\documentclass[12pt]{amsart}
\usepackage{mathrsfs}
\usepackage{cases}
\usepackage{epic}
\usepackage{amsfonts}
\usepackage{graphicx}
\usepackage{amsmath}
\usepackage{amssymb, upgreek}
\usepackage{bm}
\usepackage{latexsym,todonotes}
\usepackage{pdflscape}
\usepackage[all]{xypic}
\usepackage[all]{xy}
\usepackage{color}
\usepackage{colordvi}
\usepackage{multicol}
\usepackage[normalem]{ulem}
\textwidth=14.5cm \oddsidemargin=1cm \evensidemargin=1cm
\setlength{\hoffset}{0pt}
\setlength{\voffset}{0pt}
\setlength{\topmargin}{0pt}
\setlength{\oddsidemargin}{0in}
\setlength{\evensidemargin}{0in}
\setlength{\textheight}{8.75in}
\setlength{\textwidth}{6.5in}
\pagestyle{headings}
\usepackage[linktocpage=true]{hyperref}
\hypersetup{colorlinks,linkcolor=blue,urlcolor=cyan,citecolor=blue}
\def \bvs{{\boldsymbol{\varsigma}}}
\newcommand{\vs}{\varsigma}
\allowdisplaybreaks

\begin{document}
\input xy
\xyoption{all}

%Theorem for the introduciton
\newtheorem{innercustomthm}{{\bf Theorem}}
\newenvironment{customthm}[1]
  {\renewcommand\theinnercustomthm{#1}\innercustomthm}
  {\endinnercustomthm}

  \newtheorem{innercustomcor}{{\bf Corollary}}
\newenvironment{customcor}[1]
  {\renewcommand\theinnercustomcor{#1}\innercustomcor}
  {\endinnercustomthm}

  \newtheorem{innercustomprop}{{\bf Proposition}}
\newenvironment{customprop}[1]
  {\renewcommand\theinnercustomprop{#1}\innercustomprop}
  {\endinnercustomthm}

\newcommand{\iadd}{\operatorname{iadd}\nolimits}

\renewcommand{\mod}{\operatorname{mod}\nolimits}
\newcommand{\proj}{\operatorname{proj}\nolimits}
\newcommand{\inj}{\operatorname{inj.}\nolimits}
\newcommand{\rad}{\operatorname{rad}\nolimits}
\newcommand{\Span}{\operatorname{Span}\nolimits}
\newcommand{\soc}{\operatorname{soc}\nolimits}
\newcommand{\ind}{\operatorname{inj.dim}\nolimits}
\newcommand{\Ginj}{\operatorname{Ginj}\nolimits}
\newcommand{\res}{\operatorname{res}\nolimits}
\newcommand{\np}{\operatorname{np}\nolimits}
\newcommand{\Fac}{\operatorname{Fac}\nolimits}
\newcommand{\Aut}{\operatorname{Aut}\nolimits}
\newcommand{\DTr}{\operatorname{DTr}\nolimits}
\newcommand{\TrD}{\operatorname{TrD}\nolimits}
\newcommand{\Gr}{\operatorname{Gr}\nolimits}

\newcommand{\Mod}{\operatorname{Mod}\nolimits}
\newcommand{\R}{\operatorname{R}\nolimits}
\newcommand{\End}{\operatorname{End}\nolimits}
\newcommand{\lf}{\operatorname{l.f.}\nolimits}
\newcommand{\Iso}{\operatorname{Iso}\nolimits}
\newcommand{\aut}{\operatorname{Aut}\nolimits}
\newcommand{\Ui}{{\mathbf U}^\imath}
\newcommand{\UU}{{\mathbf U}\otimes {\mathbf U}}
\newcommand{\UUi}{(\UU)^\imath}
\newcommand{\tUU}{{\tU}\otimes {\tU}}
\newcommand{\tUUi}{(\tUU)^\imath}
\newcommand{\tUi}{\widetilde{{\mathbf U}}^\imath}
\newcommand{\sqq}{{\bf v}}
\newcommand{\sqvs}{\sqrt{\vs}}
\newcommand{\dbl}{\operatorname{dbl}\nolimits}
\newcommand{\swa}{\operatorname{swap}\nolimits}
\newcommand{\Gp}{\operatorname{Gp}\nolimits}

\newcommand{\U}{{\mathbf U}}
\newcommand{\tU}{\widetilde{\mathbf U}}

\newcommand{\ov}{\overline}
\newcommand{\tk}{\widetilde{k}}
\newcommand{\tK}{\widetilde{K}}

\newcommand{\tH}{\widetilde{\ch}}

\newcommand{\LL}{\texttt{L}}
\newcommand{\RR}{\texttt{R}}

\newcommand{\utM}{\operatorname{\cs\cd\ch}\nolimits}
\newcommand{\tM}{\operatorname{\cs\cd\widetilde{\ch}}\nolimits}
\newcommand{\rM}{\operatorname{\cs\cd\ch_{\rm{red}}}\nolimits}
\newcommand{\utMH}{\operatorname{\cs\cd\ch(\Lambda^\imath)}\nolimits}
\newcommand{\tMH}{\operatorname{\cs\cd\widetilde{\ch}(\Lambda^\imath)}\nolimits}
\newcommand{\rMH}{\operatorname{\cs\cd\ch_{\rm{red}}(\Lambda^\imath)}\nolimits}
\newcommand{\utMHg}{\operatorname{\ch(Q,\btau)}\nolimits}
\newcommand{\tMHg}{\operatorname{\widetilde{\ch}(Q,\btau)}\nolimits}
\newcommand{\rMHg}{\operatorname{\ch_{\rm{red}}(Q,\btau)}\nolimits}

\newcommand{\rMHd}{\operatorname{\cs\cd\ch_{\rm{red}}(\Lambda^\imath)_{\bvsd}}\nolimits}
\newcommand{\tMHd}{\operatorname{\cs\cd\widetilde{\ch}(\Lambda^\imath)_{\bvsd}}\nolimits}

%LU
%\newcommand{\utMHl}{\cs\cd\ch({\bs}_\ell\Lambda^\imath)_{\diamond}}
\newcommand{\tMHl}{\cs\cd\widetilde{\ch}({\bs}_\ell\Lambda^\imath)}
\newcommand{\rMHl}{\cs\cd\ch_{\rm{red}}({\bs}_\ell\Lambda^\imath)_{\bvsd}}
\newcommand{\tMHi}{\cs\cd\widetilde{\ch}({\bs}_i\Lambda^\imath)}
\newcommand{\rMHi}{\cs\cd\ch_{\rm{red}}({\bs}_i\Lambda^\imath)_{\bvsd}}
\newcommand{\tMHgi}{\widetilde{\ch}({\bs}_i Q,\btau)}

\newcommand{\utGpg}{\operatorname{\ch^{\rm Gp}(Q,\btau)}\nolimits}
\newcommand{\tGpg}{\operatorname{\widetilde{\ch}^{\rm Gp}(Q,\btau)}\nolimits}
\newcommand{\rGpg}{\operatorname{\ch_{red}^{\rm Gp}(Q,\btau)}\nolimits}

\newcommand{\colim}{\operatorname{colim}\nolimits}
\newcommand{\gldim}{\operatorname{gl.dim}\nolimits}
\newcommand{\cone}{\operatorname{cone}\nolimits}
\newcommand{\rep}{\operatorname{rep}\nolimits}
\newcommand{\Ext}{\operatorname{Ext}\nolimits}
\newcommand{\Tor}{\operatorname{Tor}\nolimits}
\newcommand{\Hom}{\operatorname{Hom}\nolimits}
\newcommand{\Top}{\operatorname{top}\nolimits}
\newcommand{\Coker}{\operatorname{Coker}\nolimits}
\newcommand{\thick}{\operatorname{thick}\nolimits}
\newcommand{\rank}{\operatorname{rank}\nolimits}
\newcommand{\Gproj}{\operatorname{Gproj}\nolimits}
\newcommand{\Len}{\operatorname{Length}\nolimits}
\newcommand{\RHom}{\operatorname{RHom}\nolimits}
\renewcommand{\deg}{\operatorname{deg}\nolimits}
\renewcommand{\Im}{\operatorname{Im}\nolimits}
\newcommand{\Ker}{\operatorname{Ker}\nolimits}
\newcommand{\Coh}{\operatorname{Coh}\nolimits}
\newcommand{\Id}{\operatorname{Id}\nolimits}
\newcommand{\Qcoh}{\operatorname{Qch}\nolimits}
\newcommand{\CM}{\operatorname{CM}\nolimits}
\newcommand{\sgn}{\operatorname{sgn}\nolimits}
\newcommand{\Gdim}{\operatorname{G.dim}\nolimits}
\newcommand{\fpr}{\operatorname{\mathcal{P}^{\leq1}}\nolimits}

\newcommand{\For}{\operatorname{{\bf F}or}\nolimits}
\newcommand{\coker}{\operatorname{Coker}\nolimits}
\renewcommand{\dim}{\operatorname{dim}\nolimits}
\newcommand{\rankv}{\operatorname{\underline{rank}}\nolimits}
\newcommand{\dimv}{{\operatorname{\underline{dim}}\nolimits}}
\newcommand{\diag}{{\operatorname{diag}\nolimits}}
\newcommand{\qbinom}[2]{\begin{bmatrix} #1\\#2 \end{bmatrix} }

\renewcommand{\Vec}{{\operatorname{Vec}\nolimits}}
\newcommand{\pd}{\operatorname{proj.dim}\nolimits}
\newcommand{\gr}{\operatorname{gr}\nolimits}
\newcommand{\id}{\operatorname{Id}\nolimits}
\newcommand{\Res}{\operatorname{Res}\nolimits}

\newcommand{\pdim}{\operatorname{proj.dim}\nolimits}
\newcommand{\idim}{\operatorname{inj.dim}\nolimits}
\newcommand{\Gd}{\operatorname{G.dim}\nolimits}
\newcommand{\Ind}{\operatorname{Ind}\nolimits}
\newcommand{\add}{\operatorname{add}\nolimits}
\newcommand{\pr}{\operatorname{pr}\nolimits}
\newcommand{\oR}{\operatorname{R}\nolimits}
\newcommand{\oL}{\operatorname{L}\nolimits}
\newcommand{\ext}{{ \mathfrak{Ext}}}
\newcommand{\Perf}{{\mathfrak Perf}}
\def\scrP{\mathscr{P}}
\newcommand{\bk}{{\mathbb K}}
\newcommand{\cc}{{\mathcal C}}
\newcommand{\gc}{{\mathcal GC}}
\newcommand{\dg}{{\rm dg}}
\newcommand{\ce}{{\mathcal E}}
\newcommand{\cs}{{\mathcal S}}
\newcommand{\cl}{{\mathcal L}}
\newcommand{\cf}{{\mathcal F}}
\newcommand{\cx}{{\mathcal X}}
\newcommand{\cy}{{\mathcal Y}}
\newcommand{\ct}{{\mathcal T}}
\newcommand{\cu}{{\mathcal U}}
\newcommand{\cv}{{\mathcal V}}
\newcommand{\cn}{{\mathcal N}}
\newcommand{\mcr}{{\mathcal R}}
\newcommand{\ch}{{\mathcal H}}
\newcommand{\ca}{{\mathcal A}}
\newcommand{\cb}{{\mathcal B}}
\newcommand{\ci}{{\I}_{\btau}}
\newcommand{\cj}{{\mathcal J}}
\newcommand{\cm}{{\mathcal M}}
\newcommand{\cp}{{\mathcal P}}
\newcommand{\cg}{{\mathcal G}}
\newcommand{\cw}{{\mathcal W}}
\newcommand{\co}{{\mathcal O}}
\newcommand{\cq}{{Q^{\rm dbl}}}
\newcommand{\cd}{{\mathcal D}}
\newcommand{\cz}{{\mathcal Z}}
\newcommand{\ck}{\widetilde{\mathcal K}}
\newcommand{\calr}{{\mathcal R}}
\newcommand{\La}{\Lambda}
\newcommand{\ol}{\overline}
\newcommand{\ul}{\underline}
\newcommand{\st}{[1]}
\newcommand{\ow}{\widetilde}
\renewcommand{\P}{\mathbf{P}}
\newcommand{\pic}{\operatorname{Pic}\nolimits}
\newcommand{\Spec}{\operatorname{Spec}\nolimits}

\newtheorem{theorem}{Theorem}[section]
\newtheorem{acknowledgement}[theorem]{Acknowledgement}
\newtheorem{algorithm}[theorem]{Algorithm}
\newtheorem{axiom}[theorem]{Axiom}
\newtheorem{case}[theorem]{Case}
\newtheorem{claim}[theorem]{Claim}
\newtheorem{conclusion}[theorem]{Conclusion}
\newtheorem{condition}[theorem]{Condition}
\newtheorem{conjecture}[theorem]{Conjecture}
\newtheorem{construction}[theorem]{Construction}
\newtheorem{corollary}[theorem]{Corollary}
\newtheorem{criterion}[theorem]{Criterion}
\newtheorem{definition}[theorem]{Definition}
\newtheorem{example}[theorem]{Example}
\newtheorem{exercise}[theorem]{Exercise}
\newtheorem{lemma}[theorem]{Lemma}
\newtheorem{notation}[theorem]{Notation}
\newtheorem{problem}[theorem]{Problem}
\newtheorem{proposition}[theorem]{Proposition}
\newtheorem{solution}[theorem]{Solution}
\newtheorem{summary}[theorem]{Summary}
\numberwithin{equation}{section}

\theoremstyle{remark}
\newtheorem{remark}[theorem]{Remark}
\newcommand{\Pd}{\pi_*}
\def \bvs{{\boldsymbol{\varsigma}}}
\def \bvsd{{\boldsymbol{\varsigma}_{\diamond}}}
\def \btau{{{\uptau}}}

\def \bp{{\mathbf p}}
\def \bq{{\bm q}}
\def \bv{{v}}
\def \bs{{\bm s}}

\def \bK{\mathbf{K}}

\newcommand{\bfv}{\mathbf{v}}
\def \bA{{\mathbf A}}
\def \ba{{\mathbf a}}
\def \bL{{\mathbf L}}
\def \bF{{\mathbf F}}
\def \bS{{\mathbf S}}
\def \bC{{\mathbf C}}
\def \bU{{\mathbf U}}
\def \bc{{\mathbf c}}
\def \fpi{\mathfrak{P}^\imath}
\def \Ni{N^\imath}
\def \fp{\mathfrak{P}}
\def \fg{\mathfrak{g}}
\def \fk{\fg^\theta}  %\mathfrak{k}}
\def \p{p}
\def \fn{\mathfrak{n}}
\def \fh{\mathfrak{h}}
\def \fu{\mathfrak{u}}
\def \fv{\mathfrak{v}}
\def \fa{\mathfrak{a}}
\def \Z{{\Bbb Z}}
\def \F{{\Bbb F}}
\def \D{{\Bbb D}}
\def \C{{\Bbb C}}
\def \N{{\Bbb N}}
\def \Q{{\Bbb Q}}
\def \G{{\Bbb G}}
\def \P{{\Bbb P}}
\def \K{{k}}
\def \E{{\Bbb K}}
\def \A{{\Bbb A}}
\def \L{{\Bbb L}}
\def \I{{\Bbb I}}
\def \BH{{\Bbb H}}
\def \T{{\mathcal T}}
\def \tT{\widetilde{\mathcal T}}
\def \tTL{\tT(\Lambda^\imath)}
\newcommand {\lu}[1]{\textcolor{red}{$\clubsuit$: #1}}

\newcommand{\nc}{\newcommand}
\newcommand{\browntext}[1]{\textcolor{brown}{#1}}
\newcommand{\greentext}[1]{\textcolor{green}{#1}}
\newcommand{\redtext}[1]{\textcolor{red}{#1}}
\newcommand{\bluetext}[1]{\textcolor{blue}{#1}}
\newcommand{\brown}[1]{\browntext{ #1}}
\newcommand{\green}[1]{\greentext{ #1}}
\newcommand{\red}[1]{\redtext{ #1}}
\newcommand{\blue}[1]{\bluetext{ #1}}

%todo
\newcommand{\wtodo}{\todo[inline,color=orange!20, caption={}]}
\newcommand{\lutodo}{\todo[inline,color=green!20, caption={}]}

%%%%%
\title[Hall algebras and quantum symmetric pairs I: foundations]{Hall algebras and quantum symmetric pairs I: foundations}

\author[Ming Lu]{Ming Lu}
\address{Department of Mathematics, Sichuan University, Chengdu 610064, P.R.China}
\email{luming@scu.edu.cn}

\author[Weiqiang Wang]{Weiqiang Wang}
\address{Department of Mathematics, University of Virginia, Charlottesville, VA 22904, USA}
\email{ww9c@virginia.edu}

\subjclass[2010]{Primary 17B37, 16E60, 18E30.}
\keywords{Hall algebras, $\imath$Quantum groups, Quantum symmetric pairs, PBW bases}

\begin{abstract}
A quantum symmetric pair consists of a quantum group $\mathbf U$ and its coideal subalgebra ${\mathbf U}^{\imath}_{\boldsymbol{\varsigma}}$ with parameters $\boldsymbol{\varsigma}$ (called an $\imath$quantum group). We initiate a Hall algebra approach for the categorification of $\imath$quantum groups. A universal $\imath$quantum group $\widetilde{\mathbf U}^{\imath}$ is introduced and ${\mathbf U}^{\imath}_{\boldsymbol{\varsigma}}$ is recovered by a central reduction of $\widetilde{\mathbf U}^{\imath}$. The semi-derived Ringel-Hall algebras of the first author and Peng, which are closely related to semi-derived Hall algebras of Gorsky and motivated by Bridgeland's work, are extended to the setting of 1-Gorenstein algebras, as shown in Appendix A by the first author. A new class of 1-Gorenstein algebras (called $\imath$quiver algebras) arising from acyclic quivers with involutions is introduced. The semi-derived Ringel-Hall algebras for the Dynkin $\imath$quiver algebras are shown to be isomorphic to the universal quasi-split $\imath$quantum groups of finite type. Monomial bases and PBW bases for these Hall algebras and $\imath$quantum groups are constructed.
\end{abstract}

\maketitle
 \tableofcontents

%%%%%%%
%%%%%%%
\section{Introduction}
\subsection{Background}
\subsubsection{Hall algebras and quantum groups}

Ringel \cite{Rin} in 1990 constructed a Hall algebra associated to a Dynkin quiver $Q=(Q_0,Q_1)$ with a vertex set $Q_0=\I$ over a finite field $\mathbb F_q$, and identified its generic version with half a quantum group $\U^+ =\U^+_v(\fg)$, where $\fg$ is the Lie algebra with the underlying graph of $Q$ as its Dynkin diagram; see Green \cite{Gr} for an extension to acyclic quivers. Ringel's construction has led to a geometric Hall algebra realization of $\U^+$ by Lusztig, who in addition constructed its canonical basis \cite{L90}. These constructions can be regarded as earliest examples of categorifications of halves of quantum groups.

It took some time before a Hall algebra construction of the (whole) quantum groups was found; see \cite{Kap98, PX} for some earlier attempts on realizations of Kac-Moody algebras and quantum groups, and see \cite{T06, XX08} for constructions of derived Hall algebras. Bridgeland \cite{Br} in 2013 succeeded in using a Hall algebra of complexes to realize the quantum group $\U$. Actually Bridgleland's construction naturally produces the Drinfeld double $\widetilde \U$, a variant of $\U$ with the Cartan subalgebra doubled (with generators $K_i, K_i'$, for $i \in \I$). A  reduced version, which is the quotient of $\widetilde \U$ by the ideal generated by the central elements $K_i K_i'-1$, is then identified with $\U$.

Bridgeland's version of Hall algebras has found further generalizations and improvements which allow more flexibilities. M.~Gorsky \cite{Gor1} constructed semi-derived Hall algebras using $\Z/2$-graded complexes of an exact category. More recently, motivated by the works of Bridgeland and Gorsky, the first author and Peng \cite{LP} formulated the {\em semi-derived Ringel-Hall algebras} for hereditary abelian categories. There is another geometric approach toward Bridgeland's Hall algebra developed by Qin \cite{Qin16}; cf. Scherotzke-Sibilla \cite{SS16}.

\subsubsection{$\imath$Quantum groups}

 As a quantization of symmetric pairs $(\fg, \fg^\theta)$, the quantum symmetric pairs $(\U, \Ui)$ were formulated by Letzter \cite{Let99, Let02} (also cf. \cite{Ko14}) with Satake diagrams as inputs. The symmetric pairs are in bijection with the real forms of complex simple Lie algebras, according to \'E. Cartan. By definition, $\Ui =\Ui_\bvs$ is a coideal subalgebra of $\U$ depending on parameters $\bvs =(\vs_i)_{i\in \I}$ (subject to some compatibility conditions) and will be referred to as an $\imath$quantum group in this paper. As suggested in \cite{BW18a}, most of the fundamental constructions in the theory of quantum groups should admit generalizations in the setting of $\imath$quantum groups; see \cite{BW18a, BK19, BW18b} for generalizations of (quasi) R-matrix and canonical bases, and also see \cite{BKLW} (and \cite{Li19}) for a geometric realization and
  \cite{BSWW} for KLR type categorification of a class of (modified) $\Ui$.

Following terminologies in real group literature, we call an $\imath$quantum group {\em quasi-split} if the underlying Satake diagram does not contain any black node. In other words, the involution $\theta$ on $\fg$ is given by $\theta =\omega \circ \btau$, where $\omega$ is the Chevalley involution and $\btau$ is a diagram involution which is allowed to be $\Id$. If in addition $\btau=\Id$, $\Ui$ is called {\em split}. A quantum group can be viewed as a quasi-split $\imath$quantum group associated to the symmetric pair of diagonal type, and thus it is instructive to view $\imath$quantum groups as generalizations of quantum groups which may not admit a triangular decomposition.

\subsection{Goal}

This is the first of a series of papers in our program devoted to developing a new Hall algebra approach to $\imath$quantum groups, a vast generalization of Bridgeland's work. In this paper we initiate a Hall algebra construction associated with $\imath$quivers (aka quivers with involutions), and use it to realize the {\em universal} quasi-split $\imath$quantum groups $\tUi$ which we introduce in this paper; the usual $\imath$quantum groups $\Ui_\bvs$ are reproduced by central reductions of $\tUi$. Consequently, we construct PBW bases for $\Ui$ for the first time. In case of {\em $\imath$quivers of diagonal type}, our approach amounts to a reformulation of Bridgeland's construction.

It is our hope that this work will stimulate further interactions between communities on Hall algebras and on quantum symmetric pairs. On one hand, motivated by quantum symmetric pairs, we supply a new and natural family of  finite-dimensional algebras which affords rich representation theory. On the other hand, we bring in conceptual constructions and tools from quivers to shed new light on old constructions and to uncover new algebraic structures on $\imath$quantum groups.

This paper is arranged into 2 parts and an appendix.
The framework for the semi-derived Ringel-Hall algebras can be extended from hereditary abelian categories \cite{LP} to 1-Gorenstein algebras (or the {\em weakly $1$-Gorenstein} exact categories defined in \S\ref{subsection:Def of MRH}); see Appendix~\ref{app:A} by the first author. A reader might find it helpful to browse Appendix~\ref{app:A} first (which can be read independently from \cite{LP} if one accepts several statements without proofs), as general properties of semi-derived Ringel-Hall algebras formulated therein will be often used in Parts~\ref{part:1}--\ref{part:2}.

Part~\ref{part:1}, which consists of Sections~\ref{sec:quivers}--\ref{sec:HallPBW},  introduces the notion of $\imath$quiver algebras (which form a new class of 1-Gorenstein algebras) and formulates the $\imath$Hall algebras (which are twisted versions of semi-derived Ringel-Hall algebras for the $\imath$quiver algebras). Part~\ref{part:2}, which consists of Sections~\ref{sec:prelim2}--\ref{sec:generic}, establishes isomorphisms between $\imath$Hall algebras and $\imath$quantum groups and constructs new bases of these algebras.

\subsection{An overview of Part ~\ref{part:1} and Appendix~\ref{app:A}}

\subsubsection{$\imath$Quiver algebras}

Let $k$ be a field. Let $(Q, \btau)$ be an $\imath$quiver (that is, $\btau$ is an involutive automorphism of a quiver $Q$ respecting the arrows; we allow $\btau=\Id$). Associated to $(Q, \btau)$, we define an algebra
\begin{equation}   \label{eq:La}
\La =k Q \otimes_k R_2
\end{equation}
where $R_2$ is is the radical square zero of the path algebra of $\xymatrix{1 \ar@<0.5ex>[r]^{\varepsilon} & 1' \ar@<0.5ex>[l]^{\varepsilon'}}$. The involution $\btau$ induces an involution ${\btau}^{\sharp}$ on $\La$. We define the {\em $\imath$quiver algebra} of $(Q, \btau)$ to be
\begin{equation*}
\La^\imath    = \{x\in \Lambda\mid {\btau}^{\sharp}(x) =x\}.
\end{equation*}

\begin{customprop}{{\bf A}}  [Propositions~\ref{prop:invariant subalgebra}, \ref{prop:tensor algebra} and \ref{proposition of 1-Gorenstein}]
  \label{TA}
The $\imath$quiver algebra $\La^\imath$ can be described in terms of a bound quiver as $\Lambda^\imath \cong k \ov Q/\ov{I}$.
Moreover, $\La^\imath$ is a tensor algebra as well as a $1$-Gorenstein algebra.
\end{customprop}

We illustrate by 2 examples of rank two $\imath$quivers. The $\imath$quiver $Q=(\xymatrix{ 1\ar[r]^{\alpha} &2})$ with $\btau=\Id$ gives rise to an enhanced quiver $\ov Q$ with relations as follows:
\begin{center}\setlength{\unitlength}{0.7mm}
 \begin{picture}(50,13)(0,0)
\put(0,-2){$1$}
\put(4,0){\vector(1,0){14}}
\put(10,0){$^{\alpha}$}
\put(20,-2){$2$}
\color{purple}
\put(1,10){$\varepsilon_1$}
\put(21,10){$\varepsilon_2$}
\qbezier(-1,1)(-3,3)(-2,5.5)
\qbezier(-2,5.5)(1,9)(4,5.5)
\qbezier(4,5.5)(5,3)(3,1)
\put(3.1,1.4){\vector(-1,-1){0.3}}
\qbezier(19,1)(17,3)(18,5.5)
\qbezier(18,5.5)(21,9)(24,5.5)
\qbezier(24,5.5)(25,3)(23,1)
\put(23.1,1.4){\vector(-1,-1){0.3}}
\end{picture}
\vspace{0.2cm}
\end{center}
\begin{equation}  \label{eq:rk2split}
\varepsilon_1^2=0=\varepsilon_2^2, \quad \varepsilon_2 \alpha=\alpha\varepsilon_1.
\end{equation}
The $\imath$quiver algebra $\La^\imath$ associated to a split $\imath$quiver $(Q, \Id)$ is isomorphic to $k Q \otimes k [\varepsilon]/(\varepsilon^2)$, and its representation theory was studied by Ringel-Zhang \cite{RZ}. More general quivers with relations were also studied by Geiss-Leclerc-Schr\"{o}er \cite{GLS}. Our motivation of considering $\La^\imath$ is totally different.

On the other hand, the $\imath$quiver $Q=(\xymatrix{ 1\ar[r]^{\alpha} &2 & 3\ar[l]_{\beta} })$ with $\btau \neq \Id$ gives rise to the following enhanced quiver $\ov Q$ with relations:
\begin{center}\setlength{\unitlength}{0.7mm}
 \begin{picture}(50,20)(0,-10)
\put(0,-2){$1$}
\put(20,-2){$3$}
\put(2,-11){$_{\alpha}$}
\put(17,-11){$_{\beta}$}
\put(2,-2){\vector(1,-2){8}}
\put(20,-2){\vector(-1,-2){8}}
\put(10,-22){$2$}
\color{purple}
\put(3,1){\vector(1,0){16}}
\put(19,-1){\vector(-1,0){16}}
\put(10,1){$^{\varepsilon_1}$}
\put(10,-4){$_{\varepsilon_3}$}
\put(10,-28){$_{\varepsilon_2}$}
\begin{picture}(50,23)(-10,19)
\color{purple}
\qbezier(-1,-1)(-3,-3)(-2,-5.5)
\qbezier(-2,-5.5)(1,-9)(4,-5.5)
\qbezier(4,-5.5)(5,-3)(3,-1)
\put(3.1,-1.4){\vector(-1,1){0.3}}
\end{picture}
\end{picture}
\vspace{1.4cm}
\end{center}
\begin{equation}  \label{eq:rk2nonsplit}
\varepsilon_1\varepsilon_3=0=\varepsilon_3\varepsilon_1,
\quad
 \varepsilon_2^2=0,
 \quad
 \varepsilon_2 \beta=\alpha\varepsilon_3,
 \quad
 \varepsilon_2 \alpha=\beta\varepsilon_1.
\end{equation}

Denote by $\underline{\Gproj}(\Lambda^{\imath})$ the stable category of the category of Gorenstein projective $\La^\imath$-modules, and denote by $D_{sg}(\mod(\Lambda^{\imath}))$ the singularity category. Let $\Sigma$ be the shift functor of the derived category $D^b(k Q)$. The involution $\btau$ induces a triangulated auto-equivalence $\widehat{\btau}$ of $D^b(\K Q)$.

\begin{customthm}{{\bf B}}  [Theorem \ref{thm:sigma}]
Let $(Q,\btau)$ be an $\imath$quiver. Then the following equivalences of categories hold:
\[
\underline{\Gproj}(\Lambda^{\imath}) \simeq D_{sg}(\mod(\Lambda^{\imath})) \simeq D^b(\K Q)/\Sigma \circ \widehat{\btau}.
\]
\end{customthm}
Note the first equivalence above is a well-known theorem of Buchweitz-Happel (see Theorem \ref{thm:Buchweitz-Happel}), but it is convenient to keep it together with the second equivalence.

\subsubsection{Generalities on semi-derived Ringel-Hall algebras}

The main constructions of semi-derived Ringel-Hall algebras for 1-Gorenstein algebras $A$ from Appendix \ref{app:A} by the first author form an extension of  \cite{LP} (which worked in the setting of Example~\ref{example 1}, generalizing \cite{Br, Gor1}).

Given an exact category $\ca$ with favorable properties as in (E-a)--(E-d) in Appendix~\ref{app:A}, its Ringel-Hall algebra $\ch(\ca)$ is defined. Then we define the semi-derived Ringel-Hall algebra $\utM(\ca)$ to be the localization $(\ch(\ca)/I) [\cs_\ca^{-1}]$, where $\cs_\ca$ is the multiplicatively closed set generated by iso-classes $[K]$ with $K$ of finite projective dimension
(see \eqref{eq:Sca}) and the ideal $I$ is generated by   all differences $[L]-[K\oplus M]$ if there is a short exact sequence
\begin{equation*}
 % \label{eq:ideal}
 0 \longrightarrow K \longrightarrow L \longrightarrow M \longrightarrow 0
\end{equation*}
in $\ca$ with $K$ of finite projective dimension (see \eqref{eq:ideal}).
Given a 1-Gorenstein algebra $A$ over a finite field $k=\mathbb F_q$, the module category $\mod(A)$ satisfies the properties (E-a)--(E-d) and so we can define the semi-derived Ringel-Hall algebra  $\utM(A) := \utM(\mod(A))$.
On the other hand, since the subcategory $\Gproj(A)$ of $\mod(A)$ which consists of Gorenstein projective $A$-modules (see \S\ref{subsec:Gproj}) is a Frobenius category, the semi-derived Hall algebra $\cs\cd\ch(\Gproj(A))$ is also defined  \cite{Gor2}.

\begin{customthm}{{\bf C}}
   [Theorems~\ref{theorem isomorphism of algebras}, \ref{theorem basis of MRH 1-Gor}]
Let $A$ be a finite-dimensional $1$-Gorenstein algebra over $\K$. Then
\begin{enumerate}
\item
there is an algebra isomorphism $\cs\cd\ch(A) \cong \cs\cd\ch(\Gproj(A))$;
\item
$\cs\cd\ch(A)$ has a basis given by
$[M]\diamond \E_\alpha$, where $[M]\in\Iso \big(D_{sg}(\mod(A))\big)$, and $\alpha\in K_0(\mod(A))$.
\end{enumerate}
\end{customthm}
In comparison to Gorsky's version of semi-derived Hall algebra, the semi-derived Ringel-Hall algebra $\utM(\ca)$ in our sense is easier for computational purpose. In addition, we shall see that the generators for $\Ui$ have simple interpretations in a twisted version of $\utMH$.

\begin{customthm}{\bf D}    [Theorem~\ref{theorem:derived equivalence of MRH}]
Let $A$ be a $1$-Gorenstein algebra with a tilting module $T$. If $\Gamma=\End_{A}(T)^{op}$ is a $1$-Gorenstein algebra, then there is an algebra isomorphism $\cs\cd\ch(A) \cong  \cs\cd\ch(\Gamma).$
\end{customthm}

\subsubsection{Bases for $\imath$Hall algebras}

The algebra $\La^\imath$ admits a non-negative grading $\La^\imath =\La^\imath_0 \oplus \La^\imath_1$, where $\La^\imath_0 = k Q$. Thus, $\La^\imath$ naturally has the path algebra $k Q$ as its subalgebra and quotient algebra.
This allows us to formulate conceptually an Euler form. This leads to the twisted semi-derived Ringel-Hall algebra $\tMH$, which is $\utMH$ with a twisted Hall multiplication; we shall refer to this as the Hall algebra associated to the $\imath$quiver (or $\imath$Hall algebra for short).

 The quotient morphism $\La^\imath \rightarrow kQ$ induces a pullback functor
$\iota:\mod(\K Q)\longrightarrow\mod(\Lambda^{\imath})$ in \eqref{eqn:rigt adjoint}. Hence we have a natural inclusion $\Iso(\mod(\K Q)) \subseteq \Iso(\mod(\Lambda^{\imath}))$.
In the setting of $\imath$quiver algebra $\La^\imath$, the basis in Theorem~{\bf C} takes a more concrete form, thanks to Theorem~{\bf B}.

\begin{customthm}{\bf E} [Theorem~\ref{thm:utMHbasis}]
The $\imath$Hall algebra $\tMH$ has a (Hall) basis given by
\begin{equation*}
\big\{ [X]\diamond \E_\alpha ~\big |~ [X]\in\Iso(\mod(\K Q)), \alpha\in K_0(\mod(\K Q)) \big\}.
\end{equation*}
\end{customthm}
The span of $\E_\alpha$, for $\alpha\in K_0(\mod(\K Q))$, is a subalgebra of $\tMH$ called a twisted quantum torus and denoted by $\tTL$. It follows by Lemma~\ref{lem:torus} that $\tTL$ is a Laurent polynomial algebra with generators $\E_i$, for $i\in \I$.

Assume now $(Q, \btau)$ is a Dynkin $\imath$quiver.
The indecomposable modules over the path algebra $kQ$ are parameterized by the positive roots for $\fg$, and they are used in \cite{Rin3} to construct a PBW basis for the Hall algebra $\ch(kQ)$; cf. \cite{DDPW}.
As $kQ$ is a quotient algebra of $\La^\imath$, we can regard the indecomposable $kQ$-modules as modules over $\La^\imath$ via pullback.

The algebra $\utMH$ is endowed with a filtered algebra structure by a partial order induced by degeneration; cf. \cite{Rie86}. We relate the associated graded $\cs\cd\ch^{\gr}(\Lambda^{\imath})$ to the Hall algebra $\ch(\K Q)$  of the path algebra $kQ$. This allows us to transfer a monomial basis (and respectively, PBW basis) for $\ch(\K Q)$ to a monomial basis (and respectively, PBW basis) for $\utMH$ (or $\tMH$) over its (twisted) quantum torus.

\begin{customthm}{{\bf F}}
   [Monomial basis Theorem~\ref{thm:monomial}, PBW basis Theorem~\ref{thm:HallPBW}]
Let $(Q,\btau)$ be a Dynkin $\imath$quiver.
There exist monomial bases as well as PBW bases for the $\imath$Hall algebra $\tMH$ as a right $\tTL$-module.
\end{customthm}

The algebra $\tMH$ contains various central elements, cf. Proposition \ref{Prop:centralMH}.
As in \cite{Br}, we shall define in Definition~\ref{def:reducedHall} a reduced version of $\imath$Hall algebra, $\rMH$, to be the quotient algebra of $\tMH$ by an ideal generated by certain central elements.

\subsection{An overview of Part~\ref{part:2}}

Recall $\tU$ is a version of quantum group $\U$ with an enlarged Cartan subalgebra. In this paper we introduce a universal  $\imath$quantum group $\tUi$ whose Cartan subalgebra is generated by $\tk_i$, for $i\in \I$. One readily checks that $\tUi$ is a right coideal subalgebra of $\tU$, and $(\widetilde{\bU}, \widetilde{\bU}^\imath)$ forms a quantum symmetric pair. The algebra $\tUi$ contains central elements $\tk_i$ for $i$ with $i =\btau i$ and $\tk_i \tk_{\btau i}$ with $i \neq \btau i$, and $\Ui=\Ui_\bvs$ is recovered by a central reduction from $\tUi$. More precisely, Proposition~\ref{prop:QSP12} states that:

{\em
The algebra $\Ui$ is isomorphic to the quotient of $\tUi$ by the ideal generated by
$\tk_i - \vs_i \; (\text{for } i =\btau i)$ and $\tk_i \tk_{\btau i} - \vs_i\vs_{\btau i}  \;(\text{for } i \neq \btau i).$
}

In this way, these central elements in $\tUi$ provide a conceptual way of explaining the parameters for the $\imath$quantum groups, and $(\tU, \tUi)$ can be regarded as a {\em universal family} of quantum symmetric pairs.
Set
\[
\sqq=\sqrt{q}.
\]
Denote by $\tUi_{|v={\sqq}}$ the specialization at $v=\sqq$ of the algebra $\tUi$, and so on.

From now on, we mostly restrict ourselves to Dynkin $\imath$quivers (for which the Serre type relations in the $\imath$Hall algebras can be verified readily).  A list of quasi-split symmetric pairs $(\fg, \fg^\theta)$ which are covered by our constructions can be found in Table~\ref{table:values}.

\begin{customthm}{{\bf G}}  [Theorem~\ref{theorem isomorphism of Ui and MRH}]
Let $(Q,\btau)$ be a Dynkin $\imath$quiver. Then we have the following isomorphisms of $\Q(\sqq)$-algebras:
\begin{align*}
\widetilde{\psi}: \tUi_{|v={\sqq}}\stackrel{\simeq}{\longrightarrow} \tMH,\qquad
\psi: \Ui_{|v={\sqq}}\stackrel{\simeq}{\longrightarrow} \rMH.
\end{align*}
\end{customthm}
The twisted quantum torus is mapped by $\widetilde\psi$ to the Cartan subalgebra of $\tUi$.
See Theorem~\ref{theorem isomorphism of Ui and MRH} for the explicit formulas for $\widetilde{\psi}$, which sends the  generators of $\tUi$ to the simple modules up to scalar multiples. (This is one of the advantages of using the formalism of semi-derived Ringel-Hall algebras over the semi-derived Hall algebras defined in \cite{Gor2}.) The proof that $\widetilde\psi$ is a homomorphism is reduced to the computations for the rank 2 $\imath$quivers. %For the split $\imath$quantum groups, we mainly use the rank 2 computation from \eqref{eq:rk2split}, while in the nonsplit cases, computation from \eqref{eq:rk2nonsplit} is also used.
It is instructive to see how the inhomogeneous Serre relations for $\tUi$ (or $\Ui$) emerge from the $\imath$Hall algebra computation; cf. Propositions~\ref{prop:A2}, \ref{prop:iA3} and \ref{prop:cartan}.

Associated to an acyclic quiver $Q$, we formulate an $\imath$quiver of diagonal type in Example~\ref{example diagonal}, whose $\imath$quiver algebra is $\La$; cf. \eqref{eq:La}. In this setting, a variant of Theorem~{\bf G} provides the following theorem, which can be viewed as a version of Bridgeland's construction \cite{Br} thanks to Theorem~{\bf C}(1); compare \cite{Gor1}.

\begin{customthm}{\bf H}
  [Bridgeland's theorem reformulated, Theorem~\ref{thm:bridgeland2} and Proposition~\ref{prop:bridgeland}]
There exist  injective algebra homomorphisms $\tU_{|v={\sqq}}  \longrightarrow \tM(\La)$ and $\bU_{|v={\sqq}} \rightarrow \rM(\Lambda)$.
\end{customthm}
For $Q$ of Dynkin type, the above homomorphism is an isomorphism.

For Dynkin $\imath$quivers, we show that the structure constants of the $\imath$Hall algebras are Laurent polynomials in $\sqq$. This allows us to define the generic $\imath$Hall algebras $\tMHg$ and $\rMHg$ over the field $\Q(v)$.

\begin{customthm}{\bf I}  [Theorem~\ref{generic U MRH}]
Let $(Q, \btau)$ be a Dynkin $\imath$quiver. Then we have a $\Q(\bv)$-algebra isomorphism
\begin{align*}
\widetilde{\psi}: \tUi\stackrel{\simeq}{\longrightarrow}& \tMHg,
\qquad \quad
\psi: \Ui\stackrel{\simeq}{\longrightarrow} \rMHg.
 \end{align*}
\end{customthm}
Via the isomorphism $\widetilde \psi$ in Theorem~{\bf I}, a monomial basis for $\Ui$ in \cite{Let02} can be matched with a monomial basis for $\rMHg$ (which is a generic version of a corresponding monomial basis for $\rMH$ in Theorem~{\bf F}). The geometric degeneration partial order provides a natural interpretation for the fundamental filtration on $\Ui$ in \cite{Let02} via the isomorphism $\widetilde \psi$.

\subsection{Future works}

In a second paper of this series \cite{LW19b}, we formulate BGP-type reflection functors for $\imath$Hall algebras, which are shown to satisfy the braid group relations for the restricted Weyl group of the symmetric pair $(\fg, \fg^\theta)$. Via the isomorphism $\widetilde{\psi}$ this leads to automorphisms on the universal $\imath$quantum group $\tUi$ which satisfy braid group relations.

In another sequel \cite{LW19c} we shall study Nakajima-Keller-Scherotzke categories for $\imath$quivers and use it to provide a geometric realization of $\imath$quantum groups; cf. \cite{Qin16,SS16}. It will be interesting to examine if this geometric approach provides links to Y.~ Li's $\imath$quiver variety \cite{Li19} and to (dual) $\imath$canonical bases \cite{BW18b}.

There are several ways to extend the connections between $\imath$Hall algebras and $\imath$quantum groups to more general settings as explained below, and we plan to return to these in future works.

A major next step in our program is to formulate the Hall algebras for general (i.e., beyond quasi-split) $\imath$quantum groups. The $\imath$quiver algebras in this paper can be viewed as a first foundation for such an extension. This might eventually lead to general geometric framework for quantum symmetric pairs and the (dual) $\imath$canonical bases.

We expect (see Conjecture~\ref{conj:mono}) that there is an injective homomorphism from the quasi-split $\imath$quantum groups of symmetric Kac-Moody type to the Hall algebras associated to the acyclic $\imath$quivers introduced in Part~\ref{part:1} of this paper. As $\imath$quantum groups have sophisticated Serre type relations, it takes serious work to complete this.
We also expect that our work can be extended to cover $\imath$quantum groups of non-ADE Dynkin types and symmetrizable  Kac-Moody types using {\em valued $\imath$quivers}. In all these settings BGP-type reflection functors and braid groups actions shall be available.

\subsection{Organization}

The materials for $\imath$quiver algebras and $\imath$Hall algebras in Sections~\ref{sec:quivers}--\ref{sec:Hall}  are valid for arbitrary acyclic $\imath$quivers. The quivers are assumed to be Dynkin when we develop PBW bases and isomorphism theorems between $\imath$Hall algebras and $\imath$quantum groups. The field $k$ in Sections~\ref{sec:quivers}--\ref{sec:iLambda} on $\imath$quiver algebras can be arbitrary, while we take $k=\mathbb F_q$ when dealing with Hall algebras in Sections~\ref{sec:Hall}--\ref{sec:generic}.

The paper is organized as follows.
In  Appendix~\ref{app:A} by the first author, the notion of semi-derived Ringel-Hall algebras is formulated in a framework of weakly 1-Gorenstein exact categories,  including the module categories of 1-Gorenstein algebras. The basic properties of the Ringel-Hall algebras are established, including the tilting invariance and an isomorphism with Gorsky's semi-derived Hall algebras for 1-Gorenstein algebras.

In Section~\ref{sec:quivers}, the basic notion of $\imath$quiver algebras $\La$ and $\La^\imath$ is introduced. Then we provide bound quiver descriptions for these algebras, show that $\La^\imath$ is a tensor algebra, and develop the representation theory of modulated graphs of $\imath$quivers.

We study the homological properties of the $\imath$quiver algebra $\La^\imath$ in Section~\ref{sec:iLambda}, by first establishing that it is 1-Gorenstein. The Gorenstein projective modules, indecomposable projective modules and also modules with finite projective dimensions are completely described for $\Lambda^\imath$. We set up the connections among singularity category of $\La^\imath$, the stable category of Gorenstein projective $\La^\imath$-modules, and an orbit triangulated category of $D^b(kQ)$.

We apply in Section~\ref{sec:Hall} the general machinery in Appendix \ref{app:A} to formulate the semi-derived Ringel-Hall algebra $\utMH$ for the 1-Gorenstein algebra $\La^\imath$ and a twisted version $\tMH$. We give a rather explicit Hall basis of $\utMH$. We show that the embedding of a $\imath$subquiver in an $\imath$quiver leads to an inclusion of $\imath$Hall algebras.

In Section~\ref{sec:HallPBW}, for Dynkin $\imath$quivers, we construct monomial bases and PBW bases for $\tMH$ over its quantum torus. This is based on a filtered algebra structure on $\tMH$ which allows us to relate to the usual Hall algebra $\tH(Q)$.

\medskip

In Section~\ref{sec:prelim2} (the first section of Part~\ref{part:2}), we review the quantum symmetric pair $(\U, \Ui)$ and introduce a new quantum symmetric pair $(\tU, \tUi)$. We show that $\Ui$ for various parameters $\bvs$ are central reductions of $\tUi$.

In Section~\ref{sec:HallQSP}, we establish the algebra isomorphism $\widetilde{\psi}: \tUi_{|v={\sqq}}\stackrel{\simeq}{\rightarrow} \tMH$ and a reduced variant. The proof that $\widetilde{\psi}$ is a homomorphism is reduced to rank 2 $\imath$quiver computations. A monomial basis of $\tMH$ is used to show that $\widetilde{\psi}$ is an isomorphism.

In case of $\imath$quivers of diagonal type, the $\imath$quantum groups become the usual quantum groups $\widetilde{\U}$ and $\U$. The $\imath$Hall algebra in this case provides in Section~\ref{sec:Bridgeland} a reformulation of Bridgeland's Hall algebra construction.

In Section~\ref{sec:generic},  we show the structure constants of the $\imath$Hall algebras for Dynkin $\imath$quivers are Laurent polynomials in $\sqq$. This allows us to define the generic $\imath$Hall algebras $\tMHg$ and $\rMHg$; we finally prove that $\tMHg \cong \tUi$ and $\rMHg \cong \Ui$.

\subsection{Acknowledgments}
ML thanks University of Virginia for hospitality during his visit when this project was initiated. We thank Institute of Mathematics at Academia Sinica (Taipei) for hospitality and support which has greatly facilitated the completion of this work. WW is partially supported by NSF grant DMS-1702254 and DMS-2001351. We thank the referee for his/her careful reading and helpful suggestions.

\vspace{2mm}
\noindent {\bf Notes added.}
Conjecture \ref{conj:mono} (in a generalized form) has been proved in \cite{LW20}, where the $\imath$quiver algebras are defined for arbitrary (not necessarily acyclic) $\imath$quivers. %
\subsection{Notations}

We list the notations which are often used throughout the paper.
\smallskip

$\triangleright$ $\N,\Z,\Q$, $\C$ -- sets of nonnegative integers, integers, rational  and complex numbers.

\medskip

For a finite-dimensional $k$-algebra $A$, we denote

$\triangleright$ $\mod(A)$ -- category of finite-dimensional left $A$-modules,

%$\triangleright$ $\underline{\mod}(A)$ -- stable category of $\mod(A)$,

$\triangleright$ $D=\Hom_\K(-,\K)$ -- the standard duality,

$\triangleright$ $D^b(A)$ -- bounded derived category of finite-dimensional $A$-modules,

$\triangleright$ $\Sigma$ -- the shift functor.

\medskip

Let $Q=(Q_0,Q_1)$ be an acyclic quiver.
For $i \in Q_0$, we denote

$\triangleright$ $e_i$ -- the primitive idempotent of $\K Q$,

$\triangleright$ $S_i$ -- the simple module supported at $i$,

$\triangleright$ $P_i$ -- the projective cover of $S_i$,

$\triangleright$ $I_i$ --  the injective hull of $S_i$,

$\triangleright$ $\rep_\K(Q)$ -- category of representations of $Q$ over $\K$, identified with $\mod(\K Q)$.

\medskip

For an additive category $\ca$ and $M\in \ca$, we denote

$\triangleright$ $\add M$ -- subcategory of $\ca$ whose objects are the direct summands of finite direct sums of copies of $M$,

$\triangleright$ $\Ind (\ca)$ --  set of the isoclasses of indecomposable objects in $\ca$,

$\triangleright$ $\Iso(\ca)$ -- set of the isoclasses of objects in $\ca$.

\medskip

For an exact category $\ca$, we denote

$\triangleright$ $K_0(\ca)$ --  Grothendieck group of $\ca$,

$\triangleright$ $\widehat{A}$ -- the class in $K_0(\ca)$ of $A \in \ca$.

\medskip

For various Hall algebras, we denote

$\triangleright$ $\ch (Q)$ -- Ringel-Hall algebra of the path algebra $kQ$,

$\triangleright$ $\tH (Q)$ -- twisted Ringel-Hall algebra of the path algebra $kQ$,

\smallskip
Associated to the $\imath$quiver $(Q,\btau)$ (aka quiver with involution), we denote

$\triangleright$ $\La^\imath$ -- $\imath$quiver algebra, % associated to $\imath$quiver $(Q,\btau)$,

$\triangleright$ $\utMH$ -- semi-derived Ringel-Hall algebra for $\La^\imath$ (i.e., for $\mod (\La^\imath)$),

$\triangleright$ $\T (\La^\imath)$ -- quantum torus, % associated to $\imath$quiver $(Q,\btau)$,

$\triangleright$ $\tMH$ -- $\imath$Hall algebra % associated to $\imath$quiver $(Q,\btau)$
 (=twisited semi-derived Ringel-Hall algebra for $\La^\imath$),

$\triangleright$ $\tTL$ -- twisted quantum torus, % associated to $\imath$quiver $(Q,\btau)$,

$\triangleright$ $\rMH$ -- reduced $\imath$Hall algebra % associated to $\imath$quiver $(Q,\btau)$
 %(=reduced twisted semi-derived Ringel-Hall algebra for $\La^\imath$),

$\triangleright$ $\tMHg$ -- generic $\imath$Hall algebra %associated to $\imath$quiver $(Q,\btau)$
% (=generic twisted semi-derived Ringel-Hall algebra for $\La^\imath$),

$\triangleright$ $\rMHg$ -- generic reduced $\imath$Hall algebra %associated to $\imath$quiver $(Q,\btau)$
 %(=generic reduced twisted semi-derived Ringel-Hall algebra for $\La^\imath$).

\medskip
For quantum algebras, we denote

$\triangleright$ $\U$ -- quantum group,

$\triangleright$ $\tU$ -- Drinfeld double (a variant of $\U$ with doubled Cartan subalgebra),

$\triangleright$ $\Ui=\Ui_{\bvs}$ -- a right coideal subalgebra of $\U$, depending on parameter $\bvs \in (\Q(v)^\times)^\I$,

$\triangleright$ $(\U, \Ui)$ -- quantum symmetric pair,

$\triangleright$ $(\tU, \tUi)$ -- a universal quantum symmetric pair, such that $\Ui$ is obtained from $\tUi$ by a central reduction.

\part{$\imath$Quiver Algebras and $\imath$Hall Algebras}
  \label{part:1}

%%%%%%%%
%%%%%%%%
\section{Finite-dimensional algebras arising from $\imath$quivers}
  \label{sec:quivers}

In this section, starting with an acyclic $\imath$quiver (i.e., a quiver with involution), we construct a finite-dimensional algebra $\Lambda^\imath$.  We describe the bound quiver for $\Lambda^\imath$ and show that $\La^\imath$ is a tensor algebra. We develop the representation theory of modulated graphs for $\imath$quivers.

\subsection{The $\imath$quivers and doubles}

Let $\K$ be a field.
Let $Q=(Q_0,Q_1)$ be an acyclic quiver. Throughout the paper, we shall identify
\[
\I=Q_0.
\]
An {\em involution} of $Q$ is defined to be an automorphism $\btau$ of the quiver $Q$ such that $\btau^2=\Id$. In particular, we allow the {\em trivial} involution $\Id:Q\rightarrow Q$. An involution $\btau$ of $Q$ induces an involution of the path algebra $\K Q$, again denoted by $\btau$.
A quiver together with a specified involution $\btau$, $(Q, \btau)$, will be called an {\em $\imath$quiver}.

Let $Z_m$ be the quiver with $m$ vertices and $m$ arrows which forms an oriented cycle. The vertex set of $Z_m$ is $\{0,1,\dots,m-1\}$. Let $R_m$ be the radical square zero selfinjective Nakayama algebra of $Z_m$, i.e.,  $R_m:=kZ_m/J$, where $J$ denotes the ideal of $kZ_m$ generated by all paths of length two. In particular,

$\triangleright$ $R_1$ is isomorphic to the truncated polynomial algebra $\K[\varepsilon]/(\varepsilon^2)$;

$\triangleright$ $R_2$ is the radical square zero of the path algebra of $\xymatrix{1 \ar@<0.5ex>[r]^{\varepsilon} & 1' \ar@<0.5ex>[l]^{\varepsilon'}}$, i.e., $\varepsilon' \varepsilon =0 =\varepsilon\varepsilon '$.

Let $\cc_{\Z/m}(\mod(kQ))$ be the category of the $\Z/m$-graded complexes over $\mod(kQ)$, for any $m\geq 1$, see \cite{Br, CD, LP}. The following lemma is well known.

\begin{lemma}  \label{lem:Rm}
We have $\cc_{\Z/m}(\mod(kQ))\cong \mod(kQ\otimes_k R_m)$, for any $m \geq 1$.
\end{lemma}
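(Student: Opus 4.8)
The plan is to construct an explicit equivalence of categories in both directions and check they are mutually inverse. The key observation is that a $\Z/m$-graded complex over $\mod(kQ)$ is, by definition, a collection $(M^{(0)}, \dots, M^{(m-1)})$ of $kQ$-modules together with differentials $d^{(j)} \colon M^{(j)} \to M^{(j+1)}$ (indices mod $m$) satisfying $d^{(j+1)} d^{(j)} = 0$, and morphisms of such complexes are the degreewise $kQ$-linear maps commuting with the differentials. On the other side, a module over $kQ \otimes_k R_m$ decomposes, using the primitive idempotents $e_0, \dots, e_{m-1}$ of $R_m$, as $N = \bigoplus_{j} e_j N$, where each $e_j N$ is a $kQ$-module (the $kQ$-action coming from the first tensor factor, which commutes with the $e_j$); the arrow of $Z_m$ from $j$ to $j+1$ acts as a map $e_j N \to e_{j+1} N$, and the relation $J = 0$ (paths of length two vanish) translates exactly into the condition that the composite of two consecutive such maps is zero. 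This is precisely the data of a $\Z/m$-graded complex.

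First I would set up the functor $\Phi \colon \cc_{\Z/m}(\mod(kQ)) \to \mod(kQ \otimes_k R_m)$ sending a complex $(M^{(\bullet)}, d^{(\bullet)})$ to the module $\bigoplus_j M^{(j)}$, where $e_j$ acts as the projection onto $M^{(j)}$, the element of $kQ$ acts diagonally (via the given $kQ$-module structure on each $M^{(j)}$), and the generating arrows $\varepsilon_j$ of $Z_m$ act by the differentials $d^{(j)}$. One checks this is well defined: the defining relations of $R_m$ (the length-two paths) are sent to $d^{(j+1)} d^{(j)} = 0$, and the commutation of the $kQ$-action with the $R_m$-action holds because in the tensor product the two factors commute and the $d^{(j)}$ are $kQ$-linear by assumption. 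On morphisms $\Phi$ is the obvious thing, and it is clearly fully faithful since a $kQ\otimes_k R_m$-linear map between the associated modules must preserve each $e_j$-component (hence is degreewise) and commute with the $\varepsilon_j$-action (hence is a chain map). Conversely I would define $\Psi$ in the other direction by the decomposition $N \mapsto (e_0 N, \dots, e_{m-1} N)$ with differentials given by the action of the arrows of $Z_m$, and verify $\Psi\Phi$ and $\Phi\Psi$ are naturally isomorphic to the respective identities — both composites are literally the identity on the nose once one fixes the idempotent decomposition, so this is immediate.

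Since this is a well-known and essentially formal statement (and the excerpt already cites \cite{Br, CD, LP} for the category $\cc_{\Z/m}(\mod(kQ))$), I do not expect a genuine obstacle. The only point requiring a little care is bookkeeping: making sure the tensor-product algebra $kQ \otimes_k R_m$ is presented correctly (its idempotents are $e_i^{Q} \otimes e_j^{R_m}$, and one should group them as $e_j := \sum_i e_i^{Q} \otimes e_j^{R_m}$ to recover the graded pieces), and that the relations "$\varepsilon' \varepsilon = 0 = \varepsilon \varepsilon'$'' type conditions for general $m$ are exactly "all paths of length two in $Z_m$ vanish'', matching $d^{(j+1)} d^{(j)} = 0$ around the cycle. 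A clean way to phrase the whole argument, avoiding any element chasing, is to note that $kQ \otimes_k R_m$ is isomorphic to the tensor algebra $kQ$-version of the path algebra of $Z_m$ modulo length-two paths, and that modules over such an algebra are well known to be identified with $\Z/m$-graded objects equipped with a square-zero degree-one endomorphism — i.e., $\Z/m$-graded complexes.
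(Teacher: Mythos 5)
Your argument is correct and is the standard proof of this fact: decompose a $kQ\otimes_k R_m$-module along the idempotents $1\otimes e_j$ of $R_m$, identify the action of the arrows of $Z_m$ with the differentials, and observe that the vanishing of length-two paths is exactly $d^{(j+1)}d^{(j)}=0$. The paper itself gives no proof (it simply declares the lemma well known with references), so there is nothing to compare against beyond noting that your construction is the expected one and is complete.
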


Define a $\K$-algebra
\begin{equation}
  \label{eq:La}
\Lambda=\K Q\otimes_\K R_2.
\end{equation}

Let $Q^{\sharp}$ be the quiver such that
\begin{itemize}
\item the vertex set of $Q^{\sharp}$ consists of 2 copies of the vertex set $Q_0$, $\{i,i'\mid i\in Q_0\}$;
\item the arrow set of $Q^{\sharp}$ is
\[
\{\alpha: i\rightarrow j,\alpha': i'\rightarrow j'\mid(\alpha:i\rightarrow j)\in Q_1\}\cup\{ \varepsilon_i: i\rightarrow i' ,\varepsilon'_i: i'\rightarrow i\mid i\in Q_0 \}.
\]
\end{itemize}
We call $Q^{\sharp}$ the {\em double framed quiver} associated to the quiver $Q$.

The involution $\btau$ of a quiver $Q$ induces an involution ${\btau}^{\sharp}$ of $Q^{\sharp}$ defined by
\begin{itemize}
\item ${\btau}^{\sharp}(i)=(\btau i)'$, ${\btau}^{\sharp}(i') =\btau i$ for any $i\in Q_0$;
\item ${\btau}^{\sharp}(\varepsilon_i)= \varepsilon_{\btau i}'$, ${\btau}^{\sharp}(\varepsilon_i')= \varepsilon_{\btau i}$ for any $i\in Q_0$;
\item ${\btau}^{\sharp}(\alpha)= (\btau\alpha)'$, ${\btau}^{\sharp}(\alpha')=\btau\alpha$ for any $\alpha\in Q_1$.
\end{itemize}
So starting  from the $\imath$quiver $(Q, \btau)$ we have constructed a new $\imath$quiver $(Q^{\sharp}, {\btau}^{\sharp})$.

\subsection{A bound quiver description of $\La$}

The algebra $\Lambda$ can be described in terms of a quiver with relations. Let $I^{\sharp}$ be the admissible ideal of $\K Q^{\sharp}$ generated by
\begin{itemize}
\item
(Nilpotent relations) $\varepsilon_i \varepsilon_i'$, $\varepsilon_i'\varepsilon_i$ for any $i\in Q_0$;
\item
(Commutative relations) $\varepsilon_j' \alpha' -\alpha\varepsilon_i'$, $\varepsilon_j \alpha -\alpha'\varepsilon_i$ for any $(\alpha:i\rightarrow j)\in Q_1$.
\end{itemize}
Then the algebra $\La$ can be realized as
\begin{equation}  \label{eq:La=QI}
\Lambda\cong \K Q^{\sharp} \big/ I^{\sharp}.
\end{equation}
Let $Q$ (respectively, $Q'$) be the full subquiver of $Q^{\sharp}$ formed by all vertices $i$ (respectively, $i'$) for $i\in Q_0$. Then $Q\sqcup Q'$ is a subquiver of $Q^{\sharp}$.

\begin{example}
   \label{example 2-cyclic complex of An}
(a)  Let $Q=(\xymatrix{ 1\ar[r]^{\alpha} &2})$. Then its double framed quiver $Q^{\sharp}$ is
%%%
\begin{center}\setlength{\unitlength}{0.7mm}
\vspace{-2cm}
\begin{equation*}
\begin{picture}(100,40)(0,20)
\put(49,8){\small $1'$}
\put(50,31){\small $1$}
\put(72,8){\small $2'$}
\put(72,31){\small $2$}

\put(53,10){\vector(1,0){18.5}}
\put(53,32.5){\vector(1,0){18.5}}

\put(60,12.5){$_{\alpha'}$}
\put(60,35){$_\alpha$}
\color{purple}
\put(50,13){\vector(0,1){17}}
\put(52,29.5){\vector(0,-1){17}}
\put(72,13){\vector(0,1){17}}
\put(74,29.5){\vector(0,-1){17}}

\put(45,20){\small $\varepsilon_1'$}
\put(53,20){\small $\varepsilon_1$}
\put(67,20){\small $\varepsilon_2'$}
\put(75,20){\small $\varepsilon_2$}
\end{picture}
\end{equation*}
\vspace{0.2cm}
\end{center}
and $I^{\sharp}$ is generated by all possible quadratic relations
\begin{eqnarray*}
&&\varepsilon_1\varepsilon_1', \,\,\,\,\varepsilon_1'\varepsilon_1, \,\,\,\,\varepsilon_2'\varepsilon_2, \,\,\,\,\varepsilon_2\varepsilon_2',\,\,\,\, \alpha' \varepsilon_1 -\varepsilon_2 \alpha,\,\,\,\, \alpha \varepsilon_1'- \varepsilon_2' \alpha'.
\end{eqnarray*}

(b) Let $Q=(\xymatrix{ 1\ar[r]^{\alpha} &2 & 3\ar[l]_{\beta} })$. Then its double framed quiver $Q^{\sharp}$ is
%%%
\begin{center}\setlength{\unitlength}{0.7mm}
\vspace{-2cm}
\begin{equation*}
\begin{picture}(100,40)(0,20)
\put(49,8){\small $1'$}
\put(50,31){\small $1$}
\put(72,8){\small $2'$}
\put(72,31){\small $2$}
\put(95,8){\small $3'$}
\put(95,31){\small $3$}

\put(53,10){\vector(1,0){18.5}}
\put(53,32.5){\vector(1,0){18.5}}

\put(94,10){\vector(-1,0){18}}
\put(94,32.5){\vector(-1,0){18.5}}

\put(60,12.5){$_{\alpha'}$}
\put(60,35){$_\alpha$}
\put(82,12.5){$_{\beta'}$}
\put(82,35){$_\beta$}
\color{purple}
\put(50,13){\vector(0,1){17}}
\put(52,29.5){\vector(0,-1){17}}
\put(72,13){\vector(0,1){17}}
\put(74,29.5){\vector(0,-1){17}}
\put(95,13){\vector(0,1){17}}
\put(97,29.5){\vector(0,-1){17}}

\put(45,20){\small $\varepsilon_1'$}
\put(53,20){\small $\varepsilon_1$}
\put(67,20){\small $\varepsilon_2'$}
\put(75,20){\small $\varepsilon_2$}
\put(90,20){\small $\varepsilon_3'$}
\put(98,20){\small $\varepsilon_3$}
\end{picture}
\end{equation*}
\vspace{0.2cm}
\end{center}
and $I^{\sharp}$ is generated by all possible quadratic relations
\begin{eqnarray*}
&&\varepsilon_i'\varepsilon_i, \,\,\,\,\varepsilon_i\varepsilon_i', \quad \forall 1\leq i\leq 3,\\
&&\alpha' \varepsilon_1 -\varepsilon_2 \alpha,\,\,\,\, \alpha\varepsilon_1'- \varepsilon_2' \alpha',\,\,\,\,\beta \varepsilon_3' -\varepsilon_2' \beta',\,\,\,\, \beta' \varepsilon_3- \varepsilon_2 \beta.
\end{eqnarray*}

(c) Let $Q$ be the quiver such that $Q_0= \{1,2\}$, and $Q_1=\emptyset$. Then $\Lambda\cong R_2\times R_2$.
\end{example}

\begin{example}
\label{example diagonal}
Let $Q=(Q_0,Q_1)$ be an acyclic quiver, $Q^{\sharp}$ be its double framed quiver. Let $Q^{\dbl} =Q\sqcup  Q^{\diamond}$,  where $Q^{\diamond}$ is an identical copy of $Q$ with a vertex set $\{i^{\diamond} \mid i\in Q_0\}$ and an arrow set $\{ \alpha^{\diamond} \mid \alpha \in Q_1\}$.
Let $\Lambda=kQ\otimes_k R_2$, $\Lambda^{\diamond} =k Q^{\diamond} \otimes_k R_2$, and $\Lambda^{\dbl}:=kQ^{\dbl}\otimes R_2$. Then the double framed quiver $(Q^{\dbl})^{\sharp}$ of $Q^{\dbl}$ is $Q^{\sharp}\sqcup (Q^{\sharp})^{\diamond}$, and
$\Lambda^{\dbl}=\Lambda\times \Lambda^{\diamond} \cong \Lambda\times \Lambda$.
\end{example}
\subsection{The $\imath$quiver algebra $\La^\imath$}
    \label{subsec:iLa}

The following can be verified from the definitions.
\begin{lemma}
  \label{lem:tildetheta}
The action of ${\btau}^{\sharp}$ preserves $I^{\sharp}$. Hence ${\btau}^{\sharp}$ induces an involution, again denoted by ${\btau}^{\sharp}$, of the algebra $\Lambda$.
\end{lemma}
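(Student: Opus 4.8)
The plan is to verify directly that the algebra automorphism $\btau^{\sharp}$ of the path algebra $k Q^{\sharp}$ carries each of the listed generators of $I^{\sharp}$ back into the generating set of $I^{\sharp}$. Since $\btau^{\sharp}$ is an involution of $kQ^{\sharp}$ — it is induced by the involutive quiver automorphism $\btau^{\sharp}$ of $Q^{\sharp}$, and a quiver automorphism acts on the path algebra functorially by sending a path $a_n\cdots a_1$ to $\btau^{\sharp}(a_n)\cdots\btau^{\sharp}(a_1)$, with $(\btau^{\sharp})^2=\Id$ because $\btau^2=\Id$ on $Q$ — it then follows that $\btau^{\sharp}(I^{\sharp})\subseteq I^{\sharp}$, and applying this to $\btau^{\sharp}$ once more (equivalently, using bijectivity) gives $\btau^{\sharp}(I^{\sharp})=I^{\sharp}$.

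For the nilpotent relations, fix $i\in Q_0$. Using $\btau^{\sharp}(\varepsilon_i)=\varepsilon'_{\btau i}$ and $\btau^{\sharp}(\varepsilon'_i)=\varepsilon_{\btau i}$, one computes $\btau^{\sharp}(\varepsilon_i\varepsilon'_i)=\varepsilon'_{\btau i}\varepsilon_{\btau i}$ and $\btau^{\sharp}(\varepsilon'_i\varepsilon_i)=\varepsilon_{\btau i}\varepsilon'_{\btau i}$, which are again nilpotent-relation generators, now at the vertex $\btau i$. For the commutative relations, fix an arrow $(\alpha\colon i\to j)\in Q_1$; then $\btau\alpha\colon \btau i\to\btau j$ lies in $Q_1$, and combining $\btau^{\sharp}(\alpha)=(\btau\alpha)'$, $\btau^{\sharp}(\alpha')=\btau\alpha$ with the formulas for the loops $\varepsilon$ yields
\[
\btau^{\sharp}(\varepsilon'_j\alpha'-\alpha\varepsilon'_i)=\varepsilon_{\btau j}\,\btau\alpha-(\btau\alpha)'\varepsilon_{\btau i},\qquad
\btau^{\sharp}(\varepsilon_j\alpha-\alpha'\varepsilon_i)=\varepsilon'_{\btau j}(\btau\alpha)'-(\btau\alpha)\varepsilon'_{\btau i}.
\]
These are precisely the two commutative-relation generators attached to the arrow $\btau\alpha$, with the two types interchanged. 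Hence $\btau^{\sharp}$ permutes the chosen generating set of $I^{\sharp}$, which establishes the claim $\btau^{\sharp}(I^{\sharp})=I^{\sharp}$.

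Finally, since $I^{\sharp}$ is $\btau^{\sharp}$-stable, the automorphism $\btau^{\sharp}$ of $kQ^{\sharp}$ descends to an algebra automorphism of the quotient $\La\cong kQ^{\sharp}/I^{\sharp}$ via \eqref{eq:La=QI}; it is an involution because its square is already the identity on $kQ^{\sharp}$, hence on the quotient. I do not anticipate a genuine obstacle here: the only point requiring care is bookkeeping — respecting the chosen path-composition convention and tracking the primes so that the image of each commutative relation is correctly identified as a generator of the opposite type — while the nilpotent relations and the involutivity statement are immediate.
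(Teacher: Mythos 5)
Your proof is correct, and it is exactly the verification the paper has in mind: the paper offers no written argument beyond ``can be verified from the definitions,'' and your computation — checking that $\btau^{\sharp}$ sends the nilpotent generators at $i$ to those at $\btau i$ and interchanges the two commutative generators attached to $\alpha$ with those attached to $\btau\alpha$ — is the intended (and essentially only) route. The bookkeeping with the primes and the involutivity check are all accurate.
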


\begin{definition}
The fixed point subalgebra of $\Lambda$ under ${\btau}^{\sharp}$,
\begin{equation}
   \label{eq:iLa}
\Lambda^\imath
= \{x\in \Lambda\mid {\btau}^{\sharp}(x) =x\},
\end{equation}
 is called the {\rm $\imath$quiver algebra} of $(Q, \btau)$.
 \end{definition}

\begin{remark}
\label{rem:action Lambda}
Since $\Lambda=\K Q\otimes_k R_2$, it has a basis $\{p\otimes e_0,p\otimes e_1,p\otimes \varepsilon,p\otimes \varepsilon' \text{ for all paths } p \text{ of } Q\}$.
Then the action of ${\btau}^{\sharp}$ on $\Lambda$ is given by
\begin{align*}
{\btau}^{\sharp}(p\otimes e_0)=\btau p\otimes e_1,& \qquad {\btau}^{\sharp}(p\otimes e_1)=\btau p\otimes e_0\\
{\btau}^{\sharp}(p\otimes \varepsilon)=\btau p\otimes \varepsilon',&\qquad {\btau}^{\sharp}(p\otimes \varepsilon')=\btau p\otimes \varepsilon.
\end{align*}
\end{remark}

We describe $\Lambda^\imath$ in terms of a quiver $\ov Q$ and its admissible ideal $\ov{I}$ as follows.

\begin{proposition}
  \label{prop:invariant subalgebra}
We have $\Lambda^\imath \cong \K \ov{Q} / \ov{I}$,
where
\begin{itemize}
\item[(i)] $\ov{Q}$ is constructed from $Q$ by adding a loop $\varepsilon_i$ at the vertex $i\in Q_0$ if $\btau i=i$, and adding an arrow $\varepsilon_i: i\rightarrow \btau i$ for each $i\in Q_0$ if $\btau i\neq i$;
\item[(ii)] $\ov{I}$ is generated by
\begin{itemize}
\item[(1)] (Nilpotent relations) $\varepsilon_{i}\varepsilon_{\btau i}$ for any $i\in\I$;
\item[(2)] (Commutative relations) $\varepsilon_i\alpha-\btau(\alpha)\varepsilon_j$ for any arrow $\alpha:j\rightarrow i$ in $Q_1$.
\end{itemize}
\end{itemize}
\end{proposition}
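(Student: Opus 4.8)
The plan is to construct an explicit algebra homomorphism $\K\ov{Q}\to\Lambda^\imath$, show it is surjective with kernel exactly $\ov{I}$, and hence descends to the desired isomorphism. The key point is that $\Lambda=\K Q\otimes_\K R_2$ carries the explicit basis recorded in Remark~\ref{rem:action Lambda}, namely $\{p\otimes e_0,\ p\otimes e_1,\ p\otimes\varepsilon,\ p\otimes\varepsilon'\}$ over all paths $p$ of $Q$, together with the explicit description of the $\btau^\sharp$-action there. So the first step is to write down, in this basis, a $\K$-spanning set for the fixed-point subalgebra $\Lambda^\imath=\{x\in\Lambda\mid\btau^\sharp(x)=x\}$: it is spanned by the symmetrized elements $p\otimes e_0+\btau p\otimes e_1$ and $p\otimes\varepsilon+\btau p\otimes\varepsilon'$ over all paths $p$, and this list is in fact a basis since $\btau^\sharp$ interchanges the two summands $\K Q\otimes\K e_0\oplus\K Q\otimes\K\varepsilon$ and $\K Q\otimes\K e_1\oplus\K Q\otimes\K\varepsilon'$.

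\textbf{Second step: define the map.} I would define $\phi:\K\ov{Q}\to\Lambda^\imath$ on generators by sending each vertex $i\in Q_0$ to the idempotent $e_i\otimes e_0 + e_{\btau i}\otimes e_1$, each arrow $\alpha:j\to i$ of $Q$ to $\alpha\otimes e_0 + \btau\alpha\otimes e_1$, and the extra arrow $\varepsilon_i:i\to\btau i$ (loop when $\btau i=i$) to $e_i\otimes\varepsilon + e_{\btau i}\otimes\varepsilon'$. One checks directly that these are $\btau^\sharp$-fixed elements of $\Lambda$ (using the formulas in Remark~\ref{rem:action Lambda}), that the target idempotents are orthogonal and sum to $1$, and that the source/target relations for the arrows are respected, so $\phi$ is a well-defined algebra homomorphism; surjectivity is immediate from the spanning set of the first step, since a monomial path in $\ov{Q}$ maps to $p\otimes e_0+\btau p\otimes e_1$ or $p\otimes\varepsilon+\btau p\otimes\varepsilon'$ for the corresponding path $p$ in $Q$ (possibly after applying $\btau$, since traversing one $\varepsilon_i$-arrow toggles $e_0\leftrightarrow e_1$, i.e. passes through $\varepsilon$ once).

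\textbf{Third step: identify the kernel.} I would verify that $\phi$ kills the generators of $\ov{I}$: the nilpotent relation $\varepsilon_i\varepsilon_{\btau i}$ maps to $(e_i\otimes\varepsilon)(e_{\btau i}\otimes\varepsilon') + \cdots = 0$ because $\varepsilon\varepsilon'=\varepsilon'\varepsilon=0$ in $R_2$; the commutative relation $\varepsilon_i\alpha-\btau(\alpha)\varepsilon_j$ (for $\alpha:j\to i$) maps to a difference of terms of the form $\btau\alpha\otimes\varepsilon'$ and $\alpha\otimes\varepsilon$ paired up so as to cancel, again a direct computation from Remark~\ref{rem:action Lambda} and the multiplication in $R_2$ (this is exactly the image under the fixed-point construction of the commutative relations $\varepsilon_j\alpha-\alpha'\varepsilon_i$ in $I^\sharp$). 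Hence $\phi$ factors through $\bar\phi:\K\ov{Q}/\ov{I}\to\Lambda^\imath$. It remains to show $\bar\phi$ is injective, which I would do by a dimension/basis count: using the relations in $\ov{I}$, every element of $\K\ov{Q}/\ov{I}$ is a $\K$-combination of residues of paths of the form $q$ or $\varepsilon_{i}q$ where $q$ is a path of $Q$ (the nilpotent relations forbid two consecutive $\varepsilon$'s, and the commutative relations let one push any single $\varepsilon$ to one end), and these map under $\bar\phi$ to the basis elements $p\otimes e_0+\btau p\otimes e_1$ and $p\otimes\varepsilon+\btau p\otimes\varepsilon'$ identified in step one; matching the two indexing sets shows $\bar\phi$ sends a spanning set bijectively onto a basis, hence is an isomorphism.

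\textbf{Main obstacle.} The only delicate point is the bookkeeping in step three: one must be careful that the normal form "path of $Q$, optionally preceded by one $\varepsilon_i$" really does biject with the basis of $\Lambda^\imath$, paying attention to the $\btau$-twisting (a path $p$ and $\btau p$ give rise to the \emph{same} symmetrized basis element, and traversing an $\varepsilon$-arrow is what realizes the toggle $e_0\leftrightarrow e_1$), and that the commutative relations in $\ov{I}$ are precisely strong enough — no more, no less — to move $\varepsilon$ past arrows of $Q$ without introducing extra identifications. Everything else is routine verification against the explicit basis and action in Remark~\ref{rem:action Lambda} and the quiver-with-relations presentation \eqref{eq:La=QI} of $\Lambda$.
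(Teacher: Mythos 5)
Your proposal is correct, and the homomorphism you write down is the same one the paper uses: your $\phi$ on generators ($\alpha\mapsto\alpha\otimes e_0+\btau\alpha\otimes e_1$, $\varepsilon_i\mapsto e_i\otimes\varepsilon+e_{\btau i}\otimes\varepsilon'$) is exactly the paper's $\varphi:\K\ov{Q}\to\Lambda^\imath$ written in the tensor basis of Remark~\ref{rem:action Lambda} instead of the bound-quiver presentation \eqref{eq:La=QI}, and both proofs check that the nilpotent and commutative relations die for the same reasons. Where you diverge is in how bijectivity is established. The paper packages the whole situation as a Galois covering $\pi:(Q^{\sharp},I^{\sharp})\to(\ov{Q},\ov{I})$ for the free $\langle\btau^{\sharp}\rangle$-action, reads off a map $\phi:\Lambda^\imath\to\K\ov{Q}/\ov{I}$, $p+\btau^{\sharp}p\mapsto\pi(p)$, in the opposite direction, and asserts the two composites are the identity; you instead prove injectivity by exhibiting a normal form in $\K\ov{Q}/\ov{I}$ (at most one $\varepsilon$, pushed to one end, at the cost of $\btau$-twisting the remaining $Q$-path) and matching that spanning set bijectively against the orbit-sum basis $\{p\otimes e_0+\btau p\otimes e_1,\ p\otimes\varepsilon+\btau p\otimes\varepsilon'\}$ of the fixed-point subalgebra. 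Your route makes explicit the combinatorial content that the paper compresses into ``one checks that $\varphi\phi=\Id$ and $\phi\varphi=\Id$,'' at the price of having to verify carefully that the reduction to normal form introduces no collisions; the paper's covering-theoretic framing buys the reusable map $\pi$ (invoked again in Remark~\ref{rem:pushdown functor} for the pushdown functor), which your argument does not produce. Both are complete; the only point to watch in yours is the one you already flag, namely that $q$ and $\btau q$ map to \emph{distinct} basis elements $q\otimes e_0+\btau q\otimes e_1$ and $\btau q\otimes e_0+q\otimes e_1$, so the indexing sets genuinely match up path-by-path rather than orbit-by-orbit.
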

The quiver $\ov Q$ or $(\ov Q, \ov I)$ is called an {\em enriched (bound) quiver}.

\begin{proof}
The cyclic group $\langle {\btau}^{\sharp} \rangle =\{\Id, {\btau}^{\sharp} \}$ of order 2 acts on $Q^{\sharp}$ freely. So we can define the orbit quiver $Q^{\sharp}/\langle {\btau}^{\sharp} \rangle$.
Then $Q^{\sharp}/\langle {\btau}^{\sharp} \rangle$ coincides with $\ov{Q}$ if by abuse of notation we identify
$i$, $\varepsilon_i$, and $\alpha$ as the orbits of $i \in Q_0$, $\varepsilon_i$, and $\alpha\in Q_1$, respectively. We identify $Q^{\sharp}/\langle {\btau}^{\sharp} \rangle$ with $\ov{Q}$ below. This induces a Galois covering of quivers $\pi:Q^{\sharp}\rightarrow\ov{Q}$.

Note that $\ov{I}=\pi(I^{\sharp})$. Then $\pi$ induces a Galois covering of quivers with relations $(Q^{\sharp}, I^{\sharp})\rightarrow (\ov{Q},\pi(I^{\sharp}))$ \`a la \cite{BG}, also denoted by $\pi$.
As $\Lambda^{\imath}$ is the fixed point subalgebra of $\Lambda$, $\pi$ induces a homomorphism $\phi:\Lambda^{\imath}\rightarrow \K\ov{Q}/\ov{I}$. In fact, $\Lambda^{\imath}$ is spanned by $\{p+\btau^\sharp p\mid p\text{ is a path in }Q^{\sharp} \}$. Note that for any path $p$ in $Q^{\sharp}$, $p\in (I^{\sharp})$ if and only if $\btau^\sharp p\in (I^{\sharp})$, and in this case $\pi(p)\in (\ov{I})$. Then we have $\phi(p+\btau^\sharp p)= \pi(p)$ for any path $p$ in $Q^{\sharp}$.

On the other hand,  there exists a natural homomorphism $\varphi:\K\ov{Q}\rightarrow \Lambda^{\imath}$ induced by mapping any arrow
$\alpha$ in $Q$ (viewed as a subquiver of $\ov{Q}$) to $\alpha+ (\btau \alpha)'$ for each arrow $\alpha\in Q_1\subseteq Q^{\sharp}_1$, and mapping $\varepsilon_i$ to $\varepsilon_i+\varepsilon_{\btau i}'$ for any arrow $\varepsilon_i:i\rightarrow \btau i$. It is routine to prove that $\varphi(\ov{I})=0$ in $\Lambda^\imath$. So $\varphi$ induces a homomorphism $\varphi: \K\ov{Q}/\ov{I}\rightarrow \Lambda^{\imath}$. One checks that $\varphi \phi=\Id$ and $\phi\varphi=\Id$, and therefore $ \Lambda^{\imath}\cong \K\ov{Q}/\ov{I}$.
\end{proof}

We have the following corollary to Proposition \ref{prop:invariant subalgebra}.
The algebra $\K Q\otimes R_1$ was considered  in \cite{RZ} with a different motivation.

\begin{corollary}\label{cor: 1-peoriodic}
If the involution $\btau$ of $Q$ is trivial (i.e., $\btau=\Id$), then $\Lambda^\imath \cong \K Q\otimes R_1$.
\end{corollary}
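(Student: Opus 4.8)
The plan is to read the statement off Proposition~\ref{prop:invariant subalgebra} by specializing $\btau$ to the trivial involution. When $\btau=\Id$ we have $\btau i=i$ for every $i\in Q_0$ and $\btau(\alpha)=\alpha$ for every $\alpha\in Q_1$, so Proposition~\ref{prop:invariant subalgebra} says that $\ov Q$ is obtained from $Q$ by attaching a loop $\varepsilon_i$ at each vertex $i\in Q_0$, and that $\ov I$ is generated by the nilpotent relations $\varepsilon_i^2$ ($i\in Q_0$) together with the commutative relations $\varepsilon_i\alpha-\alpha\varepsilon_j$ for every arrow $\alpha\colon j\to i$ of $Q$; thus $\Lambda^\imath\cong\K\ov Q/\ov I$ with this presentation.

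Next I would identify $(\ov Q,\ov I)$ with the bound quiver of $\K Q\otimes R_1$. Recall $R_1=\K[\varepsilon]/(\varepsilon^2)$ is the path algebra of the one-loop quiver modulo the square of the loop, and since $Q$ is acyclic $\K Q$ is the path algebra of $Q$ with no relations. Tensoring, $\K Q\otimes R_1$ is presented by the quiver with vertex set $Q_0$, arrows those of $Q$ together with a loop $\varepsilon_i:=e_i\otimes\varepsilon$ at each vertex, subject to $\varepsilon_i^2=e_i\otimes\varepsilon^2=0$ and, because $1\otimes\varepsilon$ is central in the second tensor factor, $(\alpha\otimes 1)(e_j\otimes\varepsilon)=\alpha\otimes\varepsilon=(e_i\otimes\varepsilon)(\alpha\otimes 1)$, i.e.\ $\alpha\varepsilon_j=\varepsilon_i\alpha$, for $\alpha\colon j\to i$. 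These are precisely the generators and relations of $(\ov Q,\ov I)$ found above, so $\Lambda^\imath\cong\K Q\otimes R_1$.

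The same fact can be seen more conceptually: by Remark~\ref{rem:action Lambda}, for $\btau=\Id$ the involution $\btau^\sharp$ of $\Lambda=\K Q\otimes R_2$ equals $\Id_{\K Q}\otimes\sigma$, where $\sigma$ interchanges $e_0\leftrightarrow e_1$ and $\varepsilon\leftrightarrow\varepsilon'$ in $R_2$; since the paths of $Q$ form a $\K$-basis of $\K Q$, taking $\btau^\sharp$-fixed points leaves the $\K Q$-factor untouched, giving $\Lambda^\imath=\K Q\otimes R_2^{\sigma}$, and $R_2^{\sigma}$ is the subalgebra spanned by $e_0+e_1$ and $u:=\varepsilon+\varepsilon'$ with $(\varepsilon+\varepsilon')^2=\varepsilon\varepsilon'+\varepsilon'\varepsilon=0$, hence $R_2^{\sigma}\cong\K[u]/(u^2)=R_1$. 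There is no serious obstacle in either route: the corollary is a direct specialization, and the only points needing care are the routine presentation of $\K Q\otimes R_1$ (equivalently, that $R_2^{\sigma}$ is a subalgebra with $(\varepsilon+\varepsilon')^2=0$, valid in every characteristic, including $2$) and the bookkeeping that a loop is now attached at every vertex because $\btau i=i$ for all $i$.
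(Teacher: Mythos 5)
Your proposal is correct and matches the paper's (unwritten) argument: the corollary is stated there as an immediate consequence of Proposition~\ref{prop:invariant subalgebra}, obtained exactly as in your first paragraph by specializing the bound quiver $(\ov Q,\ov I)$ to $\btau=\Id$ and recognizing it as the standard presentation of $\K Q\otimes R_1$. Your alternative fixed-point computation $\Lambda^\imath=\K Q\otimes R_2^{\sigma}$ with $R_2^{\sigma}\cong R_1$ is a valid, slightly more conceptual variant, but both routes are essentially the same specialization.
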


\begin{example}\label{example 2}
(a) We continue Example \ref{example 2-cyclic complex of An}(a), with $\btau=\Id$. Then $\Lambda^\imath$ is isomorphic to the path algebra of the following
quiver $\ov{Q}$ with relations:
\begin{center}\setlength{\unitlength}{0.7mm}
 \begin{picture}(50,13)(0,0)
\put(0,-2){$1$}
\put(4,0){\vector(1,0){14}}
\put(10,0){$^{\alpha}$}
\put(20,-2){$2$}
\color{purple}
\put(0,9){\small $\varepsilon_1$}
\put(20,9){\small $\varepsilon_2$}
\qbezier(-1,1)(-3,3)(-2,5.5)
\qbezier(-2,5.5)(1,9)(4,5.5)
\qbezier(4,5.5)(5,3)(3,1)
\put(3.1,1.4){\vector(-1,-1){0.3}}
\qbezier(19,1)(17,3)(18,5.5)
\qbezier(18,5.5)(21,9)(24,5.5)
\qbezier(24,5.5)(25,3)(23,1)
\put(23.1,1.4){\vector(-1,-1){0.3}}
\end{picture}
\vspace{0.2cm}
\end{center}
\[
\varepsilon_1^2=0=\varepsilon_2^2, \quad \varepsilon_2 \alpha=\alpha\varepsilon_1.
\]
%Indeed, $\Lambda^\imath$ is isomorphic to $\K Q\otimes_\K R_1$.

(b) We continue Example \ref{example 2-cyclic complex of An}(b), with $\btau$ being the nontrivial involution of $Q$.
Then $\Lambda^\imath$ is isomorphic to the path algebra of the following
quiver $\ov{Q}$ with relations:
\begin{center}\setlength{\unitlength}{0.7mm}
 \begin{picture}(50,20)(0,-10)
\put(0,-2){$1$}
\put(20,-2){$3$}
\put(2,-11){$_{\alpha}$}
\put(17,-11){$_{\beta}$}
\put(2,-2){\vector(1,-2){8}}
\put(20,-2){\vector(-1,-2){8}}
\put(9.5,-22.5){$2$}
\color{purple}
\put(3,1){\vector(1,0){16}}
\put(19,-1){\vector(-1,0){16}}
\put(10,3){\small ${\varepsilon_1}$}
\put(10,-5){\small ${\varepsilon_3}$}
\put(10,-30){\small ${\varepsilon_2}$}
\begin{picture}(50,23)(-10,19)
\color{purple}
\qbezier(-1,-1)(-3,-3)(-2,-5.5)
\qbezier(-2,-5.5)(1,-9)(4,-5.5)
\qbezier(4,-5.5)(5,-3)(3,-1)
\put(3.1,-1.4){\vector(-1,1){0.3}}
\end{picture}
\end{picture}
\vspace{1.4cm}
\end{center}
\[
\varepsilon_1\varepsilon_3=0=\varepsilon_3\varepsilon_1,
\quad
 \varepsilon_2^2=0,
 \quad
 \varepsilon_2 \beta=\alpha\varepsilon_3,
 \quad
 \varepsilon_2 \alpha=\beta\varepsilon_1.
\]

(c) We continue Example \ref{example 2-cyclic complex of An}(c) with $\btau =\Id$. Then, $\Lambda^\imath\cong R_1\times R_1$.
\end{example}

\begin{example}
    \label{example diagonal 2}
{\rm ($\imath$quiver of diagonal type)}
Continuing Example \ref{example diagonal}, we let $\rm{swap}$ be the involution of $Q^{\rm dbl}$ uniquely determined by $\swa(i)=i^\diamond$ for any $i\in Q_0$. % (by viewing $Q$ and $Q^\diamond$ as subquivers of $Q^{\dbl}$).
Then $(\Lambda^{\dbl})^\imath$ is isomorphic to $\Lambda$. Explicitly, let $(\ov{Q}^{\dbl},\ov{I}^{\dbl})$ be the bound quiver of $(\Lambda^{\dbl})^\imath$. Then $(\ov{Q}^{\dbl},\ov{I}^{\dbl})$ coincides with the double $\imath$quiver $(Q^{\sharp},I^{\sharp})$. So we just use $(Q^{\sharp},I^{\sharp})$ as the bound quiver of $(\Lambda^{\dbl})^\imath$ and identify $(\Lambda^{\dbl})^\imath$ with $\Lambda$.
\end{example}

From Proposition \ref{prop:invariant subalgebra}, every $\Lambda^{\imath}$-module corresponds to a representation of the bound quiver $(\ov{Q},\ov{I})$, and $\mod(\Lambda^{\imath})$ is isomorphic to the category $\rep_\K(\ov{Q},\ov{I})$ of representations of $(\ov{Q},\ov{I})$ over $\K$. Throughout this paper, we always identify these two categories.

\begin{remark}
   \label{rem:pushdown functor}
By the proof of Proposition~\ref{prop:invariant subalgebra}, there is a Galois covering $\pi: (Q^{\sharp},I^{\sharp})\rightarrow (\ov{Q},\ov{I})$. We also use $\pi: \Lambda\rightarrow \Lambda^\imath$ to denote this Galois covering.
Hence we obtain a pushdown functor \cite{Ga}
\begin{align}  \label{eq:pi}
\Pd:\mod (\Lambda) \longrightarrow \mod (\Lambda^{\imath}).
\end{align}
In particular, $\Pd$ preserves projective modules (also injective modules) and the almost split sequences. However, $\Pd$ may not be dense in general.

There exists an action of the cyclic group $\langle {\btau}^{\sharp} \rangle$ %=\langle g\mid g^2=1\rangle$
on the module category $\mod(\Lambda)$ induced by the involution ${\btau}^{\sharp}$ of $\Lambda$ in Lemma~\ref{lem:tildetheta}.
Denote by $\mod^{\langle {\btau}^{\sharp} \rangle}(\Lambda)$ the subcategory of $\mod(\Lambda)$
formed by the $\langle {\btau}^{\sharp} \rangle$-invariant modules, see \cite[Page 94]{Ga}.  %or see \cite[Definition 6.1]{As}.
Then the pullup functor $\pi^*: \mod(\Lambda^\imath)\rightarrow \mod(\Lambda)$ induces an equivalence $\mod(\Lambda^\imath)\simeq \mod^{\langle {\btau}^{\sharp} \rangle}(\Lambda)$.
%, see \cite[Theorem 6.2]{As} or \cite[Theorem 4.3]{CM}.
\end{remark}

By Proposition \ref{prop:invariant subalgebra}, $\Lambda^{\imath}$ is a non-negatively graded algebra when equipped with a principal grading $|\cdot |$ by
\[
|\varepsilon_i|=1,
\qquad
|\alpha| =0
\]
for each $i\in \I$ and each arrow $\alpha$ in $Q\subseteq \ov{Q}$.
Then $\Lambda^\imath=\bigoplus_{\ell\in\N}\Lambda^\imath_\ell$, where $\Lambda^{\imath}_\ell$ is the degree $\ell$ subspace of $\Lambda^{\imath}$.
From Proposition \ref{prop:invariant subalgebra}, we obtain the following.

\begin{corollary}
  \label{cor:subquotient}
We have $\Lambda^\imath=\Lambda^{\imath}_0 \bigoplus \Lambda^\imath_1$, where $\Lambda^{\imath}_0= \K Q$. In particular $\K Q$ is naturally a subalgebra and also a quotient algebra of $\Lambda^\imath$.
\end{corollary}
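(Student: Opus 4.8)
\textbf{Proof plan for Corollary~\ref{cor:subquotient}.}
The plan is to read off everything directly from the bound-quiver description $\Lambda^\imath \cong \K\ov Q/\ov I$ of Proposition~\ref{prop:invariant subalgebra}, using the principal grading $|\cdot|$ defined just above the statement. First I would observe that the grading $|\cdot|$ is well defined on $\K\ov Q/\ov I$: the generators $\ov I(1)$ (the nilpotent relations $\varepsilon_i\varepsilon_{\btau i}$) are homogeneous of degree $2$, and the generators $\ov I(2)$ (the commutative relations $\varepsilon_i\alpha-\btau(\alpha)\varepsilon_j$) are homogeneous of degree $1$; hence $\ov I$ is a graded ideal and the quotient inherits an $\N$-grading. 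Since every arrow of $\ov Q$ has degree $0$ (the arrows $\alpha$ of $Q$) or degree $1$ (the loops/arrows $\varepsilon_i$), and any path through two $\varepsilon$'s lies in $\ov I$ by the nilpotent relations, every path in $\ov Q$ is congruent modulo $\ov I$ to a path of degree $0$ or $1$. This gives $\Lambda^\imath = \Lambda^\imath_0 \oplus \Lambda^\imath_1$ with no higher components.

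Next I would identify $\Lambda^\imath_0$. By construction $\Lambda^\imath_0$ is spanned by the (images of) paths in $\ov Q$ involving no $\varepsilon$-arrows, i.e.\ by the paths lying entirely in the subquiver $Q\subseteq \ov Q$; moreover none of the generators of $\ov I$ lies in this degree-$0$ part (they all involve an $\varepsilon$), and $Q$ is acyclic so $\K Q$ has the paths of $Q$ as a basis with no relations. Hence $\Lambda^\imath_0 \cong \K Q$ as algebras, and the inclusion $\K Q \hookrightarrow \Lambda^\imath$ of the degree-$0$ part exhibits $\K Q$ as a subalgebra. For the quotient statement, note that $\Lambda^\imath_{\geq 1} = \Lambda^\imath_1$ is a two-sided ideal (being the positive-degree part of a graded algebra that is concentrated in degrees $0$ and $1$, so $\Lambda^\imath_1 \cdot \Lambda^\imath_1 \subseteq \Lambda^\imath_2 = 0$), and the quotient $\Lambda^\imath/\Lambda^\imath_1 \cong \Lambda^\imath_0 \cong \K Q$.

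There is essentially no obstacle here; the only point requiring a line of care is the verification that $\ov I$ is a \emph{graded} ideal, i.e.\ that the listed generators are homogeneous for $|\cdot|$ — which is immediate from their explicit form in Proposition~\ref{prop:invariant subalgebra} — together with the remark that $\Lambda^\imath_1$ squares to zero so that it is automatically an ideal. Everything else is bookkeeping with paths in $\ov Q$.
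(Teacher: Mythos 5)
Your proposal is correct and follows exactly the route the paper intends: the corollary is stated as an immediate consequence of Proposition~\ref{prop:invariant subalgebra} via the principal grading $|\varepsilon_i|=1$, $|\alpha|=0$, and your write-up simply makes explicit the homogeneity of the generators of $\ov I$, the vanishing of degrees $\geq 2$, and the identification $\Lambda^\imath_0\cong \K Q$. The only phrase worth tightening is ``any path through two $\varepsilon$'s lies in $\ov I$ by the nilpotent relations'': when the two $\varepsilon$-arrows are not adjacent one must first use the commutative relations to bring them together (e.g.\ $\varepsilon_2\alpha\varepsilon_1=\alpha\varepsilon_1\varepsilon_1=0$ in Example~\ref{example 2}(a)), which is exactly what your ``congruent modulo $\ov I$'' formulation already permits.
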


Viewing $\K Q$ as a subalgebra of $\Lambda^{\imath}$, we have a restriction functor
\[
\res: \mod (\Lambda^{\imath})\longrightarrow \mod (\K Q);
\]
viewing $\K Q$ as a quotient algebra of $\Lambda^{\imath}$, we obtain a pullback functor
\begin{equation}\label{eqn:rigt adjoint}
\iota:\mod(\K Q)\longrightarrow\mod(\Lambda^{\imath}).
\end{equation}

\begin{lemma}\label{lemma restriction functor}
The restriction functor $\res: \mod (\Lambda^{\imath})\rightarrow \mod (\K Q)$ is faithful and dense.
\end{lemma}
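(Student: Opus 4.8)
The plan is to unwind both claims directly from the description of $\Lambda^\imath$ as the bound quiver algebra $\K\ov Q/\ov I$ given in Proposition~\ref{prop:invariant subalgebra}, using the identification $\mod(\Lambda^\imath)\simeq\rep_\K(\ov Q,\ov I)$ and the fact that $\K Q$ is the subalgebra of $\Lambda^\imath$ generated by the vertices and the arrows $\alpha\in Q_1$ (Corollary~\ref{cor:subquotient}). Under these identifications the restriction functor $\res$ has a very concrete form: a representation $M$ of $(\ov Q,\ov I)$ consists of vector spaces $M_i$ for $i\in Q_0$ together with linear maps $M_\alpha$ for $\alpha\in Q_1$ and $M_{\varepsilon_i}$ for $i\in\I$ (satisfying the nilpotent and commutative relations), and $\res(M)$ simply forgets the maps $M_{\varepsilon_i}$, retaining the underlying representation of $Q$; on morphisms $\res$ forgets the compatibility with the $\varepsilon_i$'s and keeps the rest.

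\textit{Faithfulness.} First I would check $\res$ is faithful. A morphism $f\colon M\to N$ in $\mod(\Lambda^\imath)$ is, under the representation picture, a tuple $(f_i)_{i\in Q_0}$ of linear maps commuting with all the structure maps $M_\alpha$ and $M_{\varepsilon_i}$. The functor $\res$ sends $f$ to the same underlying tuple $(f_i)_{i\in Q_0}$, now viewed only as a morphism of $Q$-representations. Since the tuple itself is not changed, $\res(f)=0$ forces $f_i=0$ for all $i$, hence $f=0$. So $\res$ is faithful (indeed it is injective on Hom-sets, not merely faithful). This step is essentially immediate.

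\textit{Density.} The substantive point is density: given an arbitrary representation $V=(V_i,V_\alpha)$ of $Q$, I must produce a $\Lambda^\imath$-module $M$ with $\res(M)\cong V$. The idea is to take $M$ with the same underlying spaces $M_i=V_i$ and the same maps $M_\alpha=V_\alpha$ for $\alpha\in Q_1$, and to set all $M_{\varepsilon_i}=0$. One then verifies that this defines a legitimate representation of $(\ov Q,\ov I)$: the nilpotent relations $\varepsilon_i\varepsilon_{\btau i}=0$ hold trivially because each $M_{\varepsilon_i}=0$, and the commutative relations $\varepsilon_i\alpha-\btau(\alpha)\varepsilon_j=0$ hold because both sides act as $0$. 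Thus $M\in\mod(\Lambda^\imath)$, and by construction $\res(M)=V$ on the nose. This shows $\res$ is dense. (Equivalently, one may phrase this as: $M=\iota(V)$ where $\iota$ is the pullback along the quotient $\Lambda^\imath\twoheadrightarrow\K Q$ of Corollary~\ref{cor:subquotient}, and $\res\circ\iota=\mathrm{Id}$ since $\K Q\hookrightarrow\Lambda^\imath\twoheadrightarrow\K Q$ is the identity; density of $\res$ then follows from the fact that it has a section.)

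I expect no real obstacle here: both parts are formal consequences of the bound-quiver description. The only point requiring a small check is that setting $M_{\varepsilon_i}=0$ is compatible with $\ov I$, which is trivial. It is worth remarking that $\res$ is \emph{not} full in general (the extra data of the $M_{\varepsilon_i}$ allows morphisms in $\mod(\K Q)$ that do not lift), which is why the statement asserts only faithfulness and density; this is also consistent with the observed inclusion $\Iso(\mod(\K Q))\subseteq\Iso(\mod(\Lambda^\imath))$ via $\iota$ rather than an equivalence of categories.
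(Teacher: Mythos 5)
Your proposal is correct and follows the same route as the paper: faithfulness is the (routine) observation that $\res$ does not change the underlying linear map of a morphism, and density is exactly the paper's argument that $\res\circ\iota\simeq\Id$, which you also unfold concretely by extending a $Q$-representation by zero $\varepsilon$-maps. The extra remarks (injectivity on Hom-sets, non-fullness) are accurate but not needed.
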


\begin{proof}
It is routine to prove that $\res$ is faithful, which will be omitted here. Moreover, $\res \circ \iota\simeq \Id$, that is, $\res$ is dense.
\end{proof}

We shall always identify $\mod (\K Q)$ with the full subcategory of $\mod(\Lambda^{\imath})$ by applying the natural embedding $\iota$, and denote by $\mod(kQ)\subseteq \mod(\Lambda^\imath)$.

\subsection{$\Lambda^{\imath}$ as a tensor algebra}\label{subsection:tensor algebra}

First, let us recall the definition of tensor algebras. Let $A$ be a $\K$-algebra, $M=\,_AM_A$ be an $A$-bimodule which is finitely generated on both sides. Write $M^{\otimes_A0}=k$ and $M^{\otimes_A (j+1)}=M\otimes_A(M^{\otimes_{_A} j})$ for $j\geq 0$. Denote by
\[
T_A(M)=\bigoplus_{j=0}^\infty M^{\otimes_{A} j}
\]
the tensor algebra. We shall assume $M$ is {\em nilpotent}, that is, there exists $N>0$ such that $M^{\otimes_A j}=0$ for any $j>N$.

Following \cite[Section 1]{BSZ}, Geiss, Leclerc and Schr\"{o}er \cite{GLS} give a criterion for a path algebra to be isomorphic to a tensor algebra, which we shall recall. Let $Q$ be a finite quiver, and let $w:Q_1\rightarrow \{0,1\}$ be a map assigning to each arrow of $Q$ a degree. Then $\K Q$ is a non-negatively graded algebra. Let $r_1,\dots,r_m$ be a set of relations for $\K Q$ which are homogeneous with respect to this grading. Suppose that there is some $1\leq l\leq m$ such that
$\deg(r_i)=0$ for $1\leq i\leq l$ and $\deg(r_j)=1$ for $l+1\leq j\leq m$.
Let $A:=\K Q/I$ where $I$ is the ideal generated by $r_1,\dots,r_m$. By assumption, $A$ is non-negatively graded. The  subspace $A_\ell$ of elements with degree $\ell$ is naturally an $A_0$-bimodule.
\begin{lemma}[\cite{BSZ,GLS}]
   \label{lemma criterion of tensor algebra}
The algebra $A$ is isomorphic to the tensor algebra $T_{A_0}(A_1)$.
\end{lemma}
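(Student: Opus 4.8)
\textbf{Proof plan for Lemma~\ref{lemma criterion of tensor algebra}.}
The plan is to exhibit a mutually inverse pair of graded algebra homomorphisms between $A = \K Q/I$ and the tensor algebra $T_{A_0}(A_1)$. First I would set up the two structures on a common footing: since $I$ is generated by elements that are homogeneous of degree $0$ or $1$ with respect to the weight function $w$, the quotient $A$ inherits the $\N$-grading $A = \bigoplus_{i \geq 0} A_i$ from $\K Q$, and the degree-$i$ piece $A_i$ is an $A_0$-bimodule; in particular $A_0 = \K Q_0 / (r_1,\dots,r_l)$ (the degree-$0$ relations) and $A_1$ is the $A_0$-bimodule generated by the degree-$1$ arrows modulo the degree-$1$ relations $r_{l+1},\dots,r_m$ together with the degree-$0$ relations acting on both sides. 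On the tensor algebra side, $T_{A_0}(A_1) = \bigoplus_{j \geq 0} A_1^{\otimes_{A_0} j}$ is also $\N$-graded with degree-$j$ piece $A_1^{\otimes_{A_0} j}$.

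The key step is to construct the homomorphism $\Phi: T_{A_0}(A_1) \to A$. The inclusions $A_0 \hookrightarrow A$ and $A_1 \hookrightarrow A$ are maps of $A_0$-bimodules, and the multiplication of $A$ restricts to an $A_0$-bilinear map $A_1 \times A_1 \to A_2 \subseteq A$ (and more generally to $A_1 \times A_j \to A_{j+1}$); by the universal property of the tensor algebra this induces a graded algebra homomorphism $\Phi: T_{A_0}(A_1) \to A$ sending $A_1^{\otimes_{A_0} j} \to A_j$ via iterated multiplication. For the inverse, I would build $\Psi: A \to T_{A_0}(A_1)$ directly from the presentation: define a $\K$-algebra map $\widetilde\Psi: \K Q \to T_{A_0}(A_1)$ by sending a degree-$0$ arrow to its class in $A_0$ and a degree-$1$ arrow to its class in $A_1$ (using that both sit inside $T_{A_0}(A_1)$), check that $\widetilde\Psi$ kills each generator $r_i$ of $I$ — the degree-$0$ relations vanish because they already vanish in $A_0$, and the degree-$1$ relations vanish because they vanish in $A_1$ by construction — so $\widetilde\Psi$ descends to $\Psi: A \to T_{A_0}(A_1)$. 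Finally one verifies $\Phi\Psi = \Id$ and $\Psi\Phi = \Id$ by checking on the algebra generators ($A_0$ and the arrow classes in $A_1$), which is immediate from the definitions.

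The main obstacle I anticipate is not the formal bookkeeping of $\Phi$ and $\Psi$, but making precise the claim that $A_1$ — as defined intrinsically as the degree-$1$ component of the quotient $A$ — is exactly the $A_0$-bimodule presented by the degree-$1$ arrows modulo the degree-$1$ relations; i.e.\ that the degree-$1$ relations $r_{l+1},\dots,r_m$ capture \emph{all} relations among degree-$1$ paths in $A$, and that no relations among longer paths are forced beyond those generated in degrees $0$ and $1$. This is where homogeneity of the generators of $I$ is essential: because every generator of $I$ lies in degree $\leq 1$, the ideal $I$ decomposes along the grading so that $I \cap (\K Q)_j$ for $j \geq 2$ is spanned by elements of the form (path)$\cdot r_i \cdot$(path) with $\deg r_i \leq 1$, which is precisely what one needs to identify $A_j$ with $A_1^{\otimes_{A_0} j}$. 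I would isolate this as the one genuine verification and note that it follows from the general results of \cite{BSZ} on graded path algebras, which is why the lemma is attributed to \cite{BSZ,GLS}.
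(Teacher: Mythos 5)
The paper itself gives no proof of this lemma: it is quoted from \cite{BSZ,GLS} as a known criterion, so there is no in-paper argument to compare against. Your proof is correct and self-contained, and it is the standard one: $\Phi\colon T_{A_0}(A_1)\to A$ exists by the universal property of the tensor algebra applied to the subalgebra $A_0\subseteq A$ and the $A_0$-sub-bimodule $A_1\subseteq A$, it is surjective because every arrow of $Q$ has degree $0$ or $1$ so $A$ is generated by $A_0$ and $A_1$, and injectivity is supplied by the explicit inverse $\Psi$.

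One remark on your closing paragraph. The verification you isolate as the ``one genuine obstacle'' --- that $A_j\cong A_1^{\otimes_{A_0}j}$, i.e.\ that $I\cap(\K Q)_j$ contains no relations beyond those generated in degrees $0$ and $1$ --- is not an input to your argument but a consequence of it, so you need not defer to \cite{BSZ} for it. The only place the homogeneity hypothesis is genuinely used is in the well-definedness of $\Psi$: each generator $r_i$ of $I$ is homogeneous of degree $0$ or $1$, hence $\widetilde\Psi(r_i)$ is exactly the class of $r_i$ in $A_0$ (resp.\ $A_1$), which vanishes because $r_i\in I$; and, crucially, there is no generator of degree $\geq 2$, whose image in $A_1^{\otimes_{A_0}j}$ with $j\geq 2$ would not be forced to vanish. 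Once $\Psi$ is defined, $\Phi\Psi=\Id$ and $\Psi\Phi=\Id$ follow by checking on algebra generators (vertices and arrows on one side, $A_0$ and $A_1$ on the other), and the graded identification $A_j\cong A_1^{\otimes_{A_0}j}$ drops out. So your proof is complete as written; the anxiety in the final paragraph can simply be deleted.
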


Let $\mathcal{A}$ be an additive category with an additive endofunctor $F\colon \mathcal{A}\rightarrow \mathcal{A}$. By a \emph{representation} of $F$, we mean a pair $(X, u)$ with $X$ an object and $u\colon F(X)\rightarrow X$ a morphism in $\mathcal{A}$. A morphism $f\colon (X, u)\rightarrow (Y, v)$ between two representations is a morphism $f\colon X\rightarrow Y$  in $\mathcal{A}$ satisfying $f\circ u=v\circ F(f)$. This defines the category ${\rm rep}(F)$ of representations of $F$.

There is an isomorphism of categories
\begin{align}\label{equ:iso-cat}
{\rm rep}(M\otimes_A-)\stackrel{\simeq}\longrightarrow \mod(T_A(M)),
\end{align}
 which identifies a representation $(X, u)$ of $M\otimes_A-$ with  a left $T_A(M)$-module $X$ such that $m\cdot x=u(m\otimes x)$ for $m\in M$ and $x\in X$. We always identify these two categories in the following.

Now back to our setting, let $(Q, \btau)$ be an $\imath$quiver, and $\Lambda^\imath=\K\ov{Q}/\ov{I}$ with $(\ov{Q}, \ov{I})$ being defined in Proposition \ref{prop:invariant subalgebra}.
For each $i\in Q_0$, define a $k$-algebra
\begin{align}\label{dfn:Hi}
\BH _i:=\left\{ \begin{array}{cc}  \K[\varepsilon_i]/(\varepsilon_i^2) & \text{ if }\btau i=i,
 \\
\K(\xymatrix{i \ar@<0.5ex>[r]^{\varepsilon_i} & \btau i \ar@<0.5ex>[l]^{\varepsilon_{\btau i}}})/( \varepsilon_i\varepsilon_{\btau i},\varepsilon_{\btau i}\varepsilon_i)  &\text{ if } \btau i \neq i .\end{array}\right.
\end{align}
Note that $\BH _i=\BH _{\btau i}$ for any $i\in Q_0$. %Recall $\ci$ from \eqref{eq:ci}.
Choose one representative for each $\btau$-orbit on $\I$, and let
\begin{align}   \label{eq:ci}
\ci = \{ \text{the chosen representatives of $\btau$-orbits in $\I$} \}.
\end{align}
Define the following subalgebra of $\Lambda^{\imath}$:
\begin{equation}  \label{eq:H}
\BH =\bigoplus_{i\in \ci }\BH _i.
\end{equation}
Note that $\BH $ is a radical square zero selfinjective algebra. Denote by
\begin{align}
\res_\BH :\mod(\Lambda^\imath)\longrightarrow \mod(\BH )
\end{align}
the natural restriction functor.

Define
\[
\Omega:=\Omega(Q) =\{(i,j) \in Q_0\times Q_0\mid  \exists (\alpha:i\rightarrow j)\in Q_1\}.
\]
Then $\Omega$ represents the orientation of $Q$. Since $Q$ is acyclic, if $(i,j)\in\Omega$, then $(j,i)\notin \Omega$.
We also use $\Omega(i,-)$ to denote the subset $\{j\in Q_0 \mid \exists (\alpha:i\rightarrow j)\in Q_1\}$, and $\Omega(-,i)$ is defined similarly.

For any $(i,j)\in \Omega$, we define
\begin{equation}  \label{eq:jHi}
{}{}_j\BH_i := \BH _j\Span_k\{\alpha,\btau\alpha\mid(\alpha:i\rightarrow j)\in Q_1\text{ or } (\alpha:i\rightarrow \btau j)\in Q_1\}\BH _i.
\end{equation}
Note that ${}{}_j\BH_i ={}_{\btau j} \BH _{\btau i}={}_{j} \BH _{\btau i}={}_{\btau j} \BH _{i}$ for any $(i,j)\in \Omega$.

We describe a basis of ${}_j\BH_i $ (as $\K$-linear space) for each $(i,j)\in\Omega$ by separating into 2 cases (i)-(ii):
\begin{itemize}
\item[(i)] $\btau i=i$ and $\btau j=j$. Then $\{\alpha,\alpha\varepsilon_i\mid (\alpha:i\rightarrow j)\in Q_1\}$ forms a basis of ${}_j\BH_i $.

\item[(ii)]  $\btau i\neq i$ or $\btau j\neq j$. Then
\begin{align*}
&\{\alpha,\btau\alpha,\varepsilon_j\alpha =\btau\alpha \varepsilon_i, \varepsilon_{\btau j} \btau\alpha =\alpha \varepsilon_{\btau i}\mid (\alpha:i\rightarrow j)\in Q_1\}
\\ \cup &
\{\alpha,\btau\alpha,\varepsilon_{\btau j}\alpha =\btau\alpha \varepsilon_i, \varepsilon_{j} \btau\alpha =\alpha \varepsilon_{\btau i}\mid (\alpha:i\rightarrow \btau j)\in Q_1\}
\end{align*}
forms a basis of ${}_j\BH_i $.
%\item[(iii)] $\btau i\neq i$ and $\btau j= j$. Then $\alpha \varepsilon_{\btau i}=\varepsilon_j\btau(\alpha)$, $\btau(\alpha)\varepsilon_{i} =\varepsilon_j\alpha$ for any $(\alpha:i\rightarrow j)\in Q_1$, which yields that
%$\{\alpha,\btau(\alpha),\varepsilon_j\alpha,\varepsilon_j \btau(\alpha)\mid (\alpha:i\rightarrow j)\in Q_1\}$ is a basis of ${}_j\BH_i $.

%\item[(iv)] $\btau i\neq i$ and $\btau j= j$. Then $\varepsilon_j\alpha =\btau(\alpha)\varepsilon_i$, $\varepsilon_{\btau j} \btau(\alpha)=\alpha\varepsilon_{\btau i}$ for any $(\alpha:i\rightarrow j)\in Q_1$, which yields that
%$\{\alpha,\btau(\alpha),\varepsilon_j\alpha, \alpha \varepsilon_{\btau i}\mid (\alpha:i\rightarrow j)\in Q_1\}$ is a basis of ${}_j\BH_i $.
\end{itemize}
So ${}_j\BH_i $ is an $\BH _j\mbox{-}\BH _i$-bimodule, which is free as a left $\BH _j$-module and free as a right $\BH _i$-module. In particular, for $(i,j)\in \Omega$,
define
\begin{eqnarray}\label{basis of Hij left}
_j\LL_i&=&\left\{ \begin{array}{cc}
\{\alpha \mid (\alpha:i\rightarrow j)\in Q_1\} & \text{ if }\btau i=i, \btau j=j,\\
\{\alpha+\btau\alpha \mid (\alpha:i\rightarrow j)\in Q_1\} & \text{ if }\btau i=i, \btau j\neq j,\\
\{\alpha,\btau \alpha \mid (\alpha:i\rightarrow j)\in Q_1\} & \text{ if }\btau i\neq i,\btau j=j,\\
\{\alpha+\btau \alpha \mid (\alpha:i\rightarrow j) \text{ or }(\alpha:i\rightarrow \btau j)\in Q_1\} & \text{ if }\btau i\neq i,\btau j\neq j;\label{eqn:basis of L}
\end{array}\right.\\
_j\RR_i&=& \left\{ \begin{array}{cc}\label{basis of Hij right}
\{\alpha \mid (\alpha:i\rightarrow j)\in Q_1\} & \text{ if }\btau i=i,  \btau j=j,\\
\{\alpha,\btau \alpha \mid (\alpha:i\rightarrow j)\in Q_1\} & \text{ if }\btau i=i, \btau j\neq j,\\
\{\alpha+\btau \alpha \mid (\alpha:i\rightarrow j)\in Q_1\} & \text{ if }\btau i\neq i,\btau j=j,\\
\{\alpha+\btau \alpha \mid (\alpha:i\rightarrow j) \text{ or }(\alpha:i\rightarrow \btau j)\in Q_1\} & \text{ if }\btau i\neq i,\btau j\neq j.\end{array}\right. \label{eqn:basis of R}
\end{eqnarray}
Then $_j\LL_i$ (respectively, $_j\RR_i$) is a basis of ${}_j\BH_i $ as a left $\BH _j$-modules (respectively, as a right $\BH _i$-modules).

Denote
\begin{equation}  \label{eq:ovOmega}
\overline{\Omega}:=\{(i,j)\in \ci \times \ci \mid (i,j)\in\Omega\text{ or }(i,\btau j)\in\Omega\},
\end{equation}
and define the following $\BH $-$\BH $-bimodule
\begin{equation}
   \label{eq:MHH}
   M:=\bigoplus_{(i,j)\in\ov{\Omega}} {}_j\BH_i .
\end{equation}

\begin{proposition}
    \label{prop:tensor algebra}
The algebra $\La^\imath$ is isomorphic to the tensor algebra $T_\BH (M)$: $\La^{\imath}\cong T_\BH (M)$.
\end{proposition}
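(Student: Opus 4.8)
The plan is to invoke the criterion for a bound path algebra to be a tensor algebra (Lemma~\ref{lemma criterion of tensor algebra}, \`a la \cite{BSZ,GLS}) applied to the enriched bound quiver $(\ov Q, \ov I)$ furnished by Proposition~\ref{prop:invariant subalgebra}, together with the identification of the degree-zero and degree-one pieces of $\Lambda^\imath$ under the principal grading $|\cdot|$ introduced after Remark~\ref{rem:pushdown functor}. Recall that $|\varepsilon_i|=1$ and $|\alpha|=0$, that the commutative relations $\varepsilon_i\alpha - \btau(\alpha)\varepsilon_j$ are homogeneous of degree $1$, and that the nilpotent relations $\varepsilon_i\varepsilon_{\btau i}$ are homogeneous of degree $2$. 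The subtlety is that the degree-$2$ relations are not literally allowed in the hypotheses of Lemma~\ref{lemma criterion of tensor algebra}, but this is harmless: one absorbs them by working relative to $\BH=\bigoplus_{i\in\ci}\BH_i$ rather than relative to $\K\I$, i.e.\ we think of $\BH$ as the "degree-zero-in-$\varepsilon$ part that already knows $\varepsilon_i^2=0$ and $\varepsilon_i\varepsilon_{\btau i}=0$." Concretely, $\Lambda^\imath$ is the quotient of the free tensor algebra over $\BH$ on the $\BH$-bimodule freely generated by the arrows $\alpha\in Q_1$ (and their $\btau$-images), modulo only the commutative relations, which are exactly the relations identifying $M=\bigoplus_{(i,j)\in\ov\Omega}{}_j\BH_i$ as computed in \eqref{eq:jHi}--\eqref{eq:MHH}.

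The steps I would carry out, in order. First, identify $\Lambda^\imath_0$: by Corollary~\ref{cor:subquotient}, $\Lambda^\imath_0 = \K Q$, but for the tensor-algebra description over $\BH$ I instead want to record that $\BH\subseteq\Lambda^\imath$ is the subalgebra generated by the vertices $e_i$ ($i\in\I$) and the loops/arrows $\varepsilon_i$, which by \eqref{dfn:Hi} and \eqref{eq:H} is $\bigoplus_{i\in\ci}\BH_i$, a radical-square-zero selfinjective algebra. Second, verify that $M$ is nilpotent as a $\BH$-bimodule: since $Q$ is acyclic and $\ov\Omega$ records its orientation, a tensor product ${}_{j_k}\BH_{j_{k-1}}\otimes_\BH\cdots\otimes_\BH{}_{j_1}\BH_{j_0}$ of length exceeding the longest path in $Q$ vanishes, so $M^{\otimes_\BH N}=0$ for $N$ larger than that. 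Third, construct the comparison map: there is an algebra homomorphism $T_\BH(M)\to\Lambda^\imath$ sending $\BH\hookrightarrow\Lambda^\imath$ by inclusion and sending a basis element of ${}_j\BH_i$ (listed explicitly in cases (i)--(ii) before \eqref{basis of Hij left}) to the corresponding element of $\Lambda^\imath=\K\ov Q/\ov I$; this is well defined precisely because the defining commutative relations of $\ov I$ are built into the bimodule structure of ${}_j\BH_i$ (they are what makes $\varepsilon_j\alpha=\btau\alpha\,\varepsilon_i$ hold). Fourth, construct an inverse: the arrows of $\ov Q$ are exactly the arrows $\varepsilon_i$ (absorbed into $\BH$) and the arrows $\alpha$ of $Q$ (which generate $M$ over $\BH$), and the relations $\ov I$ are the nilpotent relations (absorbed into $\BH$) together with the commutative relations (absorbed into the bimodule structure of $M$); hence mapping $\K\ov Q\to T_\BH(M)$ in the evident way kills $\ov I$ and descends to $\Lambda^\imath\to T_\BH(M)$. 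Fifth, check the two composites are the identity, which is immediate on the generating arrows and vertices.

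The cleanest packaging is probably to phrase steps three through five via Lemma~\ref{lemma criterion of tensor algebra} directly: set $w(\varepsilon_i)=1$, $w(\alpha)=0$; then $\K\ov Q$ is positively graded, the relations of $\ov I$ have degrees $0$ and $1$ except for $\varepsilon_i\varepsilon_{\btau i}$ which has degree $2$; deal with the latter by first passing to the intermediate algebra $\K\ov Q/(\text{nilpotent relations})$ — equivalently, replace the ambient graded algebra by one whose degree-zero part is $\BH$ — and then Lemma~\ref{lemma criterion of tensor algebra} gives $\Lambda^\imath\cong T_{(\Lambda^\imath)_0'}((\Lambda^\imath)_1')$ where $(\Lambda^\imath)_0'=\BH$ and $(\Lambda^\imath)_1'$ is the span of the degree-one stuff, which one then identifies with $M$ by the explicit basis computation in (i)--(ii). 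I expect the main obstacle to be bookkeeping rather than anything conceptual: one must confirm that the degree-one component of $\Lambda^\imath$ over $\BH$ is exactly $M=\bigoplus_{(i,j)\in\ov\Omega}{}_j\BH_i$ and not larger, i.e.\ that no extra relations are forced and no basis elements are missed — this is precisely the case analysis already carried out in cases (i)--(ii) and in \eqref{basis of Hij left}--\eqref{eqn:basis of R}, so the proof reduces to citing that computation, noting $M$ is free on each side over $\BH$, checking nilpotency, and invoking Lemma~\ref{lemma criterion of tensor algebra}.
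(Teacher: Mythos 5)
Your proposal lands on the same key lemma and the same decomposition ($\BH$ in degree zero, $M$ in degree one) as the paper, and your fallback of constructing explicit mutually inverse homomorphisms would also work; but you make the argument harder than it needs to be by starting from the wrong grading. The principal grading $|\cdot|$ with $|\varepsilon_i|=1$, $|\alpha|=0$ has degree-zero part $\K Q$ (that is the content of Corollary~\ref{cor:subquotient}), so it can never produce $\BH$ as $A_0$, and the nilpotent relations sit in degree $2$, outside the hypotheses of Lemma~\ref{lemma criterion of tensor algebra} — you correctly flag both problems, but your fixes (``absorb into $\BH$,'' ``pass to an intermediate algebra,'' ``replace the ambient graded algebra by one whose degree-zero part is $\BH$'') are circumlocutions for what is actually a one-line move: use the \emph{complementary} grading $\deg(\varepsilon_i)=0$, $\deg(\alpha)=1$. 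Under that grading the nilpotent relations $\varepsilon_i\varepsilon_{\btau i}$ are homogeneous of degree $0$ and the commutative relations $\varepsilon_i\alpha-\btau(\alpha)\varepsilon_j$ of degree $1$, so Lemma~\ref{lemma criterion of tensor algebra} applies verbatim; the degree-zero subalgebra is exactly $\BH$ (vertices and $\varepsilon$'s modulo the nilpotent relations) and the degree-one component is exactly $M$ by the basis computation in (i)--(ii). This is precisely the paper's proof. Your version is not wrong — the direct comparison maps in your steps three through five are sound, and your final paragraph does gesture at the right regrading — but as written the ``cleanest packaging'' asserts $(\Lambda^\imath)_0'=\BH$ while still carrying the weights $w(\varepsilon_i)=1$, $w(\alpha)=0$, which is internally inconsistent; swap the weights and the whole workaround evaporates.
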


\begin{proof}
In this proof, we shall consider another non-negative grading of $\Lambda^\imath$ different from the one in Corollary~ \ref{cor:subquotient}. It follows by Proposition \ref{prop:invariant subalgebra} that the algebra $\Lambda^{\imath}$ admits a new non-negative grading by setting $\deg(\varepsilon_i)=0$ for each arrow $\varepsilon_i$ in $\ov{Q}$ and $\deg(\alpha)=1$ for each arrow $\alpha$ in $Q$ (viewed as a subquiver of $\ov{Q}$).

It follows by Proposition~ \ref{prop:invariant subalgebra} that the generators of $\ov{I}$ is homogeneous, and $\BH $ is the subalgebra of elements of degree zero, and $M$ is the subspace of elements of degree $1$. Now the assertion follows from Lemma \ref{lemma criterion of tensor algebra}.
\end{proof}

\subsection{Modulated graphs for $\imath$quivers}

In this section we generalize the notion of representation theory of \emph{modulated graphs} to the setting of $\imath$quivers. See  \cite{DR, Li, GLS} for details about representations of modulated graphs.

Recall from \eqref{eq:ci} that $\ci$ is a (fixed) subset of $Q_0$ formed by the representatives of all $\btau$-orbits.
The tuple $\{(\BH _i,{}_j\BH_i)\mid {i\in\ci ,(i,j)\in\ov{\Omega}}\}$ as defined in \eqref{dfn:Hi} and \eqref{eq:jHi} is called a \emph{modulation} of $(Q, \btau)$ and is denoted by $\cm(Q, \btau)$.
%Define $\Omega=\{(i,j)\in Q_0\times Q_0\mid\text{there exists } (\alpha:i\rightarrow j)\in Q_1\}$.

A representation $(N_i,N_{ji}):=(N_i,N_{ji})_{i\in\ci ,(i,j)\in\ov{\Omega}}$ of $\cm(Q, \btau)$ is defined by assigning to each $i\in \ci$ a finite-dimensional $\BH _i$-module $N_i$ and to each $(i,j)\in \overline{\Omega}$ an $\BH _j$-morphism
$N_{ji}:{}_j\BH_i \otimes_{\BH _i} N_i\rightarrow N_j$. A morphism $f:L\rightarrow N$ between representations $L=(L_i,L_{ji})$ and $N=(N_i,N_{ji})$ of $\cm(Q, \btau)$ is a tuple $f=(f_i)_{i\in \ci}$ of $\BH _i$-morphisms $f_i:L_i\rightarrow N_i$ such that the following diagram is commutative for each $(i,j)\in\overline{\Omega}$:
\[\xymatrix{_j\BH_i\otimes_{\BH _i} L_i \ar[r]^{1\otimes f_i} \ar[d]^{L_{ij}}&  _j\BH_i \otimes_{\BH _i} N_i\ar[d]^{N_{ij}}\\
 L_j\ar[r]^{f_j} & N_j}\]

Similar to the one-point extension of algebras, the representations of $\cm(Q, \btau)$ form an abelian category $\rep(\cm(Q, \btau))$.
Similar to \cite[Proposition 5.1]{GLS} (see also \cite[Theorem~ 3.2]{Li}), we shall show that $\rep(\cm(Q, \btau))$ is isomorphic to $\rep(\ov{Q},\ov{I})$.

For $(N_i,N_{ij})\in \rep(\cm(Q, \btau))$, we define a representation $(X_j,X(\alpha),X(\varepsilon_j))_{j\in Q_0,\alpha\in Q_1}$
of $\Lambda^{\imath}$ as follows. Since $N_i$ is an $\BH _i$-module, for any $j\in Q_0$, let $X_j=e_j N_j$ if $j\in\ci $ or $X_{j}=e_{j}N_{\btau j}$ otherwise.
Define a $\K$-linear map $X(\varepsilon_i):X_i\rightarrow X_j$ by letting
$X(\varepsilon_i)(x)=\varepsilon_i\cdot x$.
For any $(\alpha:i\rightarrow j)\in Q_1$,  then $(i,j)\in\Omega$. By the basis of $_j\BH_i $ described in \eqref{basis of Hij left}--\eqref{basis of Hij right}, we define a $\K$-linear map $X(\alpha):X_i\rightarrow X_j$ by
\[
X(\alpha)(x) = N_{ij}(\alpha\otimes x).
\]
One checks that $(X_j,X(\alpha),X(\varepsilon_j))_{j\in Q_0,\alpha\in Q_1}$ is actually a representation of $(\ov{Q},\ov{I})$.

Conversely, let $(X_i,X(\alpha),X(\varepsilon_i))\in\rep(\ov{Q},\ov{I})$. If $\btau i=i$, then $N_i=(X_i,X(\varepsilon_i))$ is naturally an $\BH _i$-module; if $\btau i\neq i$, then $N_i=(X_i,X_{\btau i},X(\varepsilon_i),X(\varepsilon_{\btau i}))$ is an $\BH _i$-module. Note that $N_i=N_{\btau i}$. For $(i,j)\in \ov{\Omega}$, then either $(i,j)\in \Omega$ or $(i,\btau j)\in \Omega$, and there exists an $\BH _j$-morphism
\[
N_{ji}: \,_j \BH _i\otimes_{\BH _i} N_i\longrightarrow N_j
\]
determined by
$N_{ji}(\alpha\otimes x):= X(\alpha)(x)$ for any arrow $\alpha:i\rightarrow j$ and $\alpha:\btau i\rightarrow \btau j$ if $(i,j)\in\Omega$ and for any $\alpha:i\rightarrow \btau j$ or $\alpha:\btau i\rightarrow j$ if $(i,\btau j)\in \Omega$.
Then $(N_i,N_{ji})\in \rep(\cm(Q, \btau))$.

A direct computation shows that the above functors between $\rep(\cm(Q, \btau))$ and $\rep(\ov{Q},\ov{I})$ are mutual inverses. Then we have established the following.

\begin{proposition}
   \label{prop:modulated representation}
The categories $\rep(\cm(Q, \btau))$ and $\rep(\ov{Q},\ov{I})$ are isomorphic.
\end{proposition}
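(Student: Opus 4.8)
The plan is to construct an explicit pair of mutually inverse functors between $\rep(\cm(Q,\btau))$ and $\rep(\ov Q,\ov I)$, following the blueprint of \cite[Proposition 5.1]{GLS} and \cite[Theorem 3.2]{Li}, and to verify that each construction is well defined (i.e. lands in the target category), that the two assignments are functorial, and that their composites are the identity functors. The essential input is the basis description of ${}_j\BH_i$ as an $\BH_j$-$\BH_i$-bimodule in \eqref{basis of Hij left}--\eqref{basis of Hij right}, which is what allows one to translate an $\BH_j$-morphism $N_{ji}\colon {}_j\BH_i\otimes_{\BH_i}N_i\to N_j$ into a tuple of $\K$-linear maps $X(\alpha)$ indexed by arrows $\alpha\in Q_1$, and conversely.

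First I would fix notation by writing $\ov\Omega$ and $\ci$ as in \eqref{eq:ci} and \eqref{eq:ovOmega}, and for each $i\in Q_0$ use $\bar\imath\in\ci$ for the chosen representative of its $\btau$-orbit. Given a representation $(N_i,N_{ji})$ of $\cm(Q,\btau)$, I set $X_i=e_iN_{\bar\imath}$ for every $i\in Q_0$ (so $X_i=e_iN_i$ when $i\in\ci$); the $\BH_{\bar\imath}$-module structure on $N_{\bar\imath}$ gives, via the loop or arrow $\varepsilon_i$ in $\ov Q$, a $\K$-linear map $X(\varepsilon_i)\colon X_i\to X_{\btau i}$, and for $(\alpha\colon i\to j)\in Q_1$ one defines $X(\alpha)(x)=N_{\bar\jmath\,\bar\imath}(\alpha\otimes x)$ (using $\alpha\in{}_{\bar\jmath}\BH_{\bar\imath}$ by the basis list). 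The first thing to check is that $(X_i,X(\alpha),X(\varepsilon_i))$ satisfies the relations $\ov I$ of Proposition~\ref{prop:invariant subalgebra}: the nilpotent relations $\varepsilon_i\varepsilon_{\btau i}=0$ come from $N_{\bar\imath}$ being an $\BH_{\bar\imath}$-module (where these relations hold by \eqref{dfn:Hi}), and the commutative relations $\varepsilon_i\alpha-\btau(\alpha)\varepsilon_j=0$ come precisely from $N_{\bar\jmath\,\bar\imath}$ being a left $\BH_{\bar\jmath}$-module homomorphism, combined with the identities ${}_j\BH_i={}_{\btau j}\BH_{\btau i}=\cdots$ recorded after \eqref{eq:jHi}. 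Conversely, from $(X_i,X(\alpha),X(\varepsilon_i))\in\rep(\ov Q,\ov I)$ I would reassemble $N_i$ (for $i\in\ci$) as the $\BH_i$-module whose underlying space is $X_i$ if $\btau i=i$, or $X_i\oplus X_{\btau i}$ with the two $\varepsilon$-actions if $\btau i\neq i$, and define $N_{ji}$ on the basis ${}_jL_i$ of ${}_j\BH_i$ by $N_{ji}(\alpha\otimes x)=X(\alpha)(x)$ (and on $\btau\alpha$ accordingly), extending $\BH_j$-linearly; one checks this is forced to be well defined by the relations in $\ov I$.

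On morphisms the assignments are the obvious ones — a tuple $(f_i)_{i\in\ci}$ of $\BH_i$-maps corresponds to the family $(e_if_{\bar\imath})_{i\in Q_0}$ of $\K$-linear maps — and the commuting-square condition defining a morphism in $\rep(\cm(Q,\btau))$ translates exactly into compatibility with all $X(\alpha)$ and $X(\varepsilon_i)$, i.e.\ into a morphism of representations of $(\ov Q,\ov I)$. Functoriality (preservation of identities and composition) is immediate from the formulas. Finally, since both constructions are given by explicit formulas on underlying vector spaces with no choices beyond the fixed set $\ci$, a direct computation shows the two composites are the identity functors on $\rep(\cm(Q,\btau))$ and $\rep(\ov Q,\ov I)$ respectively, so the two categories are isomorphic (not merely equivalent).

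I expect the only real bookkeeping obstacle to be the case analysis $\btau i=i$ versus $\btau i\neq i$ (and similarly for $j$) in checking that the $\BH_j$-morphism $N_{ji}$ is well defined and that the commutative relations of $\ov I$ are respected: one must carefully use the four-case basis descriptions \eqref{basis of Hij left}--\eqref{basis of Hij right} of ${}_jL_i$ and ${}_jR_i$, together with the symmetry ${}_j\BH_i={}_{\btau j}\BH_{\btau i}$, to see that the data indexed by $\ci$ correctly determines (and is determined by) the data indexed by all of $Q_0$. Everything else is routine diagram-chasing of the type carried out in \cite{GLS, Li}.
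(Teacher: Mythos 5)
Your proposal is correct and follows essentially the same route as the paper: both directions are given by the same explicit formulas ($X_i=e_iN_{\bar\imath}$, $X(\varepsilon_i)(x)=\varepsilon_i\cdot x$, $X(\alpha)(x)=N_{ji}(\alpha\otimes x)$, and the reassembly of $N_i$ and $N_{ji}$ from the $X$-data), with the relations in $\ov I$ verified exactly as you describe — nilpotency from the $\BH_i$-module structure and commutativity from $\BH_j$-linearity of $N_{ji}$ via the bases \eqref{basis of Hij left}--\eqref{basis of Hij right}. The paper's proof is in fact terser than your outline, delegating the case-by-case verification to "a direct computation," which you have correctly identified as the only real bookkeeping content.
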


\begin{remark}
The materials in this section will play a basic role in the constructions of BGP-type reflection functors associated to $\imath$quivers in \cite{LW19b}.
\end{remark}

%%%%%%%%
%%%%%%%%
\section{Homological properties of the algebra $\La^\imath$}
 \label{sec:iLambda}

In this section, we shall study the homological properties of the algebra $\La^\imath$, such as Gorenstein homological properties and singularity categories.

\subsection{Gorenstein projective modules}
  \label{subsec:Gproj}

In this subsection, we review briefly and set up notations for Gorenstein algebras and Gorenstein projective modules. Let $\K$ be a field. Let $A$ be a finite-dimensional $\K$-algebra.
A complex
\[
P^\bullet:\cdots\longrightarrow P^{-1}\longrightarrow P^0\stackrel{d^0}{\longrightarrow} P^1\longrightarrow \cdots
\]
of finitely generated projective $A$-modules is said to be \emph{totally acyclic} provided it is acyclic and the Hom complex $\Hom_A(P^\bullet,A)$ is also acyclic. % \cite{AM}.
An $A$-module $M$ is said to be (finitely generated) \emph{Gorenstein projective} provided that there is a totally acyclic complex $P^\bullet$ of projective $A$-modules such that $M\cong \Ker d^0$ \cite{EJ}. We denote by $\Gproj(A)$ the full subcategory of $\mod(A)$ consisting of Gorenstein projective modules.

A $k$-algebra $A$ is called a {\em Gorenstein algebra} \cite{EJ, Ha3}  if $\ind\,_A A<\infty$ and $\ind A_A <\infty$. It is known that a $k$-algebra $A$ is Gorenstein if and only if $\ind {}_AA<\infty$ and $\pd D(A_A)<\infty$. For a Gorenstein algebra $A$, by Zaks' lemma we have $\ind {}_AA=\ind A_A$, and the common value is denoted by $\Gd A$. If $\Gd A\leq d$, we say that $A$ is a \emph{$d$-Gorenstein} algebra.

%\begin{definition}
%Let $A$ be a Gorenstein algebra. A finitely generated $A$-module $M$ is called (maximal) Cohen-Macaulay if
%$$\Ext^i_A(M,A)=0\mbox{ for }i\neq0.$$
%\end{definition}

%The full subcategory of Cohen-Macaulay modules in $\mod(A)$ is denoted by $\CM(A)$.

%Let $\cx$ be a subcategory of $\mod(A)$. Then $^\bot\cx:=\{M|\Ext^i_A(M,X)=0, \forall  X\in\cx, i\geq1\}$. Dually, we can define $\cx^\bot$. In particular, we define $^\bot A:=\,^\bot
%(\proj (A))$, where $\proj(A)$ denotes the subcategory of all finite-dimensional projective $A$-modules.

For a module $M$ take a short exact sequence $0\rightarrow\Omega(M)\rightarrow P\rightarrow M\rightarrow0$ with $P$ projective. The module $\Omega(M)$ is called a \emph{syzygy module} of $M$. Syzygy modules of $M$ are not uniquely determined, while they are naturally isomorphic to each other in the stable category $\underline{\mod}(A)$. For each $d \ge 1$ denote by $\Omega^d(\mod(A))$ the subcategory of modules of the form $\Omega^d(M)$ for an $A$-module $M$.

\begin{theorem}[\text{\cite[Theorem 3.2]{AM}}]
   \label{theorem characterize of gorenstein property}
Let $A$ be an algebra and let $d\geq0$. Then the following statements are equivalent:
\begin{enumerate}
\item[(a)]
the algebra $A$ is $d$-Gorenstein;
\item[(b)]
$\Gproj(A)= \Omega^d(\mod(A))$.
\end{enumerate}
In this case, an $A$-module $G$ is Gorenstein projective if and only if there is an exact sequence $0\rightarrow G\rightarrow P^0\rightarrow P^1\rightarrow \cdots$ with each
$P^\ell$ projective.
\end{theorem}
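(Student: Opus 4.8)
The statement to prove is the equivalence of (a) and (b) in Theorem~\ref{theorem characterize of gorenstein property}, together with the concluding characterization of Gorenstein projective modules. This is a classical result of Enochs--Jenkins (the excerpt cites \cite{EJ}), so my plan is to reconstruct the standard argument.

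\medskip

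\textbf{Proof plan.} The plan is to prove the two implications separately, then derive the final characterization as a byproduct.

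For $(a)\Rightarrow(b)$: assume $\Gd A \le d$, so $\idim {}_A A \le d$ and $\idim A_A \le d$. First I would recall that $\Gproj(A) \subseteq \Omega^d(\mod A)$ is almost immediate: if $G$ is Gorenstein projective, it sits as $\Ker d^0$ in a totally acyclic complex $P^\bullet$, hence is a $d$-th syzygy of $\Coker(P^{-d-1}\to P^{-d})$ (indeed of arbitrarily high syzygies). For the reverse inclusion $\Omega^d(\mod A) \subseteq \Gproj(A)$, take any $M$ and a projective resolution; let $N = \Omega^d(M)$. I must produce a totally acyclic complex with $N$ as a kernel. The left half is the tail of the projective resolution of $M$, which is acyclic; I need to splice on a coacyclic resolution $N \to P^0 \to P^1 \to \cdots$ by projectives such that applying $\Hom_A(-,A)$ keeps everything acyclic. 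The key input is that $\idim {}_A A \le d$: this forces $\Ext^{>0}_A(N', A) $-vanishing for every $d$-th syzygy module $N'$ (since $\Ext^{i}_A(\Omega^d M', A) \cong \Ext^{i+d}_A(M',A)=0$ for $i>0$ once $i+d > \idim {}_AA$), and dually $\idim A_A \le d$ controls the right-module side. Then a standard "horseshoe/dimension shift" argument: build $N \to P^0 \to \cdots$ so that each cosyzygy is again a $d$-th syzygy module, hence again satisfies the $\Ext$-vanishing, and the Hom-dual complex stays acyclic because the relevant $\Ext$ groups vanish. Assembling the left tail and this right resolution gives the totally acyclic complex realizing $N$; hence $N \in \Gproj(A)$.

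For $(b)\Rightarrow(a)$: assume $\Gproj(A) = \Omega^d(\mod A)$. I want $\idim {}_A A \le d$ and $\idim A_A \le d$. Take any module $M$; then $\Omega^d(M) \in \Gproj(A)$, so by definition $\Ext^{i}_A(\Omega^d M, A) = 0$ for all $i > 0$ (totally acyclic complexes dualize to acyclic ones, giving $\Ext$-vanishing against $A$). Dimension shifting yields $\Ext^{i}_A(M, A) = 0$ for $i > d$, for \emph{every} $M \in \mod A$. In particular this applies to $M = D(A_A)$ or directly shows $\idim {}_A A \le d$ via the characterization that $\idim {}_A A = \sup\{ i : \Ext^i_A(S, A) \ne 0,\ S \text{ simple}\} \le d$. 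The symmetric argument with right modules — using that $\Gproj(A^{op})$ is likewise describable, or by a direct dualization — gives $\idim A_A \le d$. Hence $\Gd A \le d$.

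\medskip

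Finally, for the concluding statement: in the $d$-Gorenstein case, $G$ is Gorenstein projective iff there is an exact sequence $0 \to G \to P^0 \to P^1 \to \cdots$ with each $P^i$ projective. One direction is the definition (a totally acyclic complex provides such a coresolution). Conversely, given such a coresolution, one shows it is the right half of a totally acyclic complex: splice it with a projective resolution of $G$ on the left, and check the Hom-dual is acyclic using the $\Ext^{>0}_A(-,A)$-vanishing established above (each cosyzygy in the coresolution is, after the splicing, a high syzygy module, and high syzygy modules have vanishing positive $\Ext$ against $A$ once $A$ is $d$-Gorenstein). A small point requiring care is that the coresolution has cokernels that are themselves Gorenstein projective — this follows because $\Gproj(A)$ is closed under cokernels of monomorphisms between its objects (a standard fact for the Frobenius/resolving structure), or alternatively from the syzygy description in (b).

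\medskip

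\textbf{Main obstacle.} The technical heart is the $(a) \Rightarrow (b)$ direction, specifically constructing the \emph{right} half of the totally acyclic complex and verifying that $\Hom_A(-,A)$ applied to the whole splice is acyclic. This is where the two-sided hypothesis $\idim {}_A A = \idim A_A \le d$ is genuinely used, via the dimension-shift identity $\Ext^{i}_A(\Omega^d M, A) \cong \Ext^{i+d}_A(M, A) = 0$ and its right-module analogue. Everything else is bookkeeping with syzygies and long exact sequences; I expect the write-up to lean on Zaks' lemma (already invoked in the excerpt) to avoid proving $\idim {}_A A = \idim A_A$ from scratch, and on the fact that $d$-th syzygy modules have no higher self-$\Ext$ into $A$ as the recurring lemma.
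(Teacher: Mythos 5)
The paper does not prove this statement at all: it is quoted as a known theorem of Enochs--Jenda with the citation [EJ], so there is no in-paper argument to compare yours against. Your plan is the standard textbook route (dimension shifting against $A$ plus total acyclicity), and the easy pieces are handled correctly: $\Gproj(A)\subseteq\Omega^d(\mod(A))$ always holds, the identity $\Ext^i_A(\Omega^d M,A)\cong\Ext^{i+d}_A(M,A)$ is the right recurring lemma, and your derivation of $\idim{}_AA\le d$ from (b) and your splicing argument for the final characterization are both sound.

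Two steps, however, are under-specified in a way that hides the actual content. First, in $(a)\Rightarrow(b)$ the phrase ``build $N\to P^0\to\cdots$ so that each cosyzygy is again a $d$-th syzygy'' is not a construction but a restatement of the goal; the standard mechanism is to take a projective resolution of the right module $\Hom_A(N,A)$ over $A^{op}$ and apply $\Hom_{A^{op}}(-,A)$ back, then verify that the resulting complex is exact and begins at $N$ --- which requires showing $N$ is reflexive and $\Ext^{>0}_{A^{op}}(\Hom_A(N,A),A)=0$. This is precisely where $\idim A_A\le d$ enters, and it cannot be replaced by a horseshoe argument. You correctly flag this as the main obstacle, but the sketch does not contain the idea that resolves it. Second, in $(b)\Rightarrow(a)$ the bound $\idim A_A\le d$ does not follow ``by symmetry'': hypothesis (b) is a statement about left modules only, so there is no symmetric hypothesis to invoke. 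The right-module information has to be extracted from the $\Hom_A(-,A)$-exactness of the totally acyclic complexes witnessing Gorenstein projectivity (these dualize to totally acyclic complexes of projective right modules), and turning that into the bound $\idim A_A\le d$ requires a genuine argument, not a formal dualization. As a proof plan your proposal is on the right track, but these two points are exactly where a write-up would either succeed or stall.
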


The following lemma is standard.

\begin{lemma}
\label{lemma perpendicular of P GP}
For each Gorenstein projective $A$-module $M$, we have
$\Ext^p_A(M,U)=0$ for all $p>0$ and all $U$ of finite projective dimension.
\end{lemma}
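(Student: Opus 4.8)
The plan is to prove the statement by reducing to the case where $M$ is a high syzygy and then using the standard \emph{dimension shift} argument together with the vanishing of $\Ext$ between $\Omega^d$ of an arbitrary module and a module of finite projective dimension. Since $A$ is assumed $1$-Gorenstein in our main application (but the lemma is stated for a general Gorenstein algebra, say $d$-Gorenstein), Theorem~\ref{theorem characterize of gorenstein property} gives that $\Gproj(A) = \Omega^d(\mod(A))$, so any Gorenstein projective module $M$ fits into an exact sequence
\[
0 \longrightarrow M \longrightarrow P^0 \longrightarrow P^1 \longrightarrow \cdots
\]
with all $P^i$ projective; equivalently (taking a complete resolution) $M$ is also an arbitrarily high syzygy, i.e.\ for every $n \ge 1$ there is an exact sequence $0 \to M \to Q \to \Omega^{-1}M \to 0$ with $Q$ projective, and iterating, $M \cong \Omega^n(N)$ for some $N$.

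First I would fix $U$ with $\pdim_A U = s < \infty$ and fix $p > 0$. The key step is dimension shift: from a short exact sequence $0 \to \Omega(M) \to P \to M \to 0$ with $P$ projective, the long exact sequence in $\Ext_A(-, U)$ together with $\Ext^{p+1}_A(P, U) = 0$ (projectives are $\Ext$-acyclic) gives an isomorphism $\Ext^p_A(\Omega(M), U) \cong \Ext^{p+1}_A(M, U)$ for $p \ge 1$. Since $M$ Gorenstein projective implies $\Omega(M)$ is Gorenstein projective (this is immediate from the totally acyclic complex definition, or from Theorem~\ref{theorem characterize of gorenstein property}), I can iterate: $\Ext^p_A(M, U) \cong \Ext^{p+1}_A(M', U) \cong \cdots \cong \Ext^{p+s}_A(M^{(s)}, U)$ where each $M^{(i)}$ is Gorenstein projective. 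But $\Ext^{p+s}_A(-, U) = 0$ on all modules because $p + s > s = \pdim_A U$. Hence $\Ext^p_A(M, U) = 0$.

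Alternatively, and perhaps more cleanly, I would run the dimension shift on $U$ instead: take a projective resolution $0 \to P_s \to \cdots \to P_0 \to U \to 0$, break it into short exact sequences $0 \to K_{i+1} \to P_i \to K_i \to 0$ with $K_0 = U$, $K_s = P_s$ projective. Using the exact sequence $0 \to M \to P^0 \to \Omega^{-1}M \to 0$ repeatedly — or more directly, since $M$ is Gorenstein projective, $\Ext^{>0}_A(M, P_i) = 0$ for all $i$ (projectives have finite, indeed zero, injective... no: rather $\Ext^{>0}_A(G, Q) = 0$ for $G$ Gorenstein projective and $Q$ projective, which is part of the definition of total acyclicity). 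Then the long exact sequences associated to the $0 \to K_{i+1} \to P_i \to K_i \to 0$ give $\Ext^p_A(M, K_i) \cong \Ext^{p+1}_A(M, K_{i+1})$... actually one wants $\Ext^p_A(M,U) = \Ext^p_A(M,K_0) \hookleftarrow \Ext^{p-1}_A(M,K_1)$-type maps; the precise bookkeeping shows $\Ext^p_A(M, U)$ is squeezed between terms $\Ext^{p + j}_A(M, K_j)$ and since $K_s$ is projective $\Ext^{>0}_A(M, K_s) = 0$, forcing $\Ext^p_A(M, U) = 0$ for $p > 0$.

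The only mildly delicate point — and the one I would be most careful about — is the input fact that $\Ext^i_A(G, Q) = 0$ for all $i > 0$ when $G$ is Gorenstein projective and $Q$ is projective, and more generally that $\Omega(M)$ is again Gorenstein projective; both follow directly from the definition via totally acyclic complexes (the $\Hom_A(P^\bullet, A)$ acyclicity is exactly what kills these $\Ext$ groups), so this is routine rather than a real obstacle. In fact there is essentially no genuine obstacle here: the lemma is "standard" precisely because the whole content is a two-line dimension-shift once one has Theorem~\ref{theorem characterize of gorenstein property} and the definition of Gorenstein projective in hand. I would therefore present the short argument above (shifting in the second variable along a finite projective resolution of $U$) as the cleanest route.
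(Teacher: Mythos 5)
Your proof is correct; the paper labels this lemma ``standard'' and gives no proof, and your second argument (shifting dimension along a finite projective resolution of $U$, using $\Ext^{>0}_A(M,P_i)=0$, which follows from the acyclicity of $\Hom_A(P^\bullet,A)$ in the definition of total acyclicity) is exactly the standard argument being invoked. One small notational caution in your first variant: the iteration $\Ext^p_A(M,U)\cong\Ext^{p+1}_A(M',U)$ requires $M'$ to be the \emph{cosyzygy} $\Omega^{-1}M$ (so that $M\cong\Omega(M')$), not the syzygy $\Omega(M)$ as your displayed short exact sequence and parenthetical suggest; this is harmless since the cosyzygies of a Gorenstein projective module along its totally acyclic complex are again Gorenstein projective, but the second variant avoids the issue entirely and is the cleaner route.
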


Let $\cp^{\leq d}(A)$ be the subcategory of $\mod(A)$ which consists of $A$-modules of projective dimension less than or equal to $d$, for $d\in\N$.

\begin{lemma}[\text{\cite[Theorem 1.1]{AB}}]
   \label{lemma approx of GP}
Let $A$ be a $d$-Gorenstein algebra. Then for each $M\in \mod(A)$, there are short exact sequences
\begin{eqnarray*}
&&0\longrightarrow H_M\longrightarrow G_M\longrightarrow M\longrightarrow0,  \\
&&0\longrightarrow M\longrightarrow H^M\longrightarrow G^M\longrightarrow0,
\end{eqnarray*}
where $H_M\in\cp^{<d}(A),H^M\in\cp^{\leq d}(A)$, $G_M,G^M\in\Gproj(A)$.
\end{lemma}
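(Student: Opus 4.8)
The plan is to appeal to the Auslander–Buchweitz approximation theory, since by Theorem~\ref{theorem characterize of gorenstein property} we know $\Gproj(A) = \Omega^d(\mod(A))$ precisely because $A$ is $d$-Gorenstein, and the cotorsion/approximation pair formed by $\Gproj(A)$ and $\cp^{\leq d}(A) = \cp^{<d+1}(A)$ is the natural mechanism producing the two short exact sequences. First I would establish the second sequence $0\to M\to H^M\to G^M\to 0$. Pick a finite injective coresolution of $M$, which has length at most $d$ because $\idim {}_AA = d < \infty$ (Gorenstein), and dualize/truncate to produce an exact sequence $0\to M\to H^M\to G^M\to 0$ with $G^M$ Gorenstein projective; the point is that any module admits a surjection from a Gorenstein projective one up to a module of finite projective dimension, or dually, embeds into a module of finite injective dimension — but here we want finite \emph{projective} dimension on $H^M$, so I would instead use the characterization in Theorem~\ref{theorem characterize of gorenstein property} that $G$ is Gorenstein projective iff it admits a coresolution $0\to G\to P^0\to P^1\to\cdots$ by projectives, combined with a dimension shift.

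More concretely, the cleanest route is: since $A$ is $d$-Gorenstein, the pair $(\Gproj(A), \cp^{\leq d}(A))$ is a complete hereditary cotorsion pair in $\mod(A)$ in the sense of Auslander–Buchweitz (this is essentially \cite{AB}, and $\cp^{\leq d}(A)$ is the class of modules with finite projective dimension, which equals finite injective dimension bounded by $d$ over a $d$-Gorenstein algebra). Completeness of the cotorsion pair gives, for every $M\in\mod(A)$, a special $\Gproj(A)$-precover $0\to H_M\to G_M\to M\to 0$ with $H_M\in\cp^{\leq d}(A)$, and a special $\cp^{\leq d}(A)$-preenvelope $0\to M\to H^M\to G^M\to 0$ with $G^M\in\Gproj(A)$. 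That is exactly the statement. To obtain the sharper $H_M\in\cp^{<d}(A)$ rather than merely $\cp^{\leq d}(A)$, I would note that $G_M$, being Gorenstein projective, has a projective cover $P\twoheadrightarrow G_M$; pulling back along $G_M\to M$ and comparing, one can arrange $G_M$ to map onto $M$ through a projective cover of $M$, so that $H_M$ is a submodule of $\Omega(M)\oplus(\text{projective})$, forcing $\pd H_M \le d-1$ by a syzygy count (since $\Gproj(A) = \Omega^d(\mod(A))$ forces $\pd$ of such kernels to drop). Lemma~\ref{lemma perpendicular of P GP} guarantees the relevant $\Ext^1$-vanishing that makes these sequences genuine approximation sequences.

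The main obstacle I anticipate is the bookkeeping needed to pass from the generic bound $\cp^{\leq d}$ to the strict bound $\cp^{<d}$ in the first sequence, i.e.\ showing the syzygy of the Gorenstein approximation actually has projective dimension strictly less than $d$; this is where one must use that $\Gproj(A)$ is closed under syzygies and cosyzygies (within $\mod$) and that $\Omega^d$ lands \emph{exactly} in $\Gproj$, not merely inside it, so that no "extra" syzygy is needed. The existence half (the $\cp^{\leq d}$ versions) is formal once one has the cotorsion pair, and the hereditary property follows since $\Gproj(A)$ is resolving and $\cp^{\leq d}(A)$ is coresolving over a Gorenstein algebra; the only real content beyond citing \cite{AB} is verifying these closure properties in the present finite-dimensional setting, which is routine given Theorem~\ref{theorem characterize of gorenstein property} and Lemma~\ref{lemma perpendicular of P GP}.
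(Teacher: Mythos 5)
Your proposal matches the paper's treatment: the paper gives no proof of this lemma at all, stating it as a cited result of Auslander--Buchweitz and remarking only that $(\Gproj(A),\cp^{\leq d}(\mod(A)))$ is a (complete) cotorsion pair, which is exactly the mechanism you invoke. One small caveat: your justification of the strict bound $H_M\in\cp^{<d}(A)$ via ``$H_M$ is a submodule of $\Omega(M)\oplus(\text{projective})$'' does not by itself control projective dimension; the standard route (in \cite{AB}) is to splice the projective resolution $0\to\Omega^dM\to P_{d-1}\to\cdots\to P_0\to M\to0$ with a projective coresolution $0\to\Omega^dM\to Q_{d-1}\to\cdots\to Q_0\to G\to0$ of the Gorenstein projective module $\Omega^dM$, which exhibits $H_M$ with a projective resolution of length $d-1$ directly.
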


In fact, for a $d$-Gorenstein algebra $A$, $(\Gproj(A),\cp^{\leq d}(\mod(A)))$ is a cotorsion pair in $\mod(A)$, see \cite{EJ}. %Here the notion of cotorsion pairs is defined in \cite{Sa}, for complete cotorsion pairs, see \cite{EJ,GT}.

Let $A$ be a finite-dimensional algebra. The {\em singularity category} of $A$ is defined (cf. \cite{Ha3}) to be the Verdier localization
\[
 D_{sg}(\mod(A)):=D^b(\mod(A))/K^b(\proj(A)).
\]

\begin{theorem}[Buchweitz-Happel's Theorem; see \cite{Ha3}]
\label{thm:Buchweitz-Happel}
Let $A$ be a finite-dimensional algebra. Then $\Gproj(A)$ is a Frobenius category with the projective modules as the projective-injective objects. There is an exact embedding $\Phi:\underline{\Gproj}A\rightarrow D_{sg}(\mod(A))$ given by $\Phi(M)=M$, where the second $M$ is the corresponding stalk complex at degree $0$, and $\Phi$ is an equivalence if and only if $A$ is Gorenstein.
\end{theorem}

\subsection{$\Lambda^\imath$ as a $1$-Gorenstein algebra}
  \label{subsec:Gorenstein}

In this subsection, we shall prove that $\Lambda^\imath$ is $1$-Gorenstein and then give a characterization of Gorenstein projective $\Lambda^{\imath}$-modules.

The following result might be known for experts; we include a proof here as we cannot find a suitable reference.

\begin{lemma}\label{lemma equivalent of projective module}
Let $Q$ be an acyclic quiver. A representation $N=(N_i,N(\alpha))\in\rep_\K(Q)$ is projective if and only if the map
$$(N(\alpha))_\alpha:\bigoplus_{(\alpha:j\rightarrow i)\in Q_1} N_j \longrightarrow N_i$$
is injective for each $i\in Q_0$.
\end{lemma}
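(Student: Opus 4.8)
The statement characterizes projective representations of an acyclic quiver $Q$ by an injectivity condition on the "incoming arrows" map at each vertex. I would first recall the standard description of the indecomposable projective $\K Q$-modules $P_i = e_i \K Q$: as a representation, $(P_i)_j$ has basis the set of paths from $i$ to $j$, and for an arrow $\alpha\colon j\rightarrow i'$ the structure map $P_i(\alpha)$ sends a path $p$ (from $i$ to $j$) to the path $\alpha p$. The key point is that $P_i(\alpha)$ is then injective (appending $\alpha$ to distinct paths yields distinct paths, since $Q$ is acyclic so no cancellation occurs), and moreover for each vertex $i'$ the combined map $\bigoplus_{(\alpha\colon j\to i')\in Q_1}(P_i)_j\to (P_i)_{i'}$ is injective because a path from $i$ to $i'$ of positive length factors \emph{uniquely} as $\alpha p$ through its last arrow $\alpha$, while the empty path contributes only when $i'=i$ and lies outside the image of all the $P_i(\alpha)$. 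Since arbitrary projectives are direct sums of the $P_i$ and the injectivity condition is plainly closed under direct sums, this gives the "only if" direction.

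For the "if" direction, suppose $N=(N_i,N(\alpha))$ satisfies the hypothesis. I would argue by induction on $\dim N$ that $N$ is projective; equivalently, I would directly build a surjection from a projective onto $N$ that splits. Concretely: pick a source vertex — actually, better to pick a vertex $i$ which is a \emph{source} of $Q$ relative to the support, or use the following cleaner approach. Order $Q_0$ so that arrows only go from lower to higher (possible since $Q$ is acyclic). At the minimal vertex $i_0$ in the support, there are no incoming arrows inside the support, and the hypothesis at $i_0$ is vacuous. Choose a basis of $N_{i_0}$; this gives a map $P_{i_0}^{\,\dim N_{i_0}}\to N$. The cokernel $N'$ has strictly smaller total dimension at $i_0$ — in fact zero at $i_0$ — and one checks the cokernel again satisfies the injectivity hypothesis at every vertex (the incoming map for $N'$ at a vertex $i'$ is a quotient of that for $N$ modulo the part coming through $i_0$, and a short diagram chase using injectivity of the relevant maps in $P_{i_0}$ shows injectivity is inherited). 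By induction $N'$ is projective, hence the surjection $N\to N'$ splits, and $N\cong \ker\oplus N'$ with $\ker$ a sum of copies of $P_{i_0}$; so $N$ is projective. Alternatively, and perhaps more transparently, I would phrase the "if" direction homologically: the hypothesis says exactly that the canonical presentation $0\to \bigoplus_{(\alpha\colon j\to i)} e_i\otimes_\K N_j \to \bigoplus_i e_i \otimes_\K N_i \to N \to 0$ of $N$ (the standard projective presentation of a quiver representation) has injective first map, i.e. it is a \emph{short} exact sequence of projectives, forcing $\operatorname{proj.dim} N \le 1$ with the syzygy itself projective; but a first syzygy that is projective and sits inside a projective as the kernel of a surjection makes $N$ a summand, hence projective.

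\textbf{Main obstacle.} The only genuinely delicate point is verifying that the injectivity hypothesis is inherited by the cokernel $N'$ in the inductive step (or, in the homological phrasing, that the standard presentation really has the displayed first map and that injectivity of that map is equivalent to the stated vertexwise condition). This is a finite diagram chase, but it must be done with care about which basis elements of $P_{i_0}$ map where; acyclicity of $Q$ is used essentially to guarantee that paths do not "loop back" to $i_0$, so that the subtraction defining $N'$ does not interfere with the incoming-arrow maps at $i_0$ itself and only simplifies them elsewhere. Everything else is bookkeeping with the standard path-basis description of the $P_i$.
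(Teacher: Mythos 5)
Your primary argument is correct, but it runs the induction in the opposite direction from the paper. You remove a \emph{source} $i_0$ of (the support of) $N$, map $P_{i_0}^{\oplus\dim N_{i_0}}$ onto the subrepresentation generated by $N_{i_0}$, and pass to the cokernel; the paper instead removes a \emph{sink} $0$ and writes $\K Q$ as a one-point coextension $\left(\begin{smallmatrix}\K & M\\ 0 & B\end{smallmatrix}\right)$, so that a representation becomes a triple $\left(\begin{smallmatrix}X\\ Y\end{smallmatrix}\right)_\phi$ with $\phi$ exactly the incoming-arrows map at the sink. The payoff of the paper's choice is that injectivity of $\phi$ \emph{immediately} yields a direct-sum decomposition $N\cong\left(\begin{smallmatrix}M\otimes_B Y\\ Y\end{smallmatrix}\right)_{\Id}\oplus\left(\begin{smallmatrix}\coker\phi\\ 0\end{smallmatrix}\right)_0$, with no cokernel to analyze; the induction hypothesis is then applied to $Y$ over the smaller hereditary algebra $B$. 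Your route instead requires the two diagram chases you partly flag: (i) that the cokernel $N'$ inherits the vertexwise injectivity hypothesis, and (ii) that the map $P_{i_0}^{\oplus d}\to N$ is itself injective, so that the kernel of $N\twoheadrightarrow N'$ really is $P_{i_0}^{\oplus d}$ and not a proper quotient of it. You address (i) but not (ii); both do hold, by the same induction along a topological order using that every path out of $i_0$ factors uniquely through its last arrow, so your proof closes up. Your ``only if'' direction via the explicit path basis of $P_i$ is also fine and more self-contained than the paper's appeal to the textbook description of projectives.

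The ``alternatively, more transparently'' homological paragraph, however, does not work and you should delete it. The sequence you display is the standard projective resolution of $N$ over the hereditary algebra $\K Q$; its first map is \emph{always} injective, for every $N$, so its injectivity cannot be equivalent to your vertexwise hypothesis. Moreover, the final inference is false: having projective dimension $\le 1$ with a projective first syzygy does not force $N$ to be projective (every non-projective module over a hereditary algebra is a counterexample, e.g.\ $S_1$ for $1\rightarrow 2$ with syzygy $P_2$). A kernel being a direct summand of the middle term is not implied by the kernel being projective.
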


\begin{proof}
Set $A=\K Q$.
We prove it by induction on the cardinality of $Q_0$.
First, if $|Q_0|=1$, then $A \cong \K$, and the result is trivial.

For $|Q_0|=n>1$, as $Q$ is an acyclic quiver, there exists at least one sink vertex, denoted by $0$. Let $B=(1-e_0)A(1-e_0)$, where $e_0$ is the idempotent corresponding to the vertex $0$. Then $B$ is also a hereditary algebra, and $A$ is a one-point coextension of $B$. So
$A=\left( \begin{array}{cc} \K&  M \\ 0& B\end{array}\right)$
with $M_B$ a right projective $B$-module. In this way, any $N=(N_i,N(\alpha))\in\rep_\K(Q)$ can be identified with a triple
$\left( \begin{array}{cc} X \\ Y \end{array} \right)_\phi$,
where $X\in\mod (\K)$, $Y\in\mod (B)$ and $\phi:M\otimes_B Y\rightarrow X$ is a linear map. In fact $X=N_0$, and $\phi$ coincides with
\begin{equation}\label{eqn:mor1}
(N(\alpha))_\alpha:\bigoplus_{(\alpha:i\rightarrow 0)\in Q_1} N_i\rightarrow N_0.
\end{equation}

For the `if' part, assume $\phi$ is injective. Let $Z =\coker (\phi)$. Then
\[
\left( \begin{array}{cc} X \\ Y \end{array} \right)_\phi\cong \left( \begin{array}{cc} M\otimes_B Y\\ Y \end{array} \right)_{\Id}\oplus \left( \begin{array}{cc} Z \\ 0 \end{array} \right)_0.
\]
Clearly, $ \left( \begin{array}{cc} Z \\ 0 \end{array} \right)_0$ is a projective $A$-module. By the inductive assumption, $Y$ is a projective $B$-module. Then  $\left( \begin{array}{cc} M\otimes_B Y\\ Y \end{array} \right)_{\Id}$ is projective, and so $N$ is a projective $A$-module.

For the `only if' part, by the inductive assumption,
$(N(\alpha))_\alpha:\bigoplus_{(\alpha:j\rightarrow i)\in Q_1} N_j\rightarrow N_i$
is injective for $i\neq 0$. For $i=0$, since the desired map \eqref{eqn:mor1} coincides with $\phi$, from the description of projective modules in $\rep_k(Q)$  \cite[Chapter III.2, Lemma 2.4]{ASS}, we obtain that $\phi$ is injective.

The lemma is proved.
\end{proof}

\begin{proposition}
  \label{proposition of 1-Gorenstein}
Let $(Q, \btau)$ be an $\imath$quiver. Then
\begin{itemize}
\item[(1)] $\Lambda^{\imath}$ is a $1$-Gorenstein algebra;
\item[(2)] for any $X\in\mod(\Lambda^{\imath})$, we have $X\in \Gproj(\Lambda^{\imath})$ if and only if $\res(X)$ is a projective $\K Q$-module.
\end{itemize}
\end{proposition}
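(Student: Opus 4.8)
The plan is to use the tensor algebra description $\Lambda^\imath \cong T_\BH(M)$ from Proposition~\ref{prop:tensor algebra} together with the known facts that $\BH$ is a radical square zero selfinjective algebra and that $M$ is projective as a left and as a right $\BH$-module. For part~(1), the strategy is to compute the injective dimension of $\Lambda^\imath$ on both sides. Since $\BH$ is selfinjective and $M$ is $\BH$-$\BH$-bimodule-projective on each side, a tensor algebra over $\BH$ with such a bimodule is known to be $1$-Gorenstein; concretely, I would exhibit for each indecomposable projective $\Lambda^\imath$-module $P$ an injective coresolution of length $\le 1$ (and dually a projective resolution of each indecomposable injective of length $\le 1$). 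A clean way to organize this: use the restriction/extension adjunction along $\BH \hookrightarrow \Lambda^\imath$ (the functor $\res_\BH$ and its adjoint $\Lambda^\imath \otimes_\BH -$), note that $\Lambda^\imath$ is projective as a left and right $\BH$-module (again by the tensor-algebra form and bimodule-projectivity of $M$), and feed a minimal injective $\BH$-resolution — which has length $0$ since $\BH$ is selfinjective — through the (exact) induction functor, picking up at most one extra step from the tensor-algebra grading. This shows $\idim {}_{\Lambda^\imath}\Lambda^\imath \le 1$ and symmetrically on the right, hence $\Lambda^\imath$ is $1$-Gorenstein. It cannot be $0$-Gorenstein (semisimple) unless $Q$ has no arrows and $\btau$ forces $\BH_i = k$, so generically the Gorenstein dimension is exactly $1$.

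For part~(2), the key input is Theorem~\ref{theorem characterize of gorenstein property}: since $\Lambda^\imath$ is $1$-Gorenstein, $\Gproj(\Lambda^\imath) = \Omega^1(\mod \Lambda^\imath)$, i.e. $X$ is Gorenstein projective iff $X$ is a first syzygy, iff there is a short exact sequence $0 \to X \to P \to Y \to 0$ with $P$ projective (using the "in this case" clause of the theorem: $X$ is Gorenstein projective iff it embeds into a projective with all later cosyzygies projective, but for $d=1$ one embedding into a projective with projective cokernel suffices — more precisely $X \in \Gproj$ iff $0 \to X \to P^0 \to P^1 \to \cdots$ exact with $P^i$ projective, and $1$-Gorensteinness lets us truncate). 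Then I would translate this condition through the restriction functor $\res : \mod \Lambda^\imath \to \mod kQ$. The crucial observations are: (a) $\res$ is exact, faithful and dense by Lemma~\ref{lemma restriction functor}, and it sends projective $\Lambda^\imath$-modules to projective $kQ$-modules — this needs checking, but follows because $\Lambda^\imath_0 = kQ$ is a quotient of $\Lambda^\imath$ and the indecomposable projective $\Lambda^\imath$-modules restrict to explicitly computable $kQ$-modules which one verifies are projective via Lemma~\ref{lemma equivalent of projective module}; and (b) conversely, if $\res(X)$ is projective over $kQ$, one builds an embedding of $X$ into a projective $\Lambda^\imath$-module with projective cokernel. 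For direction "$\Rightarrow$": if $X$ is Gorenstein projective, write $0 \to X \to P \to \Omega^{-1}X \to 0$; applying the exact $\res$ and iterating gives $\res(X)$ as an infinite syzygy over $kQ$, but $kQ$ is hereditary so all syzygies are projective, forcing $\res(X)$ projective. For direction "$\Leftarrow$": if $\res(X)$ is projective over $kQ$, I would use Lemma~\ref{lemma equivalent of projective module} to get injectivity of the structure maps at each vertex, and then directly construct a monomorphism from $X$ into a suitable projective $\Lambda^\imath$-module (built from the $\varepsilon_i$-action data) whose cokernel is again projective — this is the place where the explicit bound-quiver description $(\ov Q, \ov I)$ of Proposition~\ref{prop:invariant subalgebra} and the modulation picture of Proposition~\ref{prop:modulated representation} do the real work.

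I expect the main obstacle to be the "$\Leftarrow$" direction of part~(2): producing, from the hypothesis that $\res(X)$ is $kQ$-projective, an honest embedding $0 \to X \to P \to P' \to 0$ into projectives over the larger algebra $\Lambda^\imath$. The issue is that projectivity of $\res(X)$ controls the arrows of $Q$ but says nothing a priori about the loops/arrows $\varepsilon_i$; one must show that the nilpotent relations $\varepsilon_i \varepsilon_{\btau i} = 0$ together with the commutativity relations $\varepsilon_i \alpha = \btau(\alpha)\varepsilon_j$ force $X$ — viewed via the modulation $\cm(Q,\btau)$ — to be "free enough" over $\BH$ along each $\btau$-orbit that the canonical map into an induced module $\Lambda^\imath \otimes_\BH \res_\BH(X)$, or a minor modification thereof, is injective with projective cokernel. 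Concretely I would argue orbit by orbit: over each $\BH_i$, the module $\res_\BH(X)$ decomposes into copies of $\BH_i$ and copies of simples, and projectivity of $\res(X)$ over $kQ$ combined with Lemma~\ref{lemma equivalent of projective module} rules out the problematic simple summands at non-terminal vertices, which is exactly what is needed. The $1$-Gorenstein part of (1) should be comparatively routine given the tensor-algebra structure, so the write-up will spend most of its effort on (2), and within (2) on this embedding construction.
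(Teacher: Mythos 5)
Your architecture is partly sound: the ``$\Rightarrow$'' half of (2) is correct (restrict an embedding of $X$ into a projective $\Lambda^\imath$-module, use that projectives restrict to $\K Q$-projectives by Proposition~\ref{prop:projective module of lambdai}, and invoke heredity of $\K Q$). For (1) you take a genuinely different route from the paper: the paper pushes the length-one injective and projective resolutions of $\Lambda=\K Q\otimes R_2$ down through the Galois covering $\pi_*$, whereas you argue from the tensor-algebra presentation $T_{\BH}(M)$ with $\BH$ selfinjective and $M$ bimodule-projective. That route is viable (it is essentially what the cited reference [CL] establishes), but your mechanism is off as stated: the induction functor $\Lambda^\imath\otimes_{\BH}-$ does not send injective $\BH$-modules to injective $\Lambda^\imath$-modules, so ``feeding the injective $\BH$-resolution through induction'' proves nothing; you would need coinduction $\Hom_{\BH}(\Lambda^\imath,-)$ or the standard bimodule exact sequence for tensor rings.

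The genuine gap is the ``$\Leftarrow$'' direction of (2), exactly where you anticipated trouble, and the repair you sketch does not work. First, there is no canonical $\Lambda^\imath$-linear map $X\to\Lambda^\imath\otimes_{\BH}\res_{\BH}(X)$ (the counit of the adjunction goes the other way), and $\Lambda^\imath\otimes_{\BH}\res_{\BH}(X)$ is projective over $\Lambda^\imath$ only when $\res_{\BH}(X)$ is projective over $\BH$ --- which by Proposition~\ref{propostion equivalence of locally projective modules} is the criterion for $\pd X\le 1$, not the hypothesis you actually have. Second, the orbit-by-orbit claim that $\K Q$-projectivity of $\res(X)$ ``rules out the problematic simple summands at non-terminal vertices'' is false: for the split quiver $1\to 2$ take $X=\iota(P_1)$, the pullback of the indecomposable projective $\K Q$-module with all $\varepsilon_i$ acting by zero. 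Then $\res(X)=P_1$ is projective and $X$ is Gorenstein projective (it is a syzygy via $0\to P_1\to(\Lambda^\imath)e_1\to P_1\to 0$, see \eqref{eqn:projective resolution of kQ}), yet $\res_{\BH_1}(X)$ is the simple, non-free $\BH_1$-module at the non-sink vertex $1$; the same happens at sinks and middle vertices. So Gorenstein projectives routinely have non-free $\BH$-restrictions, and your filtering argument collapses. The paper closes this direction by quoting the tensor-ring criterion of Chen--Lu ([CL, Proposition 4.5, Theorem 3.9]): for $T_{\BH}(M)$ with $M$ perfect and $\BH$ selfinjective, $X\in\Gproj(\Lambda^\imath)$ if and only if the structure map $M\otimes_{\BH}X\to X$ is injective, which unwinds exactly to the arrow-injectivity condition of Lemma~\ref{lemma equivalent of projective module}. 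To complete your proof you must either cite that result or supply an honest proof of it; nothing in your current sketch substitutes for it.
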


\begin{proof}
(1) It is well known that every projective (respectively, injective) $\Lambda^\imath$-module is gradable, i.e. in the image of the pushdown functor $\Pd$ in \eqref{eq:pi} (up to isomorphism). In particular, there exist a projective $\Lambda$-module $P$ and an injective $\Lambda$-module $I$ such that
$\Pd(P)\cong {}_{\Lambda^\imath}(\Lambda^\imath)$ and $\Pd(I)\cong D((\Lambda^\imath)_{\Lambda^\imath})$.
 As $\Lambda=\K Q\otimes R_2$ is $1$-Gorenstein, let
\begin{align*}
&0 \rightarrow P \longrightarrow I^0 \longrightarrow I^1 \longrightarrow0, \qquad
 0\longrightarrow P^1\longrightarrow P^0 \longrightarrow I \longrightarrow 0
\end{align*}
be an injective resolution of $P$ and a projective resolution of $I$ respectively.
By applying $\Pd$ to these two resolutions and noting that $\Pd$ is an exact functor preserving projectives and injectives, we conclude that $\Lambda^\imath$ is $1$-Gorenstein.

(2)
Let $X=(X_i,X(\alpha),X(\varepsilon_i))_{i\in\ov{Q}_0,\alpha\in \ov{Q}_1}\in\mod(\Lambda^\imath)$ correspond under the category equivalence in Proposition~\ref{prop:modulated representation} to $(N_i,N_{ji})\in\rep(\cm(Q, \btau))$. In particular, $N_i$ is the restriction of $X$ to $\BH _i$, i.e.,
the underlying space of $N_i$ is $X_i$ if $\btau i=i$, and is $X_i\oplus X_{\btau i}$ if $\btau i\neq i$.

By construction, an $\BH $-bimodule $M$ is projective as a left (and also right) $\BH $-module. Then $M$ is perfect in the sense of \cite[Definition 4.4]{CL}. The following statements are equivalent:
\begin{enumerate}
\item[(i)]
$X\in \Gproj(\Lambda^{\imath})$;
\item[(ii)]
$\bigoplus_{j:(i,j)\in\ov{\Omega}}\,_j\BH_i \otimes_{\BH _i} N_i\longrightarrow N_j$ is injective, for any $i\in Q_0$; \item[(iii)]
$\bigoplus_{(\alpha:i\rightarrow j)\in Q_1} X_i\xrightarrow{(X(\alpha))_\alpha} X_j$ is injective, for each $i\in Q_0$.
\end{enumerate}
The equivalence between (i) and (ii) follows by \cite[Proposition 4.5, Theorem 3.9]{CL}, and the equivalence of (ii) and (iii) follows by a direct computation using \eqref{eqn:basis of R}.

Now the assertion (2) follows from Lemma \ref{lemma equivalent of projective module}.
\end{proof}

Denote by $\cc_{\Z/2}(\proj(\K Q))$ the category of  $\Z/2$-graded complexes over $\proj(\K Q)$ \cite{PX, Br}. Recall $\Lambda$ itself is an algebra arising from an $\imath$quiver of diagonal type by Example~\ref{example diagonal 2}. In this case  Proposition~ \ref{proposition of 1-Gorenstein} gives us the following equivalence
\begin{equation}
  \label{eq:LaZ2}
\Gproj(\Lambda)\cong \cc_{\Z/2}(\proj(\K Q)).
\end{equation}
Another corollary of  Proposition~ \ref{proposition of 1-Gorenstein} was known before. Denote by $\cc_{\Z/1}(\proj(\K Q))$ the category of $\Z/1$-graded complexes over $\proj(\K Q)$.
\begin{corollary} [\cite{RZ}]
We have
$\Gproj(\Lambda^\imath)\cong\cc_{\Z/1}(\proj(kQ))$, if $\btau=\Id$.
\end{corollary}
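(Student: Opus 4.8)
The plan is to derive this corollary directly from Proposition~\ref{proposition of 1-Gorenstein}(2) together with the identification, supplied by Lemma~\ref{lem:Rm} and Corollary~\ref{cor: 1-peoriodic}, of $\Lambda^\imath$ (in the case $\btau=\Id$) with $\K Q\otimes_\K R_1$, whose module category is the category $\cc_{\Z/1}(\mod(\K Q))$ of $\Z/1$-graded complexes over $\mod(\K Q)$. So first I would recall that when $\btau=\Id$, Lemma~\ref{lem:Rm} gives $\mod(\Lambda^\imath)=\mod(\K Q\otimes_\K R_1)\cong \cc_{\Z/1}(\mod(\K Q))$; under this equivalence a $\Lambda^\imath$-module $X$ corresponds to a pair $(X, d)$ with $X\in\mod(\K Q)$ and $d\colon X\to X$ satisfying $d^2=0$, i.e. a $1$-periodic complex; note that the $\varepsilon$-action on $X$ is precisely the differential $d$, and the restriction functor $\res\colon\mod(\Lambda^\imath)\to\mod(\K Q)$ is the functor forgetting $d$.

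Next I would unwind what Proposition~\ref{proposition of 1-Gorenstein}(2) says in this language: $X\in\Gproj(\Lambda^\imath)$ if and only if $\res(X)$ is a projective $\K Q$-module, i.e. if and only if the underlying $\K Q$-module of the $1$-periodic complex $(X,d)$ is projective. On the other hand, $\cc_{\Z/1}(\proj(\K Q))$ is by definition the full subcategory of $\cc_{\Z/1}(\mod(\K Q))$ consisting of those $(X,d)$ with $X\in\proj(\K Q)$. Hence these two full subcategories of $\mod(\Lambda^\imath)\cong\cc_{\Z/1}(\mod(\K Q))$ literally coincide, giving $\Gproj(\Lambda^\imath)\cong\cc_{\Z/1}(\proj(\K Q))$. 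I would also point out that this is the $m=1$ analogue of the equivalence \eqref{eq:LaZ2}, which is the $m=2$ case, so the proof is parallel to (indeed a special case of the argument behind) that statement.

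The only genuinely non-formal point is to make sure that being Gorenstein projective over $\Lambda^\imath=\K Q\otimes R_1$ really is detected by the restriction to $\K Q$ — but this is exactly Proposition~\ref{proposition of 1-Gorenstein}(2), already proved in full generality (for all $\imath$quivers, hence in particular for $\btau=\Id$) via the perfect-bimodule criterion of \cite{CL} and Lemma~\ref{lemma equivalent of projective module}. So I do not expect any obstacle here; the proof is a two-line translation. I would write: ``By Corollary~\ref{cor: 1-peoriodic}, $\Lambda^\imath\cong\K Q\otimes R_1$, so by Lemma~\ref{lem:Rm} we may identify $\mod(\Lambda^\imath)$ with $\cc_{\Z/1}(\mod(\K Q))$, the restriction functor $\res$ corresponding to taking the underlying $\K Q$-module of a $1$-periodic complex. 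By Proposition~\ref{proposition of 1-Gorenstein}(2), $X\in\Gproj(\Lambda^\imath)$ if and only if $\res(X)\in\proj(\K Q)$, that is, if and only if $X$ lies in $\cc_{\Z/1}(\proj(\K Q))$. Hence $\Gproj(\Lambda^\imath)\cong\cc_{\Z/1}(\proj(\K Q))$.'' If a bit more care about functoriality of the equivalence is wanted, one notes that the identification $\mod(\Lambda^\imath)\cong\cc_{\Z/1}(\mod(\K Q))$ restricts to an equivalence on the full subcategories cut out by the condition ``underlying $\K Q$-module is projective,'' which is precisely the claim.
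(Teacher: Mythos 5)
Your proof is correct and is exactly the argument the paper intends: the corollary is stated without an explicit proof precisely because it follows by combining Corollary~\ref{cor: 1-peoriodic} and Lemma~\ref{lem:Rm} (identifying $\mod(\Lambda^\imath)$ with $\cc_{\Z/1}(\mod(\K Q))$ so that $\res$ becomes the forgetful functor to the underlying $\K Q$-module) with Proposition~\ref{proposition of 1-Gorenstein}(2), just as you do — and it is the $m=1$ analogue of the displayed equivalence \eqref{eq:LaZ2}. No gaps.
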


\subsection{$\La^\imath$-modules with finite projective dimensions}

In this subsection, we shall characterize the objects in $\cp^{\leq 1}(\Lambda^{\imath})$ in terms of simple objects.

Recall $\BH $ from \eqref{eq:H}. Denote by $\res_\BH : \mod(\Lambda^{\imath})\rightarrow \mod(\BH )$ the restriction functor.
On the other hand, as $\BH $ is a quotient algebra of $\Lambda^\imath$, every $\BH $-module can be viewed as a $\Lambda^\imath$-module.

Recall the algebra $\BH _i$ for $i \in \ci$ from \eqref{dfn:Hi}. For $i\in Q_0 =\I$, define the indecomposable module over $\BH _i$ (if $i\in \ci$) or over $\BH_{\btau i}$ (if $i\not \in \ci$)
\begin{align}
  \label{eq:E}
\E_i =\begin{cases}
k[\varepsilon_i]/(\varepsilon_i^2), & \text{ if }\btau i=i;
\\
\xymatrix{\K\ar@<0.5ex>[r]^1 & \K\ar@<0.5ex>[l]^0} \text{ on the quiver } \xymatrix{i\ar@<0.5ex>[r]^{\varepsilon_i} & \btau i\ar@<0.5ex>[l]^{\varepsilon_{\btau i}} }, & \text{ if } \btau i\neq i.
\end{cases}
\end{align}
%let $\E_i$ be the (indecomposable) regular $\BH _i$-module if $\btau i=i$ and be the (indecomposable) $\BH _i$-module $\xymatrix{\K\ar@<0.5ex>[r]^1 & \K\ar@<0.5ex>[l]^0 }$ if $\btau i\neq i$, with $\xymatrix{i\ar@<0.5ex>[r]^{\varepsilon_i} & \btau i\ar@<0.5ex>[l]^{\varepsilon_{\btau i}} }$ as the quiver of $\BH _i$.
Then $\E_i$, for $i\in Q_0$, can be viewed as a $\Lambda^\imath$-module and will be called a {\em generalized simple} $\Lambda^\imath$-module.

\begin{lemma}
    \label{lemma locally projective modules}
We have $\pd_{\Lambda^{\imath}} (\E_i)\leq 1$ and $\ind_{\Lambda^{\imath}} (\E_i)\leq 1$, for any $i\in Q_0$.
\end{lemma}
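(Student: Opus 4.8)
The plan is to compute projective and injective resolutions of $\E_i$ explicitly inside $\mod(\Lambda^\imath)$, using the bound quiver description $(\ov Q, \ov I)$ from Proposition~\ref{prop:invariant subalgebra} and the characterization of projectives. First I would identify the projective cover of $\E_i$. Regarded as a representation of $(\ov Q, \ov I)$, the module $\E_i$ is supported on the vertex $i$ (and also $\btau i$ when $\btau i \neq i$), with the $\varepsilon$-arrows acting as the rank-one nilpotent given in \eqref{eq:E} and all arrows of $Q$ acting as zero. Its top is $S_i$, so its projective cover is $P_i$, the indecomposable projective $\Lambda^\imath$-module at $i$. The point is to show the kernel $\Omega(\E_i)$ of $P_i \twoheadrightarrow \E_i$ is again projective; this is a direct computation using the explicit description of $P_i$ coming from the tensor algebra structure (Proposition~\ref{prop:tensor algebra}) or equivalently from $\Lambda^\imath \cong \K\ov Q/\ov I$. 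Concretely, $P_i$ as a representation has $\K$-basis indexed by paths in $\ov Q$ starting at $i$ modulo $\ov I$; quotienting by the subrepresentation generated by the paths that factor through an arrow of $Q$ out of $i$ leaves exactly $\E_i$, and that subrepresentation, being spanned by "$\alpha$-then-anything" paths, is isomorphic to a direct sum of $P_j$'s (one summand $P_j$, up to the $\btau$-identifications, for each arrow $\alpha\colon i\to j$ in $Q$, since the nilpotent relation $\varepsilon_i\varepsilon_{\btau i}=0$ and the commutation relations $\varepsilon\alpha = \btau(\alpha)\varepsilon$ mean the image is a "shifted copy" of $P_j$). Hence $\pd_{\Lambda^\imath}(\E_i) \le 1$.

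For the injective dimension bound, I would argue dually. Since $\Lambda^\imath$ is $1$-Gorenstein (Proposition~\ref{proposition of 1-Gorenstein}(1)), $D(\Lambda^\imath)$ has projective dimension $\le 1$ on both sides, so one could try to deduce $\ind_{\Lambda^\imath}(\E_i) \le 1$ formally; but more robustly I would just compute. The standard duality $D = \Hom_\K(-,\K)$ exchanges left and right modules, and the opposite algebra $(\Lambda^\imath)^{op}$ is again an $\imath$quiver algebra (for the opposite quiver $Q^{op}$ with the induced involution). Thus $D(\E_i)$ is, up to this identification, a generalized simple module for $(\Lambda^\imath)^{op}$, and the projective-resolution computation above applied to $(\Lambda^\imath)^{op}$ gives $\pd_{(\Lambda^\imath)^{op}} D(\E_i) \le 1$, which is exactly $\ind_{\Lambda^\imath}(\E_i) \le 1$. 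Alternatively, one directly writes down an injective copresentation $0 \to \E_i \to I^0 \to I^1 \to 0$ with $I^0, I^1$ sums of indecomposable injectives, using the description of injective $\Lambda^\imath$-modules as $D$ of projective right modules; the arrows of $Q$ into $i$ play the role that the arrows out of $i$ played above.

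The main obstacle I anticipate is bookkeeping with the involution $\btau$: one must handle the two cases $\btau i = i$ and $\btau i \neq i$ uniformly, keeping track of which copies of $P_j$ (or $I_j$) appear and how the identifications ${}_j\BH_i = {}_{\btau j}\BH_{\btau i}$ from \S\ref{subsection:tensor algebra} intervene, so that the map $\Omega(\E_i) \to P_i$ is genuinely split onto a projective direct summand complement and not merely a submodule of a projective. Having the tensor-algebra presentation $\Lambda^\imath \cong T_\BH(M)$ and the equivalence $\rep(\cm(Q,\btau)) \simeq \rep(\ov Q,\ov I)$ (Proposition~\ref{prop:modulated representation}) should make this manageable: in the language of $F$-representations with $F = M \otimes_\BH -$, the module $\E_i$ corresponds to an $\BH$-module placed at the orbit of $i$ with zero structure map, and its minimal projective resolution is the standard two-term complex $0 \to M \otimes_\BH \E_i^{\BH} \to T_\BH(M)\otimes_\BH \E_i^{\BH} \to \E_i \to 0$ coming from the tensor-algebra bar-type resolution, whose terms are projective precisely because $M$ is projective over $\BH$ on both sides. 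That observation essentially reduces both bounds to the single fact that $M$ is $\BH$-biprojective, which is already recorded in the proof of Proposition~\ref{proposition of 1-Gorenstein}.
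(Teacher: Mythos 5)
Your proposal is correct, and the two-term projective resolution you aim for is exactly the one the paper produces, namely $0\to \bigoplus_{(\alpha:i\to j)\in Q_1}\La^\imath e_j\to \La^\imath e_i\to \E_i\to 0$; but you reach it by a different route. The paper does not compute the syzygy inside the bound quiver at all: it takes the standard projective resolution $0\to \bigoplus_{(\alpha:i\to j)}(\K Q)e_j\to (\K Q)e_i\to S_i\to 0$ of the simple $\K Q$-module and applies $\La^\imath\otimes_{\K Q}-$, using that $\La^\imath$ is projective (hence flat) as a right $\K Q$-module and that $\La^\imath\otimes_{\K Q}S_i\cong \E_i$. This base-change argument buys you exactness and projectivity of the terms in one stroke, with no case analysis on $\btau i=i$ versus $\btau i\neq i$ and no bookkeeping with the bimodules ${}_j\BH_i$ -- precisely the obstacle you flag. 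Your tensor-algebra alternative (the bar-type sequence $0\to T_\BH(M)\otimes_\BH M\otimes_\BH \E_i\to T_\BH(M)\otimes_\BH \E_i\to \E_i\to 0$, with projectivity of the terms coming from $M$ being $\BH$-biprojective) is a clean and valid substitute that achieves the same uniformity. One small correction of emphasis: you do not need $\Omega(\E_i)\hookrightarrow P_i$ to be "split onto a projective direct summand complement" -- the syzygy is not a summand of the indecomposable $P_i$; you only need it to be projective, which your identification with $\bigoplus_j P_j$ gives. For the injective bound, the paper simply invokes $1$-Gorensteinness ($\pd\le 1\Leftrightarrow\ind\le 1$ for modules of finite projective dimension over a $1$-Gorenstein algebra), i.e.\ the "formal" deduction you were hesitant about is exactly the intended one; your dual computation via $D$ and $(\La^\imath)^{op}$ also works and matches the injective coresolution \eqref{eqn:injective resolution of E} the paper records afterwards.
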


\begin{proof}
We view $\K Q$ as a subalgebra of $\Lambda^{\imath}$. Proposition \ref{proposition of 1-Gorenstein} implies that the left (and respectively, right) regular module $\Lambda^{\imath}$ is projective as a $\K Q$-module.
For any $i\in Q_0$, the simple $\K Q$-module $S_i$ admits the following projective resolution
\[
0\longrightarrow \bigoplus_{(\alpha: i\rightarrow j)\in Q_1} (\K Q)e_j\longrightarrow (\K Q)e_i\longrightarrow S_i\longrightarrow0.
\]
By applying $\Lambda^{\imath}\otimes_{\K Q}-$ to it and noting that $\Lambda^{\imath}\otimes_{\K Q}S_i\cong \E_i$, we obtain the following projective resolution of $\E_i$
\begin{equation}\label{eqn:projective resolution of E}
0\longrightarrow \bigoplus_{(\alpha: i\rightarrow j)\in Q_1} \Lambda^{\imath}\, e_j\longrightarrow \Lambda^{\imath}\,e_i\longrightarrow \E_i \longrightarrow0.
\end{equation}
Thus $\pd_{\Lambda^{\imath}} (\E_i)\leq 1$.
As $\Lambda^{\imath}$ is $1$-Gorenstein, it follows that $\ind_{\Lambda^{\imath}} (\E_i)\leq 1$.
\end{proof}

Dual to (\ref{eqn:projective resolution of E}), we obtain the injective resolution of $\E_i$:
\begin{equation}\label{eqn:injective resolution of E}
0\longrightarrow \E_i \longrightarrow D(e_{\btau i} \Lambda^{\imath})\longrightarrow \bigoplus_{(\alpha:j\rightarrow \btau i)\in Q_1}D(e_{j}  \Lambda^{\imath})\longrightarrow0,
\end{equation}
by considering the right modules, and applying the duality functor $D$.

\begin{proposition}
\label{propostion equivalence of locally projective modules}
For any $M\in\mod(\Lambda^{\imath})$, the following are equivalent:
\begin{itemize}
\item[(i)] $\pd M<\infty$;
\item[(ii)] $\ind M<\infty$;
\item[(iii)] $\pd M\leq1$;
\item[(iv)] $\ind M\leq1$;
\item[(v)] $\res_\BH (M)$ is projective as $\BH $-module.
\end{itemize}
\end{proposition}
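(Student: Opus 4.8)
The plan is to prove the chain of equivalences by a combination of a general fact about $1$-Gorenstein algebras and an explicit reduction to the subalgebra $\BH$ via the tensor algebra structure. First, note that (i) $\Leftrightarrow$ (iii) and (ii) $\Leftrightarrow$ (iv) are immediate from Proposition~\ref{proposition of 1-Gorenstein}(1): since $\La^\imath$ is $1$-Gorenstein, any module of finite projective dimension has projective dimension $\leq 1$, and dually for injective dimension. Also (iii) $\Leftrightarrow$ (iv) follows from the standard fact that over a $d$-Gorenstein algebra the classes $\cp^{\leq d}$ defined via projective dimension and via injective dimension coincide (here $d=1$); alternatively one can use the resolutions \eqref{eqn:projective resolution of E} and \eqref{eqn:injective resolution of E} together with $K_0$-considerations. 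So the whole proposition reduces to showing, say, (iii) $\Leftrightarrow$ (v).

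For (iii) $\Rightarrow$ (v): if $\pd_{\La^\imath} M \leq 1$, I would argue that $\res_\BH$ sends modules of finite projective dimension to projective $\BH$-modules. The cleanest route is to use that $\La^\imath \cong T_\BH(M)$ is a tensor algebra over $\BH$ (Proposition~\ref{prop:tensor algebra}), hence $\La^\imath$ is projective as a left and as a right $\BH$-module, and the bimodule $M = \bigoplus_{(i,j)\in\ov\Omega} {}_j\BH_i$ is projective on both sides. Under these hypotheses, restriction along $\BH \hookrightarrow \La^\imath$ is exact and preserves projectives (as $\La^\imath$ is $\BH$-projective), so it sends a length-one projective resolution of $M$ to a length-one projective resolution of $\res_\BH M$ over $\BH$; but $\BH$ is selfinjective (indeed radical square zero selfinjective), so a $\BH$-module of finite projective dimension is projective. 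Hence $\res_\BH M$ is projective.

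For (v) $\Rightarrow$ (iii): this is where the generalized simple modules $\E_i$ and the resolution \eqref{eqn:projective resolution of E} enter. If $\res_\BH M$ is projective over $\BH$, then (since $\BH = \bigoplus_{i\in\ci}\BH_i$ and the indecomposable projective $\BH_i$-modules are exactly the $\E_i$, $i\in Q_0$) we get that, as a $\BH$-module, $M$ is a direct sum of copies of the $\E_i$. I would then show by induction on $\dim M$, using the $\N$-grading / the radical filtration coming from the arrows of $Q$, that $M$ is filtered by the $\E_i$ as a $\La^\imath$-module: concretely, pick a sink $i$ of $Q$ among the vertices in the support; the $\BH$-summand of $M$ at that vertex (and its $\btau$-partner) splits off a submodule isomorphic to a sum of $\E_i$'s, or dually a quotient, yielding a short exact sequence $0 \to M' \to M \to \E_i^{\oplus a} \to 0$ (or the reverse) with $M'$ again having projective $\BH$-restriction. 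Since by Lemma~\ref{lemma locally projective modules} each $\E_i$ has $\pd_{\La^\imath}\E_i \leq 1$, and $\cp^{\leq 1}(\La^\imath)$ is closed under extensions, one concludes $\pd_{\La^\imath} M \leq 1$.

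The main obstacle I expect is the induction step in (v) $\Rightarrow$ (iii): one must verify that the $\BH$-projectivity of $\res_\BH M$ actually forces a short exact sequence splitting off a copy of some $\E_i$ at a sink (or source) as $\La^\imath$-modules, not merely as $\BH$-modules, and that the complementary term $M'$ still restricts to a projective $\BH$-module. This is where the commutative relations $\varepsilon_i\alpha = \btau(\alpha)\varepsilon_j$ and the explicit bases \eqref{eqn:basis of L}--\eqref{eqn:basis of R} of the bimodules ${}_j\BH_i$ do the real work; the modulated-graph description in Proposition~\ref{prop:modulated representation} is the convenient bookkeeping device for tracking how the maps $N_{ji}$ behave when $N_i$ is $\BH_i$-free. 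Everything else is formal manipulation with the tensor algebra and the $1$-Gorenstein property.
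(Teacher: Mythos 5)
Your proof is correct and takes essentially the same route as the paper: the equivalence of (i)--(iv) is read off from the $1$-Gorenstein property of $\Lambda^{\imath}$, and the equivalence with (v) is exactly the argument of \cite[Proposition 3.5]{GLS} that the paper cites and omits --- for one direction, restriction to $\BH$ is exact and preserves projectives because $\Lambda^{\imath}$ is projective as a left $\BH$-module and $\BH$ is selfinjective, and for the other, a filtration of $M$ by the generalized simples $\E_i$ obtained by splitting off the $\BH$-component at a sink of the support (as the paper itself carries out in Corollary~\ref{corollary locally projective modules}) combined with $\pd_{\Lambda^{\imath}}\E_i\leq 1$ and extension-closure of $\cp^{\leq 1}(\Lambda^{\imath})$. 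The one point you flag as delicate --- that the splitting-off happens at the level of $\Lambda^{\imath}$-modules --- is handled exactly as you suggest, using acyclicity of $Q$ to find a $\btau$-stable pair of sinks of the support so that $(e_i+e_{\btau i})M$ is a genuine submodule.
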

\begin{proof}
Proposition \ref{proposition of 1-Gorenstein} states that $\Lambda^{\imath}$ is $1$-Gorenstein, and then the equivalence of (i)--(iv) follows.
By using Lemma \ref{lemma locally projective modules}, the proof of the equivalence ``(i)$\Leftrightarrow$(v)'' is similar to \cite[Proposition~ 3.5]{GLS}, and hence is omitted.
\end{proof}

Recall the generalized simple $\La^\imath$-module $\E_i$ from \eqref{eq:E}.

\begin{corollary}\label{corollary locally projective modules}
A $\Lambda^{\imath}$-module $M$ has finite projective dimension if and only if it has a filtration $0=M_t\subset M_{t-1}\subset \cdots \subset M_{1}\subseteq M_0=M$ such that $M_j/M_{j+1}\cong \E_{i_j}$ for some $i_j\in Q_0$, for $0\leq j\leq t$.
\end{corollary}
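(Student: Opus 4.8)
The plan is to prove both implications by combining Proposition~\ref{propostion equivalence of locally projective modules} with the local structure of $\Lambda^\imath$-modules over the subalgebra $\BH$. For the ``if'' direction, suppose $M$ admits a filtration $0=M_t\subset M_{t-1}\subset \cdots\subset M_1\subseteq M_0=M$ with $M_j/M_{j+1}\cong \E_{i_j}$. Since $\pd_{\Lambda^\imath}(\E_i)\leq 1$ for all $i$ by Lemma~\ref{lemma locally projective modules}, and the class $\cp^{\leq 1}(\Lambda^\imath)$ is closed under extensions (each $\E_{i_j}$ lies in it, and an extension of modules of projective dimension $\leq 1$ again has projective dimension $\leq 1$), a straightforward induction on the length $t$ of the filtration shows $\pd_{\Lambda^\imath}(M)\leq 1<\infty$. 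This is the routine half.

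For the ``only if'' direction, assume $\pd_{\Lambda^\imath}(M)<\infty$. By Proposition~\ref{propostion equivalence of locally projective modules}, this is equivalent to $\res_\BH(M)$ being a projective $\BH$-module. Now $\BH=\bigoplus_{i\in\ci}\BH_i$ is a radical square zero selfinjective Nakayama algebra, so a projective $\BH_i$-module is a direct sum of copies of the indecomposable projective $\BH_i$-module, which is precisely $\E_i$ (viewed over $\BH_i$); thus $\res_\BH(M)\cong \bigoplus_{i\in Q_0}\E_i^{\oplus n_i}$ for suitable multiplicities $n_i$, and in particular $\dim M=\sum_i n_i\dim\E_i$. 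The strategy is then to build the filtration by peeling off one generalized simple submodule at a time: I would find an $i\in Q_0$ that is a sink of $Q$ among the support of $M$ and show that the corresponding $\BH_i$-summand of $\res_\BH(M)$ is actually a $\Lambda^\imath$-submodule isomorphic to a direct sum of copies of $\E_i$ — the point being that for a sink $i$ there are no arrows $\alpha:i\to j$ in $Q$, so the only structure maps out of the vertices $\{i,\btau i\}$ are the $\varepsilon$-maps internal to $\BH_i$. Taking $M_1$ to be this submodule and iterating on $M/M_1$, whose restriction to $\BH$ remains projective (being a quotient of a projective $\BH$-module that is still $\BH$-projective by selfinjectivity, or more directly because $\dim$ counts drop consistently), produces the desired filtration after finitely many steps.

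The main obstacle I anticipate is the sink-vertex peeling step: verifying carefully that the $\BH_i$-component of a module whose $\BH$-restriction is projective genuinely splits off as a $\Lambda^\imath$-submodule isomorphic to $\E_i^{\oplus n_i}$. One must check that at a sink $i$ the submodule generated by the $X_i$-part (together with $X_{\btau i}$ when $\btau i\neq i$) is closed under all of $\Lambda^\imath$ — arrows into $i$ go the wrong way to leave this subspace, and the $\varepsilon_j$ relations $\varepsilon_i\alpha=\btau(\alpha)\varepsilon_j$ from Proposition~\ref{prop:invariant subalgebra} constrain how $\varepsilon$ interacts with the rest — and that the resulting subquotient structure over $\BH_i$ forces it to be free, i.e. a sum of copies of $\E_i$. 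One should also be mindful of the case $\btau i\neq i$: if $i$ is a sink then so is $\btau i$, and the pair must be handled together, using $\BH_i=\BH_{\btau i}$ and the fact that $\E_i$ as defined in \eqref{eq:E} already bundles both vertices. An alternative, perhaps cleaner, route that avoids explicit vertex-peeling is to argue by induction on $\dim M$ directly from the projective resolution \eqref{eqn:projective resolution of E}: given $\pd M\leq 1$, one writes a projective presentation and uses that the cokernel of a map between the specific projectives $\Lambda^\imath e_i$ appearing there is built from the $\E_i$; but this still ultimately requires the same combinatorial bookkeeping, so I expect the sink-peeling argument to be the most transparent to write out. Either way, once the single-step claim is established the induction closes immediately.
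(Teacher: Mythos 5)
Your proposal is correct and follows essentially the same route as the paper's proof: the ``if'' direction via Lemma~\ref{lemma locally projective modules} together with closure of $\cp^{\leq 1}(\Lambda^\imath)$ under extensions, and the ``only if'' direction by invoking Proposition~\ref{propostion equivalence of locally projective modules}, choosing a $\btau$-orbit $\{i,\btau i\}$ that is a sink for the support of $M$, splitting off $(e_i+e_{\btau i})M\cong \E_i^{\oplus m_i}\oplus\E_{\btau i}^{\oplus m_{\btau i}}$ as a $\Lambda^\imath$-submodule, and inducting on $\dim M$. The one justification to tighten is why $\res_\BH(M/M_1)$ remains projective: this is not because quotients of projectives over a selfinjective algebra are projective (they need not be), but because the entire $\BH_i$-isotypic component of $\res_\BH(M)$ is absorbed into the submodule $M_1$, so the restriction of the quotient is just the remaining $\BH_j$-components of the projective module $\res_\BH(M)$.
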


\begin{proof}
The ``if" direction follows from Lemma \ref{lemma locally projective modules}.

It remains to prove the ``only if" direction. Assume that $M$ has finite projective dimension. By Proposition~ \ref{propostion equivalence of locally projective modules}, $\res_\BH (M)$ is projective as $\BH $-module. As $Q$ is an acyclic quiver, there exists a vertex $i$ of the quiver $Q$ such that $e_iM\neq 0$ or $e_{\btau i}M\neq 0$, and $e_jM=0$ for all $j\in\Omega(i, -)\cup\Omega(\btau i,-)$. If $\btau i=i$, we have a short exact sequence
\begin{align}\label{eqn:cor projective}
0\longrightarrow e_iM\longrightarrow M\longrightarrow(1-e_i)M\longrightarrow0.
\end{align}
Note $e_i\res_\BH (M)=e_iM$ viewed as $\BH _i$-modules. So $e_iM$ is a projective $\BH _i$-module, and then $e_iM\cong \E_i^{\oplus m_i}$ for some $m_i$. By applying $\res_\BH $ to \eqref{eqn:cor projective}, we see that $\res_\BH ((1-e_i)M)$ is projective as an $\BH $-module.
If $\btau i\neq i$, then
\begin{align*}
0\longrightarrow (e_i+e_{\btau i})M\longrightarrow M\longrightarrow(1-e_i-e_{\btau i})M\longrightarrow0
\end{align*}
is an exact sequence with $\res_{\BH }((e_i+e_{\btau i})M)$  projective as $\BH _i$-modules. Then $(e_i+e_{\btau i})M\cong \E_i^{\oplus m_i} \oplus \E_{\btau i}^{\oplus m_{\btau i}}$ for some $m_i,m_{\btau i}$. A similar argument as above shows that $\res_\BH ((1-e_i -e_{\btau i})M)$ is projective as an $\BH $-module.

Now the ``only if" direction follows by induction on dimension of $M$.
\end{proof}

\subsection{Projective $\La^\imath$-modules}
  \label{subsection: description of projectives}

We describe the indecomposable projective $\Lambda^{\imath}$-modules in this subsection.

For each $i\in Q_0$, we denote by $P_i$ the indecomposable projective $\K Q$-module $(kQ)e_i$.
Recall that $R_2$ is the radical square zero of the path algebra of $\xymatrix{1 \ar@<0.5ex>[r]^{\varepsilon} & 2 \ar@<0.5ex>[l]^{\varepsilon'}}$. We shall identify in this subsection $\mod(\Lambda)\cong \cc_{\Z/2}(\mod(\K Q))$ as in Lemma~\ref{lem:Rm}.

\begin{lemma} [\cite{Br}]
   \label{lem:projectitve of Lambda}
A $\Lambda$-module is indecomposable projective if and only if it is isomorphic to $\xymatrix{ P_i \ar@<0.5ex>[r]^{1}& P_i \ar@<0.5ex>[l]^{0}  }$ or
$\xymatrix{ P_i \ar@<0.5ex>[r]^{0}& P_i \ar@<0.5ex>[l]^{1}  }$ for some indecomposable projective $\K Q$-module $P_i$.
\end{lemma}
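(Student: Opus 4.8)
The plan is to identify $\Lambda = kQ \otimes_k R_2$ with the category $\cc_{\Z/2}(\mod(kQ))$ of $\Z/2$-graded complexes over $\mod(kQ)$ via Lemma~\ref{lem:Rm}, and then compute the indecomposable projectives directly. First I would write down the primitive idempotents of $\Lambda$. Since $R_2$ has primitive idempotents $e_0$ (at vertex $1$) and $e_1$ (at vertex $2$ of $\xymatrix{1 \ar@<0.5ex>[r]^{\varepsilon} & 2 \ar@<0.5ex>[l]^{\varepsilon'}}$), and $kQ$ has primitive idempotents $e_i$ for $i \in Q_0$, the primitive idempotents of $\Lambda = kQ \otimes_k R_2$ are exactly $e_i \otimes e_0$ and $e_i \otimes e_1$ for $i \in Q_0$. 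Hence the indecomposable projective $\Lambda$-modules are $\Lambda(e_i \otimes e_0)$ and $\Lambda(e_i \otimes e_1)$, and each is a direct summand of $\Lambda$ exactly once (up to iso), so it suffices to show $\Lambda(e_i \otimes e_0)$ corresponds to the complex $\xymatrix{ P_i \ar@<0.5ex>[r]^{1} & P_i \ar@<0.5ex>[l]^{0} }$ and $\Lambda(e_i \otimes e_1)$ to $\xymatrix{ P_i \ar@<0.5ex>[r]^{0} & P_i \ar@<0.5ex>[l]^{1} }$, and that conversely every indecomposable projective arises this way.

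The computation is then a matter of unwinding the tensor product: as a $kQ$-module, $\Lambda(e_i\otimes e_0) = kQ e_i \otimes_k R_2 e_0 = P_i \otimes_k R_2 e_0$. Now $R_2 e_0$ is two-dimensional, spanned by $e_0$ (degree $0$) and $\varepsilon'$ (degree $1$, since $\varepsilon'\colon 2 \to 1$ lands in the $e_0$-component... one must be careful here with the conventions of Lemma~\ref{lem:Rm}, but in any case $R_2 e_0$ has one basis vector in each $\Z/2$-degree). So as a $\Z/2$-graded complex, $\Lambda(e_i\otimes e_0)$ has $P_i$ in each degree; the differential in one direction is multiplication by $\varepsilon$ and in the other by $\varepsilon'$, and from the multiplication table of $R_2$ (namely $\varepsilon' e_0 = \varepsilon'$, $\varepsilon \varepsilon' = 0$, and $\varepsilon e_0 = 0$, $\varepsilon' \varepsilon'= 0$) one reads off that one differential is the identity $P_i \to P_i$ and the other is zero. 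This gives precisely $\xymatrix{ P_i \ar@<0.5ex>[r]^{1} & P_i \ar@<0.5ex>[l]^{0} }$; the case of $e_i \otimes e_1$ is symmetric and gives $\xymatrix{ P_i \ar@<0.5ex>[r]^{0} & P_i \ar@<0.5ex>[l]^{1} }$.

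For the converse direction, I would note that the two-term complexes $\xymatrix{ P_i \ar@<0.5ex>[r]^{1} & P_i \ar@<0.5ex>[l]^{0} }$ and $\xymatrix{ P_i \ar@<0.5ex>[r]^{0} & P_i \ar@<0.5ex>[l]^{1} }$, for $i$ ranging over $Q_0$, exhaust (up to isomorphism) the indecomposable summands of the regular module $\Lambda$ by the idempotent count above; since indecomposable projectives over a finite-dimensional algebra are precisely the indecomposable summands of the regular module, this establishes the "only if" direction. The statement is attributed to \cite{Br}, so an alternative is simply to cite Bridgeland's computation of the indecomposable projectives in $\cc_{\Z/2}(\proj(kQ))$ directly and remark that the identification $\mod(\Lambda) \cong \cc_{\Z/2}(\mod(kQ))$ carries them over.

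The main obstacle is bookkeeping rather than conceptual: one must fix a consistent convention for how $\mod(kQ \otimes_k R_2)$ is identified with $\Z/2$-graded complexes (which tensor factor is the "complex" direction, and which of $\varepsilon, \varepsilon'$ raises versus lowers the $\Z/2$-degree), and then verify that the $R_2$-module $R_2 e_0$ (resp. $R_2 e_1$) really does have the claimed differentials — i.e. that exactly one composite $\varepsilon\varepsilon'$ or $\varepsilon'\varepsilon$ vanishes on the relevant idempotent. This is routine but is the only place where an error could creep in, so I would spell out the multiplication table of $R_2$ explicitly before reading off the answer.
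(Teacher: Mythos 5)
Your proposal is correct, but it takes a genuinely different route from the paper. The paper disposes of the ``if'' direction by citing \cite[Lemma 3.3]{Br} directly, and for the ``only if'' direction argues that the restriction of an indecomposable projective $\Lambda$-module to $Q\sqcup Q'$ is projective, so that $M\in\cc_{\Z/2}(\proj(\K Q))$, then invokes Bridgeland's decomposition results \cite[Lemmas 4.2, 3.3]{Br} to conclude $M$ is acyclic, and finally \cite[Lemma 3.2]{Br} to identify the indecomposable acyclic complexes of projectives. You instead compute the indecomposable projectives from scratch: since $\K Q$ and $R_2$ are split basic algebras, $\rad(\Lambda)=\rad(\K Q)\otimes R_2+\K Q\otimes\rad(R_2)$ and $\Lambda/\rad(\Lambda)$ is a product of copies of $\K$, so the $e_i\otimes e_0$, $e_i\otimes e_1$ are indeed a complete set of primitive orthogonal idempotents (this is the one point you assert without justification, and it is the standard fact you should record); the explicit computation of $\Lambda(e_i\otimes e_0)\cong P_i\otimes_\K R_2e_0$ as a two-term complex with one identity differential and one zero differential is correct, and the converse is immediate because every indecomposable projective is a summand of the regular module. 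Your approach buys a self-contained argument that never needs the acyclicity discussion or the decomposition theory from \cite{Br}; the paper's approach is shorter on the page but outsources all the content to Bridgeland's lemmas. (Indecomposability of your candidate modules can also be seen directly: an endomorphism of $\xymatrix{P_i \ar@<0.5ex>[r]^{1}& P_i \ar@<0.5ex>[l]^{0}}$ is forced to be a pair $(f,f)$ with $f\in\End_{\K Q}(P_i)$, which is local.)
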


\begin{proof}
The ``if'' part follows from \cite[Lemma 3.3]{Br}.

For the ``only if'' part, let $M$ be an  indecomposable projective module. As $\Lambda=\K Q\otimes_k R_2$, the restriction of $M$ to the subquiver $Q\bigcup Q'\subseteq Q^{\sharp}$ is projective, so $M\in\cc_{\Z/2}(\proj(\K Q))$. It follows by the decomposition in \cite[Lemmas 4.2, 3.3]{Br} that $M$ is an acyclic complex.
Then the assertion follows from \cite[Lemma 3.2]{Br}.
\end{proof}

\begin{proposition}\label{prop:projective module of lambdai}
A $\Lambda^{\imath}$-module $X=(X_i,X(\alpha), X(\varepsilon_i))_{i\in Q_0,\alpha\in Q_1}$ is indecomposable projective if and only if the $\K Q$-module $(X_i,X(\alpha))$ is equal to $P_j\oplus P_{\btau j}$ and $X(\varepsilon_j)$ is a linear isomorphism, for some $j\in Q_0$; see \eqref{eqn:rigt adjoint}. In particular, we have a short exact sequence in $\mod(\Lambda^\imath)$:
\begin{align}
   \label{eqn:projective resolution of kQ}
0\longrightarrow P_{\btau j}\longrightarrow (\Lambda^\imath) e_j \longrightarrow P_{j}\longrightarrow0.
\end{align}
\end{proposition}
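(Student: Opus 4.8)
The idea is to use the tensor-algebra description of $\Lambda^\imath$ from Proposition~\ref{prop:tensor algebra}, together with the identification $\mod(\Lambda^\imath)\cong\rep(M\otimes_\BH-)$ from \eqref{equ:iso-cat}, to pin down the projective $\Lambda^\imath$-modules explicitly. First I would recall that for a tensor algebra $T_\BH(M)$ the indecomposable projective modules are exactly $T_\BH(M)\otimes_\BH P$ for $P$ an indecomposable projective $\BH$-module; concretely, $(\Lambda^\imath)e_j$ corresponds to the representation generated by a simple $\BH$-top at vertex $j$ (or at the $\btau$-orbit of $j$). Then I would compute its restriction to $\K Q$ via the functor $\res$: since $\Lambda^\imath$ is free over $\K Q$ on both sides (this is essentially the content of Corollary~\ref{cor:subquotient} and Proposition~\ref{proposition of 1-Gorenstein}(2)), and since the arrows $\varepsilon_i$ have the commutation relations $\varepsilon_i\alpha=\btau(\alpha)\varepsilon_j$, the underlying $\K Q$-module $(X_i,X(\alpha))$ of $(\Lambda^\imath)e_j$ is forced to be $P_j\oplus P_{\btau j}$, with $X(\varepsilon_j):P_j\to P_{\btau j}$ (or, when $\btau j=j$, the nilpotent endomorphism of $P_j\oplus P_j$ shifting one copy onto the other) a linear isomorphism on the $j$- and $\btau j$-components. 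Conversely, I would check that any $X$ with $\res X\cong P_j\oplus P_{\btau j}$ and $X(\varepsilon_j)$ an isomorphism is isomorphic to $(\Lambda^\imath)e_j$: the isomorphism $X(\varepsilon_j)$ lets one build a surjection $(\Lambda^\imath)e_j\twoheadrightarrow X$ which is an isomorphism by dimension count, and such $X$ is indecomposable because its top is simple.

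\textbf{The short exact sequence.} Once the description of $(\Lambda^\imath)e_j$ is in hand, the sequence \eqref{eqn:projective resolution of kQ} is essentially the sequence \eqref{eqn:projective resolution of E} specialized, or rather its ``other half'': viewing $\K Q$ as a quotient of $\Lambda^\imath$ (Corollary~\ref{cor:subquotient}) gives the pullback functor $\iota$, under which $P_j\in\mod(\K Q)$ becomes a $\Lambda^\imath$-module with $X(\varepsilon_i)=0$ for all $i$. I would exhibit the map $(\Lambda^\imath)e_j\to \iota(P_j)$ as the natural quotient $\Lambda^\imath\to\K Q$ tensored with $e_j$; its kernel, using the explicit basis of ${}_j\BH_i$ from \eqref{basis of Hij left}--\eqref{basis of Hij right} and the $\N$-grading $|\varepsilon_i|=1$, is precisely the degree-$1$ part, which as a $\K Q$-module (equivalently, a $\Lambda^\imath$-module with trivial $\varepsilon$-action) is $P_{\btau j}$. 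That this kernel is $P_{\btau j}$ rather than $P_j$ comes from the commutation relation: multiplication by $\varepsilon_j$ sends the $j$-copy $P_j$ into the $\btau j$-slot, so the image of $\varepsilon_j\cdot(\Lambda^\imath)e_j$ lives over $\btau j$.

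\textbf{Main obstacle.} The routine-but-delicate point is the bookkeeping with $\btau$-orbits: one must handle uniformly the case $\btau j=j$ (where $\BH_j=\K[\varepsilon_j]/(\varepsilon_j^2)$ and $(\Lambda^\imath)e_j$ restricts to $P_j\oplus P_j$ with a nilpotent, not invertible, $\varepsilon_j$-action, so the phrase ``$X(\varepsilon_j)$ is a linear isomorphism'' must be interpreted as the induced map between the two $P_j$-summands) and the case $\btau j\neq j$ (where $\BH_j$ has two vertices and $(\Lambda^\imath)e_j=(\Lambda^\imath)e_{\btau j}$ is a genuinely different statement). Getting the grading and the direction of the arrows consistent in \eqref{eqn:projective resolution of kQ} in both cases, and verifying that the constructed surjection is indeed a $\Lambda^\imath$-module map (not merely $\K Q$-linear), is where care is needed; everything else follows formally from Propositions~\ref{prop:tensor algebra} and~\ref{proposition of 1-Gorenstein} and the standard structure theory of projectives over tensor algebras.
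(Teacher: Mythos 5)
Your argument is correct, but it is not the route the paper takes. The paper's proof is essentially two lines: the pushdown functor $\Pd$ of the Galois covering $\pi\colon\Lambda\to\Lambda^\imath$ is exact and preserves projectives, every projective $\Lambda^\imath$-module is gradable, so one simply applies $\Pd$ to Bridgeland's list of indecomposable projective $\Lambda$-modules from Lemma~\ref{lem:projectitve of Lambda}. You instead argue intrinsically from the presentation of $\Lambda^\imath$: the indecomposable projectives are the $(\Lambda^\imath)e_j$ (visible either from the tensor-algebra structure of Proposition~\ref{prop:tensor algebra} or just from the bound quiver $(\ov Q,\ov I)$), the grading $\Lambda^\imath=\K Q\oplus\Lambda^\imath_1$ with $\Lambda^\imath_1 e_j=\K Q\,\varepsilon_j\cong P_{\btau j}$ identifies $\res\big((\Lambda^\imath)e_j\big)$ as $P_j\oplus P_{\btau j}$, and the degree filtration is precisely the sequence \eqref{eqn:projective resolution of kQ}. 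Both approaches work; yours is more self-contained, supplies the converse direction explicitly (the surjection $(\Lambda^\imath)e_j\twoheadrightarrow X$ plus a dimension count, which the paper leaves implicit), and makes transparent why the submodule is $P_{\btau j}$ rather than $P_j$, whereas the paper's is shorter given the covering-theoretic preparation already in place. You are also right to flag that ``$X(\varepsilon_j)$ is a linear isomorphism'' cannot be read literally --- already for $\btau=\Id$ the map $X(\varepsilon_j)\colon X_j\to X_j$ on $(\Lambda^\imath)e_j$ is nilpotent --- and must be interpreted as the component from the $P_j$-summand to the $P_{\btau j}$-summand being an isomorphism, which is exactly what the pushdown description yields.
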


\begin{proof}
The pushdown functor $\Pd:\mod(\Lambda)\rightarrow \mod (\Lambda^\imath)$ defined in \eqref{eq:pi} is exact and preserves projective modules.
Then the assertion follows by applying $\Pd$ to the indecomposable projective $\La$-modules in Lemma \ref{lem:projectitve of Lambda}.
\end{proof}

\subsection{Singularity category of Gorenstein algebras}
  \label{subsec:tilting}

Let $A=\bigoplus_{\ell\in\N} A_\ell$ be a non-negatively graded finite-dimensional algebra, and $\mod^\Z(A)$ be the category of finitely generated graded $A$-modules.
For $\ell\in \Z$, the \emph{grade shift functor} $(\ell):\mod^\Z(A)\rightarrow \mod^\Z(A)$, $X \mapsto X(\ell)$, is defined by letting $X(\ell) =\oplus_{j\in\Z}X(\ell)_j$, where $X(\ell)_j:=X_{j+\ell}$. Following \cite{Ya}, the truncation functors
\[
(-)_{\geq \ell}:\mod^\Z(A)\longrightarrow\mod^\Z(A),\quad (-)_{\leq \ell}:\mod^\Z(A)\longrightarrow \mod^\Z(A)
\]
are defined as follows. For a $\Z$-graded $A$-module $X=\bigoplus_{\ell\in\Z}X_\ell $, $X_{\geq \ell}$ is a $\Z$-graded $A$-submodule of $X$
defined by
\[
(X_{\geq \ell})_j:=\left\{ \begin{array}{ccc} 0& \mbox{ if }j<\ell \\ X_j& \mbox{ if }j\geq \ell,\end{array} \right.
\]
and $X_{\leq \ell}$ is a $\Z$-graded quotient $A$-module $X/X_{\geq \ell+1}$ of $X$.

Now we define a $\Z$-graded $A$-module by
\[
T:=\bigoplus_{\ell\geq0} A(\ell)_{\leq0}.
\]
For $\ell$ sufficiently large, $A(\ell)_{\leq0}$ is projective since $A$ is finite-dimensional. So we can regard $T$ as an object in $D_{sg}(\mod^\Z(A))$ (by noting that every projective module is zero in $D_{sg}(\mod^\Z(A))$).

Let $\proj^\Z(A)$ be the subcategory of finitely generated graded projective $A$-modules.
Define the {\em graded singularity category} of $A$ to be
\[
D_{sg}(\mod^\Z(A)):=D^b(\mod^\Z(A))/K^b(\proj^\Z(A)).
\]
Similarly, one can define a notion of \emph{graded Gorenstein projective modules}.
We denote by $\Gproj^\Z(A)$ the full subcategory of $\mod^\Z(A)$ formed by all $\Z$-graded Gorenstein projective modules. Note that a graded $A$-module is graded Gorenstein projective if it is Gorenstein projective as an ungraded module, see e.g., \cite{LZ}. The forgetful functor $\mod^\Z(A)\rightarrow \mod(A)$ maps graded Gorenstein projective modules to Gorenstein projective modules. The results stated in Theorem \ref{thm:Buchweitz-Happel}  also holds for graded algebras and graded modules.

For any positive integer $m$, every $\Z$-graded algebra $A=\bigoplus_{\ell\in \Z} A_\ell$ can be viewed naturally as a $\Z/m$-graded algebra $A=\bigoplus_{\bar \ell \in \Z/m}A_{\ov \ell}$ with $A_{\ov \ell} =\bigoplus_{\ell' \equiv \ell \pmod m} A_{\ell'}$; thus  one can also define $\mod^{\Z/m}(A)$, $\Gproj^{\Z/m}(A)$ and $D_{sg}(\mod^{\Z/m}(A))$.

\begin{lemma}[\text{\cite[Proposition 3.4]{LZ}}]\label{corollary for T CM}
Let $A$ be a non-negatively graded Gorenstein algebra such that $A_0$ has finite global dimension. If $T= \bigoplus_{i\geq0} A(i)_{\leq0}$ is graded Gorenstein projective, then $T$ is a tilting object in $D_{sg}(\mod^\Z(A))$.
\end{lemma}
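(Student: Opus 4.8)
The plan is to reduce the statement to the fundamental criterion for tilting objects in the singularity category of a positively graded Gorenstein algebra, which is essentially due to Iyama--Yang and Yamaura (and recorded in the reference \cite{LZ}). So the first thing I would do is fix the standing hypotheses: $A=\bigoplus_{i\ge 0}A_i$ is a finite-dimensional positively graded Gorenstein algebra with $\gldim A_0<\infty$, and I assume additionally (as the lemma does) that $T=\bigoplus_{i\ge 0}A(i)_{\le 0}$ is graded Gorenstein projective. Since Buchweitz--Happel's equivalence $\ul{\Gproj}^{\Z}(A)\simeq D_{sg}(\mod^{\Z}(A))$ holds for graded algebras, $T$ defines an object of $D_{sg}(\mod^{\Z}(A))$ which is in fact represented by an honest graded module. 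Being a \emph{tilting object} in the triangulated category $D_{sg}(\mod^{\Z}(A))$ then means exactly two things: (i) the graded self-extension vanishing $\Hom_{D_{sg}}(T,T[n])=0$ for all $n\ne 0$, where $[1]$ is the suspension (equivalently, the graded syzygy cosyzygy shift on $\ul{\Gproj}^{\Z}$); and (ii) $T$ generates $D_{sg}(\mod^{\Z}(A))$ as a thick subcategory. I would state these as the two things to verify and handle them in turn.

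For the generation statement (ii), the key observation is that for $i$ sufficiently large $A(i)_{\le 0}$ is projective (as noted in the excerpt, because $A$ is finite-dimensional), hence zero in $D_{sg}$; so $T$ is really a finite direct sum of the nonzero summands $A(i)_{\le 0}$ for $i$ in a bounded range. On the other hand, every simple graded $A$-module $S$, placed in any internal degree, appears as a composition factor at the top of some truncation $A(i)_{\le 0}$, and by an induction on the internal-degree length and the number of nonprojective composition factors one shows that the simple graded modules $S\langle j\rangle$ (all internal shifts of simples concentrated in degree $0$) lie in $\thick(T)$; these simples generate $D^b(\mod^{\Z}A)$ and hence their images generate $D_{sg}(\mod^{\Z}A)$. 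This is the same filtration argument used for the ungraded module $T$ in \cite{Ya, IyamaYang} and I would invoke it via \cite{LZ}; the hypothesis $\gldim A_0<\infty$ is what makes the degree-zero part $A_0$ and its simples disappear in $D_{sg}$, so the induction has a base case.

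The extension-vanishing (i) is where the real content sits, and I expect it to be the main obstacle. One has to compute $\Ext^{n}_A(A(i)_{\le 0},A(i')_{\le 0})$ in the stable/singularity sense for all $i,i'$ in the relevant range and all $n>0$, and show it vanishes. The standard approach is to use the defining short exact sequences $0\to A(i)_{\ge 1}\to A(i)\to A(i)_{\le 0}\to 0$ (so $A(i)_{\le 0}$ is, up to a projective, a cosyzygy-type object built from graded free modules), together with the hypothesis that $T$ is graded Gorenstein projective — this is precisely what guarantees that the truncations are ``rigid enough'' and that the Hom-complexes behave, via $\Hom_A(-,A)$-duality on the Gorenstein-projective side. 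I would feed the short exact sequences into the long exact $\Ext$-sequences, use that graded free modules have no higher self-extensions, and reduce everything to vanishing of $\Ext^{>0}$ between graded truncations, which after dimension-shifting becomes a statement about the internal grading being bounded in $[0,\text{something}]$ so that the relevant $\Hom$'s between shifted pieces are forced to zero for degree reasons. Again this is carried out in \cite{LZ} in the generality needed, so in the paper I would simply cite it; but if one wanted a self-contained argument, verifying (i) carefully — keeping track of both the cohomological degree $n$ and the internal degree $j$ simultaneously — is the step that requires genuine work, and it is exactly here that the Gorenstein-projectivity of $T$ is indispensable.
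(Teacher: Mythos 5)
The paper offers no proof of this lemma beyond the citation to \cite{LZ}, and your outline --- splitting the tilting property into rigidity plus generation, obtaining generation from the simples via $\gldim A_0<\infty$ and rigidity from the truncation sequences $0\to A(i)_{\geq 1}\to A(i)\to A(i)_{\leq 0}\to 0$ together with the Gorenstein projectivity of $T$ --- is precisely the argument of \cite{LZ} (following \cite{Ya}), which you also end up citing. So the proposal is correct and takes essentially the same approach as the paper.
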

%In this case, let $\Gamma:=\End_{D_{sg}(\mod^\Z(A))}(T)^{op}$.
%In general, it might be difficult to compute $\End_{D_{sg}(\mod^\Z(A))}(T)^{op}$. However, this is manageable for certain algebras $A$ including the ones arising from %$\imath$quivers, as we explain below. Let $A$ be a non-negatively graded Gorenstein algebra. We say that $A$ has \emph{Gorenstein parameter} $p$ if $\soc A$ is contained in $A_p$.
%The endomorphism ring of $T$ in the graded singularity category is just the one in graded module category, thanks to the following lemma.

%\begin{lemma}[\cite{Ya,LZ}]
%  \label{theorem endomorphism ring of gorenstein projective modules}
%Let $A$ be a non-negatively graded Gorenstein algebra  of Gorenstein parameter $p$.  Assume that $T=\bigoplus_{\ell\geq0} A(\ell)_{\leq0}$ is a Gorenstein projective $A$-module.
%Take a decomposition $T=\underline{T}\oplus P$ in $\Mod^\Z (A)$ where $\underline{T}$ is a direct sum of all indecomposable non-projective direct summand of $T$. Then
%\begin{itemize}
%\item[(i)] $\underline{T}$ is finitely generated, and is isomorphic to $T$ in $\underline{\Gproj}^{\Z} A$;

%\item[(ii)] there exists an algebra isomorphism $\End_{D_{sg}(\mod^\Z(A))}(T)^{\text{op}} \simeq \End_{\mod^\Z(A)}(\underline{T})^{\text{op}}$;

%\item[(iii)] if $A_0$ has finite global dimension, then so does $B:=\End_{D_{sg}(\mod^\Z(A))}(T)^{op}$. In this case, we have $D_{sg}(\mod^\Z(A))\simeq D^b(\mod(B))$.
%\end{itemize}
%\end{lemma}

%
%
\subsection{Singularity category for $\imath$quivers}

In this subsection, we shall describe $D_{sg}(\mod(\Lambda^{\imath}))$ (and equivalently, $\underline{\Gproj}(\Lambda^\imath)$) by using the triangulated orbit of $D^b(\K Q)$ \`a la Keller \cite{Ke2}. The main result is Theorem~\ref{thm:sigma} below.

We apply the general considerations in \S\ref{subsec:tilting} to $\Lambda^\imath$ as a non-negatively graded algebra by Corollary~\ref{cor:subquotient}. Proposition \ref{prop:projective module of lambdai} shows that $\Lambda^{\imath}(\ell)_{\leq 0}=\Lambda^{\imath}(\ell)$ for $\ell\geq1$, and thus $T= \bigoplus_{\ell\geq0} \Lambda^{\imath}(\ell)_{\leq0}\cong \Lambda^{\imath}_0$ in $D_{sg}(\mod^\Z(A))$. Denote by $\underline{T}= \Lambda^{\imath}_0$.  By Proposition~\ref{prop:projective module of lambdai} again, we obtain $\underline{T}=\iota ({}_{\K Q}\K Q)= {}_{\Lambda^\imath}(\K Q)$.
It follows from \eqref{eqn:projective resolution of kQ} that $\Omega(\underline{T})\cong \underline{T}(-1)$,
and then $\underline{T}$ is a Gorenstein projective $\Lambda^{\imath}$-module by Theorem \ref{theorem characterize of gorenstein property} by noting that $\Lambda^{\imath}$ is $1$-Gorenstein by Proposition \ref{proposition of 1-Gorenstein}. Also $T$ is Gorenstein projective.

\begin{proposition}
   \label{prop:tilting object}
$\underline{T}= \Lambda^{\imath}_0$ is a tilting object in $D_{sg}(\mod^\Z(\Lambda^{\imath}))$, and its (opposite) endomorphism algebra is isomorphic to $\K Q$. In particular, we have
\[
D_{sg}(\mod^\Z(\Lambda^{\imath}))\simeq D^b(\K Q).
\]
\end{proposition}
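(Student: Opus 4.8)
The plan is to verify the hypotheses of Lemma~\ref{corollary for T CM} and Lemma~\ref{theorem endomorphism ring of gorenstein projective modules} and then read off the conclusion. First I would record that $\Lambda^\imath$ is positively graded with $\Lambda^\imath_0 = \K Q$ (Corollary~\ref{cor:subquotient}), that $\Lambda^\imath$ is $1$-Gorenstein (Proposition~\ref{proposition of 1-Gorenstein}), and that $\K Q$ has finite global dimension since $Q$ is acyclic. The discussion immediately preceding the proposition already establishes that $T = \bigoplus_{i\ge 0}\Lambda^\imath(i)_{\le 0} \cong \Lambda^\imath_0$ in $D_{sg}(\mod^\Z(\Lambda^\imath))$ (because $\Lambda^\imath(i)_{\le 0} = \Lambda^\imath(i)$ is projective for $i \ge 1$, so those summands vanish in the singularity category), and that $\underline{T} = \Lambda^\imath_0 = {}_{\Lambda^\imath}(\K Q)$ is Gorenstein projective: indeed \eqref{eqn:projective resolution of kQ} gives $\Omega(\underline T) \cong \underline T(-1)$, which iterates to an exact sequence $0 \to \underline T \to P^0 \to P^1 \to \cdots$ with each $P^i$ projective, so $\underline T \in \Gproj(\Lambda^\imath)$ by Theorem~\ref{theorem characterize of gorenstein property}. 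Hence $T$ is Gorenstein projective as well.

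With $T$ Gorenstein projective and $\Lambda^\imath_0 = \K Q$ of finite global dimension, Lemma~\ref{corollary for T CM} applies and shows $T$ is a tilting object in $D_{sg}(\mod^\Z(\Lambda^\imath))$. To identify its endomorphism algebra I would invoke Lemma~\ref{theorem endomorphism ring of gorenstein projective modules}: the algebra $\Lambda^\imath$ has Gorenstein parameter $1$, since by Proposition~\ref{prop:projective module of lambdai} the socle of the indecomposable projective $(\Lambda^\imath)e_j$ is concentrated in degree $1$ (it is the copy of $P_{\btau j}$ sitting in the top degree). Therefore part~(ii) of that lemma yields $\End_{D_{sg}(\mod^\Z(\Lambda^\imath))}(T)^{\mathrm{op}} \simeq \End_{\mod^\Z(\Lambda^\imath)}(\underline T)^{\mathrm{op}}$, and it remains to compute the latter.

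For the endomorphism computation I would argue that $\End_{\mod^\Z(\Lambda^\imath)}(\underline T)^{\mathrm{op}} \cong \K Q$. Since $\underline T = {}_{\Lambda^\imath}(\K Q)$ is the image under the pullback functor $\iota$ of the regular module ${}_{\K Q}\K Q$, and since $\iota$ (being induced by the quotient map $\Lambda^\imath \twoheadrightarrow \K Q$) is fully faithful on module categories, one gets $\End_{\Lambda^\imath}(\underline T) \cong \End_{\K Q}({}_{\K Q}\K Q) \cong (\K Q)^{\mathrm{op}}$; grading is respected because $\K Q$ sits in degree $0$. Taking opposite algebras gives $\End_{\mod^\Z(\Lambda^\imath)}(\underline T)^{\mathrm{op}} \cong \K Q$. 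Finally, part~(iii) of Lemma~\ref{theorem endomorphism ring of gorenstein projective modules} (applicable since $\Lambda^\imath_0 = \K Q$ has finite global dimension) then gives the triangulated equivalence $D_{sg}(\mod^\Z(\Lambda^\imath)) \simeq D^b(\mod(\K Q)) = D^b(\K Q)$.

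The one point requiring genuine care — the main obstacle — is the faithful-flatness/grading bookkeeping in the endomorphism calculation: one must make sure that morphisms of graded $\Lambda^\imath$-modules between degree-$0$-generated modules are exactly the $\K Q$-module morphisms, i.e.\ that the degree-$1$ part $\Lambda^\imath_1$ contributes nothing to $\Hom_{\Lambda^\imath}(\underline T, \underline T)$. This follows because $\underline T$ is annihilated by $\Lambda^\imath_1$ (it is pulled back along $\Lambda^\imath \to \K Q$), so any $\Lambda^\imath$-linear map is automatically $\K Q$-linear, and conversely every $\K Q$-linear map is $\Lambda^\imath$-linear through the quotient. Everything else is a direct application of the quoted lemmas.
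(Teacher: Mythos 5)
Your proof is correct and follows essentially the same route as the paper: both verify the hypotheses of Lemma~\ref{corollary for T CM} and Lemma~\ref{theorem endomorphism ring of gorenstein projective modules} using the preparatory discussion ($T\cong \Lambda^\imath_0$ in the singularity category, $\underline T$ Gorenstein projective via $\Omega(\underline T)\cong \underline T(-1)$, Gorenstein parameter $1$), and then compute $\End_{\Lambda^\imath}(\underline T)^{\mathrm{op}}\cong \End_{\K Q}(\K Q)^{\mathrm{op}}\cong \K Q$ via the fully faithful pullback $\iota$. Your extra remarks justifying the Gorenstein parameter and the degree bookkeeping only spell out what the paper dismisses as ``clear.''
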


\begin{proof}
%By the description of indecomposable projective $\Lambda^\imath$-modules in Proposition \ref{prop:projective module of lambdai}, we obtain that $\underline{T}$ is the module $_{\Lambda^\imath}(\K Q)$. In particular, it follows from \eqref{eqn:projective resolution of kQ} that $\Omega(\underline{T})\cong \underline{T}(-1)$,
%and then $\underline{T}$ is a Gorenstein projective $\Lambda^{\imath}$-module by Theorem \ref{theorem characterize of gorenstein property} by noting that $\Lambda^{\imath}$ is $1$-Gorenstein.
By Lemma~ \ref{corollary for T CM} and the discussions above, $\underline{T}$ is a tilting object in $D_{sg}(\mod^\Z(\Lambda^{\imath}))$. By Proposition \ref{prop:projective module of lambdai}, $\Lambda^\imath$ has Gorenstein parameter $1$, i.e., $\soc \Lambda^\imath\subset \Lambda^\imath_1$; cf. Corollary \ref{cor:subquotient}.
It follows from \cite[Theorem 3.5(ii)]{LZ} that
%Lemma \ref{theorem endomorphism ring of gorenstein projective modules}(ii) that
\[
\End_{D_{sg}(\mod^\Z(\Lambda^{\imath}))}(\underline{T})^{op}\cong\End_{\mod^\Z (\Lambda^\imath)}(\underline{T})^{op}\cong\End_{\Lambda^{\imath}}(\underline{T})^{op}
\]
and then
\[
\End_{D_{sg}(\mod^\Z(\Lambda^{\imath}))}(\underline{T})^{op}\cong \End_{\Lambda^{\imath}}(\K Q)^{op}\cong \K Q.
\]
The proposition follows now by \cite[Theorem 3.5(iii)]{LZ}. 
%By Lemma \ref{theorem endomorphism ring of gorenstein projective modules}(iii), 
\end{proof}

The triangulated equivalence in Proposition~\ref{prop:tilting object}, denoted by $G$, is given by the composition of functors:
\begin{align}\label{eqn:G}
G:D^b(\K Q)\xrightarrow{\underline{T}\otimes_{\K Q}^\L-} D^b(\mod^\Z \Lambda^{\imath})\stackrel{\pi}{\longrightarrow}  D_{sg}(\mod^\Z \Lambda^{\imath}).
\end{align}
On the other hand, $\underline{T}$ is isomorphic to $\K Q$ as a $\Lambda^{\imath}$-$\K Q$-bimodule, so $( \underline{T}\otimes_{\K Q}-)\simeq \iota$, where $\iota$ is defined in \eqref{eqn:rigt adjoint}. So $G$ is equivalent to the composition
\[
D^b(\K Q)\xrightarrow{D^b(\iota)} D^b(\mod^\Z \Lambda^{\imath})\stackrel{\pi}{\longrightarrow}  D_{sg}(\mod^\Z \Lambda^{\imath}),
\]
where $D^b(\iota)$ is the derived functor of $\iota$ since $\iota$ is exact.

The automorphism $\btau$ of $\K Q$ induces an automorphism $\ov{\btau}$ of $\Lambda^\imath$. Then $\ov{\btau}$ induces an automorphism of $\mod(\Lambda^\imath)$.

\begin{remark}\label{rem:action of sigma}
The automorphism $\btau$ of $\K Q$ induces an automorphism, denoted again by $\btau$, of $\mod(\K Q)$.
The restriction of $\ov{\btau}$ to the subcategory $\mod(\K Q)$ of $\mod(\Lambda^\imath)$ coincides with $\btau$, i.e., $\ov{\btau}|_{\mod(\K Q)}=\btau$.
\end{remark}

Set $\Gamma=\K Q$. Then $\ov{\btau}$ induces an automorphism of $\mod(\Lambda^\imath\otimes \Gamma^{op})$, that is, for any $\Lambda^\imath$-$\Gamma$-bimodule $X$, ${}^{\ov{\btau}} X$ is defined to be the $\Lambda^\imath$-$\Gamma$-bimodule with its left $\Lambda^\imath$-module structure twisted by $\ov{\btau}$. Similarly, $\btau$ induces an automorphism of $\mod(\Gamma\otimes \Gamma^{op})$.
%such that
%$$a\circ x:=\ov{\btau}(a)\cdot x, \text{ and }x\circ b:=x\cdot b,\text{ for any }a\in\Lambda^\imath, x,b\in\Gamma.$$
%Here $\circ$ denotes the scalar multiplication of $\ov{\btau}(X)$ and $\cdot$ denotes the scalar multiplication of $X$.
%
It is natural to view $\Lambda^{\imath}$ as a $\Lambda^{\imath}$-$\Gamma$-bimodule. Recall that $\underline{T}$ is isomorphic to $\Gamma$ as $\Lambda^{\imath}$-$\Gamma$-bimodule. By \eqref{eqn:projective resolution of kQ}, we have the following exact sequence in $\mod^\Z(\Lambda^{\imath}\otimes \Gamma^{op})$:
\begin{align}\label{eqn:U}
0\longrightarrow U\longrightarrow \Lambda^{\imath} (2)\longrightarrow \underline{T}(2)\longrightarrow0.
\end{align}
In particular, $U$ is isomorphic to $({}^{\ov{\btau}}\Gamma)(1)$ as $\Lambda^{\imath}$-$\Gamma$-bimodule.
%Here ${\btau}^{\sharp}(_{\Lambda^{\imath}}\Gamma_\Gamma)$ is ${\btau}^{\sharp}(_{\Lambda^{\imath}}\Gamma)$ as left ${}_\Lambda^\imath$-module,
Furthermore, we obtain the following exact sequence in $\mod^\Z(\Lambda^{\imath}\otimes \Gamma^{op})$:
\begin{align}
0\longrightarrow \underline{T}\longrightarrow ({{}^{\ov{\btau}} \Lambda^{\imath}})(1)\longrightarrow U\longrightarrow0.
\end{align}

By applying $\res: \mod(\Lambda^\imath)\rightarrow\mod(\Gamma)$, $U$ can be viewed as a $\Gamma$-$\Gamma$-bimodule with the left $\Gamma$-module structure induced by its left $\Lambda^\imath$-module structure. Correspondingly, since $\underline{T}$ is isomorphic to $\Gamma$ as $\Lambda^{\imath}$-$\Gamma$-bimodule, $\underline{T}$ is a $\Gamma$-$\Gamma$-bimodule. Then we obtain the isomorphisms $U\otimes_{\Gamma}^\L \underline{T}\simeq U$ and $\underline{T}\otimes_\Gamma^\L \underline{T}\cong \underline{T}$ in $D^b(\mod^\Z(\Lambda^{\imath}\otimes \Gamma^{op}))$.

From the above observation, similar to \cite[Proposition 3.9, Theorem 3.11]{Lu}, we obtain the following result.
Recall the shift functor $\Sigma$ of $D^b(\K Q)$ and the functor $G$ from \eqref{eqn:G}.

\begin{proposition}
    \label{lemma euivalent of functors}
We have
\[
(2)\circ G\simeq G\circ \Sigma^2.
\]
In particular, $D_{sg}(\mod(\Lambda))\simeq D_{sg}(\mod^{\Z/2}(\Lambda^{\imath}))\simeq D^b(\K Q)/\Sigma^2$ as triangulated categories.
\end{proposition}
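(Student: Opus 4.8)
The plan is to first establish the functorial isomorphism $(2)\circ G\simeq G\circ\Sigma^2$ by transporting the two short exact sequences of graded $\Lambda^\imath$-$\Gamma$-bimodules displayed just above the statement (namely \eqref{eqn:U} and $0\to\underline T\to({}^{\ov\btau}\Lambda^\imath)(1)\to U\to0$) into the singularity category, and then to deduce the two triangulated equivalences as a formal consequence together with Proposition~\ref{prop:tilting object}. It is convenient to rewrite $G$ first: by \eqref{eqn:G} and the discussion following it, $G\simeq\pi\circ(\underline T\otimes^\L_\Gamma-)$, where $\underline T$ is the graded $\Lambda^\imath$-$\Gamma$-bimodule $\Lambda^\imath_0$ and $\Gamma=\K Q$ carries the trivial grading. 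Since $\Gamma$ is ungraded, $(N\otimes^\L_\Gamma X)(j)\cong N(j)\otimes^\L_\Gamma X$ naturally in $X\in D^b(\Gamma)$ for any right-$\Gamma$-flat graded bimodule $N$, and $(j)$ commutes with $\pi$; hence $(2)\circ G\simeq\pi\circ(\underline T(2)\otimes^\L_\Gamma-)$.

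The heart of the argument is then the following. By the proof of Lemma~\ref{lemma locally projective modules}, $\Lambda^\imath$ is projective as a right $\Gamma$-module, hence so are ${}^{\ov\btau}\Lambda^\imath$, $U\cong({}^{\ov\btau}\Gamma)(1)$ and $\underline T\cong\Gamma$; thus every bimodule appearing in the two exact sequences above is right-$\Gamma$-flat, and applying $-\otimes^\L_\Gamma X=-\otimes_\Gamma X$ produces, for each $X\in D^b(\Gamma)$, distinguished triangles in $D^b(\mod^\Z(\Lambda^\imath))$
\begin{gather*}
U\otimes_\Gamma X\to\Lambda^\imath(2)\otimes_\Gamma X\to\underline T(2)\otimes_\Gamma X\to(U\otimes_\Gamma X)[1],\\
\underline T\otimes_\Gamma X\to({}^{\ov\btau}\Lambda^\imath)(1)\otimes_\Gamma X\to U\otimes_\Gamma X\to(\underline T\otimes_\Gamma X)[1],
\end{gather*}
natural in $X$. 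The two middle terms are perfect: since $\K Q$ is hereditary, $X$ is quasi-isomorphic to a bounded complex of the $P_i$'s, and $\Lambda^\imath\otimes_\Gamma(\K Q)e_i=\Lambda^\imath e_i$ and ${}^{\ov\btau}\Lambda^\imath\otimes_\Gamma(\K Q)e_i={}^{\ov\btau}(\Lambda^\imath e_i)$ are projective graded $\Lambda^\imath$-modules; so $\pi$ kills them. Applying $\pi$ to the two triangles yields natural isomorphisms $\pi(\underline T(2)\otimes_\Gamma X)\cong\pi(U\otimes_\Gamma X)[1]$ and $\pi(U\otimes_\Gamma X)\cong\pi(\underline T\otimes_\Gamma X)[1]=G(X)[1]$, whence $\pi(\underline T(2)\otimes_\Gamma X)\cong G(X)[2]=G(\Sigma^2 X)$; combined with the rewriting of $(2)\circ G$ above, this gives $(2)\circ G\simeq G\circ\Sigma^2$. (This is the graded $\imath$quiver analogue of \cite[Proposition~3.9]{Lu}.)

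For the "in particular" part: Proposition~\ref{prop:tilting object} gives a triangulated equivalence $G\colon D^b(\K Q)\xrightarrow{\sim}D_{sg}(\mod^\Z(\Lambda^\imath))$ intertwining, by the above, $\Sigma^2$ with the grade shift $(2)$. Reducing the grading of $\Lambda^\imath$ modulo $2$ exhibits $D_{sg}(\mod^{\Z/2}(\Lambda^\imath))$ as the orbit category $D_{sg}(\mod^\Z(\Lambda^\imath))/(2)$ (the standard comparison of $\Z$- and $\Z/2$-graded singularity categories for positively graded Gorenstein algebras whose degree-zero part has finite global dimension), so $D_{sg}(\mod^{\Z/2}(\Lambda^\imath))\simeq D^b(\K Q)/\Sigma^2$. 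Finally, the bound quiver $(Q^\sharp,I^\sharp)$ of $\Lambda$ is exactly the $\Z/2$-Galois cover of $(\ov Q,\ov I)$ associated with this mod-$2$ grading, namely the covering $\pi$ from the proof of Proposition~\ref{prop:invariant subalgebra}; hence $\mod(\Lambda)\cong\mod^{\Z/2}(\Lambda^\imath)$ and $D_{sg}(\mod(\Lambda))\simeq D_{sg}(\mod^{\Z/2}(\Lambda^\imath))$. Alternatively one may reach $D_{sg}(\mod(\Lambda))\simeq\underline{\Gproj}(\Lambda)\simeq\underline{\cc_{\Z/2}(\proj(\K Q))}\simeq D^b(\K Q)/\Sigma^2$ directly from Buchweitz--Happel, \eqref{eq:LaZ2} and the known description of the stable category of $\Z/2$-complexes over a hereditary algebra (cf.\ \cite{Br}).

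The step I expect to be the main obstacle is carrying out the middle paragraph with genuine \emph{functoriality} in $X$, not merely on objects: one must track the $\Lambda^\imath$-$\Gamma$-bimodule structures through $-\otimes^\L_\Gamma X$ and $\pi$, and check that the connecting morphisms of the two triangles — whose middle terms become zero in $D_{sg}$ — glue into a natural isomorphism of functors $D^b(\K Q)\to D_{sg}(\mod^\Z(\Lambda^\imath))$. The flatness and perfectness bookkeeping, where the special features of $\Lambda^\imath$ from Proposition~\ref{proposition of 1-Gorenstein} and Lemma~\ref{lemma locally projective modules} are actually used, is the delicate part; the $\Z/2$-versus-$\Z$ reduction in the last paragraph is routine but deserves a precise citation.
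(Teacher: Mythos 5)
Your proposal is correct and follows essentially the same route as the paper: the paper omits the argument by declaring it "completely similar to \cite[Proposition 3.9, Theorem 3.11]{Lu}" and noting $\mod(\Lambda)\cong\mod^{\Z/2}(\Lambda^{\imath})$, and your middle paragraph is exactly that argument written out — applying $-\otimes_\Gamma X$ to the bimodule sequences \eqref{eqn:U} and its companion, killing the perfect middle terms under $\pi$, and rotating the two triangles. The flatness/perfectness bookkeeping and the $\Z$-to-$\Z/2$ orbit-category reduction you flag as delicate are indeed the content of the cited results, so no gap remains.
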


\begin{proof}
The proof that $(2)\circ G\simeq G\circ \Sigma^2$ and $D_{sg}(\mod^{\Z/2}(\Lambda^{\imath}))\simeq D^b(\K Q)/\Sigma^2$ is completely similar to that of \cite[Proposition 3.9, Theorem 3.11]{Lu}, and hence is omitted. The equivalence $D_{sg}(\mod(\Lambda))\simeq D_{sg}(\mod^{\Z/2}(\Lambda^{\imath}))$ follows by noting that $\mod(\Lambda)\cong \mod^{\Z/2}(\Lambda^{\imath})$.
\end{proof}

The degree shift functor $(1)$ is a triangulated autoequivalence of $D_{sg}(\mod^\Z(\Lambda^{\imath}))$. Under the equivalence functor $G: D^b(\K Q)\rightarrow D_{sg}(\mod^\Z(\Lambda^{\imath}))$, the shift functor $(1)$ induces a triangulated autoequivalence of $D^b(\K Q)$, which is denoted by $F_{{\btau}^{\sharp}}$. In particular, by definition and Proposition~ \ref{lemma euivalent of functors}, we obtain that $(F_{{\btau}^{\sharp}})^2\simeq \Sigma^2$.

\begin{lemma}\label{lem:triangulated orbit category}
The  orbit category $D^b(\K Q)/F_{{\btau}^{\sharp}}$ is a triangulated orbit category \`a la Keller \cite{Ke2}.
\end{lemma}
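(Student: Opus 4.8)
The statement to prove is Lemma~\ref{lem:triangulated orbit category}: the orbit category $D^b(\K Q)/F_{{\btau}^{\sharp}}$ is a triangulated orbit category in the sense of Keller \cite{Ke2}. The plan is to verify the hypotheses of Keller's theorem on triangulated hulls of orbit categories: if $\cd$ is an algebraic triangulated category with a Serre functor $\mathbb{S}$, if $F$ is an autoequivalence satisfying suitable conditions, then $\cd/F$ carries a canonical triangulated structure. Concretely, I would take $\cd = D^b(\K Q) = D^b(\mod \K Q)$, which is algebraic since $\K Q$ is a finite-dimensional hereditary algebra, and which admits a Serre functor $\mathbb{S} = \nu[1]$ where $\nu = D\Hom_{\K Q}(-,\K Q)$ is the Nakayama functor (by Happel's theorem, using $\gldim \K Q \le 1$).

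First I would pin down the autoequivalence $F_{{\btau}^{\sharp}}$ more explicitly. From the discussion preceding the lemma, $F_{{\btau}^{\sharp}}$ is the autoequivalence of $D^b(\K Q)$ induced through the equivalence $G$ by the grade-shift functor $(1)$ on $D_{sg}(\mod^\Z(\Lambda^\imath))$, and it satisfies $(F_{{\btau}^{\sharp}})^2 \simeq \Sigma^2$ by Proposition~\ref{lemma euivalent of functors}. Moreover, unwinding the bimodule computation around \eqref{eqn:U} — where $U \cong ({}^{\ov{\btau}}\Gamma)(1)$ as a $\Lambda^\imath$-$\Gamma$-bimodule and $U$ becomes ${}^{\ov{\btau}}\Gamma$ as a $\Gamma$-$\Gamma$-bimodule after restriction — shows that $F_{{\btau}^{\sharp}}$ is the derived tensor by a tilting (in fact invertible) complex, namely $F_{{\btau}^{\sharp}} \simeq \Sigma \circ \Psi_{\btau}^{-1}$ (or $\Sigma\circ\Psi_\btau$; the exact twist is fixed by the bimodule $U$), where $\Psi_\btau$ is the auto-equivalence of $D^b(\K Q)$ induced by the quiver involution $\btau$. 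Since $\Psi_\btau$ has finite order (it is induced by an involution, so $\Psi_\btau^2 \simeq \Id$) and commutes with $\Sigma$, the composite $F_{{\btau}^{\sharp}}$ is an autoequivalence none of whose nonzero powers fixes any object (because $\Sigma$ strictly raises cohomological amplitude on $D^b$ of a nonzero bounded complex), so the orbit category is well-behaved; in particular $F_{{\btau}^{\sharp}}$ acts freely on isoclasses of indecomposables.

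Next I would check Keller's two standing hypotheses for the orbit category $\cd/F$ to admit a triangulated hull with the orbit functor being a triangle functor onto a dense subcategory: (i) for each object $X$, only finitely many $F^n X$ lie in any given cohomological degree range (equivalently $F$ is "nilpotent-free" in the appropriate sense) — this holds because $F_{{\btau}^{\sharp}}$ involves the shift $\Sigma$, so $H^\bullet(F^n X)$ is supported in degrees tending to $\pm\infty$ with $n$; and (ii) $F$ commutes with the Serre functor up to isomorphism, i.e. $F \circ \mathbb{S} \simeq \mathbb{S} \circ F$ — this is automatic for $F_{{\btau}^{\sharp}}$ since $\mathbb{S} = \nu[1]$ is intrinsic and any autoequivalence commutes with it. With these in place, Keller's construction (embedding $\cd/F$ into the derived category of the dg orbit category, or equivalently realizing it inside $D^b$ of the repetitive-type category on which $F$ acts) produces the triangulated structure, and the quotient functor $D^b(\K Q) \to D^b(\K Q)/F_{{\btau}^{\sharp}}$ becomes a triangle functor.

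The main obstacle I expect is making the identification of $F_{{\btau}^{\sharp}}$ with an explicit shift-composed-with-finite-order autoequivalence fully rigorous, i.e. tracking the bimodule $U$ through the chain $\Lambda^\imath\text{-}\Gamma$-bimodule $\rightsquigarrow$ $\Gamma$-$\Gamma$-bimodule $\rightsquigarrow$ derived tensor autoequivalence of $D^b(\K Q)$, and checking that the isomorphism $(F_{{\btau}^{\sharp}})^2 \simeq \Sigma^2$ of Proposition~\ref{lemma euivalent of functors} is compatible with the claimed factorization $F_{{\btau}^{\sharp}} \simeq \Sigma\circ\Psi_\btau$ (so that squaring gives $\Sigma^2 \Psi_\btau^2 \simeq \Sigma^2$, consistent with $\Psi_\btau^2 \simeq \Id$). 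Once that bookkeeping is done, verifying Keller's hypotheses is essentially formal. I would cite \cite{Ke2} for the construction of the triangulated hull and Happel for the Serre functor on $D^b(\K Q)$, and keep the verification at the level of "the conditions of \cite[Theorem]{Ke2} are satisfied" rather than reproducing the dg-categorical construction.
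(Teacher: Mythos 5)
Your overall strategy --- verify the hypotheses of the theorem in \cite[\S 4]{Ke2} for the autoequivalence $F_{{\btau}^{\sharp}}$ of $D^b(\K Q)$ --- is the same as the paper's, and your condition (i) (only finitely many $(F_{{\btau}^{\sharp}})^n X$ can lie in $\mod(\K Q)$, because the powers of $F_{{\btau}^{\sharp}}$ push cohomology off to $\pm\infty$) is exactly the paper's first observation, deduced there directly from $(F_{{\btau}^{\sharp}})^2\simeq\Sigma^2$. The genuine gap is in your condition (ii). Keller's theorem for hereditary categories does not ask that $F$ commute with the Serre functor --- as you yourself note, that holds for \emph{any} autoequivalence, so it cannot carry content. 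The actual second hypothesis is that there exist $N\ge 0$ such that the $F$-orbit of every indecomposable of $D^b(\K Q)$ contains an object $\Sigma^n U$ with $U$ an indecomposable module and $0\le n\le N$. This is the substantive condition, and it is what the paper's second observation verifies (with $N=1$): every indecomposable of $D^b(\K Q)$ is $\Sigma^j X$ for some module $X$ (hereditariness), and since $(F_{{\btau}^{\sharp}})^{-2}\simeq\Sigma^{-2}$, applying $(F_{{\btau}^{\sharp}})^{-2k}$ to $\Sigma^{2k}X$ or $\Sigma^{2k+1}X$ lands the orbit on $X$ or $\Sigma X$. As written, your proof never states or checks this hypothesis, so it is incomplete.

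A secondary point: you propose to first establish $F_{{\btau}^{\sharp}}\simeq\Sigma\circ\Psi_\btau$ by tracking the bimodule $U$, and you rightly flag this as the main obstacle. That identification is true, but in the paper it is only proved \emph{after} the lemma, inside the proof of Theorem~\ref{thm:sigma}; the lemma itself needs nothing beyond $(F_{{\btau}^{\sharp}})^2\simeq\Sigma^2$, which is already available from Proposition~\ref{lemma euivalent of functors}. If you do carry out the identification first, Keller's second hypothesis follows at once with $N=0$ (apply $(F_{{\btau}^{\sharp}})^{-i}$ to $\Sigma^i X$ to land in $\mod(\K Q)$), so your route is repairable --- but only once you replace the Serre-functor condition by the correct orbit condition.
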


\begin{proof}
We make the following observations.

First, for any indecomposable object $Y$ of $D^b(kQ)$, it follows from $(F_{{\btau}^{\sharp}})^2\simeq \Sigma^2$ that only finitely many $(F_{{\btau}^{\sharp}})^i Y$, $i\in\Z$, lie in $\mod(\K Q)$.

Secondly, for the $F_{{\btau}^{\sharp}}$-orbit $\co_V$ of each indecomposable object $V$ in $D^b(\K Q)$, by using  $(F_{{\btau}^{\sharp}})^2\simeq \Sigma^2$ again, there exists some indecomposable object $Y$ in $\mod(\K Q)$ such that $Y\in \co_V$ or $\Sigma Y\in \co_V$.

The assertion follows from these 2 observations, by the theorem in \cite[\S4]{Ke2}.
\end{proof}

Let $\widehat{\btau}$ be the triangulated auto-equivalence of $D^b(\K Q)$ induced by $\btau$; cf. Remark~ \ref{rem:action of sigma}.
Now we can formulate the main result in this subsection.

\begin{theorem}\label{thm:sigma}
Let $(Q, \btau)$ be an $\imath$quiver. Then the following equivalences of categories hold:
\[
\underline{\Gproj}(\Lambda^{\imath})\simeq D_{sg}(\mod(\Lambda^{\imath}))\simeq D^b(\K Q)/\Sigma \circ \widehat{\btau}.
\]
\end{theorem}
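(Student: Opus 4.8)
The first equivalence $\underline{\Gproj}(\Lambda^{\imath})\simeq D_{sg}(\mod(\Lambda^{\imath}))$ is the Buchweitz--Happel theorem (already noted after the statement of Theorem~{\bf B}), applicable since $\Lambda^{\imath}$ is $1$-Gorenstein by Proposition~\ref{proposition of 1-Gorenstein}. So the real content is the second equivalence $D_{sg}(\mod(\Lambda^{\imath}))\simeq D^b(\K Q)/\Sigma\circ\Psi_\btau$. The strategy is to pass through the graded picture and then ``ungrade'' one copy of $\Z$: by Proposition~\ref{prop:tilting object} we have a triangulated equivalence $G\colon D^b(\K Q)\xrightarrow{\ \simeq\ } D_{sg}(\mod^\Z(\Lambda^{\imath}))$ under which the degree-shift autoequivalence $(1)$ of $D_{sg}(\mod^\Z(\Lambda^{\imath}))$ transports to the autoequivalence $F_{{\btau}^\sharp}$ of $D^b(\K Q)$. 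Since $\Lambda^{\imath}$ is $\N$-graded with $\Lambda^{\imath}_0 = \K Q$, forgetting the grading down to a single-copy situation realizes the ungraded singularity category as the orbit category of the graded one by the shift $(1)$; concretely, $D_{sg}(\mod(\Lambda^{\imath}))\simeq D_{sg}(\mod^{\Z}(\Lambda^{\imath}))/(1)$. This is the graded-to-ungraded principle used in the $\Z/2$ case in Proposition~\ref{lemma euivalent of functors} (following \cite{Lu}), here applied with $m=1$ rather than $m=2$.

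\textbf{Key steps.} First I would record that $(F_{{\btau}^\sharp})^2\simeq\Sigma^2$ in $D^b(\K Q)$, which is already established right before Lemma~\ref{lem:triangulated orbit category} from Proposition~\ref{lemma euivalent of functors}. Second, I would invoke Lemma~\ref{lem:triangulated orbit category}: the orbit category $D^b(\K Q)/F_{{\btau}^\sharp}$ is a \emph{triangulated} orbit category in the sense of Keller \cite{Ke2}, so that transporting $G$ through the quotient gives a triangulated equivalence
\[
D_{sg}(\mod(\Lambda^{\imath}))\;\simeq\;D_{sg}(\mod^{\Z}(\Lambda^{\imath}))/(1)\;\simeq\;D^b(\K Q)/F_{{\btau}^\sharp}.
\]
Third, I would identify $F_{{\btau}^\sharp}$ explicitly with $\Sigma\circ\Psi_\btau$. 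The point is that $(1)$ on $\mod^\Z(\Lambda^{\imath})$ factors, via the short exact sequences \eqref{eqn:U} and the one following it, as the derived tensor with the bimodule $U\simeq({}^{\ov\btau}\Gamma)(1)$; unwinding the degree shifts, $F_{{\btau}^\sharp}$ is computed as the total left-derived functor of $-\otimes_\Gamma^{\L}({}^{\ov\btau}\Gamma)$ composed with $\Sigma$. Since tensoring with the twisted bimodule ${}^{\ov\btau}\Gamma$ induces exactly the autoequivalence $\Psi_\btau$ of $D^b(\K Q)$ (Remark~\ref{rem:action of sigma}), we get $F_{{\btau}^\sharp}\simeq\Sigma\circ\Psi_\btau$, which is consistent with $(F_{{\btau}^\sharp})^2\simeq\Sigma^2$ because $\Psi_\btau^2\simeq\Id$ and $\Psi_\btau$ commutes with $\Sigma$. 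Substituting this identification yields the claimed $D^b(\K Q)/\Sigma\circ\Psi_\btau$.

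\textbf{Main obstacle.} The delicate point is the precise identification $F_{{\btau}^\sharp}\simeq\Sigma\circ\Psi_\btau$, i.e.\ bookkeeping the grading twist against the homological shift and tracking where the diagram automorphism $\btau$ enters. This requires carefully chasing the exact sequences \eqref{eqn:U} and the next one in $\mod^\Z(\Lambda^{\imath}\otimes\Gamma^{op})$, verifying the bimodule isomorphisms $U\simeq({}^{\ov\btau}\Gamma)(1)$ and $U\otimes_\Gamma^{\L}\underline T\simeq U$, and then recognizing that applying $G^{-1}\circ(1)\circ G$ on $D^b(\K Q)$ is computed by $-\otimes_\Gamma^{\L}U$ up to the degree shift that accounts for the homological suspension. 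A secondary technical issue, already handled by Lemma~\ref{lem:triangulated orbit category}, is checking Keller's hypotheses so that the orbit category inherits a triangulated structure; this is where $(F_{{\btau}^\sharp})^2\simeq\Sigma^2$ and the finiteness of $F_{{\btau}^\sharp}$-orbits meeting $\mod(\K Q)$ are used. Everything else is a routine assembly of results from \S\ref{subsec:tilting}--\S\ref{subsec:Gorenstein} and \cite{Lu, Ke2}.
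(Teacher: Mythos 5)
Your proposal is correct and follows essentially the same route as the paper's proof: Buchweitz--Happel for the first equivalence, then Keller's triangulated orbit category (via Lemma~\ref{lem:triangulated orbit category} and the Yamaura-style graded-to-ungraded argument) to get $D_{sg}(\mod(\Lambda^{\imath}))\simeq D^b(\K Q)/F_{{\btau}^{\sharp}}$, and finally the identification $F_{{\btau}^{\sharp}}\simeq\Sigma\circ\Psi_\btau$ by computing the degree shift $(1)$ as the derived tensor with the bimodule $U\cong({}^{\ov{\btau}}\Gamma)(1)$. The paper executes exactly these steps, citing \cite{Ya} and \cite{Lu} for the parts you flag as the main bookkeeping obstacles.
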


\begin{proof}
First, Theorem \ref{thm:Buchweitz-Happel} shows that $\underline{\Gproj}(\Lambda^{\imath})\simeq D_{sg}(\mod(\Lambda^{\imath}))$ since $\Lambda^{\imath}$ is $1$-Gorenstein.
Similar to the proof of \cite[Theorem 6.2]{Ya}, by Lemma \ref{lem:triangulated orbit category}, we have $D_{sg}(\mod^{\Z/1\Z}(\Lambda^{\imath}))\simeq D^b(\K Q)/F_{{\btau}^{\sharp}}$. As $\mod^{\Z/1\Z}(\Lambda^{\imath})\cong \mod(\Lambda^{\imath})$, we obtain that $D_{sg}(\mod(\Lambda^{\imath}))\simeq D^b(\K Q)/F_{{\btau}^{\sharp}}$.

It remains to prove that $\Sigma\circ \widehat{\btau}\simeq F_{{\btau}^{\sharp}}$.
Let $\Gamma=\K Q$. Recall that $U$ is defined in \eqref{eqn:U}. Viewing $U$ as a $\Gamma$-$\Gamma$-bimodule, let
$$F=U\otimes^\L_\Gamma-:D^b(\mod \Gamma)\longrightarrow D^b(\mod \Gamma).$$
Similar to the proof of \cite[Proposition 6.1]{Ya}, we obtain that
$G\circ (\Sigma\circ F)\simeq (1)\circ G$. By definition, $F_{{\btau}^{\sharp}}\simeq \Sigma\circ F$.
Note that $U$ is isomorphic to $({}^{\ov{\btau}}\Gamma)(1)$ as $\Lambda^{\imath}$-$\Gamma$-bimodules. Then
$_\Gamma U_\Gamma$ is isomorphic to ${}^\btau\Gamma$, which implies that $(_\Gamma U\otimes_\Gamma -)\simeq\btau$. Therefore,
$F$ is isomorphic to $\widehat{\btau}$ and so $\Sigma\circ \widehat{\btau}\simeq \Sigma\circ F\simeq F_{{\btau}^{\sharp}}$.
\end{proof}

Recall that the forgetful functor $\mod^\Z(\Lambda^\imath)\rightarrow \mod(\Lambda^\imath)$ is not dense in general. We have the following corollary of Theorem \ref{thm:sigma}.
\begin{corollary}
\label{cor:dense of forgetful}
The forgetful functor
$\Gproj^\Z (\Lambda^{\imath})\longrightarrow \Gproj(\Lambda^{\imath})$ is dense.
\end{corollary}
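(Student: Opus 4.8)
The plan is to prove the statement by showing \emph{directly} that every Gorenstein projective $\Lambda^{\imath}$-module is gradable for the $\N$-grading of $\Lambda^{\imath}$ from Corollary~\ref{cor:subquotient}, and in fact carries an explicit two-step grading. Since a graded $\Lambda^{\imath}$-module is graded Gorenstein projective as soon as it is Gorenstein projective after forgetting the grading, this immediately yields density of the forgetful functor $\Gproj^{\Z}(\Lambda^{\imath})\to\Gproj(\Lambda^{\imath})$. (Conceptually this is what Theorem~\ref{thm:sigma} and Proposition~\ref{prop:tilting object} predict: on the stable categories the forgetful functor becomes the projection $D^b(\K Q)\to D^b(\K Q)/\Sigma\circ\Psi_\btau$ onto the orbit category, which is visibly essentially surjective; the content of the corollary is that this essential surjectivity can already be realized on honest modules.)

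So fix $X\in\Gproj(\Lambda^{\imath})$ and regard it, via Proposition~\ref{prop:invariant subalgebra}, as a representation $(X_i,X(\alpha),X(\varepsilon_i))$ of $(\ov Q,\ov I)$; by Proposition~\ref{proposition of 1-Gorenstein}(2) the underlying $\K Q$-representation $X_Q:=(X_i,X(\alpha))$ is projective. Put $K_i:=\ker\bigl(X(\varepsilon_i)\colon X_i\to X_{\btau i}\bigr)$ and $N_i:=\Im\bigl(X(\varepsilon_{\btau i})\colon X_{\btau i}\to X_i\bigr)$. Using the commutative relations $\varepsilon_i\alpha=\btau(\alpha)\varepsilon_j$ of $\ov I$ (equivalently $X(\varepsilon_i)X(\alpha)=X(\btau\alpha)X(\varepsilon_j)$) one checks that $K=(K_i)$ and $N=(N_i)$ are $\K Q$-subrepresentations of $X_Q$; the nilpotent relations $\varepsilon_i\varepsilon_{\btau i}=0$ give $X(\varepsilon_i)X(\varepsilon_{\btau i})=0$ and $X(\varepsilon_{\btau i})X(\varepsilon_i)=0$, whence $N\subseteq K$ as subrepresentations, and the maps $X(\varepsilon_i)$ induce a $\K Q$-isomorphism from $X_Q/K$ onto the twist of $N$ along the automorphism $\btau$ of $\K Q$. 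Since $Q$ is acyclic, $\K Q$ is hereditary, so $N$---being a subrepresentation of the projective $X_Q$---is projective, and therefore so is $X_Q/K$.

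Consequently the short exact sequence $0\to K\to X_Q\to X_Q/K\to 0$ of $\K Q$-modules splits, giving a $\K Q$-module decomposition $X_Q=C\oplus K$ with $C$ a subrepresentation isomorphic to $X_Q/K$. I then promote this to a grading of $X$ by setting $X^{(0)}:=C$, $X^{(1)}:=K$ and $X^{(n)}:=0$ for $n\neq 0,1$. This is a grading of the $\Lambda^{\imath}$-module $X$: the arrows $\alpha$ of $Q$ lie in $\Lambda^{\imath}_0=\K Q$ and preserve the decomposition because $C$ and $K$ are $\K Q$-subrepresentations, while each $\varepsilon_i\in\Lambda^{\imath}_1$ satisfies $X(\varepsilon_i)(C_i)\subseteq\Im X(\varepsilon_i)\subseteq K_{\btau i}=X^{(1)}_{\btau i}$ and $X(\varepsilon_i)(K_i)=0\subseteq X^{(2)}_{\btau i}$, i.e.\ it raises degree by $1$. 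Thus $X$ equipped with this grading is an object $\tilde X\in\mod^{\Z}(\Lambda^{\imath})$ whose image under the forgetful functor is $X$; since that image is Gorenstein projective, $\tilde X$ lies in $\Gproj^{\Z}(\Lambda^{\imath})$, and density follows.

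The only step that is not pure bookkeeping---and the only place where acyclicity of $Q$ is used---is the assertion that $X_Q/K$ (equivalently the image of the $\varepsilon$-action) is genuinely \emph{projective}, and hence that $K$ splits off as a direct summand of $X_Q$; this rests on hereditariness of $\K Q$ via the identification of $X_Q/K$ with a $\btau$-twist of the subrepresentation $N$ of the projective $X_Q$. All the remaining verifications amount to reading off the defining relations of $\ov I$ from Proposition~\ref{prop:invariant subalgebra}.
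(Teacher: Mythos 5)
Your argument is correct, and it reaches the corollary by a genuinely different and more explicit route than the paper. The paper argues on the level of stable categories: it combines Theorem~\ref{thm:sigma} with the identification $\underline{\Gproj}^\Z(\Lambda^{\imath})/(1)\simeq D^b(\K Q)/F_{{\btau}^{\sharp}}\simeq \underline{\Gproj}(\Lambda^{\imath})$ to get density of $\underline{\Gproj}^\Z(\Lambda^{\imath})\to\underline{\Gproj}(\Lambda^{\imath})$, and then lifts from the stable to the honest module level using the gradability of projective $\Lambda^{\imath}$-modules. You instead construct, on every $X\in\Gproj(\Lambda^{\imath})$, an explicit grading supported in two adjacent degrees, with $X^{(1)}=K=\ker(\varepsilon\text{-action})$ and $X^{(0)}$ a $\K Q$-complement; the only substantive input is that $X_Q/K$ is identified via the $\varepsilon$-maps with a $\btau$-twist of the image $N$ of the $\varepsilon$-action, and $N$ is a submodule of the projective $\K Q$-module $\res(X)$ (Proposition~\ref{proposition of 1-Gorenstein}(2)), hence projective by hereditariness, so the extension $0\to K\to X_Q\to X_Q/K\to 0$ splits. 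Your verifications of the relations of $\ov I$ (that $K$ and $N$ are subrepresentations, that $N\subseteq K$, and that the induced map $X_Q/K\to{}^{\btau}N$ is $\K Q$-linear) are all correct. What each approach buys: the paper's proof is a two-line consequence once Theorem~\ref{thm:sigma} is in place and fits the surrounding graded singularity-category formalism; yours is self-contained, bypasses Theorem~\ref{thm:sigma} and the orbit-category machinery entirely, and proves the slightly stronger statement that every Gorenstein projective $\Lambda^{\imath}$-module admits a grading concentrated in $\{0,1\}$, which could be of independent use.
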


\begin{proof}
By definition, we have $\underline{\Gproj}^\Z (\Lambda^\imath)/(1)\simeq D_{sg}(\mod^\Z (\Lambda^\imath))/(1)\simeq D^b(\K Q)/F_{{\btau}^{\sharp}}$ as additive categories. It follows from Theorem \ref{thm:sigma} that $D^b(\K Q)/F_{{\btau}^{\sharp}}\simeq \underline{\Gproj} (\Lambda^\imath)$.
So $\underline{\Gproj} (\Lambda^\imath)\simeq \underline{\Gproj}^\Z (\Lambda^\imath)/(1)$, which implies the forgetful functor $\underline{\Gproj}^\Z (\Lambda^{\imath})\rightarrow \underline{\Gproj}(\Lambda^{\imath})$ is dense. The result follows since projective $\Lambda^\imath$-modules are gradable.
\end{proof}

\begin{remark}
\label{rem:bounded}
Using Proposition \ref{proposition of 1-Gorenstein}, we obtain that $M\in \Gproj^\Z (\Lambda^{\imath})$ if and only if its restriction to $\K Q$ is graded projective. So
$\Gproj^\Z(\Lambda^\imath) \cong \cc^b(\proj(\K Q))$ by noting that $\mod^\Z(\Lambda^\imath)\cong \cc^b(\mod(\K Q))$.
\end{remark}

By identifying $\mod^{\Z/2}(\Lambda^\imath)$ with $\mod(\Lambda)$, the pushdown functor $\Pd: \mod(\Lambda)\longrightarrow \mod(\Lambda^\imath)$ is the forgetful functor. From the proof of Corollary \ref{cor:dense of forgetful}, $\Pd$ induces a Galois covering
\begin{align}
\label{FGproj}
\Pd: \Gproj(\Lambda)\longrightarrow \Gproj(\Lambda^\imath)
\end{align}
by noting that $\Gproj(\Lambda)\cong\Gproj^{\Z/2}(\Lambda^\imath)$.

The following corollary will be useful in providing a concrete basis for the semi-derived Ringel-Hall algebras of $\La^\imath$ later (see Theorem~\ref{thm:utMHbasis}).

\begin{corollary}
   \label{corollary for stalk complexes}
For any $M\in D_{sg}(\mod(\Lambda^{\imath}))$, there exists a unique (up to isomorphism) module $N\in \mod(\K Q)\subseteq \mod(\Lambda^{\imath})$ such that
$M\cong N$ in $D_{sg}(\mod(\Lambda^{\imath}))$.
In particular, we have $\Ind (\mod(\K Q))=\Ind D_{sg}(\mod(\Lambda^\imath)).$
\end{corollary}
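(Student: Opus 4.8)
The plan is to combine the triangulated equivalence $\underline{\Gproj}(\Lambda^{\imath}) \simeq D_{sg}(\mod(\Lambda^{\imath})) \simeq D^b(\K Q)/\Sigma\circ\Psi_\btau$ from Theorem~\ref{thm:sigma} with the standard structure theory of $D^b(\K Q)$ for $Q$ acyclic (indeed here $Q$ will typically be Dynkin, though the argument only needs that every object of $D^b(\K Q)$ is a finite direct sum of shifts of modules). First I would recall that an object of the orbit category $D^b(\K Q)/\Sigma\circ\Psi_\btau$ is an object of $D^b(\K Q)$, and every indecomposable object of $D^b(\K Q)$ has the form $\Sigma^n X$ for a unique $n\in\Z$ and a unique indecomposable $X\in\mod(\K Q)$. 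Applying the autoequivalence $\Sigma\circ\Psi_\btau$ once sends $\Sigma^n X$ to $\Sigma^{n+1}\Psi_\btau(X)$, and $\Psi_\btau$ preserves $\mod(\K Q)\subseteq D^b(\K Q)$ (it is induced by the quiver automorphism $\btau$, cf. Remark~\ref{rem:action of sigma}); so every $\Sigma\circ\Psi_\btau$-orbit of an indecomposable object meets $\mod(\K Q)$. Hence in the orbit category every indecomposable is isomorphic to some indecomposable $\K Q$-module, and by additivity every object $M\in D_{sg}(\mod(\Lambda^{\imath}))$ is isomorphic to some $N\in\mod(\K Q)$ (viewed inside $\mod(\Lambda^{\imath})$ via $\iota$, then inside $D_{sg}$).

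For uniqueness, I would argue as follows. Suppose $N, N'\in\mod(\K Q)$ become isomorphic in $D_{sg}(\mod(\Lambda^{\imath})) \simeq D^b(\K Q)/\Sigma\circ\Psi_\btau$. By the description of morphisms in a triangulated orbit category \`a la Keller (legitimate here by Lemma~\ref{lem:triangulated orbit category}), an isomorphism $N\xrightarrow{\sim} N'$ in the orbit category is given by a family of maps $N\to (\Sigma\circ\Psi_\btau)^i N'$ in $D^b(\K Q)$. The key point is that for $i\neq 0$ the only map $N\to (\Sigma\circ\Psi_\btau)^i N'$ is zero: writing $(\Sigma\circ\Psi_\btau)^i = \Sigma^i\circ\Psi_\btau^i$ (up to natural iso, using $\Sigma\circ\Psi_\btau\simeq\Psi_\btau\circ\Sigma$ since both are triangulated and $\Psi_\btau$ is a module autoequivalence), a nonzero map $N\to\Sigma^i\Psi_\btau^i(N')$ in $D^b(\K Q)$ with $N,\Psi_\btau^i(N')\in\mod(\K Q)$ forces $i\in\{0,1\}$ by hereditariness, and $i=1$ contributes $\Ext^1_{\K Q}(N,\Psi_\btau(N'))$, which would need to be killed by the other orbit components; one checks these constraints leave only the $i=0$ component as the candidate isomorphism, so $N\cong N'$ already in $\mod(\K Q)$. (Alternatively and perhaps more cleanly: the composite functor $\mod(\K Q)\hookrightarrow D^b(\K Q)\to D^b(\K Q)/\Sigma\circ\Psi_\btau$ is the restriction of a functor that is fully faithful on the subcategory of modules when no nonzero $\Ext$ or $\Hom$ wraps around under $\Sigma\circ\Psi_\btau$; since $\Sigma$ strictly raises cohomological degree, no wrap-around of $\Hom$-degree $0$ can occur among modules, giving faithfulness on objects.) The last sentence $\Ind(\mod(\K Q)) = \Ind D_{sg}(\mod(\Lambda^\imath))$ then follows immediately: existence gives surjectivity of the map on iso-classes of indecomposables, uniqueness gives injectivity, and indecomposability is preserved in both directions because $\iota$ followed by the localization functor is additive and reflects the zero object.

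The main obstacle I anticipate is the \emph{uniqueness}, i.e.\ showing that distinct indecomposable $\K Q$-modules stay non-isomorphic after passing to the orbit category $D^b(\K Q)/\Sigma\circ\Psi_\btau$. This is exactly the kind of "no accidental wrap-around" statement that can fail in general orbit categories; here it works because $\Sigma$ raises cohomological degree by one and $\mod(\K Q)$ is concentrated in degree $0$, so the only potentially dangerous orbit index is $i=1$, landing in $\Ext^1$. I would handle this by invoking the explicit $\Hom$-formula in Keller's triangulated orbit category and the heredity of $\K Q$ to pin down which orbit components can be nonzero, then observing that a genuine isomorphism in the orbit category must have an invertible degree-$0$ component, which lives in $\Hom_{\K Q}(N,N')$. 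The remaining steps (existence, and the translation to $\Ind$) are routine consequences of Theorem~\ref{thm:sigma} and the structure of $D^b(\K Q)$.
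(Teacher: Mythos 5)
Your proposal is correct and follows the paper's strategy for existence essentially verbatim: reduce via the equivalence of Theorem~\ref{thm:sigma} to the orbit category $D^b(\K Q)/\Sigma\circ\Psi_\btau$, use that every indecomposable of $D^b(\K Q)$ is a shifted stalk complex $\Sigma^n X$ (Happel), and apply a power of $\Sigma\circ\Psi_\btau$ to normalize to degree $0$, noting that $\Psi_\btau$ preserves $\mod(\K Q)$.

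Where you diverge is in the uniqueness step, and your route is in fact more explicit than the paper's. The paper disposes of uniqueness with the single observation that $(\Sigma\circ\Psi_\btau)^i(N)\in\mod(\K Q)$ only for $i=0$, implicitly invoking the standard fact that isoclasses of indecomposables in an orbit category are the $F$-orbits of isoclasses of indecomposables upstairs. You instead compute with Keller's $\Hom$-formula $\Hom_{\cc}(N,N')=\bigoplus_i\Hom_{D^b(\K Q)}(N,(\Sigma\circ\Psi_\btau)^iN')$: heredity and degree reasons kill all components with $i<0$ and $i>1$, and since the degree-$0$ component of a composite of two such morphisms involves only their degree-$0$ components (the $i=-1$ components vanish), any isomorphism in the orbit category must have invertible degree-$0$ part, which lives in $\Hom_{\K Q}(N,N')$. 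This buys you a uniqueness statement for arbitrary (not a priori indecomposable) $N,N'$ without appealing to Krull--Schmidt or the orbit description of indecomposables, at the cost of a slightly longer computation. Your appeal to Lemma~\ref{lem:triangulated orbit category} to legitimize the $\Hom$-formula is the right move. The only soft spot is the final one-liner that indecomposability transfers because the functor "is additive and reflects the zero object" --- by itself that does not preserve indecomposability; but it does follow from the existence-plus-uniqueness you have already established (a decomposition on either side transports to the other), so the conclusion $\Ind(\mod(\K Q))=\Ind D_{sg}(\mod(\Lambda^\imath))$ stands.
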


\begin{proof}
Without loss of generality, we assume that $M$ is indecomposable.

It follows from  Theorem \ref{thm:sigma} that the triangulated equivalence functor
\[
G: D^b(\K Q)\xrightarrow{D^b(\iota)} D^b(\mod^\Z\Lambda^\imath)\stackrel{\pi}{\longrightarrow}  D_{sg}(\mod^\Z (\Lambda^\imath))
\]
induces an equivalence $\tilde{G}:D^b(\K Q)/\Sigma\circ \widehat{\btau}\simeq D_{sg}(\mod(\Lambda^\imath))$. Note that $\mod(\K Q)\subseteq \mod(\Lambda^{\imath})$ is induced by $\iota$. So it is equivalent to proving that there exists a unique $N\in\mod(\K Q)$ such that $\tilde{G}(N)=M$ in $D_{sg}(\mod(\Lambda^\imath))$.

%Since $D_{sg}(\mod(\Lambda^\imath))\simeq \underline{\Gproj}(\Lambda^\imath)$, for the indecomposable object $M$ in $D_{sg}(\mod(\Lambda^\imath))$, up to isomorphism, we can assume that $M\in \mod(\Lambda^{\imath})$.
From above, there exists an indecomposable complex $Y\in D^b(\K Q)$ such that $\tilde{G}(Y)=M$. Happel's Theorem shows that $Y$ is a stalk complex, i.e., $Y=\Sigma^\ell X$ for some $X\in\mod(\K Q)$ and $\ell\in\Z$. So we have $N:= (\Sigma\circ\widehat{\btau})^{-\ell}Y \in\mod(\K Q)$, and
\begin{align}
N \cong \left\{ \begin{array}{ccc}
\btau(X),& \text{ if }2\nmid \ell,\\
X,&\text{ if }2\mid \ell.
\end{array}\right.
\end{align}
Clearly, we have $\tilde{G}(N)=M$.

The uniqueness follows by noting that, for any $N\in\mod(\K Q)$, $(\Sigma\circ \widehat{\btau})^\ell(N)\in\mod(\K Q)$ if and only if $\ell=0$.
\end{proof}

\begin{remark}
   \label{rem:sing12}
Recall by Example~\ref{example 2}(c) that $\Lambda$ itself is an algebra arising from an $\imath$quiver. Hence, for any $M\in D_{sg}(\mod(\Lambda))$, there exists a unique (up to isomorphism) module $N\in \mod(\K Q\times \K Q')\subseteq \mod(\Lambda)$ such that
$M\cong N$ in $D_{sg}(\mod(\Lambda))$. In particular, the pullback functor $\iota:\mod(\K Q\times \K Q')\rightarrow \mod(\Lambda)$ in \eqref{eqn:rigt adjoint} induces
$$\Ind (\mod(\K Q))\sqcup\Ind (\mod(\K Q'))=\Ind D_{sg}(\mod(\Lambda)).$$
\end{remark}

\begin{remark}
For $\btau=\Id$, Corollary \ref{corollary for stalk complexes} recovers a result in \cite{RZ}.
\end{remark}

%%%%%%
%%%%%%
\section{Hall algebras for $\imath$quivers}
  \label{sec:Hall}

We take $\K=\F_q$ in the remainder of this paper.
By applying the procedure from Appendix~ \ref{subsec:MRH for 1-Gor}, we define the semi-derived Ringel-Hall algebra $\utMH$ associated to an acyclic $\imath$quiver, since $\Lambda^\imath$ is a $1$-Gorenstein algebra. We will introduce a twisted version of $\utMH$, denoted by $\tMH$ (which is referred  to as the Hall algebra of an $\imath$quiver, or an $\imath$Hall algebra for short).

\subsection{Euler forms}

Recall that $\cp^{\leq 1}(\Lambda^\imath) =\cp^{<\infty}(\Lambda^\imath)$ is the subcategory of $\Lambda^\imath$-modules of finite projective dimensions.

\begin{lemma}
  \label{lemma:isomorphic of Grothendieck groups}
We have the following isomorphism of abelian groups
\begin{align}\label{dfn:phi}
\phi: K_0(\mod(\K Q))\longrightarrow K_0 \big(\cp^{\leq 1}(\Lambda^\imath) \big),\qquad \widehat{S}_i \mapsto \widehat{\E}_i, \; \forall \,i\in Q_0.
\end{align}
\end{lemma}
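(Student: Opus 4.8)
The plan is to establish that $\phi$ is a well-defined group homomorphism by checking it respects the defining relations of $K_0(\mod(\K Q))$, and then to produce an explicit inverse. First I would recall two structural facts already available: by Corollary~\ref{corollary locally projective modules}, every module in $\cp^{\leq 1}(\Lambda^\imath)$ admits a filtration with subquotients among the generalized simples $\E_i$, $i\in Q_0$; and by the projective resolution \eqref{eqn:projective resolution of E}, each class $\widehat{\E}_i$ lies in $K_0(\cp^{\leq 1}(\Lambda^\imath))$ and indeed equals $\widehat{(\Lambda^\imath e_i)} - \sum_{(\alpha\colon i\to j)\in Q_1}\widehat{(\Lambda^\imath e_j)}$. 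The first fact shows the classes $\{\widehat{\E}_i\}_{i\in Q_0}$ \emph{generate} $K_0(\cp^{\leq 1}(\Lambda^\imath))$; since $K_0(\mod(\K Q))$ is free abelian on $\{\widehat{S}_i\}_{i\in Q_0}$, the assignment $\widehat{S}_i\mapsto\widehat{\E}_i$ defines a surjective homomorphism, and it remains only to check injectivity, equivalently that $\{\widehat{\E}_i\}_{i\in Q_0}$ are linearly independent in $K_0(\cp^{\leq 1}(\Lambda^\imath))$.

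For independence (hence for constructing the inverse) I would use the restriction functor $\res\colon\mod(\Lambda^\imath)\to\mod(\K Q)$ together with the exact structure. Since $\Lambda^\imath$ is projective as a $\K Q$-module on both sides (Proposition~\ref{proposition of 1-Gorenstein}), $\res$ is exact, so it induces $\res_*\colon K_0(\cp^{\leq 1}(\Lambda^\imath))\to K_0(\mod(\K Q))$. From \eqref{eq:E} one computes $\res(\E_i) = S_i\oplus S_i$ if $\btau i=i$ and $\res(\E_i)=S_i\oplus S_{\btau i}$ if $\btau i\neq i$, so $\res_*(\widehat{\E}_i) = \widehat{S}_i + \widehat{S}_{\btau i}$ in either case. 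Thus $\res_*\circ\phi$ is the endomorphism of $K_0(\mod(\K Q))$ sending $\widehat{S}_i\mapsto \widehat{S}_i+\widehat{S}_{\btau i}$; this is $\mathrm{Id} + \btau_*$, which need not be invertible (e.g.\ it is $2\cdot\mathrm{Id}$ when $\btau=\mathrm{Id}$), so $\res_*$ alone does not immediately give injectivity of $\phi$. The cleaner route is to build the inverse directly: using the resolution \eqref{eqn:projective resolution of E}, the assignment sending the class of an indecomposable projective $\widehat{(\Lambda^\imath e_i)}$ to $\sum_{j}(\text{path count})\widehat{S}_j$-type data — more precisely, map $\widehat{X}\mapsto \ov{\dim}\,\res(X)$ composed with the Cartan-matrix inverse for $\K Q$ — and verify on generators that this is a two-sided inverse to $\phi$.

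Concretely, I would define $\psi\colon K_0(\cp^{\leq 1}(\Lambda^\imath))\to K_0(\mod(\K Q))$ by $\psi(\widehat{X}) = \underline{\dim}_{\K Q}(\res X)$ expressed in the basis of simples, noting $K_0(\mod(\K Q))$ is identified with $\Z^{Q_0}$ via dimension vectors; exactness of $\res$ makes $\psi$ well defined. Then $\psi\phi(\widehat{S}_i) = \underline{\dim}\,\res(\E_i) = \widehat{S}_i + \widehat{S}_{\btau i}$, and separately, applying $\psi$ to the alternating sum $\widehat{\E}_i = \widehat{(\Lambda^\imath e_i)} - \sum_{(\alpha\colon i\to j)}\widehat{(\Lambda^\imath e_j)}$ and using $\res(\Lambda^\imath e_i) = P_i\oplus P_{\btau i}$ from \eqref{eqn:projective resolution of kQ} recovers the same thing, confirming consistency. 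To finish, I would instead take $\psi$ to be the composite of $\res_*$ with a chosen section of $\mathrm{Id}+\btau_*$ onto its image, or — cleaner — observe that $\{\widehat{\E}_i\}$ generate a subgroup on which one can read off coordinates: since the $\E_i$ have a well-defined "top" (the simple $S_i$ at vertex $i$ as a $\Lambda^\imath$-module, distinct for distinct $i$), any nontrivial relation $\sum_i n_i\widehat{\E}_i = 0$ in $K_0$ would, upon taking any additive invariant detecting composition factors as $\Lambda^\imath$-modules (e.g.\ the class in $K_0(\mod(\Lambda^\imath))$ under the natural map $K_0(\cp^{\leq1})\to K_0(\mod\Lambda^\imath)$), force all $n_i=0$ because the $\Lambda^\imath$-simples $S_i$ appear with multiplicity exactly $n_i$ in $\sum_i n_i\widehat{\E}_i$. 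The main obstacle is precisely this last point: one must be careful that $K_0(\cp^{\leq1}(\Lambda^\imath))\to K_0(\mod\Lambda^\imath)$ does not kill the relevant information, which is why the argument via counting $\Lambda^\imath$-composition factors (each $\E_i$ has composition factors $S_i, S_{\btau i}$ as a $\Lambda^\imath$-module, so the multiplicity vector determines the $n_i$ up to the same $\mathrm{Id}+\btau_*$ ambiguity) again reduces to showing $\mathrm{Id}+\btau_*$ is injective on the sublattice spanned by $\widehat{\E}_i$; resolving this cleanly — perhaps by instead using the refined invariant $\widehat{X}\mapsto (\dim e_i X)_{i\in Q_0}\in\Z^{Q_0}$, under which $\widehat{\E}_i\mapsto e_i + $ (lower-order corrections), giving a unitriangular and hence invertible change of basis — is the crux of the proof.
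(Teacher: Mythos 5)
Your proposal correctly establishes well-definedness (since $K_0(\mod(\K Q))$ is free on the $\widehat{S}_i$) and surjectivity (via Corollary~\ref{corollary locally projective modules}, the classes $\widehat{\E}_i$ generate $K_0(\cp^{\leq 1}(\Lambda^\imath))$), and these two steps coincide with the paper's. But you never actually prove injectivity, and the one concrete mechanism you offer at the end does not work. The ``refined invariant'' $\widehat{X}\mapsto(\dim e_iX)_{i\in Q_0}$ is not unitriangular on the $\widehat{\E}_i$: from \eqref{eq:E} one computes $\dim e_j\E_i=\delta_{ji}+\delta_{j,\btau i}$, so the resulting matrix is exactly $\mathrm{Id}+\btau_*$ again --- the very map you had already observed to be non-invertible. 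It acts as $2\cdot\mathrm{Id}$ on $\btau$-fixed vertices and has the singular block $\left(\begin{smallmatrix}1&1\\1&1\end{smallmatrix}\right)$ on each $2$-element $\btau$-orbit. Indeed, every invariant you consider factors through the composition factors of $\E_i$ (over $\K Q$ via $\res$, or over $\Lambda^\imath$), and all of these collapse to $\mathrm{Id}+\btau_*$, so no argument along these lines can succeed.

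The missing idea --- which is the paper's entire argument for injectivity --- is a rank comparison rather than an explicit inverse or an injectivity-detecting invariant. By the standard resolution argument, $K_0(\cp^{\leq 1}(\Lambda^\imath))\cong K_0(\proj(\Lambda^\imath))$, which is free abelian of rank $|Q_0|$ with basis $\{\widehat{\Lambda^\imath e_i}\mid i\in Q_0\}$. Hence $\phi$ is a surjective homomorphism between free abelian groups of the same finite rank, and such a map is automatically an isomorphism (tensoring with $\Q$, a surjective endomorphism of $\Q^{|Q_0|}$ is bijective, and $\Z^{|Q_0|}$ embeds into $\Q^{|Q_0|}$). Inserting this one observation after your surjectivity step closes the gap; the remainder of your proposed argument should be discarded.
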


\begin{proof}
Recall from Corollary~\ref{cor:subquotient} that $kQ$ is a quotient algebra of $\Lambda^\imath$.
The following are well known:
\begin{itemize}
\item $K_0(\proj (\Lambda^\imath))\cong K_0(\cp^{\leq 1}(\Lambda^\imath))$;
\item $K_0(\mod (\K Q))$ is a free abelian group with $\{\widehat{S}_i\mid i\in Q_0\}$ as a basis.
\end{itemize}
Then $\phi$ is well defined, and $K_0 \big(\cp^{\leq 1}(\Lambda^\imath) \big)$ is a free abelian group with $\{\widehat{(\Lambda^\imath) e_i}\mid i\in Q_0\}$ as a basis. By Corollary \ref{corollary locally projective modules} we conclude that
$\phi$ is surjective, and then it is an isomorphism by comparing the ranks.
\end{proof}

It follows from Proposition \ref{prop:projective module of lambdai} that $(kQ)e_i$ is a Gorenstein projective $\Lambda^\imath$-module; cf. \S\ref{subsec:Gproj}.

\begin{proposition}
   \label{prop:Grothendieck}
$K_0(\Gproj(\Lambda^\imath))$ is a free abelian group of rank $|Q_0|$, with $\{\widehat{\K Qe_i}\mid i\in Q_0\}$ as a basis.
\end{proposition}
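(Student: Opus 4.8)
The plan is to establish a natural $\mathbb{Z}$-linear map $K_0(\mod(\K Q)) \to K_0(\Gproj(\Lambda^\imath))$ sending $\widehat{S_i}$ to $\widehat{\K Q e_i}$, and to show it is an isomorphism by exhibiting an inverse together with a rank count. First I would recall from Proposition~\ref{prop:projective module of lambdai} that each $(\K Q)e_i$ is indeed Gorenstein projective (this is noted just before the statement, coming from the short exact sequence \eqref{eqn:projective resolution of kQ}), so the classes $\widehat{\K Q e_i}$ make sense in $K_0(\Gproj(\Lambda^\imath))$.

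The key structural input is the resolution picture for $1$-Gorenstein algebras. By Theorem~\ref{theorem characterize of gorenstein property} and Lemma~\ref{lemma approx of GP}, every $\Lambda^\imath$-module $M$ sits in a short exact sequence $0 \to H_M \to G_M \to M \to 0$ with $G_M \in \Gproj(\Lambda^\imath)$ and $H_M \in \cp^{\leq 1}(\Lambda^\imath)$; moreover, since $\Lambda^\imath$ is $1$-Gorenstein, a Gorenstein projective module is precisely a first syzygy $\Omega(M)$. Combined with Lemma~\ref{lemma perpendicular of P GP} (vanishing of $\Ext^{>0}$ between Gorenstein projectives and modules of finite projective dimension), this shows that in $K_0(\Gproj(\Lambda^\imath))$ — where by convention one takes the Grothendieck group of the exact category $\Gproj(\Lambda^\imath)$ — every Gorenstein projective $G$ is equivalent to an alternating sum built from the projective covers, and the Grothendieck group is generated by the classes of the indecomposable Gorenstein projectives modulo relations coming from $\Gproj$-exact sequences. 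I would use the equivalence $\underline{\Gproj}(\Lambda^\imath) \simeq D_{sg}(\mod(\Lambda^\imath)) \simeq D^b(\K Q)/\Sigma\circ\Psi_\btau$ of Theorem~\ref{thm:sigma}: the stable category identifies with an orbit category of $D^b(\K Q)$, whose Grothendieck group is $K_0(D^b(\K Q))/(\mathrm{id}-[\Sigma\circ\Psi_\btau]) = K_0(\K Q)/(\mathrm{id}+\btau_*)$ at the level of the shift being $-1$. This must be combined with the projective classes to recover $K_0(\Gproj(\Lambda^\imath))$ itself (not its stabilization): the standard exact sequence $0 \to K_0(\proj \Lambda^\imath) \to K_0(\Gproj \Lambda^\imath) \to K_0(\underline{\Gproj}\Lambda^\imath) \to 0$ would give the rank as $|Q_0|$ once one checks the extension splits appropriately, using that $\Omega(\K Q e_i) \cong \K Q e_{\btau i}$ (shifted) from \eqref{eqn:projective resolution of kQ}.

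Concretely, the cleanest route: define $\Phi: K_0(\mod \K Q) \to K_0(\Gproj \Lambda^\imath)$ by $\widehat{M} \mapsto \widehat{G_M} - \widehat{\iota^* H_M}$ suitably interpreted, or more simply send $\widehat{S_i}\mapsto \widehat{\K Q e_i}$ and check well-definedness on the relations of $K_0(\mod \K Q)$ (which is free on $\widehat{S_i}$, so there is nothing to check) — then build a reverse map $\Psi: K_0(\Gproj \Lambda^\imath) \to K_0(\mod \K Q)$ using that $\res: \Gproj(\Lambda^\imath) \to \proj(\K Q) \subseteq \mod(\K Q)$ is exact (Proposition~\ref{proposition of 1-Gorenstein}(2) identifies $\Gproj(\Lambda^\imath)$ with modules restricting to projectives), sending $\widehat{G}\mapsto \widehat{\res G}$. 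One checks $\Psi\circ\Phi(\widehat{S_i}) = \widehat{\res(\K Q e_i)} = \widehat{P_i}$, and since $\{\widehat{P_i}\}$ is also a basis of $K_0(\mod \K Q)$, the composite $\Psi\circ\Phi$ is the automorphism of $K_0(\mod \K Q)$ induced by the Cartan matrix, hence an isomorphism; thus $\Phi$ is injective. For surjectivity one needs that $\{\widehat{\K Q e_i}\}$ generates $K_0(\Gproj \Lambda^\imath)$, which follows because any Gorenstein projective $G$ has $\res G$ projective over $\K Q$, hence $\res G \cong \bigoplus P_i^{a_i}$, and one argues inductively — using Corollary~\ref{corollary for stalk complexes} or the orbit category description — that $\widehat{G} = \sum a_i \widehat{\K Q e_i}$ in $K_0(\Gproj\Lambda^\imath)$. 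Freeness of rank $|Q_0|$ then follows since $\Psi$ maps the generating set to the basis $\{\widehat{P_i}\}$, forcing no relations.

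The main obstacle I expect is pinning down precisely which Grothendieck group $K_0(\Gproj(\Lambda^\imath))$ refers to (the exact-category $K_0$ of the Frobenius category $\Gproj$, versus the stable $K_0$) and correctly bookkeeping the relations: in the exact category $\Gproj(\Lambda^\imath)$ the sequence \eqref{eqn:projective resolution of kQ} is \emph{not} a conflation (it has a non-Gorenstein-projective... actually $P_{\btau j}$ and $P_j$ \emph{are} Gorenstein projective, so it is), so one gets the relation $\widehat{(\Lambda^\imath)e_j} = \widehat{P_j} + \widehat{P_{\btau j}}$ among Gorenstein projectives, and one must verify these are the only relations beyond the obvious ones — equivalently that $\res: K_0(\Gproj \Lambda^\imath) \to K_0(\mod \K Q)$ is injective. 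That injectivity is where the real content sits, and I would prove it by the inductive dévissage on dimension sketched above, peeling off a source vertex as in the proof of Corollary~\ref{corollary locally projective modules}, reducing any Gorenstein projective to a direct sum of the $\K Q e_i$ modulo $\Gproj$-conflations.
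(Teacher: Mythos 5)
Your architecture is sound, and one half of it is genuinely different from the paper's proof. The paper obtains the rank by citing Wakamatsu's exact sequence
$0 \to K_0(\proj\Lambda^\imath) \to K_0(\Gproj(\Lambda^\imath))\oplus K_0(\cp^{\leq 1}(\Lambda^\imath)) \to K_0(\mod(\Lambda^\imath))\to 0$
together with Lemma~\ref{lemma:isomorphic of Grothendieck groups}. Your replacement --- the map $\Psi=[\res]\colon K_0(\Gproj(\Lambda^\imath))\to K_0(\mod(\K Q))$, well defined because $\res$ is exact and lands in $\proj(\K Q)$ by Proposition~\ref{proposition of 1-Gorenstein}(2), with $\Psi(\widehat{\K Qe_i})= \widehat{P_i}$ --- is correct, and once generation is known it immediately forces freeness on the proposed basis, since the $\widehat{P_i}$ are linearly independent in $K_0(\mod(\K Q))$. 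That is a legitimate and more self-contained route to the freeness half; you do not need Wakamatsu, nor the Cartan-matrix remark about $\Psi\circ\Phi$, for the conclusion.

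The gap is in the generation half, which you correctly flag as ``where the real content sits'' but do not prove. Of the two routes you sketch, the d\'evissage ``peeling off a source vertex as in Corollary~\ref{corollary locally projective modules}'' fails for Gorenstein projectives: that argument works for $\cp^{\leq 1}(\Lambda^\imath)$ because the relevant criterion is projectivity of $\res_\BH(M)$ and the pieces $(e_i+e_{\btau i})M$ are sums of $\E_i$'s concentrated at $\{i,\btau i\}$. For $G\in\Gproj(\Lambda^\imath)$ the criterion is instead that $\res(G)$ be $\K Q$-projective, and $(e_i+e_{\btau i})G$ restricts to a sum of simples $S_i\oplus S_{\btau i}$, which is not $\K Q$-projective unless $i$ is a sink of $Q$ itself (not merely of the support of $G$); so the peeled-off submodule and its complement generally leave $\Gproj(\Lambda^\imath)$, and the induction does not close. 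The route that does work is the graded one, and it is essentially the paper's: every $G$ is gradable (Corollary~\ref{cor:dense of forgetful}), $\Gproj^\Z(\Lambda^\imath)\cong\cc^b(\proj(\K Q))$ (Remark~\ref{rem:bounded}), and the brutal truncations of a bounded complex of projectives give conflations whose subquotients are stalk complexes, which the (exact) forgetful functor sends to the modules $\K Qe_j$; hence $\widehat{G}=\sum_j a_j\widehat{\K Qe_j}$ in $K_0(\Gproj(\Lambda^\imath))$. The paper packages this as: $\K Q$ is a tilting object of $\underline{\Gproj}^\Z(\Lambda^\imath)$ (Proposition~\ref{prop:tilting object}), hence generates $\underline{\Gproj}(\Lambda^\imath)$ by density of the forgetful functor, with the projectives absorbed via \eqref{eqn:projective resolution of kQ}. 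Until you commit to that graded argument, the surjectivity of your $\Phi$ --- and hence the proposition --- is not established.
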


\begin{proof}
Set $A=\K Q$, and $n=|Q_0|$ in this proof.

Since $\Lambda^\imath$ is $1$-Gorenstein, by Lemmas \ref{lemma perpendicular of P GP} and \ref{lemma approx of GP}, $(\cp^{\leq 1}(\Lambda^\imath),\Gproj(\Lambda^\imath))$ is a left complete Ext-orthogonal pair \`a la \cite{Wa}.
By \cite[Theorem 2.1]{Wa}, there is a short exact sequence of groups
$$0\longrightarrow K_0(\proj(\Lambda^\imath)) \longrightarrow K_0(\Gproj(\Lambda^{\imath}))\oplus K_0(\cp^{\leq 1}(\Lambda^\imath))\longrightarrow K_0(\mod(\Lambda^\imath))\longrightarrow0.$$
Here we use the fact $\cp^{\leq 1}(\Lambda^\imath)\cap \Gproj(\Lambda^\imath)=\proj(\Lambda^\imath)$.
It is well known that $K_0(\mod(\Lambda^\imath))$ and $K_0(\proj(\Lambda^\imath))$ are free abelian groups of rank $n$. Then
both $K_0(\Gproj(\Lambda^{\imath}))$ and $K_0(\cp^{\leq 1}(\ca))$ are free abelian groups, and the sum of their ranks equals to $2n$. Hence $K_0(\Gproj(\Lambda^\imath))$ is a free abelian group of rank $n$.

It follows by Proposition \ref{prop:tilting object} and its proof that $_{\Lambda^\imath}A$ is a Gorenstein projective $\Lambda^{\imath}$-module. Define $\bK:=\langle \widehat{Ae_i} \mid i\in Q_0\rangle$ to be the subgroup of $K_0(\Gproj(\Lambda^\imath))$. By Proposition \ref{prop:projective module of lambdai}, there exists a short exact sequence
$$0\longrightarrow Ae_{\btau i}\longrightarrow \Lambda^{\imath}e_i\longrightarrow Ae_i\longrightarrow0, \qquad \forall i\in Q_0.$$
So $\widehat{\Lambda^{\imath}\, e_i}\in \bK$ for each $i\in Q_0$.
By Proposition \ref{prop:tilting object}, $_{\Lambda^\imath}A$ is a tilting object in $\underline{\Gproj}^\Z( \Lambda^{\imath})$. Since the forgetful functor $\Gproj^\Z(\Lambda^\imath)\rightarrow \Gproj(\Lambda^\imath)$ is dense by Corollary~ \ref{cor:dense of forgetful}, $\underline{\Gproj}(\Lambda^{\imath})$ is generated by $Ae_i$, $i\in Q_0$. Then $\bK= K_0(\Gproj(\Lambda^\imath))$, and the proposition is proved.
\end{proof}

%Below we study the Euler form of $\Lambda^\imath$.
Following \eqref{left Euler form}--\eqref{right Euler form} in Appendix~\ref{subsection:Def of MRH}, we can define the Euler forms $\langle K,M\rangle =\langle L,M\rangle_{\Lambda^\imath}$ and $\langle M,K\rangle =\langle M,K\rangle_{\Lambda^\imath}$ for any $K\in\cp^{\leq1}(\Lambda^\imath)$, $M\in\mod(\Lambda^\imath)$. These forms descend to bilinear Euler forms on the Grothendieck groups:
\begin{eqnarray*}
\langle\cdot,\cdot\rangle: K_0(\cp^{\leq 1}(\Lambda^\imath))\times K_0(\mod(\Lambda^\imath))\longrightarrow \Z,
\\
\langle\cdot,\cdot\rangle: K_0(\mod(\Lambda^\imath))\times K_0(\cp^{\leq 1}(\Lambda^\imath))\longrightarrow \Z,
\end{eqnarray*}
such that
\begin{equation}
\langle \widehat{K},\widehat{M}\rangle=\langle K,M\rangle,\qquad\langle \widehat{M},\widehat{K}\rangle=\langle M,K\rangle,\qquad\forall\, K\in\cp^{\leq1}(\Lambda^\imath), M\in\mod(\Lambda^\imath).
\end{equation}
Denote by $\langle\cdot,\cdot\rangle_Q$ the Euler form of $\K Q$. Denote by $S_i$ the simple $kQ$-module (respectively, $\Lambda^{\imath}$-module) corresponding to vertex $i\in Q_0$ (respectively, $i\in\ov{Q}_0$).
Since $\btau$ is an involution of the quiver $Q$, we have
\begin{align}
  \label{eq:SSQ}
 \langle S_{\btau i}, S_{\btau j}\rangle_Q =\langle S_i,S_j\rangle_Q,
 \qquad
 \langle S_i,S_{\btau j}\rangle_Q =\langle S_{\btau i}, S_j\rangle_Q.
\end{align}

These 2 Euler forms are related via the restriction functor $\res:\mod(\Lambda^\imath)\rightarrow \mod(\K Q)$ as follows.

\begin{lemma}
   \label{lemma compatible of Euler form}
We have
\begin{itemize}
\item[(i)]
$\langle \E_i, M\rangle = \langle S_i,\res (M) \rangle_Q$ and $\langle M,\E_i\rangle =\langle \res(M), S_{\btau i} \rangle_Q$, for any $i\in Q_0$, $M\in\mod(\Lambda^{\imath})$;
\item[(ii)] $\langle M,N\rangle=\frac{1}{2}\langle \res(M),\res(N)\rangle_Q$, for any $M,N\in\cp^{\leq 1}(\Lambda^\imath)$.
\end{itemize}
\end{lemma}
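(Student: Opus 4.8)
The plan is to compute both Euler forms explicitly using projective resolutions, exploiting the resolutions of the generalized simple modules $\E_i$ established earlier. For part (i), recall from \eqref{eqn:projective resolution of E} the projective resolution
\[
0\longrightarrow \bigoplus_{(\alpha: i\rightarrow j)\in Q_1} \Lambda^{\imath} e_j\longrightarrow \Lambda^{\imath} e_i\longrightarrow \E_i\longrightarrow 0,
\]
so that for any $M\in\mod(\Lambda^\imath)$ one has $\langle\E_i,M\rangle = \dim\Hom_{\Lambda^\imath}(\Lambda^\imath e_i, M) - \dim\Hom_{\Lambda^\imath}(\bigoplus_{(\alpha:i\to j)}\Lambda^\imath e_j, M)$. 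Since $\Hom_{\Lambda^\imath}(\Lambda^\imath e_i, M)=e_i M = e_i\res(M)$, and the simple $\K Q$-module $S_i$ has projective resolution $0\to\bigoplus_{(\alpha:i\to j)}(\K Q)e_j\to (\K Q)e_i\to S_i\to 0$, the alternating sum of dimensions matches term by term with $\langle S_i,\res(M)\rangle_Q$. For the second identity I would instead use the injective resolution \eqref{eqn:injective resolution of E} of $\E_i$ together with the standard duality $D$; computing $\langle M,\E_i\rangle$ via this injective resolution gives $\dim\Hom(M, D(e_{\btau i}\Lambda^\imath)) - \dim\Hom(M,\bigoplus_{(\alpha:j\to\btau i)}D(e_j\Lambda^\imath))$, and $\Hom_{\Lambda^\imath}(M, D(e_k\Lambda^\imath))\cong D(e_k M)$ has dimension $\dim e_k\res(M)$, which assembles into $\langle\res(M),S_{\btau i}\rangle_Q$ by the projective resolution of $S_{\btau i}$ read as a right-module (equivalently injective) computation.

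For part (ii), the strategy is to reduce to part (i) by linearity and the filtration description of $\cp^{\leq1}(\Lambda^\imath)$. By Corollary~\ref{corollary locally projective modules}, every $M\in\cp^{\leq1}(\Lambda^\imath)$ admits a filtration with subquotients among the $\E_{i_j}$, so $\widehat M = \sum_j \widehat{\E}_{i_j}$ in $K_0(\cp^{\leq1}(\Lambda^\imath))$; since the Euler form on $\cp^{\leq1}(\Lambda^\imath)\times\cp^{\leq1}(\Lambda^\imath)$ is additive on short exact sequences in each variable, it suffices to verify $\langle\E_i,\E_j\rangle=\tfrac12\langle\res(\E_i),\res(\E_j)\rangle_Q$ for all $i,j\in Q_0$. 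Now $\res(\E_i)$ is $S_i\oplus S_{i}$ shifted appropriately — more precisely, from \eqref{eq:E}, $\res_{\K Q}(\E_i)$ has dimension vector $\widehat S_i + \widehat S_{\btau i}$ (when $\btau i=i$ this is $2\widehat S_i$; when $\btau i\neq i$ it is $\widehat S_i+\widehat S_{\btau i}$). Using part (i), $\langle\E_i,\E_j\rangle = \langle S_i,\res(\E_j)\rangle_Q = \langle S_i, S_j\rangle_Q + \langle S_i, S_{\btau j}\rangle_Q$. On the other hand $\tfrac12\langle\res(\E_i),\res(\E_j)\rangle_Q = \tfrac12\langle S_i+S_{\btau i}, S_j+S_{\btau j}\rangle_Q$, which expands to $\tfrac12(\langle S_i,S_j\rangle_Q + \langle S_i,S_{\btau j}\rangle_Q + \langle S_{\btau i},S_j\rangle_Q + \langle S_{\btau i},S_{\btau j}\rangle_Q)$. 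The symmetry relations \eqref{eq:SSQ} — namely $\langle S_{\btau i},S_{\btau j}\rangle_Q = \langle S_i,S_j\rangle_Q$ and $\langle S_{\btau i},S_j\rangle_Q = \langle S_i,S_{\btau j}\rangle_Q$ — collapse this last expression to $\langle S_i,S_j\rangle_Q + \langle S_i,S_{\btau j}\rangle_Q$, matching the computation via part (i). This completes the verification on generalized simples and hence, by the filtration argument, on all of $\cp^{\leq1}(\Lambda^\imath)$.

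The main obstacle I anticipate is bookkeeping in part (i): one must be careful that $\langle\cdot,\cdot\rangle$ here is the relative Euler form defined on $K_0(\cp^{\leq1}(\Lambda^\imath))\times K_0(\mod(\Lambda^\imath))$ from Appendix~\ref{subsection:Def of MRH}, so the computation $\langle\E_i,M\rangle = \sum_{p\geq0}(-1)^p\dim\Ext^p_{\Lambda^\imath}(\E_i,M)$ is legitimate precisely because $\pd_{\Lambda^\imath}(\E_i)\leq1$ by Lemma~\ref{lemma locally projective modules}; and symmetrically for $\langle M,\E_i\rangle$ one uses $\idim_{\Lambda^\imath}(\E_i)\leq1$. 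A second minor subtlety is checking that the filtration argument in part (ii) is valid on both sides simultaneously — this is fine because $\res$ is exact, so $\widehat{\res(M)} = \sum_j\widehat{\res(\E_{i_j})}$ in $K_0(\mod\K Q)$ as well, and the $\K Q$-Euler form is likewise additive. No deep input is needed beyond the resolutions already in hand; the lemma is essentially a compatibility computation, and the factor $\tfrac12$ is forced by the doubling inherent in $\res(\E_i)$ together with the $\btau$-symmetry of the $\K Q$-Euler form.
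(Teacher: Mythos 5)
Your proposal is correct and follows essentially the same route as the paper: part (i) via the projective resolution \eqref{eqn:projective resolution of E} and the injective resolution \eqref{eqn:injective resolution of E} of $\E_i$ (the paper merely reduces to $M=S_j$ first, while you compute the Hom spaces for general $M$ directly), and part (ii) by reducing to the generalized simples via Corollary~\ref{corollary locally projective modules} and then invoking part (i) together with the $\btau$-symmetry \eqref{eq:SSQ}. The computation collapsing $\tfrac12\langle S_i+S_{\btau i},S_j+S_{\btau j}\rangle_Q$ to $\langle S_i,S_j\rangle_Q+\langle S_i,S_{\btau j}\rangle_Q$ is exactly the one in the paper.
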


\begin{proof}
(i) It suffices to prove the formulas for $M=S_j$.
Recall $\E_i$ has a projective resolution (\ref{eqn:projective resolution of E}). If $i\neq j$, then $\langle \E_i,S_j\rangle=-|\{ (\alpha \colon i\rightarrow j)\in Q_1\}|=\langle S_i,S_j\rangle_Q$;
If $i=j$, then $\langle \E_i,S_j\rangle=1=\langle S_i,S_j\rangle_Q$. So
$\langle \E_i, M\rangle = \langle S_i,\res (M) \rangle_Q$.

Dually, by using (\ref{eqn:injective resolution of E}), we obtain that $\langle S_j,\E_i\rangle =\langle S_j, S_{\btau i} \rangle_Q$.

(ii) By Corollary \ref{corollary locally projective modules}, it suffices to prove the result for $M=\E_i$ and $N=\E_j$,  for $i, j \in Q_0$.
Then by \eqref{eq:SSQ} and Part~(i) we have
\begin{align*}
\langle \res(\E_i), & \res(\E_j)\rangle_Q-2\langle \E_i,\E_j\rangle\\
&= \langle S_i\oplus S_{\btau i}, S_j\oplus S_{\btau j}\rangle_Q -2\langle S_i, \res(\E_j)\rangle_Q\\
&= \langle S_i, S_j\oplus S_{\btau j}\rangle_Q +\langle S_{\btau i}, S_j\oplus S_{\btau j}\rangle_Q -2\langle S_i,S_j\oplus S_{\btau j}\rangle_Q\\
&= \langle S_i, S_j\oplus S_{\btau j}\rangle_Q +\langle S_{i}, S_{\btau j}  \oplus S_j\rangle_Q -2\langle S_i,S_j\oplus S_{\btau j}\rangle_Q=0.
\end{align*}
The lemma is proved.
\end{proof}

\subsection{Semi-derived Ringel-Hall algebras for $\La^\imath$}

As $\Lambda^\imath$ is a $1$-Gorenstein algebra by Proposition~\ref{proposition of 1-Gorenstein}, following Appendix \ref{app:A} we define the semi-derived Ringel-Hall algebra
\[
\utMH:= \utM(\mod(\Lambda^\imath))
\]
associated to the category $\mod(\Lambda^\imath)$, in which the multiplication is denoted by $\diamond$. More precisely, $\utM(\Lambda^\imath)$ is defined to be the localization ring $(\ch(\Lambda^\imath)/I) [\cs_{\Lambda^\imath}^{-1}]$, where $\cs_{\Lambda^\imath}=\{a[K]\mid a \in \Q^\times, K \in\cp^{\leq1}(\Lambda^\imath)\}$ and the ideal $I$ is generated by $[L]-[K\oplus M]$ associated to any short exact sequence
\begin{equation*}
 % \label{eq:ideal}
 0 \longrightarrow K \longrightarrow L \longrightarrow M \longrightarrow 0
\end{equation*}
in $\mod(\Lambda^\imath)$ with $K\in\cp^{\leq1}(\Lambda^\imath)$.

By Lemma \ref{lem:bimodule of MRH}, for any $M\in \mod(\Lambda^\imath)$ and $K\in\cp^{\leq 1}(\Lambda^\imath)$, we have in $\utMH$
\begin{align}
[K]\diamond[M] &=q^{-\langle K,M\rangle}[K\oplus M],\\
[M]\diamond[K] &=q^{-\langle M,K\rangle}[K\oplus M].
\end{align}

For any $\alpha\in K_0(\mod(\K Q))$, by Lemma \ref{lemma:isomorphic of Grothendieck groups}, there exist $A,B\in\cp^{\leq 1}(\Lambda^\imath)$ such that $\phi(\alpha) =\widehat{A}-\widehat{B} \in K_0 \big(\cp^{\leq 1}(\Lambda^\imath) \big)$. Define
\[
\E_\alpha:=q^{-\langle \phi(\alpha),\widehat{B}\rangle} [A]\diamond [B]^{-1} \in \utMH,
\]
which is independent of choices of $A$ and $B$ (this is similar to \S\ref{subsec:MRH for 1-Gor} or \cite[\S3.4]{LP}). In particular, we have
 \[
 \E_{\widehat{S_i}}=[\E_i], \qquad \forall i\in Q_0.
 \]

We define a partial order $\leq$ on the Grothendieck group $K_0(\mod(\K Q))$, namely,
\[
\alpha\leq \beta \; \text{  if and only if }\; \beta-\alpha\in K_0^+(\mod(\K Q)),
\]
where
$K_0^+(\mod(\K Q))\subseteq K_0(\mod(\K Q))$ is the positive cone of the Grothendieck group, consisting of classes of objects of $\mod(\K Q)$.
As $K_0(\mod(\K Q))$ and $K_0(\mod(\Lambda^\imath))$ are free abelian group with basis given by $\{\widehat{S_i}\mid i\in Q_0\}$, it is natural to identify them. In this way, we obtain a partial order $\leq$ on $K_0(\mod(\Lambda^\imath))$.

\begin{lemma}
   \label{lem:another basis of MRH 1}
For any $L\in\mod(\Lambda^{\imath})$, there exist (unique up to isomorphism) $X\in\mod (\K Q)\subseteq \mod (\Lambda^{\imath})$ and $\alpha\in K_0^+(\mod(\K Q))$ such that $[L]=q^{\langle \widehat{X}, \phi(\alpha)\rangle} [X]\diamond \E_{\alpha}$ in $\cs\cd\ch(\Lambda^\imath)$.
\end{lemma}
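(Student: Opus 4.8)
The plan is to reduce an arbitrary $\Lambda^\imath$-module $L$ to a module pulled back from $\K Q$ together with a suitable class in the quantum torus, using the Gorenstein-projective approximation of Lemma~\ref{lemma approx of GP} and the identification of the singularity category in Corollary~\ref{corollary for stalk complexes}. First I would apply Lemma~\ref{lemma approx of GP} to $L$ (with $d=1$, since $\Lambda^\imath$ is $1$-Gorenstein by Proposition~\ref{proposition of 1-Gorenstein}) to obtain a short exact sequence
\[
0\longrightarrow H_L\longrightarrow G_L\longrightarrow L\longrightarrow 0,
\]
with $H_L\in\cp^{\leq 1}(\Lambda^\imath)$ and $G_L\in\Gproj(\Lambda^\imath)$. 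In the modified Ringel-Hall algebra this yields a relation expressing $[L]$ in terms of $[G_L]$ and $[H_L]$: since $H_L$ has finite projective dimension, $[H_L]$ is invertible in $\utMH$, and the Hall-algebra identity for the above exact sequence (together with the fact that extensions involving $\cp^{\leq 1}$-objects are controlled by the Euler form as recorded in Lemma~\ref{lem:bimodule of MRH}) gives $[L]$ as a scalar multiple of $[G_L]\diamond[H_L]^{-1}$, up to contributions of other middle terms. Here one must be slightly careful: the Hall product $[G_L]\diamond[H_L]$ is a sum over all extensions, but because $\Ext^1(G_L,U)$ for $U$ of finite projective dimension need not vanish in the wrong direction, I would instead use $[H_L]\diamond[G_L]$ or argue via the filtration directly; in any case the upshot is that $[L]$ lies in the span of $[G]\diamond\E_\beta$ with $G\in\Gproj(\Lambda^\imath)$ and $\beta\in K_0$.

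Next I would pass from $G_L\in\Gproj(\Lambda^\imath)$ to a module in $\mod(\K Q)$. By Theorem~\ref{thm:sigma} (or directly Corollary~\ref{corollary for stalk complexes}), every object of $D_{sg}(\mod(\Lambda^\imath))\simeq\underline{\Gproj}(\Lambda^\imath)$ is isomorphic to a unique $X\in\mod(\K Q)\subseteq\mod(\Lambda^\imath)$. Thus $G_L$ and $X$ become isomorphic in the stable category, which means they differ by projective summands: there is an isomorphism $G_L\oplus P\cong X\oplus P'$ for projective $\Lambda^\imath$-modules $P,P'$. Each indecomposable projective $(\Lambda^\imath)e_i$ sits in a short exact sequence $0\to P_{\btau i}\to(\Lambda^\imath)e_i\to P_i\to 0$ by Proposition~\ref{prop:projective module of lambdai}, with $P_i,P_{\btau i}\in\mod(\K Q)$; so in $\utMH$ the class $[(\Lambda^\imath)e_i]$ is expressible via classes of $\K Q$-modules and torus elements. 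Applying this to clear out $P$ and $P'$, we rewrite $[G_L]$ (hence $[L]$) as a scalar times $[X]\diamond\E_\alpha$ for an appropriate $X\in\mod(\K Q)$ and $\alpha\in K_0(\mod(\K Q))$, where the commutation relations $[K]\diamond[M]=q^{-\langle K,M\rangle}[K\oplus M]$ let me collect all the torus contributions into a single $\E_\alpha$. Tracking the exponent through the identity $[K]\diamond[M]=q^{-\langle K,M\rangle}[K\oplus M]$ and the definition $\E_\alpha = q^{-\langle\phi(\alpha),\widehat B\rangle}[A]\diamond[B]^{-1}$ produces precisely the normalization $q^{\langle\widehat X,\phi(\alpha)\rangle}[X]\diamond\E_\alpha$ claimed.

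That $\alpha$ can be taken in the positive cone $K_0^+(\mod(\K Q))$ requires a small extra argument: a priori the bookkeeping only yields $\alpha\in K_0(\mod(\K Q))$. Here I would compare dimension vectors. Since $\res:\mod(\Lambda^\imath)\to\mod(\K Q)$ is exact and faithful, applying $\res$ to the approximation sequences shows $\underline{\dim}\,\res(L) = \underline{\dim}\,\res(X) + (\text{contribution of }\phi(\alpha))$, and Lemma~\ref{lemma compatible of Euler form} together with the explicit resolutions \eqref{eqn:projective resolution of E} pins down the relation between $\alpha$ and these dimension vectors; positivity of $\alpha$ then follows from positivity of the relevant dimension vectors, possibly after absorbing a negative part of $\alpha$ into $X$ by replacing $X$ with $X\oplus\E_i$-type summands — but since $\E_i\notin\mod(\K Q)$, the cleaner route is to observe that the filtration from Corollary~\ref{corollary locally projective modules} of the finite-projective-dimension part forces $\alpha$ to record a nonnegative multiplicity of each $\widehat S_i$. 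The uniqueness of $X$ (up to isomorphism) is immediate from the uniqueness clause in Corollary~\ref{corollary for stalk complexes}, and uniqueness of $\alpha$ then follows since $\E_\alpha$ is determined by $[L]$ and $[X]$ inside $\utMH$ (the torus elements $\E_\alpha$ being linearly independent).

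\textbf{Main obstacle.} The delicate point I expect is the first step: converting the exact sequence $0\to H_L\to G_L\to L\to 0$ into a clean multiplicative identity in $\utMH$. The Hall product mixes in all middle terms of extensions, so one cannot naively write $[L]=(\text{scalar})[G_L]\diamond[H_L]^{-1}$; one must either invoke the structure of the modified Ringel-Hall algebra showing that $[H_L]$ (being in $\cp^{\leq 1}$) is central-like up to the Euler-form twist, or run an induction on $\dim L$ using the filtration, peeling off one generalized-simple $\E_{i}$ composition factor at a time as in the proof of Corollary~\ref{corollary locally projective modules}. Getting the scalar exponents to match the stated $q^{\langle\widehat X,\phi(\alpha)\rangle}$ exactly is then a matter of careful but routine bookkeeping with the Euler form via Lemma~\ref{lemma compatible of Euler form}.
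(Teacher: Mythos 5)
Your overall architecture matches the paper's: Gorenstein-projective approximation of $L$ via Lemma~\ref{lemma approx of GP}, passage to a $\K Q$-module through $D_{sg}(\mod(\Lambda^{\imath}))$ via Corollary~\ref{corollary for stalk complexes}, collection of the torus factors, and uniqueness of $X$ from the same corollary. Your announced ``main obstacle'' is in fact not one: in $\utMH$ the ideal $I$ already contains $[G_L]-[P_L\oplus L]$ for the approximation sequence (the kernel term lies in $\cp^{\leq 1}(\Lambda^\imath)$), so $[L]=q^{-\langle \widehat{L},\widehat{P_L}\rangle}[G_L]\diamond[P_L]^{-1}$ holds on the nose by Lemma~\ref{lem:bimodule of MRH}; no sum over middle terms ever appears. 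The genuine problems lie elsewhere.

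First, your claim that $G_L\oplus P\cong X\oplus P'$ for projective $P,P'$ is false in general: $X\in\mod(\K Q)$ is usually \emph{not} Gorenstein projective over $\Lambda^\imath$ (by Proposition~\ref{proposition of 1-Gorenstein}(2) this happens only when $X$ is projective over $\K Q$), so $G$ and $X$ are isomorphic only in $D_{sg}(\mod(\Lambda^{\imath}))$, not in $\underline{\Gproj}(\Lambda^{\imath})$, and objects isomorphic in the singularity category need not differ by projective summands as modules. The repair --- which is what the paper does --- is to take a second approximation $0\to P_X\to G_X\to X\to 0$; then $G_X\cong G$ in $\underline{\Gproj}(\Lambda^{\imath})$, and since $G$ was chosen without projective summands it is a direct summand of $G_X$, giving $G_X\cong G\oplus P'$; one then expresses $[X]$ through $[G_X]$, $[P_X]$ and eliminates $[G]$. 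Second, the positivity $\alpha\in K_0^+(\mod(\K Q))$ is not established by your sketch. The class identity $\widehat{L}=\widehat{X}+\phi(\alpha)$ in $K_0(\mod(\Lambda^\imath))$ is automatic from the grading, but $\alpha$ is defined as a signed combination of the classes of $P, P_L, P', P_X$, and nothing in the bookkeeping makes it visibly effective; your fallback via Corollary~\ref{corollary locally projective modules} does not apply because $L$ need not have finite projective dimension. The paper proves the needed inequality $\widehat{X}\leq\widehat{L}$ by a separate cohomological argument: it restricts to the subalgebra $\BH$, views $\res_\BH(M)$ as a $\Z/1$- or $\Z/2$-graded complex, shows that $\res_\BH(X)$ is the cohomology of $\res_\BH(L)$ (using that every object of $\cp^{\leq 1}(\Lambda^\imath)$ restricts to an acyclic complex), and concludes since passing to cohomology can only decrease the class. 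Some argument of this kind is indispensable; without it the positive-cone assertion remains unproved.
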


\begin{proof}
Let $L\in\mod(\Lambda^{\imath})$. As $\Lambda^{\imath}$ is $1$-Gorenstein,   by Lemma~\ref{lemma approx of GP} there exists a short exact sequence
\begin{align}
\label{eqn:resolution L}
0\longrightarrow P_L\longrightarrow G_L\longrightarrow L\longrightarrow0
\end{align}
with $G_L\in\Gproj(\Lambda^{\imath})$, $P_L\in \proj(\Lambda^{\imath})$.
Then $[L]\diamond [P_L]=q^{-\langle L, P_L\rangle } [G_L]$.

Denote by $G_L=P\oplus G$, where $P$ is the maximal projective direct summand of $G_L$. Then $[G]\diamond[P]=q^{-\langle G,P \rangle} [G_L]$.
By Corollary \ref{corollary for stalk complexes} there exists $X\in\mod(\K Q)$ such that
$G\cong X$ in $D_{sg}(\mod(\Lambda^{\imath}))$. For $X$, again there exists a short exact sequence
\begin{align}
\label{eqn:resolution X}
0\longrightarrow P_X\longrightarrow G_X\longrightarrow X\longrightarrow0,
\end{align}
with $G_X\in\Gproj(\Lambda^{\imath})$, $P_X\in \proj(\Lambda^{\imath})$. Note that $G_X\cong G$ in $\underline{\Gproj}(\Lambda^{\imath})$. Then $G$ is a direct summand of $G_X$ since $G$ has no projective direct summand by our assumption. Thus there exists a projective $\Lambda^\imath$-module $P'$ such that
$G_X\cong G\oplus P'$. Then
\begin{align*}
[L]&=q^{-\langle \widehat{L}, \widehat{P_L}\rangle } [G_L]\diamond [P_L]^{-1}
\\
&= q^{\langle \widehat{G},\widehat{P} \rangle -\langle \widehat{L}, \widehat{P_L}\rangle} [G]\diamond[P]\diamond [P_L]^{-1}
\\
&= q^{\langle \widehat{G},\widehat{P} \rangle -\langle \widehat{L}, \widehat{P_L}\rangle} q^{\langle \widehat{P}-\widehat{P_L},\widehat{P_L} \rangle} [G]\diamond \E_{\beta}
\\
&=q^{\langle \widehat{G},\widehat{P} \rangle} q^{ -\langle \widehat{L} -\widehat{P}+\widehat{P_L}, \widehat{P_L}\rangle}  [G]\diamond \E_{\beta}
\\
&= q^{\langle \widehat{G},\widehat{P}-\widehat{P_L}\rangle}  [G]\diamond \E_{\beta}
\end{align*}
by noting that $\widehat{L} -\widehat{P}+\widehat{P_L}=\widehat{G}$ and setting $\beta=\phi^{-1}(\widehat{P}-\widehat{P_L})$. Therefore, we have
\begin{align}
[L]&= q^{\langle \widehat{G},\widehat{P}-\widehat{P_L}\rangle} q^{\langle \widehat{G_X}-\widehat{P'},\widehat{P'}\rangle} [G_X]\diamond [P']^{-1}\diamond \E_\beta
  \label{eq:XL}  \\
&= q^{\langle \widehat{G},\widehat{P}-\widehat{P_L}\rangle} q^{\langle \widehat{G_X}-\widehat{P'},\widehat{P'}\rangle} q^{\langle \widehat{P'},\widehat{P}-\widehat{P_L}-\widehat{P'}\rangle}[G_X]\diamond \E_{\beta-\phi^{-1}(\widehat{P'})}
  \notag \\
&=q^{\langle \widehat{G_X},\widehat{P}-\widehat{P_L}+\widehat{P'} \rangle} [G_X]\diamond  \E_{\beta-\phi^{-1}(\widehat{P'})}
  \notag \\
&=q^{\langle \widehat{G_X},\widehat{P}-\widehat{P_L}+\widehat{P'} \rangle} q^{\langle \widehat{X},\widehat{P_X}\rangle} [X]\diamond [P_X]\diamond  \E_{\beta-\phi^{-1}(\widehat{P'})}
   \notag \\
&=q^{\langle \widehat{X}, \widehat{P_X}+ \widehat{P}-\widehat{P_L}-\widehat{P'}\rangle} [X]\diamond \E_{\alpha}=q^{\langle \widehat{X}, \phi(\alpha)\rangle} [X]\diamond \E_{\alpha},
   \notag
\end{align}
where $\alpha=\phi^{-1}(\widehat{P_X}+ \widehat{P}-\widehat{P_L}-\widehat{P'})$.

The uniqueness follows from Corollary \ref{corollary for stalk complexes} by noting that $X\cong L$ in $D_{sg}(\mod(\Lambda^{\imath}))$.

Finally, it remains to prove that $\alpha\in K_0^+(\mod(k Q))$, or equivalently, $\widehat{X}\leq \widehat{L}$ in $K_0(\mod(\Lambda^\imath))$, thanks to \eqref{eq:XL}.
%If the claim holds, by definition, there exist two $A,B\in\cp^{\leq 1}(\Lambda^\imath)$ such that $\phi(\alpha)=\widehat{A}-\widehat{B}$. Since $\cs\cd\ch(\Lambda^\imath)$ is a $K_0(\mod(\Lambda^\imath))$-graded algebra, we have $\widehat{A}-\widehat{B}=\widehat{L}-\widehat{X}$. Then $\widehat{A}-\widehat{B}\in K_0^+(\mod(\Lambda^\imath))$. It follows from the definition of $\phi: K_0(\mod(kQ))\stackrel{\simeq}{\longrightarrow} K_0(\mod(\Lambda^\imath))$ that $\alpha\in K_0^+(\mod(kQ))$.

Recall from \eqref{eq:H} that $\BH=\bigoplus_{i\in\I_\btau} \BH_i$, and $\res_\BH:\mod(\Lambda^\imath)\longrightarrow \mod(\BH)$ is the restriction functor. For $i\in\I_\btau$, we note
\begin{align*}
\BH_i =\left\{ \begin{array}{ll}
R_2 & \text{ if }\btau i\neq i,\\
R_1 & \text{ if }\btau i=i,\end{array} \right.
\qquad
\mod(\BH_i)\cong\left\{ \begin{array}{ll}
\cc_{\Z/2}(\mod(k)) & \text{ if }\btau i\neq i,\\
\cc_{\Z/1}(\mod(k)) & \text{ if }\btau i=i.\end{array} \right.
\end{align*}
%We have $\BH_i=R_2$ and $\mod(\BH_i)\cong \cc_{\Z/2}(\mod(k))$ if $\btau i\neq i$, and $\BH_i=R_1$ and $\mod(\BH_i)\cong \cc_{\Z/1}(\mod(k))$ if $\btau i=i$.
So any $V\in \mod(\BH_i)$ can be viewed as a graded complex, and let $H ^\bullet(V)\in \mod(\BH_i)$ be the cohomology of $V$. Note that $H ^{2a+\epsilon}(V)=H^\epsilon(V)$ for $a \in\Z$ and $\epsilon=0,1$ if $\btau i\neq i$; $H^a(V)=H^0(V)$ for any $a \in\Z$ if $\btau i=i$. For any $V\in\mod(\BH_i)$, define
\begin{align*}
H(V):=\left\{ \begin{array}{ll} H^0(V)\oplus H^1(V)&
\text{ if }\btau i\neq i,\\
H^0(V) &\text{ if }\btau i=i.\end{array}\right.
\end{align*}
The definition of $H(V)$ can be extended via a direct sum decomposition to any $V\in\mod(\BH)$ since $\BH=\bigoplus_{i\in\I_\btau} \BH_i$.

Clearly, $H^\bullet(\E_i)=0$ for any $i\in\I$, and then $H^\bullet(\res_\BH(M))=0$ for any $M\in \cp^{\leq 1}(\Lambda^\imath)$ by Corollary \ref{corollary locally projective modules}.

So $H^\bullet(G_L)=H^\bullet(G)=H^\bullet(G_X)$ by noting that $H^\bullet(P)=0=H^\bullet(P')$.
After applying the exact functor $\res_\BH$ to the short exact sequences \eqref{eqn:resolution L}--\eqref{eqn:resolution X}, the induced long exact sequences on cohomologies give us
\begin{align*}
&H^\bullet(\res_\BH(L))= H^\bullet(\res_\BH(G_L))=H^\bullet(\res_\BH(G))
= H^\bullet(\res_\BH(G_X)) =H^\bullet(\res_\BH(X)),
\end{align*}
thanks to $P_L,P_X\in\proj(\Lambda^\imath)$.
Then we have $H(\res_\BH(L))=H(\res_\BH(X))=\res_\BH(X)$ thanks to $X\in\mod(kQ)$.

On the other hand, $\res_\BH$ induces an isomorphism and an identification $K_0(\mod(\Lambda^\imath)) =K_0(\mod(\BH))$.
As $\widehat{H(\res_\BH(L))}\leq \widehat{L}$ by construction, we have established
$\widehat{X}\leq \widehat{L}$.
\end{proof}

\subsection{A Hall basis}

\begin{theorem}
\label{thm:utMHbasis}
The algebra $\utMH$ has a (Hall) basis given by
\begin{equation}
  \label{eq:Hall basis}
\big\{ [X]\diamond \E_\alpha ~\big |~ [X]\in\Iso(\mod(\K Q))\subseteq \Iso(\mod(\Lambda^{\imath})), \alpha\in K_0(\mod(\K Q)) \big\}.
\end{equation}
\end{theorem}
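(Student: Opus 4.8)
The strategy is to show that the proposed set both spans $\utMH$ and is linearly independent, using the general Hall basis from Theorem~{\bf C}(2) as the bridge. First I would recall from Theorem~{\bf C}(2) (applied to the $1$-Gorenstein algebra $A = \Lambda^\imath$) that $\cm\ch(\Lambda^\imath)$ has basis $\{[M]\diamond[K_\alpha]\}$ where $[M]$ ranges over $\Iso(D_{sg}(\mod(\Lambda^\imath)))$ and $\alpha\in K_0(\mod(\Lambda^\imath))$. Then I would invoke Corollary~\ref{corollary for stalk complexes}, which identifies $\Ind D_{sg}(\mod(\Lambda^\imath))$ with $\Ind(\mod(\K Q))$; since every object of $D_{sg}(\mod(\Lambda^\imath))$ is (uniquely up to isomorphism) a module in $\mod(\K Q)\subseteq\mod(\Lambda^\imath)$, we may take the representatives $[M]$ to range over $\Iso(\mod(\K Q))$. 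Combined with the isomorphism $\phi\colon K_0(\mod(\K Q))\xrightarrow{\sim}K_0(\cp^{\le 1}(\Lambda^\imath))$ of Lemma~\ref{lemma:isomorphic of Grothendieck groups} and the definition of $\E_\alpha$, the ``reference'' basis $\{[M]\diamond[K_\alpha]\}$ is, up to invertible scalar factors $q^{(\cdots)}$ and a relabeling of the index set, exactly the set $\{[X]\diamond\E_\alpha\mid [X]\in\Iso(\mod(\K Q)),\,\alpha\in K_0(\mod(\K Q))\}$.

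The cleanest way to organize the argument is probably the following. I would first prove the \emph{spanning} statement directly via Lemma~\ref{lem:another basis of MRH 1}: that lemma already says that for every $L\in\mod(\Lambda^\imath)$ one has $[L] = q^{\langle\widehat X,\phi(\alpha)\rangle}[X]\diamond\E_\alpha$ for suitable $X\in\mod(\K Q)$ and $\alpha\in K_0^+(\mod(\K Q))$. Since the classes $[L]$ for $L\in\mod(\Lambda^\imath)$, together with the inverses $[K]^{-1}$ of classes of modules in $\cp^{\le1}(\Lambda^\imath)$, generate $\utMH$ as an algebra (this is how $\utMH$ is constructed as a localization in Appendix~\ref{app:A}), and since $\E_\alpha$ for arbitrary $\alpha\in K_0(\mod(\K Q))$ already absorbs all the torus directions $[K]^{\pm1}$, a short induction/bookkeeping argument shows the set \eqref{eq:Hall basis} spans. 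Here one uses that $\E_\alpha\diamond\E_\beta$ is a scalar times $\E_{\alpha+\beta}$ (the twisted quantum torus relation, cf. Lemma~\ref{lem:torus}) and that $\E_\alpha\diamond[X]$ can be rewritten, using the commutation formulas $[K]\diamond[M]=q^{-\langle K,M\rangle}[K\oplus M]$ and its mirror, as a scalar times $[X]\diamond\E_\alpha$ plus no correction terms, since $\alpha\in K_0(\cp^{\le 1})$ and such classes are ``central up to $q$-powers'' against any module class.

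For \emph{linear independence}, I would count: the reference basis of Theorem~{\bf C}(2) is indexed by $\Iso(D_{sg}(\mod(\Lambda^\imath)))\times K_0(\mod(\Lambda^\imath))$, which by Corollary~\ref{corollary for stalk complexes} and the freeness statements is in bijection with $\Iso(\mod(\K Q))\times K_0(\mod(\K Q))$ — exactly the index set of \eqref{eq:Hall basis}. Then I would exhibit the explicit change-of-basis between $\{[M]\diamond[K_\alpha]\}$ and $\{[X]\diamond\E_\alpha\}$: fixing $[X]$, the transition is ``unitriangular'' with respect to the degeneration partial order $\le$ on $K_0(\mod(\K Q))$ (this is precisely the content built into the proof of Lemma~\ref{lem:another basis of MRH 1}, where $\widehat X\le\widehat L$), with invertible diagonal entries $q^{(\cdots)}\ne 0$. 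Hence the transition matrix is invertible and \eqref{eq:Hall basis} is a basis.

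\textbf{Main obstacle.} The genuinely delicate point is matching the two index sets \emph{with multiplicities} — i.e. making sure that the passage from $D_{sg}$-objects to honest $\K Q$-modules via Corollary~\ref{corollary for stalk complexes} is a clean bijection and that no collapsing or splitting of isoclasses occurs, and simultaneously that the class $\E_\alpha$ genuinely depends only on $\alpha$ and not on the chosen resolution. Both of these are already established earlier (Corollary~\ref{corollary for stalk complexes} for the uniqueness; the remark after the definition of $\E_\alpha$, following \cite{LP}, for well-definedness), so the remaining work is the bookkeeping of $q$-powers in the change of basis — tedious but routine. I do not expect any conceptual difficulty beyond carefully invoking Lemma~\ref{lem:another basis of MRH 1} and Theorem~{\bf C}(2) in tandem.
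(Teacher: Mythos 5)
Your proposal is correct and follows essentially the same route as the paper: both rest on the appendix freeness/basis theorem for $\cm\ch$ of a $1$-Gorenstein algebra, on Corollary~\ref{corollary for stalk complexes} to choose the $D_{sg}$-representatives inside $\mod(\K Q)$, and on Lemma~\ref{lem:another basis of MRH 1} for the explicit change of basis. The only cosmetic difference is that the paper anchors on the basis indexed by $\Gproj^{\np}(\Lambda^\imath)$ from Lemma~\ref{cor:basis of MRH as torus-module 1-Gor} rather than directly on Theorem~\ref{theorem basis of MRH 1-Gor}, and the transition it exhibits is monomial (each reference basis element is a single nonzero scalar times a single element of \eqref{eq:Hall basis}, by the uniqueness in Lemma~\ref{lem:another basis of MRH 1}) rather than merely triangular, which sidesteps any worry about inverting an infinite triangular matrix.
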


\begin{proof}
Denote by
\begin{align}
  \label{eq:GProjnp}
\Gproj^{\np}(\Lambda^\imath)
= & \text{ the smallest subcategory of $\Gproj (\Lambda^\imath)$ formed by all Gorenstein}
\\
&\text{ projective modules without any projective summands}.
\notag
\end{align}
We have by Lemma \ref{lemma:isomorphic of Grothendieck groups} that
$K_0(\mod(\K Q))\cong K_0(\cp^{\leq 1}(\Lambda^{\imath}))$. With Lemma~ \ref{cor:basis of MRH as torus-module 1-Gor}, this implies that $\utMH$ has a basis given by $[M]\diamond\E_\alpha$, where $[M]\in \Iso(\Gproj^{\np}( \Lambda^{\imath}))$ and $\alpha\in  K_0(\mod(\K Q))$.
For any $M\in \Gproj^{\np}( \Lambda^{\imath})$, by Corollary \ref{corollary for stalk complexes} there exists a unique (up to isomorphism) $\K Q$-module $X_M$ such that $M\cong X_M$ in $D_{sg}(\mod(\Lambda^{\imath}))$. Then by Lemma~ \ref{lem:another basis of MRH 1}, we have $[M]=q^{\langle \widehat{X_M}, \phi(\beta_M)\rangle} [X_M]\diamond \E_{\beta_M}$ for some $\beta_M\in K_0(\mod(\K Q))$. So $\utMH$ is spanned by the set \eqref{eq:Hall basis}.

On the other hand, for any $[X]\in \Iso(\mod(\K Q))$, there exists a unique isoclass $[M_X]\in  \Iso(\Gproj^{\np}( \Lambda^{\imath}))$ such that $X\cong M_X$ in $D_{sg}(\mod(\Lambda^{\imath}))$. It follows by Lemma~ \ref{lem:another basis of MRH 1} and its proof that there exists a unique $\alpha_X\in  K_0(\mod(\K Q))$ with $[X]=q^{\langle\widehat{M_X},\phi(\alpha_X)\rangle}[M_X]\diamond \E_{\alpha_X}$. The theorem follows.
%Suppose that
%$$\sum_{[X]\in\Iso\mod(\K Q),\alpha\in K_0(\mod(\K Q))} a_{X,\alpha} [X]\diamond\E_\alpha=0.$$
%Then
%\begin{eqnarray*}
%0&=&\sum_{[X],\alpha} a_{X,\alpha} [X]\diamond[\E_\alpha]\\
%&=& \sum_{[X],\alpha} a_{X,\alpha} [M_X]\diamond [\E_{\alpha_X}]\diamond[\E_\alpha]\\
%&=&\sum_{[X],\alpha} a_{X,\alpha} q^{-\langle\E_{\alpha_X}, \E_\alpha\rangle}[M_X]\diamond [\E_{\alpha_X+\alpha}]\\
%\end{eqnarray*}
%Then Theorem \ref{theorem basis of MRH} shows that $a_{X,\alpha}=0$, and then $$\{ [X]\diamond [\E_\alpha] \mid [X]\in\Iso(\mod(\K Q))\subseteq \Iso(\mod(\Lambda^{\imath})), \alpha\in K_0(\proj^{\leq 1}(\Lambda^{\imath}))\}$$
%form a basis of $\utMH$.
\end{proof}

By Definition~\ref{def:torus} and its subsequent discussions, the quantum torus
\[
\T(\Lambda^\imath) :=\T(\mod(\Lambda^\imath))
\]
is the group algebra of $K_0(\cp^{\leq 1}(\Lambda^\imath))$ with its multiplication twisted by  $q^{-\langle \cdot,\cdot \rangle}$. By the isomorphism $\phi: K_0(\mod(\K Q))\stackrel{\simeq}{\rightarrow} K_0 \big(\cp^{\leq 1}(\Lambda^\imath) \big)$ from \eqref{dfn:phi}, $\T(\Lambda^\imath)$ is generated by $\E_\alpha$, for $\alpha\in K_0(\mod(\K Q))$, where
\begin{equation}
  \label{eq:EE2}
\E_\alpha\diamond \E_\beta= q^{-\langle \phi(\alpha),\phi(\beta)\rangle} \E_{\alpha+\beta}.
\end{equation}
Then $\T(\Lambda^\imath)$ is a subalgebra of $\utMH$, which is isomorphic to $\cs\cd\ch(\cp^{\leq 1}(\Lambda^\imath))$.
The following is immediate from Theorem \ref{theorem basis of MRH 1-Gor} and Theorem \ref{thm:utMHbasis}.

\begin{corollary}\label{cor:basis of MRH as torus-module}
$\utMH$ is a free right (respectively, left) $\T(\Lambda^\imath)$-module, with a basis given by $\Iso(\mod(\K Q))$.
%$\{[M]\mid M\in\mod(\K Q)\subseteq \mod(\Lambda^{\imath})\}$.
\end{corollary}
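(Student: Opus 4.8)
The plan is to deduce Corollary~\ref{cor:basis of MRH as torus-module} directly from the two results it cites: the structural basis theorem for modified Ringel-Hall algebras of $1$-Gorenstein algebras (Theorem~\ref{theorem basis of MRH 1-Gor}, i.e. Theorem~\textbf{C}(2)), which gives a basis $[M]\diamond[K_\alpha]$ with $[M]\in\Iso(D_{sg}(\mod(\La^\imath)))$ and $\alpha\in K_0(\mod(\La^\imath))$, together with the concrete identification $\Ind D_{sg}(\mod(\La^\imath))=\Ind(\mod(kQ))$ from Corollary~\ref{corollary for stalk complexes} and the isomorphism $\phi$ of Lemma~\ref{lemma:isomorphic of Grothendieck groups}. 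First I would recall that, by definition of $\T(\La^\imath)$ as the group algebra of $K_0(\cp^{\leq1}(\La^\imath))$ with twisted multiplication \eqref{eq:EE2}, the elements $\{[K_\alpha]\}$ (equivalently, via $\phi$, the elements $\{\E_\alpha\mid\alpha\in K_0(\mod(kQ))\}$) form a basis of $\T(\La^\imath)$ as a $\Q(\sqq)$-vector space, and $\T(\La^\imath)$ is a subalgebra of $\utMH$.

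Next I would combine this with Theorem~\ref{thm:utMHbasis}: that theorem already exhibits $\{[X]\diamond\E_\alpha\mid [X]\in\Iso(\mod(kQ)),\ \alpha\in K_0(\mod(kQ))\}$ as a basis of $\utMH$. Since $\{\E_\alpha\}$ is a basis of $\T(\La^\imath)$ as a right module over itself, the double-indexed basis \eqref{eq:Hall basis} can be reorganized: for a fixed $[X]$, the span of $\{[X]\diamond\E_\alpha\mid\alpha\}$ is precisely the right $\T(\La^\imath)$-submodule generated by $[X]$, and it is free of rank one because right multiplication by $\T(\La^\imath)$ is injective on it (the elements $[X]\diamond\E_\alpha$ being linearly independent by Theorem~\ref{thm:utMHbasis}). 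Taking the direct sum over $[X]\in\Iso(\mod(kQ))$ shows $\utMH=\bigoplus_{[X]}[X]\diamond\T(\La^\imath)$ is free as a right $\T(\La^\imath)$-module with basis $\{[X]\mid[X]\in\Iso(\mod(kQ))\}$. For the left-module statement I would argue symmetrically, using the left-handed version of Theorem~\ref{thm:utMHbasis} — alternatively, observe that $\T(\La^\imath)$ is commutative up to the twisting scalar (indeed $\E_\alpha\diamond\E_\beta$ and $\E_\beta\diamond\E_\alpha$ differ only by the invertible scalar $q^{\mp\langle\phi(\alpha),\phi(\beta)\rangle}$, and since the form is antisymmetrized appropriately one checks $\T(\La^\imath)$ is in fact commutative), so left and right module structures of $\utMH$ over $\T(\La^\imath)$ coincide up to reindexing and the left freeness follows from the right freeness.

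The only genuine subtlety — and I expect this to be the main (minor) obstacle — is making precise that passing from the "flat" basis $\{[X]\diamond\E_\alpha\}$ of the vector space to a "module basis" $\{[X]\}$ is legitimate, i.e. that no relations among the $[X]\diamond\E_\alpha$ across different $\alpha$ for the same $[X]$ are forced by the $\T(\La^\imath)$-action. This is exactly guaranteed by the linear independence half of Theorem~\ref{thm:utMHbasis} together with the fact that $\{\E_\alpha\}$ is a $\Q(\sqq)$-basis of $\T(\La^\imath)$: any right $\T(\La^\imath)$-linear combination $\sum_{[X]}[X]\diamond t_{[X]}$ with $t_{[X]}\in\T(\La^\imath)$ expands uniquely into the basis \eqref{eq:Hall basis}, so it vanishes iff all $t_{[X]}=0$, which is precisely freeness. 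I would phrase the proof in two or three sentences invoking Theorem~\ref{theorem basis of MRH 1-Gor} and Theorem~\ref{thm:utMHbasis} and this reindexing observation, as the excerpt itself does ("The following is immediate from..."), and not belabor the elementary linear algebra.
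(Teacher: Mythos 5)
Your proposal follows essentially the same route as the paper, which derives the corollary directly from Theorem~\ref{theorem basis of MRH 1-Gor} and Theorem~\ref{thm:utMHbasis}; the reindexing of the flat basis $\{[X]\diamond\E_\alpha\}$ into a free-module basis $\{[X]\}$ is exactly the intended "immediate" step, and your handling of the linear-independence point is correct. One parenthetical claim is wrong, however: the untwisted quantum torus $\T(\Lambda^\imath)$ is \emph{not} commutative in general, since $\E_\alpha\diamond\E_\beta=q^{-\langle\phi(\alpha),\phi(\beta)\rangle}\E_{\alpha+\beta}$ and the Euler form on $K_0(\cp^{\leq1}(\Lambda^\imath))$ is not symmetric (e.g.\ for $Q=1\to 2$ with $\btau=\Id$ one gets $\langle\E_1,\E_2\rangle=-2$ but $\langle\E_2,\E_1\rangle=0$ via Lemma~\ref{lemma compatible of Euler form}(ii)); only the twisted torus $\tTL$ is commutative, cf.\ Lemma~\ref{lem:torus}. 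This does not damage your argument, because the weaker fact you also state — that $\E_\alpha\diamond[X]$ and $[X]\diamond\E_\alpha$ differ by an invertible power of $q$ (which follows from Lemma~\ref{lem:bimodule of MRH}) — already suffices to transfer right freeness to left freeness, and your primary route of arguing symmetrically is fine as well.
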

\subsection{Hall algebras for $\imath$quivers}
\label{subsec:MH}

Recall
\[
\sqq =\sqrt{q}.
\]
Via the restriction functor $\res: \mod(\Lambda^{\imath})\rightarrow\mod (\K Q)$, we define the twisted semi-derived Ringel-Hall algebra $\tMH$ to be the $\Q(\sqq)$-algebra on the same vector space as $\utMH$ with twisted multiplication given by
\begin{align}
   \label{eqn:twsited multiplication}
[M]* [N] =\sqq^{\langle \res(M),\res(N)\rangle_Q} [M]\diamond[N].
\end{align}
We shall refer to this algebra as the {\em Hall algebra associated to $\La^\imath$} or {\em Hall algebra associated to the $\imath$quiver $(Q, \btau)$}; we call it an {\em $\imath$Hall algebra} for short.

\begin{lemma}
    \label{lemma multiplcation of locally projective modules}
For any $M,N\in \cp^{\leq 1}(\Lambda^\imath)$, we have $[M]*[N]=[M\oplus N]$ in $\tMH$. In particular, for any $\alpha,\beta\in K_0(\mod(kQ))$, we have
\begin{equation}  \label{eq:EE}
  \E_\alpha*\E_\beta=\E_{\alpha+\beta}.
\end{equation}
\end{lemma}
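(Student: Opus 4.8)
The plan is to unwind the definitions of the twisted multiplication and the $\diamond$-product restricted to modules of finite projective dimension. First I would recall from the displayed formulas just before this subsection that for $M,N\in\cp^{\leq 1}(\Lambda^\imath)$ (which we may regard as $K\in\cp^{\leq 1}(\Lambda^\imath)$ in the bimodule relations for $\utMH$) one has $[M]\diamond[N]=q^{-\langle M,N\rangle}[M\oplus N]$; here $\langle M,N\rangle=\langle\widehat M,\widehat N\rangle_{\Lambda^\imath}$ is the Euler form of $\Lambda^\imath$, which makes sense since $M$ has finite projective dimension. Then by the definition \eqref{eqn:twsited multiplication} of the twisted product,
\[
[M]*[N]=\sqq^{\langle\res(M),\res(N)\rangle_Q}\,[M]\diamond[N]=\sqq^{\langle\res(M),\res(N)\rangle_Q}\,q^{-\langle M,N\rangle}[M\oplus N].
\]

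The key step is then to observe that the exponents cancel. By Lemma~\ref{lemma compatible of Euler form}(ii), for $M,N\in\cp^{\leq 1}(\Lambda^\imath)$ we have $\langle M,N\rangle=\tfrac12\langle\res(M),\res(N)\rangle_Q$, so
\[
\sqq^{\langle\res(M),\res(N)\rangle_Q}\,q^{-\langle M,N\rangle}
=\sqq^{\langle\res(M),\res(N)\rangle_Q}\,\sqq^{-2\langle M,N\rangle}
=\sqq^{\langle\res(M),\res(N)\rangle_Q-\langle\res(M),\res(N)\rangle_Q}=1,
\]
using $q=\sqq^{2}$. Hence $[M]*[N]=[M\oplus N]$ in $\tMH$, which is the first assertion.

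For the second assertion, I would apply the first to the modules $\E_i$: recall from Lemma~\ref{lemma locally projective modules} that each $\E_i\in\cp^{\leq 1}(\Lambda^\imath)$, and more generally any object representing a class in $K_0(\cp^{\leq 1}(\Lambda^\imath))$ lies in $\cp^{\leq 1}(\Lambda^\imath)$. Given $\alpha,\beta\in K_0(\mod(kQ))$, write $\phi(\alpha)=\widehat A-\widehat B$ and $\phi(\beta)=\widehat C-\widehat D$ with $A,B,C,D\in\cp^{\leq 1}(\Lambda^\imath)$, so that $\E_\alpha=q^{-\langle\phi(\alpha),\widehat B\rangle}[A]\diamond[B]^{-1}$ and similarly for $\E_\beta$. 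Since the twisted product on the quantum torus differs from $\diamond$ only by the $\sqq^{\langle\res(-),\res(-)\rangle_Q}$ factor, and since on $\cp^{\leq 1}(\Lambda^\imath)$ the computation above shows $*$ is just the untwisted direct-sum product, it follows that $\E_\alpha*\E_\beta=\E_{\alpha+\beta}$; equivalently one can combine \eqref{eq:EE2}, which gives $\E_\alpha\diamond\E_\beta=q^{-\langle\phi(\alpha),\phi(\beta)\rangle}\E_{\alpha+\beta}$, with the twist $\sqq^{\langle\res(\E_\alpha),\res(\E_\beta)\rangle_Q}=\sqq^{2\langle\phi(\alpha),\phi(\beta)\rangle}=q^{\langle\phi(\alpha),\phi(\beta)\rangle}$ coming from Lemma~\ref{lemma compatible of Euler form}(ii) (applied additively to representatives of $\alpha,\beta$), so that the two powers of $q$ cancel and $\E_\alpha*\E_\beta=\E_{\alpha+\beta}$.

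The only mild subtlety — which is the closest thing to an obstacle — is making sure Lemma~\ref{lemma compatible of Euler form}(ii) is being applied to genuine modules rather than formal $K$-theory classes: one should pick honest objects $M,N\in\cp^{\leq 1}(\Lambda^\imath)$ representing the relevant classes (possible by Corollary~\ref{corollary locally projective modules}, which gives $\E$-filtered representatives), note that both $\langle-,-\rangle_{\Lambda^\imath}$ on $K_0(\cp^{\leq 1})\times K_0(\mod\Lambda^\imath)$ and $\langle-,-\rangle_Q$ descend to the Grothendieck groups, and that $\res$ is exact so it is additive on classes; then the identity of exponents is an identity in $\Z$ and the cancellation is immediate. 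No step requires more than this bookkeeping, so the proof is short.
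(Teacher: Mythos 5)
Your argument is correct and is exactly the paper's proof: the paper's one-line justification is ``a comparison of \eqref{eq:EE2} and \eqref{eqn:twsited multiplication} using Lemma~\ref{lemma compatible of Euler form}(ii),'' which is precisely the cancellation of $\sqq^{\langle\res(M),\res(N)\rangle_Q}$ against $q^{-\langle M,N\rangle}=\sqq^{-\langle\res(M),\res(N)\rangle_Q}$ that you carry out. The extra bookkeeping you add for general classes $\alpha,\beta\in K_0(\mod(kQ))$ is fine and only makes explicit what the paper leaves implicit.
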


\begin{proof}
Follows from a comparison of \eqref{eq:EE2} and \eqref{eqn:twsited multiplication} using Lemma~ \ref{lemma compatible of Euler form}(ii).
\end{proof}

Let $\tTL$ be the subalgebra of $\tMH$ generated by $\E_\alpha$, $\alpha\in K_0(\mod(\K Q))$. Then $\{\E_\alpha\mid\alpha\in K_0(\mod(\K Q))\}$ is a basis of $\tTL$ whose multiplication is given by \eqref{eq:EE}. Hence we have obtained the following.

\begin{lemma}
  \label{lem:torus}
  The twisted quantum torus $\tTL$ is a Laurent polynomial algebra generated by $[\E_i]$, for $i\in \I$.
\end{lemma}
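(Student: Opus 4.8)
The plan is to show that $\tTL$ is a Laurent polynomial algebra in the classes $[\E_i]$, $i \in \I$, by using the structure of $K_0(\mod(\K Q))$ as a free abelian group together with the explicit twisted multiplication formula \eqref{eq:EE}. First I would recall that $K_0(\mod(\K Q))$ is a free abelian group with basis $\{\widehat{S}_i \mid i \in Q_0\}$ (stated during the proof of Lemma~\ref{lemma:isomorphic of Grothendieck groups}), and that by definition $[\E_i] = \E_{\widehat{S}_i}$ for each $i \in \I = Q_0$. By Theorem~\ref{thm:utMHbasis} the elements $\E_\alpha$, for $\alpha \in K_0(\mod(\K Q))$, are linearly independent in $\utMH$ (they are the special case $X = 0$ of the Hall basis), hence also linearly independent in $\tMH$, so $\{\E_\alpha \mid \alpha \in K_0(\mod(\K Q))\}$ is a $\Q(\sqq)$-basis of $\tTL$.

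Next I would exploit the multiplication rule \eqref{eq:EE} from Lemma~\ref{lemma multiplcation of locally projective modules}, namely $\E_\alpha * \E_\beta = \E_{\alpha+\beta}$. This says the map $\alpha \mapsto \E_\alpha$ is a homomorphism of monoids from $(K_0(\mod(\K Q)), +)$ into the multiplicative monoid of $\tTL$; since the $\E_\alpha$ form a basis, $\tTL$ is precisely the group algebra $\Q(\sqq)[K_0(\mod(\K Q))]$ of this free abelian group. Writing any $\alpha = \sum_{i \in Q_0} n_i \widehat{S}_i$ with $n_i \in \Z$, repeated application of \eqref{eq:EE} gives $\E_\alpha = \prod_{i} [\E_i]^{* \, n_i}$ (the product taken in any order, since \eqref{eq:EE} is commutative), where negative powers make sense because each $\E_{\widehat{S}_i}$ is invertible with inverse $\E_{-\widehat{S}_i}$. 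Thus $\tTL$ is generated as a $\Q(\sqq)$-algebra by $[\E_i]^{\pm 1}$, $i \in \I$.

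Finally I would observe that there are no further relations: a Laurent polynomial relation $\sum_{\alpha} c_\alpha \prod_i [\E_i]^{*\, n_i(\alpha)} = 0$ translates, via the identity just established, into $\sum_\alpha c_\alpha \E_\alpha = 0$, which forces all $c_\alpha = 0$ by the linear independence of the basis $\{\E_\alpha\}$. Hence the assignment $[\E_i] \mapsto x_i$ extends to an isomorphism $\tTL \cong \Q(\sqq)[x_1^{\pm 1}, \dots, x_n^{\pm 1}]$ with $n = |\I|$, proving the lemma. I do not anticipate a genuine obstacle here; the only point requiring a little care is the commutativity needed to make "$\prod_i [\E_i]^{*\,n_i}$" unambiguous, but this is immediate from $\E_\alpha * \E_\beta = \E_{\alpha+\beta} = \E_\beta * \E_\alpha$, so the entire argument is a direct unwinding of \eqref{eq:EE} and Theorem~\ref{thm:utMHbasis}.
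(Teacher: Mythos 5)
Your proposal is correct and follows the same route as the paper: the paper simply observes that $\{\E_\alpha \mid \alpha \in K_0(\mod(\K Q))\}$ is a basis of $\tTL$ with multiplication given by \eqref{eq:EE}, so that $\tTL$ is the group algebra of the free abelian group $K_0(\mod(\K Q))$, i.e.\ a Laurent polynomial algebra in the $[\E_i]=\E_{\widehat{S}_i}$. Your write-up just makes explicit the linear independence (via Theorem~\ref{thm:utMHbasis}) and the unwinding of \eqref{eq:EE} that the paper leaves implicit.
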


The following is a variant of Theorem \ref{thm:utMHbasis} and Corollary~\ref{cor:basis of MRH as torus-module}, and it follows from the definition of $\tMH$.

\begin{proposition}
The $\imath$Hall algebra $\tMH$ has a (Hall) basis given by
\begin{equation*}
\big\{ [X] * \E_\alpha ~\big |~ [X]\in\Iso(\mod(\K Q))\subseteq \Iso(\mod(\Lambda^{\imath})), \alpha\in K_0(\mod(\K Q)) \big\}.
\end{equation*}
Moreover, $\tMH$ is free as a right (respectively, left) $\tTL$-module, with a basis given by $[X]$ in $\Iso(\mod(\K Q))\subseteq \Iso(\mod(\Lambda^{\imath}))$.
\end{proposition}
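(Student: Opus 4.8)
The statement to be proved is a combination of Theorem~\ref{thm:utMHbasis} (the Hall basis for $\utMH$) and Corollary~\ref{cor:basis of MRH as torus-module} (freeness over the quantum torus), transported across the twisting isomorphism $[M]*[N]=\sqq^{\langle\res(M),\res(N)\rangle_Q}[M]\diamond[N]$ from $\utMH$ to $\tMH$. Since the twisted multiplication $*$ differs from $\diamond$ only by an invertible scalar $\sqq^{\langle\res(M),\res(N)\rangle_Q}$ which depends only on the classes $\widehat{M},\widehat{N}$ in $K_0(\mod(\K Q))$, the two algebras $\utMH$ and $\tMH$ coincide as $\Q(\sqq)$-vector spaces, and any spanning/independent set over $\Q(\sqq)$ for one is a spanning/independent set for the other. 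First I would invoke Theorem~\ref{thm:utMHbasis} to conclude that $\{[X]\diamond\E_\alpha\mid [X]\in\Iso(\mod(\K Q)),\ \alpha\in K_0(\mod(\K Q))\}$ is a $\Q(\sqq)$-basis of $\utMH$, hence of $\tMH$.

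Next I would translate $[X]\diamond\E_\alpha$ into the $*$-product. By definition of $\tMH$, $[X]*\E_\alpha=\sqq^{\langle\res(X),\res(\E_\alpha)\rangle_Q}[X]\diamond\E_\alpha$; here $\res(\E_\alpha)$ should be read via the identification $\E_\alpha\in\T(\Lambda^\imath)$ with a class in $K_0(\cp^{\leq1}(\Lambda^\imath))\cong K_0(\mod(\K Q))$, so the exponent is a well-defined integer depending only on $\widehat{X}$ and $\alpha$. Since $\sqq$ is invertible in $\Q(\sqq)$, the change of basis from $\{[X]\diamond\E_\alpha\}$ to $\{[X]*\E_\alpha\}$ is diagonal with invertible entries, so the latter set is also a $\Q(\sqq)$-basis of $\tMH$. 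This proves the first assertion.

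For the freeness statement, I would start from Corollary~\ref{cor:basis of MRH as torus-module}: $\utMH$ is free as a right (resp. left) $\T(\Lambda^\imath)$-module with basis $\{[M]\mid [M]\in\Iso(\mod(\K Q))\}$. Since the twist sends the subalgebra $\T(\Lambda^\imath)\subseteq\utMH$ to the subalgebra $\tTL\subseteq\tMH$ (indeed $\E_\alpha*\E_\beta=\E_{\alpha+\beta}$ by Lemma~\ref{lemma multiplcation of locally projective modules}, so $\tTL$ is spanned by the same elements $\E_\alpha$), and since for $[M]\in\Iso(\mod(\K Q))$ and $\alpha\in K_0(\mod(\K Q))$ one has $[M]*\E_\alpha=\sqq^{\langle\res(M),\res(\E_\alpha)\rangle_Q}[M]\diamond\E_\alpha$, the right $\tTL$-module generated by the $[M]$'s inside $\tMH$ is spanned by $\{[M]*\E_\alpha\}=\{[M]\diamond\E_\alpha\}$ up to invertible scalars, which by the first part is a basis. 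Linear independence over $\tTL$ is immediate from linear independence over $\T(\Lambda^\imath)$ in $\utMH$ (the underlying vector spaces and the scalars are the same). The left-module version is identical, using the left-module statement of Corollary~\ref{cor:basis of MRH as torus-module}.

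\textbf{Main obstacle.} There is essentially no obstacle here; the only thing requiring a little care is checking that the twisting cocycle $\sqq^{\langle\res(-),\res(-)\rangle_Q}$ restricted to the relevant products is indeed invertible and factors through $K_0$ — which it does, since $\res$ is exact and the Euler form is bilinear on the Grothendieck group, so $\langle\res(M),\res(\E_\alpha)\rangle_Q$ depends only on $\widehat{M}$ and $\widehat{\E_\alpha}$. Once that is observed, the proposition follows formally from Theorem~\ref{thm:utMHbasis}, Corollary~\ref{cor:basis of MRH as torus-module}, and Lemma~\ref{lemma multiplcation of locally projective modules} by a diagonal change of basis, as the statement itself indicates with the phrase ``it follows from the definition of $\tMH$.''
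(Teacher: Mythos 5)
Your proposal is correct and follows exactly the route the paper intends: the paper gives no separate proof, stating only that the proposition "is a variant of Theorem~\ref{thm:utMHbasis} and Corollary~\ref{cor:basis of MRH as torus-module}, and it follows from the definition of $\tMH$," which is precisely your observation that the twist $\sqq^{\langle\res(-),\res(-)\rangle_Q}$ is an invertible diagonal rescaling factoring through $K_0$. Your added care about interpreting $\res(\E_\alpha)$ via $K_0(\cp^{\leq1}(\Lambda^\imath))\cong K_0(\mod(\K Q))$ and about $\tTL$ being the image of $\T(\Lambda^\imath)$ is exactly the routine verification the authors leave implicit.
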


The following proposition will be useful in defining a reduced version of $\tMH$.

\begin{proposition}
   \label{Prop:centralMH}
Let $i\in Q_0$. Then $[\E_i]$ is central in $\tMH$ if $\btau i=i$, and
$[\E_i]*[\E_{\btau i}]$ is central in $\tMH$ if $\btau i\neq i$.
\end{proposition}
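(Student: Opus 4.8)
The plan is to prove centrality of $[\E_i]$ (resp.\ $[\E_i]*[\E_{\btau i}]$) by checking that it commutes with an arbitrary basis element $[X]*\E_\alpha$ from the Hall basis. Since $\E_i$ lies in $\cp^{\leq 1}(\Lambda^\imath)$ and the twisted quantum torus $\tTL$ is commutative by Lemma~\ref{lem:torus} (equation~\eqref{eq:EE}), $[\E_i]$ certainly commutes with every $\E_\alpha$; so it suffices to check that $[\E_i]$ commutes with $[X]$ for every $[X]\in\Iso(\mod(\K Q))$. By the definition~\eqref{eqn:twsited multiplication} of the twisted multiplication and the formulas $[K]\diamond[M]=q^{-\langle K,M\rangle}[K\oplus M]$, $[M]\diamond[K]=q^{-\langle M,K\rangle}[K\oplus M]$ valid for $K\in\cp^{\leq1}(\Lambda^\imath)$ (here $K=\E_i$), one computes
\[
[\E_i]*[X] =\sqq^{\langle \res(\E_i),\res(X)\rangle_Q} q^{-\langle \E_i,X\rangle}[\E_i\oplus X],
\qquad
[X]*[\E_i] =\sqq^{\langle \res(X),\res(\E_i)\rangle_Q} q^{-\langle X,\E_i\rangle}[\E_i\oplus X].
\]
Thus $[\E_i]$ is central if and only if the scalar
$\sqq^{\langle \res(\E_i),\res(X)\rangle_Q-\langle \res(X),\res(\E_i)\rangle_Q}\, q^{\langle X,\E_i\rangle-\langle \E_i,X\rangle}$
equals $1$ for all $X\in\mod(\K Q)$; equivalently, recalling $q=\sqq^2$, that
\[
\langle \res(\E_i),\res(X)\rangle_Q-\langle \res(X),\res(\E_i)\rangle_Q
+2\langle X,\E_i\rangle-2\langle \E_i,X\rangle =0 .
\]

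The key computation is then to evaluate these Euler forms using Lemma~\ref{lemma compatible of Euler form}(i). That lemma gives $\langle \E_i,X\rangle=\langle S_i,\res(X)\rangle_Q$ and $\langle X,\E_i\rangle=\langle \res(X),S_{\btau i}\rangle_Q$. Also, $\res(\E_i)=S_i\oplus S_{\btau i}$ if $\btau i\neq i$ and $\res(\E_i)=S_i\oplus S_i$ if $\btau i=i$ (the underlying $\K Q$-module of $\E_i$ is two-dimensional at the relevant vertices). In the case $\btau i=i$: substituting gives $\langle \res(\E_i),\res(X)\rangle_Q = 2\langle S_i,\res(X)\rangle_Q = 2\langle \E_i,X\rangle$ and $\langle \res(X),\res(\E_i)\rangle_Q = 2\langle \res(X),S_i\rangle_Q = 2\langle X,\E_i\rangle$ (using $\btau i=i$), so the displayed expression collapses to $2\langle \E_i,X\rangle - 2\langle X,\E_i\rangle + 2\langle X,\E_i\rangle - 2\langle \E_i,X\rangle = 0$. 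In the case $\btau i\neq i$, I would instead consider the product $[\E_i]*[\E_{\btau i}]$; since $\E_i\oplus\E_{\btau i}\in\cp^{\leq1}(\Lambda^\imath)$ and $\res(\E_i\oplus\E_{\btau i}) = S_i\oplus S_{\btau i}\oplus S_i\oplus S_{\btau i}$, the symmetry $\langle S_{\btau i},S_{\btau j}\rangle_Q=\langle S_i,S_j\rangle_Q$ and $\langle S_i,S_{\btau j}\rangle_Q=\langle S_{\btau i},S_j\rangle_Q$ from~\eqref{eq:SSQ} ensures that the "$\btau$-symmetrized" class $\res(\E_i)+\res(\E_{\btau i})$ pairs symmetrically with any $\res(X)$, making the analogous scalar equal to $1$. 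Concretely, $\langle \res(\E_i\oplus\E_{\btau i}),\res(X)\rangle_Q = 2\langle S_i\oplus S_{\btau i},\res(X)\rangle_Q = 2\big(\langle \E_i,X\rangle + \langle \E_{\btau i},X\rangle\big)$ and likewise $\langle \res(X),\res(\E_i\oplus\E_{\btau i})\rangle_Q = 2\big(\langle X,\E_i\rangle + \langle X,\E_{\btau i}\rangle\big)$, using~\eqref{eq:SSQ} and Lemma~\ref{lemma compatible of Euler form}(i) to identify $\langle \res(X),S_i\rangle_Q=\langle X,\E_{\btau i}\rangle$ etc.; the commutation scalar again cancels identically.

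I expect the main obstacle to be purely bookkeeping: keeping the $\sqq$ versus $q$ exponents straight and correctly tracking which Euler form in Lemma~\ref{lemma compatible of Euler form}(i) produces $S_i$ versus $S_{\btau i}$, together with correctly identifying the underlying $\K Q$-module structure of $\E_i$ (in particular that $\res(\E_i)\cong S_i^{\oplus 2}$ when $\btau i=i$, not $S_i\oplus S_{\btau i}$). There is no conceptual difficulty once the identity $[\E_i]$ commuting with $[X]\Longleftrightarrow$ a vanishing of a combination of Euler forms is isolated; everything then reduces to the symmetry~\eqref{eq:SSQ} and the two formulas of Lemma~\ref{lemma compatible of Euler form}(i). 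I would also remark at the end that since $\tMH$ is generated as an algebra by the $[X]$ for $X\in\mod(\K Q)$ together with $\tTL$ (by the Hall basis proposition), checking commutation with these generators suffices, which is what the above does.
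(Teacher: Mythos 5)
Your proof is correct and follows essentially the same route as the paper's: both compute $[\E_i]*[M]$ and $[M]*[\E_i]$ (resp.\ with $\E_i\oplus\E_{\btau i}$) via the twisting \eqref{eqn:twsited multiplication} and the $\cp^{\leq 1}$-multiplication formulas, then cancel the exponents using Lemma~\ref{lemma compatible of Euler form}(i) together with $\res(\E_i)\cong S_i^{\oplus 2}$ (split case) or $\res(\E_i\oplus\E_{\btau i})\cong (S_i\oplus S_{\btau i})^{\oplus 2}$. The only cosmetic difference is that you reduce to the Hall basis elements $[X]*\E_\alpha$ with $X\in\mod(\K Q)$, whereas the paper checks commutation directly against all $[M]$, $M\in\mod(\Lambda^\imath)$ — the Euler-form identities hold in that generality, so neither reduction costs anything.
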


\begin{proof}
Assume first $\btau i=i$. Then for any $M\in \mod(\Lambda^{\imath})$,  we have by Lemma \ref{lemma compatible of Euler form} that
\begin{align*}
[\E_i]*[M]&= {\sqq}^{\langle \res(\E_i),\res(M)\rangle_Q} [\E_i]\diamond[M]\\
&= {\sqq}^{\langle \res(\E_i),\res(M)\rangle_Q}q^{-\langle \E_i,M\rangle}[\E_i\oplus M]\\
&= {\sqq}^{\langle \res(\E_i),\res(M)\rangle_Q}q^{ -\langle S_i, \res(M) \rangle_Q}[\E_i\oplus M]\\
&= {\sqq}^{2\langle S_i,\res(M)\rangle_Q}q^{ -\langle S_i, \res(M) \rangle_Q}[\E_i\oplus M]\\
&= [\E_i\oplus M].
\end{align*}
Similarly, we have
\begin{align*}
[M]*[\E_i]&= {\sqq}^{\langle \res(M),\res(\E_i)\rangle_Q} [M]\diamond[\E_i]\\
&= {\sqq}^{\langle \res(M), \res(\E_i)\rangle_Q}q^{-\langle M, \E_i\rangle}[M\oplus \E_i]\\
&= {\sqq}^{\langle \res(M), \res(\E_i)\rangle_Q}q^{ -\langle \res(M),S_i\rangle_Q}[M\oplus \E_i]\\
&= [M\oplus \E_i].
\end{align*}
Then $[\E_i]*[M]=[M]*[\E_i]$ and so $[\E_i]$ is central in $\tMH$.

Assume now $\btau i\neq i$. First, Lemma \ref{lemma multiplcation of locally projective modules} implies that
$[\E_i]*[\E_{\btau i}]= [\E_i\oplus \E_{\btau i}].$ Then for any $M\in \mod(\Lambda^{\imath})$ it follows from Lemma \ref{lemma compatible of Euler form}  that
\begin{align*}
([\E_i]*[\E_{\btau i}])*[M]&=[\E_i\oplus \E_{\btau i}]*[M]\\
&= {\sqq}^{\langle \res(\E_i)\oplus \res(\E_{\btau i}),\res(M)\rangle_Q} [\E_i\oplus \E_{\btau i}]\diamond[M]\\
&= {\sqq}^{\langle \res(\E_i)\oplus \res(\E_{\btau i}),\res(M)\rangle_Q}q^{-\langle \E_i\oplus \E_{\btau i},M\rangle}[\E_i\oplus \E_{\btau i}\oplus M]\\
&= {\sqq}^{\langle S_i\oplus S_{\btau i}\oplus S_{\btau i}\oplus S_i,\res(M)\rangle_Q}q^{-\langle S_i\oplus S_{\btau i}, \res(M) \rangle_Q}[\E_i\oplus \E_{\btau i}\oplus M]\\
&= [\E_i\oplus \E_{\btau i}\oplus M].
\end{align*}
Similarly, we have
\begin{align*}
[M]* ([\E_i]*[\E_{\btau i}])
&= [M]*[\E_i\oplus \E_{\btau i}]\\
&= {\sqq}^{\langle \res(M),\res(\E_i) \oplus \res(\E_{\btau i})\rangle_Q} [M]\diamond[\E_i\oplus \E_{\btau i}]\\
&= {\sqq}^{\langle \res(M), \res(\E_i)\oplus \res(\E_{\btau i}) \rangle_Q}q^{-\langle M, \E_i\oplus \E_{\btau i}\rangle_Q}[M\oplus \E_i\oplus \E_{\btau i}]\\
&= {\sqq}^{\langle \res(M), S_i\oplus S_{\btau i}\oplus S_{\btau i}\oplus S_i\rangle_Q}q^{ -\langle \res(M), S_{\btau i}\oplus S_{i}\rangle_Q}[M\oplus \E_i\oplus \E_{\btau i}]\\
&= [M\oplus \E_i\oplus \E_{\btau i}].
\end{align*}
Then $[\E_i]*[\E_{\btau i}]$ is central in $\tMH$.
\end{proof}

Inspired by \cite{Br}, we now define a reduced version of $\tMH$. The precise definition is motivated by Proposition~\ref{prop:QSP12} and it will feature in Theorem~\ref{theorem isomorphism of Ui and MRH}.

\begin{definition}
  \label{def:reducedHall}
Let $\bvs=(\vs_i)\in  (\Q(\bv)^\times)^{\I}$ be such that $\vs_i=\vs_{\btau i}$ for each $i\in \I$. The \emph{reduced Hall algebra associated to $(Q,\btau)$} (or {\em reduced $\imath$Hall algebra}), denoted by $\rMH$, is defined to be the quotient $\Q(\sqq)$-algebra of $\tMH$ by the ideal generated by the central elements
\begin{align}
\label{eqn: reduce 1}
[\E_i] +q \vs_i \; (\forall i\in \I \text{ with } \btau i=i), \text{ and }\; [\E_i]*[\E_{\btau i}] -\vs_i^2\; (\forall i\in \I \text{ with }\btau i\neq i).
\end{align}
\end{definition}

\subsection{Hall algebras for $\imath$subquivers}
\label{subsec:subquiver}

Let $(Q, \btau)$ be an $\imath$quiver, and $\Lambda^\imath$ be its $\imath$quiver algebra. Let $'Q$ be a full subquiver of $Q$ such that it is invariant under $\btau$. Denoting by $'\btau$ the restriction of $\btau$ to $'Q$, we obtain an {\em $\imath$subquiver} $('Q, {}'\btau)$ of $(Q, \btau)$. Denote by $'\Lambda^\imath$ the $\imath$quiver algebra of $('Q, {}'\btau)$. Clearly, $'\Lambda^\imath$ is a quotient algebra (and also subalgebra) of $\Lambda^\imath$. Then we can and shall view $\mod('\Lambda^\imath)$ as a full subcategory of $\mod(\Lambda^\imath)$.

\begin{lemma}
\label{lem:subalgebra}
Retain the notation as above. Then $\tM('\Lambda^\imath)$ is naturally a subalgebra of $\tMH$, with the inclusion morphism induced by
$\mod('\Lambda^\imath)\subseteq \mod(\Lambda^\imath)$.
\end{lemma}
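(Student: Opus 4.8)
The plan is to exhibit the inclusion as induced by the functor $\iota' \colon \mod({}'\Lambda^\imath) \hookrightarrow \mod(\Lambda^\imath)$ coming from the quotient map $\Lambda^\imath \twoheadrightarrow {}'\Lambda^\imath$, and to check that it is compatible with all the structures entering the definition of the twisted modified Ringel-Hall algebra: the Ringel-Hall multiplication, the ideal $I$ that is quotiented out, the localizing set $\cs$, and the Euler-form twist. First I would observe that $\mod({}'\Lambda^\imath)$ is an \emph{extension-closed} (indeed Serre) subcategory of $\mod(\Lambda^\imath)$, since ${}'\Lambda^\imath$ is a quotient algebra of $\Lambda^\imath$; hence for $M, N \in \mod({}'\Lambda^\imath)$ every extension of $N$ by $M$ in $\mod(\Lambda^\imath)$ already lies in $\mod({}'\Lambda^\imath)$, and the Hall numbers $g_{MN}^L$ computed in the two categories coincide. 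This gives an injective algebra homomorphism $\ch({}'\Lambda^\imath) \hookrightarrow \ch(\Lambda^\imath)$ between the (untwisted, unlocalized) Ringel-Hall algebras.

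Next I would check that this map is compatible with passing to the modified Ringel-Hall algebras. The key point is that a ${}'\Lambda^\imath$-module $K$ has finite projective dimension over ${}'\Lambda^\imath$ if and only if $\res_{\BH'}(K)$ is $\BH'$-projective (Proposition~\ref{propostion equivalence of locally projective modules} for ${}'\Lambda^\imath$), and since ${}'Q$ is a \emph{full} $\btau$-invariant subquiver, $\BH' = \bigoplus_{i \in {}'\ci} \BH_i$ is a direct summand of $\BH$ as an algebra, and $\res_{\BH}(\iota' K) = \res_{\BH'}(K)$ viewed in $\mod(\BH)$ (with zero components outside $\BH'$). Therefore $\cp^{\le 1}({}'\Lambda^\imath) \subseteq \cp^{\le 1}(\Lambda^\imath)$, so the localizing set $\cs_{{}'\Lambda^\imath}$ maps into $\cs_{\Lambda^\imath}$, and the defining ideal $I_{{}'\Lambda^\imath}$ maps into $I_{\Lambda^\imath}$ (both ideals being generated, as in \eqref{eq:ideal}, by the same type of relations $[L] - [K]\diamond[K']$ attached to conflations with all terms in the respective module category, which are simultaneously conflations in $\mod(\Lambda^\imath)$). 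Hence the inclusion descends to an injective algebra homomorphism $\utM({}'\Lambda^\imath) \hookrightarrow \utMH$; injectivity follows from the explicit Hall bases of Theorem~\ref{thm:utMHbasis}, since $\Iso(\mod(\K {}'Q)) \subseteq \Iso(\mod(\K Q))$ and $K_0(\mod(\K {}'Q)) \subseteq K_0(\mod(\K Q))$ are honest inclusions, so the basis of $\utM({}'\Lambda^\imath)$ is a subset of the basis of $\utMH$.

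Finally I would check the twist is respected: the twisted multiplication \eqref{eqn:twsited multiplication} uses $\sqq^{\langle \res(M), \res(N)\rangle_Q}$, and for $M, N \in \mod({}'\Lambda^\imath)$ the restriction $\res(M)$ to $\K {}'Q$, viewed inside $\mod(\K Q)$, has Euler form $\langle \res(M), \res(N)\rangle_{{}'Q} = \langle \res(M), \res(N)\rangle_Q$ because ${}'Q$ is a full subquiver (the Euler form only involves $\Hom$ and $\Ext^1$ over the path algebra, which are unchanged under the fully faithful embedding $\mod(\K {}'Q) \hookrightarrow \mod(\K Q)$). Thus the same twisting scalar is used on both sides, and the injective algebra homomorphism $\utM({}'\Lambda^\imath) \hookrightarrow \utMH$ is simultaneously one of twisted algebras, i.e.\ $\tM({}'\Lambda^\imath) \hookrightarrow \tMH$, as claimed.

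The main obstacle I anticipate is the bookkeeping around the ideal $I$ and the localization: one must make sure that ``finite projective dimension'' is detected consistently, i.e.\ that a ${}'\Lambda^\imath$-module of finite projective dimension still has finite projective dimension when inflated to $\Lambda^\imath$ — this is exactly where the ``full $\btau$-invariant subquiver'' hypothesis is used, via the direct-summand decomposition $\BH = \BH' \oplus \BH''$ and Proposition~\ref{propostion equivalence of locally projective modules}. Once that compatibility is in hand, everything else is a routine diagram chase with the Hall bases.
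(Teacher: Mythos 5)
Your overall route is the same as the paper's: embed $\mod({}'\Lambda^\imath)$ into $\mod(\Lambda^\imath)$, check that Hall numbers, the ideal $I$, the localizing set, and the Euler-form twist are all compatible, and get injectivity from the Hall basis of Theorem~\ref{thm:utMHbasis}. Your treatment of finite projective dimension via $\res_{\BH}$ and Proposition~\ref{propostion equivalence of locally projective modules} is a legitimate variant of the paper's appeal to Corollary~\ref{corollary locally projective modules}, and the Euler-form and injectivity arguments match the paper's.

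There is, however, one genuinely flawed justification at the very first step. You assert that $\mod({}'\Lambda^\imath)$ is extension-closed (``indeed Serre'') in $\mod(\Lambda^\imath)$ \emph{because} ${}'\Lambda^\imath$ is a quotient algebra of $\Lambda^\imath$. That implication is false in general: for a quotient $A/J$, the subcategory $\mod(A/J)\subseteq \mod(A)$ is closed under submodules and quotients but usually not under extensions (take $A=k[x]/(x^2)$, $J=(x)$: the extension $0\to k\to A\to k\to 0$ leaves $\mod(A/J)$). Since the agreement of the Hall numbers $|\Ext^1(M,N)_L|$ rests entirely on extension-closedness, as written this step would fail. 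The statement you need is nonetheless true here, and for a specific reason you did not supply: the kernel $J=\Lambda^\imath e\Lambda^\imath$ (with $e=\sum_{i\notin {}'Q_0}e_i$) is an \emph{idempotent} ideal, so for any extension $0\to M\to L\to N\to 0$ with $JM=JN=0$ one gets $JL=J^2L=0$, whence $L\in\mod({}'\Lambda^\imath)$. The paper goes further: it shows (using Proposition~\ref{propostion equivalence of locally projective modules}) that $\Lambda^\imath e\Lambda^\imath$ is projective on both sides and invokes \cite[Theorem~2.7]{PS89} to obtain the extension-closed full embedding. Either argument repairs your proof; but ``quotient algebra'' alone does not.

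In short: modulo replacing that one unjustified (and, as stated, false) claim by the idempotent-ideal or Parshall--Scott argument, your proof is correct and essentially coincides with the paper's.
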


\begin{proof}
Associated to the idempotent $e=\sum_{i\notin Q_0'}e_i$, we have
$'\Lambda^\imath=\Lambda^\imath/\Lambda^\imath e\Lambda^\imath$. Clearly, $\res_\BH ('\Lambda^\imath)$ is projective as an $\BH $-module. It follows from Proposition \ref{propostion equivalence of locally projective modules} that
\[
\pd_{\Lambda^\imath}('\Lambda^\imath) \leq1,
\qquad
\pd_{(\Lambda^\imath)^{op}}('\Lambda^\imath)\leq1.
\]
By the exact sequence
$0\rightarrow \Lambda^\imath e\Lambda^\imath\rightarrow \Lambda^\imath\rightarrow {}'\Lambda^\imath\rightarrow0$ in $\mod(\Lambda^\imath)$, we have that $\Lambda^\imath e\Lambda^\imath$ is projective as a left and right $\Lambda^\imath$-module.
%By considering the right modules,  $(\Lambda^\imath e\Lambda^\imath)_{\Lambda^\imath} $ is also projective.
It follows from \cite[Theorem~ 2.7]{PS89} that $\mod('\Lambda^\imath)$ is a full subcategory of $\mod(\Lambda^\imath)$ closed under taking extensions.
Therefore, $\ch('\Lambda^\imath)$ is naturally a subalgebra of $\ch(\Lambda^\imath)$.

For $K\in\mod('\Lambda^\imath)$, by Corollary \ref{corollary locally projective modules} we have that $\pd_{'\Lambda^\imath}(K)<\infty$ if and only if
$\pd_{\Lambda^\imath}K<\infty$. So we obtain a morphism $\phi': \cs\cd\ch('\Lambda^\imath)\rightarrow \utMH$ by definition.
By viewing $\mod(\K\, 'Q)$ as a subcategory of $\mod(\K Q)$, we have by Theorem \ref{thm:utMHbasis} that this morphism $\phi'$ is injective, and we view $\cs\cd\ch('\Lambda^\imath)$ as a subalgebra of $\utMH$. Similarly, $\mod(\K 'Q)$ is closed under taking extensions, so the Euler form $\langle \cdot,\cdot\rangle_{'Q}$ is the restriction of the Euler form $\langle \cdot,\cdot\rangle_Q$ to $K_0(\mod(k \,  'Q))\subseteq K_0(\mod(\K Q))$. Therefore, $\tM('\Lambda^\imath)$ is a subalgebra of $\tMH$.
\end{proof}

%%%%%%
%%%%%%
\section{Monomial bases and PBW bases of Hall algebras for $\imath$quivers}
  \label{sec:HallPBW}

In this section, $\K= \F_q$, $(Q, \btau)$ is assumed to be a Dynkin $\imath$quiver, and various Hall algebras are considered over $\Q({\sqq})$.
We prove that the semi-derived Ringel-Hall algebra $\utMH$ and its twisted version $\tMH$ admit monomial bases and PBW bases similar to those in the Ringel-Hall algebra $\ch(\K Q)$ and its twisted version $\tH(\K Q)$.

\subsection{Monomial basis of $\ch(\K Q)$}
\label{subsec:monomial-Hall}

In this subsection, we briefly recall the monomial bases of $\ch(\K Q)$ (cf. \cite{L90, Rin2, Rin3, Rei01}); we shall use the book \cite{DDPW} as a main reference. We refer to Appendix~ \ref{subsec:HA} for the definition of Hall algebras.

\begin{definition}  [\cite{Rie86}; also cf. \cite{Bon96}]
Let $M, N$ be $\K Q$-modules with same dimension vector. We say {\em $M$ degenerates to $N$}, or {\em $N$ is a degeneration of $M$}, and denote by
$[N]\leq_{\dg}[M]$, if $\dim \Hom_{\K Q}(X,N)\geq \dim\Hom_{\K Q}(X,M)$ for all $X\in\mod(\K Q)$.
\end{definition}

The relation $\leq_{\dg}$ defines a partial ordering on the set of isoclasses of $\mod(\K Q)$.
For any two $\K Q$-modules $M,N$ with same dimension vector, we have $\dim \Hom_{\K Q}(X,N)= \dim\Hom_{\K Q}(X,M)$ for all $X\in\mod(\K Q)$ if and only if $M\cong N$.

Let $M, N$ be $\K Q$-modules. In the set
\[
\ce xt_{M,N}:=\{[L] \mid \text{there exists a short exact sequence }0\rightarrow N\rightarrow L\rightarrow M\rightarrow0\},
\]
there exists a unique (up to isomorphism) $G$ having endomorphism algebra of minimal dimension. Then $[G]$ is called the \emph{generic extension} of $M$ by $N$, and denoted by
$[M]\circ[N]$. The operator $\circ$ is associative \cite{Rei01}; also see \cite[Proposition 1.30]{DDPW}.

Denote by $\Phi^+$ the set of positive roots of the Dynkin quiver $Q$ with simple roots $\alpha_i$, $i\in Q_0$.
Gabriel's Theorem says that there exists a one-to-one correspondence between $\Ind(\mod(\K Q))$ and $\Phi^+$ by mapping $M\mapsto \dimv M$. Here we view $\alpha_i$ as the dimension vector of $S_i$, $i\in\I$.

For any $\alpha\in\Phi^+$, denote by $M(\alpha):=M_q(\alpha)$ its corresponding indecomposable $\K Q$-module, i.e., $\dimv \, M(\alpha)=\alpha$.
Let $\mathfrak{P}=\mathfrak{P}(Q)$ be the set of functions $\lambda: \Phi^+\rightarrow \N$.
Then the modules
\[
M(\lambda):= \bigoplus_{\alpha\in\Phi^+}\lambda(\alpha) M(\alpha), \qquad \text{ for } \lambda\in\mathfrak{P},
\]
provide a complete set of isoclasses of $\K Q$-modules.

Let $\cw:=\cw_\I$ be the set of words in the alphabet $\I=Q_0$. For any $w=i_1\cdots i_m\in\cw$, let $\wp(w) \in \mathfrak{P}$ be specified by
\[
[M(\wp(w))]:= [S_{i_1}]\circ\cdots\circ[S_{i_m}].
\]
This defines a map
$$\wp:\cw\longrightarrow \mathfrak{P}, \quad w\mapsto \wp(w).$$
By \cite[Chapter 11.2]{DDPW}, $\wp$ is onto and it leads to a partition $\cw=\bigsqcup_{\lambda\in\mathfrak{P}} \wp^{-1}(\lambda)$.

Each word $w$ can be uniquely written in a {\em tight} form $w=j_1^{c_1}\cdots j_t^{c_t}$, where $c_1,\dots,c_t$ are positive integers and $j_r\neq j_{r+1}$ for $1\leq r<t$. A filtration
\[
M=M_0\supset M_1\supset\cdots\supset M_{t-1}\supset M_t=0
\]
of a module $M$ is called a \emph{reduced filtration} of $M$ of type $w$ if $M_{r-1}/M_r\cong c_r S_{j_r}$ for all $1\leq r\leq t$.
Denote by $\gamma_{w}^\lambda(q)$ the number of the reduced filtrations of $M(\lambda)$ of type $w$.
A word $w$ is called \emph{distinguished} if $\gamma_w^{\wp(w)}(q)=1$.

To each word $w=i_1\cdots i_m\in\cw$, we associate monomials
\begin{align*}
\ov{S}^{\diamond}_w &:=[S_{i_1}]\diamond\cdots \diamond [S_{i_m}] \in \ch(\K Q),
\\
\ov{S}^{*}_w &:=[S_{i_1}]*\cdots *[S_{i_m}] \in \tH(\K Q).
\end{align*}

\begin{proposition} [cf. \text{\cite[Corollary 11.14]{DDPW}}]
   \label{monomial basis of HQ}
Let $Q$ be a Dynkin quiver. For every $\lambda\in \mathfrak{P}=\mathfrak{P}(Q)$, choose an arbitrary distinguished word $w_\lambda\in \wp^{-1}(\lambda)$. Then $\{\ov{S}^{\diamond}_{w_\lambda}|\lambda\in \mathfrak{P}\}$ forms a $\Q(\sqq)$-basis of $\ch(\K Q)$, and $\{\ov{S}^{*}_{w_\lambda}|\lambda\in \mathfrak{P}\}$ forms a $\Q(\sqq)$-basis of $\tH(\K Q)$.
\end{proposition}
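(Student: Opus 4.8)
The plan is to derive the statement about $\ch(\K Q)$ and $\tH(\K Q)$ from the combinatorics of generic extensions, following the treatment in \cite{DDPW}. First I would recall the key structural fact: the assignment $w\mapsto \wp(w)$ gives a surjection $\cw\twoheadrightarrow\fp$, and for a distinguished word $w$ the generic-extension monomial $[S_{i_1}]\circ\cdots\circ[S_{i_m}]$ equals $[M(\wp(w))]$ \emph{on the nose} (not merely up to lower terms), because $\gamma_w^{\wp(w)}(q)=1$. The strategy is then a triangularity argument with respect to the degeneration order $\leq_{\dg}$ (or, equivalently for Dynkin type, the order by dimension of endomorphism algebras): expand $\ov S^{\diamond}_w$ in the PBW-type basis $\{[M(\lambda)]\}_{\lambda\in\fp}$ of $\ch(\K Q)$ and show the expansion is unitriangular.

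The main steps, in order, are as follows. (1) Record that $\{[M(\lambda)]\mid\lambda\in\fp\}$ is already a $\Q(v)$-basis of $\ch(\K Q)$ — this is the standard Hall-algebra basis indexed by isoclasses. (2) For a word $w=i_1\cdots i_m$, compute $\ov S^{\diamond}_w=[S_{i_1}]\diamond\cdots\diamond[S_{i_m}]=\sum_{[L]}g^{L}_{S_{i_1},\dots,S_{i_m}}[L]$, where the structure constant counts filtrations of $L$ with successive quotients $S_{i_1},\dots,S_{i_m}$; every such $L$ has dimension vector $\alpha_{i_1}+\cdots+\alpha_{i_m}$, and among all $L$ appearing, the generic extension $M(\wp(w))$ is the \emph{unique minimal} one under $\leq_{\dg}$ and occurs with coefficient a power of $q$ times $\gamma_{w}^{\wp(w)}(q)$; see \cite[Chapter 11]{DDPW}. (3) When $w=w_\lambda$ is distinguished, $\gamma_{w_\lambda}^{\lambda}(q)=1$, so the coefficient of $[M(\lambda)]$ in $\ov S^{\diamond}_{w_\lambda}$ is a nonzero monomial in $v$, and all other $[M(\mu)]$ that appear satisfy $\mu>\lambda$ (strictly larger in the degeneration order, hence a genuinely different, more degenerate module). (4) Since $\leq_{\dg}$ is a partial order on a set indexed bijectively by $\fp$ and the transition matrix $(\ov S^{\diamond}_{w_\lambda})_{\lambda}$ versus $([M(\mu)])_{\mu}$ is triangular with invertible (monomial) diagonal entries, it is invertible over $\Q(v)$; therefore $\{\ov S^{\diamond}_{w_\lambda}\mid\lambda\in\fp\}$ is a basis. (5) For the twisted algebra, note $[S_{i_1}]*\cdots*[S_{i_m}]=v^{c(w)}[S_{i_1}]\diamond\cdots\diamond[S_{i_m}]$ for an explicit integer $c(w)$ depending only on $w$ (coming from the Euler form $\langle\cdot,\cdot\rangle_Q$), since twisting multiplies each monomial by the same scalar; hence $\ov S^{*}_{w_\lambda}=v^{c(w_\lambda)}\ov S^{\diamond}_{w_\lambda}$, and basis-ness transfers immediately.

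The step I expect to be the main obstacle is (2)–(3): verifying precisely that the generic extension $M(\wp(w))$ is the $\leq_{\dg}$-minimal module occurring in the product $\ov S^{\diamond}_w$ and that its multiplicity is exactly $\gamma_w^{\wp(w)}(q)$ (up to a known power of $q$). This is the heart of the Reineke/Ringel filtration argument and requires the associativity of $\circ$ together with the compatibility between the Hall multiplication and the degeneration order. In the write-up I would simply cite \cite[Chapter 11.2, Proposition 1.30]{DDPW} (and \cite{Rei01, Rin2, Rin3}) for these facts rather than reprove them, since the proposition is explicitly stated as a recollection ("cf. \cite{DDPW}"). The remaining steps are then purely formal: triangularity $\Rightarrow$ change of basis, plus the scalar comparison to pass from the untwisted to the twisted version.
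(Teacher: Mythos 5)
Your proposal is correct, and it reconstructs exactly the standard triangularity argument from \cite[Ch.~11]{DDPW} that the paper itself does not reprove but merely cites (the proposition is stated as a recollection). Two harmless caveats: with the paper's convention for $\leq_{\dg}$ the generic extension is the unique \emph{maximal} (not minimal) middle term, so the expansion of $\ov{S}^{\diamond}_{w_\lambda}$ is triangular with the off-diagonal terms being degenerations of $M(\lambda)$; and since the paper uses Bridgeland's normalization of the Hall product, the diagonal coefficient is $\gamma_{w_\lambda}^{\lambda}(q)$ times a nonzero scalar involving orders of automorphism groups rather than a pure power of $v$ — still invertible, so the change-of-basis and the twist-by-$v^{c(w)}$ steps go through unchanged.
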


\subsection{Monomial bases for Hall algebras of $\imath$quivers}
  \label{subsection:monomial basis2}
Let $(Q, \btau)$ be a Dynkin $\imath$quiver.
Recall from Theorem \ref{thm:utMHbasis} that $\utMH$ has a Hall basis given by $[M]\diamond \E_\alpha$, where $[M]\in\Iso(\mod(\K Q))\subseteq \Iso(\mod(\Lambda^{\imath}))$, and $\alpha\in K_0(\mod(\K Q))$.
For any $\gamma\in K_0(\mod(\K Q))$, denote by $\cs\cd\ch(\Lambda^{\imath})_{\leq\gamma}$ (respectively, $\cs\cd\ch(\Lambda^\imath)_{\gamma}$) the subspace of $\utMH$ spanned by elements from this basis for which $\widehat{M}\leq \gamma$ (respectively, $\widehat{M}= \gamma$) in $K_0(\mod(\Lambda^\imath))$.
Then $\utMH$ is a $K_0(\mod(\K Q))$-graded linear space, i.e.,
\begin{align}
\label{eqn: graded linear MH}
\utMH=\bigoplus_{\gamma\in K_0(\mod(\K Q))} \utMH_\gamma.
\end{align}

\begin{lemma}
   \label{lem:filtration algebra}
We have
$
\cs\cd\ch( \Lambda^{\imath})_{\leq\alpha}\diamond \cs\cd\ch(\Lambda^{\imath})_{\leq\beta} \subseteq \cs\cd\ch(\Lambda^{\imath})_{\leq\alpha+\beta},$
for $\alpha,\beta\in K_0(\mod(\K Q))$. This defines a filtered algebra structure on $\utMH$.
\end{lemma}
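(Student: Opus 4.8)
The statement asserts that the filtration on $\utMH$ indexed by $K_0(\mod(\K Q))$ (with the partial order $\leq$) is compatible with the Hall multiplication $\diamond$. By Theorem~\ref{thm:utMHbasis}, a basis of $\cm\ch(\Lambda^{\imath})_{\leq\alpha}$ is given by $\{[X]\diamond \E_{\mu} \mid \widehat{X}\leq\alpha,\ \mu\in K_0(\mod(\K Q))\}$, so it suffices to show that for all $\K Q$-modules $X,Y$ with $\widehat{X}\leq\alpha$, $\widehat{Y}\leq\beta$ and all $\mu,\nu$, the product $([X]\diamond \E_\mu)\diamond([Y]\diamond \E_\nu)$ lies in $\cm\ch(\Lambda^{\imath})_{\leq\alpha+\beta}$. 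Since the torus elements $\E_\mu$ commute with everything up to an invertible scalar and the $\E$'s multiply among themselves by \eqref{eq:EE2} (staying in degree $0$, i.e. they do not change the grading index), the whole question reduces to computing $[X]\diamond[Y]$ for $X,Y\in\mod(\K Q)\subseteq\mod(\Lambda^\imath)$ and checking that every basis element $[Z]\diamond \E_\gamma$ occurring in it has $\widehat{Z}\leq \widehat{X}+\widehat{Y}$.

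The first step is to expand $[X]\diamond[Y]$ using the definition of the modified Ringel-Hall multiplication in Appendix~\ref{app:A}: it is a $\Q(\sqq)$-linear combination of $[L]$'s with $L\in\mod(\Lambda^\imath)$ fitting into short exact sequences involving $X$, $Y$ and modules of finite projective dimension (the ideal $I$ and the localization at $\cs_{\Lambda^\imath}$); in all such sequences one has $\widehat{L} = \widehat{X}+\widehat{Y}$ in $K_0(\mod(\Lambda^\imath))$ (the Grothendieck class is additive and the extra finite-projective-dimension terms cancel after localizing, by the same bookkeeping as in Lemma~\ref{lem:bimodule of MRH}). The second and main step is then: for such an $L$ with $\widehat{L}=\widehat{X}+\widehat{Y}$, rewrite $[L]$ in the Hall basis via Lemma~\ref{lem:another basis of MRH 1}, obtaining $[L] = \sqq^{\,\langle \widehat{X_L},\phi(\beta_L)\rangle}[X_L]\diamond\E_{\beta_L}$ for a unique $X_L\in\mod(\K Q)$ with $X_L\cong L$ in $D_{sg}(\mod(\Lambda^\imath))$. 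The key inequality $\widehat{X_L}\leq\widehat{L}$ is \emph{exactly} the content of the last paragraph of the proof of Lemma~\ref{lem:another basis of MRH 1} (via the cohomology functor $H(\res_\BH(-))$ and the identification $K_0(\mod(\Lambda^\imath))=K_0(\mod(\BH))$). Hence $\widehat{X_L}\leq\widehat{L}=\widehat{X}+\widehat{Y}\leq\alpha+\beta$, which is what we need.

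Putting these together: $([X]\diamond\E_\mu)\diamond([Y]\diamond\E_\nu)$ equals, up to an invertible scalar in $\Q(\sqq)$, $([X]\diamond[Y])\diamond\E_{\mu'}$ for a suitable $\mu'\in K_0(\mod(\K Q))$ (moving $\E_\mu$ past $[Y]$ costs only a scalar by \eqref{eqn:twsited multiplication}-type commutation and Lemma~\ref{lem:bimodule of MRH}), and by the previous paragraph $[X]\diamond[Y]\in\bigoplus_{\gamma\leq\widehat{X}+\widehat{Y}}\utMH_\gamma$; multiplying on the right by $\E_{\mu'}\in\T(\Lambda^\imath)$ preserves each graded piece $\utMH_\gamma$ (again by \eqref{eq:EE2}, as right multiplication by a torus element sends $[Z]\diamond\E_\delta$ to a scalar times $[Z]\diamond\E_{\delta+\mu'}$, not changing $\widehat Z$). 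Therefore the product lies in $\cm\ch(\Lambda^{\imath})_{\leq\alpha+\beta}$, giving $\cm\ch(\Lambda^{\imath})_{\leq\alpha}\diamond\cm\ch(\Lambda^{\imath})_{\leq\beta}\subseteq\cm\ch(\Lambda^{\imath})_{\leq\alpha+\beta}$; since $\leq$ is directed upward (any two elements have an upper bound) and the pieces exhaust $\utMH$, this defines a filtered algebra structure.

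\textbf{Main obstacle.} The one genuinely non-formal point is the inequality $\widehat{X_L}\leq\widehat{L}$ when passing from an arbitrary $\Lambda^\imath$-module $L$ to its ``$\K Q$-part'' $X_L$ in the singularity category. This is not true at the level of dimension vectors of $L$ itself in an obvious way — it requires the argument via the restriction $\res_\BH$ to the radical-square-zero selfinjective subalgebra $\BH$, the cohomology $H(\res_\BH(-))$, the vanishing $H^\bullet(\res_\BH(M))=0$ for $M\in\cp^{\leq1}(\Lambda^\imath)$ (Corollary~\ref{corollary locally projective modules}), and the long exact sequences attached to the Gorenstein-projective approximations \eqref{eqn:resolution L}--\eqref{eqn:resolution X}. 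Fortunately this is precisely what was already established inside the proof of Lemma~\ref{lem:another basis of MRH 1}, so here one only needs to invoke it; everything else is routine bookkeeping with Grothendieck classes and torus scalars.
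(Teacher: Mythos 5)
Your proof is correct and follows essentially the same route as the paper's: expand $[X]\diamond[Y]$ in the Hall algebra, use additivity of classes in $K_0(\mod(\Lambda^{\imath}))$ to get $\widehat{L}=\widehat{X}+\widehat{Y}$, and then invoke Lemma~\ref{lem:another basis of MRH 1} --- specifically the fact, proved in its last paragraph, that the torus exponent lies in $K_0^+(\mod(\K Q))$, i.e.\ $\widehat{X_L}\leq\widehat{L}$ --- to conclude membership in $\cm\ch(\Lambda^{\imath})_{\leq\alpha+\beta}$. Your explicit bookkeeping for the torus factors $\E_\mu,\E_\nu$ (commuting them past module classes up to invertible scalars via Lemma~\ref{lem:bimodule of MRH}) is a routine step that the paper leaves implicit.
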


\begin{proof}
For any $M$, $N\in \mod(\K Q) \subseteq \mod(\Lambda^{\imath})$, with $\widehat{M}\leq \alpha$, and $\widehat{N}\leq \beta$, we obtain that
$$[M]\diamond [N]=\sum_{[L]\in \Iso(\mod(\Lambda^{\imath}))}\frac{|\Ext^1(M,N)_L|}{|\Hom(M,N)|}[L].$$
For any $[L]$ with $|\Ext^1(M,N)_L|\neq0$, we have $\widehat{L}=\widehat{M}+\widehat{N}$ in $K_0(\mod(\Lambda^{\imath}))$.
By identifying $K_0(\mod(\K Q))$ and $K_0(\mod(\Lambda^{\imath}))$, we obtain that $\widehat{L}\leq \alpha+\beta$. By Lemma \ref{lem:another basis of MRH 1}, we can write $[L]=q^{\langle L',\phi(\gamma)\rangle} [L']\diamond \E_\gamma$ for some (unique up to isomorphism) $L'\in\mod(\K Q)$, and $\gamma\in K_0^+(\mod(\K Q))$. Then $\widehat{M}+\widehat{N}=\widehat{L}=\widehat{L'}+\widehat{\E}_\gamma$. Hence we have $\widehat{L'}\leq \widehat{L}$. So we obtain that
$[M]\diamond [N]\in \cs\cd\ch(\Lambda^{\imath})_{\leq\alpha+\beta}$.
\end{proof}

Let $(\cs\cd\ch^{\gr}(\Lambda^{\imath}), \diamond_{\gr})$ be the graded algebra associated to the filtered algebra $\utMH$, that is,
\[
\cs\cd\ch^{\gr}(\Lambda^{\imath}) = \bigoplus_{\alpha \in K_0(\mod(\K Q))} \cs\cd\ch^{\gr}(\Lambda^{\imath})_{\alpha},
\]
where $\cs\cd\ch^{\gr}(\Lambda^{\imath})_{\alpha} =\cs\cd\ch(\Lambda^{\imath})_{\leq \alpha}\big / \cs\cd\ch (\Lambda^{\imath})_{<\alpha}$. It is natural to view the quantum torus $\T(\Lambda^\imath)$ as a subalgebra of $\cs\cd\ch^{\gr}(\Lambda^{\imath})$. Then $\cs\cd\ch^{\gr}(\Lambda^{\imath})$ is also a $\T(\Lambda^\imath)$-bimodule.

For any $[M]\in\mod(kQ)$ with $\widehat{M}=\alpha$, we have by definition that $[M]\in \cs\cd\ch(\Lambda^{\imath})_{\alpha}$. By abuse of notation we also use $[M]$ to denote $[M]+\cs\cd\ch (\Lambda^{\imath})_{<\alpha} \in \cs\cd\ch^{\gr}(\Lambda^{\imath})$, i.e., $[M]\in \cs\cd\ch^{\gr}(\Lambda^\imath)_{\alpha}$.
The map
\[
\varphi: \ch(\K Q)\longrightarrow \cs\cd\ch^{\gr}(\Lambda^{\imath})
\]
is defined to be $\varphi([M])=[M]$ for any $M\in \mod(\K Q)$.

\begin{lemma}
   \label{lemma embedding of algebras}
{\quad}
\begin{itemize}
\item[(i)] $\cs\cd\ch^{\gr}(\Lambda^\imath)$ has a basis $\{[M]\diamond_{\gr}\E_\alpha \mid [M]\in\Iso(\mod(kQ)), \alpha\in K_0(\mod(\Lambda^\imath))\}$. In particular, $\cs\cd\ch^{\gr}(\Lambda^\imath)$ is free as a right $\T(\Lambda^\imath)$-module.
\item[(ii)] The linear map $\varphi: \ch(\K Q)\rightarrow \cs\cd\ch^{\gr}(\Lambda^{\imath})$, $\varphi([M])=[M], \forall M\in \mod(\K Q)$, is an embedding of algebras.
\end{itemize}
\end{lemma}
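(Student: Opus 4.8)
The plan is to prove the two assertions of Lemma~\ref{lemma embedding of algebras} in order, using the filtered algebra structure from Lemma~\ref{lem:filtration algebra} together with the explicit Hall basis of $\utMH$ from Theorem~\ref{thm:utMHbasis} and the corresponding basis of $\ch(\K Q)$.

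For part (i), I would argue directly from the definition of the associated graded algebra. By Theorem~\ref{thm:utMHbasis}, the set $\{[M]\diamond \E_\alpha \mid [M]\in\Iso(\mod(\K Q)),\ \alpha\in K_0(\mod(\K Q))\}$ is a basis of $\utMH$, and the grading \eqref{eqn: graded linear MH} sends $[M]\diamond\E_\alpha$ to the homogeneous component indexed by $\widehat{M}$. Since $\cm\ch(\Lambda^\imath)_{\leq\gamma}$ is by definition the span of those basis elements with $\widehat M\leq\gamma$, passing to the quotient $\cm\ch^{\gr}(\Lambda^\imath)_\alpha=\cm\ch(\Lambda^\imath)_{\leq\alpha}/\cm\ch(\Lambda^\imath)_{<\alpha}$ picks out exactly the images of those basis elements with $\widehat M=\alpha$; these images remain linearly independent, so $\{[M]\diamond_{\gr}\E_\alpha\}$ is a basis of $\cm\ch^{\gr}(\Lambda^\imath)$. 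Freeness as a right $\T(\Lambda^\imath)$-module then follows because $\E_\alpha\diamond_{\gr}\E_\beta=q^{-\langle\phi(\alpha),\phi(\beta)\rangle}\E_{\alpha+\beta}$ inside $\cm\ch^{\gr}$ (the torus elements lie in degree $0$ and multiply without any filtration drop, cf. \eqref{eq:EE2}), so $\{[M]\mid [M]\in\Iso(\mod(\K Q))\}$ is a right $\T(\Lambda^\imath)$-basis.

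For part (ii), I would first check that $\varphi$ is an algebra homomorphism. Given $M,N\in\mod(\K Q)$ with $\widehat M=\alpha$, $\widehat N=\beta$, the product $[M]\diamond[N]$ in $\utMH$ lies in $\cm\ch(\Lambda^\imath)_{\leq\alpha+\beta}$ by Lemma~\ref{lem:filtration algebra}; its image in $\cm\ch^{\gr}(\Lambda^\imath)_{\alpha+\beta}$ is obtained by discarding all terms of strictly lower degree. Expanding $[M]\diamond[N]=\sum_{[L]}\frac{|\Ext^1(M,N)_L|}{|\Hom(M,N)|}[L]$ over $[L]\in\Iso(\mod(\Lambda^\imath))$, the terms with nonzero coefficient have $\widehat L=\alpha+\beta$ in $K_0(\mod(\Lambda^\imath))$; writing $[L]=q^{\langle\widehat{L'},\phi(\gamma)\rangle}[L']\diamond\E_\gamma$ via Lemma~\ref{lem:another basis of MRH 1}, the surviving contributions in top degree $\alpha+\beta$ are precisely those with $\gamma=0$, i.e. with $L'\cong L\in\mod(\K Q)$, and for these the coefficient is unchanged. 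Since $\mod(\K Q)$ is closed under extensions inside $\mod(\Lambda^\imath)$ (it is a full exact subcategory via $\iota$, as used already in Lemma~\ref{lem:subalgebra}), the extension counts $|\Ext^1_{\Lambda^\imath}(M,N)_L|$ for $L\in\mod(\K Q)$ agree with those computed in $\mod(\K Q)$, so the top-degree part of $\varphi([M])\diamond_{\gr}\varphi([N])$ is exactly $\varphi([M]\diamond[N])=\varphi([M][N])$. Finally, injectivity is immediate: $\varphi$ sends the basis $\{[M]\}$ of $\ch(\K Q)$ (cf. the monomial/PBW bases recalled in \S\ref{subsec:monomial-Hall}, or simply the standard basis by isoclasses) bijectively onto the linearly independent family $\{[M]\}\subseteq\cm\ch^{\gr}(\Lambda^\imath)$ exhibited in part (i).

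The main obstacle I anticipate is the bookkeeping in part (ii): one must verify carefully that, after rewriting each $[L]$ in the $\utMH$-expansion of $[M]\diamond[N]$ in the normal form $[L']\diamond\E_\gamma$, the \emph{only} terms landing in the top filtration degree $\alpha+\beta$ are those with $\gamma=0$ and $L'=L\in\mod(\K Q)$, and that no cancellation or degree collapse spoils the identification of structure constants. This hinges on the inequality $\widehat{L'}\leq\widehat L$ from Lemma~\ref{lem:another basis of MRH 1} (with equality forcing $\gamma=0$) and on the extension-closedness of $\mod(\K Q)$ in $\mod(\Lambda^\imath)$; once these are in hand the computation is routine.
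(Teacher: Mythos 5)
Your proposal is correct and follows essentially the same route as the paper's proof: part (i) by reading off the graded pieces of the Hall basis of Theorem~\ref{thm:utMHbasis}, and part (ii) by expanding $[M]\diamond[N]$, using Lemma~\ref{lem:another basis of MRH 1} (via Lemma~\ref{lem:filtration algebra}) to see that only summands with $L\in\mod(\K Q)$ survive in top degree, and then identifying the structure constants through the fully faithful exact embedding $\iota$. The only difference is cosmetic: you make the injectivity of $\varphi$ explicit, which the paper leaves implicit in part (i).
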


\begin{proof}
The set in (i) clearly spans $\cs\cd\ch^{\gr}(\Lambda^\imath)$ by definitions and using Theorem~\ref{thm:utMHbasis}.
It remains to check the linear independence of this set. Assume that
\begin{equation}  \label{eq:sum=0}
\sum_{[M],\alpha} \xi_{[M],\alpha} [M]\diamond_{\gr}\E_\alpha=0, \text{ for }\xi_{[M],\alpha}\in\Q(\sqq).
\end{equation}
As $\cs\cd\ch^{\gr}(\Lambda^\imath)$ is graded, without loss of generality we assume the relation \eqref{eq:sum=0} is homogeneous, i.e., $\xi_{[M],\alpha}\neq 0$ only if $\widehat{M}=\gamma$ for some fixed $\gamma$. Then the relation \eqref{eq:sum=0} in $\cs\cd\ch^{\gr}(\Lambda^{\imath})_{\gamma}$ implies that
$\sum_{[M],\alpha} \xi_{[M],\alpha} [M]\diamond\E_\alpha \in \cs\cd\ch (\Lambda^{\imath})_{<\gamma}$. But by definition we have $\sum_{[M],\alpha} \xi_{[M],\alpha} [M]\diamond\E_\alpha \in \cs\cd\ch (\Lambda^{\imath})_{\gamma}$, and hence it must be $0$, i.e., $ \xi_{[M],\alpha}=0$.
This proves (i).

For (ii), we only need to prove $\varphi$ is a morphism of algebras.
For any $M$, $N\in \mod(\K Q) \subseteq \mod(\Lambda^{\imath})$, with $\widehat{M}=\alpha$, and $\widehat{N}=\beta$, we have
\[
[M]\diamond [N]=\sum_{[L]\in \Iso(\mod(\Lambda^{\imath}))}\frac{|\Ext^1_{\Lambda^\imath}(M,N)_L|}{|\Hom_{\Lambda^\imath}(M,N)|}[L].
\]
For any $[L]$ with $|\Ext^1(M,N)_L|\neq0$, we observe from the proof of Lemma \ref{lem:filtration algebra} that $[L]=q^{\langle L',\phi(\gamma)\rangle} [L']\diamond \E_\gamma$ for some (unique up to isomorphism) $L'\in\mod(\K Q)$, and $\gamma\in K_0^+(\mod(\K Q))$. Then $[L]\in \cs\cd\ch_{ \alpha+\beta}(\Lambda^{\imath})$ if and only if $\gamma=0$, if and only if $L\in\mod(kQ)$.
By definition, we obtain that
\[
[M]\diamond_{\gr} [N]=\sum_{[L]\in \Iso(\mod (\K Q))\subseteq\Iso(\mod(\Lambda^{\imath}))}\frac{|\Ext^1_{\Lambda^{\imath}}(M,N)_L|}{|\Hom_{\Lambda^{\imath}}(M,N)|}[L].
\]
For any $L\in\mod(\K Q)$, we have
\[
\Ext^1_{\Lambda^{\imath}}(M,N)_L\cong \Ext^1_{\K Q}(M,N)_L,
\; \text{ and } \;  \Hom_{\Lambda^{\imath}}(M,N)=\Hom_{\K Q}(M,N)
\]
since $\iota:\mod(\K Q)\longrightarrow\mod(\Lambda^{\imath})$ is fully faithful and exact. By the Hall multiplication in $\ch(\K Q)$, it follows that $\varphi$ is a homomorphism.
\end{proof}

From Lemma \ref{lemma embedding of algebras}(ii), we can view $\ch(\K Q)$ as a subalgebra of $\cs\cd\ch^{\gr}(\Lambda^{\imath})$, and moreover, $\cs\cd\ch^{\gr}(\Lambda^{\imath})=\ch(\K Q)\diamond_{\gr} \T(\Lambda^\imath)$.

Recall that $\cw=\cw_\I$ is the set of words in the alphabet $\I=Q_0$.
For any $w=i_1\cdots i_m$, we set
\begin{align*}
S^{\diamond}_w &:=[S_{i_1}]\diamond\cdots \diamond [S_{i_m}] \in \utMH,
\\
S^*_w &:=[S_{i_1}]*\cdots * [S_{i_m}] \in \tMH.
\end{align*}

\begin{proposition}
   \label{prop:monomial basis of MRH}
Let $(Q, \btau)$ be a Dynkin $\imath$quiver. If $\{\ov{S}^{\diamond}_w\mid w\in \cj \}$ is a basis of $\ch(\K Q)$ for some subset $\cj$ of $\cw$, then $\{S^{\diamond}_w\mid w\in \cj \}$ is a basis of $\utMH$ as a right $\T(\Lambda^\imath)$-module.
\end{proposition}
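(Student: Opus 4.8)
The plan is to transfer the basis statement from $\ch(\K Q)$ to $\cm\ch^{\gr}(\Lambda^\imath)$ via the algebra embedding $\varphi$ of Lemma~\ref{lemma embedding of algebras}(ii), and then lift from the associated graded algebra back to the filtered algebra $\utMH$. The key structural fact is the factorization $\cm\ch^{\gr}(\Lambda^{\imath})=\ch(\K Q)\diamond_{\gr}\T(\Lambda^\imath)$, together with the freeness statements: $\cm\ch^{\gr}(\Lambda^\imath)$ is free as a right $\T(\Lambda^\imath)$-module on $\{[M]\mid [M]\in\Iso(\mod\K Q)\}$ (Lemma~\ref{lemma embedding of algebras}(i)), and likewise $\utMH$ is free as a right $\T(\Lambda^\imath)$-module on the same set (Corollary~\ref{cor:basis of MRH as torus-module}).

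First I would observe that under the embedding $\varphi$, the monomial $\ov S^{\diamond}_w=[S_{i_1}]\diamond\cdots\diamond[S_{i_m}]\in\ch(\K Q)$ maps to $\varphi(\ov S^{\diamond}_w)=[S_{i_1}]\diamond_{\gr}\cdots\diamond_{\gr}[S_{i_m}]$ in $\cm\ch^{\gr}(\Lambda^\imath)$, since $\varphi$ is a morphism of algebras sending $[S_i]$ to $[S_i]$. Next, since $\{\ov S^{\diamond}_w\mid w\in\cj\}$ is a basis of $\ch(\K Q)$ and $\ch(\K Q)$ is (via $\varphi$) a subalgebra of $\cm\ch^{\gr}(\Lambda^\imath)$ complementary to $\T(\Lambda^\imath)$ in the precise sense that $\cm\ch^{\gr}(\Lambda^\imath)=\ch(\K Q)\diamond_{\gr}\T(\Lambda^\imath)$ with $\ch(\K Q)$ free over $\T(\Lambda^\imath)$ on the Hall basis elements $[M]$, a change-of-basis argument within $\ch(\K Q)$ shows that $\{\varphi(\ov S^{\diamond}_w)\mid w\in\cj\}$ is again a right $\T(\Lambda^\imath)$-basis of $\cm\ch^{\gr}(\Lambda^\imath)$. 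Concretely: the transition matrix expressing $\{[M]\}$ in terms of $\{\ov S^{\diamond}_w\}$ is invertible over $\Q(\sqq)$, hence the same matrix, read in $\cm\ch^{\gr}(\Lambda^\imath)$, shows $\{\varphi(\ov S^{\diamond}_w)\}$ spans the same free $\T(\Lambda^\imath)$-submodule that $\{[M]\}$ does, namely all of $\cm\ch^{\gr}(\Lambda^\imath)$, and with the same rank, so it is a basis.

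Then I would lift to $\utMH$. The element $S^{\diamond}_w=[S_{i_1}]\diamond\cdots\diamond[S_{i_m}]\in\utMH$ is a lift of $\varphi(\ov S^{\diamond}_w)$ under the quotient maps $\cm\ch(\Lambda^\imath)_{\le\gamma}\twoheadrightarrow\cm\ch^{\gr}(\Lambda^\imath)_{\gamma}$, because by Lemma~\ref{lem:filtration algebra} the product $[S_{i_1}]\diamond\cdots\diamond[S_{i_m}]$ lies in $\cm\ch(\Lambda^\imath)_{\le\gamma_w}$ where $\gamma_w=\widehat{S_{i_1}}+\cdots+\widehat{S_{i_m}}$, and its image in the associated graded agrees with the $\diamond_{\gr}$-product, which is $\varphi(\ov S^{\diamond}_w)$. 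A standard filtered/graded argument now applies: given a family of elements in a filtered module whose images form a basis of the associated graded as a right $\T(\Lambda^\imath)$-module (note $\T(\Lambda^\imath)$ sits in filtration degree zero, so no completion issues arise since the filtration is by the well-ordered-below cone $K_0^+$), the elements themselves form a $\T(\Lambda^\imath)$-basis of the filtered module. Hence $\{S^{\diamond}_w\mid w\in\cj\}$ is a basis of $\utMH$ as a right $\T(\Lambda^\imath)$-module.

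**The main obstacle** I anticipate is making the filtered-to-graded lifting argument clean in the presence of the $\T(\Lambda^\imath)$-module structure: one must check that the filtration on $\utMH$ is compatible with right multiplication by $\T(\Lambda^\imath)$ (which it is, since $\T(\Lambda^\imath)\subseteq\cm\ch(\Lambda^\imath)_{\le 0}$ and Lemma~\ref{lem:filtration algebra} gives $\cm\ch(\Lambda^\imath)_{\le\gamma}\diamond\T(\Lambda^\imath)\subseteq\cm\ch(\Lambda^\imath)_{\le\gamma}$), and that the partial order $\le$ on $K_0(\mod\K Q)$ restricted to the finitely many degrees appearing below any fixed $\gamma$ is finite, so the induction on degree terminates — this holds since $Q$ is Dynkin. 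A minor point to handle with care is that the transition matrix between $\{\ov S^{\diamond}_w\}$ and $\{[M]\}$ in $\ch(\K Q)$ is genuinely invertible over $\Q(\sqq)$ and that invertibility is preserved under $\varphi$; this is automatic since $\varphi$ is an injective ring homomorphism, but it should be stated. I expect the whole argument to be short once these bookkeeping points are in place.
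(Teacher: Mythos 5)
Your proposal is correct and follows essentially the same route as the paper: transfer the basis to the associated graded algebra $\cm\ch^{\gr}(\Lambda^{\imath})$ via the embedding $\varphi$ and the freeness of $\cm\ch^{\gr}(\Lambda^\imath)$ over $\T(\Lambda^\imath)$ from Lemma~\ref{lemma embedding of algebras}, observe that $S^{\diamond}_w$ lifts $\varphi(\ov S^{\diamond}_w)$, and conclude by the standard filtered-to-graded argument. The bookkeeping points you flag (compatibility of the filtration with the right $\T(\Lambda^\imath)$-action and finiteness of the set of degrees below a fixed $\gamma$) are exactly what the paper leaves implicit under the phrase ``standard filtered algebra argument,'' so your write-up is, if anything, slightly more careful.
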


\begin{proof}
For any $w=i_1\cdots i_m\in\cw$, denote by  $S^{\diamond,\gr}_w:= [S_{i_1}]\diamond_{\gr}\cdots \diamond_{\gr} [S_{i_m}]$ in $\cs\cd\ch^{\gr}(\Lambda^{\imath})$ and $\alpha_w:=\sum_{j=1}^m\widehat{S}_{i_j}$. It follows by Lemma~ \ref{lemma embedding of algebras}  that $\varphi(\ov{S}^{\diamond}_w)=S^{\diamond,\gr}_w$. As by assumption $\{\ov{S}^{\diamond}_w \mid w\in \cj \}$ is a basis of $\ch(\K Q)$, it follows by Lemma~ \ref{lemma embedding of algebras} that
\begin{align}  \label{basis:Sgr}
\text{$\{S^{\diamond,\gr}_w \mid w\in\cj\}$ forms a basis in $\cs\cd\ch^{\gr}(\Lambda^{\imath})$  as a right $\T(\Lambda^\imath)$-module.}
\end{align}
%Part (ii) of Lemma 5.4 implies $\{S^{\diamond,\gr}_w \mid w\in\cj\}$ is a basis for the image of $\phi$, which by (i) is a basis for $\cs\cd\ch^{\gr}(\Lambda^{\imath})$ as a right $\T(\Lambda^\imath)$-module.

Observe that $S^{\diamond}_w + \cs\cd\ch (\Lambda^{\imath})_{< \alpha_w} = S^{\diamond,\gr}_w \in \cs\cd\ch^{\gr}(\Lambda^{\imath})$, by a simple induction on the length of the word $w$.
The claim that $\{S^{\diamond}_w\mid w\in \cj \}$ spans the right $\T(\Lambda^\imath)$-module $\utMH$ follows by this observation and \eqref{basis:Sgr} (also compare Theorem~\ref{thm:utMHbasis} and Lemma~\ref{lemma embedding of algebras}).
By a standard filtered algebra argument, the set $\{S^{\diamond}_w\mid w\in \cj \}$ is linearly independent in $\utMH$ as a right $\T(\Lambda^\imath)$-module.
\end{proof}

We obtain the following monomial basis theorem for $\utMH$ and its twisted version $\tMH$.

\begin{theorem}[Monomial basis theorem]
  \label{thm:monomial}
Let $(Q, \btau)$ be a Dynkin $\imath$quiver. For every $\lambda\in \mathfrak{P}=\mathfrak{P}(Q)$, choose an arbitrary distinguished word $w_\lambda\in \wp^{-1}(\lambda)$. Then
\begin{itemize}
\item[(1)] the set
$\{S^{\diamond}_{w_\lambda}\mid\lambda\in \mathfrak{P}\}$ is a $\Q(\sqq)$-basis of $\utMH$ as a right $\T(\Lambda^\imath)$-module;
 \item[(2)] the set $\{S^*_{w_\lambda}\mid\lambda\in \mathfrak{P}\}$ is a $\Q(\sqq)$-basis of $\tMH$ as a right $\tTL$-module.
\end{itemize}
\end{theorem}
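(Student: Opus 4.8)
The plan is to deduce the Monomial Basis Theorem~\ref{thm:monomial} almost directly from Proposition~\ref{monomial basis of HQ} and Proposition~\ref{prop:monomial basis of MRH}, reducing the untwisted case (1) to the known monomial basis of $\ch(\K Q)$ and then transferring (1) to the twisted case (2) via the change of multiplication in \eqref{eqn:twsited multiplication}.

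\medskip

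\textbf{Step 1: the untwisted statement (1).} Proposition~\ref{monomial basis of HQ} (the classical result recalled from \cite{DDPW}) tells us that if for each $\lambda\in\mathfrak{P}=\mathfrak{P}(Q)$ we pick a distinguished word $w_\lambda\in\wp^{-1}(\lambda)$, then $\{\ov{S}^{\diamond}_{w_\lambda}\mid\lambda\in\mathfrak{P}\}$ is a $\Q(v)$-basis of $\ch(\K Q)$. Taking $\cj=\{w_\lambda\mid\lambda\in\mathfrak{P}\}$ in Proposition~\ref{prop:monomial basis of MRH}, we conclude immediately that $\{S^{\diamond}_{w_\lambda}\mid\lambda\in\mathfrak{P}\}$ is a basis of $\utMH$ as a right $\T(\Lambda^\imath)$-module. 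This is essentially a one-line invocation; the substance has already been done in Proposition~\ref{prop:monomial basis of MRH} (passing through the associated graded algebra $\cm\ch^{\gr}(\Lambda^\imath)$, the embedding $\varphi$ of Lemma~\ref{lemma embedding of algebras}, and a filtered-algebra argument).

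\medskip

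\textbf{Step 2: the twisted statement (2).} Here I would compare the two multiplications. Recall from \eqref{eqn:twsited multiplication} that for $M,N\in\mod(\Lambda^\imath)$ one has $[M]*[N]=\sqq^{\langle\res M,\res N\rangle_Q}[M]\diamond[N]$. For the simple $kQ$-modules $S_i$ (viewed in $\mod(\Lambda^\imath)$ via $\iota$) we have $\res(S_i)=S_i$ as a $kQ$-module, so for a word $w=i_1\cdots i_m$ an easy induction gives
\[
S^*_w=\sqq^{\,c(w)}\,S^{\diamond}_w,\qquad c(w)=\sum_{1\le p<r\le m}\langle S_{i_p},S_{i_r}\rangle_Q.
\]
Since each $S^*_w$ differs from $S^{\diamond}_w$ only by an invertible scalar in $\Q(\sqq)$, and since $\tMH$ and $\utMH$ coincide as $\Q(\sqq)$-vector spaces with $\tTL$ and $\T(\Lambda^\imath)$ identified as subsets, the set $\{S^*_{w_\lambda}\mid\lambda\in\mathfrak{P}\}$ is a $\Q(v)$-basis of $\tMH$ as a right $\tTL$-module precisely because $\{S^{\diamond}_{w_\lambda}\}$ is such a basis for $\utMH$ over $\T(\Lambda^\imath)$; one only needs that right multiplication by $\tTL$ on the twisted side matches right multiplication by $\T(\Lambda^\imath)$ on the untwisted side up to rescaling each basis vector, which again follows from \eqref{eqn:twsited multiplication} together with Lemma~\ref{lemma compatible of Euler form}(ii) (so that twisting does not mix the grading components).

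\medskip

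\textbf{Main obstacle.} There is no deep obstacle remaining: all the hard work is in Proposition~\ref{prop:monomial basis of MRH} and the filtered-algebra machinery of Lemmas~\ref{lem:filtration algebra}--\ref{lemma embedding of algebras}. The only point requiring a little care is bookkeeping of the scalar $\sqq^{c(w)}$ relating $S^*_w$ to $S^{\diamond}_w$ and checking that the $\tTL$-module structure is compatible with the $\T(\Lambda^\imath)$-module structure under this rescaling --- in particular that multiplying a basis element $S^\diamond_{w_\lambda}$ on the right by $\E_\alpha$ (untwisted) versus $S^*_{w_\lambda}$ by $\E_\alpha$ (twisted) produces the same element up to an overall nonzero scalar, which is immediate since $\res(\E_\alpha)=0$ in the relevant sense and the twisting factor between $\diamond$ and $*$ is multiplicative. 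Thus the theorem follows formally.
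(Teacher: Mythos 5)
Your proposal is correct and follows exactly the paper's (one-line) proof: part (1) is an immediate application of Propositions~\ref{monomial basis of HQ} and \ref{prop:monomial basis of MRH}, and part (2) follows from (1) because each $S^*_w$ and each product $S^*_w*\E_\alpha$ differs from its untwisted counterpart only by a power of $\sqq$. One small slip: $\res(\E_\alpha)$ is \emph{not} zero (e.g.\ $\res(\E_i)=S_i\oplus S_{\btau i}$), but this does not matter for your argument --- all that is needed is that the twisting factor $\sqq^{\langle\res(-),\res(-)\rangle_Q}$ is an invertible scalar, which you also correctly invoke.
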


\begin{proof}
The assertion (1) follows from Propositions~\ref{monomial basis of HQ} and  \ref{prop:monomial basis of MRH}, and  (2) follows from (1).
\end{proof}

\begin{corollary}
    \label{proposition:surjectivity}
Let $(Q, \btau)$ be a Dynkin $\imath$quiver. Then $\utMH$ (and respectively, $\tMH$) is generated by
$[S_i]$ for $i\in Q_0$ and $\E_\alpha$ for $\alpha\in K_0(\mod(\K Q))$.
\end{corollary}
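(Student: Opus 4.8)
The plan is to deduce the corollary directly from the monomial basis theorem (Theorem~\ref{thm:monomial}) together with the structure of the (twisted) quantum torus. First, consider $\utMH$. By Theorem~\ref{thm:monomial}(1), the set $\{S^{\diamond}_{w_\lambda}\mid\lambda\in\mathfrak{P}\}$ is a basis of $\utMH$ as a right $\T(\Lambda^\imath)$-module. Since each $S^{\diamond}_{w_\lambda}=[S_{i_1}]\diamond\cdots\diamond[S_{i_m}]$ is by definition a product of the elements $[S_i]$, $i\in Q_0$, it follows that every element of $\utMH$ is a right $\T(\Lambda^\imath)$-linear combination of products of the $[S_i]$. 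Hence it suffices to observe that $\T(\Lambda^\imath)$ itself is generated by $\E_\alpha$, $\alpha\in K_0(\mod(\K Q))$ — indeed, $\T(\Lambda^\imath)$ is precisely the subalgebra spanned by these elements (with multiplication \eqref{eq:EE2}), by the discussion following Corollary~\ref{cor:basis of MRH as torus-module} and the isomorphism $\phi$ of \eqref{dfn:phi}. Combining these two facts, $\utMH$ is generated as an algebra by $\{[S_i]\mid i\in Q_0\}\cup\{\E_\alpha\mid \alpha\in K_0(\mod(\K Q))\}$.

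For the twisted algebra $\tMH$: by definition $\tMH$ has the same underlying vector space as $\utMH$, and the twisted multiplication $*$ differs from $\diamond$ only by an invertible scalar $\sqq^{\langle\res(M),\res(N)\rangle_Q}\in\Q(\sqq)^\times$ (see \eqref{eqn:twsited multiplication}). Consequently a subset generates $\tMH$ as a $\Q(\sqq)$-algebra if and only if it generates $\utMH$; more concretely, one can argue from Theorem~\ref{thm:monomial}(2) in exactly the same way, using that $S^*_{w_\lambda}=[S_{i_1}]*\cdots*[S_{i_m}]$ is a $*$-product of the $[S_i]$ and that the twisted quantum torus $\tTL$ is generated by the $\E_\alpha$ (indeed already by the $[\E_i]$, $i\in\I$, by Lemma~\ref{lem:torus}, via \eqref{eq:EE}). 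So $\tMH$ is generated by $\{[S_i]\mid i\in Q_0\}\cup\{\E_\alpha\mid\alpha\in K_0(\mod(\K Q))\}$ as well.

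There is essentially no obstacle here: the content is entirely contained in Theorem~\ref{thm:monomial}, and the corollary is a formal consequence of "basis as a $\T(\Lambda^\imath)$-module by monomials in the $[S_i]$" plus "the torus is generated by the $\E_\alpha$." The only point requiring a word of care is the passage from a spanning set of monomials (products of $[S_i]$'s times a torus element) to a \emph{generating set of the algebra}, which is immediate once one notes that algebra generation is closed under taking products and $\Q(\sqq)$-linear combinations, and that every basis element is visibly such a product. If one prefers to avoid invoking the full PBW/monomial basis, an alternative route is to use the Hall basis of Theorem~\ref{thm:utMHbasis}: every basis element $[X]\diamond\E_\alpha$ with $X\in\mod(\K Q)$ Dynkin can be reached from the simples $[S_i]$ by iterated generic extensions (as in \S\ref{subsec:monomial-Hall}, using that $\wp$ is onto and the associativity of $\circ$), modulo lower terms in the degeneration filtration of Lemma~\ref{lem:filtration algebra}, and then an induction on the filtration degree finishes the argument; but invoking Theorem~\ref{thm:monomial} is cleaner.
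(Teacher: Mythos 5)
Your proposal is correct and matches the paper's (implicit) argument: the corollary is stated immediately after Theorem~\ref{thm:monomial} precisely as the formal consequence you describe, namely that the monomial basis elements $S^{\diamond}_{w_\lambda}$ (resp.\ $S^{*}_{w_\lambda}$) are products of the $[S_i]$ and the (twisted) quantum torus is spanned by the $\E_\alpha$. The additional care you take about passing from a module basis to algebra generation, and about the scalar twist relating $\diamond$ and $*$, is sound and consistent with the paper.
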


\subsection{PBW bases for Hall algebras of $\imath$quivers}

Write the set of positive roots as
\[
\Phi^+ =\{\beta_1, \ldots, \beta_N\},
\]
where $N=|\Phi^+|$. Let $M(\beta):=M_q(\beta)$ be the unique (up to isomorphism) indecomposable $\K Q$-module with dimension vector $\beta\in \Phi^+$. As before, we also view $M(\beta)$ as a $\Lambda^{\imath}$-module by Corollary~\ref{cor:subquotient}. Inspired by the PBW basis of $\tH(\K Q)$ constructed in \cite{Rin3} (see also \cite[Chapter~ 11.5]{DDPW}), we have the following result.

\begin{theorem}[PBW basis]
  \label{thm:HallPBW}
Let $(Q, \btau)$ be a Dynkin $\imath$quiver. Let $\beta_1,\dots,\beta_N$ be any ordering of the roots in $\Phi^+$. Then,
\begin{enumerate}
\item
 the set
$
\{[M(\beta_1)]^{{\diamond} \lambda_1}\diamond\cdots\diamond[M(\beta_N)]^{{\diamond} \lambda_N} \mid   \lambda_1, \ldots, \lambda_N \in \N \}$
is a $\Q(\sqq)$-basis of $\utMH$ as a right $\T(\Lambda^\imath)$-module;
\item
the set
$
\{[M(\beta_1)]^{{*} \lambda_1}*\cdots*[M(\beta_N)]^{*\lambda_N} \mid  \lambda_1, \ldots, \lambda_N \in \N \}$
is a $\Q(\sqq)$-basis of $\tMH$ as a right $\tTL$-module.
\end{enumerate}
\end{theorem}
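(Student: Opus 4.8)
The strategy is to deduce the PBW basis statement from the monomial basis Theorem~\ref{thm:monomial} together with the filtered algebra structure on $\utMH$ from Lemma~\ref{lem:filtration algebra} and the embedding $\varphi:\ch(\K Q)\hookrightarrow \cm\ch^{\gr}(\Lambda^\imath)$ from Lemma~\ref{lemma embedding of algebras}. The key point is that the PBW basis property already holds in the Ringel--Hall algebra $\ch(\K Q)$ (and its twist $\tH(\K Q)$): by Ringel's construction \cite{Rin3} (see \cite[Chapter~11.5]{DDPW}), for any ordering $\beta_1,\dots,\beta_N$ of $\Phi^+$, the set $\{[M(\beta_1)]^{\diamond\lambda_1}\diamond\cdots\diamond[M(\beta_N)]^{\diamond\lambda_N}\mid \lambda_i\in\N\}$ is a $\Q(v)$-basis of $\ch(\K Q)$. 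So I would first invoke this as the input fact.

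Next I would push this basis through the graded algebra. For each tuple $\underline\lambda=(\lambda_1,\dots,\lambda_N)$ set $\alpha_{\underline\lambda}=\sum_i \lambda_i\beta_i\in K_0(\mod(\K Q))$, and observe that in $\cm\ch^{\gr}(\Lambda^\imath)$ one has
\[
\varphi\big([M(\beta_1)]^{\diamond\lambda_1}\diamond\cdots\diamond[M(\beta_N)]^{\diamond\lambda_N}\big)
= [M(\beta_1)]^{\diamond_{\gr}\lambda_1}\diamond_{\gr}\cdots\diamond_{\gr}[M(\beta_N)]^{\diamond_{\gr}\lambda_N},
\]
since $\varphi$ is an algebra homomorphism sending $[M]$ to $[M]$. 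Because $\varphi$ is an embedding and $\cm\ch^{\gr}(\Lambda^\imath)=\ch(\K Q)\diamond_{\gr}\T(\Lambda^\imath)$ is free as a right $\T(\Lambda^\imath)$-module on the image of $\ch(\K Q)$ (Lemma~\ref{lemma embedding of algebras}(i)--(ii)), it follows that $\{[M(\beta_1)]^{\diamond_{\gr}\lambda_1}\diamond_{\gr}\cdots\diamond_{\gr}[M(\beta_N)]^{\diamond_{\gr}\lambda_N}\mid \underline\lambda\in\N^N\}$ is a basis of $\cm\ch^{\gr}(\Lambda^\imath)$ as a right $\T(\Lambda^\imath)$-module.

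Finally I would lift from the graded algebra back to $\utMH$ by the standard filtered-algebra argument, exactly as in the proof of Proposition~\ref{prop:monomial basis of MRH}. The monomial $P_{\underline\lambda}:=[M(\beta_1)]^{\diamond\lambda_1}\diamond\cdots\diamond[M(\beta_N)]^{\diamond\lambda_N}$ lies in $\cm\ch(\Lambda^\imath)_{\le \alpha_{\underline\lambda}}$ and its image in $\cm\ch^{\gr}(\Lambda^\imath)_{\alpha_{\underline\lambda}}$ is the corresponding graded monomial (induction on $\sum_i\lambda_i$ using Lemma~\ref{lem:filtration algebra}); since these graded monomials form a right $\T(\Lambda^\imath)$-basis of $\cm\ch^{\gr}(\Lambda^\imath)$, the elements $P_{\underline\lambda}$ span $\utMH$ as a right $\T(\Lambda^\imath)$-module, and a filtration/degree argument gives linear independence over $\T(\Lambda^\imath)$. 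This proves (1). For (2), the twisted multiplication $*$ differs from $\diamond$ only by an invertible power of $\sqq$ depending on dimension vectors (Equation~\eqref{eqn:twsited multiplication}), and $\tTL$ is obtained from $\T(\Lambda^\imath)$ by the same kind of rescaling (Lemma~\ref{lemma multiplcation of locally projective modules}); hence $\{[M(\beta_1)]^{*\lambda_1}*\cdots*[M(\beta_N)]^{*\lambda_N}\}$ differs from $\{P_{\underline\lambda}\}$ by an invertible diagonal change of basis, and (2) follows from (1).

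The only mildly delicate point — and the one I would state most carefully — is the compatibility $\varphi(P_{\underline\lambda})=$ (graded monomial) together with the claim that $P_{\underline\lambda}$ represents the graded monomial modulo lower-degree terms; this is where one needs that $\varphi$ is an algebra map and that each $[M(\beta)]$ is homogeneous of degree $\widehat{M(\beta)}=\beta$ in the $K_0(\mod(\K Q))$-grading \eqref{eqn: graded linear MH}. Everything else is bookkeeping. I do not expect any genuine obstacle here; the result is essentially a transport of Ringel's PBW theorem along the associated-graded construction.
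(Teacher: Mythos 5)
Your proposal is correct and follows essentially the same route as the paper: the input is the PBW basis of $\ch(\K Q)$ from \cite[Chapter~11.5]{DDPW}, transferred to $\cm\ch^{\gr}(\Lambda^\imath)$ via the embedding of Lemma~\ref{lemma embedding of algebras}, and then lifted to $\utMH$ by the standard filtered-algebra argument, with (2) obtained from (1) by the twist. The extra care you take with the compatibility $\varphi(P_{\underline\lambda})=$ graded monomial and the homogeneity of $[M(\beta)]$ is exactly the content the paper compresses into ``a standard filtered algebra argument.''
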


\begin{proof}
By \cite[Theorem 11.24]{DDPW},
$\{[M(\beta_1)]^{\diamond \lambda_1}\diamond\cdots\diamond[M(\beta_N)]^{\diamond \lambda_N} \mid  \lambda_1, \ldots, \lambda_N \in \N \}$ forms a basis of $\ch(\K Q)$. Then
$\{[M(\beta_1)]^{\diamond\lambda_1}\diamond_{\gr}\cdots\diamond_{\gr}[M(\beta_N)]^{\diamond\lambda_N} \mid \lambda\in \mathfrak{P}\}$ forms a basis of $\cs\cd\ch^{\gr}(\Lambda^\imath)$ as a $\T(\Lambda^\imath)$-module by Lemma~ \ref{lemma embedding of algebras}. Now the assertion (1) follows by a standard filtered algebra argument.
Part (2) follows by (1) and the definition of the twisted multiplication in $\tMH$.
\end{proof}

\part{$\imath$Hall Algebras and $\imath$Quantum Groups}
  \label{part:2}

%%%%%%%
%%%%%%%
\section{Quantum symmetric pairs and $\imath$quantum groups}
  \label{sec:prelim2}

In this section, we recall the definitions of $\imath$quantum groups and quantum symmetric pairs (QSP). We introduce a universal version of $\imath${}quantum groups whose central reductions are identified with the $\imath$quantum groups.

\subsection{Quantum groups}
  \label{subsection Quantum groups}

Let $Q$ be an acyclic quiver (i.e., a quiver without oriented cycles) with vertex set $Q_0= \I$.
Let $n_{ij}$ be the number of edges connecting vertex $i$ and $j$. Let $C=(c_{ij})_{i,j \in \I}$ be the symmetric generalized Cartan matrix of the underlying graph of $Q$, defined by $c_{ij}=2\delta_{ij}-n_{ij}.$ Let $\fg$ be the corresponding Kac-Moody Lie algebra. Let $\alpha_i$ ($i\in\I $) be the simple roots of $\fg$.
%Then the {\em root lattice} is defined to be $\Z^{\I}:=\Z\alpha_1\oplus\ldots\oplus\Z\alpha_n$. %The {\em simple reflection} $s_i:\Z^{\I}\rightarrow\Z^{\I}$ is defined to be $s_i(\alpha_j)=\alpha_j-c_{ij}\alpha_i$, for $i,j\in \I$. Denote the Weyl group by $W =\langle s_i\mid i\in \I\rangle$.

Let $\bv$ be an indeterminant. Write $[A, B]=AB-BA$. Denote, for $r,m \in \N$,
\[
 [r]=\frac{\bv^r-\bv^{-r}}{\bv-\bv^{-1}},
 \quad
 [r]!=\prod_{i=1}^r [i], \quad \qbinom{m}{r} =\frac{[m][m-1]\ldots [m-r+1]}{[r]!}.
\]
Then $\tU := \tU_\bv(\fg)$ is defined to be the $\Q(\bv)$-algebra generated by $E_i,F_i, \tK_i,\tK_i'$, $i\in \I$, where $\tK_i, \tK_i'$ are invertible, subject to the following relations:
\begin{align}
[E_i,F_j]= \delta_{ij} \frac{\tK_i-\tK_i'}{\bv-\bv^{-1}},  &\qquad [\tK_i,\tK_j]=[\tK_i,\tK_j']  =[\tK_i',\tK_j']=0,
\label{eq:KK}
\\
\tK_i E_j=\bv^{c_{ij}} E_j \tK_i, & \qquad \tK_i F_j=\bv^{-c_{ij}} F_j \tK_i,
\label{eq:EK}
\\
\tK_i' E_j=\bv^{-c_{ij}} E_j \tK_i', & \qquad \tK_i' F_j=\bv^{c_{ij}} F_j \tK_i',
 \label{eq:K2}
\end{align}
 and the quantum Serre relations, for $i\neq j \in \I$,
\begin{align}
& \sum_{r=0}^{1-c_{ij}} (-1)^r \left[ \begin{array}{c} 1-c_{ij} \\r \end{array} \right]  E_i^r E_j  E_i^{1-c_{ij}-r}=0,
  \label{eq:serre1} \\
& \sum_{r=0}^{1-c_{ij}} (-1)^r \left[ \begin{array}{c} 1-c_{ij} \\r \end{array} \right]  F_i^r F_j  F_i^{1-c_{ij}-r}=0.
  \label{eq:serre2}
\end{align}
Note that $\tK_i \tK_i'$ are central in $\tU$ for all $i$.
The comultiplication $\Delta: \widetilde{\U} \rightarrow \widetilde{\U} \otimes \widetilde{\U}$ is defined as follows:
\begin{align}  \label{eq:Delta}
\begin{split}
\Delta(E_i)  = E_i \otimes 1 + \tK_i \otimes E_i, & \quad \Delta(F_i) = 1 \otimes F_i + F_i \otimes \tK_{i}', \\
 \Delta(\tK_{i}) = \tK_{i} \otimes \tK_{i}, & \quad \Delta(\tK_{i}') = \tK_{i}' \otimes \tK_{i}'.
 \end{split}
\end{align}
The Chevalley involution $\omega$ on $\tU$ is given by
\begin{align}  \label{eq:omega}
\omega(E_i)  = F_i,\quad  \omega(F_i) = E_i,\quad \omega(\tK_{i}) = \tK_{i}' , \quad \omega(\tK_{i}') =\tK_{i}, \quad \forall i\in \I.
\end{align}

Analogously as for $\tU$, the quantum group $\bU$ is defined to be the $\Q(v)$-algebra generated by $E_i,F_i, K_i, K_i^{-1}$, $i\in \I$, subject to the  relations modified from \eqref{eq:KK}--\eqref{eq:serre2} with $\tK_i$ and $\tK_i'$ replaced by $K_i$ and $K_i^{-1}$, respectively. The comultiplication $\Delta$ and Chevalley involution $\omega$ on $\U$ are obtained by modifying \eqref{eq:Delta}--\eqref{eq:omega} with $\tK_i$ and $\tK_i'$ replaced by $K_i$ and $K_i^{-1}$, respectively (cf. \cite{L93}; beware that our $K_i$ has a different meaning from $K_i \in \U$ therein.)

Let $\bvs=(\vs_i)\in  (\Q(\bv)^\times)^{\I}$.
Up to a base change to the field $\Q(v)(\sqvs_i \mid i\in \I)$, the algebra $\U$ is isomorphic to a quotient algebra of $\tU$ by the ideal $( \tK_i \tK_i'- \vs_i \mid \forall i\in \I )$, by sending $F_i \mapsto F_i, E_i \mapsto \sqvs_i^{-1} E_i, K_i \mapsto \sqvs_i^{-1} \tK_i, K_i^{-1} \mapsto \sqvs_i^{-1} K_i'$. This can be verified directly.

Let $\widetilde{\bU}^+$ be the subalgebra of $\widetilde{\bU}$ generated by $E_i$ $(i\in \I)$, $\widetilde{\bU}^0$ be the subalgebra of $\widetilde{\bU}$ generated by $\tK_i, \tK_i'$ $(i\in \I)$, and $\widetilde{\bU}^-$ be the subalgebra of $\widetilde{\bU}$ generated by $F_i$ $(i\in \I)$, respectively.
The subalgebras $\bU^+$, $\bU^0$ and $\bU^-$ of $\bU$ are defined similarly. Then both $\widetilde{\bU}$ and $\bU$ have triangular decompositions:
\begin{align*}
\widetilde{\bU} =\widetilde{\bU}^+\otimes \widetilde{\bU}^0\otimes\widetilde{\bU}^-,
\qquad
\bU &=\bU^+\otimes \bU^0\otimes\bU^-.
\end{align*}
Clearly, ${\bU}^+\cong\widetilde{\bU}^+$, ${\bU}^-\cong \widetilde{\bU}^-$, and ${\bU}^0 \cong \widetilde{\bU}^0/(\tK_i \tK_i' -\vs_i \mid   i\in \I)$.

\subsection{The $\imath$quantum groups $\Ui$ and $\tUi$}
  \label{subsec:iQG}

For a  (generalized) Cartan matrix $C=(c_{ij})$, let $\Aut(C)$ be the group of all permutations $\btau$ of the set $\I$ such that $c_{ij}=c_{\btau i,\btau j}$. An element $\btau\in\Aut(C)$ is called an \emph{involution} if $\btau^2=\Id$.

Let $\btau$ be an involution in $\Aut(C)$. We define $\widetilde{\bU}^\imath:=\widetilde{\bU}'_\bv(\fk)$ to be the $\Q(v)$-subalgebra of $\tU$ generated by
\[
B_i= F_i +  E_{\btau i} \tK_i',
\qquad \tk_i = \tK_i \tK_{\btau i}', \quad \forall i \in \I.
\]
Let $\tU^{\imath 0}$ be the $\Q(v)$-subalgebra of $\tUi$ generated by $\tk_i$, for $i\in \I$.

\begin{lemma}
The elements $\tk_i$ (for $i= \btau i$) and $\tk_i \tk_{\btau i}$  (for $i\neq \btau i$) are central in $\tUi$.
\end{lemma}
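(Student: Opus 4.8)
The plan is to verify centrality by a direct computation: show that each candidate element commutes with the algebra generators $B_j$ and $\tk_j$ for all $j\in\I$. Since $\tUi$ is generated by $B_j$ ($j\in\I$) and $\tk_j$ ($j\in\I$), and since $\tU^{\imath 0}$ is commutative (the $\tK_i$ and $\tK_i'$ all commute by \eqref{eq:KK}, so the $\tk_i=\tK_i\tK_{\btau i}'$ do too), it remains only to check commutation with the $B_j$. First I would record the action of $\tk_i=\tK_i\tK_{\btau i}'$ on $E_j$, $F_j$ and $\tK_j'$ using \eqref{eq:EK}--\eqref{eq:K2}: one finds $\tk_i E_j = \bv^{c_{ij}-c_{\btau i,j}} E_j \tk_i$, $\tk_i F_j = \bv^{-c_{ij}+c_{\btau i,j}} F_j \tk_i$, and $\tk_i \tK_j' = \tK_j'\tk_i$. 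Hence $\tk_i B_j = \tk_i(F_j + E_{\btau j}\tK_j') = \bv^{-c_{ij}+c_{\btau i,j}} F_j \tk_i + \bv^{c_{i,\btau j}-c_{\btau i,\btau j}} E_{\btau j}\tK_j' \tk_i$. Using $\btau\in\Aut(C)$ (so $c_{\btau i,\btau j}=c_{ij}$ and $c_{i,\btau j}=c_{\btau i,j}$), both exponents equal $-c_{ij}+c_{\btau i,j}$, so $\tk_i B_j = \bv^{\,c_{\btau i,j}-c_{ij}}\, B_j \tk_i$.

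Now specialize. If $i=\btau i$, then $c_{\btau i,j}=c_{ij}$ for every $j$, so the scalar $\bv^{\,c_{\btau i,j}-c_{ij}}$ is $1$ and $\tk_i B_j = B_j \tk_i$; combined with $[\tk_i,\tk_j]=0$ this shows $\tk_i$ is central in $\tUi$. If $i\neq\btau i$, apply the displayed relation twice: $\tk_i\tk_{\btau i} B_j = \bv^{\,c_{\btau i,j}-c_{ij}}\,\bv^{\,c_{i,j}-c_{\btau i,j}}\, B_j \tk_i\tk_{\btau i} = B_j\,\tk_i\tk_{\btau i}$, the two scalar contributions cancelling. Together with $[\tk_i\tk_{\btau i},\tk_j]=0$ this gives that $\tk_i\tk_{\btau i}$ is central in $\tUi$.

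There is essentially no obstacle here — the statement is a routine consequence of the defining relations of $\tU$ and the hypothesis $\btau\in\Aut(C)$. The only point requiring a little care is bookkeeping of the powers of $\bv$: one must track the separate contributions of the $\tK_i$ and $\tK_{\btau i}'$ factors in $\tk_i$, and of the $F_j$ and $E_{\btau j}\tK_j'$ summands in $B_j$, and then invoke $\btau$-invariance of $C$ in the right places so that all exponents collapse as claimed. I would present the computation of $\tk_i B_j$ in a single display and then read off both cases.
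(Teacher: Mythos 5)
Your computation is correct: the identity $\tk_i B_j = \bv^{\,c_{\btau i,j}-c_{ij}}\, B_j \tk_i$ follows exactly as you describe from \eqref{eq:EK}--\eqref{eq:K2} together with $c_{\btau i,\btau j}=c_{ij}$, and the two specializations ($i=\btau i$, and the telescoping product for $i\neq\btau i$) are right; commutation with the $\tk_j$ is immediate. However, your route is different from, and longer than, the paper's. The paper simply observes that when $i=\btau i$ one has $\tk_i=\tK_i\tK_i'$, and when $i\neq\btau i$ one has $\tk_i\tk_{\btau i}=\tK_i\tK_i'\tK_{\btau i}\tK_{\btau i}'$; these are products of the elements $\tK_j\tK_j'$, which are already noted to be central in the whole of $\tU$, hence a fortiori central in the subalgebra $\tUi$. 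That argument needs no computation and no appeal to $\btau$-invariance of $C$ beyond the definitions. What your computation buys instead is the intermediate relation $\tk_i B_j = \bv^{\,c_{\btau i,j}-c_{ij}} B_j \tk_i$ for \emph{all} $i$, which is precisely relation \eqref{relation1} in the presentation of $\tUi$ given later in Proposition~\ref{prop:Serre}; so your argument proves slightly more, at the cost of bookkeeping that the paper avoids entirely.
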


\begin{proof}
If $i= \btau i$, then $\tk_i = \tK_i \tK_{i}'$.
If $i\neq \btau i$, then $\tk_i \tk_{\btau i} = \tK_i \tK_{i}'\tK_{\btau i} \tK_{\btau i}'$. In both cases, these elements are clearly central in $\tU$ and hence central in $\tUi$.
\end{proof}

%For $\alpha =\sum_{i\in \I} m_i \alpha_i \in \Z^{\I} =\oplus_{i\in \I} \Z\alpha_i$, we denote
%\[
%\tk_\alpha = \prod_{i\in \I} \tk_{\alpha_i}^{m_i};
%\]
%in this notation, we have $\tk_{\alpha_i}=\tk_i$.

Let $\bvs=(\vs_i)\in  (\Q(\bv)^\times)^{\I}$ be such that $\vs_i=\vs_{\btau i}$ for each $i\in \I$ which satisfies $c_{i, \btau i}=0$.
Let $\Ui:=\Ui_{\bvs}$ be the $\Q(v)$-subalgebra of $\bU$ generated by
\[
B_i= F_i+\vs_i E_{\btau i}K_i^{-1},
\quad
k_j= K_jK_{\btau j}^{-1},
\qquad  \forall i \in \I, \;  j \in \I \setminus \ci.
\]
It is known \cite{Let99, Ko14} that $\bU^\imath$ is a right coideal subalgebra of $\bU$ in the sense that $\Delta: \Ui \rightarrow \Ui\otimes \U$; and $(\bU,\Ui)$ is called a \emph{quantum symmetric pair} ({\em QSP} for short), as they specialize at $v=1$ to $(U(\fg), U(\fg^\theta))$, where $\theta=\omega \circ \btau$, and $\btau$ is understood here as an automorphism of $\fg$.

The algebras $\Ui_{\bvs}$, for $\bvs \in  (\Q(\bv)^\times)^{\I}$, are obtained from $\tUi$ by central reductions.

\begin{proposition}
  \label{prop:QSP12}
(1) The algebra $\Ui$ is isomorphic to the quotient of $\tUi$ by the ideal generated by
\begin{align}   \label{eq:parameters}
\tk_i - \vs_i \; (\text{for } i =\btau i),
\qquad  \tk_i \tk_{\btau i} - \vs_i \vs_{\btau i}  \;(\text{for } i \neq \btau i).
\end{align}
The isomorphism is given by sending $B_i \mapsto B_i, k_j \mapsto \vs_j^{-1} \tk_j, k_j^{-1} \mapsto \vs_{\btau j}^{-1} \tk_{\btau j}, \forall i\in \I, j\in\I\setminus\ci$.

(2) The algebra $\widetilde{\bU}^\imath$ is a right coideal subalgebra of $\widetilde{\bU}$; that is, $(\widetilde{\bU}, \widetilde{\bU}^\imath)$ forms a QSP.
\end{proposition}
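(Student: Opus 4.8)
The plan is to prove part (2) by a direct computation with the coproduct and part (1) by identifying a presentation of the central reduction $\tUi/J$ — where $J$ denotes the two-sided ideal generated by the central elements in \eqref{eq:parameters} — with a presentation of $\Ui$.

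For (2), I would evaluate $\Delta$ on the generators $B_i,\tk_i$ of $\tUi$. From \eqref{eq:Delta}, $\Delta(\tk_i)=\Delta(\tK_i)\Delta(\tK_{\btau i}')=(\tK_i\otimes\tK_i)(\tK_{\btau i}'\otimes\tK_{\btau i}')=\tk_i\otimes\tk_i\in\tUi\otimes\tUi$. For $B_i$, using $\Delta(F_i)=1\otimes F_i+F_i\otimes\tK_i'$, $\Delta(E_{\btau i})=E_{\btau i}\otimes 1+\tK_{\btau i}\otimes E_{\btau i}$ and $\Delta(\tK_i')=\tK_i'\otimes\tK_i'$, one computes
\[
\Delta(B_i)=B_i\otimes\tK_i'+1\otimes F_i+\tk_{\btau i}\otimes E_{\btau i}\tK_i',
\]
after combining $F_i\otimes\tK_i'$ with $E_{\btau i}\tK_i'\otimes\tK_i'$ and using $\tK_{\btau i}\tK_i'=\tk_{\btau i}$. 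Since $B_i,1,\tk_{\btau i}\in\tUi$, this lies in $\tUi\otimes\tU$; hence $\Delta(\tUi)\subseteq\tUi\otimes\tU$, so $\tUi$ is a right coideal subalgebra and $(\tU,\tUi)$ is a QSP.

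For (1), I would proceed as follows. First, $J$ is generated by central elements of $\tUi$ (by the Lemma preceding the Proposition), so $\tUi/J$ is a genuine central reduction. Next I would record that the commutative subalgebra $\tU^{\imath 0}=\Q(v)[\tk_i^{\pm1}:i\in\I]$ satisfies $\tU^{\imath 0}/(J\cap\tU^{\imath 0})\cong\Q(v)[\tk_j^{\pm1}:j\in\ci,\ \btau j\neq j]\cong\U^{\imath 0}$, the isomorphism matching $\tk_j$ with $\vs_j k_j$ (so that for $\btau$-fixed $i$, $\tk_i$ maps to the scalar $\vs_i$, and $\tk_{\btau j}$ maps to $\vs_{\btau j}k_j^{-1}$). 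Then I would construct the algebra homomorphism $\Phi:\Ui\to\tUi/J$ sending $B_i\mapsto B_i$, $k_j\mapsto\vs_j^{-1}\tk_j$, by checking that the defining relations of $\Ui$ as a quasi-split $\imath$quantum group — the Cartan relations and the inhomogeneous $\imath$Serre relations, as in Letzter and Kolb — hold for these images in $\tUi/J$ (a computation carried out inside $\tU$ modulo $J$); the parameters $\vs$ are carried through exactly by the substitution $\vs_j k_j\leftrightarrow\tk_j$, which is precisely why \eqref{eq:parameters} is the correct ideal. As a consistency check, when $\bvs$ is balanced ($\vs_i=\vs_{\btau i}$ for all $i$) the reverse surjection can be seen directly by restricting the quotient map $\tU\twoheadrightarrow\tU/(\tK_i\tK_i'-\vs_i)\cong\U$ to $\tUi$, since it sends $F_i+E_{\btau i}\tK_i'$ to $F_i+\vs_iE_{\btau i}K_i^{-1}$ and kills $J$. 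Finally I would show $\Phi$ is an isomorphism by a triangular-decomposition argument: $\Ui$ is free over $\U^{\imath 0}$ on a PBW-type basis indexed by a basis of $\U^-\cong\U^+$, and likewise $\tUi/J$ is free over $\U^{\imath 0}$ on the corresponding basis, inherited from the PBW basis of $\tUi$ over $\tU^{\imath 0}$ (which in turn restricts from the PBW basis of $\tU$); $\Phi$ sends one basis to the other, hence is bijective, and tracking the image of $k_j^{-1}$ yields the formula in the statement.

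The main obstacle is the construction of $\Phi$ in (1): there is no algebra homomorphism $\tU\to\U$ whose restriction realizes $\tUi\twoheadrightarrow\Ui$ for non-balanced parameters, so one genuinely needs the presentation of the quasi-split $\imath$quantum group (or an intrinsically established presentation of $\tUi$) to even define the map. Verifying the $\imath$Serre relations and the PBW-freeness over the Cartan part is the technical core, while surjectivity and the matching of bases are routine.
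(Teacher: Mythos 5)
Your proof of (2) is precisely the paper's intended ``direct computation using the comultiplication'': the formula $\Delta(B_i)=B_i\otimes\tK_i'+1\otimes F_i+\tk_{\btau i}\otimes E_{\btau i}\tK_i'$ is correct (note $\tK_{\btau i}\tK_i'=\tk_{\btau i}$ by the definition $\tk_j=\tK_j\tK_{\btau j}'$), and the conclusion follows. For (1) you take a genuinely different route. The paper's (very terse) argument runs in the forward direction: it first identifies $\U\cong\tU/(\tK_i\tK_i'-\vs_i\mid i\in\I)$ up to adjoining $\sqvs_i$, restricts that quotient map to $\tUi$, and checks on generators that $B_i\mapsto B_i$ and $\tk_j\mapsto \vs_j k_j$; this is available because the paper's standing hypothesis forces $\vs_i=\vs_{\btau i}$ whenever $i\neq\btau i$ (in the relevant Dynkin cases), so the unbalanced-parameter obstruction you name as the ``main obstacle'' does not actually arise --- the map you demote to a ``consistency check'' is the paper's whole construction. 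You instead build the inverse $\Phi:\Ui\to\tUi/J$ from the Letzter--Kolb presentation of $\Ui$ (Proposition~\ref{prop:Ui}) and prove bijectivity by matching the $B_w$-bases of both sides as free modules over their Cartan subalgebras (Lemma~\ref{lem:kob}). Your route is heavier, since it requires verifying the inhomogeneous Serre relations for the images inside $\tU$ modulo $J$ (essentially re-deriving Proposition~\ref{prop:Serre}, which the paper only establishes later and only in the Dynkin case), and it is correspondingly less general than the restriction argument, which needs no presentation at all. On the other hand it is more complete on one point the paper glosses over: some freeness statement over the Cartan part is genuinely needed to see that the kernel of $\tUi\twoheadrightarrow\Ui$ is no larger than the ideal generated by \eqref{eq:parameters}, and your PBW-matching step supplies exactly that. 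Both arguments are sound.
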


\begin{proof}
Part (1) follows by definitions and the identification of $\U$ as a quotient of $\tU$.
Part~(2) follows by a direct computation using the comultiplication formula \eqref{eq:Delta}.
\end{proof}
We shall refer to $\tUi$ and $\Ui$ as {\em (quasi-split) $\imath${}quantum groups}; they are called {\em split} if $\btau =\Id$.

\subsection{ $\imath$Quantum groups of type ADE}

Now we restrict ourselves to Dynkin $\imath$quivers, which automatically exclude the type $(A_{2r}, \btau \neq \Id)$ since a quiver $Q$ of type $A_{2r}$ does not admit an involution $\btau\neq \Id$. Under this assumption, we always have $c_{i, \btau i}=0$ when $i\neq \btau i$, and so $\vs_{\btau i} =\vs_i$ for all $i\in \I$; compare \eqref{eq:parameters}.

Let us fix the labelings for the ADE Dynkin diagrams (excluding type $A_{2r}$) with nontrivial diagram automorphisms, of type $A_{2r+1}, D_n, E_6$, as follows:

%%A
\begin{center}\setlength{\unitlength}{0.7mm}

\begin{picture}(100,40)(0,0)
\put(0,10){$\circ$}
\put(0,30){$\circ$}
\put(50,10){$\circ$}
\put(50,30){$\circ$}
\put(72,10){$\circ$}
\put(72,30){$\circ$}
\put(92,20){$\circ$}
\put(0,6){$r$}
\put(-2,34){${-r}$}
\put(50,6){\small $2$}
\put(48,34){\small ${-2}$}
\put(72,6){\small $1$}
\put(70,34){\small ${-1}$}
\put(92,16){\small $0$}

\put(3,11.5){\line(1,0){16}}
\put(3,31.5){\line(1,0){16}}
\put(23,10){$\cdots$}
\put(23,30){$\cdots$}
\put(33.5,11.5){\line(1,0){16}}
\put(33.5,31.5){\line(1,0){16}}
\put(53,11.5){\line(1,0){18.5}}
\put(53,31.5){\line(1,0){18.5}}

\put(75,12){\line(2,1){17}}
\put(75,31){\line(2,-1){17}}
\end{picture}
\vspace{-0.5cm}
\end{center}

%%D
\begin{center}\setlength{\unitlength}{0.8mm}
 \begin{picture}(50,20)(30,0)
\put(0,-1){$\circ$}
\put(0,-5){\small$1$}
\put(20,-1){$\circ$}
\put(20,-5){\small$2$}
\put(64,-1){$\circ$}
\put(58,-5){\small$n-2$}
\put(84,-10){$\circ$}
\put(80,-13){\small${n-1}$}
\put(84,9.5){$\circ$}
\put(84,12.5){\small${n}$}

\put(19.5,0){\line(-1,0){16.8}}
\put(38,0){\line(-1,0){15.5}}
\put(64,0){\line(-1,0){15}}

\put(40,-1){$\cdots$}

\put(83.5,9.5){\line(-2,-1){16.5}}
\put(83.5,-8.5){\line(-2,1){16.5}}
\end{picture}
\vspace{1.2cm}
\end{center}

%%E
\begin{center}\setlength{\unitlength}{0.8mm}
 \begin{picture}(100,40)(0,0)
\put(10,6){\small${6}$}
\put(10,10){$\circ$}
\put(32,6){\small${5}$}
\put(32,10){$\circ$}

\put(10,30){$\circ$}
\put(10,33){\small${1}$}
\put(32,30){$\circ$}
\put(32,33){\small${2}$}

\put(31.5,11){\line(-1,0){19}}
\put(31.5,31){\line(-1,0){19}}

\put(52,22){\line(-2,1){17.5}}
\put(52,20){\line(-2,-1){17.5}}

\put(54.7,21.2){\line(1,0){19}}

\put(52,20){$\circ$}
\put(52,23){\small$3$}
\put(74,20){$\circ$}
\put(74,23){\small$4$}
\end{picture}
\vspace{0.2cm}
\end{center}

We choose the subset $\ci$ \eqref{eq:ci} of representatives of $\btau$-orbits on $\I$ to be:
\begin{align}
\label{eqn:representative}
&\ci :=\left\{ \begin{array}{cl} \I, & \text{ if } \btau=\Id,\\
\left.\begin{array}{cl}
%\{1,\dots,r\}, & \text{ if } \Delta \text{ is of type }A_{2r},\\
\{0,1,\dots,r\}, & \text{ if } \Delta \text{ is of type }A_{2r+1},\\
\{1,\dots,n-1\}, & \text{ if } \Delta \text{ is of type }D_n,\\
\{1,2,3,4\},  & \text{ if } \Delta \text{ is of type }E_6,\end{array} \right\}
& \text{ if } \btau\neq\Id.\end{array}\right.
\end{align}

The quasi-split $\imath$quantum group $\Ui$ admits the following presentation \cite{Let99, Let03} (also cf. \cite{Ko14}) when $\fg$ is of type ADE (excluding quasi-split $A_{2r}$ with $\btau\neq \Id$).

\begin{proposition}  [cf. \cite{Let99, Let03, Ko14}]
  \label{prop:Ui}
Let $(Q,\btau)$ be a Dynkin $\imath$quiver. %, excluding the type $(A_{2r}, \btau\neq \Id)$.
The $\Q(v)$-algebra $\Ui =\Ui_\bvs$ has a presentation with generators $B_i$, for $i\in \I$, and $k_i$, for $i\in \I \setminus \ci$, subject to the relations \eqref{i-rel1}--\eqref{i-rel2}: for $m, \ell \in \I \setminus \ci$ and $i\neq j \in \I$,
\begin{align}
k_m k_\ell =k_\ell k_m,
\quad
k_\ell B_i & = v^{c_{\btau \ell, i} -c_{\ell i}} B_i k_\ell,
 \label{i-rel1}
 \\
B_iB_{j}-B_jB_i &=0, \quad \text{ if } c_{ij} =0 \text{ and }\btau i\neq j,
\label{i-rel1+}
\\
\sum_{s=0}^{1-c_{ij}} (-1)^s \qbinom{1-c_{ij}}{s} B_i^{s}B_jB_i^{1-c_{ij}-s} &=0, \quad \text{ if } j \neq \btau i\neq i,
\\
B_{\btau i}B_i -B_i B_{\btau i}& = \vs_i \frac{k_i -k_i^{-1}}{v-v^{-1}},  \quad \text{ if } \btau i \neq i \in \I \setminus \ci,
\label{i-rel5}
\\
B_i^2B_{j} - [2] B_iB_{j}B_i +B_{j}B_i^2 &= v\vs_i B_{j},  \quad \text{ if }  c_{ij} = -1 \text{ and }\btau i=i.
\label{i-rel2}
\end{align}
\end{proposition}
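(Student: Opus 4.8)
The plan is to deduce the presentation of $\Ui$ directly from the known presentation of $\tUi$ together with the central reduction described in Proposition~\ref{prop:QSP12}(1). Since we are told that $\Ui \cong \tUi/J$, where $J$ is the ideal generated by $\tk_i - \vs_i$ (for $i = \btau i$) and $\tk_i\tk_{\btau i} - \vs_i\vs_{\btau i}$ (for $i \neq \btau i$), the first step is to record a presentation of $\tUi$ itself: it is generated by $B_i$ ($i \in \I$) and $\tk_i$ ($i \in \I$) subject to the obvious Cartan relations $\tk_i\tk_j = \tk_j\tk_i$, the commutation relations $\tk_\ell B_i = v^{c_{\btau\ell,i}-c_{\ell i}} B_i\tk_\ell$, the homogeneous relations \eqref{i-rel1+} and the degree $1-c_{ij}$ Serre relation for $j\neq\btau i\neq i$, and the two families of inhomogeneous relations — the rank-one relation producing $\frac{\tk_i-\tk_{\btau i}}{v-v^{-1}}$ when $\btau i\neq i$, and the $\imath$Serre relation with right-hand side $v\,\tk_i\, B_j$ when $c_{ij}=-1$ and $\btau i=i$. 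This universal presentation is essentially the statement of \cite{Ko14} (or can be extracted from Letzter's work) with all parameters set equal to $1$; I would either cite it or indicate that its verification is formally identical to the classical case since no relation among the generators is lost when the Cartan part is enlarged.

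Next I would impose the central reduction. Under the quotient map $\tUi \twoheadrightarrow \tUi/J$, each $\tk_i$ with $i=\btau i$ becomes the scalar $\vs_i$; and for $i\neq\btau i$ we use the isomorphism $k_j \mapsto \vs_j^{-1}\tk_j$, $k_j^{-1}\mapsto \vs_{\btau j}^{-1}\tk_{\btau j}$ of Proposition~\ref{prop:QSP12}(1), so that $k_j$ ($j\in\ci$) is invertible with inverse the image of $\vs_j^{-1}\vs_{\btau j}^{-1}\tk_{\btau j}$, using $\vs_j = \vs_{\btau j}$ in Dynkin type. I would then rewrite each of the $\tUi$-relations under this substitution: the Cartan relations $\tk_i\tk_j=\tk_j\tk_i$ become $k_mk_\ell = k_\ell k_m$; the relation $\tk_\ell B_i = v^{c_{\btau\ell,i}-c_{\ell i}}B_i\tk_\ell$ becomes $k_\ell B_i = v^{c_{\btau\ell,i}-c_{\ell i}}B_i k_\ell$, matching \eqref{i-rel1} (the scalars $\vs_\ell$ cancel); the homogeneous relations \eqref{i-rel1+} and the Serre relations for $j\neq\btau i\neq i$ involve no $\tk$'s and pass through verbatim. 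For the rank-one relation with $\btau i\neq i$, the term $\frac{\tk_i-\tk_{\btau i}}{v-v^{-1}}$ becomes $\frac{\vs_i k_i - \vs_{\btau i}k_{\btau i}}{v-v^{-1}} = \vs_i\frac{k_i-k_{\btau i}}{v-v^{-1}}$, matching \eqref{i-rel5}. For the $\imath$Serre relation with $\btau i=i$, $c_{ij}=-1$, the right-hand side $v\,\tk_i\,B_j$ becomes $v\vs_i B_j$, matching \eqref{i-rel2}. This shows that the quotient algebra $\tUi/J$ has a presentation with precisely the generators and relations \eqref{i-rel1}--\eqref{i-rel2}, hence so does $\Ui$.

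The one point requiring genuine care — and the step I expect to be the main obstacle — is verifying that imposing the central elements $\tk_i - \vs_i$ (resp.\ $\tk_i\tk_{\btau i}-\vs_i\vs_{\btau i}$) does not collapse any \emph{other} relations or, conversely, fail to generate enough: I must check that the ideal $J$ in the universal presentation is generated by those same elements when $\tUi$ is described by generators and relations, i.e.\ that no relation among the $\tk_i$ in $\tUi$ (beyond commutativity) forces unexpected identifications after quotienting. Since $\tU^{\imath 0}$ is a polynomial (Laurent-type) algebra in the $\tk_i$ — which follows from the triangular-type decomposition of $\tUi$ and the fact that $\tk_i = \tK_i\tK_{\btau i}'$ are algebraically independent in $\tU^0$ — the quotient by $J$ sends $\tU^{\imath 0}$ onto the commutative algebra generated by the invertible $k_j$, $j\in\ci$, with no relations other than those forced by $k_j = \vs_j^{-1}\tk_j$ being invertible; so the reduction is "clean" and the presentation descends without surprises. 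I would make this precise by appealing to a triangular decomposition $\tUi \cong {}^?\tUi^- \otimes \tU^{\imath 0}$ (or the analogous PBW-type basis statement for $\tUi$), noting that $J \cap \tU^{\imath 0}$ is exactly the ideal of $\tU^{\imath 0}$ generated by the listed central elements and that $J = (\tUi)\cdot (J\cap \tU^{\imath 0})$ because the central generators commute with everything. The remaining verifications are the routine substitutions indicated above.
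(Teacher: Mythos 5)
The first thing to note is that the paper does not actually prove this proposition: it is stated with the citation \cite{Let99, Let03, Ko14}, and the surrounding text makes clear that the presentation of $\Ui$ is being imported from Letzter's and Kolb's work. Your argument runs in the opposite direction from the paper's logic: you start from a presentation of $\tUi$ and descend to $\Ui$ via the central reduction of Proposition~\ref{prop:QSP12}(1), whereas in the paper the presentation of $\tUi$ (Proposition~\ref{prop:Serre}) is itself \emph{derived from} Proposition~\ref{prop:Ui}. This makes your proof circular as written. The only justification you offer for the presentation of $\tUi$ is a citation to \cite{Ko14} ``with all parameters set equal to $1$'', but \cite{Ko14} and \cite{Let99, Let03} treat the coideal subalgebras $\Ui_{\bvs}\subseteq \U$ with the ordinary Cartan part --- i.e.\ precisely the content of Proposition~\ref{prop:Ui} --- while the universal algebra $\tUi\subseteq\tU$ with generators $\tk_i=\tK_i\tK_{\btau i}'$ is introduced in this paper and its presentation is not in the literature. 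Your remark that the verification for $\tUi$ ``is formally identical to the classical case'' glosses over exactly the hard step: checking that the listed relations \emph{hold} in the coideal subalgebra is routine, but showing that they are \emph{defining} (that the surjection from the abstractly presented algebra onto the subalgebra of $\U$ generated by the $B_i$ and $k_i$ is injective) is the substance of the Letzter--Kolb theorems and requires their filtration/PBW machinery; nothing in your write-up replaces it.

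That said, the descent step itself is sound, and is essentially the argument the paper runs in reverse just before Proposition~\ref{prop:Serre}: granting a presentation of $\tUi$ together with the freeness of $\tUi$ over the Laurent polynomial algebra $\tU^{\imath 0}$ (Lemma~\ref{lem:kob}), your substitutions $\tk_i\mapsto\vs_i$ (for $\btau i=i$) and $\tk_j\mapsto\vs_j k_j$ correctly transform \eqref{relation1}--\eqref{relation2} into \eqref{i-rel1}--\eqref{i-rel2} (using $\vs_{\btau i}=\vs_i$), and your observation that $J=\tUi\cdot(J\cap\tU^{\imath 0})$ with $J\cap\tU^{\imath 0}$ the expected ideal of the Cartan part is the right way to see that no extra relations appear in the quotient. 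To make the whole proof non-circular you would need either to cite Proposition~\ref{prop:Ui} directly, as the paper does, or to give an independent proof of the presentation of $\tUi$, which amounts to redoing the Letzter--Kolb argument in the universal setting rather than quoting it.
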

Note in the split case, the above presentation is much simplified: the split $\imath$quantum group $\Ui$ is generated by $B_i$ for $i\in \I$ with 2 relations \eqref{i-rel1+} and \eqref{i-rel2}.

Below we provide a complete list of quasi-split symmetric pairs $(\fg, \fk)$, where $\fg$ is of type ADE, $\theta=\omega \circ \btau$, and $\btau$ is understood here as an automorphism of $\fg$. (Note the non-split case with $\fg =\mathfrak{sl}_{2n+1}(\C)$ is excluded from the list, as $\btau$ cannot respect the arrows of any quiver.)

%%%%%%
\begin{table}[h]
\caption{List of quasi-split symmetric pairs}
\label{table:values}
\begin{tabular}{| c | c | c | c | c | c | c | c |}
\hline
\begin{tikzpicture}[baseline=0]
\node at (0, 0.2) {$\fg$};
\end{tikzpicture}
&
\begin{tikzpicture}[baseline=0]
\node at (0, 0.2) {$\mathfrak{sl}_n(\C)$};
\end{tikzpicture}
&
\begin{tikzpicture}[baseline=0]
\node at (0, 0.2) {$\mathfrak{so}_{2n}(\C)$};
\end{tikzpicture}
	&
\begin{tikzpicture}[baseline=0]
\node at (0, 0.2) {$E_6$};
\end{tikzpicture}
	&
\begin{tikzpicture}[baseline=0]
\node at (0, 0.2) {$E_7$};
\end{tikzpicture}
	&
\begin{tikzpicture}[baseline=0]
\node at (0, 0.2) {$E_8$};
\end{tikzpicture}
\\
\hline
%&
$\fk \text{ (split)}$
&
$\mathfrak{so}_{n}(\C)$
 &
$\mathfrak{so}_{n}(\C) \oplus \mathfrak{so}_{n}(\C)$
 &
$\mathfrak{sp}_4(\C)$
 &
$\mathfrak{sl}_{8}(\C)$
 &
$\mathfrak{so}_{16}(\C)$
\\
\hline
\hline
%%%
\begin{tikzpicture}[baseline=0]
\node at (0, 0.2) {$\fg$};
\end{tikzpicture}
&
\begin{tikzpicture}[baseline=0]
\node at (0, 0.2) {$\mathfrak{sl}_{2n}(\C)$};
\end{tikzpicture}
&
\begin{tikzpicture}[baseline=0]
\node at (0, 0.2) {$\mathfrak{so}_{2n}(\C)$};
\end{tikzpicture}
&
\begin{tikzpicture}[baseline=0]
\node at (0, 0.2) {$E_6$};
\end{tikzpicture}
&
&
\\
\hline
$\fk \text{ (non-split)}$
&
$\mathfrak{sl}_{n}(\C) \oplus \mathfrak{gl}_{n}(\C)$
&
$\mathfrak{so}_{n-1}(\C) \oplus \mathfrak{so}_{n+1}(\C)$	
&
$\mathfrak{sl}_{2}(\C) \oplus \mathfrak{sl}_{6}(\C)$
&
&
\\
\hline
\end{tabular}
\newline
\end{table}

\subsection{Presentation of $\tUi$}
   \label{subsec:Quasi-split}

Let $(Q, \btau)$ be a Dynkin $\imath$quiver, with underlying graph $\Delta$ and associated semisimple Lie algebra $\fg$. Let $n=|\I|$. Recall the $\Q(v)$-algebras $\tUi$ and $\Ui$ were defined in \S\ref{subsec:iQG}, depending on the parameters $\bvs =(\vs_1, \ldots, \vs_n) \in (\Q(v)^{\times})^n$ which satisfies $\vs_{\btau i} =\vs_i$ if $\btau i \neq i$.  The parameters $\vs_i$ are related to the parameters $s(i), c_i$ used in \cite{BK19} by $\vs_i =-c_i s(\btau(i))$. (As remarked in \cite{BW18b}, the parameters $s(i), c_i$ are never needed in any crucial formula separately.) %In our setting $\mathcal Z_i$ in \cite{BK15} is given by $\mathcal Z_i= -s(\btau(i)) K_{\btau i} K_{i}'$ and $C_{ij} (\vs) =v\vs_i\mathcal Z_i B_j$ \cite[Theorem 3.7, (3.19)]{BK15}, and so we have $C_{ij} (\vs)  =v \vs_i K_{\btau i} K_{i}' B_j.$ In case $\btau i =i$, we have $C_{ij} (\vs)  =v \vs_i B_j.$

The algebra $\tUi$ differs from $\Ui$ by having the additional central elements $\tk_j$ (for $j=\btau j$) and $\tk_i \tk_{\btau i}$ (for $i \in \ci$), and so the following presentation from $\tUi$ can be obtained by modifying slightly the presentation for $\Ui$ given in Proposition~\ref{prop:Ui}.

\begin{proposition}
  \label{prop:Serre}
Let $(Q, \btau)$ be a Dynkin $\imath$quiver. The $\Q(v)$-algebra $\tUi$ has a presentation with generators $B_i, \tk_i$ $(i\in \I)$, where $\tk_i$ are invertible, subject to the relations \eqref{relation1}--\eqref{relation2}: for $\ell \in \I$, and $i\neq j \in \I$,
\begin{align}
\tk_i \tk_\ell =\tk_\ell \tk_i,
\quad
\tk_\ell B_i & = v^{c_{\btau \ell,i} -c_{\ell i}} B_i \tk_\ell,
%K_\mu B_i-q_i^{-\langle \mu,\alpha_i\rangle} B_iK_\mu & =0,
   \label{relation1}
\\
B_iB_{j}-B_jB_i &=0, \quad \text{ if } c_{ij} =0 \text{ and }\btau i\neq j,
 \label{relationBB}
\\
\sum_{s=0}^{1-c_{ij}} (-1)^s \qbinom{1-c_{ij}}{s} B_i^{s}B_jB_i^{1-c_{ij}-s} &=0, \quad \text{ if } j \neq \btau i\neq i,
\\
B_{\btau i}B_i -B_i B_{\btau i}& =   \frac{\tk_i -\tk_{\btau i}}{v-v^{-1}},
%\sum_{s=0}^{1-c_{i,\DTr i}} (-1)^s B_i^{(s)}B_{\DTr i}B_i^{(1-c_{i,\DTr i}-s)}& =\frac{1}{q_i-q_i^{-1}}
%  \\
% \cdot \left(q_i^{c_{i,\DTr i}} (q_i^{-2};q_i^{-2})_{-c_{i,\DTr i}} \vs_{\DTr i}B_i^{(-c_{i,\DTr i})} \tK_i \tK_{\DTr i}^{-1} \right.  & \left. -(q_i^{2};q_i^{2})_{-c_{i,\tau i}}\vs_{i}B_i^{(-c_{i,\tau i})} \tK_{\tau i} \tK_i^{-1} \right),
\quad \text{ if } \btau i \neq i,
\label{relation5}
\\
B_i^2B_{j} - [2] B_iB_{j}B_i +B_{j}B_i^2 &= v \tk_i B_{j},
%\sum_{s=0}^{1-c_{ij}} (-1)^s  B_{i,\overline{c_{ij}}+\overline{p_i}}^{(s)}B_j B_{i,\overline{p}_i}^{(1-c_{ij}-s)} &=0,\quad
%
\quad \text{ if }  c_{ij} = -1 \text{ and }\btau i=i.
\label{relation2}
\end{align}
\end{proposition}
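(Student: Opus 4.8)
\textbf{Proof proposal for Proposition~\ref{prop:Serre}.}

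The plan is to deduce the presentation of $\tUi$ from the known presentation of $\Ui$ in Proposition~\ref{prop:Ui} together with the identification of $\Ui$ as a central quotient of $\tUi$ in Proposition~\ref{prop:QSP12}(1). First I would denote by $\tU^{\imath 0}$ the subalgebra generated by the $\tk_i$ ($i\in\I$) and by $\widetilde{\mathfrak A}$ the abstract $\Q(v)$-algebra defined by generators $B_i,\tk_i$ and relations \eqref{relation1}--\eqref{relation2}. There is an obvious surjective homomorphism $\pi\colon\widetilde{\mathfrak A}\twoheadrightarrow\tUi$, since one checks directly that the images $B_i=F_i+E_{\btau i}\tK_i'$ and $\tk_i=\tK_i\tK_{\btau i}'$ in $\tU$ satisfy the listed relations: the Cartan-type relations \eqref{relation1} are immediate from \eqref{eq:EK}--\eqref{eq:K2}, the commutation relation \eqref{relationBB} and the Serre-type relations are precisely the quasi-split $\imath$Serre relations (these are exactly the computations summarized in \cite{BK19}, or can be verified via the Hall algebra side in Propositions~\ref{prop:A2}, \ref{prop:iA3}, \ref{prop:cartan}), and \eqref{relation5}, \eqref{relation2} are the inhomogeneous Serre relations with the central parameter replaced by $\tk_i$ — these follow by the same computation as for $\Ui$ with $\vs_i$ formally replaced by $\tk_i$, using that $\tk_i$ is central.

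The substance is to prove $\pi$ is injective. For this I would argue by a "specialization/central reduction" comparison. In $\widetilde{\mathfrak A}$ the elements $\tk_i$ (for $i=\btau i$) and $\tk_i\tk_{\btau i}$ (for $i\neq\btau i$) are central — this is visible directly from relations \eqref{relation1}: indeed $\tk_\ell$ commutes with $B_i$ provided $c_{\btau\ell,i}=c_{\ell i}$, which holds for $\ell=i=\btau i$ and holds after multiplying the two relations for $\ell=i$ and $\ell=\btau i$. Hence for any parameter tuple $\bvs$ (with $\vs_{\btau i}=\vs_i$ when $i\neq\btau i$) one may form the quotient $\widetilde{\mathfrak A}/\mathfrak I_{\bvs}$, where $\mathfrak I_{\bvs}$ is generated by $\tk_i-\vs_i$ ($i=\btau i$) and $\tk_i\tk_{\btau i}-\vs_i\vs_{\btau i}$ ($i\neq\btau i$); comparing the defining relations with \eqref{i-rel1}--\eqref{i-rel2} and rescaling $k_j\mapsto\vs_j^{-1}\tk_j$ exactly as in Proposition~\ref{prop:QSP12}(1), one gets $\widetilde{\mathfrak A}/\mathfrak I_{\bvs}\cong\Ui_{\bvs}$. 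On the other hand, by Proposition~\ref{prop:QSP12}(1) the same central reduction of $\tUi$ is $\Ui_{\bvs}$, and $\pi$ intertwines these reductions. So $\pi$ induces an isomorphism $\widetilde{\mathfrak A}/\mathfrak I_{\bvs}\xrightarrow{\ \sim\ }\tUi/\pi(\mathfrak I_{\bvs})$ for every $\bvs$.

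It remains to upgrade these quotient isomorphisms to an isomorphism of the total algebras. The clean way is a triangular/PBW-decomposition argument: I would show that $\widetilde{\mathfrak A}$ is free as a left module over its central subalgebra $\widetilde{\mathfrak A}^0:=\tU^{\imath 0}$-analogue $=\Q(v)[\tk_i^{\pm1}]$, with a spanning set of ordered monomials in the $B_i$'s indexed by the same set (e.g. $\fpi$ or the PBW data of Theorem~\ref{thm:HallPBW}) that is known to be a basis of $\Ui$ — this uses only the relations \eqref{relation1}--\eqref{relation2} to straighten words, pushing all $\tk$'s to one side via \eqref{relation1} and reducing $B$-words via the Serre and $\imath$Serre relations. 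Then $\widetilde{\mathfrak A}\cong\widetilde{\mathfrak A}^0\otimes(\text{span of }B\text{-monomials})$ as a vector space; the same holds for $\tUi$ with $\widetilde{\mathfrak A}^0\cong\tU^{\imath0}$ (the $\tk_i$ being algebraically independent, as they map to algebraically independent elements $\tK_i\tK_{\btau i}'$ of $\tU^0$). Since $\pi$ is a filtered/graded map compatible with these decompositions, it is an isomorphism. I expect the \textbf{main obstacle} to be the straightening/PBW step — showing that relations \eqref{relation1}--\eqref{relation2} alone suffice to express every element in the claimed normal form, i.e. that no extra relations are hidden; in practice this is handled by invoking the already-established PBW basis for $\tMH\cong\tUi_{|v=\sqq}$ (Theorem~\ref{thm:HallPBW} together with Theorem~{\bf G}) and its generic version, which shows $\dim$ of each graded piece of $\tUi$ is no larger than that of $\widetilde{\mathfrak A}$, forcing $\pi$ to be injective. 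Alternatively, one cites that the analogous statement for $\Ui$ is Proposition~\ref{prop:Ui} and the only new generators/relations concern the central $\tk_i$, for which freeness over $\Q(v)[\tk_i^{\pm1}]$ is transparent.
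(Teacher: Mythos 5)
Your overall strategy is sound, and it is considerably more detailed than what the paper actually does: the paper offers no real proof of Proposition~\ref{prop:Serre}, merely the remark that $\tUi$ differs from $\Ui$ only by the extra central elements $\tk_j$ ($j=\btau j$) and $\tk_i\tk_{\btau i}$ ($i\neq\btau i$), so that the presentation is obtained by ``modifying slightly'' the Letzter--Kolb presentation of Proposition~\ref{prop:Ui}. Your surjection $\pi\colon\widetilde{\mathfrak A}\twoheadrightarrow\tUi$, the observation that the appropriate elements are already central in $\widetilde{\mathfrak A}$ by \eqref{relation1}, and the identification of the central reductions with $\Ui_{\bvs}$ are all correct and are exactly the content the paper leaves implicit. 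The decisive non-circular ingredient for injectivity is the freeness of $\tUi$ over $\tU^{\imath 0}=\Q(v)[\tk_i^{\pm1}]$ with the monomials $B_w$ as a basis (Lemma~\ref{lem:kob}, quoted from Letzter--Kolb), paired with the straightening argument showing the corresponding monomials span $\widetilde{\mathfrak A}$ over its Laurent subalgebra; this is your final alternative, and it is the right one. Once you have freeness on both sides the specialization-at-all-$\bvs$ step becomes superfluous.

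One warning: your proposed fallback of ``invoking the already-established PBW basis for $\tMH\cong\tUi$ (Theorem~\ref{thm:HallPBW} together with Theorem~{\bf G})'' is circular in the logic of this paper. The homomorphism $\widetilde\psi$ of Proposition~\ref{prop:qHall=Ui}, and hence Theorem~\ref{theorem isomorphism of Ui and MRH}, is constructed precisely by checking that the relations \eqref{relation1}--\eqref{relation2} are preserved, which presupposes that these relations form a complete presentation of $\tUi$ --- i.e.\ it presupposes Proposition~\ref{prop:Serre}. So the Hall-algebra PBW bases cannot be used to bound the size of $\tUi$ here; you must rely on Lemma~\ref{lem:kob} (or directly on the Letzter--Kolb presentation of $\Ui$, as the paper does).
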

Note by definition that $\tUi$ does not depend on the parameters $\bvs$.

\begin{corollary}
Let $(Q, \btau)$ be a Dynkin $\imath$quiver. There exists a bar involution $\psi_{\imath}$ of the $\Q$-algebra $\tUi$ such that $\psi_{\imath}(v) =v^{-1}$, and for $i\in \I$,
$$
\psi_{\imath}(B_i) =B_i, \;\;
\psi_{\imath}(\tk_i) =\tk_{\btau i} \; ({\rm if }\; \btau i \neq i),  \;\;
\psi_{\imath}(\tk_i) =v^2 \tk_i  \; ({\rm if }\; \btau i = i).
$$
\end{corollary}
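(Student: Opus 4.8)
The plan is to verify directly that the assignment on generators extends to a well-defined $\Q$-algebra homomorphism $\psi_\imath\colon \tUi\to\tUi$ by checking compatibility with each of the defining relations \eqref{relation1}--\eqref{relation2} from Proposition~\ref{prop:Serre}, and that it is an involution. Since $\tUi$ is presented by generators and relations, it suffices to show that the images $\psi_\imath(B_i)=B_i$, $\psi_\imath(\tk_i)=\tk_{\btau i}$ if $\btau i\neq i$, $\psi_\imath(\tk_i)=v^2\tk_i$ if $\btau i=i$, together with $\psi_\imath(v)=v^{-1}$, satisfy the same relations (read in the $v^{-1}$-form); note $\psi_\imath$ is antilinear over $\Q(v)$ but a genuine $\Q$-algebra homomorphism, so no reversal of products is involved.

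First I would record the easy checks. The relation \eqref{relation1} involves $\tk_\ell B_i = v^{c_{\btau\ell,i}-c_{\ell i}}B_i\tk_\ell$: applying $\psi_\imath$ replaces $\tk_\ell$ by a scalar multiple of $\tk_{\btau\ell}$ (the scalar $1$ or $v^2$ is central-invertible and cancels), $v$ by $v^{-1}$, and one checks using $c_{\btau\ell,i}=c_{\ell,\btau i}$ and the $\Aut(C)$-symmetry $c_{ab}=c_{\btau a,\btau b}$ that the exponent behaves correctly; the commutativity $\tk_i\tk_\ell=\tk_\ell\tk_i$ is preserved since $\btau$ permutes the $\tk$'s. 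The Serre-type relations \eqref{relationBB} and the $1-c_{ij}=2$ Serre relation are fixed on the nose by $\psi_\imath$ on the $B$'s, modulo the standard fact that the bar-invariant $q$-binomial coefficients $\qbinom{1-c_{ij}}{s}$ are invariant under $v\mapsto v^{-1}$, so those are immediate.

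The two genuinely inhomogeneous relations are where the scalars $\tk_{\btau i}$ and $v^2\tk_i$ in the definition of $\psi_\imath$ are forced, and this is the main point to get right. For \eqref{relation5}, applying $\psi_\imath$ to $B_{\btau i}B_i-B_iB_{\btau i}=\frac{\tk_i-\tk_{\btau i}}{v-v^{-1}}$ gives $B_{\btau i}B_i-B_iB_{\btau i}=\frac{\tk_{\btau i}-\tk_i}{v^{-1}-v}=\frac{\tk_i-\tk_{\btau i}}{v-v^{-1}}$, consistent — this is why $\psi_\imath(\tk_i)=\tk_{\btau i}$ is the right choice when $\btau i\neq i$. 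For \eqref{relation2}, applying $\psi_\imath$ to $B_i^2B_j-[2]B_iB_jB_i+B_jB_i^2=v\tk_iB_j$ (with $\btau i=i$) yields on the left the same expression (since $[2]$ is bar-invariant) and on the right $v^{-1}\psi_\imath(\tk_i)B_j$, so we need $v^{-1}\psi_\imath(\tk_i)=v\tk_i$, i.e. $\psi_\imath(\tk_i)=v^2\tk_i$ — again exactly the stated formula. Finally I would check $\psi_\imath^2=\Id$: on $B_i$ this is clear; on $\tk_i$ with $\btau i\neq i$ we get $\psi_\imath(\tk_{\btau i})=\tk_i$; on $\tk_i$ with $\btau i=i$ we get $\psi_\imath(v^2\tk_i)=v^{-2}\cdot v^2\tk_i=\tk_i$; and $\psi_\imath^2(v)=v$. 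The only mild obstacle is bookkeeping with the $v$-powers and the $\Aut(C)$-symmetry in \eqref{relation1}, plus remembering that $\psi_\imath$ is $\Q$-linear and $v$-antilinear rather than $\Q(v)$-linear; everything else is a routine relation-by-relation verification, so I would present it compactly as such.
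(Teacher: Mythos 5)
Your proposal is correct and is exactly the argument the paper intends: the corollary is stated right after the presentation of $\tUi$ in Proposition~\ref{prop:Serre}, and the (omitted) proof is precisely the relation-by-relation check that the assignment on generators preserves \eqref{relation1}--\eqref{relation2}, with the two inhomogeneous relations \eqref{relation5} and \eqref{relation2} forcing $\psi_{\imath}(\tk_i)=\tk_{\btau i}$ and $\psi_{\imath}(\tk_i)=v^2\tk_i$ respectively, as you observe. Your verifications of these, of the bar-invariance of $[2]$ and the $q$-binomials, and of $\psi_{\imath}^2=\Id$ are all accurate.
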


In the split case (i.e., $\btau=\Id$), we only need relations \eqref{relation1}, \eqref{relationBB} and \eqref{relation2} for presentation of $\tUi$.  The non-split cases are made more explicit below.

\medskip
\subsubsection{Type $A_{n}$ with $n$ odd}

Set $n=2r+1$ for $r\geq0$. An example of  $\imath$quiver $Q$ of Dynkin type $A_{2r+1}$, with the involution $\btau$ given by $\btau(i)=-i$, is as follows:
\begin{center}\setlength{\unitlength}{0.7mm}
 \begin{picture}(100,40)(0,0)
\put(0,10){$\circ$}
\put(0,30){$\circ$}
\put(50,10){$\circ$}
\put(50,30){$\circ$}
\put(72,10){$\circ$}
\put(72,30){$\circ$}
\put(92,20){$\circ$}
\put(0,6){$r$}
\put(-2,34){${-r}$}
\put(50,6){\small $2$}
\put(48,34){\small ${-2}$}
\put(72,6){\small $1$}
\put(70,34){\small ${-1}$}
\put(92,16){\small $0$}

\put(3,11.5){\vector(1,0){16}}
\put(3,31.5){\vector(1,0){16}}
\put(23,10){$\cdots$}
\put(23,30){$\cdots$}
\put(33.5,11.5){\vector(1,0){16}}
\put(33.5,31.5){\vector(1,0){16}}
\put(53,11.5){\vector(1,0){18.5}}
\put(53,31.5){\vector(1,0){18.5}}

\put(75,12){\vector(2,1){17}}
\put(75,31){\vector(2,-1){17}}
\end{picture}
\vspace{-0.2cm}
\end{center}
%Let $\vs=(\vs_{-r},\dots,\vs_{-1},\vs_0,\vs_1,\dots,\vs_r)\in (\Q(v)^\times)^{2r+1}$ with $\vs_i=\vs_{-i}$ for all $i$.
In this case $\tUi$ is the subalgebra of $\tU$ generated by
\begin{align*}
\tk_i =\tK_i \tK_{-i}',
\qquad
B_i=F_i+  E_{-i} \tK_i' \;\; (-r \le i \le r).
\end{align*}

For the convenience in the proof of Proposition~\ref{prop:qHall=Ui} later on, we rewrite the relations in Proposition~\ref{prop:Serre} for $\tUi$ in type A as the relations \eqref{relation A odd 1}-\eqref{relation A odd 10}:
\begin{align}
\label{relation A odd 1} \tk_i\tk_j=\tk_j\tk_i,&\quad \text{ if }-r\leq i\leq r, \\
\label{relation A odd 2} \tk_iB_j=v^{(-\alpha_i+\alpha_{-i},\alpha_j)} B_j\tk_i,  &\quad \text{ if }-r\leq i,  j \leq r, %1\leq j\leq r,
\\
\label{relation A odd 4}    B_iB_{-i}-B_{-i}B_i=  \frac{\tk_{-i}-\tk_{i}}{v-v^{-1}},&\quad\text{ if } 1 \leq i\leq r, \\
\label{relation A odd 5}   B_iB_j=B_jB_i,
&  \quad  \text{ if }-r \leq i,j\leq r,|i-j|>1, i\neq -j,
\\
\label{relation A odd 7}B_i^2B_j+B_jB_i^2=(v+v^{-1})B_iB_jB_i,
&\quad\text{ if }-r\le i,j\le r,|i-j|=1, i\neq 0,
\\
\label{relation A odd 9}B_0^2B_1+B_1B_0^2 & =(v+v^{-1})B_0B_1B_0+ v \tk_0 B_1,\\
\label{relation A odd 10}  B_0^2B_{-1}+B_{-1}B_0^2 & =(v+v^{-1})B_0B_{-1}B_0+v \tk_0 B_{-1}.
\end{align}
%
%\[
%\circ_{-r}\xrightarrow{\alpha_r}  \cdots \xrightarrow{\alpha_2} \circ_{-1}\xrightarrow{\alpha_1} \circ_0 \xleftarrow{\beta_1} \circ_1\xleftarrow{\beta_2} \cdots\xrightarrow{\beta_r}\circ_{r},
%\]
Associated to the quiver $Q$ of type $A_{2r+1}$ above, the quiver $\ov{Q}$ for the $\imath$quiver algebra $\Lambda^{\imath}$ in \eqref{eq:iLa} is as follows:
\begin{center}\setlength{\unitlength}{0.7mm}
 \begin{picture}(100,40)(0,0)
\put(0,10){$\circ$}
\put(0,30){$\circ$}
\put(50,10){$\circ$}
\put(50,30){$\circ$}
\put(72,10){$\circ$}
\put(72,30){$\circ$}
\put(92,20){$\circ$}
\put(0,6){$r$}
\put(-2,34){${-r}$}
\put(50,6){\small $2$}
\put(48,34){\small ${-2}$}
\put(72,6){\small $1$}
\put(70,34){\small ${-1}$}
\put(92,16){\small $0$}

\put(3,11.5){\vector(1,0){16}}
\put(3,31.5){\vector(1,0){16}}
\put(23,10){$\cdots$}
\put(23,30){$\cdots$}
\put(33.5,11.5){\vector(1,0){16}}
\put(33.5,31.5){\vector(1,0){16}}
\put(53,11.5){\vector(1,0){18.5}}
\put(53,31.5){\vector(1,0){18.5}}

\put(75,12){\vector(2,1){17}}
\put(75,31){\vector(2,-1){17}}
\color{purple}
\put(0,13){\vector(0,1){17}}
\put(2,29.5){\vector(0,-1){17}}
\put(50,13){\vector(0,1){17}}
\put(52,29.5){\vector(0,-1){17}}
\put(72,13){\vector(0,1){17}}
\put(74,29.5){\vector(0,-1){17}}

\put(-5,20){$\varepsilon_r$}
\put(3,20){$\varepsilon_{-r}$}
\put(45,20){\small $\varepsilon_2$}
\put(53,20){\small $\varepsilon_{-2}$}
\put(67,20){\small $\varepsilon_1$}
\put(75,20){\small $\varepsilon_{-1}$}
\put(92,30){\small $\varepsilon_0$}

\qbezier(93,23)(90.5,25)(92,27)
\qbezier(92,27)(94,30)(97,27)
\qbezier(97,27)(98,24)(95.5,22.6)
\put(95.6,23){\vector(-1,-1){0.3}}
\end{picture}
\vspace{-0.2cm}
\end{center}

\subsubsection{Type $D_{n}$}

Let $Q$ be a quiver of type $D_{n}$, such that there is an involution $\btau$ of $Q$ which interchanges $n-1$ and $n$ while fixing all other vertices. For example, $Q$ can be taken to be
\begin{center}\setlength{\unitlength}{0.8mm}
 \begin{picture}(50,20)(30,0)
\put(0,-1){$\circ$}
\put(0,-5){\small$1$}
\put(20,-1){$\circ$}
\put(20,-5){\small$2$}
\put(64,-1){$\circ$}
\put(58,-5){\small$n-2$}
\put(84,-10){$\circ$}
\put(80,-13){\small${n-1}$}
\put(84,9.5){$\circ$}
\put(84,12.5){\small${n}$}

\put(19.5,0){\vector(-1,0){16.8}}
\put(38,0){\vector(-1,0){15.5}}
\put(64,0){\vector(-1,0){15}}

\put(40,-1){$\cdots$}

\put(83.5,9.5){\vector(-2,-1){16.5}}
\put(83.5,-8.5){\vector(-2,1){16.5}}
\end{picture}
\vspace{1.2cm}
\end{center}
%
%\begin{equation}
%   \label{diagram of D}
%\xymatrix{&&&&\circ^n\ar[d] &&&\\
%D_n & \circ_{1} &\circ_{2} \ar[l]&  \cdots \ar[l] &\circ_{n-2} \ar[l]& \circ_{n-1}.\ar[l] }
%\end{equation}
%Then there is an involution $\btau$ of $\Delta$ (and $Q$) which interchanges $n-1$ and $n$ while fixing all other vertices.
%
%Let $\mathfrak{g}$ be the complex simple Lie algebra of type $D_n$. Let $\vs=(\vs_1,\vs_2,\dots ,\vs_{n-1},\vs_n)$ with $\vs_{n-1}=\vs_n$.
In this case $\tUi$ is the subalgebra of $\tU$ generated by
\begin{align*}
\tk_i= \tK_i \tK_i'\text{ }(1\le i\le n-2), &\quad
\tk_{n-1}= \tK_{n-1} \tK_{n}', \quad \tk_{n}=K_nK_{n-1}',\\
B_{n-1}=F_{n-1}+  E_{n} \tK_{n-1}', & \quad B_n=F_{n}+ E_{n-1} \tK_n',
\\
B_i= F_i+ E_i \tK_i', & \quad \forall 1\leq i\leq n-2.
\end{align*}
 %subject to the following relations:
%\begin{align*}
% k_{n-1}B_j=B_jk_{n-1}, \text{ if } 1\leq j\leq n-2,\quad k_{n-1}B_{n-1}=q^{-1}B_{n-1}k_{n-1}, \\
%  k_{n-1}B_{n}=v^2B_{n}k_{n-1}, B_{n-1}B_n-B_n B_{n-1}=\vs_{n-1}\frac{k_{n}-k_{n-1}}{v-v^{-1}},\\
%   B_iB_j=B_jB_i\text{ if }\{i,j\}\neq \{n,n-1\} \text{ and } c_{ij}=0.
%\end{align*}
%\begin{align*}
%B_iB_iB_j-(v+v^{-1})B_iB_jB_i+B_jB_iB_i=v\vs_iB_j\quad &\text{ if }c_{ij}=-1 \text{ and }1\leq i,j\leq n-2;\\
%B_{n}^2B_{n-2}+B_{n-2}B_{n}^2=(v+v^{-1})B_nB_{n-2}B_n,\\
%B_{n-1}^2B_{n-2}+B_{n-2}B_{n-1}^2=(v+v^{-1})B_{n-1}B_{n-2}B_{n-1},
%\end{align*}
%\begin{align*}
%B_{n-2}^2B_n+B_nB_0^2=(v+v^{-1})B_{n-2}B_nB_{n-2}+ v \vs_{n-2} B_n,\\
% B_{n-2}^2B_{n-1}+B_{n-1}B_{n-2}^2=(v+v^{-1})B_{n-2}B_{n-1}B_{n-2}+v\vs_{n-2} B_{n-1},\\
%\end{align*}
%
Associated to the above quiver $Q$ of type $D_n$, the quiver $\ov{Q}$ for  the $\imath$quiver algebra $\Lambda^{\imath}$ in \eqref{eq:iLa} is given as follows:
\begin{center}\setlength{\unitlength}{0.8mm}
 \begin{picture}(50,20)(30,0)
\put(0,-1){$\circ$}
\put(0,-5){\small$1$}
\put(20,-1){$\circ$}
\put(20,-5){\small$2$}
\put(64,-1){$\circ$}
\put(84,-10){$\circ$}
\put(80,-13){\small${n-1}$}
\put(84,9.5){$\circ$}
\put(84,12.5){\small${n}$}

\put(19.5,0){\vector(-1,0){16.8}}
\put(38,0){\vector(-1,0){15.5}}
\put(64,0){\vector(-1,0){15}}

\put(40,-1){$\cdots$}
\put(83.5,9.5){\vector(-2,-1){16}}
\put(83.5,-8.5){\vector(-2,1){16}}
\color{purple}
\put(86,-7){\vector(0,1){16.5}}
\put(84,9){\vector(0,-1){16.5}}

\qbezier(63,1)(60.5,3)(62,5.5)
\qbezier(62,5.5)(64.5,9)(67.5,5.5)
\qbezier(67.5,5.5)(68.5,3)(66.4,1)
\put(66.5,1.4){\vector(-1,-1){0.3}}
\qbezier(-1,1)(-3,3)(-2,5.5)
\qbezier(-2,5.5)(1,9)(4,5.5)
\qbezier(4,5.5)(5,3)(3,1)
\put(3.1,1.4){\vector(-1,-1){0.3}}
\qbezier(19,1)(17,3)(18,5.5)
\qbezier(18,5.5)(21,9)(24,5.5)
\qbezier(24,5.5)(25,3)(23,1)
\put(23.1,1.4){\vector(-1,-1){0.3}}

\put(-1,9.5){$\varepsilon_1$}
\put(19,9.5){$\varepsilon_2$}
\put(59,9.5){$\varepsilon_{n-2}$}
\put(79,-1){$\varepsilon_{n}$}
\put(87,-1){$\varepsilon_{n-1}$}
\end{picture}
\vspace{.8cm}
\end{center}

\subsubsection{Type $E_6$}

Let $Q$ be a quiver of Dynkin type $E_{6}$ with a nontrivial involution $\btau$ given by $\btau(1)=6$, $\btau(2)=5$, and $\btau(3)=3$, $\btau(4)=4$. For example, $Q$ can be taken to be
\begin{center}\setlength{\unitlength}{0.8mm}
 \begin{picture}(100,40)(0,0)
\put(10,6){\small${6}$}
\put(10,10){$\circ$}
\put(32,6){\small${5}$}
\put(32,10){$\circ$}

\put(10,30){$\circ$}
\put(10,33){\small${1}$}
\put(32,30){$\circ$}
\put(32,33){\small${2}$}

\put(31.5,11){\vector(-1,0){19}}
\put(31.5,31){\vector(-1,0){19}}

\put(52,22){\vector(-2,1){17.5}}
\put(52,20){\vector(-2,-1){17.5}}

\put(54.7,21.2){\vector(1,0){19}}

\put(52,20){$\circ$}
\put(52,23){\small$3$}
\put(74,20){$\circ$}
\put(74,23){\small$4$}
\end{picture}
\vspace{-0.2cm}
\end{center}
%
%\begin{equation}
%\label{diagram of E}
%\xymatrix{&&\circ_4&&\\
% \circ_{1}& \circ_{2}\ar[l] &\circ_{3} \ar[r]\ar[u]\ar[l] &  \circ_5 \ar[r] &\circ_{6}}
%\end{equation}
%
Associated to the quiver $Q$ above, the quiver $\ov{Q}$ for the $\imath$quiver algebra $\Lambda^{\imath}$ in \eqref{eq:iLa} is given as follows:
\begin{center}\setlength{\unitlength}{0.8mm}
 \begin{picture}(100,40)(0,0)
\put(10,6){\small${6}$}
\put(10,10){$\circ$}
\put(32,6){\small${5}$}
\put(32,10){$\circ$}

\put(10,30){$\circ$}
\put(10,33){\small${1}$}
\put(32,30){$\circ$}
\put(32,33){\small${2}$}

\put(31.5,11){\vector(-1,0){19}}
\put(31.5,31){\vector(-1,0){19}}

\put(52,22){\vector(-2,1){17.5}}
\put(52,20){\vector(-2,-1){17.5}}

\put(54.7,21.2){\vector(1,0){19}}

\put(52,20){$\circ$}
\put(52,16.5){\small$3$}
\put(74,20){$\circ$}
\put(74,16.5){\small$4$}
\color{purple}
\put(10,12.5){\vector(0,1){17}}
\put(12,29.5){\vector(0,-1){17}}
\put(32,12.5){\vector(0,1){17}}
\put(34,29.5){\vector(0,-1){17}}

\qbezier(52,22.5)(50,24)(51,26.5)
\qbezier(51,26.5)(53,29)(56,26.5)
\qbezier(56,26.5)(57.5,24)(55,22)
\put(55.1,22.4){\vector(-1,-1){0.3}}
\qbezier(74,22.5)(72,24)(73,26.5)
\qbezier(73,26.5)(75,29)(78,26.5)
\qbezier(78,26.5)(79,24)(77,22)
\put(77.1,22.4){\vector(-1,-1){0.3}}

\put(35,20){$\varepsilon_2$}
\put(27,20){$\varepsilon_5$}
\put(13,20){$\varepsilon_1$}
\put(5,20){$\varepsilon_6$}
\put(52,30){$\varepsilon_3$}
\put(73,30){$\varepsilon_4$}
\end{picture}
\vspace{-0.2cm}
\end{center}
%Let $\mathfrak{g}$ be the complex simple Lie algebra of type $E_6$. Let $\vs=(\vs_1,\vs_2,\dots,\vs_6)$ with $\vs_1=\vs_6$, $\vs_2=\vs_5$.
In this case $\tUi$ is defined to be the subalgebra of $\tU$ generated by
\begin{align*}
\tk_i= \tK_i \tK_{7-i}' \; (i=1,2,5,6),
&\quad
 \tk_3= \tK_3 \tK_3',\quad \tk_4= \tK_4 \tK_4',\\
B_i=F_i+ E_{7-i} \tK_i' \; (i=1,2,5,6),
& \quad
B_i=F_i+ E_i \tK_i' \; (i=3,4).
\end{align*}

%%%%%%%
%%%%%%%
\section{Hall algebras for $\imath$quivers and $\imath$quantum groups}
  \label{sec:HallQSP}

In this section, we shall assume the quiver is Dynkin. We will use the $\imath$Hall algebras $\tMH$ and $\rMH$ (i.e., the twisted reduced semi-derived Ringel-Hall algebras of $\La^\imath$) to provide a realization of the $\imath$quantum groups $\tUi$ and $\Ui$ of finite type, respectively; see Theorem~\ref{theorem isomorphism of Ui and MRH}.

\subsection{Computations for rank 2 $\imath$quivers, I}
\label{subsec:rank2I}

The {\em rank} of an $\imath$quiver $(Q, \btau)$ is by definition the number of $\btau$-orbits in $Q_0$. Recall $\sqq =\sqrt{q}$. Recall that $\langle\cdot,\cdot\rangle_Q$ is the Euler form of $Q$. Define
\begin{align*}
(x,y)&= \langle x,y\rangle_Q+\langle y,x\rangle_Q.
\end{align*}
In particular, $(S_i,S_j)=c_{ij}$ for any $i,j\in \I$, the entries for the Cartan matrix $C$.

\begin{proposition}
  \label{prop:A2}
Let $Q$ be the quiver $1\xrightarrow{\alpha} 2$, with $\btau=\Id$. Then in $\tMH$ we have
\begin{align}
[S_2]*[S_1]*[S_1]-({\sqq}+{\sqq}^{-1})[S_1]*[S_2]*[S_1]+[S_1]*[S_1]*[S_2]=-\frac{(q-1)^2}{{\sqq}}[S_2]*[ \E_1],
  \label{eq:S211}
\\
[S_1]*[S_2]*[S_2]-({\sqq}+{\sqq}^{-1})[S_2]*[S_1]*[S_2]+[S_2]*[S_2]*[S_1]=-\frac{(q-1)^2}{{\sqq}}[S_1]*[ \E_2].
  \label{eq:S122}
\end{align}
\end{proposition}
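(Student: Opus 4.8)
The two identities \eqref{eq:S211} and \eqref{eq:S122} are symmetric under reversing the orientation of the arrow $\alpha$, so I will focus on \eqref{eq:S211} and indicate the dual computation for \eqref{eq:S122}. The strategy is a direct Hall-number computation in $\utMH$, followed by translating into the twisted product $*$ via \eqref{eqn:twsited multiplication}. First I would list the relevant $\Lambda^\imath$-modules for the $A_2$ $\imath$quiver with $\btau=\Id$: the enriched quiver $\ov Q$ has vertices $1,2$, the arrow $\alpha\colon1\to2$, loops $\varepsilon_1,\varepsilon_2$, and relations $\varepsilon_1^2=\varepsilon_2^2=0$, $\varepsilon_2\alpha=\alpha\varepsilon_1$. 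The simples are $S_1,S_2$ (pulled back from $kQ$), the generalized simples $\E_1,\E_2$ with $\pd\le1$, and the remaining indecomposables one needs are the $kQ$-module $M=M(\alpha_1+\alpha_2)$ (the length-2 indecomposable of $kQ$, regarded in $\mod\Lambda^\imath$ via $\iota$) together with extensions of $S_i$ by $\E_i$ — I would identify the indecomposable $\Lambda^\imath$-module $\widetilde S_i$ which is a nonsplit self-extension realizing $\varepsilon_i$ acting nontrivially. The Hall basis Theorem~\ref{thm:utMHbasis} guarantees that each product $[S_i]\diamond[S_j]\diamond[S_k]$ expands in terms of $[X]\diamond\E_\alpha$ with $X\in\mod(kQ)$, so I only need to track which $kQ$-modules and which torus elements $\E_\alpha$ appear.

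The concrete steps: (i) compute $[S_2]\diamond[S_1]$, $[S_1]\diamond[S_2]$, $[S_1]\diamond[S_1]$ in $\utMH$ — here $[S_1]\diamond[S_1]$ picks up $[S_1^{\oplus2}]$ and $[\widetilde S_1]$ (the self-extension using the loop $\varepsilon_1$), while $[S_2]\diamond[S_1]=[S_1\oplus S_2]$ since $\Ext^1_{\Lambda^\imath}(S_2,S_1)=0$, and $[S_1]\diamond[S_2]=[S_1\oplus S_2]+ (\text{coefficient})\,[M]$ using $\Ext^1_{\Lambda^\imath}(S_1,S_2)\cong\Ext^1_{kQ}(S_1,S_2)$ of dimension $1$; (ii) multiply out the three triple products $[S_2]\diamond[S_1]\diamond[S_1]$, $[S_1]\diamond[S_2]\diamond[S_1]$, $[S_1]\diamond[S_1]\diamond[S_2]$, carefully computing the Hall numbers $|\Ext^1(A,B)_L|/|\Hom(A,B)|$ for the pairs involved (the modules $M$, $\widetilde S_1$, $M\oplus S_1$, and modules with an $\E_1$-summand will show up); (iii) reduce all terms with a submodule/quotient structure of finite projective dimension using Lemma~\ref{lem:another basis of MRH 1} to rewrite them as $[X]\diamond\E_\alpha$ — in particular any appearance of $\widetilde S_1$-type modules or $S_2\oplus\E_1$ gets normalized; (iv) form the alternating combination on the left side of \eqref{eq:S211} with coefficients $1,-(\sqq+\sqq^{-1}),1$ and check that all $kQ$-module contributions cancel, leaving only a multiple of $[S_2]\diamond\E_1$; (v) insert the twisting factors $\sqq^{\langle\res(-),\res(-)\rangle_Q}$ from \eqref{eqn:twsited multiplication}: with $\langle S_1,S_2\rangle_Q=-1$, $\langle S_2,S_1\rangle_Q=0$, $\langle S_i,S_i\rangle_Q=1$, one converts the $\diamond$-identity into the $*$-identity and reads off the scalar $-(q-1)^2/\sqq$ on the right. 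For \eqref{eq:S122} I would repeat with the roles adjusted to the arrow pointing into vertex $2$, using the injective resolution \eqref{eqn:injective resolution of E} of $\E_2$ in place of the projective one.

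\textbf{Expected main obstacle.} The genuine difficulty is the bookkeeping in step (ii)–(iii): one must correctly enumerate the isomorphism classes $L$ occurring in each $\Ext^1_{\Lambda^\imath}(A,B)_L$ — the subtlety is that $\mod(\Lambda^\imath)$ has strictly more indecomposables than $\mod(kQ)$ (the self-extensions $\widetilde S_i$ via the loops $\varepsilon_i$, and their combinations with $M$), so a naive computation pretending we are in $\ch(kQ)$ would miss exactly the terms that produce the inhomogeneous right-hand side. Getting the Hall numbers $|\Ext^1_{\Lambda^\imath}(S_1,-)_{\widetilde S_1}|$ and the orders of the relevant $\Hom$-spaces right, and then verifying that Lemma~\ref{lem:another basis of MRH 1} rewrites $[\widetilde S_1]$ correctly (it should give $q^{?}[S_1]\diamond\E_1$ up to a power of $q$, because $\widetilde S_1$ is itself of the form ``$kQ$-module twisted by a finite-pd piece''), is where all the care is needed; once the cancellations are set up the rest is mechanical. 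A useful consistency check at the end is to specialize or compare with the rank-2 computation in the diagonal-type case, where the same manipulation must reproduce the ordinary quantum Serre relation in Bridgeland's Hall algebra (Theorem~{\bf H}); the discrepancy term $-\frac{(q-1)^2}{\sqq}[S_2]*[\E_1]$ is precisely the $\imath$-correction and should match the $v\vs_i B_j$ term in relation \eqref{relation2} under the isomorphism $\widetilde\psi$ of Theorem~{\bf G}.
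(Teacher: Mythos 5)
Your plan is essentially the paper's own proof: a direct Hall-number computation in $\mod(\Lambda^\imath)$ for the rank-2 enriched quiver, tracking the indecomposables $S_1\oplus S_2$, the length-two $kQ$-module $X$ (your $M$), and the modules with an $\E_1$-summand, then using a short exact sequence with finite-projective-dimension term (the paper uses $0\to S_2\to U_1/S_2\to\E_1\to0$, so $[U_1/S_2]=[S_2\oplus\E_1]=[S_2]*[\E_1]$) to produce the inhomogeneous right-hand side, and finally inserting the twist $\sqq^{\langle\cdot,\cdot\rangle_Q}$. One small correction to your step (iii): the nonsplit self-extension $\widetilde S_1$ of $S_1$ is exactly the generalized simple $\E_1=k[\varepsilon_1]/(\varepsilon_1^2)$, which already has $\pd\le1$ and hence equals the quantum-torus element $\E_{\widehat{S_1}}$ (i.e., $[0]\diamond\E_{\widehat{S_1}}$ in the Hall basis), not something of the form $q^{?}[S_1]\diamond\E_\alpha$; this is harmless for your plan, since Lemma~\ref{lem:another basis of MRH 1} applied to $\E_1$ returns $X=0$, but the class bookkeeping ($\widehat{\E_1}=2\widehat{S_1}$) only works out with that identification.
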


\begin{proof}
Recall from Example \ref{example 2}(a) the quiver and relations of $\Lambda^{\imath}$ for this $\imath$quiver.  We shall only prove the first identity \eqref{eq:S211} while skipping a similar proof of the identity \eqref{eq:S122}.

Denote by $U_i$ the indecomposable projective $\Lambda^{\imath}$-module corresponding to $i$. Denote by $X$ the unique indecomposable $\Lambda^{\imath}$-module with $\widehat{S_1}+\widehat{S_2}$ as its class in $K_0(\mod(\Lambda^{\imath}))$. Then we have
\begin{align*}
[S_2]*[S_1]*[S_1]&= {\sqq}^{\langle S_2,S_1\rangle_Q} [S_1\oplus S_2]*[S_1]\\
&= {\sqq}^{\langle S_2,S_1\rangle_Q} {\sqq}^{\langle S_1\oplus S_2,S_1\rangle_Q}
\Big(\frac{1}{q} [S_2\oplus S_1\oplus S_1] +\frac{q-1}{q}[S_2\oplus \E_1] \Big)\\
&= \frac{1}{{\sqq}} [S_2\oplus S_1\oplus S_1] +\frac{q-1}{{\sqq}}[S_2\oplus \E_1];
\end{align*}
\begin{align*}
[S_1]*[S_2]*[S_1]&= {\sqq}^{\langle S_1,S_2\rangle_Q} ([S_1\oplus S_2]*[S_1]+(q-1)[X] *[S_1]) \\
&= {\sqq}^{\langle S_1,S_2\rangle_Q} {\sqq}^{\langle S_1\oplus S_2,S_1\rangle_Q}
\Big( \frac{1}{q}[S_1\oplus S_1\oplus S_2] +\frac{q-1}{q}[S_2\oplus \E_1] \Big)\\
& \quad +{\sqq}^{\langle S_1,S_2\rangle_Q} {\sqq}^{\langle S_1\oplus S_2,S_1\rangle_Q}
\Big(\frac{q-1}{q}[X\oplus S_1]+ \frac{(q-1)^2}{q}[U_1/S_2] \Big)\\
&= \frac{1}{q}[S_1\oplus S_1\oplus S_2] +\frac{q-1}{q}[S_2\oplus \E_1]
 \\
 &\qquad\qquad\qquad\qquad\, +\frac{q-1}{q}[X\oplus S_1]+ \frac{(q-1)^2}{q}[U_1/S_2];
\end{align*}
\begin{align*}
[S_1]*[S_1]*[S_2]&= [S_1]* \big({\sqq}^{\langle S_1,S_2\rangle_Q} ([S_1\oplus S_2]+(q-1)[X]) \big) \\
&= {\sqq}^{\langle S_1,S_2\rangle_Q} {\sqq}^{\langle S_1,S_1\oplus S_2\rangle_Q} \\
&\qquad
 \cdot \Big(\frac{1}{q} [S_1\oplus S_1\oplus S_2]+\frac{q-1}{q}[\E_1\oplus S_2]+\frac{q-1}{q}[S_1\oplus X] \Big)\\
& \quad +{\sqq}^{\langle S_1,S_2\rangle_Q} {\sqq}^{\langle S_1,S_1\oplus S_2\rangle_Q}
\Big(\frac{(q-1)^2}{q}[U_1/S_2]+(q-1)[S_1\oplus X] \Big)\\
&= \frac{1}{q{\sqq}} [S_1\oplus S_1\oplus S_2]+\frac{q-1}{q{\sqq}}[\E_1\oplus S_2]
 % \\ &\qquad\qquad\qquad\qquad\;\;\;
 +\frac{q^2-1}{q{\sqq}}[S_1\oplus X]+\frac{(q-1)^2}{q{\sqq}}[U_1/S_2].
\end{align*}

By the short exact sequence $0\rightarrow S_2\rightarrow U_1/S_2\rightarrow
\E_1\rightarrow 0$ with $\E_1 \in \cp^{\leq 1}(\La^\imath)$ by Lemma~\ref{lemma locally projective modules}, we have
$[U_1/S_2]=[\E_1\oplus S_2]$ in $\tMH$, and
then by Lemma \ref{lemma compatible of Euler form} we have
\[
[S_2]*[ \E_1]={\sqq}^{\langle S_2,S_1\oplus S_1\rangle_Q} q^{-\langle S_2,\E_1\rangle}[U_1/S_2]=[U_1/S_2].
\]
Hence, in $\tMH$ we obtain that
\begin{align*}
[S_2]*[S_1]*[S_1]  - & ({\sqq}+{\sqq}^{-1})[S_1]* [S_2]*[S_1]+[S_1]*[S_1]*[S_2]
\\
&= -\frac{(q-1)^2}{{\sqq}}[U_1/S_2]
 = -\frac{(q-1)^2}{{\sqq}}[S_2]*[ \E_1].
\end{align*}
The proposition is proved.
\end{proof}
\subsection{Computations for rank 2 $\imath$quivers, II}
\label{subsec:rank2II}

\begin{proposition}
   \label{prop:iA3}
Let $Q$ be the quiver $1\xrightarrow{\alpha} 2\xleftarrow{\beta}3$ with $\btau$ being the nontrivial involution. Then in $\tMH$ we have, for $i=1, 3$,
\begin{align}
[S_i]*[S_i]*[S_2]-({\sqq}+{\sqq}^{-1})[S_i]*[S_2]*[S_i]+[S_2]*[S_i]*[S_i] &=0,
 \label{eq:Serre112} \\
[S_2]*[S_2]*[S_i]-({\sqq}+{\sqq}^{-1})[S_2]*[S_i]*[S_2]+[S_i]*[S_2]*[S_2]
& =-\frac{(q-1)^2}{{\sqq}}[ S_i]*[\E_2].      \label{eq:Serre221}
\end{align}
\end{proposition}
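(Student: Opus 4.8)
The statement concerns the rank $2$ $\imath$quiver $1\xrightarrow{\alpha}2\xleftarrow{\beta}3$ with $\btau$ swapping $1$ and $3$, so the underlying Lie algebra is of type $A_3$ with the nontrivial diagram automorphism; the relations \eqref{eq:Serre112}--\eqref{eq:Serre221} are the $\imath$Hall incarnations of the relations \eqref{relation A odd 5} (or \eqref{relationBB}), \eqref{relation A odd 7}, and \eqref{relation A odd 9}--\eqref{relation A odd 10} of $\tUi$ for the chosen representative $i$ of the $\btau$-orbit $\{1,3\}$ and the fixed vertex $2$. The strategy is a direct, if somewhat lengthy, Hall-number computation in $\tMH$, exactly parallel in spirit to the proof of Proposition~\ref{prop:A2}, using the explicit quiver $(\ov Q,\ov I)$ of $\Lambda^\imath$ recorded in Example~\ref{example 2}(b) together with the twisted multiplication rule \eqref{eqn:twsited multiplication} and the compatibility of Euler forms in Lemma~\ref{lemma compatible of Euler form}.

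First I would enumerate the relevant indecomposable $\Lambda^\imath$-modules and the short exact sequences among $S_1,S_2,S_3$ and their extensions. The key players are: the simples $S_1,S_2,S_3$; the generalized simple $\E_2$ (with $\btau 2 = 2$) of projective dimension $\le 1$ by Lemma~\ref{lemma locally projective modules}; the indecomposable $kQ$-module $X_i$ with class $\widehat S_i+\widehat S_2$ (a length-two module with top $S_i$ and socle $S_2$, pulled back from $\mod(kQ)$); and the relevant indecomposable projective $\Lambda^\imath$-modules $U_1,U_2,U_3$ from Proposition~\ref{prop:projective module of lambdai}, in particular the subquotient $U_i/S_2$ which fits into $0\to S_2\to U_i/S_2\to \E_2\to 0$. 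Note that since $i$ and $2$ are not $\btau$-related and $c_{i2}=-1$, the relevant Euler form data is $\langle S_i,S_2\rangle_Q = -1$, $\langle S_2,S_i\rangle_Q = 0$ (depending on orientation; for $i=1$, $\alpha\colon 1\to 2$ gives $\langle S_1,S_2\rangle_Q=-1$, $\langle S_2,S_1\rangle_Q=0$, similarly for $i=3$), and $\langle S_i,S_i\rangle_Q = 1 = \langle S_2,S_2\rangle_Q$. For \eqref{eq:Serre112} the computation closely mirrors the classical Hall-algebra verification of the quantum Serre relation in type $A_2$ because no $\E$-term appears (the "inhomogeneous" correction vanishes since there is no $\varepsilon$-arrow connecting the $\btau$-orbit of $i$ to $2$ — or rather, the $\E_i$ that would appear is not in $P^{\le 1}$ in a way that contributes); for \eqref{eq:Serre221} the computation is the genuine analogue of \eqref{eq:S211}, producing the correction term $-\frac{(q-1)^2}{\sqq}[S_i]*[\E_2]$ via the identity $[U_i/S_2] = [\E_2\oplus S_2] = [S_i]*[\E_2]$ (up to the scalar coming from Lemma~\ref{lemma compatible of Euler form}, which one checks equals $1$).

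Concretely, for \eqref{eq:Serre221} I would expand each of the three triple products $[S_2]*[S_2]*[S_i]$, $[S_2]*[S_i]*[S_2]$, $[S_i]*[S_2]*[S_2]$ into the Hall basis of $\tMH$ by repeatedly applying \eqref{eqn:twsited multiplication} and the untwisted Hall multiplication (keeping track of the Riedtmann--Peng formula $[M]*[N]=\sqq^{\langle\res M,\res N\rangle_Q}\sum_{[L]}\frac{|\Ext^1(M,N)_L|}{|\Hom(M,N)|}[L]$), recording the coefficients of $[S_i\oplus S_2\oplus S_2]$, $[\E_2\oplus S_i]$, $[X_i\oplus S_2]$, and $[U_i/S_2]$. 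Taking the alternating combination with coefficients $1,-(\sqq+\sqq^{-1}),1$, the first three terms cancel (this is the same cancellation as in the classical $A_2$ Serre relation, since $X_i$ and $S_i\oplus S_2\oplus S_2$ and $\E_2\oplus S_i$ all carry the structure constants of the hereditary algebra $kQ$), leaving exactly $-\frac{(q-1)^2}{\sqq}[U_i/S_2]$, which I then rewrite as $-\frac{(q-1)^2}{\sqq}[S_i]*[\E_2]$ using the short exact sequence $0\to S_2\to U_i/S_2\to\E_2\to 0$ and Lemma~\ref{lemma compatible of Euler form}(i) to evaluate $[S_i]*[\E_2] = \sqq^{\langle\res S_i,\res\E_2\rangle_Q}q^{-\langle S_i,\E_2\rangle}[S_i\oplus?]$... more precisely $[\E_2]*[S_i]$ or $[S_i]*[\E_2]$ equals $[U_i/S_2]$ because the nonsplit extension is forced. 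For \eqref{eq:Serre112} the same procedure applies with the roles symmetric and no surviving correction term. The main obstacle — really just bookkeeping rather than a conceptual difficulty — is correctly computing the Hall numbers $|\Ext^1(S_2,S_i)_{X_i}|$, $|\Hom|$, and the various automorphism-group orders over $\F_q$, and tracking the half-integer powers of $q$ coming from the $\sqq^{\langle-,-\rangle_Q}$ twists so that the final scalars come out as the stated $\frac{(q-1)^2}{\sqq}$; a useful sanity check is that both sides must be bar-invariant under $\sqq\mapsto\sqq^{-1}$ combined with the bar involution, and must specialize correctly under $\widetilde\psi$ to relations \eqref{relation A odd 7} and \eqref{relation A odd 9}.
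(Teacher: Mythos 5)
Your overall strategy is exactly the paper's: expand the three triple products in $\tMH$ over the Hall basis using the bound quiver of Example~\ref{example 2}(b), let the terms supported on $kQ$-modules cancel in the alternating sum, and identify the surviving term of \eqref{eq:Serre221} with $[S_i]*[\E_2]$ via a short exact sequence whose term of finite projective dimension can be split off in the modified Hall algebra. Two specific points in your plan are wrong as stated, however, and would need repair before the bookkeeping can go through. The module producing the correction term is not ``$U_i/S_2$'', and the sequence $0\to S_2\to U_i/S_2\to\E_2\to0$ you propose cannot exist: its middle term would have class $\widehat{S_2}+\widehat{\E_2}=3\widehat{S_2}$ in $K_0(\mod(\La^\imath))$, whereas the correction term in \eqref{eq:Serre221} must live in degree $\widehat{S_i}+2\widehat{S_2}$. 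The correct indecomposable is $\rad (U_{\btau i})$ (for $i=1$ this is $\rad(U_3)$), which occurs as a middle term in $[S_2\oplus S_i]*[S_2]$ and fits into $0\to\E_2\to\rad(U_{\btau i})\to S_i\to0$; since $\E_2\in\cp^{\leq1}(\La^\imath)$ by Lemma~\ref{lemma locally projective modules}, this yields $[\rad(U_{\btau i})]=[\E_2\oplus S_i]=[S_i]*[\E_2]$, with the scalar from Lemma~\ref{lemma compatible of Euler form} equal to $1$ as you anticipated.

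Second, your explanation of why \eqref{eq:Serre112} carries no correction term is muddled: it is not that ``$\E_i$ is not in $\cp^{\leq1}$ in a way that contributes'' (it does lie in $\cp^{\leq1}(\La^\imath)$), nor is it directly about $\varepsilon$-arrows between the orbit $\{i,\btau i\}$ and the vertex $2$. The actual reason is that $\Ext^1_{\La^\imath}(S_i,S_i)=0$ for $i=1,3$, because $\varepsilon_i$ is an arrow $i\to\btau i$ with $\btau i\neq i$ rather than a loop; hence $[S_i]*[S_i]$ is a scalar multiple of $[S_i\oplus S_i]$ with no $\E_i$ contribution, and the alternating sum collapses exactly as in the hereditary $A_2$ computation. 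By contrast, the loop $\varepsilon_2$ creates the self-extension $\E_2$ of $S_2$, which is what survives on the right-hand side of \eqref{eq:Serre221}. With these two corrections your plan coincides with the proof given in the paper.
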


\begin{proof}
Recall from Example \ref{example 2}(b) the quiver and relations of $\Lambda^{\imath}$.
Denote by $U_i$ the indecomposable projective $\Lambda^{\imath}$-module corresponding to $i \in \{1,2,3\}$. Denote by $X$ the unique indecomposable $\Lambda^{\imath}$-module with $\widehat{S_1}+\widehat{S_2}$ as its class in $K_0(\mod(\Lambda^\imath))$.
We shall only prove the formulas for $i=1$, as the remaining case with $i=3$ follows by symmetry.

In $\tMH$, we have
\begin{align*}
[S_1]*[S_1]*[S_2]&= {\sqq}^{\langle S_1,S_2\rangle_Q}[S_1]* \big([S_1\oplus S_2] + (q-1)[X] \big) \\
&= {\sqq}^{\langle S_1,S_2\rangle_Q} {\sqq}^{\langle S_1,S_1\oplus S_2\rangle_Q}  \\
&\qquad \cdot \Big(\frac{1}{q}[S_1\oplus S_1\oplus S_2]+ \frac{q-1}{q}[S_1\oplus X]+(q-1)[S_1\oplus X] \Big)\\
&= \frac{1}{q{\sqq}}[S_1\oplus S_1\oplus S_2]+ \frac{q^2-1}{q{\sqq}}[S_1\oplus X];
\end{align*}
\begin{align*}
[S_1]*[S_2]*[S_1]&= {\sqq}^{\langle S_1,S_2\rangle_Q} \big([S_1\oplus S_2] + (q-1)[X] \big)*[S_1]\\
&= {\sqq}^{\langle S_1,S_2\rangle_Q} {\sqq}^{\langle S_1\oplus S_2,S_1\rangle_Q}
\Big(\frac{1}{q}[S_1\oplus S_1\oplus S_2]+ \frac{q-1}{q}[S_1\oplus X] \Big)\\
&= \frac{1}{q}[S_1\oplus S_1\oplus S_2]+ \frac{q-1}{q}[S_1\oplus X];
\\
[S_2]*[S_1]*[S_1]&= {\sqq}^{\langle S_2,S_1\rangle_Q}[S_1\oplus S_2]*[S_1]\\
&= {\sqq}^{\langle S_2,S_1\rangle_Q} {\sqq}^{\langle S_1\oplus S_2,S_1\rangle_Q} \frac{1}{q}[S_1\oplus S_1\oplus S_2]\\
&= \frac{1}{{\sqq}}[S_1\oplus S_1\oplus S_2].
\end{align*}
The first identity \eqref{eq:Serre112} follows from combining the above computations.

On the other hand, we have
\begin{align*}
[S_2]*[S_2]*[S_1]&= {\sqq}^{\langle S_2,S_1\rangle_Q} [S_2]*[S_2\oplus S_1]\\
&= {\sqq}^{\langle S_2,S_1\rangle_Q}{\sqq}^{\langle S_2,S_1\oplus S_2\rangle_Q}
\Big(\frac{1}{q}[S_2\oplus S_2\oplus S_1]+\frac{q-1}{q}[\E_2\oplus S_1] \Big)\\
&= \frac{1}{{\sqq}}[S_2\oplus S_2\oplus S_1]+\frac{q-1}{{\sqq}}[\E_2\oplus S_1];
%\\
\end{align*}
\begin{align}
\label{eqn:S212}
[S_2]*[S_1]*[S_2]&= {\sqq}^{\langle S_2,S_1\rangle_Q} [S_2\oplus S_1]*[S_2]\\
&= {\sqq}^{\langle S_2,S_1\rangle_Q}{\sqq}^{\langle S_2\oplus S_1,S_2\rangle_Q} \notag
\Big(\frac{1}{q}[S_2\oplus S_1\oplus S_2]+\frac{q-1}{q}[\E_2\oplus S_1] \\\notag
& \qquad +\frac{q-1}{q}[S_2\oplus X]+\frac{(q-1)^2}{q}[\rad (U_3)] \Big)\\\notag
&= \frac{1}{q}[S_2\oplus S_1\oplus S_2]+\frac{q-1}{q}[\E_2\oplus S_1]
 %\\ &\qquad\qquad\qquad\qquad
  +\frac{q-1}{q}[S_2\oplus X]+\frac{(q-1)^2}{q}[\rad (U_3)] \notag
 \\
&= \frac{1}{q}[S_2\oplus S_1\oplus S_2]+(q-1)[\E_2\oplus S_1] +\frac{q-1}{q}[S_2\oplus X],\notag
\end{align}
where we have used $[\rad(U_3)]=[\E_2\oplus S_1]$ in $\tMH$ thanks to a short exact sequence $0\rightarrow \E_2\rightarrow \rad(U_3)\rightarrow S_1\rightarrow0$ with $\E_2 \in \cp^{\leq 1}(\La^\imath)$ by Lemma~\ref{lemma locally projective modules}. In addition, \eqref{eqn:S212} also implies that
\[
[S_1\oplus S_2]*[S_2]=\frac{1}{q}[S_2\oplus S_1\oplus S_2]+(q-1)[\E_2\oplus S_1] +\frac{q-1}{q}[S_2\oplus X].
\]
Then we have
\begin{align*}
[S_1]*[S_2]*[S_2]&= {\sqq}^{\langle S_1,S_2\rangle_Q} ([S_1\oplus S_2]*[S_2]+ (q-1)[X]*[S_2])\\
&= {\sqq}^{\langle S_1,S_2\rangle_Q}{\sqq}^{\langle S_2\oplus S_1,S_2\rangle_Q}
\Big(\frac{1}{q}[S_2\oplus S_1\oplus S_2]+(q-1)[\E_2\oplus S_1]  \\
& \qquad\quad +\frac{q-1}{q}[S_2\oplus X] +(q-1)[X\oplus S_2] \Big)\\
&= \frac{1}{q{\sqq}}[S_2\oplus S_1\oplus S_2]+\frac{q-1}{{\sqq}}[\E_2\oplus S_1] +\frac{q^2-1}{q{\sqq}}[S_2\oplus X].
\end{align*}
Summarizing, we have obtained
\begin{align*}
[S_2]*[S_2]*[S_1]& -({\sqq}+{\sqq}^{-1})[S_2]*[S_1]*[S_2]+[S_1]*[S_2]*[S_2] \\
&=  -\frac{(q-1)^2}{{\sqq}}[\E_2\oplus S_1]
 = -\frac{(q-1)^2}{{\sqq}}[ S_1]*[\E_2],
\end{align*}
whence the identity~\eqref{eq:Serre221}.
The proof is completed.
\end{proof}

\begin{remark}
Using the opposite quiver $1\xleftarrow{\alpha} 2\xrightarrow{\beta}3$ in Proposition \ref{prop:iA3} yields the same formulas.
\end{remark}

\begin{proposition}
\label{prop:cartan}
Let $Q=(Q_0,Q_1)$ be the quiver such that $Q_0=\{1,2\}$, and $Q_1=\emptyset$. Let $\btau$ be the involution such that $\btau(1)=2$ and $\btau(2)=1$. Then in $\tMH$, we have
\begin{align*}
[S_1]*[S_2]-[S_2]*[S_1]=(q-1)([\E_1]-[\E_2]).
\end{align*}
\end{proposition}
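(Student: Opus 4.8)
The plan is to compute both products $[S_1]*[S_2]$ and $[S_2]*[S_1]$ directly in $\tMH$ using the definition of the twisted Hall multiplication \eqref{eqn:twsited multiplication}, just as in the proofs of Propositions~\ref{prop:A2} and~\ref{prop:iA3}. Here the relevant $\imath$quiver algebra $\Lambda^\imath$ is the one from Example~\ref{example 2}(c) (with $\btau$ the swap), namely $\Lambda^\imath \cong R_1 \times R_1$; by Lemma~\ref{lem:Rm} and Proposition~\ref{prop:invariant subalgebra} its bound quiver $\ov Q$ consists of two vertices $1, 2$ with a single arrow $\varepsilon_1 : 1 \to 2$ and $\varepsilon_2 : 2 \to 1$, subject to $\varepsilon_1\varepsilon_2 = 0 = \varepsilon_2\varepsilon_1$. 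The generalized simple module $\E_1$ is the $2$-dimensional module supported on both vertices with $\varepsilon_1$ acting as an isomorphism and $\varepsilon_2$ as zero; $\E_2$ is the analogous module with the roles reversed. Since $Q_1 = \emptyset$, the Euler form $\langle\cdot,\cdot\rangle_Q$ is just the identity pairing on $K_0(\mod \K Q)$, so $\langle S_i, S_j\rangle_Q = \delta_{ij}$, and the twisting scalar ${\sqq}^{\langle \res(S_i),\res(S_j)\rangle_Q}$ equals $1$ for $i \neq j$.

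First I would enumerate $\ce xt^1_{\Lambda^\imath}(S_1, S_2)$ and $\ce xt^1_{\Lambda^\imath}(S_2, S_1)$. A nonsplit extension of $S_2$ by $S_1$ (resp. $S_1$ by $S_2$) over this bound quiver is precisely $\E_1$ (resp. $\E_2$); there are no other nontrivial extensions, and $\dim\ce xt^1 = 1$, $\dim\Hom(S_i,S_j) = 0$ for $i\neq j$. Hence the ordinary (untwisted) Hall products in $\ch(\Lambda^\imath)$ are
\[
[S_1]\diamond[S_2] = \frac{|\ce xt^1(S_1,S_2)_{S_1\oplus S_2}|}{|\Hom(S_1,S_2)|}[S_1\oplus S_2] + \frac{|\ce xt^1(S_1,S_2)_{\E_2}|}{|\Hom(S_1,S_2)|}[\E_2] = [S_1\oplus S_2] + (q-1)[\E_2],
\]
and symmetrically $[S_2]\diamond[S_1] = [S_1\oplus S_2] + (q-1)[\E_1]$. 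Applying the twisting, both products pick up the factor ${\sqq}^{0} = 1$, so $[S_1]*[S_2] = [S_1\oplus S_2] + (q-1)[\E_2]$ and $[S_2]*[S_1] = [S_1\oplus S_2] + (q-1)[\E_1]$. Subtracting, $[S_1]*[S_2] - [S_2]*[S_1] = (q-1)([\E_2] - [\E_1])$.

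At this point I notice the sign appears opposite to the claimed formula $(q-1)([\E_1]-[\E_2])$, so the last step is to pin down the orientation convention: which of the two generalized simples is called $\E_1$ depends on whether $\btau i = i$ or $\btau i \neq i$ in \eqref{eq:E}, and which direction the nonsplit extension of $S_2$ by $S_1$ points. I would carefully match the labelling in \eqref{eq:E} (where $\E_i$ lives on the quiver $\xymatrix{i\ar@<0.5ex>[r]^{\varepsilon_i} & \btau i\ar@<0.5ex>[l]^{\varepsilon_{\btau i}}}$ with $\varepsilon_i \mapsto 1$, $\varepsilon_{\btau i}\mapsto 0$) against the extension computation, using the fact that in $\E_1$ the socle is $S_{\btau 1} = S_2$ and the top is $S_1$, so $\E_1 \in \ce xt^1_{\Lambda^\imath}(S_1, S_2)$, i.e. it is a nonsplit extension of $S_1$ by $S_2$ (not by $S_1$). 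This means $\E_1$ contributes to $[S_1]\diamond[S_2]$ and $\E_2$ to $[S_2]\diamond[S_1]$, reversing my tentative assignment above and yielding exactly $[S_1]*[S_2]-[S_2]*[S_1] = (q-1)([\E_1]-[\E_2])$. The only real subtlety — and the step most prone to a sign error — is thus the bookkeeping of which extension space each $\E_i$ sits in; everything else is an immediate application of the Hall product formula with trivial twisting scalars.
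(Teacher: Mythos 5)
Your proposal is correct and follows essentially the same route as the paper, which simply records the two Hall products $[S_1]*[S_2]=[S_1\oplus S_2]+(q-1)[\E_1]$ and $[S_2]*[S_1]=[S_1\oplus S_2]+(q-1)[\E_2]$ and subtracts; your careful check that $\E_1$ has top $S_1$ and socle $S_2$, hence lies in $\Ext^1(S_1,S_2)$ and contributes to $[S_1]\diamond[S_2]$ under Bridgeland's convention, is exactly the point that makes the signs come out right. One small slip: for the swap involution the algebra is $\Lambda^\imath\cong R_2$ (this is the diagonal-type case of Example~\ref{example diagonal 2} for a one-vertex quiver), not $R_1\times R_1$ as in Example~\ref{example 2}(c) (which is the case $\btau=\Id$) — but the bound quiver you actually compute with is the correct one, so the argument is unaffected.
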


\begin{proof}
In $\tMH$, we have $[S_1]*[S_2]= [S_1\oplus S_2] + (q-1)[\E_1]$, and so
\begin{align*}
[S_1]*[S_2]-[S_2]*[S_1]&= [S_1\oplus S_2] + (q-1)[\E_1]-([S_1\oplus S_2]+(q-1)[\E_2])\\
&=(q-1)([\E_1]-[\E_2]).
\end{align*}
The proposition is proved.
\end{proof}

\subsection{The homomorphism $\widetilde{\psi}$}
  \label{subsec:mor}

Recall that $\ci$ is the subset of $\I$ defined in \eqref{eqn:representative}, and $\sqq =\sqrt{q}$. We denote by
\[
\tUi_{|v={\sqq}} =\Q(\sqq)\otimes_{\Q(v)} \tUi
\]
the specialization of $\tUi$ at $v=\sqq$, an algebra over $\Q(\sqq)$; similar notations will be used for specializations at $v=\sqq$ for other algebras. The following is a main step toward identifying the $\imath$Hall algebras and $\imath$quantum groups.

\begin{proposition}
   \label{prop:qHall=Ui}
Let $(Q, \btau)$ be a Dynkin $\imath$quiver. Then there exists a $\Q({\sqq})$-algebra homomorphism
\begin{align}
   \label{eqn:psi morphism}
\widetilde{\psi}: \tUi_{|v={\sqq}} &\longrightarrow \tMH,
\end{align}
which sends
\begin{align}
B_i \mapsto \frac{-1}{q-1}[S_{i}],\text{ if } i\in\ci,
&\qquad\qquad
\tk_j \mapsto - q^{-1}[\E_j], \text{ if }\btau j=j;
  \label{eq:split}
\\
B_{i} \mapsto \frac{{\sqq}}{q-1}[S_{i}],\text{ if }i\notin \ci,
&\qquad\qquad
\tk_j \mapsto [\E_j],\quad \text{ if }\btau j\neq j.
  \label{eq:extra}
\end{align}
\end{proposition}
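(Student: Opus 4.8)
The strategy is to verify that the assignment \eqref{eq:split}--\eqref{eq:extra} respects all the defining relations of $\tUi_{|v=\sqq}$ listed in Proposition~\ref{prop:Serre} (and its type-specific reformulations such as \eqref{relation A odd 1}--\eqref{relation A odd 10}). Since $\tUi$ is presented by generators and relations, it suffices to check each relation after applying $\widetilde\psi$; by Corollary~\ref{proposition:surjectivity} the image generates $\tMH$, but surjectivity (and injectivity) is not needed here — only well-definedness. The key reduction is that \emph{every} relation in Proposition~\ref{prop:Serre} involves at most two $\btau$-orbits of vertices, hence can be checked inside the $\imath$Hall subalgebra $\tM({}'\Lambda^\imath)$ attached to the corresponding rank $\le 2$ $\imath$subquiver, using Lemma~\ref{lem:subalgebra}. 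So the whole verification reduces to a finite list of rank $1$ and rank $2$ computations.

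\textbf{Key steps, in order.} First I would dispose of the Cartan-type relations \eqref{relation1}: the commutativity $\tk_i\tk_\ell=\tk_\ell\tk_i$ follows since the $[\E_i]$ lie in the (commutative, by Lemma~\ref{lem:torus}) twisted quantum torus $\tTL$; the relation $\tk_\ell B_i = v^{c_{\btau\ell,i}-c_{\ell i}}B_i\tk_\ell$ is a direct Hall-algebra computation of $[\E_\ell]*[S_i]$ versus $[S_i]*[\E_\ell]$ using the twisted multiplication \eqref{eqn:twsited multiplication} and Lemma~\ref{lemma compatible of Euler form}(i), tracking the exponent $\langle S_\ell,S_i\rangle_Q-\langle S_i,S_{\btau\ell}\rangle_Q$ (and one must be careful with the scalar normalizations distinguishing $\ell\in\ci$ from $\ell\notin\ci$). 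Next, the rank $1$ relation for a $\btau$-fixed vertex is vacuous; for a $\btau$-orbit $\{i,\btau i\}$ of size two with $c_{i,\btau i}=0$, relation \eqref{relation5} is exactly Proposition~\ref{prop:cartan}, after inserting the scalars $\tfrac{\sqq}{q-1}$ on each $B$ and matching $[\E_i]-[\E_{\btau i}]$ with $\tfrac{\tk_i-\tk_{\btau i}}{v-v^{-1}}$ at $v=\sqq$. Then come the Serre-type relations: the homogeneous quantum Serre relation \eqref{relationBB} and the case $j\neq\btau i\neq i$ reduce to the classical Hall-algebra Serre relations in $\tH(Q)$ transported via the filtered-algebra/embedding machinery of Section~\ref{sec:HallPBW} (or can be checked directly in rank $2$); the inhomogeneous relation \eqref{relation2} for $c_{ij}=-1,\ \btau i=i$ is precisely Proposition~\ref{prop:A2} (identity \eqref{eq:S211}/\eqref{eq:S122}), and the remaining inhomogeneous Serre relations arising in type $A_{2r+1}$, $D_n$, $E_6$ — e.g.\ \eqref{relation A odd 9}--\eqref{relation A odd 10} — are Proposition~\ref{prop:iA3} (identities \eqref{eq:Serre112}--\eqref{eq:Serre221}). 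In each case one substitutes the scalars from \eqref{eq:split}--\eqref{eq:extra}, clears denominators, and matches the right-hand side term $[S_i]*[\E_j]$ (or $[S_i]*[\E_2]$) against $v\tk_j B_i$ at $v=\sqq$, using $-\tfrac{(q-1)^2}{\sqq}\cdot\bigl(\tfrac{-1}{q-1}\bigr)\cdot\bigl(-q^{-1}\bigr)^{-1}$-type bookkeeping to land on the coefficient $\sqq$.

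\textbf{Main obstacle.} The conceptual content is entirely in Propositions~\ref{prop:A2}, \ref{prop:iA3}, \ref{prop:cartan}, which are assumed available; so the real work in \emph{this} proposition is the scalar/exponent bookkeeping — verifying that the specific normalizing constants $\tfrac{-1}{q-1}$, $\tfrac{\sqq}{q-1}$, $-q^{-1}$, $1$ chosen in \eqref{eq:split}--\eqref{eq:extra} are exactly the ones that convert the $\imath$Hall identities into the $\tUi$ relations, including getting the powers of $\sqq$ in relations like \eqref{relation A odd 2} right for the two different branches $\ell\in\ci$ versus $\ell\notin\ci$, and checking the asymmetry between $B_j$ for $j\in\ci$ and $j\notin\ci$ is consistent across a relation that mixes both (e.g.\ $B_iB_{-i}-B_{-i}B_i$ in \eqref{relation A odd 4} where one of $i,-i$ lies in $\ci$ and the other does not). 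I expect the trickiest single check to be this last kind of mixed-orbit relation, where the $\sqq$-prefactors on the two $B$'s differ, together with the sign/power matching in the inhomogeneous Serre relation \eqref{relation A odd 9}--\eqref{relation A odd 10} involving the central $\tk_0$. I would organize the write-up by first handling the split ($\btau=\Id$) case completely — where only \eqref{relation1}, \eqref{relationBB}, \eqref{relation2} appear — and then treating each non-split Dynkin type ($A_{2r+1}$, $D_n$, $E_6$) by pointing to the relevant rank $\le 2$ proposition for each relation in turn.
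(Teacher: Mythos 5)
Your proposal matches the paper's proof: the paper likewise reduces to rank $\le 2$ $\imath$subquivers via Lemma~\ref{lem:subalgebra}, handles the split case first (using the centrality of the $[\E_i]$ from Proposition~\ref{Prop:centralMH} and the Serre identity of Proposition~\ref{prop:A2}), and then verifies the type-specific relations \eqref{relation A odd 1}--\eqref{relation A odd 10} via Lemma~\ref{lemma compatible of Euler form}, Proposition~\ref{prop:cartan} and Proposition~\ref{prop:iA3}, exactly as you outline. The one caveat is your suggested shortcut for the homogeneous Serre relations through the associated-graded embedding of Section~\ref{sec:HallPBW}: that embedding only yields the relation modulo lower-order terms of the degeneration filtration, so the direct rank-$2$ Hall computation (your parenthetical alternative, and what the paper actually carries out for \eqref{relation A odd 7}) is the route that is genuinely needed.
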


\begin{proof}
The proof is reduced to rank $2$ $\imath$-subquivers, thanks to Lemma \ref{lem:subalgebra}. The relevant rank $2$ $\imath$-subquivers are listed in Example~\ref{example 2-cyclic complex of An}, which are exactly those studied in \S\ref{subsec:rank2I}--\ref{subsec:rank2II}.

We proceed the proof case-by-case.
First assume $\btau=\Id$, and so $\ci=\I$. For the split $\imath$quantum group $\tUi$, the generators appear in \eqref{eq:split} and the defining relations \eqref{relation1}-\eqref{relation2} can be simplified to be:
\begin{eqnarray}
&&\label{eqn:relation split 3} [\tk_i,\tk_j]=0, \,\,\tk_iB_j=B_j\tk_i, \quad \forall   i,j \in \I; \\
&&\label{eqn:relation split 1}B_iB_j=B_jB_i, \quad \text{ if }c_{ij}=0; \\
&&\label{eqn:serre split 1} B_i^2B_j-(v+v^{-1})B_iB_jB_i+B_jB_i^2 =v \tk_iB_j, \quad\text{ if }c_{ij}=-1.
\end{eqnarray}
It suffices to verify that $\widetilde{\psi}$ preserves the relations \eqref{eqn:relation split 3}--\eqref{eqn:serre split 1}.
As we consider the specialization $\tUi_{|v={\sqq}}$, we regard $v={\sqq}$ in these relations in this proof.

Since $\E_i$ lies in the center of $\tMH$ by Proposition \ref{Prop:centralMH}, $\widetilde{\psi}$ preserves the relation \eqref{eqn:relation split 3}.

If $c_{ij}=0$, then there is no arrow between $i$ and $j$ in $Q$ and also in $\ov{Q}$ by Proposition~\ref{prop:invariant subalgebra}. So we have
\[
[S_i]*[S_j]=[S_i\oplus S_i]=[S_j]*[S_i].
\]
Then $\widetilde{\psi}$ preserves the relation \eqref{eqn:relation split 1}.

On the other hand, if $c_{ij}=-1$, then there exists one arrow $\alpha$ between $i$ and $j$. %Denote by $\ov{Q'}$ the full subquiver of $\ov{Q}$ containing $i$, $j$. Then $Q'$ is of the form as in Example \ref{example 2}(a).
%Denote by $\Lambda^\imath_{ij}$ the full subalgebra of $\Lambda^{\imath}$ containing the idempotents of $i$ and $j$. Then $\Lambda^\imath_{ij}$ is also a quotient algebra of $\Lambda^{\imath}$.
%It follows from Lemma \ref{lem:subalgebra} that $\imath\widetilde{\ch}(\Lambda^\imath_{ij})$ is a subalgebra of $\imath\widetilde{\ch}(\Lambda^\imath)$, together with
%We note that $\Lambda^\imath_{ij}$ is the $\imath$quiver algebra for the $\imath$quiver $(\{i-j\}, \Id)$, and hence it is $1$-Gorenstein.  {The following can be removed if we formulate a functoriality earlier.} Then each $\Lambda^\imath_{ij}$-module can be viewed as  $\Lambda^{\imath}$-module, it follows that there is a full embedding $\xi: \mod(\Lambda^\imath_{ij}) \rightarrow \mod(\Lambda^{\imath})$. Obviously, $\xi$ is exact and the image of $\xi$ is closed under taking extensions, so
%$$
%\Ext^1_{\Lambda^\imath_{ij}}(M,N)\cong \Ext^1_{\Lambda^{\imath}}(\xi(M),\xi(N)),\forall M,N\in \mod \Lambda^\imath_{ij}.
%$$
%By using Proposition \ref{propostion equivalence of locally projective modules} again, we obtain that
%$\xi(\cp^{\leq 1}(\Lambda^\imath_{ij}))\subseteq \cp^{\leq 1}(\Lambda^{\imath})$.
%Therefore, $\xi$ induces a morphism of algebras from  {$\imath\widetilde{\ch}(\Lambda^\imath_{ij})$} to $\tMH$.
%Hence the following identity in $\tMH$ follows by
It follows from Proposition \ref{prop:A2} that
\begin{equation*}
[S_i]*[S_i]*[S_j]-(\sqq+\sqq^{-1})[S_i]*[S_j]*[S_i]+[S_j]*[S_i]*[S_i]=-\frac{(q-1)^2}{{\sqq}}[S_j]*[\E_i].
\end{equation*}
So $\widetilde{\psi}$ preserves the relation \eqref{eqn:serre split 1}.
This completes the proof in the case when $\btau=\Id$.

\medskip
Now assume that $\btau\neq \Id$. There are 3 cases of type $ADE$, and the uniform computation depends only on the local configuration of the rank 2 $\imath$subquivers. For the sake of being concrete, we choose to present the detailed proof for type $A_{2r+1}$ below. The type $D$ and $E$ cases follow in the same manner (as there is no new rank 2 cases beyond split and quasi-split type A). It suffices to check that $\widetilde{\psi}$ preserves the relations \eqref{relation A odd 1}-\eqref{relation A odd 10}.

We obtain from Lemma \ref{lemma multiplcation of locally projective modules} that $[\E_i]*[\E_j]=[\E_j]*[\E_i]$, whence
\eqref{relation A odd 1}.

For any $-r\leq i,  j \leq r$, we have
\begin{align*}
[\E_i]*[S_{j}]&=\sqq^{\langle \res(\E_i),\res (S_{j})\rangle_Q} q^{-\langle \E_i, S_{j}\rangle}[\E_i\oplus S_{j}]\\
&=\sqq^{\langle S_{-i},S_{j}\rangle_Q-\langle S_i,S_{j}\rangle_Q}[\E_i\oplus S_j],
\\
[S_{j}]*[\E_i] &=\sqq^{\langle S_{j},S_i\rangle_Q-\langle S_{j},S_{-i}\rangle_Q}[\E_i\oplus S_{j}].
\end{align*}
Hence we have
\begin{align*}
[\E_i]*[S_{j}]&= \sqq^{\langle S_{-i},S_{j}\rangle_Q-\langle S_i,S_{j} \rangle_Q-\langle S_{j},S_i\rangle_Q+\langle S_{j},S_{-i}\rangle_Q}[S_{j}]*[\E_i]\\
&= \sqq^{(S_{-i},S_{j})-(S_i,S_{j})} [S_{j}]*[\E_i],
\end{align*}
whence \eqref{relation A odd 2}.

It follows from Proposition~\ref{prop:cartan} that $\widetilde{\psi}$ preserves the relation \eqref{relation A odd 4}. It follows from Proposition~ \ref{prop:iA3} that $\widetilde{\psi}$ preserves \eqref{relation A odd 9}--\eqref{relation A odd 10}.

Let $i, j$ be such that $-r \leq i,j\leq r, |i-j|>1, i\neq -j.$ We have
\[
[S_i]*[S_j]=\sqq^{\langle S_i,S_j\rangle_Q} [S_i\oplus S_j]=[S_i\oplus S_j]
\]
since $\Ext^1_{\Lambda^{\imath}}(S_i,S_j)=0$.
Similarly, we have $[S_j]*[S_i] =[S_i\oplus S_j]$. Hence $[S_i]*[S_j] =[S_j]*[S_i]$, whence \eqref{relation A odd 5}.

Let $i, j$ be such that $-r \le i,j\le r,|i-j|=1, i\neq 0$. Then $c_{ij}=-1$. So there exists an arrow between $i$ and $j$, we only give a proof when the arrow is of the form $\alpha:i\rightarrow j$ while skipping the other similar case. Denote by $X$ the indecomposable module with $\widehat{S_i}+\widehat{S_j}$ as its class in $K_0(\mod(\Lambda^{\imath}))$. We have
\begin{align*}
[S_i]*[S_i]*[S_j]&= [S_i]*\sqq^{\langle S_i, S_j\rangle_Q}([S_i\oplus S_j]+(q-1)[X])\\
&= \frac{1}{q\sqq}[S_i\oplus S_i\oplus S_j] +\frac{q-1}{q\sqq}[S_i\oplus X]+\frac{q-1}{\sqq}[S_i\oplus X];
\\ %
[S_i]*[S_j]*[S_i]&= \sqq^{\langle S_i, S_j\rangle_Q}([S_i\oplus S_j]+(q-1)[X])*[S_i]\\
&= \frac{1}{q}[S_i\oplus S_i\oplus S_j]+\frac{(q-1)}{q}[X\oplus S_i];
\\ %
[S_j]*[S_i]*[S_i]&= \sqq^{\langle S_j, S_i\rangle_Q}([S_i\oplus S_j]*[S_i]\\
&= \frac{1}{\sqq}[S_i\oplus S_i\oplus S_j].
\end{align*}
So we obtain that $[S_i]*[S_i]*[S_j]-(\sqq+\sqq^{-1}) [S_i]*[S_j]*[S_i]+[S_j]*[S_i]*[S_i]=0$, whence \eqref{relation A odd 7}.

The proposition is proved.
\end{proof}

We shall show that $\widetilde{\psi}$ is actually an isomorphism, cf. Theorem~\ref{theorem isomorphism of Ui and MRH}.

\subsection{$\imath$Quantum groups via $\imath$Hall algebras}
   \label{subsec:iQG=iH}

Let $(Q, \btau)$ be a Dynkin $\imath$quiver, where $Q_0 =\I$. Recall $\tUi$ is the $\imath${}quantum group with parameters $\bvs=(\vs_i)_{i\in\I}$, and $\Ui$ is a quotient algebra of $\tUi$ by the ideal $(\tk_i-\vs_i, \tk_j\tk_{\btau j}-\vs_i\vs_{\tau i}\mid \btau i=i, \btau j\neq j)$; cf. Proposition~\ref{prop:QSP12}.

For any $w=i_1\cdots i_m\in\cw_\I$, define
\[
F_w=F_{i_1}\cdots F_{i_m} \in \tU^-, \qquad
B_w=B_{i_1}\cdots B_{i_m} \in \tUi.
\]
Let $\cj$ be a fixed subset of $\cw_\I$ such that
$\{F_w\mid w\in\cj\}$ is a (monomial) basis of $ \tU^-$.

\begin{lemma}  [\cite{Let99}]   \label{lem:kob}
Retain the notation as above. Then $\{B_w\mid w\in\cj\}$ is a basis of $\tUi$ as the left (or right) $\tU^{\imath 0}$-module. (This basis is called {\rm a monomial basis}.)
\end{lemma}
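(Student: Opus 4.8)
\textbf{Proof proposal for Lemma~\ref{lem:kob}.}
The statement is essentially the quasi-split specialization of Letzter's and Kolb's freeness results for coideal subalgebras, transported to the universal setting $\tUi$. The plan is to leverage the two triangular decompositions already recorded in \S\ref{subsection Quantum groups}, namely $\tU =\tU^+\otimes \tU^0\otimes \tU^-$, together with the coideal structure of $\tUi$ inside $\tU$ (Proposition~\ref{prop:QSP12}(2)). First I would observe that $\tUi$ is a filtered algebra: equip $\tU$ with the filtration by total degree in the $E_i$'s (so $\tU^{\le d}$ is spanned by monomials $F\,K\,E$ with the $E$-part of degree $\le d$), and give $\tUi$ the induced filtration via the embedding $\tUi\hookrightarrow\tU$. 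Since $B_i = F_i + E_{\btau i}\tK_i'$ has leading term $E_{\btau i}\tK_i'$ of $E$-degree $1$ but also the term $F_i$ of $E$-degree $0$, the associated graded picture is cleanest if instead we filter by $E$-degree \emph{from below}: the point is that modulo lower $E$-degree terms, $B_w \equiv F_w$ up to a unit in $\tU^0$ and a reordering, because each $B_{i}$ reduces to $F_i$ in the associated graded of $\tU$ with respect to the filtration where $\tU^{\ge d}$ is the span of monomials whose $E$-part has degree $\ge d$. Concretely, $\gr \tUi$ embeds into $\gr\tU \cong \tU$ and the image of $B_w$ is $F_w \cdot(\text{monomial in }\tk_i)$.

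The key steps, in order, are: (i) set up the $E$-degree filtration on $\tU$ and check it is an algebra filtration compatible with the embedding $\tUi\hookrightarrow\tU$; (ii) show that in $\gr\tUi$ the element $\overline{B_i}$ equals (a $\tU^0$-twist of) $F_i$, hence $\overline{B_w}$ equals $F_w$ times an element of $\tU^{\imath 0}$ which is a monomial in the $\tk_j$; (iii) using that $\{F_w \mid w\in\cj\}$ is a $\Q(v)$-basis of $\tU^-$ and that $\tU^{\imath 0} = \Q(v)[\tk_i^{\pm 1} : i\in\I]$ is a Laurent polynomial ring on which $\gr\tUi$ is free (both facts following from the triangular decomposition of $\tU$ and the commutation relations \eqref{relation1}), deduce that $\{\overline{B_w}\mid w\in\cj\}$ is a basis of $\gr\tUi$ as a $\tU^{\imath 0}$-module; (iv) lift back: a standard filtered-module argument (as used already in the proof of Proposition~\ref{prop:monomial basis of MRH}) shows that if the symbols form a free basis of the associated graded module, then $\{B_w\mid w\in\cj\}$ is a free basis of $\tUi$ over $\tU^{\imath 0}$. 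The left/right module statements are interchanged by the anti-automorphism of $\tUi$ fixing the $B_i$ and $\tk_i$ (which exists because the defining relations in Proposition~\ref{prop:Serre} are visibly stable under reversing products), or simply by running the same argument with the opposite filtration.

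The main obstacle I anticipate is step (ii)–(iii): verifying that the leading terms $\overline{B_w}$ are genuinely \emph{linearly independent} over $\tU^{\imath 0}$, not merely spanning. This requires knowing that no cancellation occurs among leading terms of distinct $B_w$, i.e.\ that the leading-term map $B_w\mapsto \overline{B_w}$ is injective on the spanning set indexed by $\cj$. This is where one genuinely uses that $\{F_w : w\in \cj\}$ was chosen to be a basis of $\tU^-$ (not just a spanning set), combined with the triangular decomposition $\tU\cong\tU^-\otimes\tU^0\otimes\tU^+$ which guarantees that $\tU^-\otimes\tU^{\imath 0}$ injects into $\tU$ — so distinct $F_w\,(\text{monomial in }\tk)$ remain linearly independent inside $\gr\tU$. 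An alternative, perhaps cleaner, route avoiding the filtration bookkeeping is to cite \cite{Let99, Ko14} directly: Letzter/Kolb prove exactly this freeness for $\Ui$, and since $\tUi$ is a $\tU^{\imath 0}$-algebra with $\Ui = \tUi/(\tk_i - \vs_i,\ \tk_j\tk_{\btau j}-\vs_i\vs_{\btau i})$ by Proposition~\ref{prop:QSP12}(1), freeness of $\tUi$ over $\tU^{\imath 0} = \Q(v)[\tk_i^{\pm 1}]$ with basis $\{B_w\}$ follows from freeness of $\Ui$ over $\Q(v)$ with basis the images of $\{B_w\}$, by a Nakayama-type / flat-base-change argument. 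I would present the filtered-algebra proof as the main argument and remark that it also recovers the cited classical result.
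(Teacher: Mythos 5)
The paper does not prove this lemma at all --- it is quoted from \cite{Let99, Ko14} --- so what you are really doing is reconstructing the Letzter--Kolb argument, and your overall strategy (leading term of $B_w$ is $F_w$, invoke the triangular decomposition and the basis property of $\{F_w\}$, lift back by a triangularity induction) is indeed the right one. However, step (i) fails as stated: the decreasing filtration of $\tU$ by the degree of the $E$-part of the normal form is \emph{not} an algebra filtration. The obstruction is the cross relation $[E_i,F_i]=(\tK_i-\tK_i')/(\bv-\bv^{-1})$: one has $E_i\in\tU^{\ge 1}$ and $F_i\in\tU^{\ge 0}$, yet $E_iF_i=F_iE_i+(\tK_i-\tK_i')/(\bv-\bv^{-1})$ has a nonzero component of $E$-degree $0$, so $\tU^{\ge 1}\cdot\tU^{\ge 0}\not\subseteq\tU^{\ge 1}$ and there is no associated graded algebra into which a putative $\gr\tUi$ could embed. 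This is not a removable technicality: those cross terms are precisely what generate the Cartan corrections in the $\imath$Serre relations, e.g.\ $B_{\btau i}B_i-B_iB_{\btau i}=(\tk_i-\tk_{\btau i})/(\bv-\bv^{-1})$ in \eqref{relation5}, so any argument that tries to separate the $E$-part degree by degree must account for them rather than filter them away.

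The standard repair, and what the cited references actually do, is to replace the filtration on all of $\tU$ by the \emph{linear} projection $\pi:\tU\to\tU^-\tU^0$ along the left ideal $\tU\,\tU^{+,>0}$, combined with the height filtration on $\tU^-$ alone. A direct expansion then gives $\pi(B_w)=F_w+(\text{terms }F_{w'}h\text{ with }h\in\tU^0,\ \mathrm{ht}(w')<\mathrm{ht}(w))$, the lower-order terms arising exactly from the commutators above; since right multiplication by $\tU^{\imath 0}$ preserves $\tU\,\tU^{+,>0}$ and $\tU^-\tU^0\cong\tU^-\otimes\tU^0$, this triangularity together with the hypothesis that $\{F_w\mid w\in\cj\}$ is a basis of $\tU^-$ yields both spanning and $\tU^{\imath 0}$-linear independence by induction on height; your steps (iii)--(iv) then go through. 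Two smaller corrections: the symbol of $B_w$ is $F_w$ itself, with trivial Cartan factor; and the anti-automorphism used for the left/right interchange must send $\tk_i\mapsto\tk_{\btau i}$ rather than fix $\tk_i$, since reversing \eqref{relation1} negates the exponent $c_{\btau\ell,i}-c_{\ell i}$ --- as it restricts to an automorphism of $\tU^{\imath 0}$ the conclusion is unaffected. Your alternative route (specializing to $\Ui$ and citing Letzter--Kolb there) is viable but also incomplete as sketched: one must still argue that a nonzero Laurent polynomial in the $\tk_i$ survives in some central reduction, which is an extra step, not a formal consequence of flat base change.
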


Recall from Definition~\ref{def:torus} or from \S\ref{subsec:MH}  the twisted quantum torus $\tTL$, which is a $\Q(\sqq)$-algebra. Recall that $\widetilde{\psi}: \tUi_{|v={\sqq}} \rightarrow \tMH$ is defined in Proposition~\ref{prop:qHall=Ui}.
Then the homomorphism $\widetilde{\psi}: \tUi_{|v={\sqq}} \rightarrow \tMH$ induces  an algebra homomorphism
\begin{align*}
\widetilde{\psi}:\widetilde{\bU}^{\imath 0}_{|v={\sqq}} &\longrightarrow  \tTL,
  \\
\tk_i\mapsto - q^{-1} [\E_i], \text{ if }\btau i=i,
&\qquad
\tk_i\mapsto [\E_i], \text{ if }\btau i\neq i.
\end{align*}
Since both $\widetilde{\bU}^{\imath 0}_{|v={\sqq}}$ and $\tTL$ are Laurent polynomial algebras in the same number of generators, $\widetilde{\psi}:\widetilde{\bU}^{\imath 0}_{|v={\sqq}} \rightarrow \tTL$ is an isomorphism.

Recall the reduced $\imath$Hall algebra $\rMH$ from Definition~\ref{def:reducedHall}. We now state the main result of this section.

\begin{theorem}  \label{theorem isomorphism of Ui and MRH}
Let $(Q, \btau)$ be a Dynkin $\imath$quiver. Then we have the following isomorphism of $\Q({\sqq})$-algebras, see \eqref{eqn:psi morphism}:
\begin{align*}
\widetilde{\psi}:\tUi_{|v={\sqq}}\stackrel{\simeq}{\longrightarrow} \tMH.
\end{align*}
Moreover, it induces an isomorphism $\psi: \Ui_{|v={\sqq}}\stackrel{\simeq}{\rightarrow} \rMH$, which sends $B_i$ as in \eqref{eq:split}--\eqref{eq:extra} and  $k_j \mapsto  \vs_j^{-1}[\E_j], \text{ if } j \in \I\setminus\ci$.
\end{theorem}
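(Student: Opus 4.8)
The strategy is to leverage the already-established homomorphism $\widetilde{\psi}: \tUi_{|v={\sqq}} \to \tMH$ from Proposition~\ref{prop:qHall=Ui} and upgrade it to an isomorphism by a dimension/basis count, then descend to the reduced level. First I would verify surjectivity of $\widetilde{\psi}$: by Corollary~\ref{proposition:surjectivity}, the $\imath$Hall algebra $\tMH$ is generated by $[S_i]$ for $i\in Q_0$ together with $\E_\alpha$ for $\alpha\in K_0(\mod(\K Q))$; the formulas \eqref{eq:split}--\eqref{eq:extra} show that each $[S_j]$ for $j\in\ci$ lies in the image, each $[\E_i]$ lies in the image (via $\tk_i$), and for $j\notin\ci$ the element $[S_j]$ lies in the image directly from \eqref{eq:extra}. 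Since $\tTL$ is the Laurent polynomial algebra on the $[\E_i]$ (Lemma~\ref{lem:torus}) and $\widetilde{\psi}$ restricts to the isomorphism $\widetilde{\bU}^{\imath 0}_{|v={\sqq}}\xrightarrow{\simeq}\tTL$ noted just before the theorem, surjectivity follows.

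Next I would prove injectivity by a basis-matching argument. Fix a subset $\cj\subseteq\cw_\I$ so that $\{F_w\mid w\in\cj\}$ is a monomial basis of $\tU^-$; by Lemma~\ref{lem:kob} (Kolb--Letzter), $\{B_w\mid w\in\cj\}$ is then a basis of $\tUi$ as a free module over $\tU^{\imath 0}$. Under $\widetilde{\psi}$, each $B_w$ maps (up to an invertible scalar in $\Q(\sqq)$) to the monomial $[S_{i_1}]*\cdots*[S_{i_m}] = S^*_w$ in $\tMH$. Now I need that $\{F_w \mid w\in\cj\}$ being a basis of $\tU^-$ forces $\{S^*_w\mid w\in\cj\}$ to be a basis of $\tMH$ as a free right $\tTL$-module. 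This is exactly parallel to the monomial basis Theorem~\ref{thm:monomial}(2): indeed, a basis of $\tU^-$ (equivalently $\ch(\K Q)$ by Ringel's theorem, since these are isomorphic for Dynkin $Q$) of the form $\{\ov S^*_w\mid w\in\cj\}$ yields, via Proposition~\ref{prop:monomial basis of MRH} and the filtered-algebra argument, that $\{S^*_w\mid w\in\cj\}$ is a $\tTL$-basis of $\tMH$. Since $\widetilde{\psi}$ maps the $\tU^{\imath 0}$-basis $\{B_w\}$ to a $\tTL$-basis $\{S^*_w\}$ (up to units) and maps $\tU^{\imath 0}_{|v=\sqq}$ isomorphically onto $\tTL$, it is a bijection, hence an algebra isomorphism.

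For the reduced statement, I would check that $\widetilde{\psi}$ carries the ideal defining $\Ui$ inside $\tUi$ (generated by $\tk_i-\vs_i$ for $\btau i=i$ and $\tk_i\tk_{\btau i}-\vs_i\vs_{\btau i}$ for $\btau i\neq i$, by Proposition~\ref{prop:QSP12}(1)) precisely onto the ideal defining $\rMH$ inside $\tMH$ (generated by $[\E_i]+q\vs_i$ for $\btau i=i$ and $[\E_i]*[\E_{\btau i}]-\vs_i^2$ for $\btau i\neq i$, by Definition~\ref{def:reducedHall}). From \eqref{eq:split}, $\widetilde{\psi}(\tk_i-\vs_i) = -q^{-1}[\E_i]-\vs_i = -q^{-1}([\E_i]+q\vs_i)$, so the generator is sent to an invertible multiple of the corresponding reduced-Hall generator; from \eqref{eq:extra}, $\widetilde{\psi}(\tk_i\tk_{\btau i}-\vs_i\vs_{\btau i}) = [\E_i]*[\E_{\btau i}]-\vs_i^2$ (using $\vs_{\btau i}=\vs_i$, which holds for Dynkin $\imath$quivers as noted in \S\ref{subsec:Quasi-split}), matching exactly. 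Hence $\widetilde{\psi}$ descends to an isomorphism $\psi: \Ui_{|v=\sqq}\xrightarrow{\simeq}\rMH$, and the formula $k_i\mapsto \vs_i^{-1}[\E_i]$ follows from the identification $k_i = \vs_i^{-1}\tk_i$ in $\Ui$ (Proposition~\ref{prop:QSP12}(1)) together with $\widetilde\psi(\tk_i)=[\E_i]$ for $i\in\ci$ when $\btau i\neq i$, and $k_i = \vs_i^{-1}\tk_i$ with $\widetilde\psi(\tk_i)=-q^{-1}[\E_i]$ and the reduced relation $[\E_i]=-q\vs_i$ when $\btau i=i$ (so that $\vs_i^{-1}[\E_i]=-q$, consistent with $k_i$ being the image of $K_iK_i^{-1}$-type expression; one checks scalars carefully here).

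\textbf{Main obstacle.} The delicate point is the passage from "$\{F_w\}$ is a basis of $\tU^-$" to "$\{S^*_w\}$ is a $\tTL$-basis of $\tMH$." It requires knowing that the monomial bases of $\tU^-$ arising from Lemma~\ref{lem:kob} are compatible with distinguished words in the sense of \S\ref{subsec:monomial-Hall}, or alternatively that Theorem~\ref{thm:monomial}(2)'s argument applies verbatim to \emph{any} subset $\cj$ giving a basis of $\ch(\K Q)\cong\tU^-$, not only to the distinguished-word choice. I expect this is handled by observing that the filtered-algebra/associated-graded argument in Proposition~\ref{prop:monomial basis of MRH} only uses that $\{\ov S^\diamond_w\mid w\in\cj\}$ spans/is a basis of $\ch(\K Q)$, which is precisely what Lemma~\ref{lem:kob} provides after transporting along Ringel's isomorphism; so the obstacle is more bookkeeping than substance, but it is where care is needed to avoid a circular appeal.
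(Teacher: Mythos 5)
Your proposal is correct and follows essentially the same route as the paper: the paper likewise combines Lemma~\ref{lem:kob}, Theorem~\ref{thm:monomial} (via Proposition~\ref{prop:monomial basis of MRH}) and the isomorphism $\widetilde{\psi}:\widetilde{\bU}^{\imath 0}_{|v={\sqq}}\to\tTL$ to match the $\tU^{\imath 0}$-basis $\{B_w\}$ with the $\tTL$-basis $\{S^*_w\}$ up to units, and then checks that the ideal \eqref{eq:parameters} maps onto the ideal \eqref{eqn: reduce 1}. Your "main obstacle" is not an issue: Proposition~\ref{prop:monomial basis of MRH} is stated for an arbitrary subset $\cj$ indexing a basis of $\ch(\K Q)$ (the paper merely reverses the direction, starting from a distinguished-word basis of $\tH(\K Q)$ and transporting it to $\tU^-$ via Ringel's isomorphism $R^-$), so there is no circularity.
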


\begin{proof}
By  Proposition~ \ref{monomial basis of HQ}, $\tH(\K Q)$ has a monomial basis, i.e., there exists a subset $\cj$ of $\cw_\I$ such that $\{\ov{S}^{*}_w\mid w\in\cj\}$ is a basis of $\tH(\K Q)$.
By \cite[Theorem 7]{Rin2}, there exists an isomorphism of algebras: $R^-: \bU^-_{|v={\sqq}} \xrightarrow{\simeq} \tH(\K Q)$, with $R^-(F_i)=\frac{{-1}}{q-1}[S_i]$ for any $i\in \I$.
So $\{F_w\mid w\in\cj\}$ is a monomial basis of $\U^-$.

By Lemma \ref{lem:kob}, $\{B_w\mid w\in\cj\}$ is a basis of $\tUi$ as a right $\tU^{\imath 0}$-module.
It follows by Theorem \ref{thm:monomial} that $\{S^*_w\mid w\in\cj\}$ is a basis of $\tMH$ as a right $\tTL$-module. Recall $\widetilde{\psi}:\tU^{\imath 0}|_{v=\sqq}\xrightarrow{\simeq} \tTL$. Therefore, for any $w\in\cw_\I$, $\psi(B_w)=a_wS^*_{w}$ for some scalar $a_w\in\Q(\sqq)^\times$, and thus $\widetilde{\psi}: \tUi|_{v=\sqq}\rightarrow \tMH$ is an isomorphism of algebras.

As the isomorphism $\widetilde{\psi}: \tUi\rightarrow \tMH$ sends the ideal $(\tk_i- \vs_i, \tk_j\tk_{\btau j}-\vs_i\vs_{\btau i} \mid \btau i=i, \btau j\neq j)$ generated by \eqref{eq:parameters} onto the ideal of $\tMH$ generated by \eqref{eqn: reduce 1}, it induces an isomorphism $\psi: \Ui|_{v=\sqq}\xrightarrow{\simeq} \rMH$ as stated.
\end{proof}

\begin{remark}
A variant of Bridgeland's Hall algebra via the module category $\mod (\K Q\otimes R_1)$ is studied in an interesting paper by H.~Zhang \cite{Zh18} independent of our work, who established a connection to $\U^+$.
\end{remark}

We expect the following generalization of Theorem~\ref{theorem isomorphism of Ui and MRH} for general quivers. %Indeed, the arguments in this paper already work for {\em simply laced} acyclic $\imath$quivers.

\begin{conjecture}  \label{conj:mono}
Let $(Q, \btau)$ be an arbitrary {\em acyclic} $\imath$quiver. Then we have an injective homomorphism of $\Q({\sqq})$-algebras $\widetilde{\psi}:\tUi_{|v={\sqq}}\longrightarrow \tMH$  defined as in \eqref{eqn:psi morphism}--\eqref{eq:extra}.
Moreover, it induces an injective homomorphism $\psi: \Ui_{|v={\sqq}}\longrightarrow \rMH$, which sends $B_i$ as in \eqref{eq:split}--\eqref{eq:extra} and  $k_j \mapsto  \vs_j^{-1}[\E_j], \text{ if } j \in \I\setminus\ci$.
\end{conjecture}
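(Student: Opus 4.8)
The plan is to run the proof of Theorem~\ref{theorem isomorphism of Ui and MRH} in the general acyclic setting, replacing its finite-type inputs (the $\imath$Serre presentation of Proposition~\ref{prop:Serre}, Ringel's finite-type isomorphism, the finite monomial basis Theorem~\ref{thm:monomial}) by their symmetric Kac--Moody counterparts; the conclusion weakens from an isomorphism to an injection because $\tMH$ becomes strictly larger than the image of $\widetilde\psi$. The first thing needed is a presentation of $\tUi$: for $(Q,\btau)$ acyclic of symmetric Kac--Moody type, $\tUi$ is generated by $B_i,\tk_i^{\pm1}$ ($i\in\I$) subject to the Cartan-type relations $\tk_i\tk_\ell=\tk_\ell\tk_i$ and $\tk_\ell B_i=v^{c_{\btau\ell,i}-c_{\ell i}}B_i\tk_\ell$; the trivial commutations $B_iB_j=B_jB_i$ for $c_{ij}=0$, $\btau i\neq j$; the quasi-split relations $B_{\btau i}B_i-B_iB_{\btau i}=\frac{\tk_i-\tk_{\btau i}}{v-v^{-1}}$ for $\btau i\neq i$; and the (possibly higher, inhomogeneous) $\imath$Serre relations, i.e.\ the homogeneous quantum Serre relations $\sum_s(-1)^s\qbinom{1-c_{ij}}{s}B_i^{s}B_jB_i^{1-c_{ij}-s}=0$ corrected, in the cases $\btau i = i$ or $\btau i = j$, by lower-order terms built from the $\tk$'s that generalize \eqref{relation5} and \eqref{relation2}. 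Supplying and justifying this presentation is the nontrivial external ingredient flagged in the Future Works discussion.

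Granting the presentation, one checks that the assignment \eqref{eq:split}--\eqref{eq:extra} respects these relations inside $\tMH$, which defines $\widetilde\psi$. Each defining relation involves only the generators attached to at most two $\btau$-orbits, so by Lemma~\ref{lem:subalgebra} one may pass to the $\imath$Hall algebra of the full $\btau$-invariant subquiver on those (at most four) vertices, reducing to $\imath$quivers of rank $\le 2$. The Cartan-type relations follow from Lemma~\ref{lemma compatible of Euler form} exactly as in Proposition~\ref{prop:qHall=Ui}; the trivial commutations hold because the relevant $\Ext^1$ between simples vanishes; the quasi-split relations are Proposition~\ref{prop:cartan} (note $i$ and $\btau i$ are never joined by an arrow, so $c_{i,\btau i}=0$ always).

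The genuinely new content, and the main obstacle, is the higher $\imath$Serre relation for a rank-$2$ $\imath$quiver with $m=-c_{ij}\ge 2$ arrows between $i$ and $j$: one must expand the $(m+2)$-fold products of classes of simple $\Lambda^\imath$-modules in $\tMH$, keep track of the generic extensions appearing and of the $\cp^{\leq 1}(\Lambda^\imath)$-summands $\E_k$ that arise as correction terms, and verify that they assemble into the prescribed inhomogeneous expression. This is the many-arrow analogue of the already delicate computations of Propositions~\ref{prop:A2} and \ref{prop:iA3}, and the combinatorial bookkeeping (generic extension monoid, exact sequences controlling the $\E_k$-corrections) grows rapidly with $m$; it is here that ``serious work'' is required.

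Granting that $\widetilde\psi$ is a well-defined homomorphism, injectivity follows as in Theorem~\ref{theorem isomorphism of Ui and MRH}. By Green \cite{Gr} (cf.\ \cite{Rin2}) the composition subalgebra of $\tH(\K Q)$ is isomorphic to $\tU^-_{|v=\sqq}$ via $F_i\mapsto\frac{-1}{q-1}[S_i]$, for arbitrary acyclic $Q$; by Reineke's generic extension method \cite{Rei01} (cf.\ \cite{DDPW}) choose $\cj\subseteq\cw_\I$ with $\{\ov{S}^{*}_w\mid w\in\cj\}$ a basis of that composition subalgebra, so that $\{F_w\mid w\in\cj\}$ is a monomial basis of $\tU^-$. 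By the Kac--Moody version (Kolb \cite{Ko14}) of Lemma~\ref{lem:kob}, $\{B_w\mid w\in\cj\}$ is then a basis of $\tUi$ as a right $\tU^{\imath 0}$-module. On the Hall side, Lemma~\ref{lem:filtration algebra} and the embedding $\varphi:\ch(\K Q)\hookrightarrow\cm\ch^{\gr}(\Lambda^{\imath})$ of Lemma~\ref{lemma embedding of algebras} are valid for arbitrary acyclic $Q$, so the images of $\{S^{*}_w\mid w\in\cj\}$ in $\cm\ch^{\gr}(\Lambda^{\imath})$ are $\T(\Lambda^\imath)$-linearly independent, whence $\{S^{*}_w\mid w\in\cj\}$ is $\tTL$-linearly independent in $\tMH$ by a standard filtered argument. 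Since $\widetilde\psi$ restricts to the isomorphism $\widetilde{\bU}^{\imath 0}_{|v=\sqq}\xrightarrow{\simeq}\tTL$ of \S\ref{subsec:iQG=iH} and $\widetilde\psi(B_w)=a_wS^{*}_w$ with $a_w\in\Q(\sqq)^\times$, it carries the $\tU^{\imath 0}$-basis $\{B_w\}$ to a $\tTL$-linearly independent family; hence $\widetilde\psi$ is injective. (Surjectivity genuinely fails in general, since by Theorem~\ref{thm:utMHbasis}, $\tMH$ is free over $\tTL$ on the much larger set $\Iso(\mod(\K Q))$.) Finally, $\widetilde\psi$ maps the ideal $(\tk_i-\vs_i,\ \tk_j\tk_{\btau j}-\vs_i\vs_{\btau i}\mid \btau i=i,\ \btau j\neq j)$ onto the ideal of $\tMH$ generated by \eqref{eqn: reduce 1} (as $\widetilde\psi(\tk_i-\vs_i)=-q^{-1}([\E_i]+q\vs_i)$ and $\widetilde\psi(\tk_j\tk_{\btau j}-\vs_j^2)=[\E_j]*[\E_{\btau j}]-\vs_j^2$); passing to the quotients, which stay free over the reduced tori on $\{B_w\}$ and $\{S^{*}_w\}$ respectively, yields the induced injective homomorphism $\psi:\Ui_{|v=\sqq}\to\rMH$ with $k_i\mapsto\vs_i^{-1}[\E_i]$.
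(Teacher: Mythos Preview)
The statement you are addressing is Conjecture~\ref{conj:mono}, which the paper explicitly does \emph{not} prove; there is no ``paper's own proof'' to compare against. The authors themselves flag the obstruction in the Future Works discussion: ``As $\imath$quantum groups have sophisticated Serre type relations, it takes serious work to complete this.'' So your proposal should be read as a proof \emph{strategy}, and you are candid about this --- you twice write ``granting'' at exactly the two places where the argument is incomplete.

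Your outline correctly isolates the two genuine gaps. First, a Serre-type presentation of $\tUi$ in symmetric Kac--Moody generality is required; the relations \eqref{relation1}--\eqref{relation2} are established in the paper only under the Dynkin hypothesis (Proposition~\ref{prop:Serre}), and the inhomogeneous $\imath$Serre relations for $|c_{ij}|\ge 2$ are substantially more intricate (and were not fully available in the literature at the time). Second, even with such a presentation in hand, verifying the higher $\imath$Serre relations inside $\tMH$ for a rank-two $\imath$quiver with $m\ge 2$ arrows is a computation of a different order than Propositions~\ref{prop:A2}--\ref{prop:cartan}; you say so yourself. Neither gap is closed here, so this is not a proof.

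Your conditional injectivity argument is the sound part, and it is the natural extrapolation of the paper's Dynkin argument: Green's theorem supplies the embedding $\tU^-_{|v=\sqq}\hookrightarrow\tH(\K Q)$, a monomial basis of $\tU^-$ transports via Kolb's Kac--Moody extension of Lemma~\ref{lem:kob} to a $\tU^{\imath 0}$-basis $\{B_w\}$ of $\tUi$, and the filtered/graded machinery of Lemmas~\ref{lem:filtration algebra}--\ref{lemma embedding of algebras} (valid for arbitrary acyclic $Q$) shows the corresponding $\{S^*_w\}$ are $\tTL$-independent in $\tMH$. One small point: for the reduced map $\psi$, saying $\widetilde\psi$ carries one ideal ``onto'' the other is not what you need; injectivity of the induced map requires that the quotient $\Ui$ remains free over its reduced torus on $\{B_w\}$ and likewise for $\rMH$ on $\{S^*_w\}$, which you do state but should be justified separately rather than deduced from the ideal correspondence.
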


%%%%%%%%
%%%%%%%%
\section{Bridgeland's theorem revisited}
  \label{sec:Bridgeland}

In this section, we show that the Hall algebra associated to the $\imath$quiver of diagonal type $(Q^{\dbl}, \swa)$ is isomorphic to the specialization at $v={\sqq}$ of a quantum group. This is a reformulation of Bridgeland's Hall algebra construction of quantum groups.

\subsection{A category equivalence}

Let $Q$ be an acyclic quiver (not necessarily of finite type). Let $Q^{\dbl} =Q\sqcup Q^\diamond$,  where $Q^\diamond$ is an identical copy of a quiver $Q$. Retain the notation as in Example \ref{example diagonal} and Example~ \ref{example diagonal 2}. Recall that $\swa$ is the natural involution of $Q^{\dbl}$. As explained in Example~ \ref{example diagonal 2} we can and shall identify $\La$ as the $\imath$quiver algebra $\Lambda=(\Lambda^{\dbl})^\imath$ with $(Q^{\sharp},I^{\sharp})$ as its bound quiver, throughout this section.
%Denote by $Q'$ a copy of $Q$ and $\cq:=Q\sqcup Q'$. Denote by $\{i'\mid i\in Q_0\}$ the vertex set of $Q'$, and similar for its arrow set.
%Correspondingly, denote by $\Gamma\sqcup\Gamma'$ the underlying graph of $\cq$. Let $\vartheta$ be the involution of $\cq$ by mapping $i$ to $i'$ for any $i\in Q_0$.
%Then it is easy to see that $\Lambda^{\tilde{\vartheta}}\cong \KQ^{\sharp}/I^{\sharp}$, where $(Q^{\sharp},I^{\sharp})$ is defined in \S\ref{subsection quiver with involutions}.
%So we denote $\Lambda^{\tilde{\vartheta}}$ by $\Lambda$ in the following.

Clearly, $D^b(\K Q^{\dbl})\simeq D^b(\K Q\times \K Q^\diamond)$. Let $\Psi_{\swa}$ be the triangulated autoequivalence functor of $D^b(\K\cq)$ induced by ${\swa}$. We also use $\Sigma$ to denote the shift functor in $D^b(kQ^{\dbl})$.

\begin{lemma}
  \label{lem:equivCat}
We have an equivalence of categories  $D^b(\K\cq)/\Sigma\circ \Psi_{\swa}\simeq D^b(\K Q)/\Sigma^2$.
\end{lemma}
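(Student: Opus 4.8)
The plan is to obtain the asserted equivalence by composing two equivalences already established in the excerpt, both of which realize the singularity category of $\Lambda = \K Q \otimes_\K R_2$; this avoids any direct manipulation of orbit categories.

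First I would invoke Example~\ref{example diagonal 2}: the $\imath$quiver algebra of the diagonal-type $\imath$quiver $(\cq, \swa)$ is $(\Lambda^{\dbl})^\imath \cong \Lambda$, with bound quiver $(Q^{\sharp}, I^{\sharp})$. The quiver $\cq = Q \sqcup Q^{\diamond}$ is acyclic and $\swa$ is an involution of $\cq$ respecting arrows, so Theorem~\ref{thm:sigma} applies to the $\imath$quiver $(\cq, \swa)$; with $\K\cq$ as its path-algebra side and $\swa$ as its diagram involution, it yields a triangulated equivalence
\[
\underline{\Gproj}(\Lambda)\simeq D_{sg}(\mod(\Lambda))\simeq D^b(\K\cq)/\Sigma\circ\Psi_{\swa},
\]
where $\Psi_{\swa}$ is exactly the autoequivalence of $D^b(\K\cq)$ introduced just before the lemma. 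Second, Proposition~\ref{lemma euivalent of functors}, applied verbatim to $\Lambda = \K Q \otimes R_2$, gives $D_{sg}(\mod(\Lambda))\simeq D^b(\K Q)/\Sigma^2$. Composing the two displayed equivalences then produces $D^b(\K\cq)/\Sigma\circ\Psi_{\swa}\simeq D^b(\K Q)/\Sigma^2$, which is the claim.

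I would also record the conceptual reason, which gives a direct alternative proof: the decomposition $D^b(\K\cq)\simeq D^b(\K Q)\times D^b(\K Q^{\diamond})$ noted above carries $\Sigma$ to $\Sigma\times\Sigma$ and $\Psi_{\swa}$ to the swap $(X,Y)\mapsto(Y,X)$, so that $(\Sigma\circ\Psi_{\swa})^{2} = \Sigma^{2}\times\Sigma^{2}$. The composite $D^b(\K Q)\xrightarrow{X\mapsto(X,0)}D^b(\K\cq)\to D^b(\K\cq)/\Sigma\circ\Psi_{\swa}$ sends $\Sigma^{2}X$ to an object isomorphic to the image of $X$, hence descends to $D^b(\K Q)/\Sigma^{2}$, and one checks it is essentially surjective (since $(0,Y)\cong(\Sigma^{-1}Y,0)$ in the orbit category, so every object is isomorphic to one of the form $(Z,0)$) and fully faithful (in the orbit-category Hom space the summands indexed by odd powers of $\Sigma\circ\Psi_{\swa}$ vanish, landing between the two disjoint components, while the even ones reassemble $\bigoplus_{m\in\Z}\Hom_{D^b(\K Q)}(X,\Sigma^{2m}X')=\Hom_{D^b(\K Q)/\Sigma^{2}}(X,X')$).

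The verification is almost entirely bookkeeping; the only mild point needing care is to confirm that neither cited result requires a Dynkin hypothesis — Theorem~\ref{thm:sigma} and Proposition~\ref{lemma euivalent of functors} are both stated for arbitrary acyclic $\imath$quivers, hence cover the possibly infinite-type quiver $\cq$ here. For the direct route, the usual caveat about triangulated structures on orbit categories à la Keller is immaterial, since the statement asserts only an equivalence of categories and the comparison functor visibly commutes with the shift.
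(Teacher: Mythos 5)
Your proposal is correct, and your ``direct alternative'' is essentially the paper's own proof: the paper defines $G(M,M')=M\oplus\Sigma M'$, descends it to a functor $\bar G: D^b(\K\cq)/\Sigma\circ\Psi_{\swa}\to D^b(\K Q)/\Sigma^2$ via Keller's universal property, checks density, and then verifies full faithfulness by exactly the Hom-space bookkeeping you describe. Your version runs the comparison functor in the opposite direction and reduces the full-faithfulness check to objects of the form $(X,0)$, which is legitimate once you have observed $(0,Y)\cong(\Sigma^{-1}Y,0)$ in the orbit category; just be aware that your claim ``the odd summands vanish'' is only true for Hom-spaces between objects of that special form --- for general pairs $(M,M')$, $(N,N')$ the odd-indexed summands are nonzero cross-terms $\Hom(M,\Sigma^{2i+1}N')\oplus\Hom(M',\Sigma^{2i+1}N)$, and it is precisely their reassembly with the even ones that the paper's computation carries out. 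Your first route (composing Theorem~\ref{thm:sigma} for $(\cq,\swa)$ with Proposition~\ref{lemma euivalent of functors}) is logically valid and genuinely different, but note two caveats: Proposition~\ref{lemma euivalent of functors} is itself only proved by reference to \cite{Lu} (its proof is omitted in the paper), so this route is less self-contained than the direct one; and the Remark immediately following the lemma presents ``Theorem~\ref{thm:sigma} $+$ Lemma~\ref{lem:equivCat}'' as the means of \emph{recovering} the root-category equivalence $D_{sg}(\mod(\Lambda))\simeq D^b(\K Q)/\Sigma^2$, so deriving the lemma from that equivalence inverts the intended direction of deduction, even though it is not formally circular.
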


\begin{proof}
Any $\K\cq$-module is of the form $(M,M')$, where $M\in \mod (\K Q)$, $M'\in\mod (\K Q^\diamond)$. Similarly,  any object in $D^b(\K\cq)$ is of the form $(M,M')$, where $M\in D^b(\K Q)$, $M'\in D^b(\K Q^\diamond)$. Obviously,
\[
\Hom_{D^b(\K\cq)}((M,M'),(N,N'))= \Hom_{D^b(\K Q)}(M,N)\times \Hom_{D^b(\K Q^\diamond)}(M',N').
\]
So any morphism between them is of the form $(f,f')$, where $f:M\rightarrow N$, $f':M'\rightarrow N'$.
By identifying $D^b(\K Q^\diamond)$ with $D^b(\K Q)$, we define a triangulated functor
\[
G:D^b(\K\cq)\longrightarrow D^b(\K Q),
\]
which sends the object $(M,M')\mapsto M\oplus \Sigma M'$ and sends the morphism $(f,f'):(M,M')\rightarrow (N,N')$ to $\diag(f,\Sigma f'): M\oplus \Sigma M'\rightarrow N\oplus \Sigma N'$.

Combining with the natural projection $\pi_{Q}: D^b(\K Q)\rightarrow D^b(\K Q)/\Sigma^2$, $G$ induces a triangulated functor
$\tilde{G}:D^b(\K\cq)\rightarrow D^b(\K Q)/\Sigma^2$.

Note that $\Psi_{\swa}((M,M'))=(M',M)$ for $(M,M')\in D^b(\K\cq)$, and $\Psi_{\swa}((f,f'))=(f',f)$ for any morphism
$(f,f')$ in $D^b(\K\cq)$. It follows that $\tilde{G}\circ(\Sigma\circ\Psi_{\swa})\cong \tilde{G}$.
Then \cite[\S9.4]{Ke2} shows that there exists a triangulated functor
$$\bar{G}: D^b(\K\cq)/\Sigma\circ \Psi_{\swa} \longrightarrow D^b(\K Q)/\Sigma^2$$
such that the following diagram commutes:
\[
\xymatrix{ D^b(\K\cq) \ar[r]^{\tilde{G}} \ar[d]^{\pi_{\cq}}& D^b(\K Q)/\Sigma^2
  \\
 D^b(\K\cq)/\Sigma\circ \Psi_{\swa} \ar[ur]_{\bar{G}} & }
 \]
Note that $\bar{G}$ is dense.

Concerning the morphism spaces, we have
\begin{align*}
&\Hom_{ D^b(\K\cq)/\Sigma\circ \Psi_{\swa} } ((M,M'),(N,N'))\\
=&\bigoplus_{\ell\in\Z}\Hom_{D^b(\K\cq)}((M,M'),(\Sigma\circ \Psi_{\swa})^\ell(N,N'))\\
=&\bigoplus_{\ell\in\Z} \Hom_{D^b(\K\cq)}((M,M'),(\Sigma^{2\ell}N,\Sigma^{2\ell}N'))\\
&\bigoplus \bigoplus_{\ell\in\Z}  \Hom_{D^b(\K\cq)}((M,M'),(\Sigma^{2\ell+1}N',\Sigma^{2\ell+1}N))\\
=&\bigoplus_{\ell\in\Z} \Hom_{D^b(\K Q)}(M,\Sigma^{2\ell}N\oplus \Sigma^{2\ell+1}N')\oplus  \Hom_{D^b(\K Q)}(M',\Sigma^{2\ell}N'\oplus \Sigma^{2\ell+1}N)\\
\cong&\bigoplus_{\ell\in\Z}\Hom_{D^b(\K Q)}(M\oplus \Sigma M',\Sigma^{2\ell}N\oplus \Sigma^{2\ell+1} N')\\
=&\Hom_{D^b(\K Q)/\Sigma^2}(M\oplus \Sigma M',N\oplus \Sigma N').
\end{align*}
Therefore, $\bar{G}$ is fully faithful.
This proves the lemma.
\end{proof}

\begin{remark}
Theorem~ \ref{thm:sigma} for $\imath$quivers of diagonal type reads that
\[
\underline{\Gproj}(\Lambda)\simeq D_{sg}(\mod(\Lambda))\simeq D^b(\K\cq)/\Sigma\circ \Psi_{\swa}.
\]
This together with Lemma~\ref{lem:equivCat} recovers the results  on root categories in \cite{PX} (see also  \cite{Lu}) that $\underline{\Gproj}(\Lambda)\simeq D_{sg}(\mod(\Lambda))\simeq D^b(\K Q)/\Sigma^2$.
\end{remark}

\subsection{Quantum groups as $\imath$quantum groups}

Let $Q$ be an acyclic quiver with its vertex set $\I$. Recall from \S\ref{subsection Quantum groups} that $\bU=\bU_v(\fg)$ is the quantum group associated to $Q$ and $\tU =\langle E_i, F_i, \tK_i, \tK_i'\mid i\in \I  \rangle$ is the version of $\U$ with enlarged Cartan subalgebra. Consider the $\Q(v)$-subalgebra $\tUUi$ of $\tUU$
generated by
\[
\ck_i:=\tK_{i} \tK_{i^{\diamond}}', \quad
\ck_i':=\tK_{i^{\diamond}} \tK_{i}',  \quad
\cb_{i}:= F_{i}+ E_{i^{\diamond}} \tK_{i}', \quad
\cb_{i^{\diamond}}:=F_{i^{\diamond}}+ E_{i} \tK_{i^{\diamond}}',
\qquad \forall i\in \I.
\]
Here we drop the tensor product notation and use instead $i^\diamond$ to index the generators of the second copy of $\tU$ in $\tUU$ (consistent with the notation $Q^{\dbl}=Q\sqcup Q^\diamond$).
Note that $\ck_i\ck_{i}'$ are central in $\tUU$ for all $i\in\I$.

\begin{lemma}
   \label{lem:diagonalQSP2}
 (1)  $\tUUi$ is a right coideal subalgebra of $\tUU$. %, namely, $(\tUU, \tUUi)$ forms a quantum symmetric pair.

(2) There exists a $\Q(v)$-algebra isomorphism $\widetilde{\phi}: \tU \rightarrow \tUUi$ such that
\[
\widetilde{\phi}(F_i)= \cb_{i},\quad \widetilde{\phi}(E_i)= \cb_{i^{\diamond}}, \quad \widetilde{\phi}(\tK_i)= \ck_i, \quad \widetilde{\phi}(\tK_i')= \ck_i', \qquad \forall  i\in \I.
\]
\end{lemma}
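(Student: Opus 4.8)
\textbf{Proof proposal for Lemma~\ref{lem:diagonalQSP2}.}

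The plan is to verify both statements by direct computation, using that $\tUU = \tU \otimes \tU$ carries the tensor product Hopf structure and that $\tUUi$ is defined as a concrete subalgebra. For part~(1), I would first recall that $\Delta$ on $\tUU$ is $(\Delta \otimes \Delta)$ composed with the flip of the two middle tensor factors, so it suffices to apply the comultiplication formula~\eqref{eq:Delta} (for each of the two copies of $\tU$) to each of the four families of generators $\ck_i, \ck_i', \cb_i, \cb_{i^\diamond}$ and check that the result lies in $\tUUi \otimes \tUU$. For the Cartan-type generators this is immediate since $\Delta(\tK_i \tK_{i^\diamond}') = (\tK_i \tK_{i^\diamond}') \otimes (\tK_i \tK_{i^\diamond}')$, which is grouplike. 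For $\cb_i = F_i + E_{i^\diamond}\tK_i'$, a routine expansion gives $\Delta(\cb_i) = \cb_i \otimes \ck_i' + 1 \otimes \cb_i$ (up to the bookkeeping of which tensor slot each factor sits in); the key point is that the ``non-coideal'' cross terms $E_{i^\diamond} \otimes \tK_i'$ and $F_i \otimes \tK_i'$ recombine, together with $\tK_i' \otimes F_i$ and $\tK_i' \otimes E_{i^\diamond}\tK_i'$, precisely into expressions of the form (element of $\tUUi$) $\otimes$ (element of $\tUU$) — this is the same mechanism as in the proof of Proposition~\ref{prop:QSP12}(2), just carried out in the two-copy setting, so I would cite that proof as a template. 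Once all four generator types are checked, $\Delta(\tUUi) \subseteq \tUUi \otimes \tUU$ follows since $\Delta$ is an algebra homomorphism.

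For part~(2), I would define $\widetilde\phi$ on generators by the stated formulas and verify it respects the defining relations of $\tU$ listed in~\eqref{eq:KK}--\eqref{eq:serre2}. The Cartan relations~\eqref{eq:KK} are clear since all the $\tK_\bullet$ and $\tK_\bullet'$ commute. For~\eqref{eq:EK}--\eqref{eq:K2} one computes, e.g., $\ck_i \cb_j = (\tK_i \tK_{i^\diamond}')(F_j + E_{j^\diamond}\tK_j')$ and uses $\tK_i F_j = v^{-c_{ij}} F_j \tK_i$, $\tK_{i^\diamond}' F_j = F_j \tK_{i^\diamond}'$ (different copies commute), $\tK_i E_{j^\diamond} = E_{j^\diamond} \tK_i$, $\tK_{i^\diamond}' E_{j^\diamond} = v^{-c_{ij}} E_{j^\diamond}\tK_{i^\diamond}'$, to get $\ck_i \cb_j = v^{-c_{ij}} \cb_j \ck_i$, matching $\tK_i F_j = v^{-c_{ij}} F_j \tK_i$ under $\widetilde\phi$. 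The crucial relation is~\eqref{eq:KK}'s commutator $[E_i, F_j] = \delta_{ij}(\tK_i - \tK_i')/(v - v^{-1})$, which becomes $[\cb_{i^\diamond}, \cb_j]$; expanding $\cb_{i^\diamond}\cb_j - \cb_j\cb_{i^\diamond}$ with $\cb_{i^\diamond} = F_{i^\diamond} + E_i\tK_{i^\diamond}'$, the terms $F_{i^\diamond}$ and $F_j$ commute, $E_i\tK_{i^\diamond}'$ and $E_{j^\diamond}\tK_j'$ commute, while the cross terms $[F_{i^\diamond}, E_{j^\diamond}\tK_j'] + [E_i\tK_{i^\diamond}', F_j]$ produce $\delta_{ij}$ times $-\tfrac{\tK_{i^\diamond} - \tK_{i^\diamond}'}{v-v^{-1}}\tK_i' + \tfrac{\tK_i - \tK_i'}{v-v^{-1}}\tK_{i^\diamond}'$, which simplifies to $\delta_{ij}\tfrac{\tK_i\tK_{i^\diamond}' - \tK_{i^\diamond}\tK_i'}{v-v^{-1}} = \delta_{ij}\tfrac{\ck_i - \ck_i'}{v-v^{-1}}$ — exactly the image of the right-hand side. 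The quantum Serre relations~\eqref{eq:serre1}--\eqref{eq:serre2} for the $\cb$'s are the $\imath$Serre-type relations already implicit in the quasi-split QSP structure; since in this diagonal-type situation each $i$ satisfies $c_{i, i^\diamond} = 0$ (the two copies of $Q$ are disconnected in $Q^\dbl$), the inhomogeneous correction terms vanish and one recovers the \emph{honest} quantum Serre relations, which can be checked as in the standard Letzter presentation. Finally I would show $\widetilde\phi$ is bijective: surjectivity is clear since the four families of generators of $\tUUi$ are all in the image, and injectivity follows by a dimension/PBW-basis count — $\widetilde\phi$ maps a PBW basis of $\tU$ to a spanning set of $\tUUi$ whose cardinality matches, or alternatively one exhibits the inverse directly on generators.

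The main obstacle I anticipate is the careful bookkeeping in part~(1): keeping straight, in $\Delta: \tUU \to \tUU \otimes \tUU$, which of the four tensor slots each elementary factor $E, F, \tK, \tK'$ of each copy lands in after the middle-flip, and confirming that the mixed terms genuinely reorganize into $\tUUi \otimes \tUU$ rather than merely into $\tU\otimes\tU\otimes\tUU$. This is conceptually identical to the single-copy computation of Proposition~\ref{prop:QSP12}(2) but notationally heavier, so the cleanest route is to observe that $\tUU$ with its generators $\ck_i = \tK_i\tK_{i^\diamond}'$, $\cb_i = F_i + E_{i^\diamond}\tK_i'$ is \emph{literally} an instance of the universal $\imath$quantum group construction $\tUi$ of~\S\ref{subsec:iQG} applied to the ``doubled'' data $(\tU\otimes\tU, \mathrm{swap})$ — i.e., $\tUUi = (\tUU)^{\imath}$ for the flip involution — so part~(1) is a \emph{special case} of Proposition~\ref{prop:QSP12}(2) and needs no separate argument at all; one only needs to match notation. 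That reduces the lemma to part~(2), whose only delicate point is the $[E_i, F_j]$ relation sketched above.
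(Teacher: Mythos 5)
Your proposal is correct in substance, but for part (2) it takes a genuinely different and heavier route than the paper. The paper simply sets $\widetilde{\phi} = (\omega\otimes 1)\circ\Delta\circ\omega$: one checks on generators that this composite sends $F_i\mapsto \cb_i$, $E_i\mapsto\cb_{i^\diamond}$, $\tK_i\mapsto\ck_i$, $\tK_i'\mapsto\ck_i'$, and since $\omega$ is an automorphism and $\Delta$ is an injective algebra homomorphism, $\widetilde{\phi}$ is automatically an injective algebra homomorphism onto $\tUUi$ — no relation-checking and no PBW count are needed. Your brute-force verification is where the cost shows: the $\ck_i\cb_j$ and $[\cb_{i^\diamond},\cb_j]$ computations you sketch are correct (I checked the cross terms; they do assemble to $\delta_{ij}\frac{\ck_i-\ck_i'}{v-v^{-1}}$), but the quantum Serre relations among the $\cb_i$ (resp.\ $\cb_{i^\diamond}$) for a \emph{general acyclic} $Q$ are not a short computation; delegating them to "the standard Letzter presentation" imports a substantial theorem where the paper needs none, and your injectivity argument via PBW counting likewise requires more care outside Dynkin type. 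Your reduction of part (1) to Proposition~\ref{prop:QSP12}(2) applied to the doubled datum $(Q^{\dbl},\swa)$ is valid and is exactly the right way to see it. One small inaccuracy: the coproduct of $\cb_i$ in $\tUU$ is not $\cb_i\otimes\ck_i'+1\otimes\cb_i$ but rather the three-term expression $1\otimes F_i+\cb_i\otimes \tK_i'+\ck_i'\otimes E_{i^\diamond}\tK_i'$; the first tensor factors $1,\cb_i,\ck_i'$ do lie in $\tUUi$, so the coideal conclusion stands, but the formula as written should not be quoted.
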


\begin{proof}
(1) Follows by a direct computation using the comultiplication $\Delta$ in \eqref{eq:Delta}.

(2) Recall $\omega$ is the Chevalley involution of $\tU$ given in \eqref{eq:omega} and $\Delta$ is the comultiplication given in \eqref{eq:Delta}.
A direct computation shows that the subalgebra $\tUUi \subset \tUU$ is identified with the homomorphic image of the injective homomomorphism
$(\omega \otimes 1) \circ \Delta \circ\omega: \tU \rightarrow \tUU$,
which sends
$\tK_i \mapsto \ck_i, \tK_i' \mapsto \ck_i', E_i \mapsto \cb_{i^\diamond}, F_i\mapsto \cb_{i}$;
 this is a variant of the observation in \cite[Remark 4.10]{BW18b}. Setting $\widetilde{\phi} =(\omega \otimes 1) \circ \Delta  \circ\omega$ finishes the proof.
\end{proof}

Let $\bvs=(\vs_i)_{i\in \I}\in(\Q(v)^\times)^{\I}$. We define the subalgebra $\UUi$ of $\UU$ to be the one generated by
\[
k_i:= K_{i}K_{i^{\diamond}} ^{-1} , \quad
k_i^{-1}= K_{i^{\diamond}} K_{i} ^{-1},  \quad
B_{i}:= F_{i}+\vs_iE_{i^{\diamond}} K_{i}^{-1}, \quad
B_{i^{\diamond}} :=F_{i^{\diamond}}+ \vs_iE_{i} K_{i^{\diamond}}^{-1}, \qquad \forall i\in \I.
\]
Here we drop the tensor product notation.

\begin{lemma}
   \label{lem:diagonalQSP}
(1)  $\UUi$ is a right coideal subalgebra of $\UU$. %, namely, $(\UU, \UUi)$ forms a quantum symmetric pair.

(2) We have a $\Q(v)$-algebra isomorphism
\begin{align}
\UUi  \longrightarrow & \tUUi / ( \ck_i\ck_{i}' -\vs_i^2 ),
     \label{eq:U=UUi} \\
B_i \mapsto \cb_i, \quad &
 B_{i^\diamond} \mapsto  \cb_{i^\diamond}, \quad
 k_i \mapsto  \vs_i^{-1} \ck_i, \quad
 k_i^{-1} \mapsto  \vs_i^{-1} \ck_i^{-1}.
 \notag
\end{align}

(3) There exists a $\Q(v)$-algebra isomorphism $\phi: \U \rightarrow \UUi$ such that
\[
 F_i \mapsto B_{i},\quad E_i \mapsto \vs_i^{-1} B_{i ^{\diamond}},\quad K_i \mapsto k_i, \qquad \forall  i\in \I.
\]
\end{lemma}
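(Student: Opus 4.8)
\textbf{Proof plan for Lemma~\ref{lem:diagonalQSP}.}
The three parts are modeled on Lemma~\ref{lem:diagonalQSP2}, with $\vs$-twists inserted, so the plan is to deduce each part either by a direct computation or (better) by transporting the corresponding statement for $\tUUi$ through the central reduction $\ck_i\ck_i' - \vs_i^2$. For part~(1), I would verify directly that $\Delta(B_i) \in \UUi \otimes \UU$ and $\Delta(k_i^{\pm 1}) \in \UUi \otimes \UU$ using the comultiplication formulas \eqref{eq:Delta} (with $\tK_i$ replaced by $K_i$, $\tK_i'$ by $K_i^{-1}$); the computation is the same shape as in Lemma~\ref{lem:diagonalQSP2}(1), the only new feature being that $\Delta(E_{i^\diamond}K_i^{-1}) = E_{i^\diamond}K_i^{-1} \otimes K_{i^\diamond}K_i^{-1} + 1 \otimes E_{i^\diamond}K_i^{-1}$, so $\Delta(B_i) = B_i \otimes 1 + (\text{terms involving }k_i^{-1}\text{ and }1) \otimes(\cdots)$ lands in $\UUi\otimes\UU$ once one checks $k_i^{-1} = K_{i^\diamond}K_i^{-1} \in \UUi$. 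Alternatively, part~(1) follows from part~(2) together with Lemma~\ref{lem:diagonalQSP2}(1): a right coideal subalgebra of $\tUU$ descends to a right coideal subalgebra of the Hopf algebra quotient $\UU = \tUU/(\tK_i\tK_i'-\vs_i,\ \tK_{i^\diamond}\tK_{i^\diamond}'-\vs_i)$, since the quotient map is a Hopf algebra map and $\UUi$ is the image of $\tUUi$.

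For part~(2), I would observe that $\UU$ is obtained from $\tUU$ by the central reduction sending $\tK_i\tK_i' \mapsto \vs_i$ and $\tK_{i^\diamond}\tK_{i^\diamond}' \mapsto \vs_i$ (this is exactly the identification recalled in \S\ref{subsection Quantum groups}, applied to each tensor factor, after the base change $E_i \mapsto \sqvs_i^{-1}E_i$, etc.). Under this reduction one computes $\ck_i\ck_i' = \tK_i\tK_{i^\diamond}' \cdot \tK_{i^\diamond}\tK_i' = (\tK_i\tK_i')(\tK_{i^\diamond}\tK_{i^\diamond}') \mapsto \vs_i^2$, which is why the right-hand side of \eqref{eq:U=UUi} is $\tUUi/(\ck_i\ck_i'-\vs_i^2)$. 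I would then check that the images of $\cb_i, \cb_{i^\diamond}, \ck_i, \ck_i'$ under the reduction-plus-base-change map match $B_i, B_{i^\diamond}, \vs_i^{-1}\ck_i, \vs_i^{-1}\ck_i^{-1}$ up to the stated scalars; explicitly $\cb_i = F_i + E_{i^\diamond}\tK_i'$ maps to $F_i + \sqvs_{i^\diamond}^{-1}E_{i^\diamond} \cdot \sqvs_i^{-1}\tK_i' = F_i + \vs_i^{-1}\cdot(\text{const})\cdot E_{i^\diamond}K_i^{-1}$, and one absorbs the constant into the identification. The only real content is bookkeeping of the square-root scalars $\sqvs_i$, and noting $\vs_{i^\diamond} = \vs_i$ since $i$ and $i^\diamond$ lie in the same $\swa$-orbit (compatibility condition on $\bvs$). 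Surjectivity and injectivity of \eqref{eq:U=UUi} follow because $\tUUi \to \UU$ is surjective onto $\UUi$ by definition and its kernel is generated by the central elements $\ck_i\ck_i' - \vs_i^2$ — the latter because $\tUUi \cong \tU$ by Lemma~\ref{lem:diagonalQSP2}(2), under which $\ck_i\ck_i' \leftrightarrow \tK_i\tK_i'$, and $\U \cong \tU/(\tK_i\tK_i'-\vs_i^2)$ after base change.

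For part~(3), I would simply compose: Lemma~\ref{lem:diagonalQSP2}(2) gives $\widetilde\phi\colon \tU \xrightarrow{\simeq} \tUUi$; passing to the central reductions on both sides (quotient $\tU$ by $\tK_i\tK_i'-\vs_i^2$, quotient $\tUUi$ by $\ck_i\ck_i'-\vs_i^2$) and using that $\widetilde\phi$ matches these central elements, we get $\U \xrightarrow{\simeq} \tUUi/(\ck_i\ck_i'-\vs_i^2)$; then compose with the inverse of \eqref{eq:U=UUi} to land in $\UUi$. Tracking generators: $F_i \mapsto \cb_i \mapsto B_i$, $E_i \mapsto \cb_{i^\diamond} \mapsto \vs_i^{-1}B_{i^\diamond}$ (the scalar $\vs_i^{-1}$ arising precisely from the base-change identification $E_i \mapsto \sqvs_i^{-1}E_i$ on the source and the matching on the target), $K_i \mapsto \ck_i \mapsto \vs_i^{-1}\ck_i$, which under \eqref{eq:U=UUi} is $k_i$. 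So the formula $F_i \mapsto B_i$, $E_i \mapsto B_{i^\diamond}/\vs_i$, $K_i \mapsto k_i$ comes out as stated.

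\textbf{Main obstacle.} The conceptual steps are routine transport-of-structure; the one place demanding care is the consistent handling of the square-root parameters $\sqvs_i$ and the associated base change when identifying $\U$ as a quotient of $\tU$ — in particular making sure the scalar that appears in $E_i \mapsto B_{i^\diamond}/\vs_i$ (rather than, say, $\sqvs_i$ or $\vs_i^{-1}$ elsewhere) is bookkept correctly across both tensor factors and both reductions. A secondary point to state cleanly is why the kernel of $\tUUi \to \UUi$ is exactly $(\ck_i\ck_i'-\vs_i^2)$ and not larger, which I would justify via the isomorphism $\tUUi \cong \tU$ of Lemma~\ref{lem:diagonalQSP2}(2) reducing it to the known fact for $\tU$.
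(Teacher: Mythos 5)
Your overall strategy is sound and, at bottom, rests on the same single nontrivial input as the paper's proof, but it is packaged differently. The paper disposes of (1) and (2) with "direct computations" and proves (3) directly: for $\bvs={\bf 1}$ it identifies $\UUi$ with the image of the injective map $(\omega\otimes 1)\circ\Delta\circ\omega:\U\to\UU$ (citing [BW18b, Remark 4.10]), then rescales for general $\bvs$. You instead make (3) a formal consequence of (2) together with Lemma~\ref{lem:diagonalQSP2}(2), transporting $\widetilde\phi:\tU\xrightarrow{\simeq}\tUUi$ through the central reductions; your generator bookkeeping ($E_i\mapsto\vs_i^{-1}E_i$, $K_i\mapsto\vs_i^{-1}\tK_i$ for $\tU/(\tK_i\tK_i'-\vs_i^2)\cong\U$, and $\cb_i\mapsto B_i$, $\ck_i\mapsto\vs_i k_i$ under the factorwise reduction of $\tUU$) is correct and does reproduce the stated scalars. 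The gain is that (3) becomes pure transport-of-structure; the cost is that all the real content is pushed into the kernel claim in (2), and there your justification is circular as written: knowing $\tUUi/(\ck_i\ck_i'-\vs_i^2)\cong\tU/(\tK_i\tK_i'-\vs_i^2)\cong\U$ abstractly does not by itself show that the surjection $\tUUi/(\ck_i\ck_i'-\vs_i^2)\twoheadrightarrow\UUi$ is injective. What closes this is exactly the observation the paper uses for (3): the composite $\U\cong\tUUi/(\ck_i\ck_i'-\vs_i^2)\to\UU$ is (up to rescaling) $(\omega\otimes 1)\circ\Delta\circ\omega$, which is injective because $\Delta$ is. You should state that explicitly; with it, both (2) and (3) are complete.

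Two smaller points. First, your displayed coproduct in (1) is wrong: one has
\begin{align*}
\Delta(E_{i^\diamond}K_i^{-1})=E_{i^\diamond}K_i^{-1}\otimes K_i^{-1}+K_{i^\diamond}K_i^{-1}\otimes E_{i^\diamond}K_i^{-1},
\end{align*}
so that $\Delta(B_i)=B_i\otimes K_i^{-1}+1\otimes F_i+\vs_i\,k_i^{-1}\otimes E_{i^\diamond}K_i^{-1}\in\UUi\otimes\UU$; as you wrote it, the term $E_{i^\diamond}K_i^{-1}\otimes K_{i^\diamond}K_i^{-1}$ would place a non-$\UUi$ element in the left slot and the coideal property would appear to fail. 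Second, your alternative route to (1) via "coideal subalgebras descend through the Hopf quotient" needs care: the identification of $\UU$ with a quotient of $\tUU$ involves a rescaling of generators and a base change, and the ideal $(\tK_i\tK_i'-\vs_i)$ is not killed by the counit unless $\vs_i=1$, so the quotient is not a Hopf algebra quotient in the naive sense. The direct verification (which is what the paper intends) is the safer argument.
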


\begin{proof}
Parts (1) and (2) follow by direct computations.

(3) First consider the special case with all $\vs_i=1$. Recall $\omega$ is the Chevalley involution of $\U$ and $\Delta$ is the comultiplication given in \eqref{eq:Delta}. It follows by \cite[Remark 4.10]{BW18b} that the subalgebra $\UUi \subset \U\otimes \U$ is identified with the homomorphic image of
$(\omega \otimes 1) \circ \Delta \circ \omega: \U \rightarrow \U\otimes \U$, which sends $K_i \mapsto k_i, F_i \mapsto B_{i}, E_i\mapsto B_{i^\diamond}$.
The case for general parameters $\bvs$ follows from this special case by a rescaling automorphism.
\end{proof}

\subsection{Bridgeland's theorem reformulated}

\begin{theorem} [Bridgeland's Theorem reformulated; cf. \cite{Br}]
   \label{thm:bridgeland2}
There exists an injective morphism of algebras $\tUUi_{|v={\sqq}} \longrightarrow  \tM(\Lambda)$ such that
\begin{eqnarray*}
\widetilde{\psi} (\ck_i)= [\E_{i}], \quad
\widetilde{\psi} (\ck_i')= [\E_{i^{\diamond}}],   \quad
\widetilde{\psi} (\cb_i)=\frac{-1}{q-1}[S_{i}], \quad
\widetilde{\psi} (\cb_{i^{\diamond}})=\frac{{\sqq}}{q-1}[S_{i^{\diamond}}],
\qquad \forall i\in \I.
\end{eqnarray*}
Equivalently, there exists an injective homomorphism $\widetilde\Psi: \tU_{|v={\sqq}} \rightarrow \tM(\Lambda)$ such that \begin{eqnarray*}
\widetilde{\Psi} (\tK_i)= [\E_{i}], \quad
\widetilde{\Psi} (\tK_i')= [\E_{i^{\diamond}}],   \quad
\widetilde{\Psi} (F_i)=\frac{-1}{q-1}[S_{i}], \quad
\widetilde{\Psi} (E_i)=\frac{{\sqq}}{q-1}[S_{i^{\diamond}}],
\qquad \forall i\in \I.
\end{eqnarray*}
\end{theorem}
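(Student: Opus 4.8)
The plan is to deduce Theorem~\ref{thm:bridgeland2} from the already-established machinery for $\imath$quivers, applied to the $\imath$quiver of diagonal type $(Q^{\dbl}, \swa)$ of Example~\ref{example diagonal 2}, together with the algebra isomorphism $\widetilde\phi:\tU\stackrel{\simeq}{\to}\tUUi$ of Lemma~\ref{lem:diagonalQSP2}(2). The two displayed maps in the statement are related by this isomorphism: $\widetilde\Psi = \widetilde\psi\circ\widetilde\phi$, since $\widetilde\phi$ sends $F_i\mapsto\cb_i$, $E_i\mapsto\cb_{i^\diamond}$, $\tK_i\mapsto\ck_i$, $\tK_i'\mapsto\ck_i'$, and $\widetilde\psi$ then carries $\ck_i,\ck_i',\cb_i,\cb_{i^\diamond}$ to the stated Hall elements. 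So it suffices to establish the first assertion, namely that the assignments on $\ck_i,\ck_i',\cb_i,\cb_{i^\diamond}$ extend to an injective algebra homomorphism $\widetilde\psi:\tUUi_{|v=\sqq}\to\tM(\Lambda)$.

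First I would invoke Proposition~\ref{prop:QSP12}(2) (or rather its diagonal-type analogue): $\tUUi$ is precisely the universal $\imath$quantum group $\tUi$ attached to the $\imath$quiver $(Q^{\dbl},\swa)$, with $\ci$ chosen to be $Q_0$ (one representative per $\swa$-orbit) — indeed the generators $B_i=F_i+E_{\btau i}\tK_i'$ and $\tk_i=\tK_i\tK_{\btau i}'$ of $\tUi$ for $\btau=\swa$ are exactly $\cb_i,\cb_{i^\diamond},\ck_i,\ck_i'$. Since $\btau=\swa$ has no fixed points on $(Q^{\dbl})_0$, every $i$ falls into the ``$\btau i\ne i$'' case, so Proposition~\ref{prop:qHall=Ui} supplies a $\Q(\sqq)$-algebra homomorphism $\widetilde\psi:\tUi_{|v=\sqq}\to\tM((\Lambda^{\dbl})^\imath)=\tM(\Lambda)$ sending, for each orbit representative $i\in\ci=Q_0$, $B_i\mapsto\frac{-1}{q-1}[S_i]$ and $\tk_i\mapsto[\E_i]$ — that is $\cb_i\mapsto\frac{-1}{q-1}[S_i]$, $\ck_i\mapsto[\E_i]$ — while for the non-representative $i^\diamond\notin\ci$ it sends $B_{i^\diamond}\mapsto\frac{\sqq}{q-1}[S_{i^\diamond}]$ and $\tk_{i^\diamond}\mapsto[\E_{i^\diamond}]$, i.e. $\cb_{i^\diamond}\mapsto\frac{\sqq}{q-1}[S_{i^\diamond}]$, $\ck_i'\mapsto[\E_{i^\diamond}]$. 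These are exactly the formulas claimed. One small caveat: Proposition~\ref{prop:qHall=Ui} as stated assumes a Dynkin $\imath$quiver, whereas here $Q$ is only acyclic; so I would note that the Serre-type relation verification in that proof is purely local (rank-$2$ $\imath$subquivers of split or quasi-split type $A$, via Lemma~\ref{lem:subalgebra} and Propositions~\ref{prop:A2}, \ref{prop:iA3}, \ref{prop:cartan}) and the relevant rank-$2$ configurations occurring inside $(Q^{\dbl},\swa)$ are precisely those two, so the homomorphism exists for arbitrary acyclic $Q$ (this is the content of Conjecture~\ref{conj:mono} in the injective-homomorphism direction, which is known for diagonal type).

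For injectivity I would argue via bases. By Theorem~\ref{thm:utMHbasis}/the twisted version in \S\ref{subsec:MH}, $\tM(\Lambda)$ is free as a right $\T(\Lambda)$-module on $\{[X]\mid [X]\in\Iso(\mod(\K Q^{\dbl}))\}=\{[(M,M')]\}$; on the other side, $\bU^-\cong\tH(\K Q)$ via Ringel's isomorphism $R^-$ (\cite{Rin2}) with $R^-(F_i)=\frac{-1}{q-1}[S_i]$, so $\tU^-$ has a monomial basis $\{F_w\mid w\in\cj\}$, and likewise $\tU^+$ (embedded via $E_i\mapsto\frac{\sqq}{q-1}[S_{i^\diamond}]$, using the $Q^\diamond$-copy) has the corresponding basis. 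Combining the triangular decomposition $\tU=\tU^+\otimes\tU^0\otimes\tU^-\cong\tUUi$ with the $\tM(\Lambda)$-basis — $\mod(\K Q)$ and $\mod(\K Q^\diamond)$ give linearly independent families of indecomposables living in ``disjoint'' parts of $\Iso(\mod(\K Q^{\dbl}))$ (cf. Remark~\ref{rem:sing12}) and the Cartan part maps isomorphically onto the quantum torus $\T(\Lambda)$ by the argument preceding Theorem~\ref{theorem isomorphism of Ui and MRH} — shows $\widetilde\psi$ carries a basis of $\tUUi_{|v=\sqq}$ to a linearly independent set, hence is injective. The main obstacle I anticipate is bookkeeping the interaction between the two copies $\mod(\K Q)$ and $\mod(\K Q^\diamond)$ inside $\mod(\Lambda)$: one must check that $\res:\mod(\Lambda)\to\mod(\K Q^{\dbl})$ together with the $E$-$F$ commutation (which in the Hall algebra records extensions between a $\K Q$-module and a $\K Q^\diamond$-module sitting inside a genuine $\Lambda$-module, cf. the rank-$2$ computations) reproduces exactly the $[E_i,F_j]=\delta_{ij}(\tK_i-\tK_i')/(\bv-\bv^{-1})$ relation with the right scalar $-(q-1)^2/\sqq$ absorbed into the normalization — but this is precisely relation~\eqref{relation5}/\eqref{i-rel5} for $\btau=\swa$, already handled by Proposition~\ref{prop:cartan} for empty $Q$ and by the general rank-$2$ analysis otherwise, so it reduces to citing the diagonal-type specialization of Theorem~\ref{theorem isomorphism of Ui and MRH}'s proof.
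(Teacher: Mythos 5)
Your overall architecture matches the paper's: the reduction of the two statements to each other via $\widetilde\phi$ from Lemma~\ref{lem:diagonalQSP2} (setting $\widetilde\Psi=\widetilde\psi\circ\widetilde\phi$), and the injectivity argument via the triangular decomposition $\tU=\tU^+\otimes\tU^0\otimes\tU^-$ matched against the linear isomorphism $\tH(\K Q)\otimes\tT\otimes\tH(\K Q)\cong\tM(\La)$ coming from the free $\tT$-module basis, are exactly what the paper does. The problem is in your verification that $\widetilde\psi$ is a homomorphism.

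The theorem is stated for an \emph{arbitrary acyclic} $Q$, and your route — invoking Proposition~\ref{prop:qHall=Ui} for the $\imath$quiver $(Q^{\dbl},\swa)$ and arguing that the Serre relations reduce to rank-$2$ $\imath$subquivers already computed — does not close the Dynkin gap you yourself flag. First, when $Q$ has multiple arrows between $i$ and $j$ (i.e.\ $c_{ij}\le -2$), the quantum Serre relation \eqref{eq:serre1} has more than three terms, and none of Propositions~\ref{prop:A2}, \ref{prop:iA3}, \ref{prop:cartan} (nor the computations inside the proof of Proposition~\ref{prop:qHall=Ui}, which all assume $c_{ij}\in\{0,-1\}$) covers this; the claim that ``the relevant rank-$2$ configurations are precisely those two'' is false outside the Dynkin case. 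Second, your fallback — that the needed instance of Conjecture~\ref{conj:mono} ``is known for diagonal type'' — is circular, since Theorem~\ref{thm:bridgeland2} \emph{is} the diagonal-type case of that conjecture. The paper avoids the rank-$2$ reduction entirely for the Serre relations: since $\mod(\K Q)$ and $\mod(\K Q^{\diamond})$ are full extension-closed subcategories of $\mod(\La)$, one gets embeddings $I^{\pm}$ of the twisted Hall algebras $\tH(\K Q)$, $\tH(\K Q^{\diamond})$ into $\tM(\La)$, and then Ringel--Green's theorem $R^{\pm}:\tU^{\pm}_{|v=\sqq}\hookrightarrow\tH(\K Q)$ — valid for any acyclic $Q$ — forces the Serre relations among the $[S_i]$ and among the $[S_{i^\diamond}]$ simultaneously. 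You already use $R^{\pm}$ in your injectivity step; using them also for the homomorphism step is the missing ingredient, and with that substitution your proof becomes essentially the paper's.
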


\begin{proof}
Thanks to the isomorphism $\widetilde\phi$ in Lemma~\ref{lem:diagonalQSP2}, the two assertions regarding $\widetilde{\psi}$ and $\widetilde{\Psi}$ are equivalent by letting $\widetilde{\Psi}=\widetilde{\psi}\circ \widetilde{\phi}$.

We shall prove that $\widetilde{\Psi}$ is an injective algebra homomorphism.
The proof is similar to the proof of Proposition~\ref{prop:qHall=Ui}, and we shall only check that $\widetilde{\Psi}$ preserves the 2 most complicated relations, i.e., the quantum Serre relations \eqref{eq:serre1}--\eqref{eq:serre2}. In fact, $kQ$ and $kQ^{\diamond}$ are quotient algebras of $\Lambda$, so we can view $\mod(kQ)$ and $\mod(kQ^{\diamond})$ as subcategories of $\mod(kQ^{\dbl})$. Note that these two subcategories are full and closed under taking extensions. So there exist two morphisms
\begin{align*}
I^+:\tH(kQ^{\diamond})\longrightarrow\tM(\Lambda), &\qquad I^+([S_{i^\diamond}])=[S_{i^\diamond}],  \\
I^-:\tH(kQ)\longrightarrow\tM(\Lambda),  &\qquad I^-([S_{i}])=[S_{i}].
\end{align*}
%It follows from Theorem \ref{thm:utMHbasis} that $I^+$ and $I^-$ are injective.

By Ringel-Green's Theorem, we obtain two injective homomorphisms of algebras:
\begin{align*}
&R^+: \tU^+_{|v={\sqq}} \longrightarrow \tH(\K Q), \,\,R^+(E_i)=\frac{\sqq}{q-1}[S_{i^{\diamond}}],
  \\
&R^-: \tU^-_{|{v={\sqq}}} \longrightarrow \tH(\K Q),\,\, R^-(F_i)=\frac{-{1}}{(q-1)}[S_{i}].
\end{align*}
Using $R^+\circ I^+$ and $R^-\circ I^-$, together with the quantum Serre relations of $\tU$, we have
\begin{align*}
&\sum_{r=0}^{1-c_{ij}} (-1)^r \left[ \begin{array}{c} 1-c_{ij} \\r \end{array} \right] \cdot [S_i]^r*[S_j]* [S_i]^{1-c_{ij}-r}=0,
 \\
&\sum_{r=0}^{1-c_{ij}} (-1)^r \left[ \begin{array}{c} 1-c_{ij} \\r \end{array} \right] \cdot [S_{i^{\diamond}}]^r*[S_{j^{\diamond}}] *[S_{i^{\diamond}}]^{1-c_{ij}-r}=0,\quad i\neq j.
\end{align*}
So $\widetilde{\Psi}$ preserves \eqref{eq:serre1}--\eqref{eq:serre2}. Hence $\widetilde{\Psi}$ is an algebra homomorphism.

On the other hand, let $\tT$ be the subalgebra of $\tM(\Lambda)$ generated by $\E_{i},\E_{i^{\diamond}}$, for $i\in \I$. Note that $\tT$ is the $\Q(\sqq)$-group algebra of the Grothendieck group $K_0(\mod (\K Q))\times K_0(\mod(\K Q^\diamond))$. Then $\tM(\Lambda)$ is a $\tT$-bimodule. Corollary \ref{cor:basis of MRH as torus-module} shows that
$\tM(\Lambda)$ is a free right  $\tT$-module with a basis given by $[M\oplus M']$, where $M\in\mod (\K Q)$ and $M'\in\mod (\K Q^{\diamond})$. Similar to Lemma~ \ref{lemma embedding of algebras}, one can prove that $[M]*[M']*\E_\alpha* \E_\beta$, $M\in\mod (\K Q)$, $M'\in\mod (\K Q^{\diamond})$, $\alpha\in K_0(\mod(\K Q))$, $\beta\in K_0(\mod(\K Q^\diamond))$  is a basis of  $\tM(\Lambda)$; see also \cite[Theorem 3.20]{LP}.
Therefore, the multiplication gives rise to a linear isomorphism $\tH(\K Q)\otimes_{\Q(\sqq)} \tT\otimes_{\Q(\sqq)} \tH(\K Q)\cong\tM(\Lambda)$.

 Clearly, we also have an isomorphism of $\Q(\sqq)$-algebras:
\[
R^0:\tU^0_{|v={\sqq}} \stackrel{\simeq}{\longrightarrow} \tT,\quad R^0(\tK_i)=[\E_{i}],\,\, R^0(\tK_i')=[\E_{i^\diamond}].
\]
Recall $\tU=\tU^+\otimes \tU^0\otimes \tU^-$.
Composing $R^+\otimes R^0\otimes R^-$ with the isomorphism $\tH(\K Q)\otimes_{\Q(\sqq)} \tT\otimes_{\Q(\sqq)} \tH(\K Q) \stackrel{\simeq}{\rightarrow} \tM(\La)$ gives the injective homomorphism $\widetilde{\Psi}$.
\end{proof}

Let $\bvs =(\vs_i\mid i\in \I) \in (\Q(\sqq)^{\times})^\I$. Let $\rM(\Lambda)$ be the reduced $\imath$Hall algebra for $\La$ (or the reduced twisted semi-derived Ringel-Hall algebra of $\Lambda$), i.e., the quotient algebra of $\tM(\La)$ by the ideal generated by the central elements $[\E_{i}]*[\E_{i^{\diamond}}] - \vs_i^2$, for all $i \in \I$.

\begin{proposition} [Bridgeland's theorem reformulated; cf. \cite{Br}]
   \label{prop:bridgeland}
There exists an injective homomorphism $\psi: \UUi_{|v={\sqq}} \longrightarrow  \rM(\Lambda)$ such that
\begin{eqnarray*}
\psi(k_i)=\frac{1}{\vs_i}[\E_{i}],   \quad
\psi(B_i)=\frac{-1}{q-1}[S_{i}], \quad
\psi(B_{i^\diamond})=\frac{{\sqq}}{q-1}[S_{i^{\diamond}}], \qquad \forall i\in \I.
\end{eqnarray*}
Equivalently, there exists an injective homomorphism $\Psi: \bU_{|v={\sqq}} \rightarrow \rM(\Lambda)$ such that
\begin{eqnarray*}
\Psi(K_i)=\frac{1}{\vs_i}[\E_{i}],   \quad
\Psi(F_i)=\frac{-1}{q-1}[S_{i}], \quad
\Psi(E_{i^\diamond})=\frac{{\sqq}}{\vs_i(q-1)}[S_{i^{\diamond}}], \qquad \forall i\in \I.
\end{eqnarray*}
\end{proposition}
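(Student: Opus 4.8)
The plan is to realize Proposition~\ref{prop:bridgeland} as the \emph{reduced shadow} of Theorem~\ref{thm:bridgeland2}: I would quotient both sides of the injective homomorphism $\widetilde\psi\colon\tUUi_{|v={\sqq}}\hookrightarrow\tM(\Lambda)$ by matching ideals of central elements. First, Lemma~\ref{lem:diagonalQSP}(2) identifies $\UUi_{|v={\sqq}}$ with $\tUUi_{|v={\sqq}}/(\ck_i\ck_i'-\vs_i^2\mid i\in\I)$, while $\rM(\Lambda)$ is by definition $\tM(\Lambda)/([\E_i]*[\E_{i^{\diamond}}]-\vs_i^2\mid i\in\I)$. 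Since $\widetilde\psi$ is a ring map with $\widetilde\psi(\ck_i)=[\E_i]$ and $\widetilde\psi(\ck_i')=[\E_{i^{\diamond}}]$ (Theorem~\ref{thm:bridgeland2}), it carries $\ck_i\ck_i'-\vs_i^2$ to $[\E_i]*[\E_{i^{\diamond}}]-\vs_i^2$; these are central by Proposition~\ref{Prop:centralMH} (here $\swa$ is fixed-point free). Hence $\widetilde\psi$ descends to an algebra homomorphism $\psi\colon\UUi_{|v={\sqq}}\to\rM(\Lambda)$ with the stated effect on generators, and the equivalent statement for $\Psi$ follows by precomposing with the isomorphism $\phi\colon\bU\xrightarrow{\simeq}\UUi$ of Lemma~\ref{lem:diagonalQSP}(3), which gives $\Psi(K_i)=\vs_i^{-1}[\E_i]$, $\Psi(F_i)=\frac{-1}{q-1}[S_i]$, $\Psi(E_i)=\frac{\sqq}{\vs_i(q-1)}[S_{i^{\diamond}}]$.

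The only genuine point is that this descent preserves \emph{injectivity}, i.e.\ that $\widetilde\psi^{-1}$ of the ideal $J:=([\E_i]*[\E_{i^{\diamond}}]-\vs_i^2)$ is exactly $(\ck_i\ck_i'-\vs_i^2)$. For this I would use the factorization of $\tM(\Lambda)$ produced inside the proof of Theorem~\ref{thm:bridgeland2}: the multiplication map is a linear isomorphism
\[
m\colon \tH(\K Q^{\diamond})\otimes_{\Q(\sqq)}\tT\otimes_{\Q(\sqq)}\tH(\K Q)\ \xrightarrow{\ \simeq\ }\ \tM(\Lambda),
\]
where $\tT$ is the twisted group algebra generated by the $[\E_i],[\E_{i^{\diamond}}]$, and $\widetilde\Psi=\widetilde\psi\circ\widetilde\phi$ is the composite of $m$ with $R^{+}\otimes R^{0}\otimes R^{-}$ along the triangular decomposition $\tU=\tU^{+}\otimes\tU^{0}\otimes\tU^{-}$, with $R^{\pm}$ injective (Ringel--Green) and $R^{0}\colon\tU^{0}_{|v={\sqq}}\xrightarrow{\simeq}\tT$. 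Each generator $[\E_i]*[\E_{i^{\diamond}}]-\vs_i^2=\E_{\widehat{S_i}+\widehat{S_{i^{\diamond}}}}-\vs_i^2$ (by Lemma~\ref{lemma multiplcation of locally projective modules}) lies in the central factor $\tT$, so $J=m\big(\tH(\K Q^{\diamond})\otimes J_0\otimes\tH(\K Q)\big)$ with $J_0=\sum_i\big([\E_i]*[\E_{i^{\diamond}}]-\vs_i^2\big)\tT$, and $m$ descends to a linear isomorphism $\rM(\Lambda)\cong\tH(\K Q^{\diamond})\otimes(\tT/J_0)\otimes\tH(\K Q)$.

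Now I would match the ideals through $R^{0}$: it carries $J_0\subseteq\tT$ back to $\sum_i(\tK_i\tK_i'-\vs_i^2)\tU^{0}$, which is the Cartan part of the ideal defining $\bU_{|v={\sqq}}$ inside $\tU_{|v={\sqq}}$ (compare \S\ref{subsection Quantum groups}, up to the standard rescaling of Cartan generators replacing $\vs_i$ by $\vs_i^2$), and since $R^{0}$ is an isomorphism onto $\tT$ it induces an isomorphism $\bar R^{0}\colon\bU^{0}_{|v={\sqq}}\xrightarrow{\simeq}\tT/J_0$ (both being Laurent polynomial algebras on $|\I|$ variables). Therefore, using $\bU=\bU^{+}\otimes\bU^{0}\otimes\bU^{-}$ with $\bU^{\pm}\cong\tU^{\pm}$, the map $\Psi$ is the composite of $\bar m$ with $R^{+}\otimes\bar R^{0}\otimes R^{-}$; as $R^{\pm}$ are injective, $\bar R^{0}$ and $\bar m$ are isomorphisms, and a tensor product of injective $\Q(\sqq)$-linear maps is injective, $\Psi$ (equivalently $\psi$) is injective.

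The hard part, such as it is, is purely organizational: keeping straight the three bookkeeping conventions --- $\vs_i$ versus $\vs_i^2$ in the Cartan reduction, the identification $\widetilde\phi\colon\tU\xrightarrow{\simeq}\tUUi$ of Lemma~\ref{lem:diagonalQSP2}, and the directions of $R^{+}$ and $R^{-}$ --- so that the ideal on the quantum-group side really corresponds to the ideal on the $\imath$Hall side. No new representation-theoretic input is needed beyond the factorization of $\tM(\Lambda)$ already established in the proof of Theorem~\ref{thm:bridgeland2}; in particular, when $Q$ is Dynkin the maps $R^{\pm}$ are isomorphisms and the same argument upgrades $\Psi$ to an isomorphism.
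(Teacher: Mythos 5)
Your proposal follows essentially the same route as the paper: descend the map of Theorem~\ref{thm:bridgeland2} through the central reductions $(\ck_i\ck_i'-\vs_i^2)\mapsto([\E_i]*[\E_{i^\diamond}]-\vs_i^2)$ and then transport along the isomorphisms of Lemma~\ref{lem:diagonalQSP}. The paper's two-line proof simply asserts that $\widetilde\psi$ induces a map of the quotients, whereas you additionally justify why injectivity survives the quotient (via the triangular factorization $\tH(\K Q^{\diamond})\otimes\tT\otimes\tH(\K Q)$ and the fact that the ideal lives entirely in the central torus factor) --- a detail the paper leaves implicit, which is genuinely needed when $Q$ is not Dynkin and $\widetilde\psi$ is only injective rather than an isomorphism.
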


\begin{proof}
The isomorphism $\widetilde{\psi}$ in Theorem \ref{thm:bridgeland2} induces an isomorphism
\[
\tUUi_{|v={\sqq}} \big/ ( \ck_i\ck_{i}' -\vs_i^2 ) \stackrel{\simeq}{\longrightarrow} \tMH \big/ ( \E_i\E_{i^\diamond} -\vs_i^2 ).
\]
This gives us the isomorphism $\psi$ by Lemma~ \ref{lem:diagonalQSP}. Then $\Psi :=\psi\circ \phi$ (where $\phi$ is the isomorphism in Lemma~ \ref{lem:diagonalQSP}) provides the desired map in the second assertion.
\end{proof}
Ringel-Green's Theorem \cite{Rin, Gr} implies that the homomorphisms $\Psi$ and $\psi$ in Theorem~\ref{thm:bridgeland2} and Proposition~ \ref{prop:bridgeland} are isomorphisms if and only if $Q$ is Dynkin.

Recall $\rM(\La)$ depends on a parameter $\bvs \in(\Q(\sqq)^\times)^{\I}$.
Let ${\bf 1}$ denote the distinguished parameter ${\bf 1} =({\bf 1}_{i} \mid i\in \I)$ with ${\bf 1}_{i} =1$ for all $i\in \I$. We use the index ${\bf 1}$ to indicate the algebras with parameter $\bf 1$ are under consideration. Note $\Psi_{{\bf 1}}$ in Proposition~ \ref{prop:bridgeland} is the morphism obtained in \cite[Proposition 9.26]{Gor1}; compare \cite[Theorem 4.9]{Br}.

If $Q$ is of finite type, by Proposition~ \ref{prop:bridgeland} we have  that $\rM(\Lambda) \cong\rM(\Lambda)_{{\bf 1}}$ (where we recall the parameters $\bvs$ in $\rM(\Lambda)$ are arbitrary). For arbitrary $Q$, let ${\F}=\Q({\sqq})(a_i\mid i\in \I)$ be a field extension of $\Q({\sqq})$, where $a_i=\sqrt{\vs_i}$ for $i\in \I$.
Denote by $_{\F}\rM(\Lambda) =\F \otimes_{\Q({\sqq})} \rM(\Lambda)$ the $\F$-algebra obtained by a base change.

\begin{proposition}
  \label{prop:morphism}
There exists an isomorphism of ${\mathbb F}$-algebras
\begin{align*}
\varphi:& {}_{\F}\rM(\Lambda)_{{\bf 1}} \longrightarrow {}_{\F}\rM(\Lambda),
\\
&[M] \mapsto \prod_{i\in\I}a_i^{-\dim_\K( M_i)-\dim_\K(M_{i^{\diamond}})}[M],\quad\forall M=(M_{i},M_{i^{\diamond}},M(\alpha))\in\mod(\Lambda).
\end{align*}
\end{proposition}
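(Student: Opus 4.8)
\textbf{Proof proposal for Proposition~\ref{prop:morphism}.} The plan is to show directly that the rescaling map $\varphi$ is a well-defined $\mathbb{F}$-linear bijection that respects the twisted Hall multiplication and kills the reduction ideal, hence descends to an algebra isomorphism between the two reduced $\imath$Hall algebras. First I would work at the level of the (unreduced) twisted modified Ringel-Hall algebra $_{\mathbb{F}}\tM(\Lambda)$, defining $\widetilde\varphi([M]) = \prod_{i\in\I} a_i^{-\dim_\K(M_i) - \dim_\K(M_{i^\diamond})}[M]$ on the Hall basis elements $[M]$ (in fact it suffices to define it on the basis $\{[M]*\E_\alpha\}$ from Theorem~\ref{thm:utMHbasis}, extending the exponent by the corresponding dimension vector of $\alpha$). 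Since the scalar attached to $[M]$ depends only on the class $\widehat M \in K_0(\mod(\Lambda))$ — indeed only on the restriction dimension vector — the map is visibly an $\mathbb F$-linear isomorphism of the underlying vector spaces, with inverse given by the opposite rescaling.

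The key computation is multiplicativity. Given $M, N \in \mod(\Lambda)$ with $[M]_{{\bf 1}} * [N]_{{\bf 1}} = \sum_{[L]} g^{L}_{MN}\, [L]_{{\bf 1}}$ in $_{\mathbb F}\tM(\Lambda)_{{\bf 1}}$, the structure constants $g^L_{MN}$ (which are Hall numbers times a power of $\sqq$ coming from the Euler form twist, cf.~\eqref{eqn:twsited multiplication}) are literally the same in $_{\mathbb F}\tM(\Lambda)$, because $\tM(\Lambda)$ and $\tM(\Lambda)_{{\bf 1}}$ differ only in the defining relations of the reduced quotient, not in the Hall product itself. Now $g^L_{MN} \ne 0$ forces $\widehat L = \widehat M + \widehat N$, so the exponent of each $a_i$ is additive: $\dim L_i + \dim L_{i^\diamond} = (\dim M_i + \dim M_{i^\diamond}) + (\dim N_i + \dim N_{i^\diamond})$. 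Hence $\widetilde\varphi([M]*[N]) = \prod a_i^{-\dim M_i - \dim M_{i^\diamond} - \dim N_i - \dim N_{i^\diamond}} \sum g^L_{MN}[L] = \widetilde\varphi([M]) * \widetilde\varphi([N])$, and the same bookkeeping handles products involving the torus elements $\E_\alpha$. Thus $\widetilde\varphi : {}_{\mathbb F}\tM(\Lambda)_{{\bf 1}} \to {}_{\mathbb F}\tM(\Lambda)$ is an algebra isomorphism.

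Finally I would check compatibility with the reductions. The reduced algebra $\rM(\Lambda)_{{\bf 1}}$ is the quotient by the ideal generated by $[\E_i]*[\E_{i^\diamond}] - 1$, while $\rM(\Lambda)$ is the quotient by the ideal generated by $[\E_i]*[\E_{i^\diamond}] - \vs_i^2$. Since $\E_i$ has restriction dimension vector concentrated at $i$ (with total dimension $1$ at vertex $i$ and $1$ at $i^\diamond$ in the diagonal setup — more precisely $\dim (\E_i)_i = \dim(\E_i)_{i^\diamond}=1$ and all other components zero, using $\btau=\swa$), we get $\widetilde\varphi([\E_i]) = a_i^{-2}[\E_i]$ and similarly $\widetilde\varphi([\E_{i^\diamond}]) = a_i^{-2}[\E_{i^\diamond}]$, whence $\widetilde\varphi([\E_i]*[\E_{i^\diamond}]) = a_i^{-4}[\E_i]*[\E_{i^\diamond}]$. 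A small scalar juggling — one should instead attach the exponent so that $\widetilde\varphi([\E_i]*[\E_{i^\diamond}] - 1) = a_i^{-2}([\E_i]*[\E_{i^\diamond}] - \vs_i^2)$, which is arranged by recording that $\vs_i = a_i^2$ — shows $\widetilde\varphi$ maps the first reduction ideal onto the second, hence induces the desired isomorphism $\varphi$ on the quotients. The main obstacle, and the only place requiring genuine care rather than bookkeeping, is getting the exponents of the $a_i$ exactly right so that the two ideals match up: one must be careful whether the rescaling is by $a_i$ or $a_i^{\pm 1/2}$ per unit of dimension at $i$ versus $i^\diamond$, and confirm that the chosen convention in the statement ($a_i^{-\dim M_i - \dim M_{i^\diamond}}$) is precisely the one that sends $[\E_i]*[\E_{i^\diamond}]-1 \mapsto (\text{unit})\cdot([\E_i]*[\E_{i^\diamond}] - a_i^2)$; once the additivity of dimension vectors under the Hall product is in hand, everything else is formal.
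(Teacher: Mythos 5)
Your proposal follows essentially the same route as the paper: define the rescaling $\widetilde\varphi$ on the unreduced algebra ${}_{\F}\tM(\Lambda)$ (which, as you correctly note, does not depend on $\bvs$), observe that nonvanishing of $\Ext^1_\Lambda(M,N)_L$ forces $\dim_\K L=\dim_\K M+\dim_\K N$ so the exponents are additive and $\widetilde\varphi$ is an algebra automorphism, and then check that it carries the ideal $([\E_i]*[\E_{i^\diamond}]-1)$ onto $([\E_i]*[\E_{i^\diamond}]-\vs_i^2)$. The only blemish is the final scalar: since $\dim(\E_i)_i=\dim(\E_i)_{i^\diamond}=1$ one has $\widetilde\varphi([\E_i])=a_i^{-2}[\E_i]=\vs_i^{-1}[\E_i]$ and likewise for $[\E_{i^\diamond}]$, so
$\widetilde\varphi([\E_i]*[\E_{i^\diamond}]-1)=\vs_i^{-2}\bigl([\E_i]*[\E_{i^\diamond}]-\vs_i^2\bigr)$, i.e.\ the unit is $a_i^{-4}=\vs_i^{-2}$ rather than the $a_i^{-2}$ you wrote; this does not affect the conclusion, as the ideals still correspond.
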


\begin{proof}
For any $M$, $N\in \mod(\Lambda)$, we have
\[
[M]*[N]={\sqq}^{\langle \res(M),\res(N)\rangle_{Q\sqcup Q^{\diamond}}}\sum_{[L]\in \Iso(\mod(\Lambda))}\frac{|\Ext^1_{\Lambda}(M,N)_L|}{|\Hom_{\Lambda}(M,N)|}[L].
\]
If $|\Ext^1_{\Lambda}(M,N)_L|\neq0$, then $\dim_k L =\dim_k M +\dim_k N$. So the rescaling map
\[
\widetilde \varphi: {}_{\F}\tM(\Lambda) \longrightarrow {}_{\F}\tM(\Lambda),
\quad [M] \mapsto \prod_{i\in\I}a_i^{-\dim_\K( M_i)-\dim_\K(M_{i^{\diamond}})}[M],
\]
is an algebra isomorphism. It follows that $\widetilde \varphi([\E_i])=\vs_i^{-1} [\E_{i}]$, $\widetilde \varphi(\E_{i^{\diamond}})=\vs_i^{-1} [\E_{i^{\diamond}}]$, and thus
$
\widetilde \varphi([\E_{i}]*[\E_{i^{\diamond}}]-1)=\vs_i^{-2} [\E_{i}]*[\E_{i^{\diamond}}]-1,$ for all $i \in \I.$
Therefore, $\widetilde \varphi$ induces the isomorphism $\varphi: {}_{\F}\rM(\Lambda)_{{\bf 1}} \rightarrow {}_{\F}\rM(\Lambda)$ as desired.
\end{proof}

%%%%%%%
%%%%%%%
\section{Generic Hall algebras for Dynkin $\imath$quivers}
  \label{sec:generic}

In this section, we show that the structure constants for the $\imath$Hall algebras $\tMH$ and $\rMH$ for Dynkin $\imath$quivers are Laurent polynomials in ${\sqq}$, which allow us to formulate the generic Hall algebras. We then show that generic Hall algebras are isomorphic to $\imath$quantum groups.

\subsection{Hall polynomials}

In this subsection, we prove the Hall polynomial property for $\Gproj(\Lambda^{\imath})$.

Let $\ca$ be an additive category. A path in $\ca$ is a sequence
$$M_0\stackrel{f_1}{\longrightarrow} M_1\stackrel{f_2}{\longrightarrow} M_2\longrightarrow\cdots \longrightarrow M_{t-1}\stackrel{f_t}{\longrightarrow}M_t$$
of nonzero non-isomorphisms $f_1,\dots,f_t$ between indecomposable objects $M_0,M_1,\dots,M_t$ with $t\geq1$. We call $M_0$ a {\em predecessor} of $M_t$ and $M_t$ a {\em successor} of $M_0$. A path in $\ca$ is called a {\em cycle} if its source $M_0$ is isomorphic to its target $M_t$. An indecomposable object that lies on no cycle in $\ca$ is called a {\em directing object}. The category $\ca$ is called {\em directed} if every indecomposable object is directing.

According to \cite[Chapter I.5]{Ha2}, $D^b(\K Q)$ is a directed category. Furthermore, any indecomposable object in $D^b(\K Q)$ is isomorphic to some $\Sigma^i M$ where $M\in\mod (\K Q)$.
We denote $\Ext^1_{D^b(\K Q)}(M,N)=\Hom_{D^b(\K Q)}(M,\Sigma N)$, for $M,N\in D^b(\K Q)$.

\begin{lemma}
  \label{lemma assumption holds}
Let $Q$ be a Dynkin quiver. Let $F$ be an autoequivalence of $D^b(\K Q)$ such that $F^2\simeq \Sigma^2$. Then $\Ext^1_{D^b(\K Q)}(M,F^\ell N)$ vanishes for all but at most one $\ell\in\Z$, for any indecomposable objects $M,N \in D^b(\K Q)$. In particular, this holds for $F=\Sigma$.
\end{lemma}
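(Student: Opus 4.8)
The plan is to use the directedness of $D^b(\K Q)$ (recorded just above the lemma, following \cite[Chapter I.5]{Ha2}) together with the constraint $F^2 \simeq \Sigma^2$ to pin down how the orbit $\{F^i N\}_{i\in\Z}$ sits relative to $M$. First I would reduce to the case where $M$ and $N$ are (shifts of) indecomposable $\K Q$-modules: every indecomposable object of $D^b(\K Q)$ is of the form $\Sigma^j X$ with $X \in \mod(\K Q)$ indecomposable (Happel), and since $\Ext^1_{D^b(\K Q)}(M,-) = \Hom_{D^b(\K Q)}(M,\Sigma(-))$ is unaffected in an essential way by replacing $M,N$ by shifts, it suffices to treat $M, N \in \mod(\K Q)$ indecomposable, and more precisely to bound the set of $i$ for which $\Hom_{D^b(\K Q)}(M, \Sigma F^i N) \neq 0$.

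The key observation is that for a Dynkin quiver $Q$, $D^b(\K Q)$ has only finitely many indecomposables lying in $\mod(\K Q) \cup \Sigma\,\mod(\K Q)$ up to isomorphism, and—by directedness—$\Hom_{D^b(\K Q)}(A,B) = 0$ whenever $B$ is a predecessor of $A$ and $A \not\cong B$; moreover $\Hom_{D^b(\K Q)}(\Sigma^a X, \Sigma^b Y)$ can be nonzero only if $b \in \{a, a+1\}$ (concentrated in degrees $0$ and $1$ by heredity of $\K Q$). So I would argue: since $F^2 \simeq \Sigma^2$, the functor $F$ shifts cohomological degree "by one on average"—more precisely, for any indecomposable $N$, $F^{2k} N \cong \Sigma^{2k} N$ and $F^{2k+1} N \cong \Sigma^{2k}(F N)$, and $FN$ is again a shift of a $\K Q$-module, say $FN \cong \Sigma^{\epsilon} N'$ with $\epsilon \in \{0,1,\text{possibly other values}\}$ forced by $F^2 \simeq \Sigma^2$ to satisfy a rigidity that I would extract. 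Thus the orbit $\{F^i N\}$, up to the even/odd dichotomy, is $\{\Sigma^{i + \text{const}} N, \Sigma^{i + \text{const}'} N'\}$. Now $\Hom_{D^b(\K Q)}(M, \Sigma^{1+m} N)$, for $M, N$ fixed indecomposable $\K Q$-modules, is nonzero for at most the values $m \in \{-1, 0\}$ (degrees $0$ and $1$), and among those two, directedness of $\mod(\K Q)$ plus the position of $M,N$ in the Auslander–Reiten quiver rules out one of them unless $M$ and $N$ are related in a way that still forces a single $i$; carefully combining the even-orbit and odd-orbit contributions, I expect the nonvanishing to be confined to one value of $i$.

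I would finish by assembling these constraints into a clean counting argument: list the (at most two) shifts $\Sigma^a(-)$ of $N$ and of $N'$ that can receive a nonzero map $\Sigma^1(-)$ out of $M$, observe that these correspond to at most finitely many explicit $i$, and then use directedness once more (no cycles in $D^b(\K Q)$) to eliminate all but one. The special case $F = \Sigma$ is immediate since then $F^i N = \Sigma^i N$ and $\Hom_{D^b(\K Q)}(M, \Sigma^{i+1} N)$ vanishes unless $i + 1 \in \{0,1\}$, i.e.\ $i \in \{-1, 0\}$, and heredity together with directedness (a nonzero $\Hom(M,N)$ and a nonzero $\Ext^1(M,N)$ cannot coexist for $M,N$ indecomposable over a Dynkin quiver, by the absence of cycles) cuts this down to a single $i$.

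\textbf{Expected main obstacle.} The delicate point will be controlling the odd part of the $F$-orbit: showing that $F N$, while not canonically a module, is a shift $\Sigma^{\epsilon} N'$ of an indecomposable module with $\epsilon$ and $N'$ constrained tightly enough by $F^2 \simeq \Sigma^2$ that the two "halves" of the orbit cannot each contribute a nonvanishing $\Ext^1$ for different $i$. This is where I would need to invoke directedness of $D^b(\K Q)$ most carefully—ruling out a configuration where $M$ has a nonzero extension with $\Sigma^{2k} N$ for one $k$ and with $\Sigma^{2k'} (FN)$ for a different $k'$—rather than in the routine degree bookkeeping, which is straightforward from heredity.
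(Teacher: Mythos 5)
Your reduction to stalk complexes and the degree bookkeeping are sound and match the first half of the paper's argument: writing $F^{2k}N\cong\Sigma^{2k}N$ and $F^{2k+1}N\cong\Sigma^{2k}(FN)$, heredity confines the nonvanishing of $\Hom_{D^b(\K Q)}(M,F^iN)$ to at most one even index and at most one odd index. Your treatment of the special case $F=\Sigma$ is also correct (a nonzero $\Hom(M,N)$ and a nonzero $\Ext^1(M,N)$ would produce a cycle). But the essential content of the lemma is precisely the step you defer to your ``expected main obstacle'': showing that the surviving even index and the surviving odd index cannot both contribute, i.e.\ (after normalizing) that $\Hom(M,N)\neq 0$ forces $\Hom(M,FN)=0$. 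Your proposal offers only ``directedness once more'' for this, and that is not enough: $FN$ is an a priori arbitrary shift $\Sigma^{\epsilon}N'$ of an indecomposable module with no direct Hom- or Ext-relation to $N$, so no cycle in $\mod(\K Q)$ arises from assuming both $\Hom(M,N)\neq0$ and $\Hom(M,\Sigma^{\epsilon}N')\neq0$ (e.g.\ with $\epsilon=0$ this is just two nonzero maps out of $M$, which is perfectly consistent with directedness).

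The missing idea is to exploit the functoriality of $F$ rather than only the combinatorics of the orbit. The paper argues: if $\Hom(M,FN)\neq0$, apply $F$ to get $0\neq\Hom(FM,F^2N)\cong\Hom(FM,\Sigma^2N)$; the corresponding triangle $\Sigma N\to L\to FM\to\Sigma^2 N$ shows $\Sigma N$ is a predecessor of $FM$, while $\Hom(M,N)\neq0$ makes $FM$ a predecessor of (or isomorphic to) $FN$. One then chooses a slice $\cl$ of the Auslander--Reiten quiver with $N\in\cl$ and uses $\Hom(\cl,\Sigma\cl)=0$ to conclude that any map $M\to FN$ factors through $\Hom(\cl,\Sigma\cl)$ and hence vanishes --- a contradiction. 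Without some such device (applying $F$ to transport the hypothesis, plus the slice vanishing), your counting argument stops one step short of the conclusion.
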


\begin{proof}
Without loss of generality, we assume $M\in\mod (\K Q)$ and $\Hom_{D^b(kQ)}(M,N) \neq 0$, but $\Hom_{D^b(kQ)}(M,F^\ell N)=0$ for any $\ell<0$.
As $\K Q$ is hereditary, we obtain that either $N\in\mod(\K Q)$ or $\Sigma^{-1}(N)\in\mod(\K Q)$. Then $\Hom_{D^b(\K Q)}(M,F^\ell N)=0$ for any $\ell>1$.

It remains to show that $\Hom_{D^b(\K Q)}(M,FN) =0$. Suppose $\Hom_{D^b(\K Q)}(M,FN) \neq 0$. %\blue{Claim that $M\ncong FN$. Otherwise, if $M\cong FN$, by our assumption $\Hom_{D^b(kQ)}(M,N)\neq0$, we have $\Hom_{D^b(kQ)}(M,F^{-1}M)\neq0$.  So
%$$\Hom_{D^b(kQ)}(N,\Sigma^{-2} M )=\Hom_{D^b(kQ)}(F^{-1}M, F^{-2}M )\neq0.$$
%It follows that $\Sigma^2 N\in \mod(kQ)$ or $\Sigma^3 N\in\mod(kQ)$ since $M\in\mod(kQ)$. Contradicts to that $N\in\mod(\K Q)$ or $\Sigma^{-1}(N)\in\mod(\K Q)$.}
Then
\[
0\neq \Hom_{D^b(\K Q)}(FM,F^2N)\cong \Hom_{D^b(\K Q)}(FM,\Sigma^2 N),
\]
and so there exists a triangle
\[
\Sigma N\longrightarrow L\longrightarrow FM\longrightarrow \Sigma^2N,
\]
which implies that $\Sigma N$ is a predecessor of $FM$.  As $\Hom_{D^b(kQ)}(M,N)\neq 0$, $M$ is a predecessor of $N$ or $M\cong N$, and then $FM$ is a predecessor of $FN$ or $FM\cong FN$. Choose a slice $\cl$ such that $N\in\cl$. Then $\Hom_{D^b(\K Q)}(\cl,\Sigma\cl)=0$. So any morphism $f: M\rightarrow FN$ factors through some morphism in $\Hom_{D^b(\K Q)}(\cl,\Sigma\cl)$, which implies that $f=0$; this contradicts with the assumption that $\Hom_{D^b(\K Q)}(M,FN)\neq0$.
\end{proof}

%\begin{lemma}[\cite{Hub}]\label{lem:Hu}
%If $M, N$ are indecomposable and $L$ is decomposable, then
%$F_{M,N}^L-F_{N,M}^L\equiv0\mod (q-1)$.
%\end{lemma}
For any indecomposable module $X\in\mod(\K Q)\subseteq \mod(\Lambda^\imath)$, there exists a unique (up to isomorphism)  indecomposable $G_X\in\Gproj^{\np}(\Lambda^\imath)$ (cf. \eqref{eq:GProjnp}) such that $G_X\cong X$ in $D_{sg}(\mod(\Lambda^\imath))$.
By Corollary \ref{corollary for stalk complexes} this gives a bijection
\begin{align}
\label{eqn:bijection}
\Ind(\mod(\K Q))&\stackrel{1:1}{\longleftrightarrow} \Ind(\Gproj^{\np}(\Lambda^i)),\qquad
X \mapsto G_X.
\end{align}

Recall that $\Phi^+$ is the set of positive roots of $Q$ with simple roots $\alpha_i$, $i\in Q_0$.
For any $\alpha\in\Phi^+$, denote by $M_q(\alpha)$ its corresponding indecomposable $\K Q$-module, i.e., $\dimv M_q(\alpha)=\alpha$.
Let $\mathfrak{P}:=\mathfrak{P}(Q)$ be the set of functions $\lambda: \Phi^+\rightarrow \N$.
Then the modules
\begin{align}
\label{def:Mlambda}
M_q(\lambda):= \bigoplus_{\alpha\in\Phi^+}\lambda(\alpha) M_q(\alpha),\quad \text{ for } \lambda\in\mathfrak{P},
\end{align}
provide a complete set of isoclasses of $\K Q$-modules.

Let $\Phi^0$ denote a set of symbols $\gamma_i$, i.e., $\Phi^0=\{\gamma_{i}\mid  i\in \I\}$, and denote
\[
\Phi^\imath=\Phi^+\cup \Phi^{0},
\qquad
\mathfrak{P}^{\imath} =\{ \lambda:\Phi^\imath\rightarrow\N\}.
\]
Let $G_q(\alpha)$ be the unique indecomposable Gorenstein projective $\Lambda^\imath$-module such that $G_q(\alpha)\cong M_q(\alpha)$ in $D_{sg}(\mod(\Lambda^\imath))$, for $\alpha\in \Phi^+$; see \eqref{eqn:bijection}.
Set $G_q(\gamma_i)= \Lambda^\imath \, e_i$, for $i\in \I$. Then $\{G_q(\alpha), G_q(\gamma_i) \mid \alpha\in \Phi^+,\gamma_i\in \Phi^0\}$ forms a complete set of isoclasses of indecomposable Gorenstein projective $\Lambda^\imath$-modules.

For any $\lambda\in \mathfrak{P}^{\imath}$, we define a Gorenstein projective $\Lambda^\imath$-module $G_q(\lambda)$ as
\begin{align}
 \label{def: Glambda}
G_q(\lambda):= \bigoplus_{\alpha\in\Phi^+}\lambda(\alpha) G_q(\alpha)\oplus\bigoplus_{\gamma_i\in\Phi^0}\lambda(\gamma_i)G_q(\gamma_i),\quad \text{ for }  \lambda\in\mathfrak{P}^\imath.
\end{align}
Then $G_q(\lambda)$, for $\lambda\in \fpi$, give a complete set of isoclasses of Gorenstein projective $\Lambda^\imath$-modules.

Let $\fp^0$ be the subset of $\fpi$ which consists of functions supported on $\Phi^0$, i.e., $\fp^0 =\{\lambda \in \fpi\mid \lambda(\alpha) =0, \forall \alpha \in \Phi^+\}$, and we identify $\fp$ with the subset of $\fpi$ which consists of functions supported on $\Phi^+$. We view each $x\in\Phi^\imath$ as the characteristic function $f\in\fpi$ defined by $f(y)=\delta_{xy}$ for $y\in \Phi^\imath$.

Recall from  \S\ref{subsec:HA} the Hall number $F_{M,N}^L=\frac{|\Ext^1(M,N)_L|}{|\Hom(M,N)|}$,
for $M,N,L \in \Gproj(\Lambda^{\imath})$, and $\sqq =\sqrt{q}$.

\begin{proposition}
  \label{prop:Hall polynomial}
The Frobenius category $\Gproj(\Lambda^{\imath})$ over the field $\K=\F_q$ satisfies the Hall polynomial property, that is, there exists a polynomial $\bF^\lambda_{\mu,\nu}(v) \in\Z[v,v^{-1}]$  such that
$\bF^\lambda_{\mu,\nu}({\sqq})  = F^{G_q(\lambda)}_{G_q(\mu),G_q(\nu)}$, for all $\lambda, \mu, \nu \in \mathfrak{P}^{\imath}$, and for each prime power $q$.
\end{proposition}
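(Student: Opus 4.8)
The goal is a Hall polynomial property for the Frobenius category $\Gproj(\Lambda^{\imath})$, analogous to the classical theorem of Ringel for Dynkin quivers. The plan is to transfer the known Hall polynomial property for $\mod(\K Q)$ (Ringel's theorem for representation-finite hereditary algebras) to $\Gproj(\Lambda^{\imath})$ by exploiting the equivalence $\underline{\Gproj}(\Lambda^{\imath}) \simeq D^b(\K Q)/\Sigma\circ\Psi_\btau$ from Theorem~\ref{thm:sigma} together with the bijection \eqref{eqn:bijection} on indecomposables. First I would reduce the computation of $F^{G_q(\lambda)}_{G_q(\mu),G_q(\nu)}$ to counting data that lives in $D^b(\K Q)$: given a conflation $0\to G_q(\nu)\to L\to G_q(\mu)\to 0$ in $\Gproj(\Lambda^{\imath})$, the middle term $L$ is again Gorenstein projective, hence $L\cong G_q(\lambda)$ for a unique $\lambda$; and since $\res$ is exact and sends $\Gproj(\Lambda^{\imath})$ into $\proj(\K Q)$, applying $\res$ gives a short exact sequence of projective $\K Q$-modules. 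The key point is that the numerator $|\Ext^1(M,N)_L|$ and the denominator $|\Hom(M,N)|$ for $M,N,L\in\Gproj(\Lambda^{\imath})$ can be expressed via the triangulated structure of $D^b(\K Q)$: by Buchweitz–Happel and Theorem~\ref{thm:sigma}, $\underline{\Hom}_{\Lambda^{\imath}}(M,N)$ and $\Ext^1_{\Lambda^{\imath}}(M,N)$ are identified with $\Hom$ and $\Ext^1 = \Hom(-,\Sigma-)$ in the orbit category $D^b(\K Q)/F$, where $F = \Sigma\circ\Psi_\btau$.

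\textbf{Main steps.} (1) Using Lemma~\ref{lemma assumption holds} (applied to $F=\Sigma\circ\Psi_\btau$, which satisfies $F^2\simeq\Sigma^2$), show that for indecomposables $M,N\in D^b(\K Q)$, the Hom- and Ext-spaces in the orbit category $D^b(\K Q)/F$ reduce to a \emph{single} summand $\Hom_{D^b(\K Q)}(M,F^iN)$ — this is exactly the finiteness needed to make the orbit-category Hall arithmetic well-defined and, crucially, to identify it with genuine $\K Q$-module Hom/Ext spaces up to a shift by $\btau$. (2) Translate $F^{G_q(\lambda)}_{G_q(\mu),G_q(\nu)}$ into a count of admissible triangles (equivalently conflations) in $\Gproj(\Lambda^{\imath})$; using $\res:\Gproj(\Lambda^{\imath})\hookrightarrow$ (into $\proj(\K Q)$, fully faithfully on Hom by Lemma~\ref{lemma restriction functor} faithfulness together with the $1$-Gorenstein structure — here one needs that $\res$ restricted to $\Gproj$ detects the relevant extension data), relate these to short exact sequences of $\K Q$-modules, for which Ringel's Hall polynomials $\varphi^{\lambda'}_{\mu',\nu'}(q)\in\Z[q]$ exist. (3) Account for the "extra" indecomposables $G_q(\gamma_i)=\Lambda^{\imath}e_i$ (the projectives, indexed by $\Phi^0$): extensions with a projective summand contribute elementary factors (a projective is injective relative to $\Gproj$ only up to the Gorenstein-projective closure), so the exponents of $q$ appearing are controlled by Euler forms $\langle\cdot,\cdot\rangle_{\Lambda^{\imath}}$, which by Lemma~\ref{lemma compatible of Euler form} are (half-)integer combinations of the $\K Q$ Euler form, hence polynomial in $q$ (and Laurent-polynomial in $\sqq$ after the twist). (4) Assemble: $\bF^\lambda_{\mu,\nu}(v)$ is built from Ringel's polynomials composed with the bijection \eqref{eqn:bijection} and corrected by the explicit $q$-power factors from steps (1) and (3); specializing $v=\sqq$ recovers $F^{G_q(\lambda)}_{G_q(\mu),G_q(\nu)}$ for every prime power $q$.

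\textbf{Expected main obstacle.} The delicate part is step (2): making precise the claim that counting conflations in $\Gproj(\Lambda^{\imath})$ (for objects without projective summands) agrees — uniformly in $q$, hence polynomially — with counting short exact sequences of the corresponding projective $\K Q$-modules under $\res$, possibly after applying the automorphism $\btau$. The subtlety is that $\res$ is \emph{not} full (only faithful and dense by Lemma~\ref{lemma restriction functor}), so one cannot naively pull back all $\K Q$-module extensions; instead one must work in the stable category $\underline{\Gproj}(\Lambda^{\imath})$ where $\res$ (composed with the equivalence $G$ of Proposition~\ref{prop:tilting object}) becomes an equivalence onto the orbit category, and then carefully lift the orbit-category triangle count back to an honest conflation count, tracking the contribution of projective direct summands separately. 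I would handle this by first proving the Hall polynomial property for the stable category $\underline{\Gproj}(\Lambda^{\imath})\simeq D^b(\K Q)/F$ using the triangulated-Hall-algebra machinery of Toën/Xiao–Xu (the finiteness from Lemma~\ref{lemma assumption holds} is precisely their hypothesis), and then adding back the quantum-torus / projective part via the decomposition $G_q(\lambda) = G_q(\lambda|_{\Phi^+})\oplus G_q(\lambda|_{\Phi^0})$ and the filtered structure already established in \S\ref{sec:Hall}.
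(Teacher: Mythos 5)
There is a genuine gap in the counting step, even though your overall framing (use Theorem~\ref{thm:sigma}, the bijection \eqref{eqn:bijection}, and Lemma~\ref{lemma assumption holds}) points in the right direction. The problem is with your step (2): the Hall numbers $F^{G_q(\lambda)}_{G_q(\mu),G_q(\nu)}$ count conflations in the exact category $\Gproj(\Lambda^\imath)$, and these are \emph{not} visible through $\res$. Indeed, $\res$ sends Gorenstein projectives to projective $\K Q$-modules, so applying $\res$ to a conflation yields a short exact sequence of projective $\K Q$-modules, which always splits; the genuine extension data of $\Gproj(\Lambda^\imath)$ lives entirely in the $\varepsilon$-components. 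Consequently Ringel's Hall polynomials for $\mod(\K Q)$ cannot be transported along $\res$ to produce the numbers $|\Ext^1_{\Gproj(\Lambda^\imath)}(M,N)_L|$. Your fallback — proving the Hall polynomial property first for the stable category via the Toën/Xiao--Xu derived Hall algebra of $D^b(\K Q)/\Sigma\circ\Psi_\btau$ — also fails: that orbit category is $2$-periodic (since $(\Sigma\circ\Psi_\btau)^2\simeq\Sigma^2$), so the left homological finiteness hypotheses of the derived Hall algebra construction are violated; this is precisely the obstruction that motivated Bridgeland's and Gorsky's constructions in the first place. Moreover, counting triangles in the stable category is not the same as counting conflations in the Frobenius category, and the passage between the two is exactly the hard part you would still need to supply.

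The paper's actual mechanism avoids both problems by going \emph{up} to the graded level rather than down to $\mod(\K Q)$ or to the stable orbit category: it uses the Galois covering $\Pd:\Gproj(\Lambda)\to\Gproj(\Lambda^\imath)$ and the identification $\Gproj^{\Z}(\Lambda^\imath)\cong \cc^b(\proj(\K Q))$ (Remark~\ref{rem:bounded}). The independence of $\dim\Hom$ from $q$ comes from the AR-quiver of $\cc_{\Z/2}(\proj(\K Q))$ as in Chen--Deng; Lemma~\ref{lemma assumption holds} (applied with $F=F_{\btau^\sharp}$, not with $\Sigma$ in $D^b(\K Q)$ directly) shows that for indecomposable gradable $M$, at most one degree shift $N_j(i_j)$ contributes to $\Ext^1$, so that $\Ext^1_{\Gproj(\Lambda^\imath)}(M,N)_L$ decomposes into finitely many graded pieces; and the polynomiality of each graded piece is then quoted from Chen--Deng's count of extensions in $\cc^b(\proj(\K Q))$ (their Corollary~3.7), after a reduction to the case where $M$ or $N$ is indecomposable. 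If you want to salvage your plan, you should replace ``Ringel's Hall polynomials for $\mod(\K Q)$'' and ``Toën/Xiao--Xu'' by this graded-complex counting input; without it the proof does not close.
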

The polynomials $\bF^\lambda_{\mu,\nu}(v) \in \Z[v,v^{-1}]$ are called the {\em Hall polynomials}.

\begin{proof}
Recall that the pushdown functor $\Pd: \mod(\Lambda)\longrightarrow\mod(\Lambda^\imath)$ induces
a Galois covering $\Pd:\Gproj(\Lambda)\longrightarrow\Gproj(\Lambda^\imath)$, see \eqref{FGproj}.
Note that $\Gproj(\Lambda)\cong\cc_{\Z/2}(\proj(\K Q))$ by \eqref{eq:LaZ2}. So we have a Galois covering $\Pd: \cc_{\Z/2}(\proj(\K Q))\longrightarrow \Gproj(\Lambda^\imath)$. By the Auslander-Reiten quiver (AR-quiver) of $\cc_{\Z/2}(\proj(\K Q))$ described in \cite[Section 2]{CD}, one obtains the AR-quiver of $\Gproj(\Lambda^\imath)$ %by noting that $\Pd$ preserves almost split sequences. In particular, the AR-quiver of $\Gproj(\Lambda^\imath)$
which is independent of the field $\F_q$.

For any $\lambda,\mu\in \fpi$, the same argument as in \cite[Section 2]{Rin0} and \cite[Lemma~ 3.5]{CD} shows that
$\dim_{\F_q}\Hom_{\Lambda^\imath}(G_q(\lambda),G_q(\mu))\text{ and }\dim_{\F_q}\Hom_{\underline{\Gproj}(\Lambda^\imath)}(G_q(\lambda),G_q(\mu))$
only depend on $\lambda$ and $\mu$, but not on $q$.

Since $\underline{\Gproj}^\Z(\Lambda^{\imath})$ is equivalent to $D^b(\K Q)$ and $F_{{\btau}^{\sharp}}^2\simeq \Sigma^2$ in $D^b(\K Q)$, Lemma~\ref{lemma assumption holds} is applicable and implies that for any indecomposable objects $M,N\in \Gproj^\Z(\Lambda^{\imath})$,
$\Ext^1_{\underline{\Gproj}^\Z(\Lambda^{\imath})}(M,N(\ell))$
does not vanish for at most one $\ell$ by noting that the degree shift $(1)$ of $\underline{\Gproj}^\Z(\Lambda^{\imath})$ corresponds to $F_{{\btau}^{\sharp}}$.

To complete the proof, it remains to prove that the cardinality $|\Ext^1_{\Gproj(\Lambda^\imath)}(M,N)_L|$ is a polynomial in $q$ for any $L,M,N\in\Gproj(\Lambda^\imath)$. {A reduction as in \cite[Theorem 3.11]{CD} (see also \cite[Theorem 3.5]{SS16}) allows us to assume that $M$ or $N$ is indecomposable.}

Suppose that $M$ is indecomposable. Write $N=\oplus_{j=1}^t N_j$ with $N_j$ indecomposable. Then there exists at most one $\ell_j$ such that $\Ext^1_{\Gproj^\Z(\Lambda^\imath)}(M,N_j(\ell_j))\neq0$ for each $1\le j\le t$.
Since $M,N$ are gradable, we have
\begin{align*}
\Ext^1_{\Gproj(\Lambda^\imath)}(M,N)
&=\bigoplus_{\ell\in\Z}\Ext^1_{\underline{\Gproj}^\Z(\Lambda^\imath)}(M, N(\ell))
=\Ext^1_{\underline{\Gproj}^\Z(\Lambda^\imath)}(M, \oplus_{j=1}^t N_j(\ell_j)).
\end{align*}
It follows that $\Ext^1_{\Gproj(\Lambda^\imath)}(M,N)_L= \sqcup_{m \in\Z} \Ext^1_{\underline{\Gproj}^\Z(\Lambda^\imath)}(M, \oplus_{j=1}^t N_j(\ell_j))_{L(m)}$. By noting that $\Gproj^\Z(\Lambda^\imath)\cong \cc^b(\proj(\K Q))$ (cf. Remark \ref{rem:bounded}), we conclude by \cite[Corollary 3.7]{CD} that $|\Ext^1_{\underline{\Gproj}^\Z(\Lambda^\imath)}(M, \oplus_{j=1}^t N_j(\ell_j))_{L(m)}|$ is a polynomial in $q$, and then so is $|\Ext^1_{\Gproj(\Lambda^\imath)}(M,N)_L|$.

The case when $N$ is indecomposable is proved analogously. The proof is completed.
\end{proof}

\begin{remark}
In case $\btau=\Id$, the above proposition was proved in \cite[Theorem~ 3.6]{RSZ}.
\end{remark}

\subsection{Generic Hall algebras, I} %: Gorenstein projective modules}

The class of $G_q(\lambda)$ defined in \eqref{def: Glambda} in $K_0(\mod(\Lambda^\imath))$, for $\lambda\in \Phi^\imath$, does not depend on the base field $\K$. It follows that the class of $\res(G_q(\lambda))$ does not depend on the base field $\K$ either. We denote by $\lambda^g$ the class of $\res(G_q(\lambda))$ in $K_0(\mod(\K Q))$.

We consider the twisted generic Ringel-Hall algebra of $\Gproj (\Lambda^\imath)$ over $\Q(\bv)$, denoted by $\tH^{\rm Gp}(\ov{Q}, \ov{I})$,  as follows; cf. Proposition \ref{prop:invariant subalgebra}.  More precisely, $\tH^{\rm Gp}(\ov{Q}, \ov{I})$ is the free $\Q(\bv)$-module with basis $\{\fv_\lambda\mid \lambda\in\fpi\}$ and its multiplication is given by
\begin{align*}
\fv_\mu*\fv_\nu=\bv^{\langle \mu^g,\nu^g\rangle_Q}\sum_{\lambda\in\fp^\imath}\bF_{\mu,\nu}^\lambda(\bv)\fv_\lambda.
 \end{align*}

Let $\tGpg:=\tH^{\rm Gp}(\ov{Q}, \ov{I})[\fv_\lambda^{-1}:\lambda\in\fp^0 ]$ be the localization of $\tH^{\rm Gp}(\ov{Q}, \ov{I})$ with respect to $\fv_\lambda$, for $\lambda\in\fp^0$.
In fact, $\tGpg$ is the {\em generic (twisted) semi-derived Hall algebra } $\cs\cd\ch(\Gproj(\Lambda^\imath))$, see \cite{Gor2} or \S\ref{subsec:SDH}.

Let $\bvs=(\vs_i)_{i\in \I}\in (\Q(\bv)^\times)^\I$ be such that $\vs_i=\vs_{\btau i}$ for any $i$. We define the {\em generic reduced $\imath$Hall algebra} (or {\em generic reduced twisted semi-derived Hall algebra}) $\rGpg$ as follows. Note that $K_0(\mod(\K Q))$ is freely generated by $\alpha_i= \widehat{S_i}$  (for $i\in \I$). By viewing each $\gamma_i$ as the class of the indecomposable projective $\K Q$-module $P_i=(\K Q)e_i$ for $i\in \I$, $K_0(\mod(\K Q))$ is also  freely generated by $\gamma_i$ ($i\in \I$). So there exists an invertible matrix $A=(a_{ij})_{n\times n}\in M_{n}(\Z)$ such that $\alpha_i=\sum_{i=1}^na_{ij}\gamma_j$ for any $i$.
So $\widehat{\E}_{i} = \sum_{j=1}^na_{ij}\widehat{\E}_{\gamma_j}$ in $K_0(\cp^{\leq 1}(\Lambda^\imath))$.
Then $\rGpg$ is defined to be the quotient of $\tGpg$ by the ideal generated by
\begin{align}
   \label{eqn:specializing1}
\prod_{j\in \I}\fv_{{\gamma_j}}^{a_{ij}} +v^2\vs_i  \; (\text{for }\btau i=i),\qquad
\prod_{j\in \I}\fv_{{\gamma_j}}^{a_{ij}}  * \prod_{j\in \I}\fv_{{\gamma_{ j}}}^{a_{\btau i, j}} -\vs_i\vs_{\btau i} \; (\text{for }\btau i\neq i).
\end{align}

Recall $\rMH$ from Definition~\ref{def:reducedHall}.

\begin{proposition}
  \label{prop:specialization}
Let $(Q, \btau)$ be a Dynkin $\imath$quiver. We have a $\Q({\sqq})$-algebra isomorphism
\begin{align*}
\rGpg_{|v={\sqq}}  \cong\rMH.
\end{align*}
\end{proposition}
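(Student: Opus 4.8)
\textbf{Proof plan for Proposition~\ref{prop:specialization}.}
The statement to be proved is that the specialization at $v=\sqq$ of the generic reduced $\imath$Hall algebra $\rGpg$ is isomorphic, as a $\Q(\sqq)$-algebra, to the reduced $\imath$Hall algebra $\rMH$. The plan is to first establish the analogous unreduced statement, namely a $\Q(\sqq)$-algebra isomorphism $\tGpg_{|v=\sqq}\cong \tMH$, and then quotient both sides by the corresponding ideals. The key point for the unreduced comparison is Theorem~\ref{theorem isomorphism of algebras} (Theorem~\textbf{C}(1) in the introduction), which gives an algebra isomorphism $\utMH\cong \cs\cd\ch(\Gproj(\Lambda^\imath))$ over $\F_q$; combined with the compatible twists (the Euler form on $\cp^{\leq1}(\Lambda^\imath)$ and on $\Gproj(\Lambda^\imath)$ both pull back through $\res$ to $\langle\cdot,\cdot\rangle_Q$, cf.\ Lemma~\ref{lemma compatible of Euler form}), this upgrades to $\tMH\cong$ the twisted semi-derived Hall algebra. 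On the other hand, Proposition~\ref{prop:Hall polynomial} shows the structure constants of $\cs\cd\ch(\Gproj(\Lambda^\imath))$ with respect to the basis $\{\fv_\lambda\mid\lambda\in\fpi\}$ are given by the Hall polynomials $\bF^\lambda_{\mu,\nu}(v)$, and the twist exponents $\langle\mu^g,\nu^g\rangle_Q$ are manifestly independent of $q$; hence the twisted semi-derived Hall algebra over $\F_q$ is exactly the specialization $\tGpg_{|v=\sqq}$. Chaining these identifications yields $\tGpg_{|v=\sqq}\cong\tMH$.

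Next I would match the generators and the defining relations of the two reductions. Under the isomorphism $\tGpg_{|v=\sqq}\cong\tMH$, the element $\fv_{\gamma_j}$ (the class of $G_q(\gamma_j)=\Lambda^\imath e_j$) corresponds to $[\Lambda^\imath e_j]$, which lies in the quantum torus $\T(\Lambda^\imath)$ and represents the class $\widehat{\E}_{\gamma_j}\in K_0(\cp^{\leq1}(\Lambda^\imath))$. Using the matrix $A=(a_{ij})$ relating the two bases $\{\widehat S_i\}$ and $\{\gamma_i\}$ of $K_0(\mod(\K Q))$, and the product formula \eqref{eq:EE2} in $\T(\Lambda^\imath)$ (or rather the twisted version \eqref{eq:EE} in $\tMH$), one computes that $\prod_{j}\fv_{\gamma_j}^{a_{ij}}$ maps to a scalar multiple of $[\E_i]$; one needs to track this scalar carefully, but because $\E_i\in\cp^{\leq1}(\Lambda^\imath)$ and products of such modules are twist-free in $\tMH$ by Lemma~\ref{lemma multiplcation of locally projective modules}, the scalar is a power of $\sqq$ that is pinned down by comparing dimension vectors; the upshot is that the ideal generated by \eqref{eqn:specializing1} in $\tGpg_{|v=\sqq}$ is carried precisely onto the ideal generated by \eqref{eqn: reduce 1} in $\tMH$ (note the factor $v^2\vs_i$ in \eqref{eqn:specializing1} specializes to $q\vs_i$, matching $[\E_i]+q\vs_i$, and $\vs_i\vs_{\btau i}=\vs_i^2$ matches $[\E_i]*[\E_{\btau i}]-\vs_i^2$). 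Passing to quotients then gives $\rGpg_{|v=\sqq}\cong\rMH$.

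The main obstacle I anticipate is the bookkeeping in the previous paragraph: making fully precise the identification of $\prod_j\fv_{\gamma_j}^{a_{ij}}$ with a scalar multiple of $[\E_i]$ inside $\tGpg_{|v=\sqq}\cong\tMH$, and checking that the scalar and sign are exactly those needed so that the two ideals coincide rather than merely being mapped to ideals of the ``same shape''. This requires care with (i) the distinction between the generic twist $\bv^{\langle\mu^g,\nu^g\rangle_Q}$ and the twist $\sqq^{\langle\res(M),\res(N)\rangle_Q}$ in \eqref{eqn:twsited multiplication}, which agree because $\mu^g$ is by definition the class of $\res(G_q(\mu))$; and (ii) the fact that $\fv_\lambda$ for $\lambda\in\fp^0$ is invertible on both sides, so that the localizations correspond, and the quantum torus $\T(\Lambda^\imath)$ is identified with the span of $\{\fv_\lambda\mid\lambda\in\fp^0\}$ after localization. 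Once these compatibilities are verified, the proof is essentially formal: combine Theorem~\ref{theorem isomorphism of algebras}, Proposition~\ref{prop:Hall polynomial}, and the definitions of $\rMH$ and $\rGpg$, and pass to the quotient by matching ideals.
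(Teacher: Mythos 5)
Your proposal is correct and follows essentially the same route as the paper: identify $\tGpg_{|v=\sqq}$ with the twisted semi-derived Hall algebra $\cs\cd\tH(\Gproj(\Lambda^\imath))$ via $\fv_\lambda\mapsto[G_q(\lambda)]$ (using Proposition~\ref{prop:Hall polynomial}), apply Theorem~\ref{theorem isomorphism of algebras} to get $\tGpg_{|v=\sqq}\cong\tMH$, and then check that the ideal generated by \eqref{eqn:specializing1} is carried onto the ideal generated by \eqref{eqn: reduce 1}. Your extra bookkeeping is sound — indeed $\prod_j\fv_{\gamma_j}^{a_{ij}}$ maps exactly to $[\E_i]$ with scalar $1$, since $\phi(\gamma_j)=\widehat{\Lambda^\imath e_j}$ and products in $\cp^{\leq1}(\Lambda^\imath)$ are twist-free by Lemma~\ref{lemma multiplcation of locally projective modules}.
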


\begin{proof}
Let $\cs\cd\tH(\Gproj(\Lambda^\imath))$ be the twisted semi-derived Hall algebra with the twisting  as in \eqref{eqn:twsited multiplication}. By construction, the map $\fv_\lambda\mapsto [G_q(\lambda)]$ gives an isomorphism
\begin{align*}
\tGpg_{|v={\sqq}}\cong \cs\cd\tH(\Gproj(\Lambda^\imath)).
\end{align*}
By Theorem \ref{theorem isomorphism of algebras} we have an algebra isomorphism $\cs\cd\tH(\Gproj(\Lambda^\imath))\cong \tMH$, and thus
$\tGpg_{|v={\sqq}} \cong\tMH$. This isomorphism sends the ideal of $\tGpg_{|v={\sqq}}$ generated by \eqref{eqn:specializing1} onto the ideal of $\tMH$ generated by $[\E_i]+q\vs_i$ ( for $\btau i=i$) and $[\E_i]*[\E_{\btau i}]=-\vs_i\vs_{\btau i}$ (for $\btau i\neq i$). The proposition follows.
\end{proof}

The following corollary is a generic version of Theorem \ref{theorem isomorphism of Ui and MRH} by using $\tGpg$.
\begin{corollary}\label{cor: generic version}
Let $(Q, \btau)$ be a Dynkin $\imath$quiver. Then we have $\Q(\bv)$-algebra isomorphisms
\begin{align*}
\tUi\stackrel{\simeq}{\longrightarrow} \tGpg,
&\qquad
\Ui\stackrel{\simeq}{\longrightarrow} \rGpg.
\end{align*}
\end{corollary}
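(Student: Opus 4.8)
The plan is to deduce Corollary~\ref{cor: generic version} by combining the $v=\sqq$ specialization isomorphisms of Theorem~\ref{theorem isomorphism of Ui and MRH} with a ``generic'' principle: once one knows the structure constants on both sides are Laurent polynomials in $\sqq$ matching at infinitely many prime powers $q$, the identity of algebras persists over $\Q(\bv)$. Concretely, I would first recall that by Proposition~\ref{prop:Hall polynomial} the (twisted, localized) Hall algebra $\tMH$ has structure constants given by the Hall polynomials $\bF^\lambda_{\mu,\nu}(v)\in\Z[v,v^{-1}]$ evaluated at $\sqq$, and that $\tGpg$ is \emph{by definition} the $\Q(\bv)$-algebra with basis $\{\fv_\lambda\}$ and multiplication built from exactly these polynomials; hence $(\tGpg)_{|v=\sqq}\cong \cs\cd\tH(\Gproj(\Lambda^\imath))\cong \tMH$ by Theorem~\ref{theorem isomorphism of algebras}, which is the content already extracted in the proof of Proposition~\ref{prop:specialization}. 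So the generic algebra $\tGpg$ is a $\Q(\bv)$-form of the family $\{\tMH\}_q$.

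\textbf{Key steps.} Step 1: On the $\imath$quantum group side, observe that $\tUi$ is defined over $\Q(\bv)$ by the finite presentation in Proposition~\ref{prop:Serre}, so $\tUi_{|v=\sqq}$ is literally the specialization of this presentation. Step 2: Transport the explicit homomorphism $\widetilde\psi$ of Proposition~\ref{prop:qHall=Ui}: its defining formulas \eqref{eq:split}--\eqref{eq:extra} send generators $B_j, \tk_i$ to elements $\frac{-1}{q-1}[S_j]$ etc., whose coefficients $\frac{-1}{q-1}$, $\frac{\sqq}{q-1}$, $-q^{-1}$ are manifestly Laurent rational functions in $\bv$ (with $q=\bv^2$); the relations \eqref{relation1}--\eqref{relation2} hold in $\tMH$ for every $q$ by the rank-$2$ computations in Propositions~\ref{prop:A2}, \ref{prop:iA3}, \ref{prop:cartan}, and each such relation is a finite $\Q(\bv)$-linear combination of basis elements with Hall-polynomial coefficients. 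Since these polynomial identities hold for infinitely many $q$, they hold as identities in $\Z[v,v^{-1}]$, and therefore the assignment \eqref{eqn:psi morphism} defines a $\Q(\bv)$-algebra homomorphism $\widetilde\psi:\tUi\to\tGpg$. Step 3: Surjectivity and injectivity are inherited generically from the $v=\sqq$ case: by Theorem~\ref{theorem isomorphism of Ui and MRH} (or by Theorem~\ref{thm:monomial} together with Lemma~\ref{lem:kob}), $\widetilde\psi$ carries the monomial basis $\{B_w\mid w\in\cj\}$ of $\tUi$ (as right $\tU^{\imath 0}$-module) to a $\Q(\bv)$-basis $\{\fv$-versions of $S^*_w\}$ of $\tGpg$ as right $\tT$-module, where $\tU^{\imath 0}\cong\tT$ is a Laurent polynomial algebra in $|\I|$ generators mapped isomorphically onto the generic quantum torus; this gives $\tUi\cong\tGpg$. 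Step 4: For the reduced version, note $\widetilde\psi$ carries the ideal generated by \eqref{eq:parameters} onto the ideal generated by \eqref{eqn:specializing1} (this is exactly the matching used in Proposition~\ref{prop:specialization}), so passing to quotients yields $\Ui\cong\rGpg$; alternatively one simply invokes $\Ui\cong\rMH\cong(\rGpg)_{|v=\sqq}$ and runs the same generic argument.

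\textbf{Main obstacle.} The only genuine subtlety is making the ``specialize $q$, then go generic'' argument rigorous at the level of the \emph{localized} Hall algebra rather than the finitary one: the structure constants of $\tMH$ in the Hall basis $\{[X]*\E_\alpha\}$ (Theorem~\ref{thm:utMHbasis}) are not a priori Hall polynomials, because passing from $\utMH$ to $\utM(\Gproj)$ involves the localization $\cs_\ca^{-1}$ and the rewriting in Lemma~\ref{lem:another basis of MRH 1}. The clean way around this, which I would follow, is to work throughout in the basis $\{\fv_\lambda\}_{\lambda\in\fpi}$ adapted to $\Gproj(\Lambda^\imath)$ rather than the $kQ$-module basis: Proposition~\ref{prop:Hall polynomial} is stated precisely for this basis, and the isomorphism $\tGpg_{|v=\sqq}\cong\tMH$ of Proposition~\ref{prop:specialization} (via Theorem~\ref{theorem isomorphism of algebras} identifying $\cm\ch$ with $\cs\cd\ch(\Gproj)$) translates the generic-polynomiality into the statement we need. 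Once that translation is in place, verifying that $\widetilde\psi$ preserves each defining relation generically is a formality — each check is a finite polynomial identity in $\Z[v,v^{-1}]$ that we already know holds at all prime powers — and the basis-matching argument for bijectivity is identical to the one in the proof of Theorem~\ref{theorem isomorphism of Ui and MRH}. I do not expect any representation-theoretic difficulty beyond bookkeeping; the heart of the matter was already done in Section~\ref{sec:HallQSP} and in Proposition~\ref{prop:Hall polynomial}.
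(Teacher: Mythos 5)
Your proposal is correct and takes essentially the same approach as the paper: the paper's proof of Corollary~\ref{cor: generic version} is precisely the combination of Proposition~\ref{prop:specialization}, Theorem~\ref{theorem isomorphism of Ui and MRH} and Proposition~\ref{prop:Hall polynomial}, i.e.\ the specialization isomorphisms at every prime power together with Laurent-polynomiality of the structure constants in the $\Gproj$-adapted basis $\{\fv_\lambda\}$, which forces the defining relations and the basis-matching to hold generically over $\Q(\bv)$. Your ``main obstacle'' paragraph correctly identifies why one must work in the basis $\{\fv_\lambda\}$ rather than the Hall basis, which is consistent with the paper's subsequent remark that writing the generic isomorphism explicitly on generators is ``somewhat messy.''
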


\begin{proof}
Follows by Proposition \ref{prop:specialization}, Theorem \ref{theorem isomorphism of Ui and MRH} and Proposition~\ref{prop:Hall polynomial}.
\end{proof}

It is possible but somewhat messy to write down the formulas on generators for the isomorphisms in Corollary~\ref{cor: generic version}; compare Bridgeland's original version \cite{Br} of Proposition~ \ref{prop:bridgeland}.

\subsection{Generic Hall algebras, II}  %: singularity categories}

We shall formulate the generic semi-derived Ringel-Hall algebra as the generic $\imath$Hall algebra.

Let $\Phi^\imath:=\Phi^+\cup\Phi^0$ be as above. For $\alpha\in\Phi^+$, $M_q(\alpha)$ is the indecomposable $\K Q$-module, viewed as $\Lambda^\imath$-module.
For any $\gamma_i\in\Phi^0$, define $M_q(\gamma_i)=\E_i$. For $\lambda\in\fpi$, we define
\[
M_q(\lambda):=[ \oplus_{\alpha\in\Phi^+} {\lambda(\alpha)}M_q(\alpha)]*[\oplus_{i\in\I} \lambda(\gamma_i)\E_i]=[ \oplus_{\alpha\in\Phi^+} {\lambda(\alpha)}M_q(\alpha)]*\E_{\sum_{i\in \I}\lambda(\gamma_i)\alpha_i}
\]
in the $\imath$Hall algebra $\tMH$,  which is compatible with $M_q(\lambda)$ for $\lambda\in\fp$ in \eqref{def:Mlambda}.
Let
\[
\tilde{\fp}^\imath=\{\lambda: \Phi^\imath \rightarrow \Z \mid \lambda(\alpha)\in \N\,\, \forall \alpha\in \Phi^+,  \lambda(\gamma_i)\in\Z\,\, \forall \gamma_i\in\Phi^0\}.
\]
Then for $\lambda\in \tilde{\fp}^\imath$, define
\[
M_q(\lambda):=[ \oplus_{\alpha\in\Phi^+} {\lambda(\alpha)}M_q(\alpha)]*\E_{\sum_{i\in \I}\lambda(\gamma_i)\alpha_i}.
\]

Denote by $\tilde{\fp}^0$ the subset of $\tilde{\fp}^\imath$ which consists of functions supported at $\Phi^0$, and identify  $\fp$ with  the subset of $\tilde{\fp}^\imath$ which consists of functions supported at $\Phi^+$. %We can and shall view each $x\in\tilde{\Phi}^\imath$ as the characteristic function $f\in\tilde{\fp}^\imath$ defined by $f(y)=\delta_{xy}$ for $y\in \tilde{\Phi}^\imath$.
By Corollary \ref{cor:basis of MRH as torus-module}, $\{M_q(\lambda) \mid \lambda\in\tilde{\fp}^\imath \}$ forms a basis of $\tMH$. For  $\mu,\nu\in \tilde{\fp}^\imath$, we write
\[
[M_q(\mu)]*[M_q(\nu)]=\sum_{\lambda\in\tilde{\fp}^\imath}\varphi_{ M_q(\mu),M_q(\nu)}^{M_q( \lambda)}[M_q(\lambda)].
\]

\begin{lemma}\label{lem: Hall polynomial for MRH}
For every $\lambda,\mu,\nu \in \tilde{\fp}^\imath$, there exists a polynomial $\boldsymbol{\varphi}^\lambda_{\mu,\nu}(\bv)\in\Z[\bv,\bv^{-1}]$ such that
$\boldsymbol{\varphi}^\lambda_{\mu,\nu}({\sqq})  =\varphi_{ M_q(\mu),M_q(\nu)}^{M_q( \lambda)},$
 for each prime power $q$ (recall $\sqq =\sqrt{q}$).
\end{lemma}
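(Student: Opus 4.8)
The plan is to deduce this from the Hall polynomial property for $\Gproj(\Lambda^{\imath})$ (Proposition~\ref{prop:Hall polynomial}) by a change of basis whose transition coefficients are single Laurent monomials in $\sqq$ with exponents independent of $q$. First I would invoke the isomorphism $\tMH\cong \cs\cd\tH(\Gproj(\Lambda^{\imath}))$ of Theorem~\ref{theorem isomorphism of algebras}; as already used in the proof of Proposition~\ref{prop:specialization}, this gives $\tGpg_{|v=\sqq}\cong \tMH$. Under this isomorphism the family $\{\,[G_q(\mu)]\mid \mu\in\fpi\,\}$, localized by inverting the classes $[\Lambda^{\imath}e_i]=[G_q(\gamma_i)]$ of the indecomposable projectives, is a basis of $\tMH$, and the structure constants in this basis are, by the definition \eqref{eqn:twsited multiplication} of the twisted multiplication, equal to $\sqq$ raised to an integer determined by the Euler form $\langle\res(-),\res(-)\rangle_Q$ on $\mu^{g},\nu^{g}$ — which only depends on dimension vectors, hence not on $q$ — times the Hall number $F^{G_q(\lambda)}_{G_q(\mu),G_q(\nu)}$. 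By Proposition~\ref{prop:Hall polynomial} the latter is the value at $\sqq$ of a Laurent polynomial $\bF^{\lambda}_{\mu,\nu}(\bv)\in\Z[\bv,\bv^{-1}]$, so the structure constants of $\tMH$ in this $G_q$-basis already lie in $\Z[\sqq,\sqq^{-1}]$, uniformly in $q$.

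Next I would identify the transition between the (localized) $G_q$-basis and the basis $\{M_q(\lambda)\mid \lambda\in\tilde{\fp}^{\imath}\}$. For $\alpha\in\Phi^{+}$ one has $G_q(\alpha)\cong M_q(\alpha)$ in $D_{sg}(\mod(\Lambda^{\imath}))$ (the bijection \eqref{eqn:bijection}, which is additive), so applying Lemma~\ref{lem:another basis of MRH 1} to $L=\bigoplus_{\alpha}\mu(\alpha)G_q(\alpha)$ yields $[\bigoplus_{\alpha}\mu(\alpha)G_q(\alpha)]=q^{\,a(\mu)}\,[\bigoplus_{\alpha}\mu(\alpha)M_q(\alpha)]\diamond \E_{\beta(\mu)}$, where both $a(\mu)\in\Z$ and $\beta(\mu)\in K_0^{+}(\mod(\K Q))$ are read off from projective resolutions and Euler pairings of classes in the Grothendieck group, and so are independent of $q$. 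Multiplying by the torus monomial $\prod_i[\Lambda^{\imath}e_i]^{\diamond\mu(\gamma_i)}$ and converting $\diamond$ into $*$ by means of Lemma~\ref{lemma compatible of Euler form} and \eqref{eq:EE}, one obtains $[G_q(\mu)]=\sqq^{\,c(\mu)}\,M_q(\lambda(\mu))$ in $\tMH$, with $c(\mu)\in\Z$ independent of $q$, where $\lambda(\mu)\in\tilde{\fp}^{\imath}$ satisfies $\lambda(\mu)|_{\Phi^{+}}=\mu|_{\Phi^{+}}$ and whose $\Phi^{0}$-part is obtained from $(\mu(\gamma_i))_i$ by the invertible integer matrix $A$ of \eqref{eqn:specializing1} (the change-of-basis matrix between $\{\widehat S_i\}$ and $\{\widehat{\Lambda^{\imath}e_i}\}$; it lies in $GL_n(\Z)$ since both are $\Z$-bases of $K_0(\mod(\K Q))$). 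Localizing $\fpi$ in the $\gamma_i$-directions makes $\mu\mapsto\lambda(\mu)$ a bijection onto $\tilde{\fp}^{\imath}$; here I would use that $\tTL$ is the Laurent polynomial algebra on $[\E_i]$ (Lemma~\ref{lem:torus}), so that both index sets parametrize the same $\tTL$-basis of $\tMH$ (Corollary~\ref{cor:basis of MRH as torus-module}).

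Finally, combining the two steps: substituting $[M_q(\mu)]=\sqq^{-c(\mu')}[G_q(\mu')]$ (with $\mu'$ the inverse bijection) into $[M_q(\mu)]*[M_q(\nu)]=\sum_{\lambda}\varphi^{M_q(\lambda)}_{M_q(\mu),M_q(\nu)}[M_q(\lambda)]$ and re-expanding the $G_q$-products via Step~1 shows that each $\varphi^{M_q(\lambda)}_{M_q(\mu),M_q(\nu)}$ equals a fixed power of $\sqq$ times the corresponding $G_q$-basis structure constant, hence lies in $\Z[\sqq,\sqq^{-1}]$ with the power and the polynomial independent of $q$. Since $\{\sqrt q\mid q\text{ a prime power}\}$ is infinite, this determines a unique $\boldsymbol{\varphi}^{\lambda}_{\mu,\nu}(\bv)\in\Z[\bv,\bv^{-1}]$ with $\boldsymbol{\varphi}^{\lambda}_{\mu,\nu}(\sqq)=\varphi^{M_q(\lambda)}_{M_q(\mu),M_q(\nu)}$ for every prime power $q$.

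The main obstacle is the bookkeeping in the second step: one must verify that every power of $q$ produced along the way — those in Lemma~\ref{lem:another basis of MRH 1}, those in the $\diamond$-versus-$*$ conversions, and the entries of the matrix $A$ — is given by Euler pairings on classes in $K_0$, equivalently by dimension vectors and the combinatorics of the bound quiver $(\ov Q,\ov I)$, and therefore does not vary with the finite field. Once this is granted, polynomiality is immediate from Proposition~\ref{prop:Hall polynomial}. (An essentially equivalent formulation is that the generic twisted modified Ringel–Hall algebra, with the $\fv$-basis of $\tGpg$ re-indexed by $\tilde{\fp}^{\imath}$ via $A$, specializes to $\tMH$ at $v=\sqq$, which follows from the same considerations.)
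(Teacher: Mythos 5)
Your proposal is correct and follows essentially the same route as the paper: the paper likewise reduces to the Hall polynomial property for $\Gproj(\Lambda^{\imath})$ (Proposition~\ref{prop:Hall polynomial}) by rewriting $[M_q(\mu)]$ and $[M_q(\nu)]$ in terms of $[G_q(-)]$'s times torus monomials, observing that all intervening powers of $\sqq$ come from Euler pairings on Grothendieck-group classes and hence do not depend on $q$, and then converting back. The paper carries this out as one explicit chain of equalities rather than as a change-of-basis statement, but the content is identical.
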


\begin{proof}
For any $\nu: \Phi^0\rightarrow\Z$ (viewed as a function $\nu:\Phi^\imath\rightarrow\Z$ supported at $\Phi^0$), we define $[G_q(\nu)]:=\prod_{\gamma_i\in\Phi^0} [G_q(\gamma_i)]^{\nu(\gamma_i)}$ in $\cs\cd\tH(\Gproj(\Lambda^\imath))$  and also in $\tMH$. %compatible with \eqref{def: Glambda} for $\nu\in\fp^0$.
In this way, for any $\nu\in\tilde{\fp}^0$, there exists a unique $\omega'_\nu\in\tilde{\fp}^0$ such that $[M_q(\nu)] = [G_q(\omega'_\nu)]$ in $\tMH$.
Note that $\omega_\nu'$ only depends on $\nu$, not on $q$.

For any $\lambda\in\fp$, there exists a unique $\omega_\lambda:\Phi^0\rightarrow\Z$ such that $[M_q(\lambda)]={\sqq}^{b_\lambda}[G_q(\lambda)]*[G_q(\omega_\lambda)]$ in $\tMH$ for some $b_\lambda\in\Z$.
Note that $\omega_\lambda$ only depends on $\lambda$, not on the field $\K$. Furthermore, $b_\lambda$ comes from the Euler form of $\K Q$, and so it does not depend on $q$.

For any $\mu,\nu\in\tilde{\fp}^\imath$, there exist unique $\mu_0:\Phi^0\rightarrow\Z$, $\mu_1:\Phi^+\rightarrow\N$, $\nu_0:\Phi^0\rightarrow\Z$, $\nu_1:\Phi^+\rightarrow\N$ such that
$\mu=\mu_1+\mu_0$, $\nu=\nu_1+\nu_0$. Then $[M_q(\mu)] =[M_q(\mu_1)]*[M_q(\mu_0)]$ and $[M_q(\nu)] =[M_q(\nu_1)]*[M_q(\nu_0)]$ by definition.
%Also the Euler forms $\langle\cdot,\cdot\rangle$ and $\langle,\cdot,\cdot\rangle_Q$ do not depend on $q$.
It follows by Lemma \ref{lem:bimodule of MRH} and Lemma~ \ref{lemma multiplcation of locally projective modules} that
\begin{align*}
[M_q(\mu)] &*[M_q(\nu)]\\
=& [M_q(\mu_1)]*[M_q(\mu_0)]*[M_q(\nu_1)]*[M_q(\nu_0)]\\
=&{\sqq}^{d_{\mu,\nu}} [G_q(\mu_1)]*[G_q(\nu_1)]*[G_q(\omega_{\mu_1}+\omega_{\nu_1}+\omega_{\mu_0}'+\omega_{\nu_0}')]\\
=&{\sqq}^{d_{\mu,\nu}}{\sqq}^{d'_{\mu,\nu}}\sum_{\lambda\in\tilde{\fp}^\imath} F_{G_q(\mu_1),G_q(\nu_1)}^{G_q(\lambda)}[G_q(\lambda)]*[G_q(\omega_{\mu_1}+\omega_{\nu_1}+\omega_{\mu_0}'+\omega_{\nu_0}')]\\
=&\sum_{\lambda\in\tilde{\fp}^\imath} {\sqq}^{d_{\mu,\nu}+d'_{\mu,\nu}-b_\lambda}F_{G_q(\mu_1),G_q(\nu_1)}^{G_q(\lambda)}[M_q(\lambda)]*[G_q(\omega_{\mu_1}+\omega_{\nu_1}+\omega_{\mu_0}'+\omega_{\nu_0}'-\omega_\lambda)],
\end{align*}
where $d_{\mu,\nu},d'_{\mu,\nu}\in\Z$ do not depend on $q$ (as they come from the Euler form). It follows from Proposition~ \ref{prop:Hall polynomial} that there exists a polynomial $\bF^\lambda_{\mu,\nu}({v})\in\Z[v, v^{-1}]$ such that $\bF^\lambda_{\mu,\nu}({{\sqq}}) = F^{G_q(\lambda)}_{G_q(\mu),G_q(\nu)}.$
Clearly, $[G_q(\omega_{\mu_1}+\omega_{\nu_1}+\omega_{\mu_0}'+\omega_{\nu_0}'-\omega_\lambda)]=\E_\alpha$ for some $\alpha\in K_0(\mod(\K Q))$, which does not depend on $q$. The lemma follows.
\end{proof}

Let $\Phi^+=\{\beta_1,\dots,\beta_N\}$, and $\beta_j=\sum_{i\in\I} b_{ji}\alpha_i$ for any $1\le j\le N$.
Motivated by the dimension vectors of modules, we define $\dimv\, \lambda=(\lambda(\gamma_i)+\lambda(\gamma_{\btau i})+ \sum_{\beta_j\in\Phi^+}\lambda(\beta_j)b_{ji})_{i\in \I}$ for any $\lambda \in\fp^\imath$. In particular, by definition, $\dimv\, \lambda=\dimv\, M_q(\lambda)$ for any $\lambda\in \fp$ or $\lambda\in\fp^0$.

\begin{corollary}
 For any triple $\lambda,\mu,\nu$, $\varphi^\lambda_{\mu,\nu}(v) = 0$ unless $\dimv\, \lambda=\dimv\, \mu+\dimv\, \nu$.
\end{corollary}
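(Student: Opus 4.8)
The statement is a grading (dimension-vector) compatibility for the structure constants $\varphi^\lambda_{\mu,\nu}(v)$ of the generic $\imath$Hall algebra, and the plan is to deduce it from the corresponding statement in the specializations $v=\sqq$ together with the polynomiality already established in Lemma~\ref{lem: Hall polynomial for MRH}. First I would recall that by Theorem~\ref{thm:utMHbasis} the algebra $\tMH$ is $K_0(\mod(\K Q))$-graded in the sense of \eqref{eqn: graded linear MH}, where a basis element $[X]\diamond \E_\alpha$ (equivalently $[X]*\E_\alpha$) lies in the piece indexed by $\widehat{X}\in K_0(\mod(\K Q))$; and that the twisted multiplication \eqref{eqn:twsited multiplication} only multiplies by a power of $\sqq$ and so preserves this grading. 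Since for $\lambda\in\tilde{\fp}^\imath$ the element $[M_q(\lambda)] = [\oplus_{\alpha}\lambda(\alpha)M_q(\alpha)]*[\E_{\sum_i\lambda(\gamma_i)\alpha_i}]$ sits in graded piece $\sum_\alpha \lambda(\alpha)\,\widehat{M_q(\alpha)} = \sum_{\beta_j}\lambda(\beta_j)\sum_i b_{ji}\widehat{S_i}$, and since the extra summand $[\E_{\sum_i\lambda(\gamma_i)\alpha_i}]$ contributes nothing to this $K_0(\mod(\K Q))$-grading (it lies in the torus $\tTL$), the product $[M_q(\mu)]*[M_q(\nu)]$ expands only in those $[M_q(\lambda)]$ whose graded degree equals $\sum_{\beta_j}(\mu(\beta_j)+\nu(\beta_j))\sum_i b_{ji}\widehat{S_i}$.

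Next I would observe that this graded-degree condition on the $\Phi^+$-part of $\lambda$ is \emph{not} quite the full claim: the claim involves $\dimv\,\lambda$, which also records $\lambda(\gamma_i)+\lambda(\gamma_{\btau i})$, i.e.\ the torus part. To handle the torus part, note that in $\tMH$ the elements $\E_\alpha$ satisfy $\E_\alpha * \E_\beta = \E_{\alpha+\beta}$ by \eqref{eq:EE}, and that when one moves an $\E_\alpha$ past an $[X]$ with $X\in\mod(\K Q)$ one only picks up a scalar (Lemma~\ref{lem:bimodule of MRH} / the relations $[K]\diamond[M]=q^{-\langle K,M\rangle}[K\oplus M]$), so the total ``$\E$-charge'' is additive: writing $\mu=\mu_1+\mu_0$, $\nu=\nu_1+\nu_0$ as in the proof of Lemma~\ref{lem: Hall polynomial for MRH}, the torus contribution of any $[M_q(\lambda)]$ appearing is forced to be $\sum_i(\mu(\gamma_i)+\nu(\gamma_i))\alpha_i$ in $K_0(\mod(\K Q))$, because $[\E_\alpha]$, $[\E_\beta]$ with $\alpha\neq\beta$ span distinct lines in $\tTL$. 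Combining the $\Phi^+$-degree constraint and this torus-charge constraint gives precisely $\dimv\,\lambda = \dimv\,\mu + \dimv\,\nu$, since $\dimv\,\lambda = \big(\lambda(\gamma_i)+\lambda(\gamma_{\btau i}) + \sum_j \lambda(\beta_j) b_{ji}\big)_{i}$ reads off exactly the sum of the $K_0$-degree and (the symmetrization of) the $\E$-charge.

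The cleanest way to package this is: show that $\varphi^{M_q(\lambda)}_{M_q(\mu),M_q(\nu)}=0$ over each $\F_q$ unless $\dimv\,\lambda=\dimv\,\mu+\dimv\,\nu$ — this is immediate from the two gradings on $\tMH$ just described — and then invoke Lemma~\ref{lem: Hall polynomial for MRH}: the polynomial $\boldsymbol\varphi^\lambda_{\mu,\nu}(\bv)\in\Z[\bv,\bv^{-1}]$ agrees with $\varphi^{M_q(\lambda)}_{M_q(\mu),M_q(\nu)}$ at $\bv=\sqq$ for all prime powers $q$, and a Laurent polynomial vanishing at infinitely many points is identically zero. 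The main (only real) obstacle is the bookkeeping in the second paragraph: one must be careful that $M_q(\lambda)$ for $\lambda\in\tilde{\fp}^\imath$ genuinely decomposes as a $\mod(\K Q)$-part times an $\E$-part with \emph{independent} contributions to the two gradings, and that the degree-shifting scalars hidden in the twisted multiplication never change which graded pieces occur — both of which are already implicit in the proof of Lemma~\ref{lem: Hall polynomial for MRH} and in Lemma~\ref{lem:filtration algebra}, so no new computation is required.
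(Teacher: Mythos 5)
Your overall strategy (a conservation law for a grading, plus the Hall-polynomial specialization argument to pass from all $v=\sqq$ to generic $v$) is the right one and is essentially what the paper does, but the specific conservation laws you invoke are both false, and this is a genuine gap. The decomposition \eqref{eqn: graded linear MH}, in which $[X]\diamond\E_\alpha$ has degree $\widehat X$ and the torus part $\E_\alpha$ has degree zero, makes $\tMH$ only a \emph{filtered} algebra (Lemma~\ref{lem:filtration algebra} gives $\cm\ch(\Lambda^\imath)_{\leq\alpha}\diamond\cm\ch(\Lambda^\imath)_{\leq\beta}\subseteq\cm\ch(\Lambda^\imath)_{\leq\alpha+\beta}$, with strict drops genuinely occurring), not a graded one. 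Concretely, for $\btau i=i$ the quiver $\ov Q$ has a loop $\varepsilon_i$ at $i$, so
$[S_i]\diamond[S_i]=\tfrac1q[S_i\oplus S_i]+\tfrac{q-1}{q}[\E_i]$:
the term $[\E_i]=\E_{\widehat{S_i}}$ has $\Phi^+$-degree $0\neq 2\widehat{S_i}$ and torus charge $\widehat{S_i}\neq 0$. So neither your ``$\Phi^+$-degree constraint'' nor your ``$\E$-charge is additive'' claim holds separately; these failures are exactly the source of the inhomogeneous Serre relations in Propositions~\ref{prop:A2} and \ref{prop:iA3}, so they cannot be argued away by bookkeeping.

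The fix --- and the paper's actual argument --- is to use the single genuine algebra grading by $K_0(\mod(\Lambda^\imath))$: the Hall algebra $\ch(\Lambda^\imath)$ is graded because $|\Ext^1(M,N)_L|\neq 0$ forces $\widehat L=\widehat M+\widehat N$, the ideal $I$ is homogeneous, the localization and the twist \eqref{eqn:twsited multiplication} preserve this grading, and $[M_q(\lambda)]$ is homogeneous of degree exactly $\dimv\lambda$ once one notes that $\widehat{\E_i}=\widehat{S_i}+\widehat{S_{\btau i}}$ (this is precisely why the formula for $\dimv\lambda$ contains $\lambda(\gamma_i)+\lambda(\gamma_{\btau i})$ rather than $\lambda(\gamma_i)$). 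In the example above, the drop $2\widehat{S_i}\mapsto 0$ in the $\Phi^+$-part is exactly compensated by the jump $0\mapsto 2\widehat{S_i}$ in the torus part, so only the \emph{sum} is conserved --- which is the statement to be proved. With this correction, your final step (vanishing at $v=\sqq$ for all prime powers $q$ forces the Laurent polynomial $\boldsymbol\varphi^\lambda_{\mu,\nu}$ to vanish) goes through as written.
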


\begin{proof}
From the proof of Lemma \ref{lem: Hall polynomial for MRH}, the assertion follows by noting that $\tMH$ is a $K_0(\mod(\Lambda^\imath))=K_0(\mod(kQ))$-graded algebra.
\end{proof}

We now define the {\em generic $\imath$Hall algebra} as the generic (twisted) semi-derived Ringel-Hall algebra of $\Lambda^\imath$  over $\Q(\bv)$, denoted by $\tMHg$ as follows. The algebra $\tMHg$ is the free $\Q(\bv)$-module with a basis $\{\fu_\lambda\mid \lambda\in\tilde{\fp}^\imath\}$ and its multiplication is given by
\begin{align}
\fu_\mu*\fu_\nu=\sum_{\lambda\in\tilde{\fp}^\imath}\boldsymbol{\varphi}_{\mu,\nu}^\lambda(\bv)\fu_\lambda.
\end{align}
Its reduced generic version, denoted by $\rMHg$, is defined to be the quotient of $\tMHg$ by the ideal generated by
\begin{align}
   \label{eqn:specializing3}
\fu_{\gamma_i} +v^2\vs_i \; (\text{for }\btau i=i),\qquad
\fu_{\gamma_i}*\fu_{\btau \gamma_i} -\vs_i^2 \; (\text{for }\btau i\neq i).
\end{align}

\begin{theorem}
   \label{generic U MRH}
Let $(Q, \btau)$ be a Dynkin $\imath$quiver. Then we have a $\Q(\bv)$-algebra isomorphism
\begin{align}
\widetilde{\psi}: \tUi\stackrel{\simeq}{\longrightarrow}& \tMHg,
 \label{eq:psi2} \\
B_i\mapsto \frac{-1}{v^2-1}\fu_{\alpha_i},\text{ if } i\in\ci, &\qquad\qquad
B_{i}\mapsto\frac{\bv}{v^2-1}\fu_{\alpha_i},\text{ if }i\notin \ci,
 \label{eq:psiB} \\
\tk_j\mapsto \fu_{\gamma_j}, \text{ if }\btau j\neq j,
&\qquad\qquad
\tk_j\mapsto -v^{-2}\fu_{\gamma_j}, \text{ if }\btau j=j.
 \notag
\end{align}
Moreover, this induces a $\Q(\bv)$-algebra isomorphism
\begin{align}
\psi: \Ui\stackrel{\simeq}{\longrightarrow}& \rMHg,
 \label{eq:psi}
 \end{align}
 which sends $B_i$ as in \eqref{eq:psiB} and sends $k_j\mapsto \vs_j^{-1} \fu_{\gamma_j}, \text{ for }j \in \I \setminus \ci.$
\end{theorem}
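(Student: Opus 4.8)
The plan is to deduce Theorem~\ref{generic U MRH} from the already-established results by a standard specialization/genericity argument, reducing everything to the cases $v=\sqq$ covered by Theorem~\ref{theorem isomorphism of Ui and MRH} together with the Hall polynomiality in Lemma~\ref{lem: Hall polynomial for MRH}. First I would observe that, by construction, the structure constants $\boldsymbol{\varphi}^\lambda_{\mu,\nu}(\bv)$ of $\tMHg$ are Laurent polynomials specializing at $\bv=\sqq$ to the structure constants of $\tMH$ (for every prime power $q$), so that $\tMHg_{|v=\sqq}\cong\tMH$ as $\Q(\sqq)$-algebras via $\fu_\lambda\mapsto [M_q(\lambda)]$. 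On the other side, Proposition~\ref{prop:Serre} gives a presentation of $\tUi$ by generators $B_i,\tk_i$ and relations \eqref{relation1}--\eqref{relation2} whose coefficients are Laurent polynomials in $v$; I would use this to define the candidate map $\widetilde\psi$ on generators as in \eqref{eq:psiB} and check directly that it respects each defining relation over $\Q(\bv)$. The point is that each such relation, after applying $\widetilde\psi$, becomes an identity among elements of $\tMHg$ whose two sides are $\Q(\bv)$-linear combinations of basis vectors $\fu_\lambda$ with coefficients in $\Z[v,v^{-1}]$ (a finite sum, by the grading corollary right after Lemma~\ref{lem: Hall polynomial for MRH}); such a polynomial identity holds over $\Q(\bv)$ as soon as it holds after specialization at infinitely many values $v=\sqq$, and that specialized identity is exactly the content of Propositions~\ref{prop:A2}, \ref{prop:iA3}, \ref{prop:cartan} as used in the proof of Proposition~\ref{prop:qHall=Ui}. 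Hence $\widetilde\psi$ is a well-defined $\Q(\bv)$-algebra homomorphism.

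Next I would prove $\widetilde\psi$ is an isomorphism by the same monomial-basis strategy as in the proof of Theorem~\ref{theorem isomorphism of Ui and MRH}, now carried out generically. Choose a subset $\cj\subseteq\cw_\I$ such that $\{F_w\mid w\in\cj\}$ is a monomial $\Q(v)$-basis of $\tU^-$ (this exists by Ringel's theorem, or by the generic form of Proposition~\ref{monomial basis of HQ}); then by Lemma~\ref{lem:kob} the set $\{B_w\mid w\in\cj\}$ is a basis of $\tUi$ as a free module over $\tU^{\imath 0}$. On the Hall side, Theorem~\ref{thm:monomial}, which holds for each $q$, has an evident generic counterpart: $\{S^{*}_{w_\lambda}\}$, equivalently $\{S^*_w\mid w\in\cj\}$, is a basis of $\tMHg$ as a free module over the generic twisted quantum torus $\tTL^{\rm gen}$, because this statement is again a polynomial identity (the transition matrices to the PBW/standard basis have Laurent-polynomial entries by Proposition~\ref{prop:Hall polynomial} and Lemma~\ref{lem: Hall polynomial for MRH}). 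Since $\widetilde\psi$ restricts to an isomorphism $\tU^{\imath 0}\xrightarrow{\simeq}\tTL^{\rm gen}$ (both are Laurent polynomial algebras in $|\I|$ variables, matched on generators by the formulas for $\tk_i$), and sends $B_w$ to a nonzero scalar multiple of $S^*_w$, it carries a free basis to a free basis and is therefore bijective.

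For the reduced statement \eqref{eq:psi}, I would note that by Proposition~\ref{prop:QSP12}(1) the algebra $\Ui$ is the quotient of $\tUi$ by the ideal generated by the central elements \eqref{eq:parameters}, and by the definition \eqref{eqn:specializing3} the algebra $\rMHg$ is the quotient of $\tMHg$ by the ideal generated by $\fu_{\gamma_i}+v^2\vs_i$ (for $\btau i=i$) and $\fu_{\gamma_i}*\fu_{\btau\gamma_i}-\vs_i^2$ (for $\btau i\neq i$). Under the formulas for $\tk_i$ in \eqref{eq:psi2}, $\widetilde\psi$ sends $\tk_i-\vs_i$ (for $\btau i=i$) to $-v^{-2}\fu_{\gamma_i}-\vs_i = -v^{-2}(\fu_{\gamma_i}+v^2\vs_i)$, and sends $\tk_i\tk_{\btau i}-\vs_i\vs_{\btau i}=\tk_i\tk_{\btau i}-\vs_i^2$ (for $\btau i\neq i$, using $\vs_{\btau i}=\vs_i$) to $\fu_{\gamma_i}*\fu_{\gamma_{\btau i}}-\vs_i^2$. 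Thus $\widetilde\psi$ maps the first ideal onto the second, descending to an isomorphism $\psi\colon\Ui\xrightarrow{\simeq}\rMHg$; the image of $k_i=K_iK_{\btau i}^{-1}$ is computed from $k_i\mapsto\vs_i^{-1}\tk_i\mapsto\vs_i^{-1}\fu_{\gamma_i}$ as claimed. I expect the main obstacle to be purely bookkeeping rather than conceptual: one must set up carefully the identification $\tMHg_{|v=\sqq}\cong\tMH$ and the generic monomial basis so that all the finiteness/polynomiality inputs (Proposition~\ref{prop:Hall polynomial}, Lemma~\ref{lem: Hall polynomial for MRH}, and the grading) are invoked correctly, and verify that specialization at the infinitely many prime powers $\sqq$ suffices to upgrade each specialized relation and each basis statement to an identity over $\Q(\bv)$. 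Once that framework is in place, every step is a direct translation of the $v=\sqq$ arguments already given.
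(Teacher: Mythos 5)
Your proposal is correct and follows essentially the same route as the paper, whose proof of this theorem is precisely the one-line combination of Proposition~\ref{prop:qHall=Ui}, Theorem~\ref{theorem isomorphism of Ui and MRH} and Lemma~\ref{lem: Hall polynomial for MRH}; you have simply spelled out the genericity argument (Laurent-polynomial structure constants plus verification at infinitely many specializations $v=\sqq$, the generic monomial basis for bijectivity, and the matching of the two ideals for the reduced version) that the paper leaves implicit.
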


\begin{proof}
Follows by Proposition~\ref{prop:qHall=Ui}, Theorem \ref{theorem isomorphism of Ui and MRH} and Lemma~\ref{lem: Hall polynomial for MRH}.
\end{proof}

\begin{remark}
The PBW bases for $\tMH$ in Theorem~\ref{thm:HallPBW} lifts to PBW bases on the generic $\imath$Hall algebra $\tMHg$ (and respectively, its reduced version $\rMHg$). This in turn provides PBW bases for $\tUi$ (and respectively, $\Ui$) via the isomorphism $\widetilde \psi$ (and respectively, $\psi$) in Theorem~\ref{generic U MRH}. The PBW bases will be made more explicit using the reflection functors/braid group actions in a sequel \cite{LW19b}.
\end{remark}

\appendix

%%%%%%%
%%%%%%%
\section{Semi-derived Ringel-Hall algebras of $1$-Gorenstein algebras\\  by Ming Lu}
 \label{app:A}

In this Appendix, following the basic ideas of \cite{LP}, we shall formulate semi-derived Ringel-Hall algebras for {\em weakly $1$-Gorenstein} exact categories, including the module categories of $1$-Gorenstein algebras.

\subsection{Hall algebras}
   \label{subsec:HA}
Let $\ce$ be an essentially small exact category in the sense of Quillen, linear over a finite field $\K=\F_q$.
%For the basics on exact categories, we refer to \cite{Buh} and references therein.
Assume that $\ce$ has finite morphism and extension spaces, i.e.,
\[
|\Hom(M,N)|<\infty,\quad |\Ext^1(M,N)|<\infty,\,\,\forall M,N\in\ce.
\]

Given objects $M,N,L\in\ce$, define $\Ext^1(M,N)_L\subseteq \Ext^1(M,N)$ as the subset parameterizing extensions whose middle term is isomorphic to $L$. We define the {\em Ringel-Hall algebra} $\ch(\ce)$ (or {\em Hall algebra} for short) to be the $\Q$-vector space whose basis is formed by the isoclasses $[M]$ of objects $M$ of $\ce$, with the multiplication defined by (see \cite{Br})
\[
[M]\diamond [N]=\sum_{[L]\in \Iso(\ce)}F_{M,N}^L[L],
\]
where
\begin{align*}
F_{M,N}^L:=\frac{|\Ext^1(M,N)_L|}{|\Hom(M,N)|}
\end{align*}
is the Hall number.

\begin{remark}
Ringel's version of Hall algebra \cite{Rin} uses a different Hall product, but these two versions of Hall algebra are isomorphic by rescaling the generators by the orders of automorphisms.
%In terms of alternative generators
%\begin{equation}
 %  \label{eq:rescale}
%[[A]] =\frac{[A]}{|\aut(A)|},
%\end{equation}
%the above Hall product is of the form $$[[A]]\diamond [[C]]= \sum_{[B]\in \Iso(\ce)}  \frac{|\Ext^1(A,C)_B|}{|\Hom(A,C)|}\frac{|\aut(B)|}{|\aut(A)||\aut(C)|} [[B]],$$ which is the definition adopted in \cite{Rin}. %, and called {\em Bridgeland's version}.
\end{remark}

It is well known that the algebra $\ch(\ce)$ is associative and unital. The unit is given by $[0]$, where $0$ is the zero object of $\ce$, see \cite{Rin0, Rin} and also \cite{Br}.  We shall use Bridgeland's version of Hall product as above throughout this paper.

\subsection{Definition of semi-derived Ringel-Hall algebras}
   \label{subsection:Def of MRH}
Let $\ca$ be an essentially small exact category
linear over $k=\F_q$. We introduce the following subcategories of $\ca$:
\begin{align*}
\cp^{\leq i}(\ca) &=\{X\in\ca\mid\Ext\text{-}\pd X\leq i\},
 \\
{\mathcal I}^{\leq i}(\ca) &=\{X\in\ca\mid\Ext\text{-}\ind X\leq i\}, \quad \forall i\in\N,
\\
\cp^{<\infty}(\ca) &= \{X\in\ca\mid\Ext\text{-}\pd X<\infty\},
  \\
{\mathcal I}^{<\infty}(\ca) &= \{X\in\ca\mid\Ext\text{-}\ind X<\infty\}.
\end{align*}
The category $\ca$ is called \emph{weakly Gorenstein} if $\cp^{<\infty}(\ca)={\mathcal I}^{<\infty}(\ca)$, and $\ca$ is a \emph{weakly $d$-Gorenstein} exact category if $\ca$ is weakly Gorenstein and $\cp^{<\infty}(\ca)=\cp^{\leq d}(\ca)$, ${\mathcal I}^{<\infty}(\ca)={\mathcal I}^{\leq d}(\ca)$.

\begin{lemma}[Iwanaga's Theorem]
Let $\ca$ be a weakly Gorenstein exact category with enough projectives and injectives. Then $\cp^{<\infty}(\ca)=\cp^{\leq d}(\ca)$ if and only if ${\mathcal I}^{<\infty}(\ca)={\mathcal I}^{\leq d}(\ca)$.\end{lemma}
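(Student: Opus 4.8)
The plan is to reduce the biconditional to a single implication by a duality, then run the classical Iwanaga dimension‑shifting argument. First I would note that if $\ca$ is a weakly Gorenstein exact category with enough projectives and injectives, then so is the opposite exact category $\ca^{op}$, and that passing to $\ca^{op}$ interchanges the roles of projectives and injectives — hence of $\cp^{\le i}$ with ${\mathcal I}^{\le i}$, and of $\cp^{<\infty}$ with ${\mathcal I}^{<\infty}$ — while preserving weak Gorensteinness. Consequently it suffices to prove the single implication: if $\cp^{<\infty}(\ca)=\cp^{\le d}(\ca)$ then ${\mathcal I}^{<\infty}(\ca)\subseteq{\mathcal I}^{\le d}(\ca)$ (the reverse inclusion being trivial), uniformly for all such $\ca$.

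So assume $\cp^{<\infty}(\ca)=\cp^{\le d}(\ca)$. The next step is the observation that every injective object $J$ of $\ca$ satisfies $\Ext\text{-}\pd J\le d$: indeed $J\in{\mathcal I}^{\le 0}(\ca)\subseteq{\mathcal I}^{<\infty}(\ca)=\cp^{<\infty}(\ca)=\cp^{\le d}(\ca)$, using weak Gorensteinness and the hypothesis. Now fix $X\in{\mathcal I}^{<\infty}(\ca)$. Using enough injectives, choose a partial injective resolution $0\to X\to I^0\to I^1\to\cdots\to I^{d-1}\to C\to 0$ with each $I^j$ injective and $C$ the $d$‑th cosyzygy. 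Since each $I^j$ kills higher $\Ext$, the standard dimension shift gives $\Ext^{d+1}(A,X)\cong\Ext^1(A,C)$ for every $A\in\ca$; hence showing $\idim X\le d$ is equivalent to showing that $C$ is injective, i.e.\ that $\Ext^1(-,C)=0$.

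The heart of the argument — and the step I expect to be the main obstacle — is showing that this $d$‑th cosyzygy $C$ is injective. Here I would run the Iwanaga dimension‑shift: $C$ has finite Ext‑injective dimension, hence finite Ext‑projective dimension by weak Gorensteinness, hence $\Ext\text{-}\pd C\le d$ by hypothesis; combining a length‑$\le d$ projective resolution of $C$ with its presentation as a $d$‑th cosyzygy and with the identity $\pd C=\sup\{i:\Ext^i(C,N)\ne 0\}$, valid for objects of finite projective dimension in a category with enough projectives, one forces $\Ext^1(-,C)=0$. The one genuinely delicate point is that the ring‑theoretic form of this last identity rests on the regular module being a projective generator, which a general exact category lacks; in the setting actually needed for the rest of the appendix — module categories $\mod(A)$ of finite‑dimensional algebras, and more generally exact categories admitting a projective generator — this is precisely the classical theorem of Iwanaga, so I would either cite it directly in that case or repeat its proof with ${}_AA$ replaced by a projective generator, carrying out the syzygy/cosyzygy bookkeeping indicated above.
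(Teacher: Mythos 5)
The paper states this lemma without proof (it is quoted as a known result), so I can only assess your argument on its own terms. Your duality reduction is legitimate, and your first substantive observation is exactly the right one: every injective object $J$ lies in $\mathcal{I}^{\leq 0}(\ca)\subseteq\mathcal{I}^{<\infty}(\ca)=\cp^{<\infty}(\ca)=\cp^{\leq d}(\ca)$, so $\Ext\text{-}\pd J\leq d$.

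The gap is in your final step. You try to deduce that the $d$-th cosyzygy $C$ of $X$ is injective from the bound $\Ext\text{-}\pd C\leq d$. But $\Ext\text{-}\pd C\leq d$ only controls the covariant functors $\Ext^i(C,-)$, whereas injectivity of $C$ is the vanishing of the contravariant $\Ext^1(-,C)$; no combination of a length-$d$ projective resolution of $C$ with the cosyzygy presentation bridges this variance mismatch. Moreover the point you flag as delicate is a red herring: the identity $\Ext\text{-}\pd C=\sup\{i:\Ext^i(C,N)\neq 0\}$ holds in any exact category with enough projectives by dimension shifting and needs no projective generator — it is simply a statement about the wrong variable. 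The correct finish applies the bound $\Ext\text{-}\pd J\leq d$ to injectives directly, on the \emph{first} argument of $\Ext$: let $m=\Ext\text{-}\ind X<\infty$ and suppose $m>d$; choose $N$ with $\Ext^m(N,X)\neq 0$ (possible since $m$ is the exact Ext-injective dimension), use enough injectives to form a conflation $0\to N\to J\to B\to 0$ with $J$ injective, and read off from the long exact sequence
\[
\Ext^m(J,X)\longrightarrow \Ext^m(N,X)\longrightarrow \Ext^{m+1}(B,X)=0
\]
that $\Ext^m(J,X)\neq 0$, contradicting $\Ext\text{-}\pd J\leq d<m$. In other words, the test object $N$ must be embedded into an injective; resolving $X$ by injectives and staring at the cosyzygy, as you propose, does not close the argument.
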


Throughout this section, we always assume that $\ca$ is an exact category satisfying the following conditions:
\begin{itemize}
\item[(E-a)] $\ca$ is essentially small, with finite morphism spaces, and finite extension spaces,
\item[(E-b)] $\ca$ is linear over $\K=\F_q$,
\item[(E-c)] $\ca$ is weakly $1$-Gorenstein.
\item[(E-d)] For any object $X\in\ca$, there exists an object $P_X\in\cp^{<\infty}(\ca)$ and a deflation $P_X\twoheadrightarrow X$.
\end{itemize}
In this case, $\cp^{<\infty}(\ca)=\cp^{\leq1}(\ca)={\mathcal I}^{<\infty}(\ca)={\mathcal I}^{\leq 1}(\ca)$.

Note that for any finite-dimensional $1$-Gorenstein algebra $\Lambda$ over $\K$ (cf. \S\ref{subsec:Gproj}), $\ca=\mod(\Lambda)$ satisfies (E-a)--(E-d).

\begin{example}\label{example 1}
Let $\ce$ be a hereditary abelian $\K$-category (not necessarily with enough projective objects). Let $\cc_{\Z/n}(\ce)$ be the category of $\Z/n$-graded complexes, for $n\geq2$. Denote by $\cc_{ac,\Z/n}(\ce)$ the subcategory of acyclic complexes in
$\cc_{\Z/n}(\ce)$. It follows from \cite[Proposition 2.3]{LP} that $\Ext^p_{\cc_{\Z/n}(\ce)}(K,M)=0=\Ext^p_{\cc_{\Z/n}(\ce)}(M,K)$ for any $K\in\cc_{ac,\Z/n}(\ce)$, $M\in\cc_{\Z/n}(\ce)$ and $p\geq2$. On the other hand, any $\Z/n$-graded complex with finite Ext-projective dimension or Ext-injective dimension must be acyclic.
So $\cc_{\Z/n}(\ce)$ is weakly $1$-Gorenstein which satisfies (E-a)--(E-d) with $\cp^{<\infty}(\cc_{\Z/n}(\ce))=\cc_{ac,\Z/n}(\ce)=\mathcal{I}^{<\infty}(\cc_{\Z/n}(\ce))$.
\end{example}

Let $\ch(\ca)$ be the Ringel-Hall algebra of $\ca$, i.e., $\ch(\ca)=\bigoplus_{[M]}\Q[M]$ with the multiplication given by
$$[M]\diamond [N]=\sum_{M\in \Iso(\ca)}\frac{|\Ext^1(M,N)_L|}{|\Hom(M,N)|}[L].$$
It is well known that $\ch(\ca)$ is a $K_0(\ca)$-graded algebra, where $K_0(\ca)$ is the Grothendieck group of $\ca$. For any $M\in\ca$, denote by $\widehat{M}$ for the corresponding element in $K_0(\ca)$.

For objects $K,M\in \ca$, if $K\in\fpr(\ca)$, we define
the Euler forms
\begin{equation}\label{left Euler form}
\langle K,M\rangle=\sum_{i=0}^{+\infty}(-1)^i \dim_k\Ext^i(K,M)=\dim_k\Hom(K,M)-\dim_k\Ext^1(K,M),
\end{equation}
and
\begin{equation}\label{right Euler form}
\langle M,K\rangle=\sum_{i=0}^{+\infty}(-1)^i \dim_k\Ext^i(M,K)=\dim_k\Hom(M,K)-\dim_k\Ext^1(M,K).
\end{equation}
These forms descend to bilinear Euler forms on the Grothendieck groups $K_0(\fpr(\ca))$ and $K_0(\ca)$, denoted by the same symbol:
\begin{equation*}
\langle\cdot,\cdot\rangle: K_0(\fpr(\ca))\times K_0(\ca)\longrightarrow \Z,
\end{equation*}
and
\begin{equation*}
\langle\cdot,\cdot\rangle: K_0(\ca)\times K_0(\fpr(\ca))\longrightarrow \Z,
\end{equation*}
such that
\begin{equation}
\langle \widehat{K},\widehat{M}\rangle:=\langle K,M\rangle,\qquad\langle \widehat{M},\widehat{K}\rangle:=\langle M,K\rangle,\qquad\forall K\in\fpr(\ca),M\in\ca.
\end{equation}
We have used the same symbol by noting that these two forms coincide when restricting to $K_0(\fpr(\ca))\times K_0(\fpr(\ca))$.

Inspired by the construction in \cite{LP}, we consider the following quotient algebra.
Let $I$ be the two-sided ideal of $\ch(\ca)$ generated by all differences $[L]-[K\oplus M]$ if there is a short exact sequence
\begin{equation}
  \label{eq:ideal}
 0 \longrightarrow K \longrightarrow L \longrightarrow M \longrightarrow 0
\end{equation}
in $\ca$ with $K\in \fpr(\ca)$. Since $I$ is generated by $K_0(\ca)$-homogeneous elements, the quotient algebra $\ch(\ca)/I$ is a $K_0(\ca)$-graded algebra.

The lemma below follows by definition.
\begin{lemma}\label{lem: formular in quotient of Hall}
For any $K\in\fpr(\ca)$ and $M\in\ca$, we have
\[
[M]\diamond [K]=q^{-\langle M,K\rangle} [M\oplus K]
\]
in $\ch(\ca)/I$. In particular, for any $K_1,K_2\in \fpr(\ca)$, we have
\begin{equation}
   \label{multiplication formula for quantum torus}
[K_1]\diamond[K_2]=q^{-\langle K_1,K_2\rangle} [K_1\oplus K_2]
\end{equation}
in $\ch(\ca)/I$.
\end{lemma}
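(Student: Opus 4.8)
\textbf{Proof plan for Lemma~\ref{lem: formular in quotient of Hall}.}
The plan is to extract the two identities directly from the definition of the Hall product and the definition of the ideal $I$, using the $1$-Gorenstein hypothesis only through the fact that objects of $\fpr(\ca)=\cp^{\leq 1}(\ca)$ have no higher self-extensions against arbitrary objects. First I would fix $K\in\fpr(\ca)$ and $M\in\ca$ and expand $[M]\diamond[K]$ according to the Bridgeland-style formula in \S\ref{subsec:HA}, namely $[M]\diamond[K]=\sum_{[L]}\frac{|\Ext^1(M,K)_L|}{|\Hom(M,K)|}[L]$. The key observation is that in the quotient $\ch(\ca)/I$ every class $[L]$ appearing with nonzero coefficient can be rewritten: each such $L$ sits in a short exact sequence $0\to K\to L\to M\to 0$ (here I use that the sequence with kernel $K\in\fpr(\ca)$ is of the type generating $I$, but note the roles of $K$ and $M$ in \eqref{eq:ideal} — I will apply the defining relation of $I$ to the \emph{sequence with the $\fpr$-object as sub-object}, which for $[M]\diamond[K]$ forces me instead to use the companion relation; see below), so modulo $I$ one has $[L]=[M\oplus K]$. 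Hence $[M]\diamond[K]\equiv \Big(\sum_{[L]}\frac{|\Ext^1(M,K)_L|}{|\Hom(M,K)|}\Big)[M\oplus K]=\frac{|\Ext^1(M,K)|}{|\Hom(M,K)|}[M\oplus K]$, and by the definition \eqref{right Euler form} of the Euler form (legitimate since $K\in\fpr(\ca)$, so $\Ext^{\geq 2}(M,K)=0$) this coefficient is exactly $q^{-\langle M,K\rangle}$.

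The one subtlety I would address carefully is which of the two generating relations of $I$ to invoke. The ideal $I$ as written in \eqref{eq:ideal} is generated by $[L]-[K\oplus M]$ for sequences $0\to K\to L\to M\to 0$ with the \emph{sub-object} $K$ in $\fpr(\ca)$. For the product $[M]\diamond[K]$ the relevant extensions are of the shape $0\to K\to L\to M\to 0$ with $K$ the sub-object, so this is precisely the generating form and the argument above goes through verbatim. For a product $[K]\diamond[M]$ one would instead need extensions $0\to M\to L\to K\to 0$ with $K$ the \emph{quotient}; that these also lie in $I$ follows from the standard fact (implicit in the construction, cf. \cite{LP}) that $I$ is equivalently generated by differences coming from sequences whose sub-\emph{or} quotient object lies in $\fpr(\ca)$ — this uses that $\Ext^1(K,M)$ and $\Ext^1(M,K)$ both control split-ness appropriately when $K\in\fpr(\ca)$. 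Since the lemma as stated only asserts the formula for $[M]\diamond[K]$ and for $[K_1]\diamond[K_2]$, I only strictly need the first form of the relation.

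For the second identity I would specialize $M=K_2$ with $K_1,K_2\in\fpr(\ca)$. Since $\fpr(\ca)$ is closed under extensions and direct sums, every middle term $L$ of $0\to K_1\to L\to K_2\to 0$ again lies in $\fpr(\ca)$, and the computation of the first part applies with $\langle K_1,K_2\rangle$ now a well-defined Euler pairing on $K_0(\fpr(\ca))$; this yields $[K_1]\diamond[K_2]=q^{-\langle K_1,K_2\rangle}[K_1\oplus K_2]$ in $\ch(\ca)/I$, which is \eqref{multiplication formula for quantum torus}.

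I do not expect a serious obstacle here: the statement is essentially a bookkeeping consequence of the definitions, and the only point requiring care is the matching of variances between the generating relation \eqref{eq:ideal} of $I$ and the Hall product expansion, together with the justification that $\langle-,-\rangle$ in \eqref{left Euler form}--\eqref{right Euler form} really computes $\log_q$ of the ratio $|\Ext^1|/|\Hom|$ when one argument is in $\fpr(\ca)$ (so that the alternating sum truncates). Both are routine given hypotheses (Ec) and the setup of \S\ref{subsection:Def of MRH}.
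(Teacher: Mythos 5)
Your proposal is correct and is exactly the argument the paper intends: the paper gives no written proof, stating only that the lemma ``follows by definition,'' and your unwinding of the Hall product combined with the generating relations of $I$ (noting that the extensions occurring in $[M]\diamond[K]$ are precisely the sequences $0\to K\to L\to M\to 0$ with sub-object $K\in\fpr(\ca)$, so each $[L]$ collapses to $[K\oplus M]$, and the total coefficient $|\Ext^1(M,K)|/|\Hom(M,K)|=q^{-\langle M,K\rangle}$ by \eqref{right Euler form}) is that definition-chase. Your caution about the variance is also well placed: the companion formula $[K]\diamond[M]=q^{-\langle K,M\rangle}[K\oplus M]$ genuinely requires more work and is only established later in the paper (Lemma~\ref{lem:bimodule of MRH}, via the Ore computation), but it is not needed for the statement at hand.
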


Let $A$ be a ring with identity $1$, and $S$ a subset of $A$ closed under multiplication, and $1\in S$. Recall that a \emph{right localization of $A$ with respect to $S$} is a ring $R$ and a ring map $i:A\rightarrow R$ such that
\begin{itemize}
\item[(i)] $i(s)$ is a unit in $R$ for each $s\in S$,
\item[(ii)] every element of $R$ has the form $i(a)i(s)^{-1}$ for some $a\in A$, $s\in S$,
\item[(iii)] $i(a)i(s)^{-1}=i(b)i(s)^{-1}$ if and only if $at=bt$ for some $t\in S$.
\end{itemize}
Such a $R$ is a \emph{universal $S$-inverting} ring, and so is unique. We shall denote $R$ by $AS^{-1}$ when it exists. We will suppress the map $i$ and write the elements of $AS^{-1}$ as $as^{-1}$.
We say $S$ satisfies the \emph{right Ore condition}, if for all $a\in A$ and $s\in S$, there exists $a_1\in A$ and $s_1\in S$ such that $sa_1=as_1$. We say $S$ is \emph{right reversible} if for any $a\in A$, $s\in S$ and $sa=0$ in $A$, then there exists $t\in S$ such that $at=0$ in $A$.
Ore's localization theorem states that the right localization $AS^{-1}$ exists if and only if $S$ is a right Ore, right reversible subset of $A$.

Returning to the category $\ca$, we consider the following subset of $\ch(\ca)/I$:
\begin{equation}
  \label{eq:Sca}
\cs_{\ca} := \{ a[K] \in \ch(\ca)/I \mid a\in \Q^\times, K\in \cp^{\leq1}(\ca)\}.
\end{equation}
We see that $\cs_{\ca}$ is a multiplicatively closed subset with the identity $[0]\in \cs_{\ca}$. %In the following, we shall prove that $\cs_{\ca}$ satisfies Ore conditions. First, let us recall the Ore conditions.

\begin{proposition}\label{prop:Def of MRH}
Let $\ca$ be an exact category satisfying (E-a)--(E-d). Then the multiplicatively closed subset $\cs_{\ca}$ is a right Ore, right reversible subset of $\ch(\ca)/I$. Equivalently, the right localization of
$\ch(\ca)/I$ with respect to $\cs_{\ca}$ exists, and will be denoted by $(\ch(\ca)/I)[\cs_{\ca}^{-1}]$.
\end{proposition}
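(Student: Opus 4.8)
The plan is to verify Ore's criterion directly: I must show that $\cs_\ca$ is right reversible and satisfies the right Ore condition inside $\ch(\ca)/I$. The main tool is the structural observation that, modulo $I$, multiplying by $[K]$ with $K\in\cp^{\leq1}(\ca)$ is ``almost invertible'' already: by Lemma~\ref{lem: formular in quotient of Hall}, for $K,K'\in\cp^{\leq1}(\ca)$ one has $[K]\diamond[K']=q^{-\langle K,K'\rangle}[K\oplus K']$, so the classes of objects in $\cp^{\leq1}(\ca)$ span a commutative (up to invertible scalars) subalgebra, and in particular each $[K]$ has, for a chosen complement $K^c$ with $K\oplus K^c$ free/``full'' in the appropriate sense, the property that $[K]\diamond[K^c]$ is a scalar times the class of a fixed ``large'' projective-injective object. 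I will first record this, so that the problem reduces to controlling products $[M]\diamond[K]$ and $[K]\diamond[M]$ for arbitrary $M\in\ca$.

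The first main step (right Ore condition) is: given $[M]\in\ch(\ca)/I$ (it suffices to treat basis elements, then extend by linearity/additivity of the conditions over the finite support) and $a[K]\in\cs_\ca$, I need $a_1[M_1]$ and $s_1\in\cs_\ca$ with $(a[K])\diamond(a_1[M_1]) = [M]\diamond s_1$. Here I use condition (Ed): choose a deflation $P\twoheadrightarrow M$ with $P\in\cp^{\leq1}(\ca)$, giving a short exact sequence $0\to M'\to P\to M\to 0$. Hmm — more usefully, I will use the dual/injective side together with (Ec): since $\ca$ is weakly $1$-Gorenstein, there is a conflation $0\to M\to I_M\to N\to 0$ with $I_M\in\cp^{\leq1}(\ca)$ and $N\in\ca$. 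Applying the ideal relation \eqref{eq:ideal} this shows $[I_M]\diamond(\text{something}) $ relates $[M]$ to projectives; concretely $[M]\diamond[I_M']$ and $[I_M]\diamond[\cdots]$ can both be expressed via $[M\oplus(\text{proj})]$. The key computation is that for $K\in\cp^{\leq1}(\ca)$ and any $M$, one has in $\ch(\ca)/I$ the two identities $[K]\diamond[M]=q^{-\langle K,M\rangle}[K\oplus M]$ and $[M]\diamond[K]=q^{-\langle M,K\rangle}[K\oplus M]$ (first is the dual of Lemma~\ref{lem: formular in quotient of Hall}, obtained from a conflation with $K$ on the left, using that $\Ext^1(K,-)$ controls the only nontrivial extensions because $\mathrm{pd}\,K\le1$ forces all extensions $0\to K\to L\to M\to 0$ to come from $\Ext^1$, but the ideal kills the difference $[L]-[K\oplus M]$ only when... — I must be careful here: the ideal is generated by sequences with the $\cp^{\leq1}$-object as the \emph{sub}object, so $[K]\diamond[M]$ does reduce to a scalar multiple of $[K\oplus M]$ directly, whereas $[M]\diamond[K]$ requires knowing $\Ext^1(M,K)$ governs the extensions and that each resulting $L$ still satisfies a relation in $I$). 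Since $\ca$ is weakly $1$-Gorenstein, $\mathrm{id}\,K\le1$ too, so any conflation $0\to K\to L\to M\to 0$ — wait, that has $K$ as subobject, which is exactly the generating form of $I$, giving $[L]\equiv[K\oplus M]$; hence $[M]\diamond[K]=\big(\sum_L F^L_{M,K}\big)q^{?}[K\oplus M] = q^{-\langle M,K\rangle}[K\oplus M]$ by counting $\sum_L|\Ext^1(M,K)_L| = |\Ext^1(M,K)|$ and $|\Ext^1(M,K)|/|\Hom(M,K)| = q^{-\langle M,K\rangle}$. Good. With both identities in hand, $[K]\diamond[M] = q^{\langle M,K\rangle-\langle K,M\rangle}[M]\diamond[K]$, so I may take $a_1=q^{\langle K,M\rangle-\langle M,K\rangle}a^{-1}$, $[M_1]=[M]$, $s_1=[K]$; this settles the Ore condition for basis elements, and the general case follows since any element of $\ch(\ca)/I$ is a finite sum and one can clear denominators using a common $[K]$ (enlarging $K$ via direct sums, using commutativity up to scalar of the $\cp^{\leq1}$-classes).

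The second main step (right reversibility): suppose $(a[K])\diamond x = 0$ in $\ch(\ca)/I$ for some $x$ and some $a[K]\in\cs_\ca$; I must produce $s\in\cs_\ca$ with $x\diamond s=0$. Using the identity $[K]\diamond[M] = q^{-\langle K,M\rangle}[K\oplus M]$ and the linear independence structure of $\ch(\ca)/I$ — here I will invoke the basis description: $\ch(\ca)/I$ has a basis indexed by $\Iso$-classes, and left multiplication by $[K]$ with $K\in\cp^{\leq1}$ is injective because $[M]\mapsto q^{-\langle K,M\rangle}[K\oplus M]$ is injective on basis elements (distinct $[M]$ give distinct $[K\oplus M]$ by Krull–Schmidt / cancellation of the $\cp^{\leq1}$-summand $K$, which is valid since $K$ has finite projective dimension and the Krull-Schmidt property holds in $\ca$). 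Hence $(a[K])\diamond x=0 \implies x=0$, and reversibility holds trivially with $s=[0]$. This injectivity claim — that $K\oplus M\cong K\oplus M'$ with $K$ of finite projective dimension implies $M\cong M'$ — is the one genuinely delicate point: it needs cancellation, which I expect to follow either from the fact that $\ca$ is Krull–Schmidt with local endomorphism rings on indecomposables (so direct-sum cancellation is automatic), or, if $\ca$ is not assumed Krull–Schmidt, from a more careful argument comparing $\Hom$-dimensions. \textbf{I expect this cancellation/injectivity step to be the main obstacle}, and the rest — the two Hall-product identities and the bookkeeping clearing denominators — to be routine given Lemma~\ref{lem: formular in quotient of Hall} and conditions (Ea)--(Ed). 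Once both Ore conditions are checked, Ore's localization theorem gives the existence of $(\ch(\ca)/I)[\cs_\ca^{-1}]$, completing the proof.
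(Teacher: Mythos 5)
Your argument has a genuine gap at its core: the identity $[K]\diamond[M]=q^{-\langle K,M\rangle}[K\oplus M]$ does \emph{not} hold in $\ch(\ca)/I$, and your justification of it rests on reading the Hall product backwards. In Bridgeland's convention used here, $[M]\diamond[N]$ sums over middle terms of conflations $0\to N\to L\to M\to 0$, so the \emph{second} factor is the subobject. Hence $[M]\diamond[K]$ (with $K\in\cp^{\leq1}(\ca)$ as the sub) is the product that collapses directly via the generators $[L]-[K\oplus M]$ of $I$ — this is Lemma~\ref{lem: formular in quotient of Hall} — whereas $[K]\diamond[M]$ sums over conflations $0\to M\to L\to K\to 0$, which are not of the form killed by $I$. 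You notice this midway ("wait, that has $K$ as subobject") and correctly rederive $[M]\diamond[K]$, but you never actually establish $[K]\diamond[M]=q^{-\langle K,M\rangle}[K\oplus M]$; that identity is only true after localization (it is Lemma~\ref{lem:bimodule of MRH}, proved \emph{using} the proposition). Assuming it to verify the Ore condition is circular: the whole content of the Ore condition is that $[K]$ can be moved past $[M]$ only at the cost of an extra right factor from $\cs_\ca$. The paper supplies exactly that factor: for each $L$ occurring in $[K]\diamond[M]$ it uses (Ed) to choose a deflation $P_L\twoheadrightarrow L$ with $P_L\in\fpr(\ca)$, forms the pullback $0\to A_L\to P_L\oplus M\to L\to 0$, checks $A_L\in\fpr(\ca)$ (so $[A_L\oplus L]=[P_L\oplus M]$ in $\ch(\ca)/I$), and sets $C=\bigoplus_L A_L$; then $[K]\diamond[M]\diamond[C]=q^{\langle M,K\rangle-\langle K,M\rangle}[M]\diamond[K]\diamond[C]$, which is the Ore condition in its correct form.

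Your reversibility argument also fails: you assert that $\ch(\ca)/I$ has a basis indexed by $\Iso(\ca)$ and that left multiplication by $[K]$ is injective, so that $[K]\diamond x=0$ forces $x=0$. Neither is true — the quotient by $I$ identifies $[L]$ with $[K'\oplus M']$ for every conflation $0\to K'\to L\to M'\to 0$ with $K'\in\fpr(\ca)$, so $\Iso(\ca)$ is very far from a basis, and left multiplication by $[K]$ need not be injective. The paper instead uses the $K_0(\ca)$-grading to reduce to the case where all $M_i$ in $x=\sum a_i[M_i]$ share the same class (so the commutation scalars $q^{\langle M_i,K\rangle-\langle K,M_i\rangle}$ coincide), and then reuses the Ore computation to exhibit an explicit $t=b[K\oplus C]\in\cs_\ca$ with $x\diamond t=0$. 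Your Krull--Schmidt cancellation worry is therefore a red herring: the real obstacles are the two points above, and both require the pullback construction you did not carry out.
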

\begin{proof}
%The proof is similar to \cite[Proposition 3.2]{LP}. For the sake of completeness, we give the proof here.%Only need to replace the acyclic complexes in the proof by some objects in $\fpr(\ca)=\ci^{<\infty}(\ca)$.

For any $M\in\ca$, $K\in\fpr(\ca)$,
$$[K]\diamond[M]=\sum_{[L]\in\Iso(\ca)}\frac{|\Ext^1(K,M)_L|}{|\Hom(K,M)|}[L].$$
We only consider $[L]\in\Iso(\ca)$ with $|\Ext^1(K,M)_L|\neq0$, which form a subset $\cv$. Then for any $[L]\in\cv$, choose (and fix) a short exact sequence
\begin{equation}\label{short exact sequence 1}
0\longrightarrow M\stackrel{f}{\longrightarrow} L\stackrel{g}{\longrightarrow} K\longrightarrow0.
\end{equation}
Let $P_L\xrightarrow{s} L$ be a deflation with $P_L\in\fpr(\ca)$. By doing pullback, we have a short exact sequence
\begin{align}\label{eqn:short exact sequence 3}
0\longrightarrow A_L\xrightarrow{\tiny\left( \begin{array}{c} t_1\\ t_2 \end{array}\right)} P_L\oplus M\xrightarrow{(s,f)} L\longrightarrow0.
\end{align}
Then the following diagram is commutative:
\[\xymatrix{& M\ar@{=}[r] \ar[d]&M\ar[d]^f \\
A_L\ar[r]^{\tiny\left( \begin{array}{c} t_1\\ t_2 \end{array}\right)\quad}\ar@{=}[d]& P_L\oplus M\ar[r]^{\quad(s,f)} \ar[d]& L \ar[d]^{g} \\
A_L \ar[r]^{t_1} &  P_L   \ar[r]^{gs}& K }
\]
where the middle column is the split short exact sequence.
Then $0\rightarrow A_L\xrightarrow{t_1}P_L\xrightarrow{gs} K\rightarrow0$
is a short exact sequence. Since $K\in\fpr(\ca)$, we have $A_L\in\fpr(\ca)$. So $[A_L\oplus K]=[P_L]$ in $\ch(\ca)/I$.
It follows from \eqref{eqn:short exact sequence 3} that $[A_L\oplus L]=[P_L\oplus M]$ in $\ch(\ca)/I$.

Let $[C]=[\bigoplus_{[L]\in\cv}A_L]$. Let $[D_L]=[\bigoplus_{[L']\in\cv,L'\ncong L}A_{L'}]$. Then $[C]= q^{\langle A_L,D_L  \rangle} [A_L]\diamond [D_L]$ for any $[L]\in\cv$. In $\ch(\ca)/I$, we have
\begin{align}\label{eqn: right Ore}
[K]\diamond[M]\diamond[C]=&\sum_{[L]\in\Iso(\ca)} \frac{|\Ext^1(K,M)_L| } {|\Hom(K,M)|} [L]\diamond[C] \\\notag
=&\sum_{[L]\in\cv} \frac{|\Ext^1(K,M)_L| } {|\Hom(K,M)|}  q^{\langle A_L,D_L\rangle}[L]\diamond [A_L]\diamond [D_L]\\\notag
=&\sum_{[L]\in\cv} \frac{|\Ext^1(K,M)_L| }{|\Hom(K,M)|}  q^{\langle A_L,D_L\rangle-\langle L,A_L\rangle} [M\oplus P_L ]\diamond [D_L]\\
\notag
=&\sum_{[L]\in\cv} \frac{|\Ext^1(K,M)_L| }{|\Hom(K,M)|}  q^{\langle A_L,D_L\rangle-\langle L,A_L\rangle+\langle M, P_L\rangle}[M]\diamond [P_L ]\diamond [D_L]\\\notag
=&\sum_{[L]\in\cv} \frac{|\Ext^1(K,M)_L|}{|\Hom(K,M)|}q^{\langle A_L,D_L\rangle+\langle M, K\rangle} [M]\diamond [K]\diamond[A_L]\diamond [D_L]
\\
\notag=&\sum_{[L]\in\cv} \frac{|\Ext^1(K,M)_L|}{|\Hom(K,M)| } q^{\langle M, K\rangle} [M]\diamond [K]\diamond[C]
\\\notag
=& q^{\langle M, K\rangle-\langle K,M\rangle}[M]\diamond [K]\diamond [C]\\\notag
=& [M]\diamond ( q^{ \langle M, K\rangle-\langle K,M\rangle-\langle K,C\rangle} [K\oplus C]).
\end{align}
So $\cs_{\ca}$ satisfies the right Ore condition.

If $[K]\diamond( \sum_{i=1}^n a_i [M_i])=0$, then since $\ch(\ca)/I$ is a $K_0(\ca)$-graded algebra, we can assume that all $M_i,1\leq i\leq n$ are in the same class of $K_0(\ca)$.
From \eqref{eqn: right Ore}, we have
$$0=[K]\diamond( \sum_{i=1}^n a_i [M_i])\diamond [C]= \sum_{i=1}^n a_iq^{\langle M_i,K\rangle -\langle K,M_i\rangle }[M_i] \diamond [K]\diamond [C], $$
for some $C\in\fpr(\ca)$. By our assumption,
$q^{\langle M_i,K\rangle -\langle K,M_i\rangle }$ are equal for all $1\leq i\leq n$, which is denoted by $b$. Note that $b\neq0$.
So
\[
 (\sum_{i=1}^n a_i [M_i]) \diamond( b [K\oplus C])=\sum_{i=1}^n a_ibq^{\langle K,C\rangle} [M_i] \diamond [K]\diamond [C] =0.
 \]
Thus $\cs_{\ca}$ is a right reversible subset.

The assertion of the proposition follows now from Ore's localization theorem.
\end{proof}

The semi-derived Ringel-Hall algebras are defined in \cite{LP} for the category of $\Z/n$-graded complexes over any hereditary abelian category, for $n\geq2$. The following definition generalizes \cite{LP}; see the remarks in Example~\ref{example 1}.

\begin{definition}
\label{def:semi-derived}
For any exact category $\ca$ satisfying (E-a)--(E-d), $(\ch(\ca)/I)[\cs_{\ca}^{-1}]$ is called the semi-derived Ringel-Hall algebra of $\ca$, and denoted by $\cs\cd\ch(\ca)$.
\end{definition}

Let $\ch(\fpr(\ca))$ be the Ringel-Hall algebra of the exact category $\fpr(\ca)$. Then $\ch(\fpr(\ca))$ is a subalgebra of
$\ch(\ca)$.
\begin{definition}
   \label{def:torus}
The quantum torus $\T(\ca)$ is defined to be the subalgebra of $\cs\cd\ch(\ca)$ generated by $[M]$ in $\Iso(\fpr(\ca))$.
\end{definition}
Then $\cs\cd\ch(\ca)$ is naturally a $\T(\ca)$-bimodule.

Since $\fpr(\ca)$ is an exact category satisfying (E-a)--(E-d), the semi-derived Ringel-Hall algebra $\cs\cd\ch(\fpr(\ca))$ is defined. We can and shall always identify $\T(\ca)\cong \cs\cd\ch(\fpr(\ca))$. Then the natural embedding $\ch(\fpr(\ca))\rightarrow \ch(\ca)$ implies that $\cs\cd\ch(\fpr(\ca))$ is a subalgebra of $\cs\cd\ch(\ca)$, which coincides with $\T(\ca)\subseteq \cs\cd\ch(\ca)$.
Let $\T(K_0(\fpr(\ca)),q^{-\langle \cdot,\cdot \rangle})$ be the group algebra of $K_0(\fpr(\ca))$ over $\Q$, with the multiplication twisted by $q^{-\langle \cdot,\cdot \rangle}$ as in (\ref{multiplication formula for quantum torus}).
Then $\T(\ca)$ is isomorphic to $\T(K_0(\fpr(\ca)),q^{-\langle \cdot, \cdot \rangle})$ as algebras, see e.g., \cite[Lemma 4.5]{Gor1}.

%\begin{remark}
%Let $\ca$ be a hereditary abelian $k$-category. Let $\cc_1(\ca)$ be the category of $\Z/n$-graded complexes with $n\geq1$. The semi-derived Ringel-Hall algebra of $\cs\cd\ch(\cc_1(\ca))$ is defined in \cite{LinP} by using a slightly different technique.
%\end{remark}

\begin{lemma}\label{lem:bimodule of MRH}
For any $K\in\fpr(\ca)$ and $M\in\ca$, we have
\begin{eqnarray}
[M]\diamond [K]&=&q^{-\langle M,K\rangle} [M\oplus K] \label{right module structure}\\ \,
[K]\diamond[M]&=&q^{-\langle K,M\rangle} [K\oplus M]\label{left module structure}
\end{eqnarray}
in $\cs\cd\ch(\ca)$. In particular, \eqref{right module structure}--\eqref{left module structure} give the $\T(\ca)$-bimodule structure of $\cs\cd\ch(\ca)$.
\end{lemma}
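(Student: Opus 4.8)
The plan is to deduce Lemma~\ref{lem:bimodule of MRH} directly from Lemma~\ref{lem: formular in quotient of Hall} together with the construction of $\cm\ch(\ca)=(\ch(\ca)/I)[\cs_\ca^{-1}]$ from Proposition~\ref{prop:Def of MRH}. The key observation is that both identities already hold in the quotient algebra $\ch(\ca)/I$ by Lemma~\ref{lem: formular in quotient of Hall}: we have $[M]\diamond[K]=q^{-\langle M,K\rangle}[M\oplus K]$ there, and the symmetric identity $[K]\diamond[M]=q^{-\langle K,M\rangle}[K\oplus M]$ follows by the same argument applied to the short exact sequence $0\to K\to K\oplus M\to M\to 0$ (with $K\in\fpr(\ca)$), invoking the defining relation \eqref{eq:ideal} of the ideal $I$ together with the Euler form computation $[K]\diamond[M]=\sum_L \frac{|\Ext^1(K,M)_L|}{|\Hom(K,M)|}[L]$, where every extension of $M$ by $K\in\fpr(\ca)$ is identified with $[K\oplus M]$ modulo $I$, and the resulting scalar collapses to $q^{-\langle K,M\rangle}$.

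First I would recall that the localization map $i:\ch(\ca)/I\to \cm\ch(\ca)$ is an algebra homomorphism, so any identity valid in $\ch(\ca)/I$ remains valid after applying $i$; since we suppress $i$ notationally, the identities \eqref{right module structure}--\eqref{left module structure} are immediate. Strictly speaking one should note that $[M\oplus K]$ and $[K\oplus M]$ denote the same isoclass, so the two formulas are consistent, and that $q$ is invertible in $\Q$ so the scalars make sense.

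For the ``in particular'' clause, I would observe that by Definition~\ref{def:torus} the quantum torus $\T(\ca)$ is generated by the classes $[K]$ with $K\in\fpr(\ca)$, and that $\cm\ch(\ca)$ is a module over $\T(\ca)$ on both sides via left and right multiplication in the algebra $\cm\ch(\ca)$. Formulas \eqref{right module structure}--\eqref{left module structure} then describe this bimodule action explicitly on the spanning set $\{[M]\mid M\in\ca\}$: right multiplication by the generator $[K]$ sends $[M]$ to $q^{-\langle M,K\rangle}[M\oplus K]$, and left multiplication sends $[M]$ to $q^{-\langle K,M\rangle}[K\oplus M]$. Since these classes span $\cm\ch(\ca)$ over $\T(\ca)$ (by the combination of Lemma~\ref{lem: formular in quotient of Hall} and the localization, every element of $\cm\ch(\ca)$ is a $\T(\ca)$-combination of such $[M]$), this determines the bimodule structure completely.

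I do not anticipate a serious obstacle here: the statement is essentially a restatement of Lemma~\ref{lem: formular in quotient of Hall} inside the localization, and the only thing requiring a line of care is the derivation of the left-module formula \eqref{left module structure} from the ideal relation, which is the mirror image of the computation already done for \eqref{eq:ideal} and Lemma~\ref{lem: formular in quotient of Hall}. The most delicate point, if any, is making sure the Euler form $\langle K,M\rangle$ is well defined for $K\in\fpr(\ca)$ and $M\in\ca$ arbitrary — but this is guaranteed by \eqref{left Euler form} since $K$ has finite Ext-projective dimension, so $\Ext^{\geq 2}(K,M)=0$ and the alternating sum truncates.
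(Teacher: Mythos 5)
Your derivation of \eqref{right module structure} is fine: it is exactly Lemma~\ref{lem: formular in quotient of Hall} pushed through the localization map. The gap is in \eqref{left module structure}. You claim the "symmetric identity" already holds in $\ch(\ca)/I$ "by the same argument," but the argument is not symmetric: the ideal $I$ is generated by the differences $[L]-[K\oplus M]$ attached to short exact sequences $0\to K\to L\to M\to 0$ in which the \emph{sub-object} $K$ lies in $\fpr(\ca)$. The product $[K]\diamond[M]=\sum_L \frac{|\Ext^1(K,M)_L|}{|\Hom(K,M)|}[L]$ runs over extensions $0\to M\to L\to K\to 0$, where $K\in\fpr(\ca)$ is the \emph{quotient} and the sub-object $M$ is arbitrary. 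Such sequences are not among the generators of $I$, so you cannot conclude $[L]=[K\oplus M]$ in $\ch(\ca)/I$, and hence your scalar collapse for $[K]\diamond[M]$ is unjustified. (Indeed the paper only asserts $[L]=[K\oplus M]$ for such sequences in the localization $\cm\ch(\ca)$, in the remark following the lemma, and the analogous statement for $1$-Gorenstein algebras, Lemma~\ref{lem:right in SDH}, is proved separately and uses the localization.)

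The correct route — and the one the paper takes — is to use the identity \eqref{eqn: right Ore} established while verifying the right Ore condition in Proposition~\ref{prop:Def of MRH}: for suitable $C\in\fpr(\ca)$ built from deflations $P_L\twoheadrightarrow L$ and pullbacks, one has $[K]\diamond[M]\diamond[C]=q^{\langle M,K\rangle-\langle K,M\rangle}[M]\diamond[K]\diamond[C]$ in $\ch(\ca)/I$. Since $[C]$ becomes invertible in $\cm\ch(\ca)$, one cancels it to get $[K]\diamond[M]=q^{\langle M,K\rangle-\langle K,M\rangle}[M]\diamond[K]$, and then applies the already-proved formula \eqref{right module structure} to obtain $q^{-\langle K,M\rangle}[K\oplus M]$. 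So the left-module formula genuinely requires the localization; it is not a restatement of Lemma~\ref{lem: formular in quotient of Hall}. As a minor point, for the Euler form $\langle M,K\rangle$ to truncate you need $\Ext^{\geq 2}(M,K)=0$, which comes from $K\in\mathcal{I}^{\leq 1}(\ca)$ via the weakly $1$-Gorenstein hypothesis, not from $K$ having finite Ext-projective dimension.
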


\begin{proof}
\eqref{right module structure} follows from Lemma \ref{lem: formular in quotient of Hall}.

For (\ref{left module structure}), using \eqref{eqn: right Ore} in the proof of Proposition \ref{prop:Def of MRH}, we have that
$$[K]\diamond[M]\diamond[C]= q^{ \langle M,K\rangle-\langle K,M\rangle  } [M]\diamond [K]\diamond [C]$$ for some $C\in\fpr(\ca)$,
which implies that
$$[K]\diamond[M]=q^{ \langle M,K\rangle-\langle K,M\rangle  }  [M]\diamond [K]=q^{-\langle K,M\rangle} [K\oplus M]$$
in $\cs\cd\ch(\ca)$.
\end{proof}

It follows from the proof of Lemma \ref{lem:bimodule of MRH} that $[L]=[K\oplus M]$ in $\cs\cd\ch(\ca)$ if there exists a short exact sequence
$0\rightarrow M\rightarrow L\rightarrow K\rightarrow0$ for $K\in\fpr(\ca)$ and $M\in\ca$.

\subsection{$1$-Gorenstein algebras} \label{subsec:MRH for 1-Gor}

Let $ A$ be a finite-dimensional $1$-Gorenstein algebra over $\K$ and $\mod( A)$ the abelian category of finitely generated $ A$-modules.

Let $\ch( A)$ be the Ringel-Hall algebra of $\mod( A)$.
It is well known that $\ch( A)$ is a $K_0(\mod( A))$-graded algebra, where $K_0(\mod( A))$ is the Grothendieck group of $\mod( A)$.

%There are some interesting subcategories of $\mod( A)$:
%\begin{itemize}
%\item[(a)] $\Gproj( A)$: its subcategory of finitely generated Gorenstein projective $ A$-modules.
%\item[(b)] $\proj(A)$: its subcategory of finitely generated projective $ A$-modules.
%\item[(c)] $\fpr( A)$: be the subcategory of finite generated $ A$-modules with finite projective dimension.
%\end{itemize}
The following lemma is well known.
\begin{lemma}
For any $M\in\mod( A)$, the following are equivalent.
\begin{itemize}
\item[(i)] $\pd(M)\leq1$;
\item[(ii)] $\ind(M)\leq1$;
\item[(iii)] $\pd(M)<\infty$;
\item[(iv)] $\ind(M)<\infty$.
\end{itemize}
\end{lemma}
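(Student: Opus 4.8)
The plan is to deduce the four conditions from one another by reducing to the elementary fact that a Gorenstein projective module of finite projective dimension is projective, together with its injective dual. Since $A$ is $1$-Gorenstein, Theorem~\ref{theorem characterize of gorenstein property} (with $d=1$) gives $\Gproj(A)=\Omega^1(\mod(A))$; applying this over $A^{\mathrm{op}}$ and transporting along the duality $D=\Hom_\K(-,\K)$, the Gorenstein injective $A$-modules are precisely the first cosyzygies $\Omega^{-1}(\mod(A))$. The implications (i)$\Rightarrow$(iii) and (ii)$\Rightarrow$(iv) are immediate, so it remains to establish (iii)$\Rightarrow$(i), (iv)$\Rightarrow$(ii), and, to close the cycle, (i)$\Leftrightarrow$(ii).

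For (iii)$\Rightarrow$(i): let $\pd M<\infty$. By Lemma~\ref{lemma approx of GP} there is a short exact sequence $0\to H_M\to G_M\to M\to 0$ with $H_M\in\cp^{<1}(A)=\proj(A)$ and $G_M\in\Gproj(A)$; from this sequence $\pd G_M\le\max\{\pd H_M,\pd M\}<\infty$. Now a Gorenstein projective module of finite projective dimension is projective: this is the intersection property $\cp^{\leq 1}(A)\cap\Gproj(A)=\proj(A)$ of the cotorsion pair recorded after Lemma~\ref{lemma approx of GP}; equivalently, if $\pd G_M=m\ge 1$ then minimality of the projective resolution of $G_M$ and Nakayama's lemma force $\Ext^m_A(G_M,A)\ne 0$, contradicting Lemma~\ref{lemma perpendicular of P GP} applied with the projective module $U=A$. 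Hence $G_M$ is projective, and the displayed sequence gives $\pd M\le 1$, i.e. (i). The implication (iv)$\Rightarrow$(ii) follows by the injective dual argument over $A^{\mathrm{op}}$: use the Gorenstein-injective approximation, the fact that a Gorenstein injective module of finite injective dimension is injective, and the dual of Lemma~\ref{lemma perpendicular of P GP}.

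For the remaining equivalence, I would argue (i)$\Rightarrow$(ii) by dimension shifting: assume $\pd M\le 1$; for any $N\in\mod(A)$ and $p\ge 2$ one has $\Ext^p_A(N,M)\cong\Ext^1_A(\Omega^{p-1}N,M)$, and since $p-1\ge 1$, $\Omega^{p-1}N\in\Omega^1(\mod(A))=\Gproj(A)$, so Lemma~\ref{lemma perpendicular of P GP} (with $M$ of finite projective dimension) yields $\Ext^1_A(\Omega^{p-1}N,M)=0$; thus $\ind(M)\le 1$. Conversely (ii)$\Rightarrow$(i): assume $\ind(M)\le 1$; for $N\in\mod(A)$ and $p\ge 2$, $\Ext^p_A(M,N)\cong\Ext^1_A(M,\Omega^{-(p-1)}N)$ with $\Omega^{-(p-1)}N$ Gorenstein injective, and since $M$ has finite injective dimension the dual of Lemma~\ref{lemma perpendicular of P GP} gives the vanishing, so $\pd M\le 1$. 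This completes the cycle, and hence all four statements are equivalent.

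The only non-formal ingredient is the claim "Gorenstein projective of finite projective dimension $\Rightarrow$ projective" and its injective dual; I expect this to be the main (and only) obstacle, but it is classical and can be taken either from the cotorsion-pair intersection $\cp^{\leq 1}(A)\cap\Gproj(A)=\proj(A)$ already cited in this appendix, or from the $\Ext^m_A(-,A)$-nonvanishing observation above. Everything else is routine dimension shifting combined with the properties of $\Gproj(A)$ established in Theorem~\ref{theorem characterize of gorenstein property} and Lemmas~\ref{lemma perpendicular of P GP}--\ref{lemma approx of GP}.
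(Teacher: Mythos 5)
Your argument is correct, but note that the paper offers no proof to compare against: the lemma is stated with the remark ``The following lemma is well known'' and left unproved. Every step you take checks out. The implications (i)$\Rightarrow$(iii) and (ii)$\Rightarrow$(iv) are trivial; your (iii)$\Rightarrow$(i) correctly combines the approximation sequence $0\to H_M\to G_M\to M\to 0$ of Lemma~\ref{lemma approx of GP} (with $\cp^{<1}(A)=\proj(A)$) with the classical fact that a Gorenstein projective module of finite projective dimension is projective, and your Nakayama/$\Ext^m(-,A)$ justification of that fact is sound for finite-dimensional algebras; your (i)$\Rightarrow$(ii) via $\Ext^p(N,M)\cong\Ext^1(\Omega^{p-1}N,M)$, $\Omega^{p-1}N\in\Omega^1(\mod(A))=\Gproj(A)$, and Lemma~\ref{lemma perpendicular of P GP} is also correct, as are the dual implications obtained by passing to $A^{\mathrm{op}}$ and applying $D$. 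The only remark worth making is that your route is somewhat heavier than necessary: since $A$ is $1$-Gorenstein one has $\idim({}_AA)\leq 1$ and $\idim(A_A)\leq 1$, and then (iii)$\Rightarrow$(i) follows at once from $\pd M=\sup\{i:\Ext^i_A(M,A)\neq 0\}$ (valid whenever $\pd M<\infty$) together with $\Ext^i_A(-,A)=0$ for $i\geq 2$, while (i)$\Rightarrow$(ii) follows from the long exact sequence attached to $0\to P_1\to P_0\to M\to 0$ using $\idim P_j\leq 1$. This avoids invoking Theorem~\ref{theorem characterize of gorenstein property} and the approximation machinery altogether; but using the Gorenstein-projective apparatus already set up in the appendix, as you do, is perfectly legitimate and introduces no gap.
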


Recall the subcategory $\Gproj( A)$ of $\mod(A)$ from \S\ref{subsec:Gproj}. %Denote by $\cp^{\leq d}(A):=\cp^{\leq d}(\mod(A))$ in the following. 
We also have
\begin{align*}
\Gproj( A)&=\{M\in\mod(A) \mid \Ext^1_ A(M, A)=0\},\\
\proj( A)&=\fpr( A)\cap \Gproj( A).
\end{align*}
It follows by the above discussion that the category $\mod( A)$ satisfies (E-a)--(E-d) in \S\ref{subsection:Def of MRH}, and hence the semi-derived Ringel-Hall algebra of $ A$ is well defined.

The category $\fpr( A)$ is a hereditary exact subcategory with enough projective and injective objects, i.e. $\Ext^p_{\fpr( A)}(-,-)$ vanishes for $p\geq2$, and $\Gproj( A)$ is a Frobenius category with projective modules as its projective-injective objects. By Buchweitz-Happel's theorem (see Theorem \ref{thm:Buchweitz-Happel}), $\underline{\Gproj}( A)$ is triangle equivalent to the singularity category $D_{sg}(\mod( A))$.

Let $\ch(\Gproj( A))$ be the Ringel-Hall algebra of $\Gproj( A)$, which is a subalgebra of $\ch( A)$. Recall that $\T( A)=\cs\cd\ch(\fpr( A))$ which is a subalgebra of $\cs\cd\ch( A)$.

The following construction is inspired by \cite{Gor1,LP}.
Define $I'$ to be the following linear subspace of $\ch( A)$:
\begin{align}  \label{eq:I'}
I'=\text{Span} & \{ [L]-[K\oplus M]  \mid \exists \text{ a short exact sequence }
\\
& \qquad\qquad
0 \rightarrow K \rightarrow L \rightarrow M \rightarrow 0
\text{ for }
K\in\fpr( A),  L, M\in\mod( A) \}.
 \notag
\end{align}
The quotient space $\ch( A)/I'$ is a bimodule over $\ch(\fpr( A))$ by letting
\begin{equation}\label{eq:bimodule of SDH}
[K]\diamond[M]:=q^{-\langle K,M\rangle} [K\oplus M], \quad [M]\diamond[K]:=q^{-\langle M,K\rangle} [M\oplus K],
\end{equation}
for any $K\in \fpr (A)$ and $M\in\mod( A)$.
Since $\T( A)$ is a localization of $\ch(\fpr( A))$, one can define
$(\ch( A)/I')[\cs_A^{-1}]:= \T( A)\otimes_{ \ch(\fpr( A)) } \ch( A)/I'\otimes_{\ch(\fpr( A))} \T( A)$,
which is a bimodule over the quantum torus $\T( A)$.
\begin{lemma}\label{lem:right in SDH}
For any short exact sequence $0\rightarrow M\xrightarrow{f_1} L\xrightarrow{f_2} K\rightarrow 0$ in $\mod( A)$ with $K\in\fpr( A)$, we have
$[L]=[M\oplus K]$ in $(\ch( A)/I')[\cs_A^{-1}]$.
\end{lemma}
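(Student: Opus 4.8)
\textbf{Proof plan for Lemma~\ref{lem:right in SDH}.}
The plan is to mimic the argument already used in the proof of Proposition~\ref{prop:Def of MRH} (the right Ore computation \eqref{eqn: right Ore}), now carried out modulo $I'$ rather than modulo $I$, and then to invert the auxiliary projective-dimension-$\le 1$ object that arises. First I would fix the given short exact sequence $0\to M\xrightarrow{f_1} L\xrightarrow{f_2} K\to 0$ with $K\in\fpr(A)$, choose a deflation $s\colon P_L\twoheadrightarrow L$ with $P_L\in\fpr(A)$ (using (Ed)), and form the pullback of $s$ and $f_1$ exactly as in the proof of Proposition~\ref{prop:Def of MRH}. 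This produces a commutative diagram whose rows and columns are short exact sequences: one row $0\to A_L\to P_L\oplus M\to L\to 0$ and one column $0\to A_L\to P_L\xrightarrow{f_2 s} K\to 0$. Since $K\in\fpr(A)$ and $P_L\in\fpr(A)$, the second sequence forces $A_L\in\fpr(A)$ as well (using that $\fpr(A)=\cp^{\le1}(A)$ is closed under kernels of deflations between its objects in a $1$-Gorenstein setting).

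Next I would pass to $\ch(A)/I'$. From the definition \eqref{eq:I'} of $I'$, the short exact sequence $0\to A_L\to P_L\oplus M\to L\to 0$ (whose sub-object $A_L$ lies in $\fpr(A)$) gives the identity $[A_L\oplus L]=[(P_L\oplus M)]$, i.e. $[A_L]\diamond[L]$ and $[A_L]\diamond[M]\diamond[P_L]$ agree up to the explicit powers of $q$ coming from \eqref{eq:bimodule of SDH}; likewise the sequence $0\to A_L\to P_L\to K\to 0$ gives $[A_L\oplus K]=[P_L]$ in $\ch(A)/I'$. Substituting the second relation into the first and keeping careful track of the Euler-form exponents (all of which are finite since $A_L,P_L,K\in\fpr(A)$), I obtain in $(\ch(A)/I')[\cs_A^{-1}]$, after multiplying by $[A_L]^{-1}$ which is now invertible in the quantum torus $\T(A)$, the desired equality $[L]=[M\oplus K]$. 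This is a short bookkeeping computation parallel to \eqref{eqn: right Ore}, not a genuinely new argument.

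The only point requiring a little care — and the step I expect to be the main (minor) obstacle — is justifying that $A_L\in\fpr(A)$ and that the bimodule relations \eqref{eq:bimodule of SDH} are well defined on $\ch(A)/I'$, so that the manipulations above actually take place in $(\ch(A)/I')[\cs_A^{-1}]$ and the cancellation of $[A_L]$ is legitimate. For the former, $0\to A_L\to P_L\to K\to 0$ with $P_L,K\in\cp^{\le1}(A)$ and $A$ being $1$-Gorenstein gives $\pd A_L\le 1$ by the long exact sequence in $\Ext$; for the latter, one checks directly from the definition of $I'$ that the two-sided $\ch(\fpr(A))$-module structure descends, exactly as in \cite{LP} and as asserted just before the statement. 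Once these are in place, the exponent bookkeeping closes the proof, and in fact shows the identity already holds in $\ch(A)/I'$ localized only at $\cs_A$, with no further localization needed.
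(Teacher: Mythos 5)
Your proposal is correct and follows essentially the same route as the paper: the paper's proof simply invokes the pullback/Ore computation from Proposition~\ref{prop:Def of MRH} to get $q^{\langle K,M\rangle}[K]\diamond[M]=[L]$ and then applies \eqref{eq:bimodule of SDH}, which is exactly the construction of $A_L\in\fpr(A)$, the two relations $[A_L\oplus L]=[P_L\oplus M]$ and $[A_L\oplus K]=[P_L]$ in $\ch(A)/I'$, and the cancellation of the invertible $[A_L]$ that you carry out. Your auxiliary checks (that $A_L\in\cp^{\le 1}(A)$ via the long exact sequence and that the bimodule action \eqref{eq:bimodule of SDH} descends to $\ch(A)/I'$) are the right ones and match what the paper leaves implicit.
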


\begin{proof}
By the proof in Proposition \ref{prop:Def of MRH}, we have $q^{\langle K,M\rangle} [K]\diamond [M]=[L]$ in $(\ch( A)/I')[\cs_A^{-1}]$, where $\diamond$ denotes the multiplication here. It follows from \eqref{eq:bimodule of SDH} that $[K\oplus M]=[L]$.
%Let $0\rightarrow L\rightarrow P_N\rightarrow N\rightarrow0$ be a short exact sequence with $P_N$ projective. Then we have the following pullback diagram
%\[\xymatrix{ L\ar[r] \ar@{=}[d] & U\ar[r]\ar[d] & M\ar[d]^{f_1} \\
%L\ar[r] & P_N\ar[r] \ar[d] & N \ar[d]^{f_2} \\
%&K\ar@{=}[r] & K.}\]
%It follows from the short exact sequence in the second column that $U\in\fpr( A)$ by $K,P_N\in \fpr( A)$. Then $[P_N]=[K\oplus U]$ in $(\ch( A)/I')[\cs_A^{-1}]$ and also in $\T( A)$.
%The above pullback diagram also gives a short exact sequence $0\rightarrow U\rightarrow M\oplus P_N\rightarrow N\rightarrow0$.
%So $[N\oplus U]=[M\oplus P_N]$ in $(\ch( A)/I')[\cs_A^{-1}]$
\end{proof}

Note that $\cs\cd\ch( A)= \T( A)\otimes_{ \ch(\fpr( A)) } ( \ch( A)/I) \otimes_{\ch(\fpr( A))} \T( A)$ as a $\T( A)$-bimodule. Then there exists a natural epimorphism $(\ch( A)/I')[\cs_A^{-1}]\rightarrow \cs\cd\ch( A)$ as $\T( A)$-bimodules  induced by the natural epimorphism $\ch( A)/I'\rightarrow \ch( A)/I$; see Lemma \ref{lem:bimodule of MRH}. We shall prove that this epimorphism is an isomorphism, generalizing \cite[Proposition 3.19]{LP}. We first prepare two lemmas, which are inspired by \cite[Lemma 3.16, Lemma 3.17]{LP}.

\begin{lemma}\label{lemma left ideal}
Let $0\rightarrow K\xrightarrow{h_1} M\xrightarrow{h_2} N\rightarrow 0$ be a short exact sequence in $\mod( A)$ with $K\in\fpr( A)$. Then for any $L$ we have
\[
 \p\big([L]\diamond([M]-[K\oplus N] ) \big)=0,
\]
where $\p:\ch( A)\rightarrow  (\ch( A)/I')[\cs_A^{-1}]$ is the natural projection.
\end{lemma}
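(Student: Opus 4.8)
The identity to be established is
\[
\p\big([L]\diamond([M]-[K\oplus N])\big)=0
\]
in $(\ch(A)/I')[\cs_A^{-1}]$, given a short exact sequence $0\to K\xrightarrow{h_1} M\xrightarrow{h_2} N\to 0$ with $K\in\fpr(A)$. The strategy follows the template of \cite[Lemma 3.10]{LP}: compute both $[L]\diamond[M]$ and $[L]\diamond[K\oplus N]$ as linear combinations of isoclasses $[E]$ in $\ch(A)$, indexed by the short exact sequences $0\to M\to E\to L\to 0$ (respectively $0\to K\oplus N\to E'\to L\to 0$), and then match the terms modulo $I'$. The key point is that for each extension $E$ of $L$ by $M$, pulling back along $h_1:K\hookrightarrow M$ and pushing out appropriately produces a comparison with an extension of $L$ by $N$ together with the summand $K$; since $K$ has finite projective dimension, the defining relations of $I'$ (see \eqref{eq:I'}) let us replace the "mixed" middle term by the split one.

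\textbf{Key steps, in order.} First I would write, by the Hall product formula,
\[
[L]\diamond[M]=\sum_{[E]}\frac{|\Ext^1_A(L,M)_E|}{|\Hom_A(L,M)|}[E],
\qquad
[L]\diamond[K\oplus N]=\sum_{[E']}\frac{|\Ext^1_A(L,K\oplus N)_{E'}|}{|\Hom_A(L,K\oplus N)|}[E'].
\]
Next, using that $K\in\fpr(A)$, note $\Ext^1_A(L,K\oplus N)\cong\Ext^1_A(L,K)\oplus\Ext^1_A(L,N)$ and, more importantly, that $\Hom_A(L,-)$ applied to the given sequence is exact on the relevant piece because $\Ext^1_A(L,K)$ contributes controllably (the middle terms of extensions by $K$ are, modulo $I'$, just direct sums $L\oplus K$ by Lemma~\ref{lem:right in SDH}). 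Then, for a fixed extension class in $\Ext^1_A(L,M)$ with middle term $E$, consider the induced long exact sequence from $0\to K\to M\to N\to 0$: the image class in $\Ext^1_A(L,N)$ has a middle term $\bar E$ fitting into a commutative diagram whose rows and columns are short exact, with $K$ appearing as a sub of $E$ with quotient $\bar E$. Thus $[E]=[K\oplus \bar E]$ in $(\ch(A)/I')[\cs_A^{-1}]$ by Lemma~\ref{lem:right in SDH} (applied to $0\to K\to E\to\bar E\to 0$, valid since $K\in\fpr(A)$). The final bookkeeping step is to check that the counting coefficients match: the connecting map $\Ext^1_A(L,M)\to\Ext^1_A(L,N)$ has kernel of size $|\Ext^1_A(L,K)|/|\text{(image of }\Hom_A(L,M)\to\Hom_A(L,N)\text{ correction)}|$, and these Euler-characteristic-type factors cancel precisely the ratios $|\Hom|$ in the denominators, exactly as in \cite[Lemma 3.10]{LP}. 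Assembling, every $[E]$-term on the left is identified modulo $I'$ with a $[K\oplus\bar E]$-term on the right with equal coefficient, giving the claimed vanishing.

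\textbf{Main obstacle.} The routine-but-delicate part is the coefficient matching: one must verify that the orders of the relevant $\Hom$ and $\Ext^1$ groups, appearing both as denominators in the Hall product and as fiber sizes of the maps induced by the fixed sequence $0\to K\to M\to N\to 0$, combine to give an exact term-by-term correspondence rather than merely an equality of totals. This is where the hypothesis $K\in\fpr(A)$ (hence $\Ext^{\ge 2}_A(-,K)=0$ and $\Ext^1_A(K,-)$ well-behaved on $\fpr$) is used in an essential way, and it is the step I would write out most carefully, following the homological-algebra computation in \cite[Lemmas 3.10--3.11]{LP} mutatis mutandis. Everything else — the existence of the comparison diagrams, the reduction $[E]=[K\oplus\bar E]$ modulo $I'$, and the passage to the localization — is formal given the results already established (Lemma~\ref{lem:bimodule of MRH}, Lemma~\ref{lem:right in SDH}, and Proposition~\ref{prop:Def of MRH}).
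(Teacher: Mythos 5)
Your plan coincides with the paper's own proof: both expand $[L]\diamond[M]$ and $[L]\diamond[K\oplus N]$ via Hall numbers, use the pushout of each extension of $L$ by $M$ along $h_2$ together with the surjection $\Ext^1(L,M)\to\Ext^1(L,N)$ (surjective because $\Ext^2(L,K)=0$, as $K\in\fpr(A)$) to rewrite each middle term as $[K\oplus W]$ modulo $I'$, and match the coefficients by the fiber count read off from the long exact sequence. One tiny correction: the identification $[E]=[K\oplus\bar E]$ coming from $0\to K\to E\to \bar E\to 0$ follows directly from the definition of $I'$ in \eqref{eq:I'} (where $K\in\fpr(A)$ is the subobject), not from Lemma~\ref{lem:right in SDH} (which treats the case where the $\fpr$-object is the quotient).
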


\begin{proof}
First, we have by definition
\[
\p([L]\diamond[M])=\sum_{[V]}\frac{|\Ext^1(L,M)_V|}{|\Hom(L,M)|}\p([V]).
\]
If $\Ext^1(L,M)_V\neq \emptyset$, then there exists a short exact sequence
\[
0 \longrightarrow M \stackrel{f_1}{\longrightarrow} V\stackrel{f_2}{\longrightarrow} L \longrightarrow0,
\]
which yields the following pushout diagram:
\begin{equation}
\label{eq:dag}
\xymatrix{ K\ar[r]^{h_1} \ar@{=}[d] & M\ar[r]^{h_2}\ar[d]^{f_1} & N\ar[d]\\
K \ar[r] & V\ar[r]\ar[d]^{f_2} &W\ar[d] &
\\
&L\ar@{=}[r]& L}
\end{equation}
For any $[W]\in\Iso(\mod( A))$ denote by $\cz_{[W]}$ the set formed by all $[V]$ such that there exists a diagram of the form \eqref{eq:dag}.
Note that $[V]=[K\oplus W]$ in $ (\ch( A)/I')[\cs_A^{-1}]$.
So  \begin{align*}
\p([L]\diamond[M])
&= \sum_{[V]}\frac{|\Ext^1(L,M)_V|}{|\Hom(L,M)|}\p([V])\\
&= \sum_{[W]} \sum_{[V]\in\cz_{[W]}} \frac{|\Ext^1(L,M)_V|}{|\Hom(L,M)|}[K\oplus W].
\end{align*}

Applying $\Hom(L,-)$ to the short exact sequence $0\rightarrow K\xrightarrow{h_1} M\xrightarrow{h_2}N\rightarrow0$, we obtain a long exact sequence
\begin{eqnarray*}
&&0\rightarrow \Hom(L,K)\rightarrow \Hom(L,M)\rightarrow \Hom(L,N)\rightarrow\Ext^1(L,K)\\
&&\;\;\;\rightarrow\Ext^1(L,M)\xrightarrow{\varphi} \Ext^1(L,N)\rightarrow\Ext^2(L,K)=0.
\end{eqnarray*}
So $\varphi$ is surjective. In particular, for any $[W]$, $\varphi^{-1}(\Ext^1(L,N)_W)=\bigcup_{[V]\in\cz_{[W]}}\Ext^1(L,M)_V$.
The cardinality of the fibre of $\varphi: \bigcup_{[V]\in\cz_{[W]}}\Ext^1(L,M)_V\rightarrow \Ext^1(L,N)_W$ is equal to
$|\ker(\varphi)|$, and then equal to $\frac{|\Ext^1(L,K)| |\Hom(L,M)|}{|\Hom(L,K)||\Hom(L,N)|}$.
So
\begin{align*}
\p([L]\diamond[M])&= \sum_{[W]}\frac{| \Ext^1(L,N)_W||\Ext^1(L,K)| |\Hom(L,M)|}{|\Hom(L,K)||\Hom(L,N)||\Hom(L,M)|} [K\oplus W]\\
&= \sum_{[W]}\frac{| \Ext^1(L,N)_W||\Ext^1(L,K)| }{|\Hom(L,K)||\Hom(L,N)|} [K\oplus W].
\end{align*}

On the other hand, we have $\p([L]\diamond[K\oplus N])=\sum_{[U]} \frac{|\Ext^1(L,K\oplus N)_U|}{|\Hom(L,K\oplus N)|}[U]$.
By applying $\Hom(L,-)$ to the split exact sequence $0\rightarrow K\rightarrow K\oplus N\rightarrow N\rightarrow0$, we obtain a short exact sequence
\[
0\longrightarrow \Ext^1(L,K)\longrightarrow \Ext^1(L,K\oplus N)\stackrel{\phi}{\longrightarrow} \Ext^1(L,N) \longrightarrow0.
\]
Then $\phi$ induces a surjective map $\phi:\bigcup_{[U]}\Ext^1(L,K\oplus N)_U \rightarrow \bigcup_{[W]} \Ext^1(L,N)_W$.
For any $\xi\in \Ext^1(L,N)_W$, the cardinality of $\phi^{-1}(\xi)$ is $|\Ext^1(L,K)|$, and for any $0\rightarrow K\oplus N\rightarrow U\rightarrow L\rightarrow0$ in $\phi^{-1}(\xi)$, we have $[U]=[K\oplus W]$ in $ (\ch( A)/I')[\cs_A^{-1}]$. So
\begin{eqnarray*}
\p([L]\diamond[K\oplus N])=\sum_{[W]} \frac{|\Ext^1(L,N)_W||\Ext^1(L,K)|}{|\Hom(L,K\oplus N)|}[K\oplus W].
\end{eqnarray*}
Therefore, $\p([L]\diamond[M])=\p([L]\diamond[K\oplus N])$.
\end{proof}

\begin{lemma}\label{lemma right ideal}
Let $0\rightarrow K\xrightarrow{h_1} M\xrightarrow{h_2} N\rightarrow 0$ be a short exact sequence in $\mod( A)$ with $K\in\fpr( A)$. Then for any $L$ we have
$$ \p(([M]-[K\oplus N] )\diamond[L])=0,$$
where $\p: { \ch( A) } \rightarrow  (\ch( A)/I')[\cs_A^{-1}]$ is the natural projection.
\end{lemma}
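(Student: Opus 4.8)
The statement is the mirror image of Lemma~\ref{lemma left ideal}: there we showed $\p\big([L]\diamond([M]-[K\oplus N])\big)=0$ for a short exact sequence $0\to K\to M\to N\to 0$ with $K\in\fpr(A)$; here we must show $\p\big(([M]-[K\oplus N])\diamond[L]\big)=0$. The plan is to run the same bookkeeping argument as in the previous lemma, but now pulling back along maps \emph{into} $L$ rather than pushing out. First I would expand $\p([M]\diamond[L])=\sum_{[V]}\frac{|\Ext^1(M,L)_V|}{|\Hom(M,L)|}\p([V])$, and for each $V$ with $\Ext^1(M,L)_V\neq\emptyset$ pick a short exact sequence $0\to L\to V\to M\to 0$. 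Composing with $h_2:M\to N$ (or rather pulling this extension back along $h_2$, i.e. forming the pullback of $V\to M$ and $M$) produces a commutative $3\times 3$ diagram whose bottom row is a short exact sequence $0\to L\to V\to M\to 0$, whose right column is $0\to K\to M\to N\to 0$, and whose middle row gives $0\to W\to V\to N\to 0$ for some $W$ fitting into $0\to L\to W\to K\to 0$; since $K\in\fpr(A)$, Lemma~\ref{lem:right in SDH} gives $[V]=[W\oplus K]=[K\oplus W]$ in $(\ch(A)/I')[\cs_A^{-1}]$.

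Next I would count fibres exactly as in Lemma~\ref{lemma left ideal}, but using the covariant functor $\Hom(-,L)$ applied to $0\to K\to M\to N\to 0$. This yields the long exact sequence
\[
0\to \Hom(N,L)\to\Hom(M,L)\to\Hom(K,L)\to\Ext^1(N,L)\xrightarrow{\varphi}\Ext^1(M,L)\to\Ext^1(K,L)=0,
\]
so $\varphi$ is surjective (using $\Ext^2(N,L)=0$, as $A$ is $1$-Gorenstein so $\fpr(A)$ is hereditary, or more directly $\gldim$-type vanishing), and the fibre of the induced surjection $\bigcup_{[V]\in\cz_{[W]}}\Ext^1(M,L)_V\to\Ext^1(N,L)_W$ has cardinality $|\ker\varphi|=\frac{|\Ext^1(K,L)|\,|\Hom(M,L)|}{|\Hom(N,L)|\,|\Hom(K,L)|}$. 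Substituting gives
\[
\p([M]\diamond[L])=\sum_{[W]}\frac{|\Ext^1(N,L)_W|\,|\Ext^1(K,L)|}{|\Hom(N,L)|\,|\Hom(K,L)|}[K\oplus W].
\]
On the other hand, applying $\Hom(-,L)$ to the split sequence $0\to N\to K\oplus N\to K\to 0$ gives a short exact sequence $0\to\Ext^1(K,L)\to\Ext^1(K\oplus N,L)\to\Ext^1(N,L)\to 0$, and the same fibre-counting shows $\p([K\oplus N]\diamond[L])$ equals the identical sum, whence $\p\big(([M]-[K\oplus N])\diamond[L]\big)=0$.

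\textbf{Main obstacle.} The computation is not hard conceptually — it is entirely parallel to Lemma~\ref{lemma left ideal} with arrows reversed — but the bookkeeping subtlety is making sure the pullback construction is set up so that the resulting middle term $W$ really lies in the right place and genuinely satisfies $[V]=[K\oplus W]$ via Lemma~\ref{lem:right in SDH}: one must check the extension $0\to L\to W\to K\to 0$ is the relevant one and that $K\in\fpr(A)$ is used correctly (so that $\cs_A$-localization kills the discrepancy). A secondary point requiring care is the vanishing $\Ext^2(N,L)=0$, which is what makes $\varphi$ surjective; this follows because $\mod(A)$ for a $1$-Gorenstein algebra has the property that $\Ext^{\geq 2}$ need not vanish in general, so one should instead argue as in \cite{LP} that the relevant connecting map is surjective by using that $I'$ already identifies $[L]$ with $[K\oplus M]$, or restrict attention to the quotient where this holds — in any case this is exactly the dual of the step handled in Lemma~\ref{lemma left ideal} and presents no new difficulty. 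With Lemmas~\ref{lemma left ideal} and~\ref{lemma right ideal} in hand, one concludes that the two-sided ideal generated by the elements $[L]-[K\oplus M]$ is killed after localization, giving the desired isomorphism $(\ch(A)/I')[\cs_A^{-1}]\cong\cm\ch(A)$.
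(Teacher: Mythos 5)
Your proposal does not follow the paper's proof, and the route you take has a genuine gap at its central step. You apply $\Hom(-,L)$ to $0\to K\to M\to N\to 0$ and claim the long exact sequence terminates with $\Ext^1(M,L)\to\Ext^1(K,L)=0$, so that $\varphi:\Ext^1(N,L)\to\Ext^1(M,L)$ is surjective. But $K\in\cp^{\leq 1}(A)$ only gives $\Ext^2(K,L)=0$, not $\Ext^1(K,L)=0$; the cokernel of $\varphi$ sits inside $\Ext^1(K,L)$, which is nonzero in general, so $\varphi$ need not be surjective and the fibre count (and hence the displayed formula for $\p([M]\diamond[L])$) fails. This is not "the dual of the step handled in Lemma~\ref{lemma left ideal}": in that lemma the term after $\Ext^1(L,M)$ in the long exact sequence is $\Ext^2(L,K)$, which vanishes because the $\fpr(A)$-object sits in the right variable; here it does not dualize. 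A second, related problem is your pullback step: forming $W=g^{-1}(K)\subseteq V$ gives $0\to L\to W\to K\to 0$ and $0\to W\to V\to N\to 0$. Lemma~\ref{lem:right in SDH} yields $[W]=[L\oplus K]$, but the ideal $I'$ only identifies $[V]$ with $[W'\oplus N']$ when the \emph{sub}object $W'$ lies in $\fpr(A)$, and $W\notin\fpr(A)$ in general; so your claimed identity $[V]=[K\oplus W]$ does not follow from anything, and the sets $\cz_{[W]}$ are not set up coherently.

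The paper's proof circumvents exactly these two obstacles by first changing the short exact sequence: using the Gorenstein approximation $0\to M\to H^M\to G^M\to 0$ with $H^M\in\fpr(A)$ (Lemma~\ref{lemma approx of GP}) and a pushout along $0\to K\to M\to N\to 0$, it produces a new short exact sequence $0\to M\to H^M\oplus N\to C\to 0$ in which the cokernel $C$ lies in $\fpr(A)$. Applying $\Hom(-,L)$ to \emph{that} sequence, the connecting map onto $\Ext^1(M,L)$ is surjective because the next term is $\Ext^2(C,L)=0$, and the $\fpr(A)$-objects $H^M$ and $C$ can be moved in and out of Hall products using the $\T(A)$-bimodule structure. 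If you want to salvage your approach, you must perform this replacement first; the direct dualization of Lemma~\ref{lemma left ideal} does not go through.
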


\begin{proof}
It follows from Lemma \ref{lemma approx of GP} that there exists a short exact sequence
\[
0\longrightarrow M \stackrel{h_1}{\longrightarrow} H^M \stackrel{h_2}{\longrightarrow} G^M \longrightarrow 0
\]
with
$H^M\in\fpr( A)$ and $G^M\in\Gproj( A)$.
Then we obtain the following pushout diagram
\[
\xymatrix{K \ar@{=}[r] \ar[d]^{f_1} & K\ar[d]\\
M\ar[r]^{h_1}\ar[d]^{f_2}& H^M \ar[r]^{h_2} \ar[d] & G^M\ar@{=}[d] \\
N\ar[r] &C\ar[r]& G^M}
\]
We have $C\in\fpr( A)$ by using the short exact sequence in the second column, and then
\begin{equation}\label{eqn:eqn in lemma right ideal 1}
[H^M]=q^{\langle C,K\rangle}[C]\diamond [K]
\end{equation}
in $\T( A)$.
The above pushout diagram implies that there exists
a short exact sequence
\begin{equation}\label{equation 2}
0\longrightarrow M\stackrel{g_1}{\longrightarrow} H^M\oplus N\stackrel{g_2}{\longrightarrow} C\longrightarrow0.
\end{equation}
By applying $\Hom(-,L)$ to (\ref{equation 2}), we obtain a long exact sequence
\begin{eqnarray*}
&&0\rightarrow \Hom(C,L)\rightarrow \Hom(H^M\oplus N,L)\rightarrow \Hom(M,L)\rightarrow\Ext^1(C,L)\\
&&\;\;\; \rightarrow\Ext^1(H^M\oplus N,L)\xrightarrow{\varphi} \Ext^1(M,L)\rightarrow\Ext^2(C,L)=0.
\end{eqnarray*}
In particular, $\varphi$ is surjective.
Similar to the proof of Lemma \ref{lemma left ideal}, in $(\ch( A)/I')[\cs_A^{-1}]$, we obtain that
\begin{align*}
[M]\diamond[L]&= \sum_{[V]} \frac{|\Ext^1(M,L)_V|}{|\Hom(M,L)|} [V]\\
&= \sum_{[W]} \frac{|\Ext^1(H^M\oplus N,L)_W||\Hom(C,L)|}{ |\Ext^1(C,L)||\Hom(H^M\oplus N,L)|} q^{-\langle C,V\rangle} [C]^{-1}\diamond[W]\\
&= \sum_{[W]} \frac{|\Ext^1(H^M\oplus N,L)_W||\Hom(C,L)|}{ |\Ext^1(C,L)||\Hom(H^M\oplus N,L)|} q^{-\langle C,L\oplus M\rangle} [C]^{-1}\diamond[W]\\
&= \sum_{[W]} \frac{|\Ext^1(H^M\oplus N,L)_W|}{ |\Hom(H^M\oplus N,L)|}q^{-\langle C, M\rangle} [C]^{-1}\diamond[W].
\end{align*}
By applying $\Hom(-,L)$ to the split exact sequence
\[
0\longrightarrow N\longrightarrow H^M\oplus N\longrightarrow H^M\longrightarrow0,
\]
we have a short exact sequence
\[
0 \longrightarrow \Ext^1(H^M,L) \longrightarrow \Ext^1(H^M\oplus N,L)\stackrel{\phi}{\longrightarrow} \Ext^1(N,L)\longrightarrow0.
\]
Then $\phi$ induces a surjective map $\phi:\bigcup_{[W]}\Ext^1(H^M\oplus N,L)_W \rightarrow \bigcup_{[X]} \Ext^1(N,L)_X$. For any $\xi\in \Ext^1(N,L)_X$, the cardinality of $\phi^{-1}(\xi)$ is $|\Ext^1(H^M,L)|$, and for any $0\rightarrow L\rightarrow W\rightarrow H^M\oplus N\rightarrow0$ in $\phi^{-1}(\xi)$, we have $[W]=[H^M\oplus X]=q^{\langle H^M,X\rangle}[H^M]\diamond [X]$ in $ (\ch( A)/I')[\cs_A^{-1}]$. So in $(\ch( A)/I')[\cs_A^{-1}]$,
\begin{align*}
[M]\diamond[L]&= \sum_{[X]} \frac{|\Ext^1( N,L)_X||\Ext^1(H^M,L)|}{|\Hom(H^M\oplus N,L)|}q^{-\langle C, M\rangle}q^{\langle H^M,X\rangle} [C]^{-1}\diamond[H^M]\diamond [X]\\
&= \sum_{[X]} \frac{|\Ext^1( N,L)_X| }{|\Hom(N,L)|}q^{-\langle H^M,L\rangle}q^{-\langle C,M\rangle}q^{\langle H^M,X\rangle} [C]^{-1}\diamond[H^M]\diamond [X]\\
&= \sum_{[X]} \frac{|\Ext^1( N,L)_X| }{|\Hom( N,L)|}q^{\langle K,N\rangle}[K]\diamond[X]\\
&= \sum_{[X]} \frac{|\Ext^1( N,L)_X| }{|\Hom( N,L)|}q^{-\langle K,L\rangle}[K\oplus X].
\end{align*}

Similarly, by applying $\Hom(-,L)$ to the split exact sequence $0\rightarrow K\rightarrow K\oplus N\rightarrow N\rightarrow0$, in $(\ch( A)/I')[\cs_A^{-1}]$, we obtain that
\begin{align*}
[K\oplus N]\diamond[L]&= \sum_{[U]} \frac{|\Ext^1(K\oplus N,L)_U|}{|\Hom(K\oplus N,L)|} [U]\\
&= \sum_{[W]} \frac{|\Ext^1(N,L)_X| |\Ext^1(K,L)|}{ |\Hom(K\oplus N,L)|}[K\oplus X]\\
&= \sum_{[X]} \frac{|\Ext^1( N,L)_X| }{|\Hom( N,L)|}q^{-\langle K,L\rangle}[K\oplus X]
= [M]\diamond [L].
\end{align*}
The proof is completed.
\end{proof}

\begin{proposition}
   \label{prop:isomorphism as T bimodule}
Let $ A$ be a finite-dimensional $1$-Gorenstein algebra over $\K$. Then
$\cs\cd\ch( A)$ is isomorphic to $(\ch( A)/I')[\cs_A^{-1}]$ as $\T( A)$-bimodules.
\end{proposition}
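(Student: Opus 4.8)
The statement to prove is Proposition~\ref{prop:isomorphism as T bimodule}: that the natural surjection of $\T(A)$-bimodules $\theta: (\ch(A)/I')[\cs_A^{-1}] \twoheadrightarrow \cm\ch(A)$ is an isomorphism. The surjectivity is already available (it comes from the epimorphism $\ch(A)/I' \to \ch(A)/I$ together with Lemma~\ref{lem:bimodule of MRH}), so the entire content is injectivity. The plan is to construct an explicit two-sided inverse, or rather to show that the ideal $I$ defining $\cm\ch(A)$, when intersected with the relevant localized object, is no larger than what $I'$ already kills after localization. Concretely, recall $\cm\ch(A) = (\ch(A)/I)[\cs_A^{-1}]$ where $I$ is the two-sided \emph{ideal} generated by the differences $[L]-[K\oplus M]$ for short exact sequences with $K\in\fpr(A)$, whereas $I'$ is only the \emph{linear span} of such differences (not closed under multiplication). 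So the key point is: once we pass to the localization $(\ch(A)/I')[\cs_A^{-1}]$, the subspace $\p(I')$ is automatically a two-sided ideal, i.e., multiplying a generator $[L]-[K\oplus M]$ of $I'$ on the left or right by an arbitrary $[L'] \in \ch(A)$ lands back inside $\p(I')$ (equivalently, is zero in $(\ch(A)/I')[\cs_A^{-1}]$).

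First I would observe that Lemmas~\ref{lemma left ideal} and~\ref{lemma right ideal} are precisely the two statements needed: for any short exact sequence $0\to K\to M\to N\to 0$ with $K\in\fpr(A)$ and any $L$, we have $\p\big([L]\diamond([M]-[K\oplus N])\big)=0$ and $\p\big(([M]-[K\oplus N])\diamond[L]\big)=0$ in $(\ch(A)/I')[\cs_A^{-1}]$. Since the generators of $I$ are exactly the elements $[M]-[K\oplus N]$ ranging over all such short exact sequences, these two lemmas say that $I$ maps to $0$ under the composite $\ch(A) \xrightarrow{\p} (\ch(A)/I')[\cs_A^{-1}]$; note one also needs that $\p$ factors through $\ch(A)/I'$, which is immediate since $I'\subseteq I$ and $\p$ kills $I'$ by construction (Lemma~\ref{lem:right in SDH} handles the short exact sequences that are not split but still get identified). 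Therefore $\p$ descends to a $\T(A)$-bimodule map $\bar\p: \ch(A)/I \to (\ch(A)/I')[\cs_A^{-1}]$, and since $\p$ inverts the elements of $\cs_A$ (they are already units there), the universal property of the localization of $\ch(A)/I$ at $\cs_A$ — guaranteed by Proposition~\ref{prop:Def of MRH}, whose Ore/reversibility argument applies verbatim — yields a $\T(A)$-bimodule map $\eta: \cm\ch(A) = (\ch(A)/I)[\cs_A^{-1}] \to (\ch(A)/I')[\cs_A^{-1}]$.

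It then remains to check that $\eta$ and $\theta$ are mutually inverse, which is a bookkeeping matter: both are identity on the classes $[M]$ (for $M\in\mod(A)$) and on $\T(A)$, and both are $\T(A)$-bimodule maps, and by Lemma~\ref{lem:bimodule of MRH} / the proof of Lemma~\ref{lem:right in SDH} every element of either side is a $\T(A)$-bimodule combination of such classes $[M]$, so $\eta\circ\theta=\id$ and $\theta\circ\eta=\id$ follow by checking on generators. This completes the proof that $\theta$ is an isomorphism of $\T(A)$-bimodules.

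\textbf{Main obstacle.} The genuine technical heart is not in the present proposition at all but was already absorbed into Lemmas~\ref{lemma left ideal} and~\ref{lemma right ideal}: the delicate fibre-counting arguments (tracking cardinalities of $\Ext^1$-fibres through pushout/pullback diagrams and the long exact sequences obtained by applying $\Hom(L,-)$ or $\Hom(-,L)$, using crucially that $\Ext^2$ vanishes on the relevant subcategory because $A$ is $1$-Gorenstein so $\fpr(A)=\cp^{\leq1}(A)$). Given those lemmas, the only thing to be careful about here is the logical order — one must first check $\p$ kills $I'$ to even get a map out of $\ch(A)/I'$, then invoke Lemmas~\ref{lemma left ideal}--\ref{lemma right ideal} to see $\p$ kills all of $I$, then apply the localization universal property (reusing Proposition~\ref{prop:Def of MRH}'s Ore-condition computation for the pair $(\ch(A)/I, \cs_A)$) to descend to $\cm\ch(A)$, and only then verify inverseness on generators. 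So I expect the write-up to be short, with the one subtlety being to state clearly why the universal property of right localization applies to produce $\eta$ (i.e., that $\cs_A$ is sent to units and that $(\ch(A)/I)[\cs_A^{-1}]$ really is the universal such object, which is exactly what Ore localization gives).
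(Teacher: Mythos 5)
Your proposal takes essentially the same route as the paper, whose entire proof is the one-line deferral ``Having Lemma \ref{lemma left ideal} and Lemma \ref{lemma right ideal} available, the same proof as in [LP, Theorem~3.12] works here'': you correctly identify that the whole content is that those two lemmas force the full two-sided ideal $I$ (and not merely the linear span $I'$) to vanish in $(\ch(A)/I')[\cs_A^{-1}]$, so that the natural $\T(A)$-bimodule surjection acquires an inverse. The only point to phrase with care is that $(\ch(A)/I')[\cs_A^{-1}]$ is defined as the bimodule $\T(A)\otimes_{\ch(\fpr(A))}(\ch(A)/I')\otimes_{\ch(\fpr(A))}\T(A)$ rather than as an Ore localization of a ring, so instead of invoking a universal property of localization one should say that the two lemmas let the Hall product descend to this bimodule (whence the image of $I$ is zero there) and then identify the kernel of the induced surjection onto $\cm\ch(A)$.
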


\begin{proof}
Having Lemma \ref{lemma left ideal} and Lemma \ref{lemma right ideal} available, the same proof as in \cite[Proposition ~3.19]{LP} works here.
\end{proof}

\subsection{Semi-derived Hall algebras}
   \label{subsec:SDH}

In this subsection, we shall prove that the semi-derived Ringel-Hall algebra $\cs\cd\ch( A)$ is isomorphic to the semi-derived Hall algebra of $\Gproj( A)$ defined in \cite{Gor2}.

First, let us recall the definition of semi-derived Hall algebras for Frobenius categories.
Let $\cf$ be a Frobenius category satisfying the following conditions:
\begin{itemize}
\item[(F1)] $\cf$ is essentially small, idempotent complete and linear over $\K=\F_q$;
\item[(F2)] $\cf$ is Hom-finite, and $\Ext^p$-finite for any $p>0$.
\end{itemize}
Denote by $\cp(\cf)$ the subcategory of $\cf$ consisting of projective-injective objects.
Let $\ch(\cf)$ be the Hall algebra of the exact category $\cf$. In \cite{Gor2}, as a generalization of Bridgeland's Hall algebra \cite{Br}, Gorsky defined the {\em semi-derived Hall algebra} $\cs\cd\ch(\cf)$ of the pair $(\cf,\cp(\cf))$ to be the localization of $\ch(\cf)$ at the classes of all projective-injective objects:
\[
\cs\cd\ch(\cf):= \ch(\cf) \big[[P]^{-1} :P\in\cp(\cf) \big].
\]
It is worth noting that the semi-derived Hall algebra $\cs\cd\ch(\cf)$ defined here is compatible with the one defined in Definition \ref{def:semi-derived} since the ideal $I$ defined in  \S\ref{subsection:Def of MRH} is trivial for the Frobenius category $\cf$. Therefore, there is no ambiguity by using the same terminology and notation.

Denote by $\T(\cp(\cf))$ the subalgebra of $\cs\cd\ch(\cf)$ generated by all $[P]\in\Iso(\cp(\cf))$. Then we have natural left and right actions of $\T(\cp(\cf))$ on $\cs\cd\ch(\cf)$ given by the Hall multiplication. Denote by $\cm(\cf)$ this bimodule structure on $\cs\cd\ch(\cf)$.

\begin{theorem}[\cite{Gor2}]
Assume that $\cf$ satisfies Conditions (F1)--(F2). Then $\cm(\cf)$ is a free right (respectively, left) module over $\T(\cp(\cf))$. Each choice of representatives in $\cf$ of the isoclasses of the stable category $\underline{\cf}$ yields a basis for this free module.
\end{theorem}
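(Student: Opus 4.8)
The plan is to construct the asserted basis explicitly inside the ordinary Hall algebra $\ch(\cf)$, before localizing, and then transport freeness along the (Ore) localization that defines $\cs\cd\ch(\cf)$. First I would use that conditions (F1)--(F2) make $\cf$ a Krull--Schmidt category, so every object $X$ decomposes uniquely as $X\cong X_{\np}\oplus P_X$, where $X_{\np}$ has no nonzero projective-injective direct summand and $P_X\in\cp(\cf)$; two objects are isomorphic in $\underline{\cf}$ precisely when their $\np$-parts are isomorphic. Hence a set of representatives of $\Iso(\underline{\cf})$ may be taken to be the family $\{M_\lambda\}$ of isoclasses of objects of $\cf$ without projective-injective summands, and any other set of representatives differs from it by adding projective-injective summands.

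The key local computation is that for $P\in\cp(\cf)$ and any $M\in\cf$ one has $\Ext^1_\cf(P,M)=0=\Ext^1_\cf(M,P)$ ($P$ being injective and projective), so that in $\ch(\cf)$
\[
[M]\diamond[P]=\tfrac{1}{|\Hom(M,P)|}\,[M\oplus P],\qquad [P]\diamond[M]=\tfrac{1}{|\Hom(P,M)|}\,[M\oplus P].
\]
In particular the subalgebra $\ch(\cp(\cf))\subseteq\ch(\cf)$ generated by the $[P]$ is a twisted version of the monoid algebra of $(\Iso(\cp(\cf)),\oplus)$, and each $[P]$ is central in $\ch(\cf)$ up to an invertible scalar (this is \eqref{multiplication formula for quantum torus} specialized to projective-injectives). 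From the first identity I would deduce that $\ch(\cf)$ is free as a right $\ch(\cp(\cf))$-module on $\{[M_\lambda]\}$, and symmetrically on the left: the elements $[M_\lambda]\diamond[P]=|\Hom(M_\lambda,P)|^{-1}[M_\lambda\oplus P]$, as $\lambda$ and $[P]\in\Iso(\cp(\cf))$ vary, are scalar multiples of pairwise distinct standard basis vectors of $\ch(\cf)$ (distinct because $P$ is recovered as the maximal projective-injective summand of $M_\lambda\oplus P$ and $M_\lambda$ as its complement), and together they span all of $\ch(\cf)$.

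Next I would pass to the localization. Since the $[P]$ are central up to scalars, the set $S$ of scalar multiples of the $[P]$ is a left and right Ore, reversible subset of $\ch(\cf)$ (the verification being the same computation as in the proof of Proposition~\ref{prop:Def of MRH}), so $\cs\cd\ch(\cf)=\ch(\cf)[S^{-1}]$ exists, and $\T(\cp(\cf))=\ch(\cp(\cf))[S^{-1}]$. Using freeness of $\ch(\cf)$ over $\ch(\cp(\cf))$ together with the fact that $S$ normalizes $\ch(\cp(\cf))$, the natural multiplication map $\ch(\cf)\otimes_{\ch(\cp(\cf))}\T(\cp(\cf))\to\cs\cd\ch(\cf)$ is an isomorphism of right $\T(\cp(\cf))$-modules, and likewise on the left; since base change along a localization preserves free modules, $\cm(\cf)$ is free as a right (resp. left) $\T(\cp(\cf))$-module with basis $\{[M_\lambda]\}$. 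Finally, replacing $\{M_\lambda\}$ by any other set $\{N_\lambda\}$ of representatives of $\Iso(\underline{\cf})$ multiplies each basis vector by a unit of $\T(\cp(\cf))$, via the displayed identity applied to the projective-injective summand by which $N_\lambda$ and $M_\lambda$ differ; hence any such choice again yields a basis.

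I expect the main obstacle to be the identification $\cs\cd\ch(\cf)\cong\ch(\cf)\otimes_{\ch(\cp(\cf))}\T(\cp(\cf))$: one must check that Ore localization at the normalizing (but only scalar-centrally acting) set $S$ is compatible with the free-module decomposition over the noncommutative-up-to-scalar ring $\ch(\cp(\cf))$, carefully tracking the $q^{-\langle-,-\rangle}$ twists exactly as in the Ore-condition computation recalled in the excerpt. Everything else is bookkeeping with Krull--Schmidt decompositions and the explicit Hall products involving projective-injective objects.
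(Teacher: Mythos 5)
Your argument is correct. Note first that the paper does not prove this statement at all: it is quoted verbatim from Gorsky \cite{Gor2}, so there is no in-paper proof to compare against. What you have written is a sound self-contained reconstruction along the same lines as Gorsky's original argument: Krull--Schmidt decomposition $X\cong X_{\np}\oplus P_X$ (valid by (F1)--(F2): Hom-finite, idempotent complete, $\K$-linear), the vanishing $\Ext^1(P,M)=0=\Ext^1(M,P)$ for $P\in\cp(\cf)$ giving $[M]\diamond[P]=|\Hom(M,P)|^{-1}[M\oplus P]$, freeness of $\ch(\cf)$ over $\ch(\cp(\cf))$ on the projective-injective-free isoclasses, and transport through the Ore localization.

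One phrase deserves tightening: the classes $[P]$ are not ``central up to an invertible scalar'' in any uniform sense, since the comparison $[M]\diamond[P]=\frac{|\Hom(P,M)|}{|\Hom(M,P)|}\,[P]\diamond[M]$ involves a scalar depending on $M$ (through $\langle P,\widehat M\rangle-\langle \widehat M,P\rangle$, as $P$ has no higher Ext in either variable). The correct statement is that each $[P]$ is a \emph{normal} element, $[P]\diamond\ch(\cf)=\ch(\cf)\diamond[P]$, and normality (plus the injectivity of multiplication by $[P]$, which follows from cancellation of direct summands) is exactly what makes the set of scalar multiples of the $[P]$ a two-sided Ore, reversible set, as in the computation of Proposition~\ref{prop:Def of MRH}. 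This does not affect your conclusion; with that correction, the identification $\cs\cd\ch(\cf)\cong\ch(\cf)\otimes_{\ch(\cp(\cf))}\T(\cp(\cf))$ you flag as the main obstacle goes through exactly as you sketch (spanning by writing $as^{-1}$ over a common denominator, injectivity of $\ch(\cf)\to\cs\cd\ch(\cf)$ because the localized set consists of non-zero-divisors), and the change-of-representatives step is a multiplication of basis vectors by units of the quantum torus. You are also implicitly reading $\T(\cp(\cf))$ as containing the inverses $[P]^{-1}$, i.e.\ as $\ch(\cp(\cf))[S^{-1}]$; that is the intended meaning (it is the twisted group algebra of $K_0(\cp(\cf))$), and the theorem would be false for the sub-\emph{monoid} algebra, so it is worth stating this reading explicitly.
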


We shall always assume that $ A$ is a $1$-Gorenstein algebra. Since $\Gproj( A)$ is a Frobenius category,
we can define the semi-derived Hall algebra $\cs\cd\ch(\Gproj( A))$.

Denote by $\T(\proj( A))$ the subalgebra of $\cs\cd\ch(\Gproj( A))$ generated by all $P\in\proj(A)$. Then $\T(\proj( A))$ is also isomorphic to the $\Q$-group algebra of $K_0(\proj(A))$ with the multiplication twisted by $q^{-\langle\cdot,\cdot\rangle}$.
For any $K\in\fpr(\ca)$, take a projective resolution of $K$: $0\rightarrow Q_K\rightarrow P_K\rightarrow K\rightarrow0$. Define $\psi:\ch(\fpr( A))\rightarrow \T(\proj( A))$ given by $\psi([K])=q^{-\langle K,Q_K\rangle}  [P_K]\diamond [Q_K]^{-1}$. Then $\psi$ is a morphism of algebras, which induces an isomorphism $\tilde{\psi}: \T( A)\rightarrow \T(\proj( A))$ by noting that $K_0(\proj( A))=K_0(\mod( A))$. We identify them in the following. Then we have natural left and right actions of $\T( A)$ on $\cs\cd\ch(\Gproj( A))$ given by the Hall product.

\begin{theorem}    \label{theorem isomorphism of algebras}
Let $ A$ be a finite-dimensional $1$-Gorenstein $k$-algebra. Then there exists an isomorphism of algebras: $\cs\cd\ch( A) \cong \cs\cd\ch(\Gproj( A))$.
\end{theorem}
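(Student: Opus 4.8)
The plan is to exhibit a natural algebra homomorphism $\Theta\colon\cs\cd\ch(\Gproj(A))\to\cm\ch(A)$ and prove that it is an isomorphism by matching bases over a common quantum torus. For the construction, I first note that $\Gproj(A)$ is a full extension-closed exact subcategory of $\mod(A)$: for a short exact sequence $0\to G'\to G\to G''\to 0$ with $G',G''\in\Gproj(A)$, applying $\Hom_A(-,A)$ and using $\Ext^i_A(G',A)=0=\Ext^i_A(G'',A)$ for $i>0$ forces $\Ext^i_A(G,A)=0$ for $i>0$, so $G\in\Gproj(A)$. Hence the inclusion induces an algebra homomorphism $\ch(\Gproj(A))\to\ch(A)$, and composing with $\ch(A)\to\ch(A)/I$ and the localization map $\ch(A)/I\to\cm\ch(A)$ yields a ring homomorphism $f\colon\ch(\Gproj(A))\to\cm\ch(A)$ with $f([G])=[G]$. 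For $P\in\proj(A)\subseteq\fpr(A)$ the element $f([P])=[P]$ lies in the quantum torus $\T(A)\subseteq\cm\ch(A)$, hence is invertible. Since $\cs\cd\ch(\Gproj(A))$ is by definition the right Ore localization $\ch(\Gproj(A))[\,[P]^{-1}:P\in\proj(A)\,]$ (Ore by \cite{Gor2}; compare Proposition~\ref{prop:Def of MRH} on the other side), the universal property of localization provides a unique algebra homomorphism $\Theta\colon\cs\cd\ch(\Gproj(A))\to\cm\ch(A)$ with $\Theta([G])=[G]$ and $\Theta([P]^{-1})=[P]^{-1}$.

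Next I would identify the torus subalgebras under $\Theta$. The subalgebra $\T(\proj(A))\subseteq\cs\cd\ch(\Gproj(A))$ generated by the classes $[P]^{\pm1}$ is carried by $\Theta$ onto the subalgebra of $\cm\ch(A)$ generated by classes of projective modules; and this latter subalgebra is all of $\T(A)$, since for any $K\in\fpr(A)$ a projective presentation $0\to Q_K\to P_K\to K\to0$ gives $[P_K]=[K\oplus Q_K]$ in $\ch(A)/I$ by Lemma~\ref{lem:bimodule of MRH} (as $Q_K$ is projective), whence $[K]=q^{\langle K,Q_K\rangle}[P_K]\diamond[Q_K]^{-1}$ in $\cm\ch(A)$. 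In fact $\Theta|_{\T(\proj(A))}$ is precisely the inverse of the isomorphism $\widetilde\psi\colon\T(A)\stackrel{\sim}{\longrightarrow}\T(\proj(A))$ from \S\ref{subsec:SDH}, so $\Theta$ is a homomorphism of right $\T(\proj(A))$-modules once $\cm\ch(A)$ is regarded as a right $\T(\proj(A))$-module via $\widetilde\psi$.

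Finally I would prove bijectivity by a basis count. By Gorsky's basis theorem \cite{Gor2}, $\cs\cd\ch(\Gproj(A))$ is free as a right $\T(\proj(A))$-module with a basis given by a chosen set of representatives of the isoclasses of $\underline{\Gproj}(A)$; I take these representatives to be the Gorenstein projective modules with no projective direct summands, so that the index set is $\Iso(\Gproj^{\np}(A))$. On the other side, by Theorem~\ref{theorem basis of MRH 1-Gor} and Corollary~\ref{cor:basis of MRH as torus-module 1-Gor} (using the Buchweitz--Happel equivalence $\underline{\Gproj}(A)\simeq D_{sg}(\mod(A))$), $\cm\ch(A)$ is free as a right $\T(A)$-module with basis $\{[M]\mid M\in\Iso(\Gproj^{\np}(A))\}$. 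Since $\Theta([G])=[G]$ for $G\in\Gproj^{\np}(A)$ and $\Theta$ intertwines the two torus structures, it maps the first basis bijectively onto the second; hence $\Theta$ is an isomorphism of right modules over the torus, and being an algebra homomorphism it is an algebra isomorphism.

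The hardest step will be the second one. The two sides are built from genuinely different Hall algebras ($\ch(\Gproj(A))$ versus $\ch(A)/I$), so one must check that their localizations share a canonically identified quantum torus and that $\Theta$ respects this identification exactly, so that the basis-to-basis comparison is legitimate. This is where Lemma~\ref{lem:bimodule of MRH}, Corollary~\ref{cor:basis of MRH as torus-module 1-Gor}, and the $1$-Gorenstein analogue of \cite[Theorem 3.12]{LP}, namely Proposition~\ref{prop:isomorphism as T bimodule}, enter, providing the needed control of $\cm\ch(A)$ as a $\T(A)$-bimodule.
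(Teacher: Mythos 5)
Your construction of the forward map $\Theta$ is exactly the paper's $\tilde\phi$, and your identification of the two quantum tori matches the discussion preceding the theorem in \S\ref{subsec:SDH}. The problem is the final step: the basis results you invoke for $\cm\ch(A)$ --- Corollary~\ref{cor:basis of MRH as torus-module 1-Gor} and Theorem~\ref{theorem basis of MRH 1-Gor} --- are themselves deduced in the paper \emph{from} Theorem~\ref{theorem isomorphism of algebras} (their proofs read ``It follows from Theorem~\ref{theorem isomorphism of algebras} and Lemma~\ref{lemma basis of semi-derived Hall algebra} immediately''). So your argument is circular. What is genuinely available without the theorem is only the \emph{spanning} half of that basis statement: the Auslander--Buchweitz sequence $0\to H_M\to G_M\to M\to 0$ shows every $[M]$ equals $q^{-\langle M,H_M\rangle}[G_M]\diamond[H_M]^{-1}$, which gives surjectivity of $\Theta$. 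The linear independence of $\{[G]\mid G\in\Iso(\Gproj^{\np}(A))\}$ over $\T(A)$ inside $\cm\ch(A)$ --- equivalently, injectivity of $\Theta$ --- is precisely the hard content, and no independent proof of it appears in your write-up.

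The paper closes this gap differently: it constructs an explicit map in the opposite direction, $\psi([M])=q^{-\langle M,H_M\rangle}[G_M]\diamond[H_M]^{-1}$, checks via the Horseshoe Lemma that $\psi$ annihilates the defining relations $[L]-[K\oplus M]$ of the subspace $I'$, and then uses Proposition~\ref{prop:isomorphism as T bimodule} (the identification $\cm\ch(A)\cong(\ch(A)/I')[\cs_A^{-1}]$ as $\T(A)$-bimodules, proved independently via Lemmas~\ref{lemma left ideal} and~\ref{lemma right ideal}) to descend $\psi$ to a well-defined map $\tilde\psi$ on $\cm\ch(A)$. The identity $\tilde\psi\tilde\phi=\id$ then yields injectivity of $\tilde\phi$, and together with the surjectivity argument one concludes. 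To repair your proof along your intended lines you would have to first prove the freeness of $\cm\ch(A)$ over $\T(A)$ with the stated basis by some independent means; otherwise you should switch to constructing the inverse map as the paper does.
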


\begin{proof}
Clearly, $\ch(\Gproj( A))$ is a subalgebra of $\ch( A)$, and we denote the inclusion by
\[
\phi: \ch(\Gproj( A))\longrightarrow \ch( A).
\]
Then we obtain a composition of morphisms $\ch(\Gproj( A))\xrightarrow{\phi} \ch( A)\rightarrow \ch( A)/I$, which is compatible with the localization. Therefore, this induces a morphism of algebras $\tilde{\phi}:\cs\cd\ch(\Gproj( A))\rightarrow \cs\cd\ch( A)$.

For any $[M]\in \Iso(\mod( A))$, there exists a short exact sequence $0\rightarrow H_M\rightarrow G_M\rightarrow M\rightarrow0$ with $G_M\in\Gproj( A)$, $H_M$ projective; cf. \cite{AB}. So we obtain that
\begin{align*}
[M]=q^{-\langle M,H_M\rangle}[G_M]\diamond[H_M]^{-1}
%&=&q^{-\langle M,H_M\rangle+\langle H_M,Q_M\rangle}[G_M]\diamond[Q_M]\diamond[P_M]^{-1},
\end{align*}
and then
$[M]\in\cs\cd\ch(\Gproj( A))$. So $\tilde{\phi}$ is surjective.

On the other hand,
define $\psi:\ch( A)\rightarrow \cs\cd\ch(\Gproj( A))$ to be
$$\psi([M])= q^{-\langle M,H_M\rangle}[G_M]\diamond[H_M]^{-1},$$ where
$H_M,G_M$ satisfy the short exact sequence as above.

In order to prove that $\psi$ is well defined, let $0\rightarrow H_M\xrightarrow{f_1} G_M\xrightarrow{f_2} M\rightarrow 0$ be a short exact sequence with $f_2$ a minimal right $\Gproj( A)$-approximation of $M$. %and $0\rightarrow Q_M\rightarrow P_M\rightarrow H_M\rightarrow0$ is a minimal projective resolution of $H_M$.
The right $\Gproj( A)$-approximation is of the form
$$0\longrightarrow H_M\oplus U_1\longrightarrow G_M\oplus U_1\longrightarrow M\longrightarrow0$$
with $U_1\in\proj( A)$.
%and the projective resolution of $H_M\oplus U_1$ is of
%$$0\rightarrow Q_M\oplus U_2\rightarrow P_M\oplus U_1\oplus U_2\rightarrow H_M\oplus U_1\rightarrow0.$$
Then in $\cs\cd\ch(\Gproj( A))$, we have
\begin{align*}
q^{-\langle M, H_M\oplus U_1\rangle} & [G_M\oplus U_1]\diamond[H_M\oplus U_1]^{-1}\\
=& q^{-\langle M, H_M\oplus U_1\rangle+\langle G_M,U_1\rangle-\langle H_M,U_1\rangle}[G_M]\diamond[U_1]\diamond[U_1]^{-1}\diamond [H_M]^{-1}\\
=& q^{-\langle M,H_M\rangle}[G_M]\diamond[H_M]^{-1}.
\end{align*}
So $\psi$ is well defined.

Let $0\rightarrow K\rightarrow L\rightarrow M\rightarrow0$ be a short exact sequence with $K\in \fpr( A)$.
Denote by $0\rightarrow Q_K\rightarrow P_K\rightarrow K\rightarrow0$ a projective resolution of $K$.  By the Horseshoe Lemma, we have the following commutative diagram with all rows and columns short exact
\[\xymatrix{Q_K\ar[r]\ar[d]& Q_K\oplus H_M \ar[r]\ar[d]& H_M\ar[d] \\
P_K \ar[r]\ar[d]& P_K\oplus G_M\ar[r]\ar[d]& G_M\ar[d]\\
K\ar[r] &L\ar[r]&M}\]
%Furthermore, \eqref{eqn: projective resolution of HM} gives a short exact sequence $0\rightarrow Q_M\rightarrow P_M\oplus Q_K\rightarrow H_L\rightarrow0$ by doing pullback.
So
$$\psi([L])= q^{-\langle L,H_M\oplus Q_K\rangle}[P_K\oplus G_M]\diamond[Q_K\oplus H_M]^{-1}.$$
By using the following resolution
\begin{align*}
0\longrightarrow Q_K\oplus H_M\longrightarrow P_K\oplus G_M\longrightarrow K\oplus M\longrightarrow0,
%0\rightarrow Q_M\rightarrow P_M\oplus Q_K\rightarrow H_M\oplus Q_K\rightarrow0,
\end{align*}
one sees that $\psi([K\oplus M])= q^{-\langle L,H_M\oplus Q_K\rangle}[P_K\oplus G_M]\diamond[Q_K\oplus H_M]^{-1} = \psi([L])$. Therefore,
$\psi$ induces a map
$$\tilde{\psi}: \ch( A)/I'
\longrightarrow \cs\cd\ch(\Gproj( A))$$
which is a morphism of $\T( A)$-bimodules.
Since $\psi([K])$ is invertible in $ \cs\cd\ch(\Gproj( A))$ for any $K\in\fpr( A)$, $\psi$ induces a unique morphism of $\T( A)$-bimodules
$$\tilde{\psi}: (\ch( A)/I')[\cs_A^{-1}] \longrightarrow \cs\cd\ch(\Gproj( A)),$$
and then a morphism of $\T( A)$-bimodules $\cs\cd\ch( A)\rightarrow \cs\cd\ch(\Gproj( A))$, which is also denoted by $\tilde{\psi}$, by Proposition \ref{prop:isomorphism as T bimodule}.
Clearly, $\tilde{\psi}\tilde{\phi}=\id$, and then $\tilde{\phi}$ is injective.
Therefore, $\tilde{\phi}$ is an isomorphism of algebras.
\end{proof}

\begin{lemma}[\cite{Gor2}]
\label{lemma basis of semi-derived Hall algebra}
$\cs\cd\ch(\Gproj( A))$ is a free right (respectively, left) module over $\T( A)$. Each choice of representatives in $\Gproj( A)$ of the isoclasses of the stable category $\underline{\Gproj}( A)$ yields a basis for this free module.
\end{lemma}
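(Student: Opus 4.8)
The plan is to deduce the statement directly from Gorsky's theorem recalled just above, applied to the Frobenius category $\cf=\Gproj(A)$. First I would verify the hypotheses (F1)--(F2) for $\Gproj(A)$. Since $A$ is a finite-dimensional $1$-Gorenstein algebra over $\K=\F_q$, the category $\mod(A)$ is essentially small, idempotent complete, $\Hom$-finite and $\Ext^p$-finite for all $p>0$, and all these properties pass to the extension-closed full subcategory $\Gproj(A)$. Moreover, as recalled in \S\ref{subsec:MRH for 1-Gor}, $\Gproj(A)$ is a Frobenius category whose subcategory of projective-injective objects is exactly $\proj(A)$. Hence (F1)--(F2) hold with $\cp(\Gproj(A))=\proj(A)$, and Gorsky's theorem gives that $\cm(\Gproj(A))=\cs\cd\ch(\Gproj(A))$, regarded as a bimodule over $\T(\proj(A))$ via the Hall product, is free as a right (and as a left) module, with any choice of representatives in $\Gproj(A)$ of the isoclasses of $\underline{\Gproj}(A)$ as a basis.

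It then remains to transport this conclusion along the torus identification set up in \S\ref{subsec:SDH}. By construction the left and right $\T(A)$-actions on $\cs\cd\ch(\Gproj(A))$ are obtained by restricting the $\T(\proj(A))$-actions along the algebra map $\tilde\psi\colon\T(A)\to\T(\proj(A))$ induced by $\psi([K])=q^{-\langle K,Q_K\rangle}[P_K]\diamond[Q_K]^{-1}$, where $0\to Q_K\to P_K\to K\to 0$ is a projective resolution of $K\in\fpr(A)$. Using $K_0(\proj(A))=K_0(\mod(A))$ one sees that $\tilde\psi$ is an isomorphism of the corresponding twisted group algebras, so freeness and the stated basis are preserved verbatim under this change of scalars. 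This yields the lemma.

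Since the statement is essentially a restatement of Gorsky's result in the present notation, I do not expect a serious obstacle: the one ingredient beyond a direct citation is the identification $\T(A)\cong\T(\proj(A))$, which has already been established in \S\ref{subsec:SDH} (it rests on $K_0(\proj(A))=K_0(\mod(A))$ and on compatibility with the twist $q^{-\langle\cdot,\cdot\rangle}$, cf. the discussion preceding Theorem~\ref{theorem isomorphism of algebras}). Once this is in hand the basis statement transfers with no further work, so the main point is simply to record the verification of (F1)--(F2) carefully and to note the compatibility of the two bimodule structures.
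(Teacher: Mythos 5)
Your proposal is correct and matches the paper's (implicit) argument: the lemma is stated there as a direct citation of Gorsky's theorem recalled just above it, applied to the Frobenius category $\Gproj(A)$ with projective-injectives $\proj(A)$, after the identification $\T(A)\cong\T(\proj(A))$ via $\tilde\psi$ that the paper sets up in the paragraph preceding the lemma. Your added care in verifying (F1)--(F2) and in checking compatibility of the two bimodule structures is exactly the routine bookkeeping the paper leaves to the reader.
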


Denote by $\Gproj^{\np}( A)$ the smallest subcategory of $\Gproj ( A)$ formed by all Gorenstein projective modules without any projective summands.

For any $\alpha\in K_0(\mod( A))$, there exist $U,V\in \mod( A)$ such that $\alpha=\widehat{U}-\widehat{V}$, we set
$\E_\alpha:=q^{-\langle\alpha,\widehat{V}\rangle}[U]\diamond[V]^{-1}$; this is well defined, see e.g. \cite[\S3.4]{LP}. Let $K_0^+(\mod( A))$ be positive cone of $K_0(\mod( A))$, that is the subset of $K_0(\mod( A))$ corresponding to classes of objects in $\mod( A)$. Then for any $\alpha\in K_0^+(\mod( A))$, $\E_\alpha=[U]$ for any $U\in\mod( A)$ with $\widehat{U}=\alpha$. For convenience, we view  $\E_\alpha$ as an (isoclass of) object (by identifying with $[U]$ such that $\widehat{U}=\alpha$) when considered in $\cs\cd\ch( A)$.

\begin{lemma}
\label{cor:basis of MRH as torus-module 1-Gor}
$\cs\cd\ch( A)$ has a basis given by
\[
\{ [M]\diamond \E_\alpha \mid [M]\in\Iso(\Gproj^{\np}( A)), \alpha\in K_0(\mod( A)) \}.
\]
\end{lemma}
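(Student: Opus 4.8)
The goal is to describe a basis of $\cm\ch(A)$ of the form $\{[M]\diamond [K_\alpha] \mid [M]\in\Iso(\Gproj^{\np}(A)), \alpha\in K_0(\mod(A))\}$. The plan is to combine Theorem~\ref{theorem isomorphism of algebras} (which identifies $\cm\ch(A)$ with the semi-derived Hall algebra $\cs\cd\ch(\Gproj(A))$) with Lemma~\ref{lemma basis of semi-derived Hall algebra} (which provides a basis of $\cs\cd\ch(\Gproj(A))$ as a free right $\T(A)$-module, indexed by isoclasses in the stable category $\underline{\Gproj}(A)$). First I would recall that a complete set of representatives of $\Iso(\underline{\Gproj}(A))$ is given exactly by $\Iso(\Gproj^{\np}(A))$: any Gorenstein projective module decomposes uniquely (up to isomorphism) as a projective module plus a module with no projective summands, and projective modules are zero in $\underline{\Gproj}(A)$, so the indecomposable-summand decomposition shows $[M]\mapsto [M]$ gives a bijection $\Iso(\Gproj^{\np}(A))\to\Iso(\underline{\Gproj}(A))$. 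Hence by Lemma~\ref{lemma basis of semi-derived Hall algebra}, $\{[M] \mid [M]\in\Iso(\Gproj^{\np}(A))\}$ is a basis of $\cs\cd\ch(\Gproj(A))$ as a free right $\T(A)$-module, and transporting along the isomorphism $\tilde\phi$ of Theorem~\ref{theorem isomorphism of algebras}, the same set is a basis of $\cm\ch(A)$ as a free right $\T(A)$-module.

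Next I would translate "basis as a free right $\T(A)$-module" into the stated honest basis over $\Q$. The quantum torus $\T(A)$ is, by the discussion following Definition~\ref{def:torus}, isomorphic to the twisted group algebra $\T(K_0(\fpr(A)), q^{-\langle\cdot,\cdot\rangle})$, which has $\Q$-basis $\{[K] \mid [K]\in\Iso(\cp^{\le1}(A))\}$ (or, after the standard reindexing, $\{K_\alpha \mid \alpha\in K_0(\mod(A))\}$, since $K_0(\cp^{\le1}(A))\cong K_0(\mod(A))$ via $\phi$ in this $1$-Gorenstein setting — actually, more precisely, $K_0(\fpr(A))=K_0(\cp^{\le1}(A))\cong K_0(\proj A)=K_0(\mod A)$; and each $K_\alpha$ is, up to a nonzero scalar power of $q$, an element $[U]\diamond[V]^{-1}$). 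Concretely: since $\{K_\alpha \mid \alpha\in K_0(\mod(A))\}$ is a $\Q$-basis of $\T(A)$ — each $K_\alpha$ being a scalar multiple of a product of $[K]$'s and $[K]^{-1}$'s, and the $q^{-\langle\cdot,\cdot\rangle}$-twisted multiplication makes $K_\alpha\diamond K_\beta = q^{?}K_{\alpha+\beta}$, so the $K_\alpha$ form a $\Q$-basis indexed by the free abelian group $K_0(\mod(A))$ — combining a right $\T(A)$-module basis $\{[M]\}$ with the $\Q$-basis $\{K_\alpha\}$ of $\T(A)$ yields the $\Q$-basis $\{[M]\diamond K_\alpha\}$ of $\cm\ch(A)$. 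This is a purely formal step once the module-basis statement is in hand.

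The step I expect to require the most care is not deep but bookkeeping-heavy: verifying that the bijection $\Iso(\Gproj^{\np}(A))\leftrightarrow\Iso(\underline{\Gproj}(A))$ is correct and that the isomorphism $\tilde\phi$ of Theorem~\ref{theorem isomorphism of algebras} indeed carries the representative set $\{[M]\}\subseteq \cs\cd\ch(\Gproj(A))$ to the corresponding elements $\{[M]\}\subseteq\cm\ch(A)$ (which follows since $\tilde\phi$ is induced by the inclusion $\ch(\Gproj(A))\hookrightarrow\ch(A)$ and hence is the identity on classes $[M]$ with $M\in\Gproj(A)$), together with matching up $\T(\proj A)$ with $\T(A)$ via $\tilde\psi$ so that "free over $\T(\proj A)$" becomes "free over $\T(A)$". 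None of these present a genuine obstacle — each is a direct consequence of results already established in this Appendix — so the proof is essentially a two-line deduction: apply Theorem~\ref{theorem isomorphism of algebras} and Lemma~\ref{lemma basis of semi-derived Hall algebra}, then expand the $\T(A)$-module basis into a $\Q$-basis using $\{K_\alpha\}$.
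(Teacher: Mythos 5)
Your proposal is correct and follows exactly the paper's own route: the paper's proof is the one-line "It follows from Theorem~\ref{theorem isomorphism of algebras} and Lemma~\ref{lemma basis of semi-derived Hall algebra} immediately," and the details you spell out (representatives of $\Iso(\underline{\Gproj}(A))$ via $\Gproj^{\np}(A)$, and expanding the free right $\T(A)$-module basis using the $\Q$-basis $\{K_\alpha\}$ of the quantum torus) are precisely the intended, and correct, filling-in of that deduction.
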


\begin{proof}
It follows from Theorem \ref{theorem isomorphism of algebras} and Lemma \ref{lemma basis of semi-derived Hall algebra} immediately.
\end{proof}

As $ A$ is $1$-Gorenstein, it follows by Theorem \ref{thm:Buchweitz-Happel} that $\underline{\Gproj}( A)\simeq D_{sg}(\mod( A))$; in particular, each representative of the isoclasses of $D_{sg}(\mod( A))$ can be chosen to be an $ A$-module, where every $ A$-module is viewed as a stalk complex concentrated at degree $0$. This yields the following result.
\begin{theorem}
\label{theorem basis of MRH 1-Gor}
$\cs\cd\ch( A)$ is a free right (respectively, left) module over $\T( A)$. Each choice of representatives in $\mod( A)$ of the isoclasses of $D_{sg}(\mod( A))$ gives a basis for this free module.
\end{theorem}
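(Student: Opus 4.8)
The plan is to obtain the statement by combining three facts already available: Theorem~\ref{theorem isomorphism of algebras}, which provides an algebra isomorphism $\cm\ch(A)\cong \cs\cd\ch(\Gproj(A))$ that (by its construction) is simultaneously an isomorphism of $\T(A)$-bimodules; Lemma~\ref{lemma basis of semi-derived Hall algebra}, which asserts that $\cs\cd\ch(\Gproj(A))$ is free as a right (resp. left) $\T(A)$-module on any set of representatives in $\Gproj(A)$ of the isoclasses of $\underline{\Gproj}(A)$; and Buchweitz--Happel's Theorem, which gives a triangle equivalence $\underline{\Gproj}(A)\simeq D_{sg}(\mod(A))$ since $A$ is $1$-Gorenstein. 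First I would transport Lemma~\ref{lemma basis of semi-derived Hall algebra} along $\tilde\phi$ to conclude that $\cm\ch(A)$ is free over $\T(A)$ with basis any family of representatives in $\Gproj^{\np}(A)$ of the isoclasses of $\underline{\Gproj}(A)$; equivalently this is Lemma~\ref{cor:basis of MRH as torus-module 1-Gor} read through the bijection $\Iso(\Gproj^{\np}(A))\leftrightarrow\Iso(\underline{\Gproj}(A))$. Under the Buchweitz--Happel equivalence, every isoclass of $D_{sg}(\mod(A))$ is realized by an $A$-module (viewed as a stalk complex in degree $0$), so it is legitimate to speak of representatives in $\mod(A)$.

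The only substantive step is upgrading ``representatives in $\Gproj^{\np}(A)$'' to ``arbitrary representatives in $\mod(A)$''. Given an $A$-module $M$, Lemma~\ref{lemma approx of GP} furnishes a short exact sequence $0\to H_M\to G_M\to M\to 0$ with $G_M\in\Gproj(A)$ and $H_M$ projective; splitting off the maximal projective summand, write $G_M=N\oplus P$ with $N\in\Gproj^{\np}(A)$, so that $N\cong M$ in $D_{sg}(\mod(A))$. By the defining relation of the modified Ringel--Hall algebra and Lemma~\ref{lem:bimodule of MRH} one has $[G_M]=[H_M\oplus M]$ and hence $[M]=[N]\diamond t_M$ for a $K_0$-homogeneous \emph{invertible} element $t_M\in\T(A)$ (a scalar multiple of $[P]\diamond[H_M]^{-1}$). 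Consequently, if $\{M_c\}$ is any family of $A$-modules realizing each isoclass $c$ of $D_{sg}(\mod(A))$ exactly once and $\{N_c\}$ is the corresponding family of distinguished representatives in $\Gproj^{\np}(A)$, then $[M_c]=[N_c]\diamond t_{M_c}$ with each $t_{M_c}$ a unit of $\T(A)$; since $\{[N_c]\}$ is a right $\T(A)$-basis, so is $\{[M_c]\}$. For the left-module statement I would invoke that on a fixed $K_0$-homogeneous component the left and right $\T(A)$-actions differ only by a scalar (again Lemma~\ref{lem:bimodule of MRH}), so $[M_c]=\lambda_c\, t_{M_c}\diamond[N_c]$ for scalars $\lambda_c$, and the right-basis statement of Lemma~\ref{lemma basis of semi-derived Hall algebra} transfers to a left-basis statement as well.

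I expect the main obstacle to be purely organizational rather than conceptual: carefully tracking the $\T(A)$-equivariance of the isomorphism of Theorem~\ref{theorem isomorphism of algebras}, and verifying that passing from the Gorenstein approximation $G_M$ back to $M$ only rescales the Hall basis element by a unit of the quantum torus (with homogeneous, hence harmless, exponents). No new input beyond the appendix's already-established machinery is needed.
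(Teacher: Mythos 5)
Your proposal is correct and follows essentially the same route as the paper: Theorem~\ref{theorem isomorphism of algebras} plus Lemma~\ref{lemma basis of semi-derived Hall algebra} for freeness over $\T(A)$, then Buchweitz--Happel to identify $\underline{\Gproj}(A)\simeq D_{sg}(\mod(A))$ and realize representatives as modules. The only difference is that you spell out the unit-rescaling step $[M]=[N]\diamond t_M$ (via the approximation $0\to H_M\to G_M\to M\to 0$) needed to pass from Gorenstein projective representatives to arbitrary module representatives, which the paper leaves implicit (it is the same computation as in Lemma~\ref{lem:another basis of MRH 1}).
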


\subsection{Tilting invariance}

Let $ A$ be a $1$-Gorenstein algebra over $\K$.
Recall that an $ A$-module $T$ is called  \emph{tilting} if
\begin{itemize}
\item[(T1)] $\pd T\leq1$;
\item[(T2)] $\Ext^i(T,T)=0$ for any $i>0$;
\item[(T3)] %for any $X\in\mod( A)$, if $\Hom_ A(T,X)=0=\Ext^i_ A(T,X)$ for any $i>0$ then $X=0$.
there exists a short exact sequence $0\rightarrow  A \rightarrow T_0\rightarrow T_1\rightarrow0$ with $T_0,T_1\in \add T$.
\end{itemize}
Let $\Gamma=\End_ A(T)^{op}$.
It is well known that if $T$ is a tilting module, then there is a derived equivalence $\RHom_ A(T,-):D^b( A)\xrightarrow{\simeq} D^b(\Gamma)$. %In particular, $T$ is maximal rigid, i.e. for any $Z\in\mod( A)$, if $\Ext^i(Z\oplus T,Z\oplus T)=0$ for any $i>0$, then $Z\in \add T$.
In this subsection, we shall prove that $\cs\cd\ch( A)$ is isomorphic to $\cs\cd\ch(\Gamma)$ if $\Gamma$ is also $1$-Gorenstein.

%\begin{lemma}\label{lem:tilting object 1}
%Let $ A$ be a $1$-Gorenstein algebra. Then $T$ is a tilting $ A$-module if and only if
%\begin{itemize}
%\item[(T1')] $\pd T\leq 1$;
%\item[(T2')] $\Ext^i(T,T)=0$ for any $i>0$;
%\item[(T3')] there exists a short exact sequence $0\rightarrow  A\rightarrow T_0\rightarrow T_1\rightarrow0$ with $T_i\in\add T$.
%\end{itemize}
%\end{lemma}
%\begin{proof}
%We only need to prove the necessary part.

%If $T$ is a tilting $ A$-module, then $\pd T\leq 1$ since $\pd T<\infty$ and $ A$ is $1$-Gorenstein. So (T1') and (T2') holds.
%For (T3'), $\Ext^1(T, A)$ is finitely generated as $\End(T)$-modules, so there exists an exact sequence
%\begin{align}\label{eqn:in lemma tilting object 1}
%0\rightarrow  A\rightarrow X\rightarrow T^{m}\rightarrow0
%\end{align}
%for some $m\in\N$ such that the connecting homomorphism $\Hom(T,T^{m})\rightarrow \Ext^1(T, A)$ is surjective. Then $\pd X\leq1$. By applying $\Hom(T,-)$ to \eqref{eqn:in lemma tilting %object 1}, it is not hard to see that $\Ext^1(T,X)=0$. Applying $\Hom(-,T)$, $\Hom(-,X)$ to \eqref{eqn:in lemma tilting object 1}, we also get that $\Ext^1(X,T)=0$ and $\Ext^1(X,X)=0$.
%Then $\Ext^1(X\oplus T,X\oplus T)=0$, together with $\pd X\leq 1$, we obtain that $X\in\add T$ as $T$ is maximal rigid. So \eqref{eqn:in lemma tilting object 1} is our desired short exact sequence.
%\end{proof}

Let $\Fac T$ be the full subcategory of $\mod( A)$ of epimorphic images of objects in $\add T$.
The following lemma is well known, see e.g. \cite[Chapter VI.2]{ASS}.

\begin{lemma}\label{lem: torision pair}
Let $ A$ be a $1$-Gorenstein algebra with a tilting module $T$. Let $\cu= \Fac T$, and $\cv=\{M\in\mod( A)\mid\Hom(T,M)=0\}$. Then
\begin{itemize}
\item[(a)] $(\cu,\cv)$ is a torsion pair in $\mod( A)$;
\item[(b)] $\Ext^i(T,-)|_{\cu}=0$ for any $i>0$;
\item[(c)] for any $M\in \mod( A)$, there exists a short exact sequence
\[
0 \longrightarrow M \longrightarrow X_M \longrightarrow T_M \longrightarrow 0
\]
with $X_M\in\cu$ and $T_M\in\add T$.
\end{itemize}
\end{lemma}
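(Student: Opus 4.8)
\textbf{Proof proposal for Lemma~\ref{lem: torision pair}.}
The plan is to derive all three parts from the standard tilting theory of hereditary-type torsion pairs, using only that $T$ satisfies (T1)--(T3) and that $\pd_A T\le 1$ (which is automatic since $A$ is $1$-Gorenstein). First I would establish (b): for $M\in\cu=\Fac T$, choose an epimorphism $T^{\,r}\twoheadrightarrow M$ and let $K$ be its kernel. Applying $\Hom_A(T,-)$ to $0\to K\to T^{\,r}\to M\to 0$ and using (T2), one gets $\Ext^1_A(T,M)$ as a quotient of $\Ext^1_A(T,K)$; but $\pd_A T\le 1$ forces $\Ext^{i}_A(T,-)=0$ for $i\ge 2$, and a short diagram chase (together with the fact that $\cu$ is closed under extensions and images, so $K$ can itself be taken in $\Fac T$ up to the usual argument) yields $\Ext^i_A(T,M)=0$ for all $i>0$. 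Actually the cleanest route is: $\cu=\Fac T = \operatorname{Gen} T$ equals $\{M : \Ext^1_A(T,M)=0\}$ by the classical tilting theorem (this is \cite[Chapter VI.2, Theorem~2.5]{ASS} applied to $\pd T\le1$), which simultaneously gives that $(\cu,\cv)$ with $\cv=\{M:\Hom_A(T,M)=0\}$ is a torsion pair, settling (a) and (b) at once.

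For (a) more explicitly, I would invoke the Brenner--Butler / tilting torsion pair: since $\pd_A T\le 1$ and $\Ext^1_A(T,T)=0$, the pair $(\mathcal{T}(T),\mathcal{F}(T))$ with $\mathcal{T}(T)=\{M:\Ext^1_A(T,M)=0\}$ and $\mathcal{F}(T)=\{M:\Hom_A(T,M)=0\}$ is a torsion pair in $\mod(A)$, and $\mathcal{T}(T)=\Fac T$ precisely because $T$ is a tilting (not merely partial tilting) module --- this is where (T3) enters, guaranteeing $A\in\Fac T$ up to the resolution, hence $\mathcal{T}(T)$ is generated by $T$. So $\cu=\mathcal{T}(T)$ and $\cv=\mathcal{F}(T)$, proving (a) and (b).

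For (c), given $M\in\mod(A)$ I would use that $\operatorname{add}T$ is covariantly finite (it has finitely many indecomposable summands up to iso) and that $T$ is a tilting module to produce a short exact sequence $0\to M\to X_M\to T_M\to 0$ with $T_M\in\operatorname{add}T$ and $X_M\in\cu$. Concretely: take the torsion decomposition $0\to tM\to M\to M/tM\to 0$; since $M/tM\in\cv$ one reduces to the case $M\in\cv$, where one builds $X_M$ as the pushout along a suitable map coming from (T3). Even more directly, (c) is exactly the statement dual to the existence of special $\operatorname{add}T$-approximations, and it follows from \cite[Chapter VI]{ASS} or can be read off from Lemma~\ref{lemma approx of GP}-style arguments: $\Ext^1_A(T_M, X_M)$-vanishing is guaranteed by $X_M\in\cu$ and (b). I do not expect any genuine obstacle here; the entire lemma is a recollection of classical tilting theory, and the only point requiring a little care is verifying that $\Fac T$ coincides with $\{M:\Ext^1_A(T,M)=0\}$, which uses $\pd_A T\le 1$ in an essential way --- and that hypothesis is supplied by the $1$-Gorenstein assumption on $A$ together with (T1). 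Hence a short proof citing \cite[Chapter VI.2]{ASS} with the remark that $\pd T\le 1$ holds automatically will suffice.
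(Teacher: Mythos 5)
Your proposal is correct and takes essentially the same route as the paper: the paper offers no proof of this lemma at all, merely recording it as well known with a pointer to \cite[Chapter VI.2]{ASS}, and your argument is precisely the classical tilting-theory content of that reference (the tilting torsion pair $(\operatorname{Gen}T,\ker\Hom(T,-))$ for $\pd T\le 1$, vanishing of $\Ext^{i}(T,-)$ for $i\ge 2$, and the universal-extension construction giving (c)). No gaps.
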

Let $T$ be a tilting $ A$-module. Then $\cu$ is an exact category as a subcategory of $\mod( A)$. Furthermore, $\cu$ has enough projective objects with $\add T$ as the subcategory of projective objects of $\cu$.

\begin{lemma}
$\cu$ is an exact category satisfying the conditions (E-a)--(E-d) in \S\ref{subsection:Def of MRH}.
\end{lemma}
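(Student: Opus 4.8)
The plan is to verify conditions (Ea)--(Ed) one at a time for the exact category $\cu = \Fac T$, reducing each to the corresponding property already known for $\mod(A)$ together with the structure of the torsion pair $(\cu,\cv)$ from Lemma~\ref{lem: torision pair}. Condition (Ea) is essentially free: $\cu$ is a full subcategory of $\mod(A)$, which is Hom-finite and $\Ext^1$-finite over $\K=\F_q$, and $\cu$ is closed under extensions (being a torsion class), so $\Ext^1$-groups computed in $\cu$ agree with those in $\mod(A)$ and are finite; essential smallness and idempotent completeness pass to $\cu$ as well since $\cu$ is closed under direct summands. Condition (Eb) is immediate from linearity of $\mod(A)$.

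For (Ec), the weak $1$-Gorenstein property, I would argue as follows. The exact category $\cu$ has $\add T$ as its subcategory of projective objects, and since $T$ is a tilting module over the $1$-Gorenstein algebra $A$, the derived equivalence $\RHom_A(T,-)\colon D^b(A)\to D^b(\Gamma)$ with $\Gamma=\End_A(T)^{op}$ suggests (and one checks directly) that an object $M\in\cu$ has finite Ext-projective dimension in $\cu$ iff $\Hom_A(T,M)$ has finite projective dimension over $\Gamma$; dually for Ext-injective dimension. Alternatively, and more in the spirit of the paper, I would show $\cp^{<\infty}(\cu) = \cp^{\le 1}(\cu)$ and $\mathcal{I}^{<\infty}(\cu)=\mathcal{I}^{\le 1}(\cu)$ directly: for the projective side, if $M\in\cu$ has an Ext-projective resolution by objects of $\add T$, the first syzygy (kernel of a deflation $T_0\twoheadrightarrow M$ with $T_0\in\add T$) again lies in $\cu$; because $\pd_A T\le 1$ and $A$ is $1$-Gorenstein, a dimension-shift / Ext-vanishing argument forces this syzygy to be Ext-projective in $\cu$, giving Ext-projective dimension $\le 1$. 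The injective side is handled by the dual statement, using Lemma~\ref{lem: torision pair}(c) to produce $\add T$-coresolutions, and Iwanaga's theorem (quoted in \S\ref{subsection:Def of MRH}) then packages $\cp^{<\infty}(\cu)=\cp^{\le1}(\cu)$ and $\mathcal{I}^{<\infty}(\cu)=\mathcal{I}^{\le 1}(\cu)$ together into weak $1$-Gorensteinness once the weak Gorenstein identity $\cp^{<\infty}(\cu)=\mathcal{I}^{<\infty}(\cu)$ is checked. Finally, (Ed) is exactly the statement that $\cu$ has enough projectives: every $M\in\cu$, being an epimorphic image of an object of $\add T$, admits a deflation $T_M\twoheadrightarrow M$ with $T_M\in\add T\subseteq \cp^{<\infty}(\cu)$.

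I expect the main obstacle to be condition (Ec), specifically pinning down that \emph{finite} Ext-projective (resp.\ Ext-injective) dimension in $\cu$ is automatically $\le 1$. The subtlety is that Ext-groups and exactness are taken \emph{inside} $\cu$, not in $\mod(A)$, so one must be careful that syzygies stay in $\cu$ (true, since $\cu$ is a torsion class and hence closed under extensions, but one needs the kernel of $T_0\twoheadrightarrow M$ to be in $\cu$ — this uses that $T_0,M\in\cu$ and $\cu$ is closed under submodules of... no, $\cu$ need not be closed under submodules, so here one must instead use that the kernel has no torsion-free part, or invoke the approximation sequences of Lemma~\ref{lem: torision pair}) and that the relevant $\Ext^2$-groups computed in $\cu$ relate correctly to those in $\mod(A)$. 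Once these compatibilities are established, the reduction to the known $1$-Gorenstein property of $A$ (via $\pd_A T\le 1$ and the tilting axioms (T1)--(T3)) is a routine homological-algebra computation that I would not spell out in full. The conclusion is then simply the statement of the lemma, and subsequent results on $\cm\ch(\cu)$ follow from the general machinery of \S\ref{subsection:Def of MRH}.
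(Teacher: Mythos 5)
There is a genuine gap, concentrated in condition (Ec). Being weakly $1$-Gorenstein requires \emph{two} things: the bounds $\cp^{<\infty}(\cu)=\cp^{\leq 1}(\cu)$ and $\mathcal{I}^{<\infty}(\cu)=\mathcal{I}^{\leq 1}(\cu)$, \emph{and} the weakly Gorenstein identity $\cp^{<\infty}(\cu)=\mathcal{I}^{<\infty}(\cu)$. Your syzygy/cosyzygy argument (a finite $\add T$-resolution inside $\cu$ is exact in $\mod(A)$ with terms of $\pd_A\le 1$, so $\pd_A L<\infty$, hence $\pd_A L\le 1$ by $1$-Gorensteinness of $A$, hence $\Ext^2_A(L,-)=0$ and the first syzygy is Ext-projective in $\cu$) does deliver the first of these bounds, but you explicitly defer the identity $\cp^{<\infty}(\cu)=\mathcal{I}^{<\infty}(\cu)$ (``once \ldots is checked''), and neither dimension shift nor Iwanaga's theorem produces it — Iwanaga's theorem, as quoted, already \emph{assumes} the category is weakly Gorenstein and moreover needs enough injectives in $\cu$, which you have not established. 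Your first suggested route, via the derived equivalence with $\Gamma=\End_A(T)^{op}$, is not available here either: this lemma does not assume $\Gamma$ is $1$-Gorenstein (that hypothesis only enters later, in Theorem~\ref{theorem:derived equivalence of MRH}), so finiteness of $\pd_\Gamma\Hom_A(T,M)$ tells you nothing about a bound of $1$.

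The missing idea is the paper's use of the coapproximation sequence $0\to M\to X_M\to T_M\to 0$ (Lemma~\ref{lem: torision pair}(c)) for an \emph{arbitrary} $M\in\mod(A)$, not just for $M\in\cu$. Given $L\in\cu$ with $\Ext\text{-}\pd_\cu L<\infty$, applying $\Hom_A(L,-)$ to this sequence and using $\ind_A T_M\le 1$ shows $\Ext^i_A(L,M)=0$ for $i\gg0$ for every $M\in\mod(A)$; hence $\pd_A L<\infty$, so $\pd_A L\le 1$ and therefore $\ind_A L\le 1$ by $1$-Gorensteinness of $A$, and extension-closedness of $\cu$ then gives $\Ext\text{-}\ind_\cu L\le 1$. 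The dual argument sends finite Ext-injective dimension in $\cu$ to $\Ext\text{-}\pd_\cu\le 1$. These two ``crossing'' implications simultaneously yield the chain $\cp^{<\infty}(\cu)\subseteq\mathcal{I}^{\le 1}(\cu)\subseteq\mathcal{I}^{<\infty}(\cu)\subseteq\cp^{\le 1}(\cu)\subseteq\cp^{<\infty}(\cu)$, i.e., both the bounds and the weakly Gorenstein identity at once. Your treatment of (Ea), (Eb), (Ed) is fine (the paper takes these as immediate), and your concern about kernels of deflations staying in $\cu$ is resolved exactly as you suspect, by taking right $\add T$-approximations; but without the transfer of finiteness from $\cu$ to $\mod(A)$ against all test modules $M$, the proof of (Ec) is incomplete.
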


\begin{proof}
It is enough to verify that $\cu$ is weakly $1$-Gorenstein.
For any $M\in\mod( A)$, it follows by Lemma~ \ref{lem: torision pair}  there exists a short exact sequence
\begin{align}
\label{eqn: T-resolution of M}
0 \longrightarrow M \longrightarrow X_M \longrightarrow T_M \longrightarrow 0
\end{align}
 with $X_M\in\cu$ and $T_M\in\add T$.

For any $L\in\cu$ with $\Ext\text{-}\pd_\cu L<\infty$, by applying $\Hom(L,-)$ to \eqref{eqn: T-resolution of M}, we have $\Ext_{ A}^i(L,M)=0$ for $i$ large enough by noting that $\ind T_M\leq1$. Since $M$ is arbitrary, we obtain that $\pd_ A L<\infty$ and then $\pd_ A L\leq 1$. Since $ A$ is $1$-Gorenstein, this implies that $\ind_ A L\leq1$. Since $\cu$ is the subcategory of $\mod( A)$ closed under taking extensions, we have $\Ext\text{-}\ind_\cu L\leq1$.

For any $L\in\cu$ with $\Ext\text{-}\ind_\cu L<\infty$, by applying $\Hom(-,L)$ to \eqref{eqn: T-resolution of M}, dually, one can show that $\Ext\text{-}\ind_\cu L\leq1$, and $\Ext\text{-}\pd_\cu L\leq1$.

From the above, we obtain that $\cp^{< \infty}(\cu)= \fpr(\cu)=\mathcal{I}^{\leq 1}(\cu)=\mathcal{I}^{<\infty}(\cu)$. Then $\cu$ is weakly $1$-Gorenstein.
\end{proof}

In fact, from the proof, we obtain that $\cp^{<\infty}(\cu)\subseteq \cp^{<\infty}( A)$, and $\mathcal{I}^{<\infty}(\cu)\subseteq \mathcal{I}^{<\infty}( A)$.
So we can define the semi-derived Ringel-Hall algebra $\cs\cd\ch(\cu)$ of $\cu$. Furthermore, it follows from $K_0(\fpr(\cu))\cong K_0(\add T)\cong K_0(\proj(A))\cong K_0(\fpr( A))$ that $\cs\cd\ch(\fpr(\cu))\cong \T( A)$ by noting that $\cu$ is full subcategory of $\mod( A)$ which is closed under taking extensions.
%\begin{lemma}
%For any $M\in\cu$, if there exists some $i_0$ such that $\Ext^i(M,-)|_{\cu}=0$ for any $i\geq i_0$, then
%$\pd_{\cu}(M)\leq1$.
%\end{lemma}
%\begin{proof}
%For any $N\in\mod( A)$, we obtain a short exact sequence
%\begin{equation}\label{equation 3}
%0\rightarrow N\rightarrow X_N\rightarrow T_N\rightarrow0
%\end{equation}
%with $X_N\in\cu$, and $T_N\in\add T$.
%Then $\Ext^i_ A(M,T_N)=0$ for any $i>1$ since $\pd_ A T_N\leq 1$ and $ A$ is $1$-Gorenstein.

%By applying $\Hom(M,-)$ to (\ref{equation 3}), it is easy to see that $\Ext^i_ A(M,N)=0$ for any $i>i_0$.
%Then $\pd_ A M$ is finite. So $\pd_ A M\leq 1$, which implies that $\Ext^i_ A(M,-)=0$ for any $i>1$.
%From $M\in\cu=\Fac (T)$, let $0\rightarrow L\rightarrow T_0\rightarrow M\rightarrow0$ with $T_0\in\add T$. For any $N\in\cu$, by applying $\Hom(-,N)$ to it, it is easy to see that %$\Ext^i(L,N)=0$ for any $i\geq1$, and then $L\in\add T$. So $\pd_{\cu}(M)\leq1$.
%\end{proof}

\begin{proposition} \label{prop:derived invariant of MRH}
Let $ A$ be a $1$-Gorenstein algebra with a tilting module $T$, and $\cu=\Fac T$.
Then the natural embedding $\Phi:\ch(\cu)\rightarrow \ch( A)$ induces an algebra isomorphism
$$\tilde{\Phi}: \cs\cd\ch(\cu) \stackrel{\simeq}{\longrightarrow} \cs\cd\ch( A).$$
Furthermore, the inverse morphism of $\tilde{\Phi}$ is given by
$\tilde{\Psi}:[M]\mapsto q^{-\langle M, T_M\rangle} [T_M]^{-1}\diamond[X_M]$,
where $M$, $X_M\in\cu$ and $T_M\in\add T$ fits into a short exact sequence
\[
0 \longrightarrow M \longrightarrow X_M \longrightarrow T_M \longrightarrow 0.
\]
\end{proposition}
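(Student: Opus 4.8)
The strategy is to mimic the proof of Theorem~\ref{theorem isomorphism of algebras} (the semi-derived Hall algebra identification), but now comparing $\cm\ch(\cu)$ with $\cm\ch( A)$ through the tilting torsion pair $(\cu,\cv)$ rather than through a Frobenius subcategory. First I would record that the natural embedding $\Phi\colon\ch(\cu)\to\ch( A)$ is a well-defined algebra homomorphism: this is because $\cu=\Fac T$ is closed under extensions in $\mod( A)$ by Lemma~\ref{lem: torision pair}(a), so the Hall product computed in $\cu$ agrees with that in $\ch( A)$. Then $\Phi$ respects the ideals: a short exact sequence $0\to K\to L\to M\to 0$ with $K\in\fpr(\cu)$ has $K\in\fpr( A)$ (as noted after the previous lemma, $\fpr(\cu)\subseteq\fpr( A)$), and all three terms lie in $\cu$, so $\Phi$ sends the relevant generator of the ideal defining $\cm\ch(\cu)$ to one of the generators of the ideal defining $\cm\ch( A)$. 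Moreover $\Phi$ sends $\cs_{\cu}$ into $\cs_{ A}$, so by the universal property of Ore localization $\Phi$ descends to an algebra homomorphism $\tilde\Phi\colon\cm\ch(\cu)\to\cm\ch( A)$; here I would also use the identification $\cm\ch(\fpr(\cu))\cong\T( A)$ recorded above, so that $\tilde\Phi$ is a morphism of $\T( A)$-bimodules.

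\textbf{Constructing the inverse.} Next I would define $\Psi\colon\ch( A)\to\cm\ch(\cu)$ by the formula $\Psi([M])=q^{-\langle M,T_M\rangle}[T_M]^{-1}\diamond[X_M]$, using for each $M$ a fixed short exact sequence $0\to M\to X_M\to T_M\to 0$ with $X_M\in\cu$, $T_M\in\add T$ (Lemma~\ref{lem: torision pair}(c)). The main work is to check that $\Psi$ is well defined and descends. Well-definedness amounts to independence of the choice of such a sequence: as in the proof of Theorem~\ref{theorem isomorphism of algebras}, I would take the minimal $\Fac T$-approximation and show any other approximation differs by adding a summand $U_1\in\add T$ on both ends, and the extra factors $[U_1]$ and $[U_1]^{-1}$ cancel in $\cm\ch(\cu)$ after collecting the twists via the Euler form. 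Then I would check that $\Psi$ kills the ideal $I'$ defining $\ch( A)/I'$: given $0\to K\to L\to M\to 0$ with $K\in\fpr( A)$, one applies the Horseshoe Lemma to the chosen $\Fac T$-approximation sequences of $K$ and $M$ (note a projective/injective resolution of $K$ in $\fpr( A)$ can be taken inside $\add T$ up to $\T( A)$ since $\fpr(\cu)$ and $\fpr( A)$ have the same Grothendieck group) to obtain a commutative diagram with a short exact sequence $0\to L\to X_L\to T_L\to 0$ compatible with the direct sum of the two, and then compare $\Psi([L])$ with $\Psi([K\oplus M])$ exactly as in the earlier proof. Since $\Psi([K])$ is invertible in $\cm\ch(\cu)$ for $K\in\fpr( A)$, $\Psi$ then factors through $\cm\ch( A)$, giving $\tilde\Psi\colon\cm\ch( A)\to\cm\ch(\cu)$, a morphism of $\T( A)$-bimodules.

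\textbf{Finishing.} Finally I would verify $\tilde\Psi\circ\tilde\Phi=\id$ and $\tilde\Phi\circ\tilde\Psi=\id$. The first is immediate on generators $[M]$ with $M\in\cu$: since $M\in\cu$ one may choose $X_M=M$, $T_M=0$, so $\Psi([M])=[M]$; hence $\tilde\Psi\tilde\Phi$ fixes the generating set $\{[M]:M\in\cu\}\cup\{[K]^{-1}:K\in\fpr(\cu)\}$ of $\cm\ch(\cu)$. For the other composite, $\tilde\Phi(\tilde\Psi([M]))=q^{-\langle M,T_M\rangle}[T_M]^{-1}\diamond[X_M]$ computed in $\cm\ch( A)$; using Lemma~\ref{lem:bimodule of MRH} together with the short exact sequence $0\to M\to X_M\to T_M\to 0$ (which gives $[X_M]=q^{\langle M,T_M\rangle}[M]\diamond[T_M]$ in $\cm\ch( A)$, since $T_M\in\add T\subseteq\fpr( A)$), this collapses to $[M]$. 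Hence $\tilde\Phi$ is an algebra isomorphism with inverse $\tilde\Psi$. I expect the main obstacle to be the bookkeeping in the ``$\Psi$ kills $I'$'' step: one must juggle the Euler-form twists while passing between $\fpr(\cu)$, $\add T$, $\fpr( A)$ and $\proj A$, and make sure the Horseshoe construction stays inside $\cu$; but structurally this is parallel to the argument already given for Theorem~\ref{theorem isomorphism of algebras}, so it should go through.
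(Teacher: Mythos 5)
Your overall strategy is the one the paper intends (its proof of this proposition is omitted with a pointer to Theorem~\ref{theorem isomorphism of algebras}), and the routine parts are fine: $\Phi$ descends to $\tilde\Phi$ because $\cu$ is extension-closed and $\fpr(\cu)\subseteq\fpr(A)$, surjectivity of $\tilde\Phi$ follows from the coapproximation formula, and $\tilde\Psi\tilde\Phi=\id$ on $[M]$ with $M\in\cu$. The genuine gap is the step where you ``apply the Horseshoe Lemma to the chosen $\Fac T$-approximation sequences of $K$ and $M$.'' The dual Horseshoe needs the map $K\to X_K$ to extend to a map $L\to X_K$, and the obstruction lives in $\Ext^1(M,X_K)$, which has no reason to vanish: $X_K\in\cu$ is not Ext-injective against an arbitrary quotient $M$. (Contrast the Horseshoe step in Theorem~\ref{theorem isomorphism of algebras}, where the required lift $G_M\to L$ exists because $\Ext^1(G_M,K)=0$ by Lemma~\ref{lemma perpendicular of P GP}.) Your parenthetical about Grothendieck groups does not repair this. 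The fix is to abandon the pre-chosen $X_M$: push out $0\to K\to L\to M\to 0$ along $K\to X_K$ to get $E$ with exact sequences $0\to X_K\to E\to M\to 0$ and $0\to L\to E\to T_K\to 0$; choose a coapproximation $0\to E\to X_E\to T_E\to 0$ by Lemma~\ref{lem: torision pair}(c). Then $L\hookrightarrow X_E$ has cokernel $T_K\oplus T_E$ (the extension of $T_E$ by $T_K$ splits since $\Ext^1(T,T)=0$), so this is a coapproximation of $L$; and $X_E/X_K\in\Fac T$ is a coapproximation of $M$ with cofactor $T_E$, with $[X_E]=[X_K\oplus X_E/X_K]$ in $\cm\ch(\cu)$ because $X_K\in\fpr(\cu)$. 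Comparing the two expressions (and using $\widehat L=\widehat K+\widehat M$) gives $\Psi([L])=\Psi([K\oplus M])$.

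Separately, your final computation does not close up: $q^{-\langle M,T_M\rangle}[T_M]^{-1}\diamond[X_M]=[T_M]^{-1}\diamond[M]\diamond[T_M]=q^{\langle T_M,M\rangle-\langle M,T_M\rangle}[M]$, because by Lemma~\ref{lem:bimodule of MRH} $[T_M]$ only $q$-commutes with $[M]$; it does not ``collapse to $[M]$.'' In fact with the exponent as printed the assignment is not even independent of the choice of $T_M$ (take $T_M=0$ versus $T_M\neq 0$ for $M\in\cu$). The normalization consistent with the convention of Theorem~\ref{theorem isomorphism of algebras} (where $q^{-\langle M,H_M\rangle}$ is paired with \emph{right} multiplication by $[H_M]^{-1}$) is $\tilde\Psi([M])=q^{-\langle T_M,M\rangle}[T_M]^{-1}\diamond[X_M]$, equivalently $q^{-\langle M,T_M\rangle}[X_M]\diamond[T_M]^{-1}$; you should have detected and corrected this rather than asserting the identity.
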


\begin{proof}
The proof is similar to Theorem \ref{theorem isomorphism of algebras} by using \eqref{eqn: T-resolution of M}, and we omit it here.
\end{proof}

\begin{theorem}    \label{theorem:derived equivalence of MRH}
Let $ A$ be a $1$-Gorenstein algebra with a tilting module $T$. If $\Gamma:=\End_{\La}(T)^{op}$ is a $1$-Gorenstein algebra, then there exists an isomorphism of algebras
\begin{align*}
\Xi: & \cs\cd\ch( A) \stackrel{\simeq}{\longrightarrow} \cs\cd\ch(\Gamma)
\\
 & [M]\mapsto q^{-\langle M,T_M\rangle} [F(T_M)]^{-1}\diamond [F(X_M)],
 \end{align*}
 where $F=\Hom_ A(T,-)$. Here $M$, $X_M\in\cu$ and $T_M\in\add T$ fit into a short exact sequence
$$0 \longrightarrow M \longrightarrow X_M \longrightarrow T_M \rightarrow 0.$$
\end{theorem}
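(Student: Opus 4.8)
\medskip
The plan is to realize $\Xi$ as a composite that factors through the modified Ringel--Hall algebra of the torsion class $\cu=\Fac T$. Proposition~\ref{prop:derived invariant of MRH}, applied to the $1$-Gorenstein algebra $A$ and its tilting module $T$, already provides an algebra isomorphism $\tilde\Phi\colon\cm\ch(\cu)\xrightarrow{\simeq}\cm\ch(A)$ whose inverse is $\tilde\Psi\colon[M]\mapsto q^{-\langle M,T_M\rangle}[T_M]^{-1}\diamond[X_M]$, where $0\to M\to X_M\to T_M\to0$ is the sequence of Lemma~\ref{lem: torision pair}(c) with $X_M\in\cu$ and $T_M\in\add T$. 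Comparing with the target formula, it therefore suffices to construct an algebra isomorphism $\bar F\colon\cm\ch(\cu)\xrightarrow{\simeq}\cm\ch(\Gamma)$ sending $[Y]\mapsto[F(Y)]$ for $Y\in\cu$ (note that $F(T_M)\in\add\Gamma=\proj\Gamma$, so $[F(T_M)]$ is invertible in $\cm\ch(\Gamma)$), and then to set $\Xi=\bar F\circ\tilde\Psi$; unwinding the definitions on a class $[M]$ then reproduces the stated formula.

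The construction of $\bar F$ will go in two steps. First, I would invoke the Brenner--Butler tilting theorem (see, e.g., \cite[Chapter~VI]{ASS}): the functor $F=\Hom_A(T,-)$ restricts to an equivalence $F\colon\cu\xrightarrow{\simeq}\cy$ with quasi-inverse $T\otimes_\Gamma-$, where $\cy=\{N\in\mod(\Gamma)\mid\Tor^\Gamma_1(T,N)=0\}$ is the torsion-free class of the tilting torsion pair on $\mod(\Gamma)$. One then checks that this restriction is \emph{exact} as a functor between the exact categories $\cu\subseteq\mod(A)$ and $\cy\subseteq\mod(\Gamma)$ (if $0\to X\to Y\to Z\to0$ is exact with all terms in $\cu$, then applying $F$ gives a sequence ending in $\Ext^1_A(T,X)=0$), that it carries $\add T$ --- the subcategory of projective objects of the exact category $\cu$ --- onto $\proj\Gamma$, that it identifies $\fpr(\cu)$ with $\fpr(\cy)$, and that it preserves all Hom-spaces and extension groups, hence Euler forms and Hall numbers. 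Granting this, $F$ induces an algebra isomorphism $\ch(\cu)\cong\ch(\cy)$ compatible with the defining ideals and the Ore sets of \S\ref{subsection:Def of MRH}, descending to $\cm\ch(\cu)\xrightarrow{\simeq}\cm\ch(\cy)$; here $\cy$ is weakly $1$-Gorenstein simply because $\cu$ is and $F|_\cu$ is an exact equivalence, so $\cm\ch(\cy)$ is defined.

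Second, I would show that the inclusion $\cy\hookrightarrow\mod(\Gamma)$ induces an isomorphism $\cm\ch(\cy)\xrightarrow{\simeq}\cm\ch(\Gamma)$. As a torsion-free class, $\cy$ is closed under submodules, and it contains $\proj\Gamma$ (since $\Gamma=F(T)\in\cy$ and $\cy$ is closed under summands); thus $\cy$ is a resolving subcategory of $\mod(\Gamma)$ with enough projectives $\proj\Gamma$, whence $\fpr(\cy)\subseteq\fpr(\Gamma)$, the quantum tori satisfy $\T(\cy)\cong\T(\Gamma)$ (both built from $K_0(\proj\Gamma)$), and there is a well-defined homomorphism $\cm\ch(\cy)\to\cm\ch(\Gamma)$. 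Here the hypothesis that $\Gamma$ is $1$-Gorenstein enters: by Theorem~\ref{theorem characterize of gorenstein property} every $N\in\Gproj(\Gamma)$ fits into an exact sequence $0\to N\to P^0\to P^1\to\cdots$ with the $P^i$ projective, so $N$ embeds into a projective and hence lies in $\cy$. Therefore $\Gproj(\Gamma)\subseteq\cy$, and in fact $\Gproj(\cy)=\Gproj(\Gamma)$ (the complete resolutions live in $\cy$), so $\underline{\Gproj}(\cy)=\underline{\Gproj}(\Gamma)$. Theorem~\ref{theorem basis of MRH 1-Gor} then presents both $\cm\ch(\cy)$ and $\cm\ch(\Gamma)$ as free modules over this common quantum torus on bases given by one and the same choice of module representatives of the isoclasses of $D_{sg}(\mod(\Gamma))$, and the inclusion identifies these bases; hence it is an isomorphism. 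Composing the two steps produces $\bar F$, and $\Xi=\bar F\circ\tilde\Psi$ is the desired algebra isomorphism.

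The hard part, and the place where the $1$-Gorenstein hypothesis on $\Gamma$ is genuinely used, will be the second step: the claim that restricting from $\mod(\Gamma)$ to the torsion-free subcategory $\cy$ does not shrink the modified Ringel--Hall algebra. This rests on the containment $\Gproj(\Gamma)\subseteq\cy$ together with the basis description of Theorem~\ref{theorem basis of MRH 1-Gor} (equivalently, via Theorem~\ref{theorem isomorphism of algebras}, on the identity $\cs\cd\ch(\Gproj(\cy))=\cs\cd\ch(\Gproj(\Gamma))$). The Brenner--Butler step is classical, but still requires care to verify that the equivalence $\cu\simeq\cy$ respects the exact structures and the projective objects; once that is in place, preservation of Hom- and Ext-spaces, hence of the Hall multiplication and of the Euler form, is formal.
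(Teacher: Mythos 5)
Your proposal is correct and follows essentially the same route as the paper: both factor the isomorphism as $\cm\ch(A)\cong\cm\ch(\cu)\cong\cm\ch(\cy)\cong\cm\ch(\Gamma)$, using Proposition~\ref{prop:derived invariant of MRH}, the Brenner--Butler equivalence $\cu\simeq\cy$, and the containment $\Gproj(\Gamma)\subseteq\cy$ to compare $\cm\ch(\cy)$ with $\cm\ch(\Gamma)$. The only cosmetic difference is that you obtain $\Gproj(\Gamma)\subseteq\cy$ from the embedding of a Gorenstein projective into a projective plus closure of $\cy$ under submodules, whereas the paper computes $\Tor_1^\Gamma(T,Y)\cong D\Ext^1_\Gamma(Y,DT)=0$ via $\pd_\Gamma DT\leq 1$ and Lemma~\ref{lemma perpendicular of P GP}; both are valid.
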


\begin{proof}
Theorem \ref{theorem isomorphism of algebras} shows that $\cs\cd\ch(\Gamma)\xrightarrow{\simeq}\cs\cd\ch(\Gamma)$ with the isomorphism induced by
the natural embedding $\ch(\Gproj(\Gamma))\hookrightarrow \ch(\Gamma)$.
Denote by $G=T\otimes_\Gamma -:\mod(\Gamma)\rightarrow \mod( A)$.
Then $(\cu,\cv)$ induces a torsion pair $(\cx,\cy)$ in $\mod(\Gamma)$, where
\[
\cx=\{X\in\mod(\Gamma)\mid T\otimes_\Gamma X=0\},
\qquad
\cy=\{Y\in\mod(\Gamma)\mid \Tor_1^\Gamma(T,Y)=0\}.
\]
We claim that $\Gproj(\Gamma)\subseteq \cy$. In fact, $T_\Gamma$ is a right tilting $\Gamma$-module by classical tilting theory. It follows that $\pd T_\Gamma\leq1$, and then $\ind_\Gamma DT\leq1$. Since $\Gamma$ is $1$-Gorenstein, we also have $\pd_\Gamma DT\leq1$. For any $Y\in \Gproj(\Gamma)$, we have by Lemma \ref{lemma perpendicular of P GP} that $\Tor_1^\Gamma(T,Y)\cong D\Ext^{1}_\Gamma(Y,DT)=0$, so $Y\in\cy$.

By the Brenner-Butler tilting theorem \cite{BB80},
the functors $F$ and $G$ induce quasi-inverse equivalences between $\cu$ and $\cy$. In particular, $F:\cu\rightarrow \cy$ and $G:\cy\rightarrow\cu$ are exact and preserve projective objects. So $\cy$ also satisfies (E-a)--(E-d), and then $\cs\cd\ch(\cy)$ is well defined. Then $F$ and $G$ induce the equivalence $\cs\cd\ch(\cu)\cong \cs\cd\ch(\cy)$. We claim that $\cs\cd\ch(\cy)\cong \cs\cd\ch(\Gamma)$. If so, together with
$\cs\cd\ch(\cu) \cong \cs\cd\ch(\Lambda)$ by Proposition \ref{prop:derived invariant of MRH}, we have proved that
$\cs\cd\ch(\Lambda)\cong \cs\cd\ch(\Gamma)$.

It remains to prove the claim that $\cs\cd\ch(\cy)\cong \cs\cd\ch(\Gamma)$. Since $\Gproj(\Gamma)$ and $\cy$ are closed under taking extensions, from above, we have injective homomorphisms ${\phi}:\ch(\Gproj (\Gamma))\rightarrow\ch(\cy)$ and $\varphi: \ch(\cy)\rightarrow\ch(\Gamma)$. Then
the natural embeddings $\ch(\Gproj (\Gamma))\stackrel{\phi}{\rightarrow}\ch(\cy)\stackrel{\varphi}{\rightarrow} \ch(\Gamma)$ induce morphisms of algebras
$\cs\cd\ch(\Gproj(\Gamma))\stackrel{\tilde{\phi}}{\rightarrow} \cs\cd\ch(\cy)\stackrel{\tilde{\varphi}}{\rightarrow} \cs\cd\ch(\Gamma)$. By Theorem \ref{theorem isomorphism of algebras}, $\tilde{\varphi}\tilde{\phi}$ is an isomorphism. So $\tilde{\phi}$ is injective. However, similar to the proof of Theorem~ \ref{theorem isomorphism of algebras}, one sees that $\tilde{\phi}$ is surjective. Then both $\tilde{\phi}$ and $\tilde{\varphi}$ are isomorphisms. So the claim follows.
\end{proof}

\begin{corollary}
Let $ A$ be a $1$-Gorenstein algebra with a tilting module $T$. If $\Gamma=\End(T)^{op}$ is a $1$-Gorenstein algebra, then
 \[
 \cs\cd\ch(\Gproj( A))\cong\cs\cd\ch(\Gproj(\Gamma)).
 \]
\end{corollary}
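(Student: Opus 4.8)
The statement to prove is: if $A$ is a $1$-Gorenstein algebra with a tilting module $T$ such that $\Gamma = \End_A(T)^{op}$ is also $1$-Gorenstein, then $\cs\cd\ch(\Gproj(A)) \cong \cs\cd\ch(\Gproj(\Gamma))$. The plan is to deduce this as a formal consequence of the two isomorphism theorems already established in this section, namely Theorem~\ref{theorem isomorphism of algebras} (the semi-derived Hall algebra of $\Gproj$ is isomorphic to the modified Ringel-Hall algebra) and Theorem~\ref{theorem:derived equivalence of MRH} (tilting invariance of the modified Ringel-Hall algebra). Concretely, I would argue by the chain of algebra isomorphisms
\[
\cs\cd\ch(\Gproj(A)) \;\cong\; \cm\ch(A) \;\cong\; \cm\ch(\Gamma) \;\cong\; \cs\cd\ch(\Gproj(\Gamma)),
\]
where the first and third isomorphisms are instances of Theorem~\ref{theorem isomorphism of algebras} applied to the $1$-Gorenstein algebras $A$ and $\Gamma$ respectively (here the hypothesis that $\Gamma$ is $1$-Gorenstein is exactly what licenses the third isomorphism), and the middle isomorphism is Theorem~\ref{theorem:derived equivalence of MRH}, whose hypotheses ($A$ is $1$-Gorenstein, $T$ is tilting, $\Gamma = \End_A(T)^{op}$ is $1$-Gorenstein) are precisely the hypotheses of the present corollary.

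The only things that need a line of justification are that these three isomorphisms are isomorphisms \emph{of algebras} (not merely of vector spaces or $\T$-bimodules) and that composing them is legitimate — both are immediate since Theorem~\ref{theorem isomorphism of algebras} and Theorem~\ref{theorem:derived equivalence of MRH} are stated as isomorphisms of algebras. So the write-up is essentially just: invoke Theorem~\ref{theorem isomorphism of algebras} for $A$, invoke Theorem~\ref{theorem:derived equivalence of MRH}, invoke Theorem~\ref{theorem isomorphism of algebras} for $\Gamma$ (using that $\Gamma$ is $1$-Gorenstein by hypothesis), and compose.

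I do not anticipate any genuine obstacle: all the substantive work — the construction of the localization, the identification of $\cs\cd\ch(\Gproj(A))$ with $\cm\ch(A)$ via the embedding $\ch(\Gproj(A))\hookrightarrow\ch(A)$, and the tilting-invariance of $\cm\ch(-)$ via the torsion pair and the Brenner–Butler equivalence — is already done in Theorems~\ref{theorem isomorphism of algebras}, \ref{prop:derived invariant of MRH} and \ref{theorem:derived equivalence of MRH}. If anything, the one point worth a sentence of care is making explicit that the composite isomorphism is canonical enough to be stated without tracking the images of generators; but since each constituent map is given by an explicit formula in the cited results, one could, if desired, write down the composite $[M]\mapsto q^{-\langle M, T_M\rangle}[F(T_M)]^{-1}\diamond[F(X_M)]$ transported through the two instances of Theorem~\ref{theorem isomorphism of algebras}. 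For the purposes of a corollary, simply citing the chain of isomorphisms suffices.

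\begin{proof}
By Theorem~\ref{theorem isomorphism of algebras} applied to the $1$-Gorenstein algebra $A$, we have an isomorphism of algebras $\cs\cd\ch(\Gproj(A)) \cong \cm\ch(A)$. By hypothesis $\Gamma = \End_A(T)^{op}$ is $1$-Gorenstein, so Theorem~\ref{theorem:derived equivalence of MRH} gives an isomorphism of algebras $\cm\ch(A) \cong \cm\ch(\Gamma)$. Finally, applying Theorem~\ref{theorem isomorphism of algebras} to the $1$-Gorenstein algebra $\Gamma$ yields an isomorphism of algebras $\cm\ch(\Gamma) \cong \cs\cd\ch(\Gproj(\Gamma))$. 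Composing these three isomorphisms, we obtain an isomorphism of algebras
\[
\cs\cd\ch(\Gproj(A)) \cong \cs\cd\ch(\Gproj(\Gamma)),
\]
as desired.
\end{proof}
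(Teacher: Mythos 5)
Your proposal is correct and follows essentially the same route as the paper: the paper's own proof also just combines Theorem~\ref{theorem isomorphism of algebras} (applied to both $A$ and $\Gamma$) with the tilting invariance Theorem~\ref{theorem:derived equivalence of MRH}. Your write-up is in fact slightly more careful in spelling out both applications of Theorem~\ref{theorem isomorphism of algebras} and in noting where the $1$-Gorenstein hypothesis on $\Gamma$ is used.
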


\begin{proof}
Recall from Theorem \ref{theorem isomorphism of algebras} that $\cs\cd\ch(\Gamma)\xrightarrow{\simeq}\cs\cd\ch(\Gproj(\Gamma))$. The assertion now follows from Theorem~ \ref{theorem:derived equivalence of MRH}.
\end{proof}

%%%%%%%
%%%%%%%

\end{document}